\let\Horig\H
\let\Oorig\O
\renewcommand*{\backref}[1]{}
\renewcommand*{\backrefalt}[4]{%
  \ifcase #1 %
    \relax
  \or
    $\uparrow$~#2.
  \else
    $\uparrow$~#2.
  \fi%
}
\newcommand{\cyrrm}[1]{\mbox{\fontencoding{OT2}\fontfamily{wncyr}\selectfont#1}} 
\begin{document}
\thispagestyle{empty}
\setlength\cftparskip{0pt}
\setlength\cftbeforesecskip{1pt}
\setlength\cftaftertoctitleskip{10pt}
\selectlanguage{english}
\setcounter{tocdepth}{4}
\setcounter{secnumdepth}{4}
\renewcommand*{\thefootnote}{\fnsymbol{footnote}}
\begin{center}\vspace*{-0.8cm}\hspace*{-0.66cm}\makebox{\Huge\textbf{$W^*$-algebras and noncommutative integration}}{\tiny\\\ \\\ \\}
{\LARGE Ryszard Pawe{\l} Kostecki{\tiny\\\ \\}{\small\textit{Perimeter Institute of Theoretical Physics, 31 Caroline Street North, Waterloo, Ontario N2L 2Y5, Canada}\footnote{Current affiliation.}}\\{\vskip -0.3cm}{\small\textit{Institute of Theoretical Physics, University of Warsaw, Ho\.za 69, 00-681 Warszawa, Poland}}\\{\small\vspace{3mm}\texttt{ryszard.kostecki@fuw.edu.pl}}}{\small\\\ \\}
{October 27, 2014}
\end{center}
\begin{abstract}
\noindent This text is a detailed overview of the theories of $W^*$-algebras and noncommutative integration, up to the Falcone--Takesaki theory of noncommutative $L_p$ spaces over arbitrary $W^*$-algebras, and its extension to noncommutative Orlicz spaces. The topics under consideration include the Tomita--Takesaki modular theory, the relative modular theory (featuring bimodules, spatial quotients, and canonical representation), the theory of $W^*$-dynamical systems (featuring derivations, liouvilleans, and crossed products), noncommutative Radon--Nikod\'{y}m type theorems, and operator valued weights. We pay special attention to abstract algebraic formulation of all properties (avoiding the dependence on Hilbert spaces wherever it is possible), to functoriality of canonical structures arising in the theory, and to the relationship between commutative and noncommutative integration theories. Several new results are proved.
\end{abstract}
\setcounter{footnote}{0}
\renewcommand*{\thefootnote}{\arabic{footnote}}
{\small
\tableofcontents}
\clearpage
\newif\ifvarwstarintcompile
\varwstarintcompiletrue
\ifvarwstarintcompile 
\mottotakesakidc
\vspace{-1cm}
\section{Introduction}

While there exists a wide range of detailed expositions of various aspects of the theory of noncommutative algebras of operators, to mention only \cite{Kaplansky:1955,Naimark:1956,Dixmier:1957,Rickart:1960,Dixmier:1964,Schwartz:1967,Topping:1971,Sakai:1971,Bonsall:Duncan:1973,Guichardet:1974,Stratila:Zsido:1975,Arveson:1976,Stratila:Zsido:1977:1979,vanDaele:1978,Pedersen:1979,Stratila:1981,Kadison:Ringrose:1983:1986:1991:1992,Sunder:1986,Tomiyama:1987,Fell:Doran:1988,Murphy:1990,Sakai:1991,WeggeOlsen:1993,Palmer:1994,Connes:1994,Evans:Kawahigashi:1998,Takesaki:2003,Blackadar:2005}, there is no self-contained text covering the theory of noncommutative integration over arbitrary $W^*$-algebras.\footnote{The canonical references for the theory of $W^*$-algebras are \cite{Sakai:1971,Stratila:1981,Takesaki:2003}, and we definitely recommend them.} Our exposition is intended to fill this gap.\footnote{On this occasion, we fill some gaps in the literature, clarifying the functorial character of some constructions. The category theoretic formulation of equivalences \eqref{Sakai.Kosaki.duality}, \eqref{sherman.equivalence}, and the diagrams \eqref{ctft.cat.diag}, \eqref{ncLp.cat.diag} seem to be new.} The theory of $W^*$-algebras is presented in an algebraic fashion, with elimination of dependence on the Hilbert spaces, and downplaying the usual noncommutative topological ($C^*$-algebraic) point of view in favour of exposition of the role played by predualisation and relative modular theory. Our presentation of noncommutative integration covers large parts of the theory that are otherwise scattered among many papers and books. In order to provide an overview that is detailed but also has a reasonable size, we have omitted proofs. Yet, we compensate for this by discussion of notions and results under consideration and by providing quite detailed references to original papers.

The new mathematical results of this text are: the construction of the family of noncommutative Orlicz spaces $L_\Upsilon(\N)$ canonically associated with arbitrary $W^*$-algebra $\N$ and arbitrary Orlicz function $\Upsilon$, the construction of the family of commutative $L_p(\boole)$ spaces canonically associated with arbitrary mcb-algebra\footnote{That is, the Dedekind--MacNeille complete boolean algebras admitting a strictly positive semi-finite countably additive measure (see Section \ref{comm.integr.section} for definitions of these terms).} $\boole$ for $p\in\,[1,\infty[$, as well as Propositions \ref{proposition.L.gamma.is.abstract}-\ref{huge.diagram.int.theory}. The category theoretic formulation of equivalences \eqref{bLinf.B.equiv}-\eqref{Linfc.mcBc.equiv}, \eqref{mcBc.locMesp.duality}, \eqref{mcBc.hypso.duality}, \eqref{mcBIso.hypsh.duality}, \eqref{Stratila.Zsido.duality}, \eqref{long.segal.equivalence}, and the diagram \eqref{commutative.integration.diagram} are also new. These results allow us to establish an explicit relationship between noncommutative and commutative integration theories without passing to topological representations (see Section \ref{integr.compar.section}), and to provide the category theoretic description of the main structures of integration theory in terms of the diagram \eqref{canonical.integration.theory.big.stuff}.

\vspace{0.3cm}{\small\noindent\textbf{Aknowledgments:} I am very indebted to Professor Stanis{\l}aw L. Woronowicz for numerous discussions that introduced me to the worlds of modular theory and noncommutative integration theory. I would like also to thank Jan Derezi\'{n}ski, Wojciech Kami\'{n}ski, Jerzy Kijowski, W{\l}adys{\l}aw Adam Majewski, Mustafa Muratov, W{\l}odzimierz Natorf, Dmitri\u{\i} Pavlov, Aleksander Pe{\l}czy\'{n}ski, David Sherman, and Piotr So{\l}tan for valuable comments, discussions, xerocopies, and/or correspondence. Naturally, all eventual drawbacks of this text are my own responsibility. This research was supported in part by Perimeter Institute for Theoretical Physics. Research at Perimeter Institute is supported by the Government of Canada through Industry Canada and by the Province of Ontario through the Ministry of Research and Innovation.}

\vspace{0.3cm}{\small\noindent\textbf{Conventions:} $\ii$, $\ee$, $\pipi$, $\RR$, $\CC$, $\ZZ$ are standard; $\RR^+\equiv[0,\infty[$, $\NN\equiv\{1,2,\ldots\}$; $\id$ denotes identity morphism in the category of objects of a given kind, $\dom$ denotes domain, $\cod$ denotes codomain, $\ran$ denotes image, $\Span_\KK$ denotes linear span in the vector space over $\KK\in\{\RR,\CC\}$, $\INT$ denotes interior of subspace of a topological space, $\Set$ denotes the category of sets and functions; given a small category $\mathscr{C}$, $\Ob(\mathscr{C})$ denotes the set of all objects in $\mathscr{C}$, $\Mor(\mathscr{C})$ denotes the set of all arrows in $\mathscr{C}$, while $\Hom_\mathscr{C}(X,Y)$ denotes the set of all arrows between $X,Y\in\Ob(\mathscr{C})$; $\dirac_{ij}$ denotes Kronecker's delta; $\sp$ denotes spectrum of an operator on a Hilbert space.  All Cyrillic titles and names were transliterated from the original papers and books. For the Latin transliteration of the Cyrillic script (in references and surnames) we use the following modification of the system GOST 7.79-2000B: {\cyrrm{ts}} = c, {\cyrrm{ch}} = ch, {\cyrrm{kh}} = kh, {\cyrrm{zh}} = zh, {\cyrrm{sh}} = sh, {\cyrrm{shch}} = shh, {\cyrrm{yu}} = yu, {\cyrrm{ya}} = ya, {\cyrrm{\"{e}}} = \"{e}, {\cyrrm{\cdprime}} = `, {\cyrrm{\cprime}} = ', {\cyrrm{\`{e}}} = \`{e}, {\cyrrm{\u{i}}} = \u{\i}, with an exception that names beginning with {\cyrrm{Kh}} are transliterated to H.\footnote{This is required for agreement with the widespread practice to transliterate {\cyrrm{Kholevo}} as Holevo, etc.} For Russian texts: {\cyrrm{y}} = y, {\cyrrm{i}} = i; for Ukrainian: {\cyrrm{i}} = y, i = i, \"{\i} = \"{\i}.}%
%
\clearpage
\else 
\chapter{$W^*$-algebras and noncommutative integration\label{alg.chapter}}
\mottotakesaki

This Chapter contains a detailed overview of the theories of $W^*$-algebras and noncommutative integration, up to the Falcone--Takesaki theory of noncommutative $L_p$ spaces over arbitrary $W^*$-algebras. The topics under consideration include the Tomita--Takesaki modular theory, the relative modular theory (featuring bimodules, spatial quotients, and canonical representation), the theory of $W^*$-dynamical systems (featuring derivations, liouvilleans, and crossed products), noncommutative Radon--Nikod\'{y}m type theorems, and operator valued weights. We pay special attention to abstract algebraic formulation of all properties (avoiding the dependence on Hilbert spaces wherever it is possible), to functoriality of canonical structures arising in the theory, and to the relationship between commutative and noncommutative integration theories.

While there exists a wide range of detailed expositions of various aspects of the theory of noncommutative algebras of operators, to mention only \cite{Kaplansky:1955,Naimark:1956,Dixmier:1957,Rickart:1960,Dixmier:1964,Schwartz:1967,Topping:1971,Sakai:1971,Bonsall:Duncan:1973,Guichardet:1974,Stratila:Zsido:1975,Arveson:1976,Stratila:Zsido:1977:1979,vanDaele:1978,Pedersen:1979,Stratila:1981,Kadison:Ringrose:1983:1986:1991:1992,Sunder:1986,Tomiyama:1987,Fell:Doran:1988,Murphy:1990,Sakai:1991,WeggeOlsen:1993,Palmer:1994,Connes:1994,Evans:Kawahigashi:1998,Takesaki:2003,Blackadar:2005}, there is no self-contained text covering the theory of noncommutative integration over arbitrary $W^*$-algebras.\footnote{The canonical references for the theory of $W^*$-algebras are \cite{Sakai:1971,Stratila:1981,Takesaki:2003}, and we definitely recommend them.} Our exposition is intended to fill this gap.\footnote{On this occasion, we fill some gaps in the literature, clarifying the functorial character of some constructions. The category theoretic formulation of equivalences \eqref{Sakai.Kosaki.duality}, \eqref{sherman.equivalence}, and the diagrams \eqref{ctft.cat.diag}, \eqref{ncLp.cat.diag} seem to be new.} The theory of $W^*$-algebras is presented in an algebraic fashion (with elimination of dependence on the Hilbert spaces), and with downplaying the usual noncommutative topological ($C^*$-algebraic) point of view in favour of exposition of the role played by predualisation and relative modular theory. Our presentation of noncommutative integration covers large parts of the theory that are otherwise scattered among many papers and books. In order to provide an overview that is detailed but also has a reasonable size, we have omitted proofs. Yet, we compensate for this by discussion of notions and results under consideration and by providing quite detailed references to original papers.

The new mathematical results contained in this Chapter are: the construction of the family of noncommutative Orlicz spaces $L_\Upsilon(\N)$ canonically associated with arbitrary $W^*$-algebra $\N$ and arbitrary Orlicz function $\Upsilon$, the construction of the family of commutative $L_p(\boole)$ spaces canonically associated with arbitrary mcb-algebra\footnote{That is, the Dedekind--MacNeille complete boolean algebras admitting a strictly positive semi-finite countably additive measure (see Section \ref{comm.integr.section} for definitions of these terms).} $\boole$ for $p\in\,[1,\infty[$, as well as Propositions \ref{proposition.L.gamma.is.abstract}-\ref{huge.diagram.int.theory}. The category theoretic formulation of equivalences \eqref{bLinf.B.equiv}-\eqref{Linfc.mcBc.equiv}, \eqref{mcBc.locMesp.duality}, \eqref{mcBc.hypso.duality}, \eqref{mcBIso.hypsh.duality}, \eqref{Stratila.Zsido.duality}, \eqref{long.segal.equivalence}, and the diagram \eqref{commutative.integration.diagram} are also new. These results allow us to establish an explicit relationship between noncommutative and commutative integration theories without passing to topological representations (see Section \ref{integr.compar.section}), and to provide the category theoretic description of the main structures of integration theory in terms of the diagram \eqref{canonical.integration.theory.big.stuff}.
\fi 
\section{Algebras and functionals\label{alg.fun.section}}
\ifvarwstarintcompile 
The theory of abstract $C^*$- and $W^*$- algebras emerged from the works of von Neumann and Murray \cite{vonNeumann:1930:algebra,Murray:vonNeumann:1936,vonNeumann:1936:topology,Murray:vonNeumann:1937,vonNeumann:1943,Murray:vonNeumann:1943,vonNeumann:1949} on von Neumann algebras, and the works of Gel'fand, Na\u{\i}mark, Ra\u{\i}kov and Shilov on Banach algebras and $C^*$-algebras \cite{Gelfand:1939,Gelfand:1941,Gelfand:1941:Ideale,Gelfand:Shilov:1941,Gelfand:Naimark:1943,Gelfand:Raikov:1943,Raikov:1946,Gelfand:Raikov:Shilov:1946,Gelfand:Naimark:1948}. The most important results obtained at that early stage (up to 1950) were von Neumann's reduction theorem \cite{vonNeumann:1949} allowing to represent separable all von Neumann algebras in terms of factor von Neumann algebras, Gel'fand--Na\u{\i}mark \cite{Gelfand:Naimark:1943} representation theory for commutative $C^*$-algebras, and its generalisation to noncommutative $C^*$-algebras by Segal \cite{Segal:1947:irreducible}. Among key results obtained later (up to early 1970s) within the frames of a general theory, it is necessary to mention Sakai's characterisation of $W^*$-algebras in purely algebraic terms \cite{Sakai:1956}, development of the theory of weights by Combes and Pedersen \cite{Combes:1966,Combes:1967,Combes:1968,Combes:1970,Combes:1971,Combes:1971:esperances,Pedersen:1966,Pedersen:1968,Pedersen:1969:III,Pedersen:1969:IV,Pedersen:1971}, and development of the theory of integral decomposition of functionals on $C^*$-algebras by Tomita \cite{Tomita:1953,Tomita:1956}, Ruelle \cite{Ruelle:1966,Ruelle:1970:integral}, Sakai \cite{Sakai:1965:central,Sakai:1971}, Wils \cite{Wils:1968,Wils:1969,Wils:1971} and others \cite{Segal:1951:decompositions,Kastler:Robinson:1966,Skau:1975} using the Choquet theory of boundary integrals \cite{Choquet:1956:I,Choquet:1956:II,Choquet:1956:III,Choquet:1956:IV,Bishop:deLeeuw:1959,Choquet:1960}.
\else 
The theory of abstract $C^*$- and $W^*$- algebras emerged from the works of von Neumann and Murray \cite{vonNeumann:1930:algebra,Murray:vonNeumann:1936,vonNeumann:1936:topology,Murray:vonNeumann:1937,vonNeumann:1943,Murray:vonNeumann:1943,vonNeumann:1949} on von Neumann algebras, and the works of Gel'fand, Na\u{\i}mark, Ra\u{\i}kov and Shilov on Banach algebras and $C^*$-algebras \cite{Gelfand:1939,Gelfand:1941,Gelfand:1941:Ideale,Gelfand:Shilov:1941,Gelfand:Naimark:1943,Gelfand:Raikov:1943,Raikov:1946,Gelfand:Raikov:Shilov:1946,Gelfand:Naimark:1948}. The most important results obtained at that early stage (up to 1950) were von Neumann's reduction theorem \cite{vonNeumann:1949} allowing to represent all separable von Neumann algebras in terms of factor von Neumann algebras, Gel'fand--Na\u{\i}mark \cite{Gelfand:Naimark:1943} representation theory for commutative $C^*$-algebras, and its generalisation to noncommutative $C^*$-algebras by Segal \cite{Segal:1947:irreducible}. Among key results obtained later (up to early 1970s) within the frames of a general theory, it is necessary to mention Sakai's characterisation of $W^*$-algebras in purely algebraic terms \cite{Sakai:1956}, development of the theory of weights by Combes and Pedersen \cite{Combes:1966,Combes:1967,Combes:1968,Combes:1970,Combes:1971,Combes:1971:esperances,Pedersen:1966,Pedersen:1968,Pedersen:1969:III,Pedersen:1969:IV,Pedersen:1971}, and development of the theory of integral decomposition of functionals on $C^*$-algebras by Tomita \cite{Tomita:1953,Tomita:1956}, Ruelle \cite{Ruelle:1966,Ruelle:1970:integral}, Sakai \cite{Sakai:1965:central,Sakai:1971}, Wils \cite{Wils:1968,Wils:1969,Wils:1971} and others \cite{Segal:1951:decompositions,Kastler:Robinson:1966,Skau:1975} using the Choquet theory of boundary integrals \cite{Choquet:1956:I,Choquet:1956:II,Choquet:1956:III,Choquet:1956:IV,Bishop:deLeeuw:1959,Choquet:1960}.
\fi 
\subsection{$C^*$-algebras\label{C.star.section}}
A \df{$C^*$-algebra} \cite{Gelfand:Naimark:1943} is defined as an algebra $\rpktarget{CSTAR}\C$ over the field $\CC$ with unit $\rpktarget{UNIT}\II$, equipped with:
\begin{enumerate}
\item[(i)] an operation $\rpktarget{STAR}^*:\C\ra\C$ satisfying
\begin{equation}
        (xy)^*=y^*x^*,\;\;(x+y)^*=x^*+y^*,\;\;(x^*)^*=x,\;\;(\lambda x)^*=\lambda^*x^*\;\;\forall x,y\in\C\;\forall\lambda\in\CC,
\label{star.operation.def}
\end{equation}
with the action of $^*$ on $\lambda\in\CC$ given by standard complex conjugation,
\begin{equation}
        ^*:\CC\ni\lambda=(\re(\lambda)+\ii\cdot\im(\lambda))\mapsto\lambda^*:=(\re(\lambda)-\ii\cdot\im(\lambda))\in\CC,
\end{equation}
\item[(ii)] a norm map $\n{\cdot}:\C\ra[0,+\infty[$ such that $\C$ is a Banach space \cite{Banach:1922} over $\CC$ with norm $\n{\cdot}$, and 
\begin{equation}
        \n{x^*x}=\n{x}^2\;\;\forall x\in\C,
        \label{C.star.condition}
\end{equation}
or, equivalently \cite{Ono:1959,Glimm:Kadison:1960},
\begin{equation}
        \n{x^*x}=\n{x^*}\n{x}\;\;\forall x\in\C.
        \label{B.star.condition}
\end{equation}
\end{enumerate}
The properties $\n{\II}=1$ and $\n{x^*y}\leq\n{x}\n{y^*}\;\forall x,y\in\C$ follow from \eqref{C.star.condition} \cite{Araki:Elliott:1973} (see also \cite{Magyar:Sebestyen:1985}), ensuring continuity of $^*$ and of multiplication in the topology generated by $\n{\cdot}$. Algebra over $\CC$ that is equipped only with the map $^*$ defined as above (but with no norm) is called a \df{$*$-algebra}, while the algebra over $\CC$ that is equipped only with the norm $\n{\cdot}$ such that it becomes a Banach space (but with no $^*$ map) is called a \df{Banach algebra} \cite{Nagumo:1936}. A $*$-algebra equipped with a structure of a topological vector space such that the operations $\cdot$ and $^*$ are continuous will be called \df{topological} \cite{vanDantzig:1931}. (It is possible to develop a theory of $C^*$-algebras without unit $\II$, as well as the theory of \df{Banach $*$-algebras}, defined as $*$-algebras that are also Banach spaces and satisfy $\n{x^*}=\n{x}$ and $\n{xy}\leq\n{x}\n{y}$ instead of \eqref{C.star.condition}. Such theories, however, will not be useful for our purposes.) The most remarkable property of $C^*$-algebras follows from the condition \eqref{C.star.condition}: their norm structure is uniquely determined by their algebraic structure \cite{Gelfand:Naimark:1943}, which follows from the property\footnote{For the definition of invertibility see below, for more discussion of $\sup$ see Section \ref{comm.integr.section}.} 
\begin{equation}
        \n{x}=\sqrt{\sup\left\{\ab{\lambda}\mid\lambda\in\CC,\;(x^*x-\lambda\II)\mbox{ is not invertible}\right\}}\;\;\forall x\in\C.
\label{norm.determined.algebraically}
\end{equation}

Let $\lambda_1,\lambda_2\in\CC$ be arbitrary. A \df{$*$-homomorphism} of $*$-algebras $\C_1$ and $\C_2$ is defined as a map $\varsigma:\C_1\ra\C_2$ such that 
\begin{align}
                \varsigma(\lambda_1x_1+\lambda_2x_2)&=\lambda_1\varsigma(x_1)+\lambda_2\varsigma(x_2),\label{homo.lin}\\
        \varsigma(x_1x_2)&=\varsigma(x_1)\varsigma(x_2),\label{homo.multi}\\
        \varsigma(x^*)&=\varsigma(x)^*,\label{homo.star}
\end{align}
for all $x,x_1,x_2\in\C_1$. From \eqref{norm.determined.algebraically} it follows that every $*$-homomorphism $\varsigma$ of a $C^*$-algebra $\C$ preserves its topological structure, that is, it is continuous with respect to the norm $\n{\cdot}$ of $\C$, and 
\begin{equation}
        \n{\varsigma(x)}\leq\n{x}\;\forall x\in\C.
\label{Cstarhomo.norm.inequality}
\end{equation}
An \df{antilinear $*$-homomorphism} is defined as a map $\varsigma:\C_1\ra\C_2$ satisfying \eqref{homo.multi}, \eqref{homo.star}, and
\begin{equation}
        \varsigma(\lambda_1x_1+\lambda_2x_2)=\lambda_1^*\varsigma(x_1)+\lambda_2^*\varsigma(x_2)\;\;\forall x_1,x_2\in\C_1.
\end{equation}
A \df{Jordan $*$-homomorphism} is defined as a map $\varsigma:\C_1\ra\C_2$ satisfying \eqref{homo.lin}, \eqref{homo.star}, and
\begin{equation}
        \varsigma(x_1x_2+x_2x_1)=\varsigma(x_1)\varsigma(x_2)+\varsigma(x_2)\varsigma(x_1)\;\;\;\forall x_1,x+2\in\C_1,
\end{equation}
or, equivalently,
\begin{equation}
        \varsigma(xx)=\varsigma(x)\varsigma(x)\;\;\;\forall x\in\C_1.
\end{equation}
A \df{$*$-antihomomorphism} is defined as a map $\varsigma:\C_1\ra\C_2$ satisfying \eqref{homo.lin}, \eqref{homo.star}, and
\begin{equation}
        \varsigma(x_1x_2)=\varsigma(x_2)\varsigma(x_1)\;\;\;\forall x_1,x_2\in\C_1.
\end{equation}
Given any $C^*$-algebra $\C$ the \df{opposite algebra} $\rpktarget{OPPOSITE}\C^o$ is defined as a $C^*$-algebra which has the same elements and norm as $\C$, but the opposite multiplication maps (that is, if $x,y\in\C$ and $\lambda_1,\lambda_2\in\CC$, then $x^o,y^o\in\C^o$ satisfy: $(\lambda_1x+\lambda_2y)^o=\lambda_1x^o+\lambda_2y^o$, $(x^*)^o=(x^o)^*$,  $(xy)^o=y^ox^o$). Hence, given $C^*$-algebras $\C_1$ and $\C_2$, one can identify $*$-homomorphisms $\varsigma:\C_1\ra\C_2$ with $*$-antihomomorphisms $\varsigma^o:\C_1^o\ra\C_2$ by $\varsigma^o(x^o)=\varsigma(x)\;\forall x\in\C_1$. A $*$-homomorphism $\varsigma:\C_1\ra\C_2$ of $C^*$-algebras $\C_1$ and $\C_2$ is called: \df{unital} if{}f $\varsigma(\II)=\II$; a \df{$*$-isomorphism} if{}f
\begin{equation}
        0=\ker(\varsigma):=\{x\in\C_1\mid\varsigma(x)=0\}.
\label{star.iso.condition}
\end{equation}
Condition \eqref{star.iso.condition} determines analogously the notions of \df{antilinear $*$-isomorphism}, \df{Jordan $*$-iso\-mor\-ph\-ism}, and \df{$*$-antiisomorphism}. If $\B_1$ and $\B_2$ are Banach spaces, then a linear function $f:\B_1\ra\B_2$ is called: \df{norm preserving} if{}f $\n{f(x)}_{\B_2}=\n{x}_{\B_1}$ $\forall x\in\B_1$; an \df{isometry} if{}f it is norm preserving and continuous with respect to norm topologies of $\B_1$ and $\B_2$; an \df{isometric isomorphism} if{}f it is a bijective isometry. Every isometry is injective, so $f$ is an isometric isomorphism if{}f it is a surjective isometry. If $\C_1$ and $\C_2$ are $C^*$-algebras, then every $*$-isomorphism $\varsigma:\C_1\ra\C_2$ is an isometric isomorphism. The same is true for antilinear $*$-isomorphism, Jordan $*$-isomorphism, and $*$-antiisomorphism. A \df{$*$-automorphism} of a $C^*$-algebra $\C$ is defined as a $*$-isomorphism from $\C$ to $\C$. Every $*$-automorphism $\alpha$ of a $C^*$-algebra satisfies $\n{\alpha(x)}=\n{x}\;\forall x\in\C$. The set of all $*$-automorphisms of a given $C^*$-algebra $\C$ is a group, denoted by $\rpktarget{AUT}\Aut(\C)$. Given some family $\{\alpha_\iota\}\subseteq\Aut(\C)$, with $\iota$ ranging over some set $I$, the \df{fixed point} $*$-subalgebra $\C_\alpha$ of $\C$ with respect to $\{\alpha_\iota\}$ is defined as 
\begin{equation}\rpktarget{FIXEDPOINT}
        \C_\alpha:=\{x\in\C\mid\alpha_\iota(x)=x\;\;\forall\iota\in I\}.
\end{equation}

An element $x$ of a $*$-algebra $\C$ is called: \df{normal} if{}f $xx^*=x^*x$ \cite{Toeplitz:1918}; \df{self-adjoint} if{}f $x=x^*$; \df{anti-self-adjoint} if{}f $x=-x^*$; \df{isometry} if{}f $x^*x=\II$; \df{unitary} if{}f $xx^*=x^*x=\II$ \cite{Antonne:1902,Schur:1909}; \df{projection} if{}f $x=x^*=xx=:x^2$; \df{positive} if{}f $\exists y\in\C$ such that $x=y^*y$; \df{square root} if{}f it is positive and $\exists!$ positive $y\in\C$ such that $y^2=x$ (which is denoted by $y=:x^{1/2}$); \df{partial isometry} if{}f $x^*x$ is a projection; \df{invertible} if{}f $\exists y\in\C$ such that $xy=\II=yx$ (this is denoted by $y=:x^{-1}$). For a given $x\in\C$, the following implications hold:
\begin{equation}
        \xymatrix{
        x \mbox{ is unitary}
        \ar@{=>}[r]
        \ar@{=>}[d]
        &
        x \mbox{ is an isometry}
        \ar@{=>}[r]
        \ar@{=>}[d]
        &       
        x \mbox{ is a partial isometry}
        &
        x \mbox{ is projection}
        \ar@{=>}[l]
        \ar@{=>}[d]
        \\
        x \mbox{ is invertible}
        &
        x \mbox{ is normal}
        &
        x \mbox{ is self-adjoint}
        \ar@{=>}[l]
        &
        x \mbox{ is positive}.
        \ar@{=>}[l]
        }
\label{implications.elements}
\end{equation}
Moreover, $(x^{-1})^{-1}=x$, $(xy)^{-1}=y^{-1}x^{-1}$, and $(x^*)^{-1}=(x^{-1})^*$. If $x$ is a partial isometry, then $xx^*$ is a projection. The \df{absolute value} of $x\in\C$ is defined as $\ab{x}:=(x^*x)^{1/2}\in\C$, and it is always positive. If $x$ is a positive element of a $*$-algebra $\C$, then one writes $x\geq0$. A partial order relation $\rpktarget{PORDER}\geq$ on elements of $*$-algebra $\C$ is defined by\footnote{As shown in \cite{Fukamiya:1952,Kelley:Vaught:1953,Fukamiya:Misonou:Takeda:1954}, the above definition of partial order $x\geq y$ for $x,y\in\C$ on an arbitrary $C^*$-algebra $\C$ is equivalent to the partial order defined by the conditions
\begin{enumerate}
\item[(i)] $x\geq 0\iff x=x^*$ and $\sp(x)\subseteq\RR^+$,
\item[(ii)] $x\geq y\iff x-y\geq 0$.
\end{enumerate}
} 
\begin{equation}
        x\geq y\iff(x-y)\geq 0,\;\mbox{for}\;x,y\in\C.
\label{Cstar.order}
\end{equation}
Given a $*$-algebra $\C$, the set of all positive elements of $\C$ is denoted $\rpktarget{POSITIV}\C^+$, the set of all projections of $\C$ is denoted $\rpktarget{PROJ}\Proj(\C)$, the set of all self-adjoint elements of $\C$ is denoted $\rpktarget{SA}\C^\sa$, the set of all anti-self-adjoint elements of $\C$ is denoted $\rpktarget{ASA}\C^\asa$, the set of all unitary elements of $\C$ is denoted $\rpktarget{UNI}\C^\uni$, while the set of all invertible elements of $\C$ is denoted $\C^\inv$. For any $*$-algebra $\C$, the set $\C^+$ is a cone that is pointed ($\C^+\cap(-\C^+)=\{0\}$), convex, closed in norm topology, and spans linearly $\C$ ($\Span_\CC\C^+=\C$) \cite{Fukamiya:1952,Kelley:Vaught:1953}. As follows from \eqref{implications.elements}, $\Proj(\C)\subseteq\C^+\subseteq\C^\sa$ and $\C^\inv\subseteq\C^\uni$. The set $\C^\sa$ equipped with partial order $\geq$ is a lattice if{}f $\C$ is commutative \cite{Sherman:1951}.

If $I$ is an ideal in a $C^*$-algebra $\C$ that is closed in norm topology of $\C$, then $\C/I$ is also a $C^*$-algebra \cite{Segal:1949:ideals,Kaplansky:1951:rings}. The $C^*$-algebra $\C$ is said to be \df{generated} by a set $Y\subseteq\C$ if $\C$ is equal to completion of the algebra of polynomials of elements of $Y$ in the topology of a norm $\n{\cdot}$ of $\C$. In particular, if $\C_2\subseteq\C_1$ is a $C^*$-algebra generated by the set $\{x,x^*\}$, where $x,x^*\in\C_1$, then $(x-\lambda\II)^{-1}\subseteq\C_2$. Every $C^*$-algebra is generated by $\C^\sa$, and if $\C$ contains $\II$ (we consider only such cases), then it is generated by the set of all its unitary elements.

If $\C$ is a $C^*$-algebra and $Y\subseteq\C$, then a \df{right annihilator} of $Y$ and a \df{two sided annihilator} of $Y$ are defined, respectively, as \cite{Dieudonne:1942,Mackey:1945}
\begin{equation}
        \ann_{\mathrm{r}}(Y):=\{x\in\C\mid yx=0\;\forall y\in Y\},\mbox{ and }
        \ann(Y):=\{x\in\C\mid yx=0=xy\;\forall y\in Y\},
\end{equation}
with $\ann_{\mathrm{r}}(\{y\})\equiv\ann_{\mathrm{r}}(y)$ and $\ann(\{y\})\equiv\ann(y)$. A $C^*$-algebra $\C$ is called: \df{\c{S}tr\u{a}til\u{a}--Zsid\'{o}} \cite{Zsido:1973,Stratila:Zsido:1977:1979} if{}f
\begin{equation}
        \forall x\in\C^+\;\;\forall\lambda_1,\lambda_2\in]0,\infty[\;\;\exists P\in\Proj(\C)\;\;\;\lambda_1\leq\lambda_2\limp(xP\geq\lambda_1P\mbox{ and }x(1-P)\leq\lambda_2(1-P));
\end{equation}
\df{Rickart} \cite{Rickart:1946} if{}f
\begin{equation}
        \forall x\in\C\;\;\exists P\in\Proj(\C)\;\;\ann_{\mathrm{r}}(x)=P\C:=\bigcup_{x\in\C}\{Px\};
\end{equation}
an \df{$AW^*$-algebra} \cite{Kaplansky:1951,Kaplansky:1952} if{}f
\begin{equation}
        \forall Y\subseteq\C\;\;\exists P\in\Proj(\C)\;\;Y\neq\varnothing\;\limp\;\ann_{\mathrm{r}}(Y)=P\C.
\end{equation}
Each $AW^*$-algebra is a Rickart $C^*$-algebra, and each Rickart $C^*$-algebra is a \c{S}tr\u{a}til\u{a}--Zsid\'{o} $C^*$-algebra \cite{Stratila:Zsido:1977:1979}. A nonunital $C^*$-subalgebra $\C_2$ of a $C^*$-algebra $\C_1$ is called \df{hereditary} if{}f
\begin{equation}
        0\leq x\leq y\;\limp\;x\in\C_2\;\;\forall x\in\C_1\;\forall y\in\C_2.
\end{equation}
If $Y$ is a subset of $C^*$-algebra $\C$, then the hereditary $C^*$-subalgebra of $\C$ generated by $Y$ is denoted $\her_\C(Y)$. A $*$-homomorphism $\varsigma:\C_1\ra\C_2$ of $C^*$-algebras $\C_1$ and $\C_2$ will be called \df{complete} if{}f
\begin{equation}
        \ann(\varsigma(\C_3))=\her_{\C_2}(\varsigma(\ann(\C_3)))
\end{equation}
for every hereditary nonunital $C^*$-subalgebra $\C_3$ of $\C_1$ \cite{Pedersen:1986}.
\subsection{Functionals} 
Let $\B$ be a Banach space over $\KK$, where $\KK\in\{\RR,\CC\}$. Recall that the \df{norm topology} on $\B$ is defined by the set of neighbourhoods
\begin{equation}
        N_{\epsilon}(x):=\{y\in\B\mid\n{x-y}<\epsilon\},
\end{equation}
with $\epsilon>0$ and $x\in\B$. Any function $\phi:\B\ra\KK$ is called a \df{functional}. It is additionally called: \df{$\KK$-linear} if{}f 
\begin{equation}
        \phi(\lambda_1 x+\lambda_2 y)=\lambda_1\phi(x)+\lambda_2\phi(y)\;\forall\lambda_1,\lambda_2\in\KK;
\end{equation}
\df{norm continuous} if{}f it is continuous in the topology induced by the norm of $\B$. 
A \df{Banach dual} \cite{Hahn:1927} of $\B$ is defined as a set $\rpktarget{BANACH}\B^\banach$ of all norm continuous $\KK$-linear functionals $\phi$ on $\B$, equipped with the norm
\begin{equation}
        \n{\phi}:=\sup\{\ab{\phi(x)}\mid\n{x}\leq1,\;x\in\B\}.
\label{norm.on.banach.dual}
\end{equation}
The set $\B^\banach$ is complete in the topology generated by this norm, and so is a Banach space. For every Banach space $\B$ there exists a \df{canonical embedding} $j:\B\ra\B^\banach{}^\banach$, defined by 
\begin{equation}
        (j(x))(\phi)=\phi(x)\;\;\;\forall\phi\in\B^\banach\;\;\forall x\in\B,
\end{equation}
which is an isometry \cite{Hahn:1927}. The \df{weak-$\star$ topology} on $\B^\banach$ is defined as the weakest topology on $\B^\banach$ such that the $\KK$-linear functions $\B^\banach\ni\omega\mapsto\omega(x)\in\KK$ are continuous in this topology for every $x\in\B$. The neighbourhoods of the weak-$\star$ topology on $\B^\banach$ have the form
\begin{equation}
        N_{\epsilon,\{x_k\}}(\phi):=
        \{
                \omega\in\B^\banach\mid
                        \ab{\omega(x_k)-\phi(x_k)}<\epsilon      
        \},
\end{equation}
where $\{x_k\}\subseteq\B$, $k\in\{1,\ldots,m\}$, $m\in\NN$, $\epsilon>0$. This topology is locally convex, but, in general, it is not first countable.

If there exists a Banach space $\B_\star$ that satisfies $\rpktarget{PREDUAL}(\B_\star)^\banach=\B$, then it is called a \df{predual} of $\B$ \cite{Bourbaki:1938}, and it can be embedded as a subset of the space $\B^\banach$ of $\KK$-linear functionals on $\B$ by means of a canonical embedding map $j:\B_\star\ra\B^\banach$. In general, a Banach space can possess no predual or it may possess different preduals which are not isometrically isomorphic (see \cite{Godefroy:1989} for a review of this issue).

A functional $\omega:\C\ra\CC$ on a $C^*$-algebra $\C$ is called: \df{linear} if{}f it is $\CC$-linear; \df{positive} if{}f $\omega(x^*x)\geq0$; \df{faithful} if{}f $\omega(x^*x)=0\limp x=0$; \df{tracial} if{}f $\omega(xy)=\omega(yx)$; \df{normalised} if{}f $\omega(\II)=1$; \df{self-adjoint} if{}f $\omega^*=\omega$, where $\omega^*(x):=(\omega(x^*))^*$; \df{normal} if{}f $\omega(\sup\filter)=\sup_{x\in\filter}\omega(x)$ for each directed filter $\filter\subseteq\C^+$ with the upper bound $\sup\filter$, or, equivalently, if{}f $\omega(\sup_\iota\{x_\iota\})=\sup_\iota\{\omega(x_\iota)\}$ for every uniformly bounded ($\sup_\iota\n{x_\iota}<\infty$) and increasing ($\iota_1\geq\iota_2\iff x_{\iota_1}\geq x_{\iota_2}$) net $\{x_\iota\}\subseteq\C^+$. Every positive linear functional is self-adjoint. If $\omega,\phi$ are self-adjoint linear functionals on $\C$ and $(\omega-\phi)$ is positive, then one writes $\phi\leq\omega$, and says that $\omega$ \df{majorises} $\phi$ (this is equivalent to $\phi(x)\leq\omega(x)\;\forall x\in\C^+$). Self-adjoint functionals are uniquely determined by their restriction to the self-adjoint elements of $\C$. Moreover, every linear functional $\omega$ on $\C$ can be represented in the form $\omega=\omega_1+\ii\omega_2$, with linear self-adjoint $\omega_1$ and $\omega_2$. The norm of a linear functional is defined by \eqref{norm.on.banach.dual},
\begin{equation}
        \n{\omega}:=
        \sup\{
                \ab{\omega(x)}\mid
                        \n{x}\leq1,\;
                        x\in\C
        \}.
\label{dual.functional}
\end{equation}
Each positive linear functional on a $C^*$-algebra satisfies $\n{\omega}=\omega(\II)$, is continuous, and self-adjoint (hence, it can be completely determined by the values it takes on the self-adjoint elements of the algebra).

If $\C$ is a $C^*$-algebra, then the space $\C^\banach$ is a Banach space with a norm given by \eqref{dual.functional}. The space of all positive linear functionals on $\C$ is denoted $\C^\banach{}^+$ and it is a convex cone in $\C^\banach$ that is closed in the weak-$\star$ topology on $\C^\banach$. The space of all faithful elements of $\C^{\banach+}$ is denoted $\rpktarget{ZERO}\C^{\banach+}_0$. The space of all self-adjoint elements of $\C^\banach$ is denoted $(\C^\banach)^\sa$. The space of all normalised elements of $\C^{\banach+}$ is denoted $\rpktarget{SCAL}\Scal(\C)$, and the notation $\rpktarget{SZERO}\Scal_0(\C):=\Scal(\C)\cap\C_0^{\banach+}$ will be used. An element $\phi\in\C^{\banach+}$ is called: \df{pure} if{}f 
\begin{equation}
\forall\psi\in\C^{\banach+}\;\;\;\psi\leq\phi\;\limp\;\;\exists\lambda\in\RR^+\;\psi=\lambda\phi;
\end{equation}
\df{mixed} if{}f it is not pure. An element $x$ of the convex set $X$ is called an \df{extremal point} if{}f
\begin{equation}
        (\exists x_1,x_2\in X\;\exists\lambda\in\,]0,1[\;\;\;x=\lambda x_1+(1-\lambda)x_2\;\;\mbox{and}\;\;x_1\neq x_2)\mbox{ is false}.
\end{equation}
From the weak-$\star$ compactness theorem \cite{Banach:1929,Alaoglu:1940,Shmulyan:1940,Kakutani:1940,Dieudonne:1942} it follows that $\Scal(\C)$ is a convex subset of $\C^\banach$ which is compact in weak-$\star$ topology on $\C^\banach$ \cite{Segal:1947:postulates}. Any $\omega\in\Scal(\C)$ is pure if{}f it is an extremal point of $\Scal(\C)$. By the Kre\u{\i}n--Milman theorem \cite{Krein:Milman:1940,Yosida:Fukamiya:1941}, if $\B$ is a Banach space and $X\subseteq\B^\banach$ is convex and compact in weak-$\star$ topology on $\B^\banach$, then it is a closure in weak-$\star$ topology on $\B^\banach$ of the set of all finite convex combinations of its extremal points. Hence, every element of $\Scal(\C)$ can be obtained as a finite convex combination of pure elements of $\Scal(\C)$.

If for a given $C^*$-algebra $\C$ there exists a predual $\C_\star$, then it is a unique predual of $\C$, and in such case $\C$ is called a \df{$W^*$-algebra} \cite{Sakai:1956}. Every $W^*$-algebra is an $AW^*$-algebra \cite{Kaplansky:1951}, but converse is false \cite{Dixmier:1951}. If $\rpktarget{N}\N$ is a $W^*$-algebra, then any $\omega\in\N^{\banach+}$ is normal if{}f it is  continuous in the weak-$\star$ topology on $\C$. In what follows, the term `weak-$\star$ topology' will refer by default to weak-$\star$ topology on a $W^*$-algebra $\N$ with respect to its predual $\N_\star$. Other uses of the weak-$\star$ topology (e.g., on $\C^\banach$ with respect to a $C^*$-algebra $\C$) will be always explicitly stated. The predual $\N_\star$ of a $W^*$-algebra $\N$ is a norm closed vector subspace of $\N^\banach=(\N_\star)^\banach{}^\banach$. The norm on $\C_\star$ coincides with the norm on $\C^\banach$. One defines also 
\begin{equation}
        \C_\star^+:=\C^{\banach+}\cap\C_\star,\;
        \rpktarget{ONE}\C_{\star1}^+:=\Scal(\C)\cap\C_\star,\;
        \C^+_{\star0}:=\C^{\banach+}_0\cap\C_\star,\;
        \C^+_{\star01}:=\Scal(\C)\cap\C^+_{\star0},\;
        \C_\star^\sa:=(\C^\banach)^\sa\cap\C_\star,
\end{equation}
and the following embeddings hold:
\begin{equation}
\xymatrix{
        \C^+_{\star01}
        \ar@{^{(}->}[r]
        \ar@{^{(}->}[rd]&
        \C^+_{\star1}
        \ar@{^{(}->}[r]&
        \C^+_\star
        \ar@{^{(}->}[r]&
        \C^\sa_\star
        \ar@{^{(}->}[r]&
        \C_\star.\\
        &
        \C^+_{\star0}
        \ar@{^{(}->}[r]
        \ar@{^{(}->}[ru]&
        \C_{\star0}
        \ar@{^{(}->}[ru]&&
}
\end{equation}
Each element of $\C^+_\star$ will be called a \df{state}, while each element of $\C^+_{\star1}$ will be called a \df{normalised state}.\footnote{This terminological shift with respect to more traditional terminology (which defines a `state' as an element of $\Scal(\C)$ for any $C^*$-algebra $\C$, see \cite{Segal:1947:postulates,Segal:1947:irreducible,Emch:1972}) reflects the change of perspective advocated in the present work: according to it, the space $\N_\star^+$ over a $W^*$-algebra $\N$ is a more fundamental object of interest than the space $\Scal(\C)$ over a $C^*$-algebra $\C$.} A linear function between $W^*$-algebras will be called \df{normal} if{}f it is continuous with respect to their weak-$\star$ topologies. While every $*$-homomorphism of $C^*$-algebras is continuous with respect to their norm topologies, from the uniqueness of a predual of a $W^*$-algebra it follows that every $*$-isomorphism of $W^*$-algebras is also continuous with respect to their weak-$\star$ topologies. The same holds for every Jordan $*$-isomorphism of $W^*$-algebras. By a theorem of Kadison \cite{Kadison:1951}, for every Jordan $*$-isomorphism $\varsigma:\N_1\ra\N_2$ of $W^*$-algebras $\N_1$ and $\N_2$ there exists a $*$-isomorphism $\varsigma_a:\N_1\ra\N_2$ and a $*$-antiisomorphism $\varsigma_b:\N_1\ra\N_2$ such that $\varsigma$ is a `sum' of $\varsigma_a$ and $\varsigma_b$ in the sense that: $\varsigma=\varsigma_a+\varsigma_b$ as a linear map, and there exist $W^*$-algebras $\N_{1a},\N_{1b}\subseteq\N_1$ and $\N_{2a},\N_{2b}\subseteq\N_2$ such that $\N_1=\N_{1a}\oplus\N_{1b}$, $\N_{2a}\oplus\N_{2b}$, $\varsigma_a:\N_{1a}\ra\N_{1b}$ is a $*$-isomorphism, $\varsigma_b:\N_{1b}\ra\N_{2b}$ is a $*$-antiisomorphism, $\varsigma_a(\N_{1b})=0$, and $\varsigma_b(\N_{1a})=0$. This means that for $W^*$-algebras the notion of $*$-isomorphism is strictly stronger than the notion of Jordan $*$-isomorphism. The uniqueness of a predual allows to define two additional topologies on a $W^*$-algebra $\N$: the \df{ultrastrong topology}, defined by the family of semi-norms on $\N$ given by
\begin{equation}
        \N\times\N_\star^+\ni(x,\phi)\mapsto\n{x}_\phi:=(\phi(x^*x))^{1/2}\in\RR^+,
\end{equation}
and the \df{ultrastrong-$\star$ topology}, provided by the family of semi-norms $\{(\n{x}^2_\phi+\n{x^*}^2_\phi)^{1/2}\}$.\footnote{When considered in the context of $W^*$-algebras $\N\subseteq\BH$, these topologies are usually called \textit{$\sigma$-strong} and \textit{$\sigma$-strong-$\star$}, respectively. This terminology is intended to avoid the overuse of the symbol $\sigma$ (which will be reserved for modular automorphisms), and also to stress independence of the Hilbert space structure.} An example of a $W^*$-algebra is an algebra $\rpktarget{BH}\BH$ of all bounded linear operators $\H\ra\H$ on any Hilbert space $\H$ (the predual $\schatten_1(\H)=\BH_\star$ and other $\schatten_p(\H)$ spaces will be discussed in Section \ref{integration.trace.section}).

A generalisation of the concept of state is provided by the notion of \textit{weight} \cite{Dixmier:1957,Tomita:1959,Combes:1966,Combes:1967,Combes:1971,Pedersen:1966,Pedersen:1971}. A \df{weight} on a $C^*$-algebra $\C$ is defined as a function $\omega:\C^+\ra[0,+\infty]$ such that $\omega(0)=0$, $\omega(x+y)=\omega(x)+\omega(y)$, and $\lambda\geq0\limp\omega(\lambda x)=\lambda\omega(x)$, with the convention $0\cdot(+\infty)=0$. The domain of a weight $\omega$ can be extended by linearity to the topological $*$-algebra\rpktarget{mmm}
\begin{equation}
        \mmm_\omega:=
                \Span_\CC\{x^*y\mid
                        x,y\in\C,\;
                        \omega(x^*x)<\infty,\;
                        \omega(y^*y)<\infty\}=
                                                                \Span_\CC\{x\in\C^+\mid
                                                                                                \omega(x)<\infty\}
        \subseteq\C,
\label{extended.domain.of.definition.of.weight}
\end{equation}
while $\omega$ can be extended to a positive linear functional on $\mmm_\omega$, which coincides with $\omega$ on $\mmm_\omega\cap\C^+$. A weight is called: \df{normalised} if{}f $\omega(\II)=1$; \df{faithful} if{}f $\omega(x)=0\limp x=0$; \df{finite} if{}f $\omega(\II)<\infty$; \df{semi-finite} if{}f
\begin{equation}
        \forall x\in\C^+\;\exists y\in\C^+\;\;\omega(x)=\infty\;\limp\;(x\geq y\mbox{ and }0<\omega(y)<\infty);
\end{equation}
\df{trace} if{}f $\omega(u^*xu)=\omega(x)$ $\forall u\in\C^\uni$ $\forall x\in\C^+$ (this is equivalent to the condition $\omega(xx^*)=\omega(x^*x)\;\forall x\in\C$); \df{normal} if{}f $\omega(\sup\{x_\iota\})=\sup\{\omega(x_\iota)\}$ for any uniformly bounded increasing net $\{x_\iota\}\subseteq\C^+$. A space of all normal semi-finite weights on a $C^*$-algebra $\C$ is denoted $\rpktarget{WC}\W(\C)$, while the subset of all faithful elements of $\W(\C)$ is denoted $\rpktarget{WZEROC}\W_0(\C)$. The symbol $\tau$ will be used exclusively to denote traces. Every finite weight is also semi-finite. Every tracial element of $\C^{\banach+}$ is a finite trace on $\C^+$. Conversely, every finite trace on $\C^+$ can be uniquely extended by linearity to a tracial element of $\C^{\banach+}$. Given $\omega,\phi\in\W(\C)$, we say that: $\omega$ \df{majorises} $\phi$ if{}f $\phi(x)\leq\omega(x)\;\forall x\in\C^+$, which is denoted by $\rpktarget{PHILEQ}\phi\leq\omega$; $\phi$ is \df{dominated} by $\omega$ if{}f $\exists\lambda>0$ $\phi\leq\lambda\omega$. The equivalent conditions for a weight $\omega$ on a $W^*$-algebra $\N$ to be normal are \cite{Haagerup:1975:normal:weights}:
\begin{enumerate}
\item[1)] $\omega(x)=\sup_{\phi\in\N_\star^+}\{\phi(x)\mid\phi\leq\omega\}$,
\item[2)] $\omega$ is weakly-$\star$ lower semi-continuous (that is, the set $\{x\in\N^+\mid\phi(x)\leq\lambda\}$ is weakly-$\star$ closed for each $\lambda\in\RR^+$).
\end{enumerate}
For any normal weight on $\N$ there exists a family $\{\phi_i\}\subseteq\N_\star^+$ such that $\phi(x)=\sum_i\phi_i(x)$ $\forall x\in\N^+$ \cite{Pedersen:Takesaki:1973}. A weight $\phi$ on any $W^*$-algebra $\N$ is semi-finite if{}f a left ideal in $\N$ given by
\begin{equation}\rpktarget{nnn}
        \nnn_\phi:=\{x\in\N\mid\phi(x^*x)<\infty\}
\end{equation}
is weakly-$\star$ dense in $\N$, or, equivalently, if{}f $\mmm_\phi$ is weakly-$\star$ dense in $\N$. Every state is a finite normal weight, and every faithful state is a finite faithful normal state, hence the diagram
\begin{equation}
\xymatrix{
        \N^+_{\star0}
        \ar@{^{(}->}[r]
        \ar@{^{(}->}[d]&
        \W_0(\N)
        \ar@{^{(}->}[d]\\
        \N^+_\star
        \ar@{^{(}->}[r]&
        \W(\N)
}
\label{Wstar.states.weights.comm}
\end{equation}
commutes. Note that for $\omega\in\N_\star^+$ the normality is equivalent to weak-$\star$ continuity, which is stronger than weak-$\star$ lower semi-continuity of elements of $\W(\N)$.

Given a $W^*$-algebra $\N$, the set $\Proj(\N)$ is a Dedekind--MacNeille complete lattice with respect to the partial order relation $\leq$ (see Section \ref{comm.integr.section} for a definition). Moreover, $\Span_\CC\Proj(\N)$ is a norm dense subset of $\N$. For any $x\in\N$ a \df{left support} and a \df{right support} of $x$ are defined, respectively, as
\begin{align}
        \supp_L(x)&:=\II-\sup\{P\in\Proj(\N)\mid Px=x\},\\
        \supp_R(x)&:=\II-\sup\{P\in\Proj(\N)\mid xP=x\}.
\end{align}
If $y\in\N^\sa$, then $\rpktarget{SUPP}\supp_L(y)=\supp_R(y)=:\supp(y)$, which is called a \df{support} of $y$. Given $\omega\in\N^+_\star$,
\begin{align}
                \supp(\omega)&:=
                        \inf\{P\in\Proj(\N)\mid
                        \omega(P)=1\}
                        =\II-\sup\{P\in\Proj(\N)\mid\omega(P)=0\},\\
        \supp_\zentr(\omega)&:=
        \inf\{P\in\Proj(\N)\cap\zentr_\N\mid
                \omega(P)=1\},
\end{align}
are called, respectively, a \df{support} and a \df{central support} of $\omega$. If $\psi$ is a normal weight on $\N$, then there exist $P_1,P_2\in\Proj(\N)$ such that $\ker(\psi)=\N P_2$ and $\overline{\nnn_\psi}=\N P_1$, where bar denotes a closure in the weak-$\star$ topology. As a result, $\overline{\nnn_\psi}=P_1\N P_1$ and
\begin{equation}
        \psi(x)=\psi(P_1xP_1)=\psi((P_1-P_2)x(P_1-P_2))\;\;\forall x\in\N.
\end{equation}
The \df{support} of $\psi$ is defined as $\rpktarget{SUPP.PSI}\supp(\psi):=P_1-P_2$. For $\psi\in\W(\N)$,
\begin{equation}
        \supp(\psi)=\II-\sup\{P\in\Proj(\N)\mid\psi(P)=0\}.
\end{equation}
For $\omega,\phi\in\N_\star^+$ we will write $\rpktarget{ll}\omega\ll\phi$ if{}f $\supp(\omega)\leq\supp(\phi)$.\footnote{If $\N=\BH$ and $\omega=\tr(\rho_\omega\cdot)$ for $\rho_\omega\in\schatten_1(\H)^+$, then $\supp(\omega)=\ran(\rho_\omega)$, so for any $\phi=\tr(\rho_\phi\cdot)$ with $\rho_\phi\in\schatten_1(\H)^+$ one has $\omega\ll\phi$ if{}f $\ran(\rho_\omega)\subseteq\ran(\rho_\phi)$.} An element $\omega\in\N^{\banach+}$ is faithful if{}f $\supp(\omega)=\II$. If $\phi$ is a normal weight on a $W^*$-algebra $\N$ (which includes $\omega\in\N^+_\star$ as a special case), then the restriction of $\phi$ to a \df{reduced} $W^*$-algebra,
\begin{equation}
        \N_{\supp(\phi)}:=\{x\in\N\mid\supp(\phi)x=x=x\,\supp(\phi)\}=\bigcup_{x\in\N}\{\supp(\phi)x\,\supp(\phi)\},
\end{equation}
is a faithful normal weight (respectively, an element of $(\N_{\supp(\phi)})^+_{\star0}$). If $\phi$ is semi-finite, then $\phi|_{\N\supp(\phi)}\in\W_0(\N_{\supp(\phi)})$. Hence, given $\psi\in\W(\N)$ and $P\in\Proj(\N)$, $P=\supp(\psi)$ if{}f $\psi|_{\N_P}\in\W_0(\N_P)$ and $\psi(P)=\psi(PxP)\;\forall x\in\N^+$. In particular, for $\omega,\phi\in\N^+_\star$ and $\omega\ll\phi$, we have $\omega|_{\N_{\supp(\phi)}}\in\W_0(\N_{\supp(\phi)})$.

One of the most important properties of a $W^*$-algebra is the existence of unique \textit{polar decompositions} of their elements, as well as of elements of their preduals (when considered only for elements of an algebra, unique polar decompositions exist for all Rickart $C^*$-algebras \cite{Ara:Goldstein:1993,Goldstein:1995}). For any $x\in\N$ there exists a unique partial isometry $v\in\N$ and a unique $y\in\N^+$ such that $x=vy$, where $y=(x^*x)^{1/2}$, while $v$ satisfies $v^*v=\supp(\ab{x})$ and $vv^*=\supp(\ab{x^*})$ \cite{vonNeumann:1932:adjungierte}. On the other hand, if $\phi\in\N_\star$, then there exists a unique partial isometry $v\in\N$ and a unique $\omega\in\N^+_\star$ such that $\phi(\cdot)=\omega(\,\cdot\,v)$, where $\n{\omega}=\n{\phi}$, $\supp(\phi)=v^*v$, and $\supp(\ab{\phi^*})=vv^*$, with $\rpktarget{AB.PHI}\ab{\phi}:=\omega$ \cite{Sakai:1958,Tomita:1960}. Moreover, $\ab{\phi}=\phi(v^*\,\cdot\,)$. The equations $x=v\ab{x}$ and $\phi=\ab{\phi}(\,\cdot\,v)$ are called \df{polar decomposition} of, respectively, $x$ and $\phi$. 
\subsection{Representations\label{representations.section}}
A \df{representation} of a $C^*$-algebra $\C$ is defined as a pair $(\H,\pi)$ of a Hilbert space $\H$ and a $*$-homomorphism $\rpktarget{pi}\pi:\C\ra\BH$. An \df{antirepresentation} of $\C$ is defined as a pair $(\H,\pi)$ of a Hilbert space $\H$ and a $*$-antihomomorphism $\pi:\C\ra\BH$. From \eqref{Cstarhomo.norm.inequality} it follows that every representation $(\H,\pi)$ is continuous with respect to the norm topologies of $\C$ and $\BH$ and satisfies $\n{\pi(x)}\leq\n{x}\;\forall x\in\C$. A representation $\pi:\C\ra\BH$ is called: \df{nondegenerate} if{}f $\{\pi(x)\xi\mid (x,\xi)\in\C\times\H\}$ is dense in $\H$; \df{nonzero} if{}f $\pi(\C)\neq\{0\}$; \df{normal} if{}f it is continuous with respect to the weak-$\star$ topologies of $\C$ and $\BH$; \df{faithful} if{}f it satisfies any of the equivalent conditions:
\begin{enumerate}
\item[i)] $\ker(\pi)=\{0\}$ (i.e., it is a $*$-isomorphism),
\item[ii)] $\pi(x)\geq0$ and $\pi(x)\neq0$ $\forall x\in\C^+\setminus\{0\}$,
\item[iii)] $\n{\pi(x)}=\n{x}\;\forall x\in\C$.  
\end{enumerate}
For any representation $\pi:\C\ra\BH$ the space $\ker(\pi)$ is an ideal in $\C$, and $(\H,\pi)$ is a faithful representation of a quotient $C^*$-algebra $\C/\ker(\pi)$. Any representation $\pi:\C\ra\BH$ of a $C^*$-algebra is nondegenerate if{}f $\pi(\II)=\II$. From the fact that every $*$-isomorphism of $W^*$-algebras is weakly-$\star$ continuous it follows that every faithful representation of a $W^*$-algebra is also normal. The representations $\pi_1:\C\ra\BBB(\H_1)$ and $\pi_2:\C\ra\BBB(\H_2)$ are called \df{unitarily equivalent} if{}f there exists a unitary operator $u:\H_1\ra\H_2$ such that $\pi_2(x)=u\pi_1(x)u^{-1}\;\forall x\in\C$. If the representations $\pi_1:\C\ra\BBB(\H_1)$ and $\pi_2:\C\ra\BBB(\H_2)$ are not unitarily equivalent, they are called \df{unitarily inequivalent}. 

An element $\xi\in\H$ is called \df{cyclic} for a $C^*$-algebra $\C\subseteq\BH$ if{}f $\C\xi:=\bigcup_{x\in\C}\{x\xi\}$ is norm dense in $\BH$. Hence, if $\xi\in\H$ is cyclic for $\C$, then
\begin{equation}
        \H=\overline{\C\xi}=\overline{\bigcup_{x\in\C}\{x\xi\}},
\end{equation}
where the completion is provided in norm topology of $\H$. An element $\xi\in\H$ is called \df{separating} for a $C^*$-algebra $\C\subseteq\BH$ if{}f 
\begin{equation}
        x\xi=0\limp x=0\;\forall x\in\C.
\end{equation}
A representation $\pi:\C\ra\BH$ of a $C^*$-algebra $\C$ is called \df{cyclic} if{}f there exists $\Omega\in\H$ that is cyclic for $\pi(\C)$. Every cyclic representation is nondegenerate. Every nondegenerate representation $\pi(\C)$ is a (countable or noncountable) direct sum of cyclic representations. According to the Gel'fand--Na\u{\i}mark--Segal theorem \cite{Gelfand:Naimark:1943,Segal:1947:irreducible} for every pair $(\C,\omega)$ of a $C^*$-algebra $\C$ and $\omega\in\C^{\banach+}$ there exists a triple $(\H_\omega,\pi_\omega,\Omega_\omega)$ of a Hilbert space $\H_\omega$ and a cyclic representation $\rpktarget{pi.omega}\pi_\omega:\C\ra\BH$ with a cyclic vector $\rpktarget{omega.omega}\Omega_\omega\in\H_\omega$\rpktarget{h.omega}, and this triple is unique up to unitary equivalence. The proof of this theorem is provided by the following explicit construction. For a $C^*$-algebra $\C$ and $\omega\in\C^{\banach+}$, one defines the scalar form $\rpktarget{scal.omega}\s{\cdot,\cdot}_\omega$ on $\C$,
\begin{equation}
        \s{x,y}_\omega := \omega(x^*y)\;\;\forall x,y\in\C,
\end{equation}
and the \df{Gel'fand ideal} 
\begin{equation}
        \I_\omega:=\{x\in\C\mid\omega(x^*x)=0\}=\{x\in\C\mid\omega(x^*y)=0\;\forall y\in\C\},
\end{equation}
which is a left ideal of $\C$, closed in the norm topology (it is also closed in the weak-$\star$ topology if $\omega\in\C^{\banach+}_\star$). The form $\s{\cdot,\cdot}_\omega$ is hermitean on $\C$ and it becomes a scalar product $\s{\cdot,\cdot}_\omega$ on $\C/\I_\omega$. The Hilbert space $\H_\omega$ is obtained by the completion of $\C/\I_\omega$ in the topology of norm generated by $\s{\cdot,\cdot}_\omega$. Consider the morphisms\rpktarget{rep.omega}
\begin{align}
        [\cdot]_\omega:\C\ni x&\longmapsto [x]_\omega\in\C/\I_\omega,\\
        \pi_\omega(y):[y]_\omega&\longmapsto[xy]_\omega.
\end{align}
From $\II\n{x}^2\geq x^*x$ one obtains the inequality
\begin{equation}
        \n{\pi_\omega(x)[y]_\omega}^2=
        \n{[xy]_\omega}^2=
        \omega(y^*x^*xy)\leq\n{x}^2\omega(y^*y)=
        \n{x}^2\n{[y]_\omega}^2
        \;\;\forall x,y\in\C,
\label{GNS.boundedness}
\end{equation}
which shows that $\pi_\omega(x)$ is a bounded operator on $\H_\omega$ for each $x\in\C$. On the other hand,
\begin{equation}
        \s{[y]_\omega,\pi_\omega(x)[z]_\omega}_\omega=
        \omega(y^*xz)=
        \omega((x^*y)^*z)=
        \s{\pi_\omega(x^*)[y]_\omega,[z]_\omega}_\omega
        \;\;\forall x,y,z\in\C
\label{GNS.star.preservation}
\end{equation}
shows that $\pi_\omega(x^*)=\pi_\omega(x)^*$. Hence, the map $\pi_\omega:\C\ni x\mapsto\pi_\omega(x)\in\BBB(\H_\omega)$ is a $*$-representation. 

A triple $(\H_\omega,\pi_\omega,\Omega_\omega)$ is called the \df{Gel'fand--Na\u{\i}mark--Segal representation}. The space $\C/\I_\omega$ is a $C^*$-algebra, so $\pi_\omega(\C)$ is a $C^*$-algebra too. The GNS representation is nondegenerate. If $\omega\in\N_\star^+$ for a given $W^*$-algebra $\N$, then $(\H_\omega,\pi_\omega,\Omega_\omega)$ is faithful if{}f $\supp_\zentr(\omega)=\II$ (in such case $\omega$ is called \df{centrally faithful}). The element $\omega\in\C^{\banach+}$ is uniquely represented in terms of $\H_\omega$ by the vector $[\II]_\omega=:\Omega_\omega\in\H_\omega$, which is cyclic for $\pi_\omega(\C)$ and satisfies $\n{\Omega_\omega}=\n{\omega}$. Hence
\begin{align}
        \omega(x)&=\s{\Omega_\omega,\pi_\omega(x)\Omega_\omega}_\omega
        \;\;\forall x\in\C,
\label{dense.omega}\\
                                \omega(y^*x)&=\s{\pi_\omega(y)\Omega_\omega,\pi_\omega(x)\Omega_\omega}_\omega\;\;\forall x,y\in\C.
\end{align}
Given $C^*$-algebra $\C$, if $\omega\in\C^{\banach+}_0$ then its cyclic GNS representative $\Omega_\omega\in\H_\omega$ is also separating for $\pi_\omega(\C)$. Hence, $\pi_\omega(x)\Omega_\omega\neq0\;\forall\pi_\omega(x)\neq0$, because such GNS representation is faithful. Every cyclic representation $(\H,\pi)$ of a $C^*$-algebra $\C$ with a cyclic vector $\xi$ is unitarily equivalent to a GNS representation $(\H_\phi,\pi_\phi,\xi)$ of $\C$, where $\phi\in\C^{\banach+}$ satisfies $[\II]_\phi=\xi$. Moreover, every representation of a $C^*$-algebra can be decomposed as a (countable or noncountable) direct sum of representations that are unitarily equivalent to the GNS representation. 

An analogue of the Gel'fand--Na\u{\i}mark--Segal representation theorem for weights follows the similar construction, but lacks cyclicity. If $\C$ is a $C^*$-algebra, and $\omega$ is a weight on $\C$, then there exists the Hilbert space $\H_\omega$, defined as the completion of $\nnn_\omega/\ker(\omega)$ in the topology of a norm generated by the scalar product $\s{\cdot,\cdot}_{\omega}:\nnn_\omega\times\nnn_\omega\ni(x,y)\mapsto\omega(x^*y)\in\CC$,
\begin{equation}\rpktarget{h.omega.zwei}
        \H_\omega:=\overline{\nnn_\omega/\ker(\omega)}=\overline{\{x\in\C\mid\omega(x^*x)<\infty\}/\{x\in\C\mid\omega(x^*x)=0\}}=\overline{\nnn_\omega/\I_\omega},
\end{equation}
and there exist the maps\rpktarget{rep.omega.zwei}\rpktarget{pi.omega.zwei} 
\begin{align}
        [\cdot]_\omega:\nnn_\omega\ni x&\mapsto [x]_\omega\in\H_\omega,
        \label{GNS.class.weight}\\
        \pi_\omega:\C\ni x&\mapsto([y]_\omega\mapsto[xy]_\omega)\in\BBB(\H_\omega),
        \label{GNS.rep.weight}
\end{align}
such that $[\cdot]_\omega$ is linear, $\ran([\cdot]_\omega)$ is dense in $\H_\omega$, and $(\H_\omega,\pi_\omega)$ is a representation of $\C$ (which follows from \eqref{GNS.boundedness} and \eqref{GNS.star.preservation} with domains of variables substituted accordingly to \eqref{GNS.class.weight} and \eqref{GNS.rep.weight}). If $\omega\in\W(\N)$ for a $W^*$-algebra $\N$, then $\pi_\omega$ is nondegenerate and normal. If $\omega\in\W_0(\N)$, then $\pi_\omega$ is also faithful.
\subsection{von Neumann algebras\label{vNa.section}}
The \df{commutant} of a subalgebra $\N$ of any algebra $\C$ is defined as 
\begin{equation}\rpktarget{comm}
        \N^\comm:=\{y\in\C\mid xy=yx\;\forall x\in\N\},
\end{equation}
while the \df{center} of $\N$ is defined as $\rpktarget{zentr}\zentr_\N:=\N\cap\N^\comm$. The commutant operation satisfies 
\begin{equation}
        \N_1\subseteq\N_2\limp\N_1^\comm\subseteq\N_2^\comm,\;
        \N\subseteq\N^\comm{}^\comm,\;
        \N^\comm{}^\comm{}^\comm=\N^\comm.
\end{equation}
If $\N\subseteq\N^\comm$, then $\N$ is a commutative algebra. A subalgebra $\N$ of a unital algebra $\C$ over $\CC$ is called: \df{irreducible} if{}f $\N^\comm=\CC\II$; \df{reducible} if{}f $\N^\comm\neq\CC\II$; a \df{factor} if{}f $\zentr_\N=\CC\II$. Every unital commutative algebra is a factor. A unital $*$-subalgebra $\N$ of an algebra $\BH$ is called the \df{von Neumann algebra} \cite{vonNeumann:1930:algebra,Murray:vonNeumann:1936} if{}f $\N=\N^\comm{}^\comm$. From von Neumann's double commutant theorem \cite{vonNeumann:1930:algebra} it follows that this is equivalent with any of the conditions: $\N$ is weakly-$\star$ closed, $\N$ is ultrastrongly closed, $\N$ is ultrastrongly-$\star$ closed. In particular, $\BH$ is a von Neumann algebra. 

An image $\pi(\N)$ of any representation $(\H,\pi)$ of a $W^*$-algebra $\N$ is a von Neumann algebra if{}f $\pi$ is normal and nondegenerate. In particular, this is always the case for GNS representation $\pi_\omega(\N)$ generated by $\omega\in\N^+_\star$ or $\omega\in\W(\N)$ for any $W^*$-algebra $\N$. By the Sakai theorem \cite{Sakai:1956,Sakai:1971}, for every $W^*$-algebra $\N$ there exists a faithful representation $(\H,\pi)$ such that $\pi(\N)$ is a von Neumann algebra on $\H$. Because any faithful representation is a $*$-isomorphism, this means that the topological structure of von Neumann algebras is determined by their algebraic structure \cite{Dixmier:1953,Takesaki:2003:entrance}. Every pair of $C^*$-algebra $\C$ and $\omega\in\C^{\banach+}$ generates the von Neumann algebra $(\pi_\omega(\C))^\comm{}^\comm\subseteq\BH$, which is called an \df{enveloping von Neumann algebra}. If $\omega\in\N^+_{\star0}$ for a given $W^*$-algebra $\N$, then $\N\iso\pi_\omega(\N)\iso\pi_\omega(\N)^\comm{}^\comm$. For any $C^*$-algebra $\C$, $\omega\in\C^{\banach+}$ determines a unique centrally faithful $\widetilde{\omega}\in(\pi_\omega(\C)^\comm{}^\comm)^+_\star$ such that 
\begin{equation}
        \widetilde{\omega}|_{\pi_\omega(\C)}=
        \s{\Omega_\omega,\,\cdot\,\Omega_\omega}_\omega\in(\pi_\omega(\C))^{\banach+}
\end{equation}
and $\pi_\omega(\C)$ is dense in $\pi_\omega(\C)^\comm{}^\comm$. With an abuse of notation, $\widetilde{\omega}$ is often denoted $\omega$, an it is called a \df{normal extension} of $\omega$ to $\pi_\omega(\C)^\comm{}^\comm$. If $\omega\in\C^{\banach+}$ is tracial, then $\widetilde{\omega}$ is tracial too.

The representation $(\H,\pi)$ of a $C^*$-algebra $\C$ is called \df{irreducible} if{}f $\pi(\C)$ is irreducible, hence, if{}f $\pi(\C)^\comm=\CC\II$. This is equivalent to the condition: (every $\H\ni\xi\neq0$ is cyclic for $\pi(\C)$) or ($\pi(\C)=\{0\}$ and $\H=\CC$). The GNS representation $\pi_\omega$ and the algebra $\pi_\omega(\C)$ are irreducible if{}f $\omega$ is pure \cite{Gelfand:Raikov:1943,Segal:1947:irreducible}. Thus, a nonzero representation is irreducible if{}f it is unitarily equivalent to a GNS representation associated with a pure $\omega$. 

If $\omega(x):=\s{\xi_\omega,x\xi_\omega}_\H\forall x\in\N\subseteq\BH$ then $\xi_\omega$ is cyclic if{}f $\supp(\omega)^\comm=\II$. On the other hand, an element $\xi\in\H$ is separating for $\N$ if{}f $\omega_\xi(\cdot):=\s{\xi,\cdot\;\xi}$ is faithful for $\N$. From this it follows that $\omega$ is faithful for $\N$ if{}f $\xi_\omega$ is cyclic for $\N^\comm$ and
\begin{equation}
        (\xi\mbox{ is separating for }\N)\iff(\xi\mbox{ is cyclic for } \N^\comm)\;\;\forall\N\subseteq\BH\;\forall\xi\in\H.
\end{equation}
Hence, a vector $\xi$ is cyclic and separating for a von Neumann algebra $\N$ if $\N\xi$ and $\N^\comm\xi$ are dense in $\H$. Moreover, the following conditions for a $W^*$-algebra $\N$ are equivalent: 
\begin{enumerate}
\item[i)] $\N^+_{\star0}\neq\varnothing$,
\item[ii)] $\N$ is $*$-isomorphic to a von Neumann algebra possessing a cyclic and separating vector,
\item[iii)] every family $\{P_n\}\subseteq\Proj(\N)$ satisfying $P_iP_j=\dirac_{ij}\II$ is countable.
\end{enumerate}
If any of these conditions is satisfied, then $\N$ is called \df{countably finite}. In particular, every von Neumann algebra $\N\subseteq\BH$ is countably finite if $\H$ is separable, and $\BH$ is countably finite if{}f $\H$ is separable.\footnote{A topological space $X$ is called \df{separable} if{}f it contains a countable dense subset, that is, if there exists a sequence $\{x_i\}\subseteq X$ such that every open nonempty subset of $X$ contains an element of this sequence.}
\subsection{Barycentric decompositions\label{barycentric.decomp.section}}
\ifvarwstarintcompile 
\textit{Note: this section is included for the purpose of completeness of exposition, but its contents will not be used in the rest of this text, with an exception of the last paragraph of Section \ref{KMS.section}.}
\else 
\textit{Note: this section is included for the purpose of completeness of exposition, but its contents will not be used in the rest of this Chapter, with an exception of the last paragraph of Section \ref{KMS.section}.}
\fi 

{\vskip 1em}\noindent Let $K$ be a convex subset of a real topological vector space $X$. An element $x\in K$ is called \df{extreme} if{}f
\begin{equation}
        \exists x_1,x_2\in K\;\;\;(x_1\neq x_2\;\;\mbox{and}\;\;\exists\lambda\in\,]0,1[\;\;\;x=(1-\lambda)x_1+\lambda x_2)
\end{equation}
is false. The set of all extreme elements of $K$ is denoted $\ex(K)$. A subset $F\subseteq K$ is called a \df{face} if{}f 
\begin{equation}
        \forall x\in F\;\;
        \exists n\in\NN\;\;\left(
                \exists\{\lambda_i\}_{i=1}^n\subseteq\RR^+\;\;
                x=\sum_{i=1}^n\lambda_ix_i,\;\;
                \sum_{i=1}^n\lambda_i=1
        \right)
        \;\;\limp\;\;
                \{x_i\}_{i=1}^n\subseteq F.     
\end{equation}
If $Y\subseteq K$, then the smallest (with respect to set embedding) among the faces of $K$ that contain $Y$ is called to be \df{generated} by $Y$ in $K$, and is denoted  $\face(Y)$. If $x\in K$, then one also defines
\begin{equation}\rpktarget{face}
        \face(x):=\{y\in K\mid\exists\epsilon>0\;\;x+\epsilon(x-y)\in K\}.
\end{equation}
A \df{convex hull} of $Y\subseteq X$ is defined by
\begin{equation}\rpktarget{co}
        \co(Y):=\left\{x\in X\mid\exists n\in\NN\;\;\exists\{x_i\}_{i=1}^n\subseteq Y\;\;\exists\{\lambda_i\}_{i=1}^n\subseteq\RR^+\;\;\sum_{i=1}^n\lambda_i=1,\;\;x=\sum_{i=1}^n\lambda_ix_i\right\}.
\end{equation} 
The elements $y,z\in K$ will be called \df{strongly disjoint} and denoted $y\strdisj z$ if{}f
\begin{equation}
\left\{
                                \begin{array}{l}
                                        \face(\face(y)\cup\face(z))=\co(\face(y)\cup\face (z))\\
                                        \Span_\RR\face(y)\cap\Span_\RR\face(z)=\{0\}.
                                \end{array}
                        \right.
\end{equation}
A point $x\in K$ will be called \df{primary} if{}f
\begin{equation}
        x=\lambda y+(1-\lambda) z
        \;\limp\;
                \left(
                        (y\strdisj z)
                        \mbox{ is false}
        \right)
        \;\;\;
        \forall y,z\in K\;\;\forall \lambda\in\,]0,1[.
\end{equation}

In what follows, we will use some of the notions introduced and discussed in Section \ref{comm.integr.section}, marking them by \textit{italics}. We will assume that $X$ is \textit{locally compact} \textit{Hausdorff} real topological vector space, and that $K$ is its \textit{compact} convex subset. The set $\Rad(K)^+\rpktarget{RADON.BARY}$ of \textit{Radon measures} $\tmu:\mho_{\mathrm{Borel}}(K)\ra[0,+\infty]$ is \textit{order preserving} isometrically isomorphic to a subset of a Banach dual $\mathrm{C}(K)^\banach$ of a space $\mathrm{C}(K)$ of all continuous functions on $K$. As a result, $\Rad(K)^+$ and $\Rad(K)^+_1:=\{\tmu\in\Rad(K)^+\mid\n{\tmu}=1\}$ can be equipped with a weak-$\star$ topology on $\mathrm{C}(K)^\banach$ with respect to $\mathrm{C}(K)\rpktarget{CONT.BARY}$. If $\tmu\in\Rad(K)^+_1$, then $\tmu$ is called: \df{supported} by $C\subseteq K$ if{}f $\tmu(C)=1$; \df{pseudosupported} by $C\subseteq K$ if{}f $\tmu(Z)=0$ for every compact countable intersection $Z$ of open sets in $K$ such that $Z\cap C=\varnothing$. A \df{barycenter} of nonzero $\tmu\in\Rad(K)^+$ is defined as
\begin{equation}\rpktarget{BARY}
        \bary_K(\tmu):=\frac{1}{\tmu(K)}\int_K\tmu(x)x.
\end{equation}
Every $\tmu\in\Rad(K)^+_1$ has a unique barycenter. The set of all elements of $\Rad(K)^+_1$ with barycenter $x\in K$ will be denoted $\Rad_x(K)^+_1$. A measure $\tmu\in\Rad(K)^+_1$ is called \df{central} if{}f
\begin{equation}
        \face\left(
                \bary_K\left(
                        \lambda^{-1}\tmu|_{\Y}
                \right)
        \right)
        \strdisj
        \face\left(
                \bary_K\left(
                        (1-\lambda)^{-1}\tmu|_{K\setminus\Y}
                \right)
        \right)
\end{equation}
for all $\Y\in\mho_{\mathrm{Borel}}(K)$ such that $\lambda=\tmu(\Y)\in\,]0,1[$. Consider an order relation $\prec$ on $\Rad(K)^+$ defined by \cite{Choquet:1960}
\begin{equation}
        \tmu_1\prec\tmu_2\;\;:\iff\;\;\int\tmu_1f\leq\int\tmu_2f\;\;\;\;\forall\mbox{ convex }f\in\mathrm{C}(K;\RR).
\end{equation}
A measure $\tmu\in\Rad_\omega(K)^+_1$ is called \df{maximal central} if{}f it is central and is maximal in terms of $\prec$ among all central measures in $\Rad_\omega(K)^+_1$. According to Wils' theorem \cite{Wils:1968,Wils:1969}, which generalises earlier result of Sakai \cite{Sakai:1965}, for every $x\in K$ there exists a unique maximal central measure $\tmu\in\Rad(K)^+_1$ such that $x$ is the barycenter of $\tmu$, and $\tmu$ is pseudosupported by the set of primary points of $K$.\footnote{See e.g. \cite{Alfsen:1971} for a detailed exposition.}

A nonempty compact convex subset $K$ of a locally convex vector space $X$ is called a \df{Choquet simplex} \cite{Choquet:1956:I,Choquet:1956:II,Choquet:1956:III,Choquet:1956:IV} if{}f is is contained in a closed hyperplane\footnote{A vector subspace $Y$ of a vector space $X$ is called a \df{hyperplane} if{}f $X/Y$ is one-dimensional.} not containing the origin of $X$ and the set $\{\lambda x\mid\lambda\geq0,\;\;x\in K\}$ is a lattice with respect to the ordering defined by
\begin{equation}
        x\geq y\;\;:\iff\;\;x-y\in K\;\;\;\forall x,y\in X.
\end{equation}
According to the Choquet representation theorem \cite{Choquet:1956:I,Choquet:1956:II,Choquet:1956:III,Choquet:1956:IV}, a compact convex metrisable\footnote{A topological space $X$ (with topology $\mathcal{T}$) is called \df{metrisable} if{}f there exists a metrical distance function $d:X\times X\ra\RR^+$ such that the topology induced on $X$ by $d$ is $\mathcal{T}$.} subset $K$ of a locally convex vector space $X$ is a Choquet simplex if{}f
\begin{equation}
        \forall x\in K\;\;\exists!\tmu\in\Rad(K)^+_1\;\;\;\;\bary_K(\tmu)=x\;\;\;\mbox{and}\;\;\;\tmu(\ex(K))=1.
\end{equation}

Let $\C$ be a $C^*$-algebra. An element $\omega\in\C^{\banach+}$ will be called \df{factorial} if{}f $\pi_\omega(\C)^\comm{}^\comm$ is a factor, or, equivalently, if{}f
\begin{equation}
        \zentr_{\pi_\omega(\C)^\comm{}^\comm}\equiv{\pi_\omega(\C)^\comm{}^\comm}\cap{\pi_\omega(\C)^\comm{}}=\CC\II.
\end{equation}
The set of all factorial elements of $\C^{\banach+}$ will be denoted $\fact(\C)\rpktarget{FACT}$. A pair $\omega_1,\omega_2\in\C^{\banach+}$ will be called \df{orthogonal} (and denoted by $\omega_1\perp\omega_2$) if{}f
\begin{equation}
        (\omega\leq\omega_1,\;\;\omega\leq\omega_2)
        \;\;\limp\;\;
        \omega=0
        \;\;\;\forall\omega\in\C^{\banach+}.
\end{equation}
Recall that $\Scal(\C)$ is a convex set that is compact in the weak-$\star$ topology on $\C^\banach$. If $\tmu$ is a \textit{compactly inner regular} \textit{Borel measure} on $\mho_{\mathrm{Borel}}(\Scal(\C))$, and
\begin{equation}
        \left(
                \int_\Y\tmu(\omega)\omega
        \right)
        \perp
        \left(
                \int_{\Scal(\C)\setminus\Y}\tmu(\omega)\omega
        \right)
        \;\;\;\forall\Y\in\mho_{\mathrm{Borel}}(\Scal(\C)),
\end{equation}
then $\tmu$ is called \df{orthogonal}. The set of all orthogonal elements of $\Rad_\omega(\Scal(\C))^+_1$ will be denoted by $\Rad^\perp_\omega(\Scal(\C))^+_1$. Given $x\in\C$, let $\hat{x}$ denote an affine continuous function $\Scal(\C)\ra\CC$ defined by $\hat{x}(\phi):=\phi(x)$. According to the Tomita--Ruelle theorem \cite{Tomita:1956,Ruelle:1970:integral}, for any $\omega\in\Scal(\C)$ there is a bijection between:
\begin{enumerate}
\item[1)] the elements $\tmu\in\Rad_\omega^\perp(\Scal(\C))^+_1$,
\item[2)] the commutative von Neumann subalgebras $\N\subseteq\pi_\omega(\C)^\comm$,
\item[3)] the elements $P\in\Proj(\BBB(\H_\omega))$ such that $P\Omega_\omega$ and $P\pi_\omega(\C)P\subseteq\{P\pi_\omega(\C)P\}^\comm$.
\end{enumerate}
It is provided by the following relations:
\begin{enumerate}
\item[i)] $P$ is a projection onto $[\N\Omega_\omega]$,
\item[ii)] $\N=\{\pi_\omega(\C)\cup P\}^\comm$,
\item[iii)] $\tmu(\widehat{x_1}\cdots\widehat{x_n})=\s{\Omega_\omega,\pi_\omega(x_1)P\cdots\pi_\omega(x_n)P\Omega_\omega}_\omega$,
\item[iv)] there exists a $*$-isomorphism between $\N$ and $\ran(w_\omega^{\tmu})$, where
\begin{equation}
        w_\omega^{\tmu}:L_\infty(\Scal(\C),\mho_{\mathrm{Borel}}(\Scal(\C)),\tmu)\ra\pi_\omega(\C)^\comm
\label{w.omega.tmu.map}
\end{equation}
is a positive map defined by
\begin{align}
        \s{\Omega_\omega,w_\omega^\tmu(f)\pi_\omega(x)\Omega_\omega}_\omega
        &=\int_{\Scal(\C)}\tmu(\phi)f(\phi)\hat{x}(\phi)\;\;\;\forall x\in\C\;\forall f\in L_\infty(\Scal(\C),\mho_{\mathrm{Borel}}(\Scal(\C)),\tmu),
        \\
        w_\omega^\tmu(\hat{y})\pi_\omega(x)\Omega_\omega
        &=\pi_\omega(x)P\pi_\omega(y)\Omega_\omega\;\;\forall x,y\in\C.
\end{align}
\end{enumerate}

Let us recall that a \df{spectral measure} is defined as a function $\rpktarget{pvm}\pvm:\mho_{\mathrm{Borel}}(\RR)\ra\Proj(\BH)$ such that
\begin{enumerate}
\item[(i)] $\pvm(\Y)\geq\pvm(\varnothing)=0$ $\forall\Y\in\mho_{\mathrm{Borel}}(\RR)$,
\item[(ii)] $\pvm(\RR)=\II$,
\item[(iii)] $\pvm(\bigcup^{\infty}_{i=1}\Y_i)=\sum_{i=1}^\infty\pvm(\Y_i)$ for each countable sequence $\{\Y_i\mid i\in\NN\cup\{+\infty\}\}\subseteq\mho_{\mathrm{Borel}}(\RR)$ of mutually disjoint sets, where the series converges in the weak-$\star$ topology,
\end{enumerate}
while a \df{semi-spectral measure} \cite{Carleman:1923,Naimark:1940} is defined as a function $\rpktarget{povm}\povm:\mho(\X)\ra\BH^+$ such that
\begin{enumerate}
\item[(i)] $\povm(\Y)\geq\povm(\varnothing)=0$ $\forall\Y\in\mho(\X)$,
\item[(ii)] $\povm(\X)=\II$,
\item[(iii)] $\povm(\bigcup^{\infty}_{i=1}\Y_i)=\sum_{i=1}^\infty\povm(\Y_i)$ for each countable sequence $\{\Y_i\mid i\in\NN\cup\{+\infty\}\}\subseteq\mho(\X)$ for mutually disjoint sets, where the series converges in the weak-$\star$ topology.
\end{enumerate}
Let $\chr(\Y)$ denote the characteristic function of $\Y\subseteq\RR$. For arbitrary $\xi,\zeta\in\H$, consider a $*$-homomorphism $Q$ from the set of bounded $\mho_{\mathrm{Borel}}(\RR)$-measurable functions on $\RR$ to $\BH$, defined by
\begin{equation}
        \s{\xi,Q(f)\zeta}_\H=\int_\RR\mu_{\xi,\zeta}f\;\;\;\forall f\in\mathrm{C}(\RR),
\end{equation}
where $\mu_{\xi,\zeta}$ is unique by the Riesz representation theorem (see Section \ref{comm.integr.section}). According to the spectral representation theorem \cite{vonNeumann:1930:Allgemeine,vonNeumann:1930:algebra,Riesz:1930:Hilbert,Stone:1932:linear,vonNeumann:1932:grundlagen}, there is a bijection between self-adjoint operators $x$ on abstract separable Hilbert space $\H$ and spectral measures $\pvm:\mho_{\mathrm{Borel}}(\RR)\ra\Proj(\BH)$, provided by
\begin{equation}
        x=\int_{\dom(\pvm)}\pvm(\lambda)\lambda,\;\;\;\sp(x)=\dom(P),\;\;\;P(\lambda)=Q(\chr([0,\lambda])).
\label{spectral.theorem.bijection}
\end{equation}
This determines a unitary isomorphism between $\H$ and a particular Hilbert space $\H_{x,\xi}$,
\begin{equation}
        U_{x,\xi}:\H\ra\H_{x,\xi}:=L_2(\sp(x),\mho_{\mathrm{Borel}}(\sp(x)),\s{\xi,\pvm^x(\cdot)\xi}_\H),
\end{equation}
such that, whenever $x\in\BH^\sa$, the commutative algebra generated by $x$ is represented as a subalgebra of $L_\infty(\sp(x),\mho_{\mathrm{Borel}}(\sp(x)),\s{\xi,\pvm^x(\cdot)\xi}_\H)$ acting on $\H_{x,\xi}$ by right multiplication. The element $\xi\in\H$ is required to be cyclic for $x$ (but otherwise arbitrary), $\sp(x)$ is the spectrum of $x$, while $\pvm^x$ denotes the spectral measure associated to $x$ by means of \eqref{spectral.theorem.bijection}. More generally, one can begin with a commutative subalgebra $\C\subseteq\BH^\sa$, and consider a unitary isomorphism of $\H$ with $L_2(\sp_{\mathrm{G}}(\C),\mho_{\mathrm{Borel}}(\sp_{\mathrm{G}}(\C)),\tr_\H(\rho\pvm^x(\cdot)))$, for $x\in\C^\sa$, $\rho\in\schatten_1(\H)^+$, and $\sp_{\mathrm{G}}(\C)$ denoting the Gel'fand spectrum of $\C$ (see Section \ref{integr.compar.section}). Hence, different choices of $(x,\xi)$ (or $(\C,\rho)$, or $(\C,\xi)$, or $(x,\rho)$) provide different `commutative snapshots' of the noncommutative algebra $\BH$.

As observed in \cite{Halvorson:2004:remote}, for a given choice of a normalised measure on $\Scal(\C)$ with barycenter $\omega$, the Tomita--Ruelle theorem determines a unique semi-spectral measure on $\Scal(\C)$ valued in $\pi_\omega(\C)^\comm$. More precisely, given a $C^*$-algebra $\C$ and $\tmu\in\Rad_\omega^\perp(\Scal(\C))^+_1$, there exists a unique semi-spectral measure $\povm_\omega^\tmu:\mho_{\mathrm{Borel}}(\Scal(\C))\ra\BBB(\H_\omega)^+$ such that 
\begin{equation}
        \s{\left(\povm_\omega^\tmu(\Y)\right)^{1/2}\Omega_\omega,\pi_\omega(x)\left(\povm_\omega^\tmu(\Y)\right)^{1/2}\Omega_\omega}_\omega=\int_{\Y}\tmu(\phi)\phi(x)\;\;\forall x\in\C\;\;\forall\Y\in\mho_{\mathrm{Borel}}(\Scal(\C)).
\label{tomita.povm}
\end{equation}
By definition, $\povm_\omega^\tmu$ is supported on the same set that $\tmu$ is.

If $K=\Scal(\C)$ for a given $C^*$-algebra $\C$, then the unique measure associated to any $\omega\in\Scal(\C)$ by Wils' theorem is given by
\begin{equation}
        \tmu_{\zentr_{\pi_\omega(\C)^\comm{}^\comm}}\in\Rad^\perp_\omega(\Scal(\C))^+_1,
\end{equation}
which is determined by the Tomita--Ruelle theorem as the measure corresponding to $\N=\zentr_{\pi_\omega(\C)^\comm{}^\comm}$. The measure $\tmu_{\zentr_{\pi_\omega(\C)^\comm{}^\comm}}$ is pseudosupported by the set $\fact(\C)\cap\Scal(\C)$. If $\C$ is separable in norm topology, then $\fact(\C)\cap\Scal(\C)\in\mho_{\mathrm{Borel}}(\Scal(\C))$ \cite{Sakai:1971}. Hence, in such case $\tmu_{\zentr_{\pi_\omega(\C)^\comm{}^\comm}}$ is supported on $\fact(\C)\cap\Scal(\C)$. (On the other hand, if $\C$ is an arbitrary $C^*$-algebra, $\omega\in\Scal(\C)$, and $\H_\omega$ is separable, then there exists a $\tmu_{\zentr_{\pi_\omega(\C)^\comm{}^\comm}}$-measurable subset $\Y\in\fact(\C)\cap\Scal(\C)$ such that $\tmu_{\zentr_{\pi_\omega(\C)^\comm{}^\comm}}(\Y)=1$.) If $\omega$ is not factorial, then $\zentr_{\pi_\omega(\C)^\comm{}^\comm}\neq\CC\II$. So, if $\C$ is separable in norm topology and $\omega\not\in\fact(\C)$, then the spectral decomposition\footnote{The symbol $\int^\oplus$ denotes direct integral in von Neumann's \cite{vonNeumann:1949} sense. See any exposition of spectral representation theorem for a detailed definition.}
\begin{equation}
        U:\H_\omega\iso\int^\oplus_{\sp_{\mathrm{G}}\left(\zentr_{\pi_\omega(\C)^\comm{}^\comm}\right)}\tmu_{\zentr_{\pi_\omega(\C)^\comm{}^\comm}}(\Y)\H_\Y,
\label{spec.decomp.zentr}
\end{equation}
is supported on the factorial elements of $\Scal(\C)$. This means that every operator acting on $\H_\omega$ can be spectrally represented as a direct integral of operators acting on the spaces $\H_\phi$ corresponding to, in general unitarily inequivalent, GNS representations for $\phi\in\fact(\C)\cap\Scal(\C)$,
\begin{equation}
        U\pi_\omega(x)U^*=\int^\oplus_{\sp_{\mathrm{G}}\left(\zentr_{\pi_\omega(\C)^\comm{}^\comm}\right)}\tmu_{\zentr_{\pi_\omega(\C)^\comm{}^\comm}}(\phi)\pi_\phi(x)\;\;\forall x\in\C.
\label{direct.integral.of.operators.primary}
\end{equation}
\section{Modular theory\label{modular.theory.section}}
The Hilbert space $\H_\omega$ of GNS representation of any finite dimensional $C^*$-algebra $\C$ has a structure of the Hilbert--Schmidt space $\schatten_2(\K)$, which can be considered as a noncommutative analogue of the $L_2(\X,\mho(\X),\tmu)$ space, or, more precisely, of \boldmath$\ell$\unboldmath$_2$ space (see Section \ref{integration.trace.section}). It is quite remarkable that in this case every $\omega\in\C^{\banach+}$ corresponds uniquely to a vector $\xi_\omega$ in the cone $\schatten_2(\K)^+$ of positive elements of $\schatten_2(\K)$: 
\begin{equation}
\xi_\omega=\rho^{1/2}_\omega\in\schatten_2(\K)^+
\;\;\iff\;\;
\tr_{\BBB(\H_\omega)}(\rho_\omega\pi_\omega(\,\cdot\,))=\omega\in\C^{\banach+}\;\;\iff\;\;
\rho_\omega\in\schatten_1(\K),
\end{equation}
where $\K$ is a Hilbert space, $\schatten_1(\K)$ is the space of all trace class (nuclear) operators on $\K$, and $\K\otimes\K^\banach=\schatten_2(\K)\iso\H_\omega$ is equipped with the inner product $\s{\xi_1,\xi_2}_{\schatten_2(\K)}:=\tr_{\BBB(\K)}(\xi_2^*\xi_1)$. 

In general case, the requirement that the elements of an algebra should be considered as dual to the functionals (and not the other way round) restricts the considerations from pairs $(\C,\C^{\banach+})$ for arbitrary $C^*$-algebras $\C$ to pairs $(\N_\star^+,\N)$ for arbitrary $W^*$-algebras $\N$. Quite remarkably, this setting allows to construct an analogue of the Hilbert--Schmidt space for an arbitrary $W^*$-algebra and to provide a similar representation of the space $\N_\star^+$ in terms of the positive cone inside this space in a way which allows to consider self-adjoint and positive elements, as well as polar decomposition.

For the standard trace $\tr$ on $\BH$, the ideal $\nnn_\tr$ is equal to the space $\schatten_2(\H)$ of all Hilbert--Schmidt operators, while the ideal $\mmm_\tr$ is equal to $\schatten_1(\H)$. In the case of $W^*$-algebra $\N$ admitting faithful normal semi-finite trace $\tau$, a proper generalisation of these properties can always be obtained by choosing a representation $(\H,\pi)$ which is \textit{semi-standard}, that is, there exists a conjugation operator\footnote{A linear operator $J:\dom(J)\ra\H$, where $\dom(J)\subseteq\H$ and $\H$ is a Hilbert space, is called a \df{conjugation} if{}f it is antilinear, isometric, and involutive ($J^2=\II$).} $J$ on $\H$ such that $J\pi(\N)J=\pi(\N)^\comm$ and $JxJ=x^*$ $\forall x\in\zentr_{\pi(\N)}$ \cite{Dixmier:1952}. The existence of an operator $J$ follows from the fact that the operation $^*:\N\ra\N$ is isometric with respect to the inner product on $\pi(\N)$ defined by $\tau$. However, if $\N$ does not admit such trace, this construction fails. A major breakthrough in the theory of operator algebras provided by Tomita \cite{Tomita:1967:a,Tomita:1967:b} can be summarised (in more modern terms) as: every faithful normal semi-finite weight $\psi$ on arbitrary $W^*$-algebra $\N$ determines a group of $*$-automorphisms $\{\Ad(\Delta^{\ii t}_\psi)\mid t\in\RR\}\subseteq\Aut(\pi_\psi(\N))$, with unitary $\Delta^{\ii t}_\psi$, and an antilinear $*$-isomorphism $j_\psi:\pi_\psi(\N)\ra J_\psi\pi_\psi(\N)J_\psi=\pi_\psi(\N)^\comm$, with a conjugation operator $J_\psi$, such that
\begin{equation}
        \Ad(\Delta_\psi^{\ii t})|_{\zentr_{\pi_\psi(\N)}}=
        \id_{\zentr_{\pi_\psi(\N)}},
        \;\;\;
        J_\psi xJ_\psi=x^*\;\forall x\in\zentr_{\pi_\psi(\N)}.
\end{equation}
The group $\pi_\psi^{-1}\circ\Ad(\Delta^{\ii t}_\psi)$ of $*$-automorphisms of $\N$ characterises the nontracial behaviour of $\psi$ ($\Delta_\psi=\pi_\psi(\II)$ if{}f $\psi$ is a trace). Among a huge amount of structural results and applications, Tomita's result has lead also to construction of an analogue of the Hilbert--Schmidt representation, developed for countably additive $W^*$-algebras by Araki \cite{Araki:1974:modular:conjugation} and Connes \cite{Connes:1974}, and for arbitrary $W^*$-algebras by Haagerup \cite{Haagerup:1973,Haagerup:1975:standard:form} (the latter is called a \textit{standard} representation). The novel property of these representations is that they determine not only a conjugation operator $J:\pi(\N)\ra\pi(\N)^\comm$, but also a convex cone $\stdcone\subseteq\H$ which admits an order preserving homeomorphism from $\N_\star^+$, thus forming a generalisation of $L_2(\X,\mho(\X),\tmu)^+$ and $\schatten_2(\K)^+$ spaces. Further generalisation of Tomita's theory by Connes \cite{Connes:1973:poids:normaux,Connes:1973:classification,Connes:1980} and Araki \cite{Araki:1973:relative:hamiltonian,Araki:1974:modular:conjugation} resulted in relative modular operators $\Delta_{\phi,\psi}$, Connes' cocycles $\Connes{\phi}{\psi}{t}$ and Connes' spatial quotients $\connes{\phi}{\psi}$, which characterise the relationships between two normal semi-finite weights on $\N$ (with Connes' cocycle playing the most important role). This allowed Kosaki \cite{Kosaki:1980:PhD} to construct a canonical $L_2(\N)$ space and a \textit{canonical} representation $(L_2(\N),\pi_\N,J_\N,L_2(\N)^+)$ associated to any $W^*$-algebra $\N$. They are constructed without invoking any auxiliary Hilbert spaces, and using the equivalence classes of the elements of $\N_\star^+$ that are induced by Connes' cocycle. In particular, $L_2(\N)^+$ is a self-polar convex cone defined by an embedding of $\N_\star^+$ into $L_2(\N)$ which preserves positivity, additivity and multiplication. This construction defines a functor from the category of $W^*$-algebras with $*$-isomorphisms to the category of standard representations with standard unitary equivalences.
\subsection{Kubo--Martin--Schwinger condition\label{KMS.section}}
If $\C$ is a $C^*$-algebra, then a group homomorphism $\alpha:\RR\ni t\mapsto\alpha_t\in\Aut(\C)$ and a group $\{\alpha_t\mid t\in\RR\}$ are called \df{strongly continuous} if{}f $t\mapsto\alpha_t(x)$ is continuous in norm topology of $\C$ for all $x\in\C$, that is,
\begin{equation}
        \lim_{t\ra0}\n{\alpha_t(x)-x}=0\;\;\forall x\in\C.
\end{equation}
If $\N$ is a $W^*$-algebra, then a group homomorphism $\alpha:\RR\ni t\mapsto\alpha_t\in\Aut(\N)$ and a group $\{\alpha_t\mid t\in\RR\}$ are called \df{weakly-$\star$ continuous} if{}f $t\mapsto\phi(\alpha_t(x))$ is a continuous function of $t$ for all $x\in\N$ and for all $\phi\in\N_\star$. If $\C$ is a $C^*$-algebra or a $*$-subalgebra of some $C^*$-algebra, and $\alpha:\RR\ra\Aut(\C)$ is strongly or weakly-$\star$ continuous group homomorphism, then\rpktarget{CAINF}
\begin{equation}
        \C^\alpha_\infty:=\{x\in\C\mid\exists!\mbox{ extension of }\alpha\mbox{ to an analytic function }\CC\ni z\mapsto\alpha_z(x)\in\C\}
\label{A.alpha.infty}
\end{equation}
is a $*$-subalgebra of $\C$.

Let $\C$ be (1a) a $C^*$-algebra or (1b) a $W^*$-algebra; let $\alpha:\RR\ni t\mapsto\alpha_t\in\Aut(\C)$ be (2a) strongly continuous or (2b) weakly-$\star$ continuous; let (3a) $\omega\in\C^{\banach+}$ or (3b) $\omega\in\W(\N)$; and let $\beta\in\RR\setminus\{0\}$. Then $\omega$ satisfies the \df{Kubo--Martin--Schwinger condition} for $\alpha$ and $\beta$ \cite{Kubo:1957,Martin:Schwinger:1959,Haag:Hugenholtz:Winnink:1967} (see also \cite{Winnink:1968,KPP:1969,Winnink:1970}) if{}f any of the following equivalent conditions holds:
\begin{enumerate}
\item[i)] $\C^\alpha_\infty\neq\varnothing$ and
\begin{equation}
        \omega(y\alpha_{z+\ii\beta}(x))=\omega(\alpha_z(x)y)\;\;\forall x\in\C^\alpha_\infty\;\forall y\in\C\;\forall z\in\CC;
\label{KMS.condition.analytic}
\end{equation}
\item[ii)]
for any $x,y\in\C$ (in the case (1a,2a,3a)) or for any $x,y\in\nnn_\omega\cap\nnn_\omega^*$ (in the case (1b,2b,3b)) there exists a map $F_{x,y}:\RR\ni t\mapsto F_{x,y}(t)\in\CC$ that can be continued for $t\in\CC$ in such a way that it is holomorphic for $\im(t)\in\,]0,\beta[$ (if $\beta\geq0$) or $\im(t)\in\,]\beta,0[$ (if $\beta<0$), as well as bounded and continuous in the closure of this domain, and
\begin{align}
                                F_{x,y}(t)&=\omega(x\alpha_t(y))\;\;\forall t\in\RR,\nonumber\\
        F_{x,y}(t+\ii\beta)&=\omega(\alpha_t(y)x)\;\;\forall t\in\RR;
\label{KMS.condition}
\end{align}
\item[iii)] 
\begin{equation}
        \int_{-\infty}^{+\infty}\dd t\,f(t)\omega(x\alpha_t(y))=\int_{-\infty}^{+\infty}\dd t\,f(t+\ii\beta)\omega(\alpha_t(y)x)\;\;\forall x,y\in\C,
\end{equation}
for all such $f$ that their inverse Fourier transform $\hat{f}$, given by
\begin{equation}
        f(t+\ii\beta)=\int_{-\infty}^{+\infty}\dd\lambda\,\hat{f}(\lambda)\ee^{-\ii\lambda t-\beta\lambda},
\end{equation}
is an infinitely differentiable function on $\RR$ with a compact support.
\end{enumerate}
Moreover, with the above notation and assumptions (1a,2a,3a) or (1b,2b,3b), $\omega$ is said to satisfy the KMS condition for $\alpha$ and:
\begin{enumerate}
\item[$0$)] $\beta=0$ if{}f $\omega$ is a trace and 
\begin{equation}
        \omega(\alpha_t(x))=\omega(x)\;\;\forall x\in\C\;\forall t\in\RR;
\end{equation}
\item[$\infty$)] $\beta=\infty$ if{}f $\C_\infty^\alpha\neq\varnothing$ and
\begin{equation}
        \im(z)\geq0\;\limp\;\ab{\omega(y\alpha_z(x))}\leq\n{x}\n{y}\;\;\forall x\in\C^\alpha_\infty\;\forall y\in\C\;\forall z\in\CC.
\end{equation}
\end{enumerate}
The basic properties implied by the KMS condition at $\beta\neq\infty$ are:
\begin{enumerate}
\item[1)] if $\omega$ satisfies the KMS condition for $\alpha$, then $\omega(\alpha_t(x))=\omega(x)\;\forall x\in\C\;\forall t\in\RR$,
\item[2)] if $\omega$ satisfies the KMS condition for $\alpha$, then $\alpha_t(x)=x\;\forall x\in\supp_{\zentr_\C}(\omega)\zentr_\C$,
\item[3)] if $\omega$ satisfies the KMS condition for $\alpha$ and $\beta\neq0$, then it satisfies the KMS condition for $\alpha_{\lambda t}$ and $\frac{\beta}{\lambda}$, where $\lambda\in\RR\setminus\{0\}$,
\item[4)] $\omega$ satisfies the KMS condition and is faithful $\iff$ $\pi_{\omega}$ is faithful,
\item[5)] the $*$-subalgebra $\C^\alpha_\infty$ is norm dense in $\C$ if{}f $\alpha$ is strongly continuous, and it is weakly-$\star$ dense in $\C$ if{}f $\alpha$ is weakly-$\star$ continuous,
\item[6)] if $\C$ has a tracial $\phi\in\Scal(\C)$ and $\alpha$ is given by $\alpha_t=\exp(t[\ii h,x])$ for some $h\in\C$, then 
\begin{equation}
        \omega=\frac{\phi(\ee^{-\beta h}\,\cdot\,)}{\phi(\ee^{-\beta h})}
\label{KMS.exponential.form1}
\end{equation}
satisfies the KMS condition for $\alpha$ and every $\beta\in\RR$. On the other hand, if $\N=\BH$ for $\dim\H\in\NN$, and there is given a strongly continuous group homomorphism $\alpha:\RR\ni t\mapsto\alpha_t:=\Ad(\ee^{\ii tH})\in\Aut(\N)$ with $H\in\N^\sa$, then $\omega\in\Scal(\N)$ satisfies the KMS condition for $\alpha$ and $\beta$ if{}f 
\begin{equation}
        \omega=\frac{\tr_\H(\rho\,\cdot\,)}{\tr_\H(\rho)}\;\;\mbox{and}\;\;\rho=\ee^{-\beta H}\in\schatten_1(\H)^+.
\label{KMS.exponential.form2}
\end{equation}
\end{enumerate}
See \cite{Kastler:1976,Sakai:1991} for review of the properties implied by the KMS condition, and see  \cite{HKTP:1974,Roepstorff:1976,Haag:TrychPohlmeyer:1977,Fannes:Verbeure:1977,Sewell:1977,Araki:Sewell:1977,Araki:1978,Pusz:Woronowicz:1978:passive,Woronowicz:1985} for equivalent characterisations of the KMS condition.

The set $\Scal_\beta^\alpha(\C)\subseteq\Scal(\C)$ of all normalised positive functionals on a $C^*$-algebra $\C$ satisfying the KMS condition for a fixed automorphism $\alpha$ and a fixed value of $\beta$ is a convex space, compact in weak-$\star$ topology on $\C^\banach$. Moreover, it is a Choquet simplex. From Choquet's representation theorem it follows that every element of $\Scal_\beta^\alpha(\C)$ can be uniquely decomposed, by means of barycentric decomposition, as a convex combination of extremal elements of this space \cite{Ruelle:1970,Takesaki:1970}. Any $\omega\in\Scal_\beta^\alpha(\C)$ is an extremal element of $\Scal_\beta^\alpha(\C)$ if{}f $\zentr_{\pi_\omega(\C)^\comm{}^\comm}=\CC\II$ \cite{Araki:Miyata:1968,Araki:1969,Lanford:1970,Takesaki:1970,Winnink:1970}, that is, if{}f $\omega\in\fact(\C)$. If $\omega$ is not extremal and $\C$ is separable in norm topology (or $\H_\omega$ is separable), then the spectrum of $\zentr_{\pi_{\omega}(\C)^\comm{}^\comm}$ is labelled by extremal elements of $\Scal_\beta^\alpha(\C)$, and the corresponding unique maximal central measure $\tmu_{\zentr_{\pi_{\omega}(\C)^\comm{}^\comm}}$ on $\Scal_\beta^\alpha(\C)$ with barycenter $\omega$ is supported on $\fact(\C)$, providing a unique integral decomposition of $\omega$ in terms of \eqref{spec.decomp.zentr}. If $\Scal_\beta^\alpha(\C)$ contains only one element, $\omega$, then $\zentr_{\pi_\omega(\C)^\comm{}^\comm}=\CC\II$. If a $W^*$-algebra $\N$ is finite (see Section \ref{classification.section}), then $\Scal_\beta^\alpha(\N)=\{\omega\}$. 
\subsection{Tomita--Takesaki modular theory}
For the purpose of the following discussion, let us recall some facts about polar decompositions of possibly unbounded linear operators on a Hilbert space $\H$. A linear operator $x:\dom(x)\ra\H$ is called: \df{densely defined} if{}f $\dom(x)$ is a dense subset of $\H$; \df{closable} if{}f it is densely defined and the closure of a graph $\bigcup_{\xi\in\dom(x)}(\xi,x\xi)$ is a closed subset of $\H\oplus\H$. Given a closable $x$, an operator defined by the closure of its graph is called a \df{closure} of $x$, and is denoted by $\bar{x}$. A densely defined operator is called \df{closed} if{}f it is closable and $x=\bar{x}$. For every closable densely defined unbounded operator $x$ its \df{adjoint} $x^*$ is defined as a unique densely defined operator $x^*:\dom(x^*)\ra\H$ such that 
\begin{equation}
        \s{x^*\xi,\zeta}=
        \s{\xi,x\zeta}\;\;
        \forall\xi\in\dom(x^*)\;
        \forall\zeta\in\dom(x), 
\end{equation}
where
\begin{align}
\dom(x^*):&
        =\{
        \xi\in\H\mid
                \dom(x)\ni\zeta\mapsto
                \s{\xi,x\zeta}\in\CC\mbox{ is bounded}
        \}
        \\
        &=
        \left\{
                \xi\in\H\mid
                \sup_{\zeta\in\dom(x)}
                \left\{
                        \frac{\ab{\s{\xi,x\zeta}}}{\n{\zeta}}
                \right\}
                <\infty
        \right\}.
\end{align}
It satisfies $x^*=(\overline{x})^*$. According to the Hellinger--Toeplitz theorem \cite{Hellinger:Toeplitz:1910}, if $\dom(x)=\H$ and $x=x^*$, then $x\in\BH$. Hence, an unbounded operator $x$ can satisfy $x=x^*$ only on $\dom(x)\subsetneq\H$. A \df{support} $\rpktarget{SUPP.DREI}\supp(x)$ of a linear operator $x:\dom(x)\ra\H$ is defined as a projection in $\Proj(\BH)$ corresponding to a closed subspace $\overline{\dom(x)}\ominus\overline{\ker(x)}$.
 
A closed, densely defined linear operator $x:\dom(x)\ra\H$ is called: \df{self-adjoint} if{}f $x=x^*$ with $\dom(x)=\dom(x)^*$; \df{anti-self-adjoint} if{}f $x=-x^*$ with $\dom(x)=\dom(x)^*$; \df{positive} if{}f $\s{x\xi,\xi}\geq0\;\forall\xi\in\dom(x)$. If $x$ is positive, then one writes $x\geq0$. If $x\geq0$, then the operator 
\begin{equation}
        y:=\int_{\sp(x)}\pvm^{x}(\lambda)\sqrt{\lambda}
\end{equation}
is a unique positive operator satisfying $y^2=x$, and is denoted by $x^{1/2}$. For every closed densely defined linear operator $x:\dom(x)\ra\H$, the operator $x^*x$ is self-adjoint and positive, hence 
\begin{equation}
        \ab{x}:=\sqrt{x^*x}=\int_{\sp(x^*x)}\pvm^{x^*x}(\lambda)\sqrt{\lambda}
\end{equation}
is a unique positive operator satisfying $\ab{x}^2=x$. Moreover, every closed densely defined operator $x$ can be uniquely decomposed in a form $x=v\ab{x}$, called a \df{polar decomposition} of $x$, where $v$ is a partial isometry in $\BH$ such that $vv^*=\supp(x)$ and $v^*v=\overline{\ran(\ab{x})}$. The spectral representation of $\ab{x}$ reads $\ab{x}=\int_{\sp(\ab{x})}\pvm^{\ab{x}}(\lambda)\lambda$. The set of all self-adjoint (respectively, anti-self-adjoint) closed, densely defined linear operators on a given Hilbert space $\H$ will be denoted $\rpktarget{LIN}(\Lin(\H))^\sa$ (respectively, $(\Lin(\H))^\asa$).

Let $\N$ be a von Neumann algebra acting on the Hilbert space $\H$, and let $\Omega\in\H$ be cyclic and separating for $\N$. Consider a densely defined antilinear operator $R_\Omega$, acting on a dense subspace of $\H$ generated by action of the von Neumann algebra $\N$ on $\Omega$, and given by
\begin{equation}
        R_\Omega:x\Omega\mapsto x^*\Omega.
\end{equation}
This operator is always closable. Its closure $\overline{R}_\Omega$ has a unique polar decomposition $\overline{R}_\Omega=J_\Omega\Delta_\Omega^{1/2}$, where the strictly positive self-adjoint operator $\rpktarget{DELTA}\Delta_\Omega$ is called \df{modular operator}, and the conjugation operator $\rpktarget{J}J_\Omega$ is called \df{modular conjugation}. The operator $\Delta_\Omega$ is characterised by the equation $\Delta_\Omega= R^*_\Omega\overline{R}_\Omega$, while $J_\Omega$ satisfies $J^*_\Omega=J^{-1}_\Omega$, so it is antiunitary. These two operators satisfy also
\begin{equation}
        \begin{array}{ccccc}
\overline{R}_\Omega=\overline{R}^{-1}_\Omega=\Delta^{-1/2}_\Omega J^*_\Omega,&
R_\Omega^*=J\Delta_\Omega^{-1/2},&
\Delta_\Omega\Omega=\Omega=J_\Omega\Omega,&
J_\Omega\Delta_\Omega J_\Omega=\Delta_\Omega^{-1},&
J_\Omega\Delta^{\ii t}_\Omega=\Delta^{-\ii t}_\Omega J_\Omega.
\end{array}
\label{Tomita.modular.relations}
\end{equation}

The Tomita theorem \cite{Tomita:1967:a,Tomita:1967:b} states that every pair $(\N,\Omega)$, where $\N$ is a von Neumann algebra acting on a Hilbert space $\H$, and $\Omega\in\H$ is cyclic and separating for $\N$, determines a weakly-$\star$ continuous group homomorphism $\rpktarget{SIGMA}\sigma^\omega:\RR\ra\Aut(\N)$ and an antilinear $*$-isomorphism $j_\omega:\N\ra\N^\comm$ such that
\begin{enumerate}
\item[1)] $\sigma_t^\omega:x\mapsto\Delta^{\ii t}_\Omega x\Delta^{-\ii t}_\Omega$, and $\{\Delta_\Omega^{\ii t}\mid t\in\RR\}$ is a strongly continuous group of unitaries in $\BH$,
\item[2)] $j_\omega:x\mapsto J_\Omega xJ_\Omega$, and $J_\Omega\N J_\Omega=\N^\comm$,
\item[3)] the map $\RR\ni t\mapsto\Delta_\Omega^{\ii t}x\Omega\in\H$ has an analytic continuation to the strip $\{z\in\CC\mid-\frac{1}{2}<\im(z)<0\}$, and $\Delta_\Omega^{1/2}x\Omega=J_\Omega x^*\Omega$,
\item[4)] $x\in\zentr_\N$ $\limp$ $j_\omega(x)=x^*$, $\sigma^\omega_t(x)=x$.
\end{enumerate}
The Winnink--Takesaki theorem \cite{Winnink:1970,Takesaki:1970} (see also \cite{Takesaki:1971,Takesaki:1973:states}) states in addition that
\begin{enumerate}
\item[5)] $\omega(x):=\s{\Omega,x\Omega}\;\forall x\in\N$ is a unique element of $\N^{\banach+}$ that satisfies the KMS condition for $\sigma^\omega$ and $\beta=1$, and $\sigma^\omega$ is a unique strongly continuous one parameter group for which $\omega$ satisfies the KMS condition with $\beta=1$.\footnote{Sometimes it is stated that $\omega$ satisfies the KMS condition for $\sigma^\omega$ with $\beta=-1$. This is only a matter of convention (see e.g. \cite{Pedersen:1979}).}
\end{enumerate}
The bijective correspondence between cyclic and separating vectors $\Omega$ and elements $\omega\in\N^{\banach+}$ allows to use notation $\rpktarget{DELTA.ZWEI}\Delta_\omega:=\Delta_\Omega$ and $\rpktarget{J.ZWEI}J_\omega:= J_\Omega$.\footnote{Rieffel and van Daele \cite{Rieffel:vanDaele:1977} showed that the modular operators $\Delta$ and $J$ (as well as the KMS condition with respect to $\Ad(\Delta^{\ii t})$ and $\beta=1$) can be also characterised in terms of the properties of two real Hilbert subspaces of a given Hilbert space (see also \cite{Skau:1980}).} The group $\sigma^\omega:\RR\ni t\mapsto\Ad(\Delta_\omega^{\ii t})\in\Aut(\N)$ is called the \df{modular automorphisms} group. The KMS condition implies the invariance properties
\begin{align}
        \omega(x)&=\omega(\sigma_t^\omega(x))\;\;\forall x\in\N,\\
        \rpktarget{NSG}\N_{\sigma^\omega}=
        \{x\in\N\mid\sigma^\omega_t(x)=x\}&=
        \{x\in\N\mid\omega(xy)=
        \omega(yx)
        \;\forall y\in\N\}.
\end{align}
The unitary operators $\Delta^{\ii t}_\omega$ leave $\Omega$ invariant: $\Delta^{\ii t}_\omega\Omega=\Omega$, so the \df{modular hamiltonian} $\rpktarget{KOMEGA}K_\omega$, defined according to
\begin{equation}
        \ee^{-K_\omega}:=\Delta_\omega,
\end{equation}
satisfies $K_\omega^*=K_\omega$ and $K_\omega\Omega=0$. If $\omega=\s{\Omega,\,\cdot\,\Omega}$ is tracial, then $j_\omega(x)=x^*\;\forall x\in\N$. 

Every $\omega\in\N^+_{\star0}$ on a $W^*$-algebra $\N$ generates a GNS representation $(\H_\omega,\pi_\omega,\Omega_\omega)$ with cyclic and separating $\Omega_\omega$, hence every such $\omega$ satisfies the KMS condition with respect to uniquely determined weakly-$\star$ continuous group $\sigma^\omega:\RR\ni t\mapsto\sigma^\omega_t\in\Aut(\N)$ and $\beta=1$ with
\begin{equation}
        \sigma^\omega:\RR\ni t
        \mapsto
        \pi_\omega^{-1}
        \left(
        \Delta_\omega^{\ii t}\pi_\omega(x)\Delta^{-\ii t}_\omega
        \right)
        \in\N\;\;\forall x\in\N.
\end{equation}
Thus, \cite{Takesaki:1970}
\begin{equation}
        \phi\in\N^+_{\star0}
        \limp
        (x\in\N_{\sigma^\phi}
        \iff
        \phi([x,y])=0\;\forall y\in\N)
\end{equation}
A functional $\omega\in\C^{\banach+}$ on a $C^*$-algebra $\C$ is called \df{modular} if{}f its cyclic GNS representative $\Omega_\omega\in\H_\omega$ is a separating vector for the von Neumann algebra $\pi_\omega(\C)^\comm{}^\comm$, that is, if{}f $\omega$ has a faithful normal extension on $\pi_\omega(\C)^\comm{}^\comm$. The GNS representative $\Omega_\omega$ of every $\omega\in\C^{\banach+}_0$ on a $C^*$-algebra $\C$ is cyclic and separating for $\pi_\omega(\C)^\comm{}^\comm$ on $\H_\omega$, so such $\omega$'s are modular. Hence, every pair of a $C^*$-algebra $\C$ and $\omega\in\C^{\banach+}_0$ generates a unique Hilbert space $\H_\omega$ equipped with a modular automorphism $\sigma_t^\omega$ of the von Neumann algebra $\pi_\omega(\C)^\comm{}^\comm$, such that the normal extension of $\omega$ satisfies the KMS condition for $\sigma^\omega$ on $\pi_\omega(\C)^\comm{}^\comm$ and $\beta=1$.

The Tomita--Takesaki theory applies also to weights \cite{Tomita:1967:b,Takesaki:1970:notes,Combes:1971,Combes:1971:esperances} (see also \cite{vanDaele:1974,Rieffel:vanDaele:1977}). If $\N$ is a $W^*$-algebra and $\omega\in\W_0(\N)$, then there exists a closeable operator 
\begin{equation}
        R_\omega:
                [\nnn_\omega\cap\nnn_\omega^*]_\omega\ni[x]_\omega
        \mapsto
                [x^*]_\omega\in\H_\omega.
\end{equation}
Its closure $\overline{R}_\omega$ has a polar decomposition $\overline{R}_\omega=J_\omega\Delta_\omega^{1/2}=\Delta_\omega^{-1/2}J_\omega$ with self-adjoint, positive, invertible operator $\rpktarget{DELTA.DREI}\Delta_\omega$ and conjugation operator $\rpktarget{J.DREI}J_\omega$. They satisfy $\Delta_\omega=R_\omega^*\bar{R}_\omega$, $J_\omega^*=J_\omega^{-1}=J_\omega$ and all of equations in \eqref{Tomita.modular.relations}. The map $j_\omega:\pi_\omega(\N)\ni x\mapsto J_\omega x J_\omega\in\pi_\omega(\N)^\comm$ is an antilinear $*$-isomorphism. Moreover, 
$\omega$ satisfies the KMS condition for $\sigma^\omega$ and $\beta=1$, where the group $\sigma^\omega(\RR)\subseteq\Aut(\N)$ is given by 
\begin{equation}\rpktarget{SIGMA.DREI}
        \sigma^\omega:\RR\ni t\mapsto
        \sigma^\omega_t(x)=
        \pi_\omega^{-1}(
                \Delta^{\ii t}_\omega\pi_\omega(x)\Delta^{-\ii t}_\omega
        )\in\N\;\;\;\forall x\in\N.
\end{equation}
In addition,
\begin{align}
\Delta^{\ii t}_\omega[x]_\omega&=[\sigma^\omega_t(x)]_\omega\;\;\forall t\in\RR\;\forall x\in\nnn_\omega,\\
\sigma^\omega_t(\nnn_\omega\cap\nnn_\omega^*)&=\nnn_\omega\cap\nnn_\omega^*,\\
j_\omega(\pi_\omega(\N))&=\pi_\omega(\N)^\comm,\\
\omega(\sigma^\omega_t(x))&=\omega(x)\;\;\forall x\in\N^+\;\forall t\in\RR.
\label{omega.sigma.invariance}
\end{align}
The weight $\phi$ on a $W^*$-algebra $\N$ is a trace if{}f $\Delta_\phi=\II$, and every semi-finite faithful normal trace $\tau$ on $\N$ satisfies $\sigma^\tau_t=\id_\N\;\forall t\in\RR$ \cite{Hugenholtz:1967}. 

From faithfulness of the normal state on the reduced algebra it follows that every $\phi\in\N_\star^+$ satisfies the KMS condition with respect to $\sigma^\phi$ and $\beta=1$ on the reduced algebra $\N_{\supp(\phi)}$. More generally, every normal weight $\phi$ on $\N$ (not necessary semi-finite or faithful) determines a unique strongly continuous group $\{\sigma^\phi_t\mid t\in\RR\}\subseteq\Aut(\N)$ such that \eqref{omega.sigma.invariance} holds, and $\phi$ satisfies the KMS condition with respect to $\sigma^\phi:\RR\ni t\mapsto\sigma^\omega_t\in\Aut(\N)$ and $\beta=1$. However, this group can be identified with the group of modular automorphisms only on the reduced algebra $\N_{\supp(\phi)}$.
\subsection{Relative modular theory\label{relative.modular.theory.section}}
Refining earlier works by Takesaki \cite{Takesaki:1970} and Perdrizet \cite{Perdrizet:1970,Perdrizet:1971} on the dual pair of cones in Hilbert space that arises from the modular theory, Woronowicz \cite{Woronowicz:1972}, Connes \cite{Connes:1972,Connes:1974} and Araki \cite{Araki:1974:modular:conjugation} have introduced the new representation of countably finite $W^*$-algebras, that preserves certain relationship between modular conjugation operator and a positive cone in the representing Hilbert space. This representation has several remarkable properties that are \textit{natural} for dealing with the structures of modular theory over countably finite $W^*$-algebras. Independently, Haagerup \cite{Haagerup:1973,Haagerup:1975:standard:form} found a general case of this construction, called the \textit{standard representation}, which is valid for all $W^*$-algebras. It can be considered as a refinement (or replacement) of the GNS representation, particularly adapted to the setting of $W^*$-algebras.

A subspace $\D\subseteq\H$ of a complex Hilbert space $\H$ is called a \df{cone} if{}f $\lambda\xi\in\D$ $\forall\xi\in\D$ $\forall\lambda\geq0$. A cone $\D\subseteq\H$ is called \df{self-polar} if{}f
\begin{equation}
        \D=\{\zeta\in\H\mid\s{\xi,\zeta}_\H\geq0\;\forall\xi\in\D\}.
\end{equation}
Every self-polar cone $\D\subseteq\H$ is pointed ($\D\cap(-\D)=\{0\}$), spans linearly $\H$ ($\Span_\CC\D=\H$), and determines a unique conjugation $J$ in $\H$ such that $J\xi=\xi\;\forall\xi\in\H$ \cite{Haagerup:1973}, 
as well as a partial order on the set $\H^\sa:=\{\xi\in\H\mid J\xi=\xi\}$ given by,
\begin{equation}
        \xi\leq\zeta\;\iff\;\xi-\zeta\in\D\;\;\forall\xi,\zeta\in\H^\sa.
\end{equation}
See \cite{Haagerup:1973,Boes:1978,Iochum:1984} for additional discussion of self-polar cones in Hilbert spaces. A closed convex self-polar cone in $\H$ is called a \df{natural cone} \cite{Woronowicz:1972,Connes:1972,Connes:1974,Araki:1974:modular:conjugation}, and is denoted $\rpktarget{STDCONE.OMEGA}\stdcone_\Omega$ if{}f, for a given vector $\Omega\in\H$ cyclic and separating with respect to a given von Neumann algebra $\N$ on $\H$,
\begin{equation}
        \stdcone_\Omega=
        \overline{\bigcup_{x\in\N^+}\left\{xJ_\Omega x J_\Omega\Omega\right\}}=
        \overline{\Delta^{1/4}_\Omega\N^+\Omega}=
        \overline{\Delta^{-1/4}_\Omega(\N^\comm)^+\Omega},
\label{TTcone}
\end{equation}
where $(\Delta_\Omega,J_\Omega)$ are modular operator and modular conjugation associated with $(\N,\Omega)$. 
Every $\xi\in\stdcone_\Omega$ is cyclic with respect to $\N$ if{}f it is separating with respect to $\N$. In such case $J_\xi=J_\Omega$ and $\stdcone_\xi=\stdcone_\Omega$. However, if $\N$ is not countably finite, then it admits no cyclic and separating vector in $\H$. In such case a more general notion is needed.

If $\N$ is a $W^*$-algebra, $\H$ is a Hilbert space, $\stdcone\subseteq\H$ is a self-polar cone, $\pi$ is a nondegenerate faithful normal representation of $\N$ on $\H$, and $J$ is conjugation on $\H$, then the quadruple $(\H,\pi,J,\stdcone)$ is called \df{standard representation} of $\N$ and $(\H,\pi(\N),J,\stdcone)$ is called \df{standard form} of $\N$ if{}f \cite{Haagerup:1975:standard:form}\rpktarget{STDCONE}
\begin{enumerate}
        \item[1)] $J\pi(\N)J=\pi(\N)^\comm$,
        \item[2)] $\xi\in\stdcone\limp J\xi=\xi$,
        \item[3)] $\pi(x)J\pi(x)J\stdcone\subseteq\stdcone$,
        \item[4)] $\pi(x)\in\zentr_{\pi(\N)}\limp J\pi(x)J=\pi(x)^*$.
\end{enumerate}
In such case $J$ is called \df{standard conjugation}, while $\stdcone$ is called \df{standard cone}. If the elements of $\N$ are identified with the elements of $\pi(\N)$ acting on $\H$, then $\N$ is called to be in a standard form, or to \df{act standartly} on $\H$. In such case, we will also use the notation $\H=\H(\N)$ instead of $(\H,\pi,J,\stdcone)$. The standard representation satisfies following properties:
\begin{enumerate}
\item[1)] \begin{equation}
        \forall\phi\in\N_\star^+\;\exists !\xi_\pi(\phi)\in\stdcone\;\forall x\in\N\;\;\phi(x)=\s{\xi_\pi(\phi),\pi(x)\xi_\pi(\phi)}_\H.
\label{std.vector.representative}
\end{equation}
A vector $\rpktarget{STDREP}\xi_\pi(\phi)$ is called \df{standard vector representative} of $\phi$;
\item[2)] the map $\rpktarget{STDREP.REVERSE}\xi_\natural^\pi:\stdcone\ni\xi\mapsto\phi_\xi\in\N_\star^+$ defined by the condition $\phi_\xi(x)=\s{\xi,\pi(x)\xi}_\H$ $\forall x\in\N$ is a bijective norm continuous homeomorphism with $(\xi^\pi_\natural)^{-1}=\xi_\pi$. The map $\N_\star^+\ni\phi\mapsto\xi_\pi(\phi)\in\stdcone$ preserves the order $\leq$, and
\begin{equation}
\n{\xi-\zeta}^2\leq\n{\phi_\xi-\omega_\zeta}\leq\n{\xi-\zeta}\n{\xi-\zeta}\;\;\forall\xi,\zeta\in\stdcone;
\end{equation}
\item[3)] $\xi\in\stdcone\limp(J\xi\in\stdcone$ and $xj(x)\xi\in\stdcone\;\forall x\in\pi(\N))$;
\item[4)] for all $\zeta\in\H$ there exists a unique $\xi\in\stdcone$ and a unique partial isometry $v\in\pi(\N)$ such that $\zeta=v\xi$ and $v^*v=P(\xi)$, where $P(\xi)$ denotes a projection onto $\overline{\pi(\N)^\comm\xi}$. The equation $\zeta=v\ab{\zeta}$ with $\ab{\zeta}:=\xi$ is called a \df{polar decomposition} of $\zeta$;
\item[5)] every vector in $\H$ can be represented as a complex linear combination of four elements of $\stdcone$;
\item[6)] $\stdcone$ is closed and convex;
\item[7)] $\phi\in\N_{\star0}^+\limp\bigcup_{x\in\pi(\N)}\{xj(x)\xi_\pi(\phi)\}$ is dense in $\stdcone$, $\xi_\pi(\phi)$ is cyclic and separating for $\pi(\N)$, $J=J_{\xi_\pi(\phi)}$, and $\stdcone$ is a natural cone, $\stdcone=\stdcone_{\xi_\pi(\phi)}$, which satisfies 
\begin{equation}
        \Delta_{\xi_\pi(\phi)}^{\ii t}\stdcone_{\xi_\pi(\phi)}=\stdcone_{\xi_\pi(\phi)}\;\;\forall t\in\RR.
\end{equation}
\end{enumerate}
Every $W^*$-algebra has a faithful representation $\pi$ such that $\pi(\N)$ is in standard form, and this representation is unique up to a unitary equivalence: if $(\H_1,\pi_1,J_1,\stdcone_1)$ and $(\H_2,\pi_2,J_2,\stdcone_2)$ are two standard representations, and $\varsigma:\pi_1(\N)\ra\pi_2(\N)$ is a $*$-isomorphism, then there exists a unique unitary $u_\varsigma:\H_1\ra\H_2$ such that \cite{Haagerup:1973,Haagerup:1975:standard:form}:
\begin{enumerate}
\item[i)] $\varsigma(x)=u_\varsigma xu^*_\varsigma\;\;\forall x\in\pi_1(\N)$,
\item[ii)] $J_2=u_\varsigma J_1u^*_\varsigma$,
\item[iii)] $\stdcone_2=u_\varsigma\stdcone_1$.
\end{enumerate} 
This unitary will be called \df{standard unitary equivalence}. For any $W^*$-algebra $\N$, every $\omega\in\W_0(\N)$ determines a standard representation given by $(\H_\omega,\pi_\omega,J_\omega,\stdcone_\omega)$, where $J_\omega$ is a modular conjugation of $\omega$, and
\begin{equation}
        \stdcone_\omega=\overline{\bigcup_{x\in\nnn_\omega\cap\nnn_\omega^*}\{\pi_\omega(x)J_\omega[x]_\omega\}}=\overline{\Delta^{1/4}_\omega[\mmm_\omega^+]_\omega}.
\end{equation}
If $\N$ is countably finite, then every $\omega\in\N_{\star0}^+$ determines a standard representation of $\N$ given by the faithful normal GNS representation $(\H_\omega,\pi_\omega,\Omega_\omega)$, equipped with a modular conjugation $J_\omega$ and with a natural cone 
$\stdcone_{\Omega_\omega}$. If $\omega$ is tracial, then
\begin{equation}
        \stdcone_{\Omega_\omega}=\overline{\pi_\omega(\N)^+\Omega_\omega}.
\end{equation}
In what follows, we will always assume that the von Neumann algebras under consideration are in standard form  (unless explicitly stated otherwise).

For a given $W^*$-algebra $\N$, $\phi\in\W(\N)$, and $\omega\in\W_0(\N)$ the map
\begin{equation}
        R_{\phi,\omega}:[x]_\omega\mapsto[x^*]_\phi\;\;\forall x\in\nnn_\omega\cap\nnn_\phi^*
        \label{relative.modular.weights}
\end{equation}
is a densely defined, closable antilinear operator. Its closure admits a unique polar decomposition
\begin{equation}
        \overline{R}_{\phi,\omega}=J_{\phi,\omega}\Delta^{1/2}_{\phi,\omega},
\end{equation}
where $\rpktarget{JREL}J_{\phi,\omega}$ is a conjugation operator, called \df{relative modular conjugation}, while $\rpktarget{DELTAREL}\Delta_{\phi,\omega}$ is a positive self-adjoint operator on $\dom(\Delta_{\phi,\omega})\subseteq\H_\omega$ with $\supp(\Delta_{\phi,\omega})=\supp(\phi)\H_\omega$, called a \df{relative modular operator} \cite{Araki:1973:relative:hamiltonian,Connes:1974,Digernes:1975}. These operators satisfy
\begin{equation}
        \Delta_{\phi,\omega}=J_{\phi,\omega}\Delta_{\omega,\phi}^{-1}J_{\phi,\omega},\;\;
        \Delta_{\lambda_1\phi,\lambda_2\omega}=\frac{\lambda_1}{\lambda_2}\Delta_{\phi,\omega}\;\;\forall\lambda_1,\lambda_2\in\RR.
\end{equation}
Moreover, for any $\omega\in\W_0(\N)$
\begin{equation}
        \Delta_{\omega,\omega}=\Delta_\omega,\;\;J_{\omega,\omega}=J_\omega.
\end{equation}
The relative modular operator can be equivalently characterised by 
\begin{equation}
        \Delta_{\phi,\omega}=R^*_{\phi,\omega}\bar{R}_{\phi,\omega}.
        \label{relative.modular.for.natural.cone}
\end{equation}
The self-adjoint operator $K_{\phi,\omega}=-\log\Delta_{\phi,\omega}$ is called  \df{relative modular hamiltonian} \cite{Araki:1973:relative:hamiltonian,Araki:1976:relham:relent}. For every $\phi,\omega\in\W_0(\N)$ the relative modular conjugation $J_{\phi,\omega}$ determines a unique unitary operator $\rpktarget{STDUNITRANS}J_\phi J_{\phi,\omega}=:V_{\phi,\omega}:\H_\omega\ra\H_\phi$, such that
\begin{align}
        \pi_\phi(x)&=V_{\phi,\omega}\pi_\omega(x)V_{\phi,\omega}^*,\\
        V_{\phi,\omega}(\stdcone_\omega)&=\stdcone_\phi,\\
        V_{\phi,\omega}J_\omega&=J_\phi V_{\phi,\omega}.
\end{align}
Thus, $V_{\phi,\omega}$ is a standard unitary equivalence of a $*$-isomorphism $\varsigma_{\phi,\omega}:\pi_\omega(\N)\ra\pi_\omega(\N)$ determined by the condition $\varsigma_{\phi,\omega}\circ\pi_\omega=\pi_\phi$. We will call $V_{\phi,\omega}$ \df{standard unitary transition} between $\H_\omega$ and $\H_\phi$. By definition of $V_{\phi,\omega}$, $J_{\phi,\omega}$ satisfies $J_{\phi,\omega}=V_{\phi,\omega}J_\omega=J_\phi V_{\phi,\omega}$. If $\omega,\phi\in\N^+_{\star0}$, then the operators $\Delta_{\phi,\omega}$ can be defined by the unique polar decomposition $\overline{R}_{\phi,\omega}=J_{\phi,\omega}\Delta^{1/2}_{\phi,\omega}$ of the closure of a densely defined antilinear operator $R_{\phi,\omega}$,
\begin{equation}
    R_{\phi,\omega}:\H_\omega\ni x\Omega_\omega\mapsto x^*\Omega_{\phi}\in\H_\phi,
\label{haag.relative.modular}
\end{equation}
where $\Omega_\omega$ and $\Omega_{\phi}$ are cyclic and separating GNS vector representatives of $\omega$ and $\phi$, respectively. However, this holds only if the $W^*$-algebra $\N$ is countably finite, which is one of the main reasons for introducing standard representation.

The relative modular operators allow to define an ultrastrongly-$\star$ continuous one-parameter family of partial isometries in $\supp(\phi)\N$, called \df{Connes' cocycle} \cite{Connes:1973:classification},\rpktarget{CONNES.COC}
\begin{equation}
\RR\ni t\mapsto\Connes{\phi}{\omega}{t}:=\Delta^{\ii t}_{\phi,\psi}\Delta^{-\ii t}_{\omega,\psi}=\Delta^{\ii t}_{\phi,\omega}\Delta^{-\ii t}_\omega\in\supp(\phi)\N,
\label{Connes.cocycle.def}
\end{equation}
where $\psi\in\W_0(\N)$ is arbitrary, so it can be set equal to $\omega$. If $\phi\in\W_0(\N)$, then $\Connes{\phi}{\omega}{t}$ becomes an ultrastrongly continuous family of unitary elements of $\N$. As shown by Araki and Masuda \cite{Araki:Masuda:1982} (see also \cite{Masuda:1984}), the definition of $\Delta_{\phi,\omega}$ and $\Connes{\phi}{\omega}{t}$ can be further extended to the case when $\phi,\omega\in\W(\N)$, by means of a densely defined closable antilinear operator
\begin{equation}
        R_{\phi,\omega}:[x]_\omega+(\II-\supp(\overline{[\nnn_\phi]_\omega}))\zeta\mapsto\supp(\omega)[x^*]_\phi\;\;\forall x\in\nnn_\omega\cap\nnn_\phi^*\;\forall\zeta\in\H,
\label{relative.modular.for.normal.weights}
\end{equation}
where $(\H,\pi,J,\stdcone)$ is a standard representation of a $W^*$-algebra $\N$, and $\H_\phi\subseteq\H\supseteq\H_\omega$. For $\phi,\omega\in\N_\star^+$ this becomes a closable antilinear operator \cite{Araki:1977:relative:entropy:II,Kosaki:1980:PhD}
\begin{equation}
        R_{\phi,\omega}:x\xi_\pi(\omega)+\zeta\mapsto\supp(\omega)x^*\xi_\pi(\phi)\;\;\forall x\in\pi(\N)\;\forall\zeta\in(\pi(\N)\xi_\pi(\omega))^\bot,
        \label{relative.modular.for.normal.states}
\end{equation}
acting on a dense domain $(\pi(\N)\xi_\pi(\omega))\cup(\pi(\N)\xi_\pi(\omega))^\bot\subseteq\H$, where $(\pi(\N)\xi_\pi(\omega))^\bot$ denotes a complement of the closure in $\H$ of the linear span of the action $\pi(\N)$ on $\xi_\pi(\omega)$. In both cases, the relative modular operator is determined by the polar decomposition of the closure $\overline{R}_{\phi,\omega}$ of $R_{\phi,\omega}$,\rpktarget{DELTA.REL.ZWEI}
\begin{equation}
        \Delta_{\phi,\omega}:=R^*_{\phi,\omega}\overline{R}_{\phi,\omega}.
\label{RR.Delta.relative}
\end{equation}
If \eqref{relative.modular.for.normal.weights} or \eqref{relative.modular.for.normal.states} is used instead of \eqref{relative.modular.weights}, then the formula \eqref{Connes.cocycle.def} has to be replaced by\rpktarget{CONNES.COC.ZWEI}
\begin{equation}
        \RR\ni t\mapsto\Connes{\phi}{\omega}{t}\supp(\overline{[\nnn_\phi]_\psi}):=\Delta^{\ii t}_{\phi,\psi}\Delta^{-\ii t}_{\omega,\psi},
\label{Connes.for.ns.weights}
\end{equation}
and $\Connes{\phi}{\omega}{t}$ is a partial isometry in $\supp(\phi)\N\supp(\omega)$ whenever $[\supp(\phi),\supp(\omega)]=0$.

Consider a von Neumann algebra $\N$, acting on a Hilbert space $\H$, a weight $\phi^\comm\in\W_0(\N^\comm)$, a \df{lineal} set \cite{Sherstnev:1974,Sherstnev:1977}
\begin{equation}
        \D(\H,\phi):=
        \left\{
                \zeta\in\H\mid
                \exists\lambda\geq0\;
                \forall x\in\nnn_{\phi^\comm}\subseteq\N^\comm\;\;
                \n{x\zeta}^2\leq\lambda\phi^\comm(x^*x)
        \right\},
\label{lineal.set}
\end{equation}
and a bounded operator
\begin{equation}
        R_{\phi^\comm}(\xi):
        \H_{\phi^\comm}\ni[x]_{\phi^\comm}
        \mapsto
         x\xi\in\H
         \;\;\forall x\in\nnn_{\phi^\comm}
         \;\;\forall\xi\in\D(\H,\phi),
\end{equation}
where $(\H_{\phi^\comm},\pi_{\phi^\comm})$ is a GNS representation of $\N^\comm$ with respect to $\phi^\comm$. The map $\xi\mapsto R_{\phi^\comm}(\xi)$ is linear, and $R_{\phi^\comm}(\xi)R_{\phi^\comm}(\xi)^*\in\N^\comm$. For any $\psi\in\W(\N)$ the function
\begin{equation}
        \xi\mapsto\psi(R_{\phi^\comm}(\xi)R_{\phi^\comm}(\xi)^*)
\end{equation}
is closable and bounded on
\begin{equation}
        \D(\H,\phi,\psi):=
        \{
                \xi\in\D(\H,\phi)
                \mid 
                R_{\phi^\comm}(\xi)\in\nnn_\psi
        \}.
\end{equation}
The \df{Connes spatial quotient}\footnote{We use the term `quotient' instead of `derivative', in order to reserve the term `derivative' for the notions defined by means of differential or smooth structures. By the same reason we use the term `Radon--Nikod\'{y}m quotient' instead of `Radon--Nikod\'{y}m derivative'. The Radon--Nikod\'{y}m quotient and Connes' spatial quotient are defined in the absence of any differential structure, and they \textit{are not} derivatives (apart from very special cases), so they shall not be called `derivatives'. This is especially important in the context of the quantum information geometry, where one considers explicitly the derivatives on spaces of integrals and spaces of quantum states.} \cite{Connes:1980} is defined as the largest positive self-adjoint operator $\rpktarget{CONNES.SPAT}\connes{\psi}{\phi^\comm}:\D(\H,\phi,\psi)\ra\H$ satisfying
\begin{equation}
        \n{\left(\connes{\psi}{\phi^\comm}\right)^{1/2}\xi}^2=\psi(R_{\phi^\comm}(\xi)R_{\phi^\comm}(\xi)^*)\;\;\forall\xi\in\D(\H,\phi,\psi).
\label{connes.spatial.def}
\end{equation}
The operator $\left(\connes{\psi}{\phi^\comm}\right)^{1/2}$ is  unbounded, but it is essentially self-adjoint on $\D(\H,\phi,\psi)$. It satisfies \cite{Connes:1980,Terp:1981,Falcone:2000}
\begin{align}
\sup_\iota\{\psi_\iota\}=\psi&\limp\sup_\iota\left\{\connes{\psi_\iota}{\phi^\comm}\right\}=\connes{\psi}{\phi^\comm}\;\;\forall\psi,\psi_\iota\in\W(\N),\\
\left(\connes{\psi_2}{\phi^\comm}\right)^{\ii t}&=\Connes{\psi_2}{\psi_1}{t}\left(\connes{\psi_1}{\phi^\comm}\right)^{\ii t}
\;\;\forall\psi_1,\psi_2\in\W_0(\N)\;\forall t\in\RR,
\label{csd.prop.one}\\
\left(\connes{\psi}{\phi^\comm}\right)^{-1}&=\connes{\phi^\comm}{\psi}
\;\;\forall\psi\in\W_0(\N),
\label{csd.prop.two}\\
\connes{\psi(x\,\cdot\,x^*)}{\phi^\comm}&=x\connes{\psi}{\phi^\comm}x^*\;\;\forall\psi\in\W(\N)\;\forall x\in\N,\\
\sigma^\psi_t(x)&=\left(\connes{\psi}{\phi^\comm}\right)^{\ii t}x\left(\connes{\psi}{\phi^\comm}\right)^{-\ii t}
\;\;\forall x\in\N_{\supp(\psi)}\;\forall\psi\in\W(\N),
\label{csd.prop.three}\\
\connes{(\psi_1+\psi_2)}{\phi^\comm}&=\connes{\psi_1}{\phi^\comm}+\connes{\psi_2}{\phi^\comm}\;\;\forall\mbox{ finite }\psi_1,\psi_2\in\W(\N),\\
\psi_1\leq\psi_2\;&\iff\;\connes{\psi_1}{\phi^\comm}\leq\connes{\psi_2}{\phi^\comm}\;\;\forall\psi_1,\psi_2\in\W(\N).
\end{align}
The extension of $\connes{\psi}{\phi^\comm}$ to all normal weights $\psi$ on $\N$ was constructed by Terp \cite{Terp:1981}, and it also satisfies the above properties. Connes' spatial quotient is a generalisation of the relative modular operator $\Delta_{\phi,\psi}$. For a given choice of a standard representation, and for $\psi\in\W_0(\N)$ and $\phi\in\W_0(\N)$ or $\phi\in\N_\star^+$ one has
\begin{equation}
        \connes{\phi}{\psi^\comm}\xi=\Delta_{\phi,\psi}\xi,
\end{equation}
where $\xi\in\supp(\phi)\H$ and $\psi^\comm:=\psi(J\cdot J)\in\W_0(\N^\comm)$, so the properties \eqref{csd.prop.one}-\eqref{csd.prop.three} turn into the corresponding properties of $\Delta_{\phi,\psi}$. By \eqref{csd.prop.one} and \eqref{csd.prop.two}, if $\varphi,\phi,\psi\in\W_0(\N)$, then
\begin{equation}
        \left(\connes{\phi}{\varphi(J\cdot J)}\right)^{\ii t}
        \left(\connes{\psi}{\varphi(J\cdot J)}\right)^{-\ii t}=
        \Connes{\phi}{\psi}{t}\;\;\forall t\in\RR.
\end{equation}
If $\N$ is semi-finite, $\phi\in\W_0(\N)$, and $\tau$ is a faithful normal semi-finite trace on $\N$, then
\begin{equation}
        \left(\connes{\phi}{\tau(J\,\cdot\,J)}\right)^{\ii t}=\Connes{\phi}{\tau}{t}\;\;\forall t\in\RR.
\end{equation}
If $\Omega\in\H$ is cyclic and separating for $\N$, $\omega(x)=\s{\Omega,x\Omega}\;\forall x\in\N$ and $\omega^\comm(x^\comm)=\s{\Omega,x^\comm\Omega}\;\forall x^\comm\in\N^\comm$, then 
\begin{equation}
        \connes{\omega}{\omega^\comm}=\Delta_\omega.
\end{equation}
\subsection{Canonical representation and bimodules\label{cano.rep.bimod}}
We will consider now the canonical construction of relative modular structures over arbitrary $W^*$-algebra. The starting point is a characterisation of Connes' cocycle in terms of analytic properties of a pair of weights. Next we will follow Kosaki's \cite{Kosaki:1980:PhD} construction of a \textit{canonical} standard representation that is uniquely associated with a given $W^*$-algebra. Finally, we will consider Connes' idea \cite{Connes:1980} of right- and bi- $W^*$-modules (correspondences) \cite{Connes:1980,Popa:1986,Connes:1994,Yamagami:1994,Falcone:1996,Falcone:2000} in the context of Kosaki's canonical representation, and its special cases, as well as Falcone's \cite{Falcone:1996,Falcone:2000} definition of Connes' spatial quotient based on right $W^*$-modules.

For any $W^*$-algebra $\N$, $\phi\in\W_0(\N)$ and $\psi\in\W(\N)$, a \df{Connes cocycle} $\rpktarget{CONNES.COC.DREI}\RR\ni t\mapsto\Connes{\psi}{\phi}{t}\in\supp(\psi)\N$ can be characterised \cite{Connes:1973:classification} independently of any representation, as a unique ultrastrongly-$\star$ continuous family $\{u_t\mid t\in\RR\}$ of partial isometries in $\supp(\psi)\N$ such that for all $s,t\in\RR$
\begin{enumerate}
\item[1)] \begin{equation}
        u_{s+t}=u_s\sigma^\phi_s(u_t),
        \label{Connes.first.cond}
\end{equation}
\item[2)] $u_su_s^*=\supp(\psi)$,
\item[3)] $u_s^*u_s=\sigma^\phi_s(\supp(\phi))$,
\item[4)] $u_s\sigma^\phi_s(\nnn_\psi^*\cap\nnn_\phi)\subseteq\nnn_\psi^*\cap\nnn_\phi$,
\item[5)] \begin{equation}
        \sigma^\psi_t(x)=u_t\sigma^\phi_t(x)u_t^*\;\;\forall x\in\N_{\supp(\psi)},
        \label{Connes.second.cond}
\end{equation}
\item[6)] for all $x\in\nnn_\psi\cap\nnn_\phi^*$ and for all $y\in\nnn_\phi\cap\nnn_\psi^*$ there exists a function $F_{x,y}$ that is bounded and continuous on the strip $\{z\in\CC\mid\im(z)\in[0,1]\}$ and holomorphic in its interior such that 
\begin{align}
        F_{x,y}(t)&=\psi(u_t\sigma^\phi_t(y)x),\\
        F_{x,y}(t+\ii)&=\phi(xu_t\sigma^\phi_t(y)).
\end{align}
\end{enumerate}
Hence, as opposed to relative modular operators, Connes' cocycle is a canonical object associated to any pair $(\psi,\phi)\in\W(\N)\times\W_0(\N)$ of weights on any $W^*$-algebra $\N$. If $\psi\in\W_0(\N)$, then $t\mapsto\Connes{\psi}{\phi}{t}$ becomes an ultrastrongly continuous group of unitaries in $\N$, the conditions 2) and 3) are satisfied trivially, the relation $\subseteq$ in 4) becomes $=$, and $\N_{\supp(\psi)}$ in 5) turns to $\N$ \cite{Connes:1973:classification}. If $\psi\in\N_\star^+$ and $\phi\in\N^+_{\star0}$, then the condition 4) is satisfied trivially, while the domains of $x$ in $y$ in the condition 6) become $x\in\N\supp(\psi)$ and $y\in\supp(\psi)\N$ \cite{Kosaki:1980:PhD}. 

Conversely, if $\phi\in\W_0(\N)$ and $\{u_t\mid t\in\RR\}$ is an ultrastrongly-$\star$ continuous one parameter family of partial isometries in $\N$ such that for all $s,t\in\RR$
\begin{enumerate}
\item[1')] $u_{s+t}=u_s\sigma^\phi_s(u_t)$,
\item[2')] $u_su_s^*\in\Proj(\N)$,
\item[3')] $u_s^*u_s=\sigma^\phi(u_su_s^*)$,
\end{enumerate}
then there exists a unique $\psi\in\W(\N)$ such that $\Connes{\psi}{\phi}{t}=u_t$ and $\supp(\psi)=u_tu_t^*$ $\forall t\in\RR$. If $\{u_t\mid t\in\RR\}$ is assumed to be an ultrastrongly continuous family of unitaries in $\N$ satisfying condition 1'), then there exists a unique $\psi\in\W_0(\N)$ such that $\Connes{\psi}{\phi}{t}=u_t\;\forall t\in\RR$. The main properties of Connes' cocycle for $\omega_1,\omega_2,\omega_3\in\W_0(\N)$ are:
\begin{align}
\Connes{\omega_1}{\omega_2}{0}&=\II,\\
\Connes{\omega_1}{\omega_2}{t}=\II\;\forall t\in\RR
&\limp\;\omega_1=\omega_2,\\
\Connes{\omega_1}{\omega_3}{t}=\Connes{\omega_2}{\omega_3}{t}
&\iff\;\omega_1=\omega_2,\\
\Connes{\omega_1}{\omega_2}{t}\Connes{\omega_2}{\omega_3}{t}&=
\Connes{\omega_1}{\omega_3}{t},\label{Connes.cocycle}\\
\Connes{\omega_1}{\omega_2}{t}^*&=\Connes{\omega_2}{\omega_1}{t},
\label{Connes.cocprop.star}\\
\Connes{\omega_1}{\omega_2}{t}(\sigma_t^{\omega_1}(x))&=(\sigma_t^{\omega_1}(x))\Connes{\omega_1}{\omega_2}{t},\\
\omega_1(\cdot)=\omega_2(u\cdot u^*)&\iff\Connes{\omega_1}{\omega_2}{t}=u^*\sigma^{\omega_2}_t(u)\;\;\forall u\in\N^\uni.
\end{align}
If $\omega_1,\omega_2\in\W(\N)$ and $\Connes{\omega_1}{\omega_2}{t}$ defined by \eqref{Connes.for.ns.weights} satisfies
\begin{align}
        \Connes{\omega_1}{\omega_2}{t}\Connes{\omega_1}{\omega_2}{t}^*
        &=\sigma^{\omega_1}(\supp(\omega_1)\supp(\omega_2)),\\
        \Connes{\omega_1}{\omega_2}{t}^*\Connes{\omega_1}{\omega_2}{t}
        &=\sigma^{\omega_2}_t(\supp(\omega_1)\supp(\omega_2)),\\
        [\supp(\omega_1),\supp(\omega_2)]
        &=0,
\end{align}
then \eqref{Connes.cocprop.star} holds, \eqref{Connes.first.cond} holds for $\supp(\omega_2)\geq\supp(\omega_1)$, \eqref{Connes.second.cond} holds for $x\in\supp(\omega_1)\N\supp(\omega_2)$, while \eqref{Connes.cocycle} holds if either $\supp(\omega_2)\geq\supp(\omega_1)$ or $\supp(\omega_2)\geq\supp(\omega_3)$.

Following Kosaki \cite{Kosaki:1980:PhD}, consider new addition and multiplication structure on $\N_\star^+$,
\begin{align}
        \lambda\sqrt{\phi}&=
        \sqrt{\lambda^2\phi}
        \;\;\forall\lambda\in\RR^+
        \;\forall\phi\in\N_\star^+,
        \label{Kosaki.new.multiplication}\\
        \sqrt{\phi}+\sqrt{\psi}&=
        \sqrt{(\phi+\psi)(y^*\,\cdot\,y)}
        \;\;\forall\phi,\psi\in\N_\star^+,
        \label{Kosaki.new.addition}
\end{align}
where\footnote{For the theorem of Connes specifying conditions guaranteeing existence and boundedness of an analytic continuation of $\Connes{\phi}{\psi}{t}$ to $t=\ii/2$ see Section \ref{nc.RN.section}.}
\begin{equation}
        y:=\Connes{\phi}{(\phi+\psi)}{-\ii/2}+\Connes{\psi}{(\phi+\psi)}{-\ii/2},
\end{equation}
and $\sqrt{\phi}$ is understood as a \textit{symbol} denoting the element $\phi$ of $\N_\star^+$ whenever it is subjected to the above operations instead of `ordinary' addition and multiplication on $\N_\star^+$. A `noncommutative Hellinger integral' on $\N_\star^+$,
\begin{equation}
        \kosaki{\phi}{\psi}:=(\phi+\psi)\left(\Connes{\psi}{(\phi+\psi)}{-\ii/2}^*\Connes{\phi}{(\phi+\psi)}{-\ii/2}\right)
\end{equation}
is a positive bilinear symmetric form on $\N_\star^+$ with respect to the operations defined by \eqref{Kosaki.new.multiplication} and \eqref{Kosaki.new.addition}. Consider an equivalence relation $\sim_\surd$ on pairs $(\sqrt{\phi},\sqrt{\psi})\in\N_\star^+\times\N_\star^+$,
\begin{equation}
        (\sqrt{\phi_1},\sqrt{\psi_1})\sim_\surd(\sqrt{\phi_2},\sqrt{\psi_2})\;\;\iff\;\;\sqrt{\phi_1}+\sqrt{\psi_2}=\sqrt{\phi_2}+\sqrt{\psi_1}.
\end{equation}
The set of equivalence classes $\N_\star^+\times\N_\star^+/\sim_\surd$ can be equipped with a real vector space structure, provided by
\begin{align}
(\sqrt{\phi_1},\sqrt{\phi_2})_\surd+(\sqrt{\psi_1},\sqrt{\psi_2})_\surd&:=(\sqrt{\phi_1}+\sqrt{\psi_1},\sqrt{\phi_2}+\sqrt{\psi_2})_\surd,\\
\lambda\cdot(\sqrt{\phi},\sqrt{\psi})_\surd&:=
        \left\{
                \begin{array}{ll}
                        (\lambda\sqrt{\phi},\lambda\sqrt{\psi})_\surd&:\lambda\geq0\\
                        ((-\lambda)\sqrt{\psi},(-\lambda)\sqrt{\phi})_\surd&:\lambda<0,
                \end{array}
        \right.
\end{align}
where $(\sqrt{\phi},\sqrt{\psi})_\surd$ denotes an element of $\N_\star^+\times\N_\star^+/\sim_\surd$. The real vector space $(\N_\star^+\times\N_\star^+/\sim_\surd,+,\cdot)$ will be denoted $V$. The map
\begin{equation}
        \N_\star^+\ni\phi\mapsto(\sqrt{\phi},0)_\surd\in V
\label{positive.predual.canonical.embedding}
\end{equation}
is injective, positive and preserves addition and multiplication by positive scalars. Its image in $V$ will be denoted by $L_2(\N)^+$. A function
\begin{equation}
        \s{\cdot,\cdot}_\surd:V\times V\ra\RR,
\end{equation}
\begin{equation}
        \s{(\sqrt{\phi_1},\sqrt{\phi_2})_\surd,(\sqrt{\psi_1},\sqrt{\psi_2})_\surd}_\surd:=
        \kosaki{\phi_1}{\psi_1}+
        \kosaki{\phi_1}{\psi_2}+
        \kosaki{\phi_2}{\psi_1}+
        \kosaki{\phi_2}{\psi_2},
\end{equation}
is an inner product on $V$, and $(V,\s{\cdot,\cdot}_\surd)$ is a real Hilbert space with respect to it, denoted $L_2(\N;\RR)$. The \df{canonical Hilbert space} is defined as a complexification of the Hilbert space $L_2(\N;\RR)$,
\begin{equation}
        L_2(\N):=L_2(\N;\RR)\otimes_\RR\CC.\rpktarget{CANON.HILB}
\end{equation}
The space $L_2(\N)^+$ is a self-polar convex cone in $L_2(\N)$, and, by \eqref{positive.predual.canonical.embedding}, it is an embedding of $\N_\star^+$ into $L_2(\N)$. The elements of $L_2(\N)^+$ will be denoted $\rpktarget{PHI.ONE.HALF}\phi^{1/2}$, where $\phi\in\N^+_\star$. Every element of $L_2(\N)$ can be expressed as a linear combination of four elements of $L_2(\N)^+$. The antilinear conjugation $\rpktarget{CANON.J}J_\N:L_2(\N)\ra L_2(\N)$ is defined by
\begin{equation}
        J_\N(\xi+\ii\zeta)=\xi-\ii\zeta\;\;\forall\xi,\zeta\in L_2(\N;\RR).
\end{equation}
The quadruple $(L_2(\N),\N,J_\N,L_2(\N)^+)$ is a standard form of $\N$, called a \df{canonical standard form} of $\N$. 

A bounded generator\footnote{See Section \ref{derivations.section} for a definition of this notion.} $\der_\N(x):=\frac{\dd}{\dd t}\left(f(\ee^{tx})\right)|_{t=0}$ of the norm continuous one parameter group of automorphisms
\begin{equation}
        \RR\ni t\mapsto f(\ee^{tx})\in\BBB(L_2(\N))\;\;\forall x\in\N,
\end{equation}
where $\N\ni x\mapsto f(x)\in\BBB(L_2(\N))$ is defined as a unique extension of the bounded linear function
\begin{equation}
        L_2(\N)^+\ni\phi^{1/2}\mapsto
        \left(\phi\left(
        \sigma^\phi_{+\ii/2}(x)x^*\,\cdot\,x\sigma^\phi_{-\ii/2}(x^*)
        \right)\right)^{1/2}\in L_2(\N)^+\;\;
        \forall x\in\N\;\forall\phi\in\N_\star^+,
\end{equation}
determines a map
\begin{equation}
        \der_\N:\N\ni x\mapsto\der_\N(x)\in\BBB(L_2(\N)),
\end{equation}
which is a homomorphism of real Lie algebras: for all $x,y\in\N$ and for all $\lambda\in\RR$,
\begin{align}
        [\der_\N(x),\der_\N(y)]&=\der_\N([x,y]),\\
        \lambda\der_\N(x)&=\der_\N(\lambda x).
\end{align}
The faithful normal representation $\rpktarget{CANON.PI}\pi_\N:\N\ra\BBB(L_2(\N))$,
\begin{equation}
        \pi_\N(x):=\textstyle\frac{1}{2}(\der_\N(x)-\ii\der_\N(\ii x)),
\end{equation}
determines a standard representation $(L_2(\N),\pi_\N,J_\N,L_2(\N)^+)$ of a $W^*$-algebra $\N$, called a \df{canonical representation} of $\N$. From the properties of standard representation it follows that every $*$-isomorphism $\varsigma:\N_1\ra\N_2$ of $W^*$-algebras $\N_1,\N_2$ determines a unique standard unitary equivalence $u_\varsigma:L_2(\N_2)\ra L_2(\N_1)$ satisfying $u_\varsigma(L_2(\N_2)^+)=L_2(\N_1)^+$, $u_\varsigma^*J_{\N_2}u_\varsigma=J_{\N_1}$, and such that $\Ad(u_\varsigma^*)$ is a unitary implementation\footnote{See Section \ref{automorphisms.section} for a definition of this notion.} of $\varsigma$. This means that Kosaki's construction of canonical representation defines a functor $\CanRep$ from the category $\WsIso$ of $W^*$-algebras with $*$-isomorphisms to the category $\StdRep$ of standard representations with standard unitary equivalences. It also allows to define a functor $\CanVN$ from the category $\Wsn$ of $W^*$-algebras with normal $*$-homomorphisms to the category $\VNn$ of von Neumann algebras with normal $*$-homomorphisms. The functor $\CanVN$ assigns $\pi_\N(\N)$ to each $\N$, and normal $*$-homomorphism $\pi_\N\circ\varsigma\circ\pi_\N^{-1}:\N_1\ra\N_2$ to each normal $*$-homomorphism $\varsigma:\N_1\ra\N_2$ (which is well defined due to faithfulness of $\pi_\N$). Let $\FrgHlb:\VNn\ra\Wsn$ be the forgetful functor which forgets about Hilbert space structure that underlies von Neumann algebras and their normal $*$-homomorphisms. Due to Sakai's theorem (see Section \ref{vNa.section}), $\CanVN$ and $\FrgHlb$ form the equivalence of categories,
\begin{equation}
        \FrgHlb\circ\CanVN\iso\id_{\Wsn},\;\;\CanVN\circ\FrgHlb\iso\id_{\VNn}.
\label{Sakai.Kosaki.duality}
\end{equation}

Given a $W^*$-algebra $\N$, a \df{right $\N$ module} $\H_{\pi(\N)}$ is defined as a Hilbert space $\H$ equipped with a normal antirepresentation $\pi^o:\N\ra\BH$, or, equivalently, a normal representation $\pi:\N^o\ra\BH$. If $\N_1$ and $\N_2$ are $W^*$-algebras, then an $\N_1$-$\N_2$ \df{bimodule} (or an $\N_1$-$\N_2$ \df{correspondence}) ${}_{\pi_1(\N_1)}\H_{\pi_2(\N_2)}$ is defined as a Hilbert space $\H$ equipped with a normal representation $\pi_1:\N_1\ra\BH$ and a normal representation $\pi_2:\N_2^o\ra\BH$ such that $\pi_1$ and $\pi_2$ commute:
\begin{align}
        \pi_1(\N_2)&\subseteq\pi_2(\N_2^o)^\comm,\\
        \pi_2(\N_2^o)&\subseteq\pi_1(\N_1)^\comm.
\end{align}
This allows to use an associative notation
\begin{equation}
        \pi_1(x)\pi_2(y^o)\xi=:x\xi y=(x\xi)y=x(\xi y)\;\;\forall x\in\N_1\;\forall y\in\N_2\;\;\;\forall\xi\in\H.
\end{equation}
The canonical representation $(L_2(\N),\pi_\N,J_\N,L_2(\N)^+)$ equips $L_2(\N)$ with $\N$-$\N$ bimodule structure provided by the left action of $\pi_\N:\N\ra\BBB(L_2(\N))$ and the right action of
\begin{equation}
        \pi^o_\N(\cdot):=J_\N\pi_\N(\cdot)^*J_\N=\textstyle\frac{1}{2}(\der_\N((\cdot)^*)+\ii\der_\N(\ii(\cdot)^*))^*.
\end{equation}
A \df{canonical right $\N$ module} and a \df{canonical $\N$-$\N$ bimodule} (the latter called also an \df{identity correspondence} \cite{Connes:1980}) are defined, respectively, as
\begin{equation}
        L_2(\N)_\N:=L_2(\N)_{J_\N\pi_\N(\N)^*J_\N},\;\;\;
        {}_\N L_2(\N)_\N:={}_{\pi_\N(\N)}L_2(\N)_{J_\N\pi_\N(\N)^*J_\N}.
\end{equation}

Let us consider a few special cases of this construction that correspond to cases discussed in the introduction of Section \ref{modular.theory.section}. Given a standard form $(\H,\pi,J,\stdcone)$ of a $W^*$-algebra $\N$, we can define a right action of $\N$ on $\H$ by a normal antirepresentation
\begin{equation}
        \xi x:=Jx^*J\xi\;\;\forall x\in\pi(\N)\;\forall\xi\in\H.
\end{equation}
Since $J\pi(\N)J=\pi(\N)^\comm$, $\pi^o(x):=J\pi(x^*)J$ defines a $*$-antiisomorphism $\pi(\N)\mapsto\pi^o(\N)$ and a normal representation $\pi:\N^o\ra\BH$, turning $\H$ into an $\N$-$\N$ bimodule. 
If $\tau$ is a faithful normal semi-finite trace on $\N$, then the standard representation of $\N$ can be specified as a GNS representation $(\H_\tau,\pi_\tau)$, equipped with the conjugation $J$ on $\H_\tau$, defined as a unique extension of the antilinear isometry $J:[x]_\omega\mapsto[x^*]_\omega\;\forall x\in\nnn_\omega$. The map 
\begin{equation}
\pi^o_\tau(x)[y]_\tau:=[yx]_\tau
\end{equation}
defines an antirepresentation of $\N$, whose boundedness follows from
\begin{equation}
        \n{[yx]_\tau}^2=\phi(x^*y^*yx)=\tau(yxx^*y^*)\leq\n{x}^2\tau(yy^*)=\n{x}^2\tau(y^*y)=\n{x}[y]^2_\tau.
\end{equation}
It satisfies
\begin{align}
        J\pi_\tau(x)J&=\pi_\tau^o(x^*)\;\;\forall x\in\N,\\
        (\pi_\tau^o(\N))^\comm&=\pi_\tau(\N).
\end{align}
If $\N\iso\BBB(\K)$ for some Hilbert space $\K$, then the standard representation is given by the Hilbert--Schmidt space $\schatten_2(\K)=\K\otimes\K^\banach=:\H_\HS$ (see Section \ref{integration.trace.section}), its positive cone $\stdcone=\H_\HS^+$, and
\begin{align}
        \rpktarget{LEFT}\pi(x)=\LLL(x):\H_\HS\ni\xi&\mapsto x\xi\in\H_\HS,\\
        J:\H_\HS\ni\xi&\mapsto\xi^*\in\H_\HS.
\end{align}
while the right action of $\N$ is given by the antilinear map
\begin{equation}
        \rpktarget{RIGHT}\RRR(x):\H_\HS\ni\xi\mapsto\xi x^*\in\H_\HS.
\end{equation}
These operations satisfy \cite{Murray:vonNeumann:1937,Haag:Hugenholtz:Winnink:1967}
\begin{equation}
\LLL(\N)\cap\RRR(\N)=\CC\II,\;\;\LLL(\N)^\comm=J\LLL(\N)J=\RRR(\N),\;\;\RRR(\N)^\comm=\LLL(\N).
\end{equation}

Finally, let us recall Falcone's  construction of Connes' spatial quotient in terms of right $\N$-modules \cite{Falcone:1996,Falcone:2000,Takesaki:2003} (see also \cite{Yamagami:1994}). For the Hilbert spaces $\H,\H_1,\H_2$, a $W^*$-algebra $\N$, and $\phi,\psi\in\W(\N)$, let the right $\N$ modules ${\H_1}_\N$ and ${\H_2}_\N$ be defined by normal  representations $\pi_1:\N^o\ra\BBB(\H_1)$ and $\pi_2:\N^o\ra\BBB(\H_2)$, respectively. Let
\begin{align}
        \BBB({\H_1}_\N,{\H_2}_\N)&:=\{x:\H_1\ra\H_2\mid x\mbox{ is a bounded linear map and }x(\xi y)=(x\xi y)\;\;\forall y\in\pi_1(\N)\},\\
        \nnn_\phi(\H)&:=\{x\in\BBB(\H_\N,L_2(\N)_\N)\mid\phi(x^*x)<\infty\}.
\end{align}
Then the antilinear operator
\begin{equation}
        R_{\phi,\psi}[x]_\psi:=[x^*]_\phi\;\;\forall x\in\nnn_\psi(\H)\cap\nnn_\phi(\H)^*
\end{equation}
is closable, and \df{Connes' spatial quotient} can be defined as\rpktarget{CONNES.SPAT.CANON}
\begin{equation}
        \left(\connes{\phi}{\psi^\comm}\right)^{1/2}:=R_{\phi,\psi}^*\bar{R}_{\phi,\psi}.
        \label{Connes.spatial.by.Falcone}
\end{equation}
See \cite{Falcone:1996,Falcone:2000,Takesaki:2003} for a proof of equivalence of definitions \eqref{Connes.spatial.by.Falcone} and \eqref{connes.spatial.def}. This allows to introduce a \df{canonical relative modular operator} on $L_2(\N)$ \cite{Kosaki:1980:PhD},\rpktarget{DELTA.REL.CANON}
\begin{equation}
        \Delta_{\phi,\psi}:=\connes{\phi}{\psi(J_\N\cdot J_\N)}.
\label{canonical.rel.mod.op}
\end{equation}
\subsection{Classification of $W^*$-algebras\label{classification.section}}
The problem of classification of von Neumann algebras has played a key role in the development of theory of operator algebras. As a consequence of the von Neumann double commutant theorem \cite{vonNeumann:1930:algebra}, the set $\Proj(\N)$ generates every von Neumann algebra $\N$ by $\Proj(\N)^\comm{}^\comm=\N$. This has to be contrasted with general $C^*$-algebras, which may have no nonzero projectors. Von Neumann and Murray developed a classification of factor von Neumann algebras $\N$ based on the analysis of the properties of $\Proj(\N)$ in terms of a \textit{dimension function} \cite{Murray:vonNeumann:1936,Murray:vonNeumann:1937,vonNeumann:1939,vonNeumann:1943,Murray:vonNeumann:1943,vonNeumann:1949}. Due to von Neumann's reduction theorem \cite{vonNeumann:1949}, every von Neumann algebra over a separable Hilbert space is uniquely isomorphic to a direct integral of factor von Neumann algebras (the construction of a direct integral of von Neumann algebras is based on the construction of the direct integral of Hilbert spaces). As a result, the classification of factor von Neumann algebras allows for a complete classification of all von Neumann algebras. Dixmier \cite{Dixmier:1949,Dixmier:1951:reduction}, Kaplansky \cite{Kaplansky:1950,Kaplansky:1951,Kaplansky:1952} and others developed equivalent classification of arbitrary von Neumann algebras $\N$ in terms of properties of projections in $\Proj(\N)$, without invoking dimension function, without using decomposition to factors, and without requirement of separability of the Hilbert space. Both classifications divide the collection of all von Neumann algebras into disjoint types I$_n$, I$_\infty$, II$_1$, II$_n$, and III. Von Neumann and Murray constructed two nonisomorphic type II$_1$ factors \cite{Murray:vonNeumann:1943} and proved \cite{vonNeumann:1943,Murray:vonNeumann:1943} existence of one example of type III factor. Next two examples of type III factors were found in \cite{Pukanszky:1956,Schwartz:1963}, while next seven examples of type II$_1$ factors were found in \cite{Schwartz:1963:typeII,Sakai:1968,Ching:1969,Dixmier:Lance:1969,ZellerMeier:1969}. The breakthrough came with discovery of continuum of type III$_\lambda$ factors by Powers \cite{Powers:1967}, a continuum of type III$_1$ factors by Araki and Woods \cite{Araki:Woods:1968}, and another  continuum of type III factors by Sakai \cite{Sakai:1970:typeIII}. Soon also a continuum of nonisomorphic factors of type II$_1$ and II$_\infty$ was discovered \cite{McDuff:1969:countable,McDuff:1969,Sakai:1970}. This has led to refined classification of type III von Neumann algebras, into types III$_0$, III$_1$ and III$_\lambda$ with $\lambda\in\,]0,1[$, developed by Connes and Takesaki \cite{Connes:1973:classification,Takesaki:1973:duality,Takesaki:1973:structure,Connes:Takesaki:1974,Connes:Takesaki:1977,Connes:1982,Takesaki:1983}. The Dixmier--Kaplansky classification was translated to abstract $W^*$-algebras by Sakai \cite{Sakai:1962,Sakai:1964,Sakai:1971}. Thanks to Kosaki's construction of canonical representation, the Connes--Takesaki refined classification of type III von Neumann algebras can be applied also to $W^*$-algebras in a canonical sense.

Given any $W^*$-algebra $\N$, consider a \df{dimension function}, defined as a map $\rpktarget{DIMFUN}\dimfun:\Proj(\N)\ra[0,+\infty]$ satisfying
\begin{enumerate}
\item[i)] $P=0\;\iff\;\dimfun(P)=0$,
\item[ii)] $P_1P_2=0\;\limp\;\dimfun(P_1+P_2)=\dimfun(P_1)+\dimfun(P_2)$,
\item[iii)] $P_1\sim P_2$ $\iff$ $\dimfun(P_1)=\dimfun(P_2)$,
\item[iv)] (($P_2\leq P_1$ and $P_2\sim P_1$) $\limp$ $P_2=P_1$) $\iff$ $\dimfun(P_1)<\infty$,
\end{enumerate}
where for any $P_1,P_2\in\Proj(\N)$ one writes $P_1\sim P_2$ if{}f there exists a partial isometry $v\in\N$ such that $P_1=v^*v$ and $P_2=vv^*$.

For any factor von Neumann algebra $\N$ acting on a separable Hilbert space there exists a dimension function $\dimfun_\N$ that is unique up to multiplication by $\lambda\in\,]0,\infty[$ \cite{Murray:vonNeumann:1936}, and every factor von Neumann algebra $\N$ can be classified as one and only one among of the following types:
\begin{itemize}
\item \df{type I$_n$} if{}f $\ran(\dimfun_\N)=\{0,1,2,\ldots,n\}$,
\item \df{type I$_\infty$} if{}f $\ran(\dimfun_\N)=\{0,1,2,\ldots,\infty\}$,
\item \df{type II$_1$} if{}f $\ran(\dimfun_\N)=[0,1]$,
\item \df{type II$_\infty$} if{}f $\ran(\dimfun_\N)=[0,\infty]$,
\item \df{type III} if{}f $\ran(\dimfun_\N)=\{0,\infty\}$.
\end{itemize}

If $\N=\BH$, $\H$ is separable and $P_\D\in\Proj(\N)$ is a projection operator onto a subspace $\D\subseteq\H$, then $\dimfun_\N(P_\D)=\dim\D$. Murray and von Neumann showed that for type II$_1$ factors $\N$ the dimension function $\dimfun_\N$ can be extended to a trace $\dimfun_\N:\N^+\ra[0,+\infty]$, and it is equal to a \textit{unique} trace on $\N$ that satisfies $\tau(\II)=1$ \cite{Murray:vonNeumann:1937}. This trace is normal, hence it corresponds to a distinguished fixed element of $\N_\star^+$ (see \cite{Yeadon:1971} for a proof based on the Ryll-Nardzewski theorem).

The `global' analogue of the above classification for an arbitrary $W^*$-algebra $\N$ is based on the analysis of the internal properties of $\Proj(\N)$.  A projection $P\in\Proj(\N)$ is called: \df{nonzero} if{}f $P\neq0$; \df{central} if{}f $P\in\zentr_\N$; \df{abelian} if{}f it is nonzero and $P\N P$ is commutative; \df{finite} if{}f $\forall P_1\in\Proj(\N)$ $(P_1\leq P$ and $P_1\sim P)\;\limp\;P_1=P$; \df{infinite} if{}f it is not finite; \df{purely infinite} if{}f it does not contain any nonzero finite projection; \df{properly infinite} if{}f $\forall P_1\in\Proj(\N)\cap\zentr_\N$ $P_1P$ is finite $\limp$ $P_1P=0$. A $W^*$-algebra is called to be: \df{finite} if{}f $\II$ is finite; \df{purely infinite} if{}f $\II$ is purely finite; \df{properly infinite} if{}f $\II$ is properly infinite; \df{semi-finite} if{}f the unique maximal purely infinite central projection in $\N$ is zero (or, equivalently, if{}f any nonzero central projection contains a nonzero finite projection); \df{type I} if{}f every nonzero central projection contains a nonzero abelian projection; \df{type I$_\infty$} if{}f it is of type I and is not finite; \df{type I$_n$} if{}f there exists a family $\{P_i\}\subseteq\Proj(\N)$, $i\in\{1,\ldots,n\}$, of abelian projections such that
\begin{equation}
        \sum_{i=1}^nP_i=\II,\;\;\;P_iP_j=0\;\;\forall i\neq j\in\{1,\ldots,n\},\;\;\;P_i\sim P_j\;\;\forall i,j\in\{1,\ldots,n\};
\end{equation}
\df{type II} if{}f $\N$ is semi-finite and does not contain any nonzero abelian projection; \df{type II$_1$} if{}f it is of type II and is finite; \df{type II$_\infty$} if{}f it is of type II and is properly infinite; \df{type III} if{}f it is purely infinite; \df{discrete} if{}f it is of type I; \df{continuous} if{}f it is not discrete. According to classification theorem for $W^*$-algebras, every $W^*$-algebra can be uniquely decomposed to a direct sum of five $W^*$-algebras, which are of type I$_n$, type I$_\infty$, type II$_1$, type II$_\infty$, and type III, respectively \cite{vonNeumann:1949,Dixmier:1957,Sakai:1964,Sakai:1971,Zsido:1973}.

Global classification of $W^*$-algebras based on the properties of $\Proj(\N)$ can be equivalently provided in terms of types of functionals they admit. A $W^*$-algebra is: semi-finite if{}f it admits a faithful normal semi-finite trace; finite if{}f it admits faithful normal finite trace. Type I$_n$ and type II$_1$ $W^*$-algebras admit faithful normal finite traces; type I$_\infty$ and type II$_\infty$ $W^*$-algebras do not admit them, but they admit faithful normal semi-finite traces; for type III $W^*$-algebras no faithful normal semi-finite trace exists. Hence, a $W^*$-algebra is semi-finite if{}f it does not contains any type III $W^*$-algebra. Finally, type II and type III $W^*$-algebras do not admit pure states \cite{Dye:1952,Dixmier:1953}\footnote{Recall that the GNS representation $\pi_\omega$ is irreducible if{}f $\omega$ is normalised and pure \cite{Segal:1947:irreducible}.}. The reasons for working with three classes of functionals on $W^*$-algebras can be summarised as follows:
\begin{enumerate}
\item[1)] only countably finite $W^*$-algebras admit faithful states,
\item[2)] only semi-finite $W^*$-algebras admit faithful normal semi-finite traces,
\item[3)] every $W^*$-algebra admits a faithful normal semi-finite weight.
\end{enumerate} 

All commutative $W^*$-algebras are type I factors. Every type I$_n$ factor is isomorphic to the algebra $\MNC$ of complex $n\times n$ matrices, while every type I$_\infty$ factor is isomorphic to the algebra $\BH$ on a separable infinite dimensional Hilbert space $\H$. In general, a $W^*$-algebra $\N$ is of type I$_n$ with $n\in\NN\cup\{+\infty\}$ if{}f it is $*$-isomorphic to $\BH$ for some $\H$ with $\dim\H=n$. Every normal trace on $\BH$ is either proportional to $\tr$, or takes the value $+\infty$ for all strictly positive operators \cite{Dixmier:1957}. Every type II$_\infty$ factor is a tensor product of some type I$_\infty$ factor and some type II$_1$ factor. Moreover, every type III factor is equal to a crossed product $\N\rtimes_{\sigma^\psi}\RR$ of a type II$_\infty$ factor with $\RR$ with respect to the group of modular automorphisms \cite{Takesaki:1973:duality} (see Section \ref{crossed.product.section}).

Given arbitrary $W^*$-algebra $\N$ and $\omega\in\W_0(\N)$, the spectrum $\sp(\Delta_\omega)$ measures the periodicity of the modular automorphism group $\sigma_t^\omega=\pi_\omega^{-1}\circ\Ad(\Delta^{\ii t}_\omega)$. If $\sp(\Delta_\omega)=\{1\}$ then $\sigma_t^\omega=\id_\N\;\forall t\in\RR$. If $\N$ is a countably finite $W^*$-algebra, then
\begin{equation}
\bigcap_{\omega\in\N^+_{\star0}}\sp(\Delta_\omega)=\{1\}\;\;\iff\;\;\N\mbox{ is finite}.
\end{equation}
The \df{modular spectrum} 
\begin{equation}\rpktarget{MODSPEC}
        \modspec(\N):=\bigcap_{\omega\in\W_0(\N)}\sp(\Delta_\omega)
\end{equation}
is an algebraic invariant characterising the factor $W^*$-algebra. If $\N$ is a type III factor, then it is called \cite{Connes:1973:classification}
\begin{itemize}
\item \df{type III$_0$} if{}f $\modspec(\N)=\{0,1\}$,
\item \df{type III$_\lambda$} if{}f $\modspec(\N)=\{0\}\cup\{\lambda^n\mid n\in\ZZ\}$ with $\lambda\in\,]0,1[$,
\item \df{type III$_1$} if{}f $\modspec(\N)=[0,\infty[$. 
\end{itemize}

The group $\Inn(\C)$ of \df{inner $*$-automorphisms} of a $C^*$-algebra $\C$ is defined by
\begin{equation}
        \Inn(\C):=\{\alpha\in\Aut(\C)\mid\exists u\in\C^\uni\;\;\forall x\in\C\;\;\;\alpha(x)=uxu^*\}.
\end{equation}
$\Inn(\C)$ is a normal subgroup of $\Aut(\C)$.\footnote{A subgroup $G_2$ of a group $G_1$ is called \df{normal} if{}f $g_1g_2g_1^{-1}\in G_2$ $\forall g_1\in G_1$ $\forall g_2\in G_2$.} Although $\Delta_\omega^{\ii t}x\Delta_\omega^{-\ii t}\in\N$ $\forall t\in\RR$ $\forall x\in\N$ for any $W^*$-algebra $\N$ and any $\omega\in\W_0(\N)$, the relationship $\Delta_\omega^{\ii t}\not\in\N$ holds also if $\N$ is semi-finite, unless $\Delta_\omega=\II$. 
Given
\begin{equation}
        \varepsilon_\N:\Aut(\N)\ra\Out(\N):=\Aut(\N)/\Inn(\N),
\end{equation}
one has 
\begin{equation}
        \varepsilon_\N(\sigma^\phi_t)=\varepsilon_\N(\sigma^\psi_t)\;\;\forall\psi,\phi\in\W_0(\N)\;\forall t\in\RR,
\end{equation}
and the map $\hat{\varepsilon}_\N:\RR\ni t\mapsto\varepsilon_\N(\sigma^\phi_t)\in\Out(\N)$ is a homeomorphism of the additive group $\RR$ onto the center of the group $\Out(\N)$, which does not depend on $\phi$. Connes \cite{Connes:1973:classification} showed that the $*$-automorphism groups $\sigma^\phi$ associated with the elements $\phi\in\W_0(\N)$ can be characterised as such arrows $\alpha$ for which the diagram
\begin{equation}
\xymatrix{
        &
        \RR
        \ar[dl]_{\alpha}
        \ar[dr]^{\hat{\varepsilon}_\N}
        &
        \\
        \Aut(\N)
        \ar[rr]_{\varepsilon_\N}
        &
        &
        \Out(\N).
}
\end{equation}
commutes. In this sense, modular automorphisms classify the noninner $*$-automorphisms of $W^*$-algebras. For type III$_\lambda$ factors $\N$ one has
\begin{equation}
        \ker(\hat{\varepsilon}_\N)=-\frac{2\pipi}{\log\lambda}\ZZ.
\end{equation}
The conditions
\begin{itemize}
\item[i)] $0\not\in\modspec(\N)$,
\item[ii)] $\modspec(\N)=\{1\}$,
\item[iii)] $\ker(\hat{\varepsilon}_\N)=\RR$,
\item[iv)] $\sigma^\phi\in\Inn(\N)\;\forall\phi\in\W_0(\N)$,
\item[v)] $\N$ is semi-finite,
\end{itemize}
are equivalent \cite{Takesaki:1970,Pedersen:Takesaki:1973,Connes:1973:classification}. 

The Connes--Takesaki refined classification theory leads to a question about possibility of complete classification of type III factors, and more generally, of all $W^*$-algebras. This problem \cytat{remains elusive and appears hardly more realizable than a full classification for infinite discrete groups} \cite{Segal:1996}. In particular, the space of isomorphism classes of factors is not countably separable \cite{Woods:1973}. However, there are still some properties which allow further refinement of the classification. A factor $W^*$-algebra $\N$ with a separable predual $\N_\star$ is called \df{approximately finite dimensional} if{}f it is generated by a sequence $\{\N_i\}$ of type I$_{n_i}$ factor $W^*$-algebras with $n_1\leq n_2\leq\ldots$, which holds if{}f $\{\bigcup_i\N_i\}$ is dense in $\N$ in weak-$\star$ topology. By von Neumann's bicommutant theorem, this implies that approximately finite dimensional factor von Neumann algebras satisfy $\N=\{\bigcup_i\N_i\}^\comm{}^\comm$. Von Neumann and Murray \cite{Murray:vonNeumann:1943} \ showed that there exists a unique approximately finite dimensional factor of type II$_1$ (see also \cite{Connes:1975,Connes:1976:injective,Connes:1977}). Connes \cite{Connes:1975,Connes:1976:injective} showed that there exists a unique approximately finite dimensional factor of type II$_\infty$ and of type III$_\lambda$ for each $\lambda\in\,]0,1[$ separately. Finally, Haagerup proved the uniqueness of an approximately finite dimensional type III$_1$ factor \cite{Haagerup:1987} (see also \cite{Connes:1985}). There is no unique approximately finite dimensional factor of type III$_0$ \cite{Krieger:1976,Connes:1975:hyperfinite}. A unique approximately finite dimensional type II$_\infty$ factor is equal to a tensor product of a unique approximately finite dimensional type I$_\infty$ and type II$_1$ factors, and a unique approximately finite dimensional type III$_1$ factor (which was defined in \cite{Araki:Woods:1968}) is a crossed product of a unique approximately finite dimensional type II$_\infty$ factor with $\RR$. 
\section{Automorphisms and their representations\label{automorphisms.section}}
Within the frames of a Hilbert space based approach to quantum theory, the symmetries of quantum theoretic models are usually investigated in terms of representations of groups $G$ by unitary operators in $\BH$ \cite{Weyl:1927,Weyl:1928,Wigner:1931,vanderWaerden:1932,Wigner:1939,Barut:Raczka:1977}. The algebraic perspective shifts considerations from the abstract Hilbert space $\H$ to the abstract $C^*$-algebra $\C$. This leads to description of symmetries of algebraic quantum models in terms of $*$-automorphisms $\rpktarget{ALPHA}\alpha\in\Aut(\C)$ and representations $G\ni g\mapsto\alpha_g\in\Aut(\C)$ of groups $G$.

Given any group\footnote{For an extension of the theory of $W^*$- and $C^*$- dynamical systems to groupoids, see \cite{Masuda:1984:I,Masuda:1984:II,Masuda:1985}.} $G$, a \df{representation} of $G$ in the group $\Aut(\C)$ of $*$-automorphisms of a $C^*$-algebra $\C$ is a map $\alpha:G\ni g\mapsto\alpha(g)=:\alpha_g\in\Aut(\C)$ which is a \df{group homomorphism}, that is,
\begin{enumerate}
        \item[1)] $\alpha(e)=\id_\C$,
        \item[2)] $\alpha(g_1)\circ\alpha(g_2)=\alpha(g_1\circ g_2)$ $\forall g_1,g_2\in G$,
\end{enumerate}
where $e$ denotes the neutral element of $G$. A group $G$ is called: \df{topological} if{}f it is also a topological space and a map $G\times G\ni(g_1,g_2)\mapsto g_1\circ g_2^{-1}\in G$ is continuous for all $g_1,g_2\in G$; \df{locally compact} if{}f it is topological and $e\in G$ has a compact\footnote{See Section \ref{comm.integr.section} for the definitions of compact and locally compact topological spaces.} topological neighbourhood. The group $\Aut(\C)$ can be considered as a topological group with respect to various topologies. In particular, the \df{norm topology} on $\Aut(\C)$ is defined by the convergence in the norm
\begin{equation}
        \n{\alpha_1-\alpha_2}_{\Aut(\C)}:=\sup_{\n{x}\leq 1}\n{\alpha_1(x)-\alpha_2(x)}\;\;\forall x\in\C\;\forall\alpha_1,\alpha_2\in\Aut(\C),
\end{equation}
while the \df{strong topology} on $\Aut(\C)$ is defined by the collection of neighbourhoods
\begin{equation}
        N_{\{x_i\},\epsilon}(\alpha):=\{\varsigma\in\Aut(\C)\mid\n{\varsigma(x_i)-\alpha(x_i)}<\epsilon\},
\end{equation}
where $\{x_i\}\subseteq\C$, $i\in\{1,\ldots,n\}$, $n\in\NN$, $\epsilon>0$. If $\N$ is a $W^*$-algebra, then $\Aut(\N)$ is a topological group also with respect to \df{weak-$\star$ topology} on $\Aut(\N)$, defined by the collection of neighbourhoods \cite{Takesaki:1983}
\begin{equation}
        N_{\{\omega_i\}}(\alpha):=
        \{\varsigma\in\Aut(\N)\mid
        \n{\omega_i\circ\alpha-\omega_i\circ\varsigma}_{\N_\star}<1,\;
        \n{\omega_i\circ\alpha^{-1}-\omega_i\circ\varsigma^{-1}}_{\N_\star}<1\},
\end{equation}
where $\{\omega_i\}\subseteq\N_\star$, $i\in\{1,\ldots,n\}$, $n\in\NN$. A triple $(\C,G,\alpha)$ of a $C^*$-algebra $\C$, locally compact group $G$, and a representation $\alpha:G\ra\Aut(\C)$ is called a \df{$C^*$-dynamical system} (or a \df{$C^*$-covariant system}) if{}f $\alpha$ is continuous in the strong topology of $\Aut(\C)$. This condition is equivalent to the continuity of the map $G\ni g\mapsto\alpha_g(x)\in\C$ in the norm topology of $\C$,
\begin{equation}
        \lim_{g\ra h}\n{\alpha_g(x)-\alpha_h(x)}=0\;\;\forall x\in\C\;\forall h\in G,
\label{strong.continuous.group}
\end{equation}
and such $\alpha$ is called a \df{strongly continuous} representation. A triple $(\N,G,\alpha)$ of a $W^*$-algebra, locally compact group $G$, and a representation $\alpha:G\ra\Aut(\N)$ is called a \df{$W^*$-dynamical system} (or a \df{$W^*$-covariant system}) if{}f $\alpha$ is continuous in the weak-$\star$ topology of $\Aut(\N)$. This condition is equivalent to the continuity of the map $G\ni g\mapsto\alpha_g(x)\in\N$ in the weak-$\star$ topology of $\N$ for any $x\in\N$, that is, to 
\begin{equation}
        G\ni g\mapsto\phi(\alpha_g(x))\in\CC\mbox{ is a continuous function}\;\;\forall x\in\N,
\label{weak.star.continuous.group}
\end{equation}
and such $\alpha$ is called a \df{weakly-$\star$ continuous} representation. Uniqueness of a predual of a $W^*$-algebra $\N$ allows to define isometries $\alpha_\star$ of $\N_\star$ that uniquely correspond to the elements $\alpha\in\Aut(\N)$, and to define the isometries of $\N_\star$ uniquely corresponding to representations $\alpha:G\ra\Aut(\N)$:
\begin{equation}
        \duality{(\alpha_g)_\star(\phi),x}_{\N_\star\times\N}=\duality{\phi,\alpha_g(x)}_{\N_\star\times\N}=\phi(\alpha_g(x))\;\;\forall x\in\N\;\forall\phi\in\N_\star.
\end{equation}
The above equivalence can be shown (see e.g. \cite{Sakai:1991}) by proving that \eqref{weak.star.continuous.group} implies continuity of $\alpha_\star$ in the norm of $\N_\star$,
\begin{equation}
        \lim_{g\ra e}\n{(\alpha_g)_\star(\phi)-\phi}_{\N_\star}=0\;\;\forall\phi\in\N_\star\;\forall g\in G.
\end{equation}

For example, for the Hilbert space $\H$ and a self-adjoint operator $H$ on $\H$, a triple $(\N,\RR,\alpha)$, with $\N:=\BH$ and $\alpha_t(x):=\ee^{\ii tH}x\ee^{-\ii tH}$ can be considered as a $C^*$-dynamical system and as a $W^*$-dynamical system. (See \cite{Simon:1976} for a detailed mathematical derivation of a Schr\"{o}dinger equation on a Hilbert space from a suitable one parameter group of automorphisms.)

Consideration of groups of $*$-automorphisms of a $C^*$-algebra $\C$ instead of groups of unitary operators on a Hilbert space is a nontrivial generalisation. For example, for a given group of $*$-automorphisms $\{\alpha_g\mid g\in G\}\subseteq\Aut(\C)$ there may be no strongly  continuous group of unitary elements $u(g)\in\C^\uni$ such that $\alpha_g(x)=u(g)xu(g)^*\;\forall x\in\C$. Moreover, if such group exists, it might be nonunique. On the other hand, a \df{unitary implementation} of a representation $\alpha:G\ra\Aut(\C)$ in a given representation $\pi:\C\ra\BH$ is defined as a map $u:G\ni g\mapsto u(g)\in\BH^\uni$ that determines a family $\{u(g)\mid g\in G\}$ of unitary operators satisfying the \df{covariance equation}
\begin{equation}
        \pi(\alpha_g(x))=u(g)\pi(x)u(g)^*\;\;\forall x\in\C\;\forall g\in G.
\label{covariance.equation.group.G}
\end{equation}
The condition \eqref{covariance.equation.group.G} alone does not determine $\{u(g)\mid g\in G\}$ uniquely. This leads to a question under which conditions one can guarantee the existence and uniqueness of corresponding groups of unitaries (and their generators) acting either on the level of algebra $\C$ or on the level of the representation of $\C$.
\subsection{Derivations\label{derivations.section}}
A \df{derivation} of a $C^*$-algebra $\C$ is defined \cite{Kaplansky:1953} as a map $\rpktarget{DER}\der:\dom(\der)\ra\C$, where $\dom(\der)\subseteq\C$ is a $^*$-subalgebra such that, for every $\lambda_1,\lambda_2\in\CC$ and every $x,y\in\dom(\der)$,
\begin{enumerate}
\item $\der(\lambda_1x+\lambda_2y)=\lambda_1\der(x)+\lambda_2\der(y)$,
\item $\der(xy)=\der(x)y+x\der(y)$,
\item $\der(x)^*=\der(x^*)$ and $x\in\dom(\der)\limp x^*\in\dom(\der)$.
\end{enumerate}
In this sense, a derivation is an algebraic derivative (differential) over a $C^*$-algebra. On the other hand, an algebraic analogue of a hamiltonian is provided by an (infinitesimal) \df{generator} of a strongly continuous representation $\alpha:\RR\ra\Aut(\C)$, which is defined as a map $k:\dom(k)\ra\C$, where
\begin{align}
        \dom(k)&:=\{x\in\C\mid\lim_{t\ra 0}\frac{1}{t}(\alpha_t(x)-x)\mbox{ exists in norm}\},\\
        k(x)&:=\lim_{t\ra 0}\frac{1}{t}(\alpha_t(x)-x)\;\;\forall x\in\dom(k).
\end{align}
If $k$ is a generator, then from the properties \eqref{homo.lin}-\eqref{homo.star} it follows that it is a derivation. But when a derivation is a generator? In other words, when it is `integrable' to a $*$-automorphism?

The unique relationship between derivations and generators can be established without introducing any additional conditions only if the derivation is bounded and if the $*$-automorphism is norm continuous. For any $C^*$-algebra $\C$ if $\dom(\der)=\C$, then $\der$ is bounded (and if $\C$ is also commutative, then $\der=0$) \cite{Kaplansky:1953,Sakai:1960}. In this case, there is a bijection between norm continuous groups $\RR\ni t\mapsto\alpha_t\in\Aut(\C)$ of $*$-automorphisms, their generators, and bounded derivations on $\C$, given by
\begin{align}
        \alpha_t&=\exp(t\der):=\sum_{n=0}^\infty\frac{(t\der)^n}{n!}\;\;\forall t\in\RR,\\
        \der&=\lim_{t\ra0}\frac{1}{t}(\alpha_t-\id_\N).
\end{align}

In case if $\der$ is a derivation on a $W^*$-algebra $\N$ and $\dom(\der)=\N$, then $\der$ is bounded and \cite{Kadison:1966,Sakai:1966} 
\begin{equation}
        \exists h\in\N^\sa\;\forall x\in\N\;\;\der(x)=\ii[h,x],
\label{Kadison.Sakai.thm}
\end{equation}
with $\n{h}\leq\frac{1}{2}\n{\der}$. For example, if $U:\RR\ni t\mapsto U(t)\in\BH^\uni$ is a strongly continuous group of unitary operators such that $U(t)\N U(t)^*\subseteq\N\;\forall t\in\RR$, then the family of maps
\begin{equation}
        \N\ni x\mapsto\alpha_t(x):=U(t)xU(t)^*\in\N
\end{equation}
is a weak-$\star$ continuous group of $*$-automorphisms of $\N$ whose generator $\der$ has the form $\der(x)=\ii[H,x]$, where $H$ is a self-adjoint generator of the group $\{U(t)\mid t\in\RR\}$. From \eqref{Kadison.Sakai.thm} it follows that for any derivation $\der:\C\ra\C$ on a $C^*$-algebra, and for any representation $\pi$ of $\C$,
\begin{equation}
       \exists h\in(\pi(\C)^\comm{}^\comm)^\sa\;\forall x\in\C\;\;\;\pi(\der(x))=\ii[h,\pi(x)]
\end{equation}
with $\n{h}\leq\frac{1}{2}\n{\der}$, hence there exists a group $\{\alpha_t\mid t\in\RR\}\subseteq\Aut(\C)$ such that
\begin{equation}
        \pi(\alpha_t(x))=\ee^{\ii th}\pi(x)\ee^{-\ii th}\;\;\forall x\in\C\;\forall t\in\RR.
\end{equation}

The conditions of norm continuity of $*$-automorphisms and boundedness of derivations put strong limitations on possible $*$-automorphisms that can be specified uniquely using derivations without assuming any additional properties \cite{Bratteli:Robinson:1979,Pillet:2006}. This leads to three (intertwined) directions: consideration of unbounded derivations on $C^*$-algebras (see e.g. \cite{Bratteli:1986,Sakai:1991}), consideration of weakly-$\star$ continuous automorphisms of $W^*$-algebras (see e.g. \cite{Borchers:1983:symm,Borchers:1983,Pillet:2006}), and imposing additional properties on derivations that guarantee the existence and uniqueness of their extensions to generators of groups of $*$-automorphisms (see e.g. \cite{Bratteli:Robinson:1979,Bratteli:1986,Sakai:1991}). In all these cases the pair $(\C,\der)$ faces not only the problem of unique determination of a corresponding $*$-automorphism $\alpha$, but also the problem of unique specification of a generator of a unitary implementation of $\{\alpha_t\mid t\in\RR\}$ in terms of unitary operators acting on $\BH$ for a given representation $(\H,\pi)$. In order to solve both problems, additional restrictions must be imposed on derivations.

For example, a derivation $\der$ on a $C^*$-algebra (or, respectively, a $W^*$-algebra) $\C$ determines uniquely a generator of a strongly continuous (or, respectively, weakly-$\star$ continuous) group $\{\alpha_t\mid t\in\RR\}\subseteq\Aut(\C)$ if{}f it satisfies the following conditions \cite{Hille:1948,Yosida:1948,Bratteli:Robinson:1976:II}:\footnote{If  $\C$ is a $C^*$-algebra and strong continuity is considered, then 2) is not necessary.}
\begin{enumerate}
\item[1)] $\der$ is densely defined and closed in the norm (or, respectively, weak-$\star$) topology,
\item[2)] $\II\in\dom(\der)$,
\item[3)] $(\II+\lambda\der)\dom(\der)=\C\;\;\forall\lambda\in\RR\backslash\{0\}$,
\item[4)] $(\II+\lambda\der)x\geq0\limp x\geq0\;\;\forall\lambda\in\RR\;\forall x\in\dom(\der)$,
\end{enumerate}
or, equivalently,
\begin{enumerate}
\item[4')]$ \n{(\II+\lambda\der)(x)}\geq\n{x}\;\;\forall\lambda\in\RR\;\forall x\in\dom(\der)$.
\end{enumerate}
However, there are various other theorems of this type. In particular \cite{Powers:Sakai:1975,Bratteli:Robinson:1976:II}, a derivation on a $C^*$-algebra $\C$ determines uniquely a generator of a strongly continuous group $\{\alpha_t\mid t\in\RR\}\subseteq\Aut(\C)$ if{}f it satisfies the conditions
\begin{enumerate}
\item[1)] $(\II\pm\der)\dom(\der)$ are dense in $\C$,
\item[2)] $\forall x\in\dom(\der)^\sa$ $\exists\phi_x\in\Scal(\C)$ such that $\phi_x(x)=\n{x}$ and $\phi_x(\der(x))=0$.
\end{enumerate}

Construction of a unique generator of unitary implementation can be provided in different inequivalent ways (see e.g. \cite{Bratteli:Robinson:1976,Bratteli:Robinson:1979,Sakai:1991}). For example \cite{Bratteli:Robinson:1979}, if $\der$ is a derivation of $C^*$-algebra $\C$ that is a generator of a strongly continuous representation $\alpha:\RR\ni t\mapsto\alpha_t\in\Aut(\C)$, and $\omega\in\Scal(\C)$ satisfy $\omega(\der(x))=0\;\forall x\in\dom(\der)$, then
\begin{equation}
        \left\{
        \begin{array}{l}
                \pi_\omega(\der(x))\xi=\ii[H,\pi_\omega(x)]\xi\;\;\forall\xi\in\pi_\omega(\dom(\der))\Omega_\omega,\\
                \pi_\omega(\dom(\der))\Omega_\omega\subseteq\dom(H),
        \end{array}
        \right. 
\end{equation}
where $(\H_\omega,\pi_\omega,\Omega_\omega)$ is a GNS representation of $\C$ generated by $\omega$ and $H$ is a self-adjoint generator of a one-parameter group of unitary transformations $U(t)$ that uniquely corresponds to $\alpha_t$ by
\begin{equation}
        \left\{
        \begin{array}{l}
                \pi_\omega(\alpha_t(x))=U(t)\pi_\omega(x)U(t)^*,\\
                U(t)\Omega_\omega=\Omega_\omega,
        \end{array}
        \right.
\label{unique.invariant.unitary.rep.auto}
\end{equation}
which holds for all $x\in\C$ and all $t\in\RR$.
\subsection{Standard liouvilleans\label{standard.liouvilleans.section}}
Characterisation of the generator of unitary implementation with the help of functionals over an algebra opens the problem of justification of the choice of particular properties of functionals that are used for this characterisation. The setting of $W^*$-algebras admits a remarkable solution to this problem: every pair of a $W^*$-dynamical system $(\N,\RR,\alpha)$ and a standard representation $(\H,\pi,J,\stdcone)$ determines uniquely a corresponding unitary implementation \textit{together with} a unique self-adjoint generator of this family of unitaries. This generator is called a \textit{standard liouvillean}\footnote{It would be however more precise to call it \textit{quantum koopmanian}, because in the commutative setting (of statistical mechanics and probability measures) the `liouvillean operator' (defined by the Poisson bracket) acts on elements of $L_1(\X,\mho(\X),\tmu)$, while it is the `koopmanian operator' \cite{Koopman:1931,vonNeumann:1932:zur,vonNeumann:1932:zusaetze} that acts on the positive cone of $L_1(\X,\mho(\X),\tmu)$.}. It is not called `hamiltonian', because in general its spectrum may be not bounded from any side, while the notion of  `hamiltonian' is usually understood as referring to a self-adjoint operator that generates a strongly continuous group of unitary operators \textit{and} has a nonnegative (or at least bounded from below) spectrum\footnote{E.g., \cytat{one of the most important principles of quantum field theory, ensuring the stability, demands that the energy should have a lower bound} \cite{Haag:1992}.}. Moreover, as opposed to hamiltonian, the construction of standard liouvillean does not require any additional analytic conditions that constrain derivation to an `integrable' infinitesimal generator. This way the $W^*$-algebraic approach makes the notion of a hamiltonian less relevant than the notion of a liouvillean. 

For any $W^*$-algebra $\N$, the unique predualisation of action of $\alpha\in\Aut(\N)$ can be connected with the uniqueness property of representation of elements of $\N_\star^+$ in terms of a standard cone of a standard representation $(\H,\pi,J,\stdcone)$ of $\N$: any $\alpha\in\Aut(\N)$ defines a unique map $u:\stdcone\ra\stdcone$ by 
\begin{equation}
        u\xi_\pi(\phi):=\xi_\pi(\alpha_\star(\phi))\;\;\;\forall\phi\in\N_\star^+.
\end{equation} 
This map is linear, can be extended to a unitary operator on all $\H$, and satisfies 
\begin{equation}
        u\pi(x)u^*=\pi(\alpha(x))\;\;\forall x\in\N.
\end{equation}
This leads to a question, whether it is possible to generate this way a \textit{standard} unitary implementation of a given representation $\alpha:G\ra\Aut(\N)$. The answer is in the affirmative, and was established by Haagerup \cite{Haagerup:1975:standard:form} (the special cases of this result were obtained earlier in \cite{Kadison:1965,Kallman:1971,Henle:1970,Halpern:1972,Pedersen:Takesaki:1973}). If $(\H,\pi,J,\stdcone)$ is a standard representation of a $W^*$-algebra $\N$, then there exists a unique  strongly continuous unitary implementation $\rpktarget{V.ALPHA}V_\alpha(g)$ of $\alpha$ satisfying
\begin{align}
        V_\alpha(g)\stdcone&=\stdcone,\\
        JV_\alpha(g)&=V_\alpha(g)J.
\end{align}
Such family $\{V_\alpha(g)\mid g\in G\}$ is called a \df{standard} unitary implementation of $\alpha$.

Thus, if $(\N,\RR,\alpha)$ is a $W^*$-dynamical system with $\N$ in standard form $(\H,\pi(\N),J,\stdcone)$, then from the theorems of Haagerup and Stone \cite{Stone:1930,Stone:1932,vonNeumann:1932:Stone} it follows that there exists a unique strongly continuous group of unitaries $\{V_\alpha(t)\mid t\in\RR\}\subseteq\BH^\uni$, and a unique self-adjoint operator $\rpktarget{K.ALPHA}K^\alpha$ on $\H$, called \df{standard liouvillean}, such that $V_\alpha(t)$ is a strongly continuous unitary implementation of $\alpha$ and for every $t\in\RR$
\begin{enumerate}
\item[i)] $V_\alpha(t)=\ee^{\ii tK^\alpha}$,
\item[ii)] $\ee^{\ii tK^\alpha}\stdcone=\stdcone$,
\item[iii)] $JK^\alpha+K^\alpha J=0$.
\end{enumerate}
Moreover, $V_\alpha(t)$ satisfy also
\begin{equation}
        \ee^{\ii tK^\alpha}\N^\comm\ee^{-\ii tK^\alpha}=\N^\comm\;\;\forall t\in\RR.
\end{equation}
The definition of a standard liouvillean $K^\alpha$ does not depend on any choice of $\omega\in\N^{\banach+}$ or $\omega\in\W(\N)$: it depends only on a $W^*$-dynamical system and a standard representation of $W^*$-algebra. For every $W^*$-dynamical system $(\N,\RR,\alpha)$ we define a \df{canonical liouvillean} as a standard liouvillean associated with a canonical representation $(L_2(\N),\pi_\N,J_\N,L_2(\N)^+)$.

As an example \cite{Bratteli:Robinson:1979,JOPP:2012}, consider a $W^*$-algebra $\N$ and its standard representation $(\H,\pi,J,\stdcone)$, equipped with a group of unitary operators $U(t):=\ee^{\ii tH}\in\pi(\N)$, with $t\in\RR$ and $H\in\pi(\N)^\sa$, which is a unitary implementation of $\RR\ni t\mapsto\alpha_t\in\Aut(\N)$,
\begin{equation}
        \pi(\alpha_t(x))=\ee^{\ii tH}\pi(x)\ee^{-\ii tH}\;\;\;\forall x\in\N\;\forall t\in\RR.
\end{equation}
If $\omega\in\N_\star^+$ and $\xi_\pi(\omega)$ is its standard vector representative, then
\begin{equation}
        \omega(\alpha_t(x))=\s{\ee^{-\ii tH}\xi_\pi(\omega),\pi(x)\ee^{-\ii tH}\xi_\pi(\omega)}\;\;\;\forall x\in\N\;\forall t\in\RR.
\end{equation}
However, generally,
\begin{equation}
        \exists(\omega,t)\in\N_\star^+\times\RR\;\;\;\left(\xi_\pi(\omega)\right)(t):=\ee^{-\ii tH}\xi_\pi(\omega)\not\in\stdcone.
\end{equation}
On the other hand, the group $\{V_\alpha(t)\mid t\in\RR\}$ of unitary operators uniquely determined by the condition $V_\alpha(t)\stdcone=\stdcone$, also implements $\alpha_t$ in $\pi$. As a result
\begin{equation}
        V_\alpha(t)\pi(x)V_\alpha(t)^*=\pi(\alpha_t(x))=U(t)xU(t)^*\;\;\forall x\in\N\;\forall t\in\RR,
\end{equation}
but $V_\alpha(t)\neq U(t)$:
\begin{equation}
        V_\alpha(t)=U(t)JU(t)J.
\label{VUJUJ.eq}
\end{equation}
If $\N$ is semi-finite, \eqref{VUJUJ.eq} implies that the standard liouvillean $K^\alpha$ of $V_\alpha(t)$ is related to the self-adjoint generator $H$ of U(t) by
\begin{equation}
        K^\alpha=H-JHJ=[H,\,\cdot\;].
\label{hamiltonian.liouvillean.J.eqn}
\end{equation}
Hence, $K^\alpha\not\in\pi(\N)^\sa$, $V_\alpha(t)\not\in\pi(\N)$. If $\N$ is finite, then, given $H\in\N^\sa$, the unitary implementation $U(t)$ is
\begin{equation}
        \LLL(\alpha_t(x))=\LLL(\ee^{\ii tH}x\ee^{-\ii tH})=\LLL(\ee^{\ii tH})\LLL(x)\LLL(\ee^{-\ii tH})=\ee^{\ii t\LLL(H)}\LLL(x)\ee^{-\ii t\LLL(H)}=\ee^{\ii tH}x\ee^{-\ii tH},
\end{equation}
while the unitary implementation $V_\alpha(t)$ reads \cite{Haag:Hugenholtz:Winnink:1967}
\begin{align}
        \LLL(\alpha_t(x))&=\LLL(\ee^{\ii tH})\RRR(\ee^{\ii tH})\LLL(x)\RRR(\ee^{-\ii tH})\LLL(\ee^{-\ii tH})=\ee^{\ii t(\LLL(H)-\RRR(H))}\LLL(x)\ee^{-\ii t(\LLL(H)-\RRR(H))}\nonumber\\&=\ee^{\ii t[H,\,\cdot\,]}x\ee^{\ii t[H,\,\cdot\,]}=\ee^{\ii tK^\alpha}x\ee^{-\ii tK^\alpha},
\end{align}
where the second equality follows from $\ee^{t(x+JxJ)}=\ee^{tx}J\ee^{tx}J$. The spectrum of $K^\alpha$ is given by
\begin{equation}
        \sp(K^\alpha)=\{\lambda_1-\lambda_2\mid\lambda_1,\lambda_2\in\sp(H)\},
\end{equation}
while the standard vector representative of the evolved state $\omega(t)$ is given by
\begin{equation}
        \xi_{\omega(t)}=(\rho_\omega(t))^{1/2}=\ee^{-\ii tH}\rho_\omega^{1/2}\ee^{\ii tH}=\LLL(\ee^{-\ii tH})\RRR(\ee^{-\ii tH})\xi_\omega\neq\xi_\omega(t).
\end{equation}
If $\N$ is a type I $W^*$-algebra, then the standard liouvillean takes the form $K^\alpha=H\otimes\II+\II\otimes H$ and acts on $\schatten_2(\H)\iso L_2(\BH,\tr)\iso\H\otimes\H^\banach$.

Another example of a standard liouvillean is provided by the modular theory. If $\omega\in\C^{\banach+}_0$ for a $C^*$-algebra $\C$, and $(\H_\omega,\pi_\omega,\Omega_\omega)$ is a GNS representation, then $\Omega_\omega$ is cyclic and separating for $\pi_\omega(\C)^\comm{}^\comm$. From the Tomita theorem it follows that the modular automorphism $\sigma^\omega$ of $\pi_\omega(\C)^\comm{}^\comm$, 
\begin{equation}
        \sigma_t^\omega(x):=\Delta^{\ii t}_\omega x\Delta^{-\ii t}_\omega\;\;\;\forall x\in\pi_\omega(\C)^\comm{}^\comm\;\forall t\in\RR,
\end{equation}
forms a $W^*$-dynamical system $(\pi_\omega(\C)^\comm{}^\comm,\RR,\sigma^\omega)$, and the modular hamiltonian is equal to a standard liouvillean of $(\H_\omega,\pi_\omega(\N)^\comm{}^\comm,J_\omega,\stdcone_{\Omega_\omega})$, 
\begin{equation}
        K^\alpha=K_\omega=-\log\Delta_\omega.
\end{equation}
The same holds also for $(\pi_\omega(\N),\RR,\sigma^\omega)$ and $(\H_\omega,\pi_\omega(\N),J_\omega,\stdcone_\omega)$ if $\N$ is a $W^*$-algebra and $\omega\in\N^+_{\star0}$.
\subsection{Crossed products\label{crossed.product.section}}
A \df{covariant representation} of a $C^*$-dynamical system $(\C,G,\alpha)$ is defined as a triple $(\H,\pi,U)$, where $\H$ is a Hilbert space, $\pi:\C\ra\BH$ is a nondegenerate representation of $\C$ on $\H$ and $U:G\ni g\mapsto U(g)\in\BH$ is a strongly continuous unitary representation of $G$ on $\H$ that is a unitary implementation of $\alpha$. A \df{covariant representation} of a $W^*$-dynamical system $(\N,G,\alpha)$ is defined as a triple $(\H,\pi,U)$ satisfying the above conditions with respect to $(\N,G,\alpha)$, under an additional requirement that the representation $(\H,\pi)$ is normal. The notions of $*$-dynamical system, covariant representation and crossed product are closely related: each $C^*$-dynamical system defines a unique corresponding $C^*$-crossed product algebra, each $W^*$-dynamical system defines a unique corresponding crossed product algebra, and there is a one-to-one correspondence between nondegenerate representations of crossed products and covariant representations of the dynamical systems in both $C^*$- and $W^*$- cases.

Let $(\C,G,\alpha)$ be a $C^*$-dynamical system, let $\tmu^G_L$ be a left Haar measure\footnote{A \df{right Haar measure} $\rpktarget{HAAR}\tmu_R^G$ (respectively, a \df{left Haar measure} $\tmu_L^G$) is defined as a nonzero Radon measure on a locally compact topological group $G$ such that $\int_G \tmu_R^G(g)f(g)=\int_G \tmu_R^G(g)f(bg)$ (respectively, $\int_G \tmu^G_L(g)f(g)=\int_G \tmu^G_L(g)f(gb)$) $\forall f\in \mathrm{C}_\mathrm{c}(G;\CC)\;\forall g,b\in G$ \cite{Haar:1933}, where $\rpktarget{Cc}\mathrm{C}_\mathrm{c}(G;\CC)$ denotes the set of continuous functions $f:G\ra\CC$ with compact supports (for a definition of Radon measure, see Section \ref{comm.integr.section}). According to the Haar--von Neumann--Weil theorem \cite{Haar:1933,vonNeumann:1935:Haarschen,vonNeumann:1936,Weil:1940}, for every locally compact topological group there exists a unique left and a unique right Haar measure, up to a nonzero multiplication constant. A \df{modular function} $M:G\ra\RR^+\setminus\{0\}$ is defined by
\begin{equation}
\int_G\tmu^G_L(h)f(hg)=M(g)\int_G\tmu^G_L(h)f(h)\;\forall f\in L_1(G,\tmu^G_L).
\end{equation}
If left and right Haar measures on $G$ coincide, then $M(g)=1$. The \df{Hurwitz measure} \cite{Hurwitz:1897} can be characterised as the Haar measure with $G$ given by the Lie group. The \df{Lebesgue measure} \cite{Lebesgue:1902,Lebesgue:1910} can be characterised as such Hurwitz measure on $\RR^n$ which is invariant with respect to a topological group of differentiable maps $f:\RR^n\ra\RR^n$ with jacobians $\det(\partial f/\partial x)=\pm 1$. The left and right invariant Lebesgue integrals are equal. The inequalities $\int_G\tmu_R^G(g)<\infty$ and $\int_G\tmu_L^G(g)<\infty$ hold if{}f $G$ is compact.} on a locally compact topological group $G$, and let $M:G\ra\,]0,\infty[$ be the modular function of $G$.
The complex vector space $\mathrm{C}_\mathrm{c}(G;\C)$ of norm continuous functions $f:G\ra\C$ with compact support can be equipped with the structure of normed $*$-algebra:
\begin{align}
                (xy)(g)&:=\int_G \tmu^G_L(b) x(b)\alpha_b(y(b^{-1}g)),
                \label{group.convolution.product}\\
                x^*(g)&:=M(g)^{-1}\alpha_g(x(g^{-1})^*),\\
                \n{x}_1&:=\int_G \tmu^G_L(b)\n{x(b)},
\end{align}
for all $x,y\in \mathrm{C}_\mathrm{c}(G;\C)$ and for all $g,b\in G$, where the integral in \eqref{group.convolution.product} is defined as the Bochner integral \cite{Bochner:1933} over the compact set $g(\supp(y))^{-1}\cap\supp(x)$. The completion of this space in the topology of the norm $\n{\cdot}_1$ is a Banach $*$-algebra, denoted by $L_1(G;\C)$ \cite{Doplicher:Kastler:Robinson:1966}. This space can be equipped also with an alternative norm
\begin{equation}
        \n{x}:=\sup\n{\pi(x)}\;\;\forall x\in \mathrm{C}_\mathrm{c}(G;\C),
\label{cscp.norm}
\end{equation}
where $\pi$ varies over all representations of $L_1(G;\C)$ on a given Hilbert space $\H$. The \df{$C^*$-crossed product} of $\C$ and $G$ with respect to $\alpha$ is defined as the completion of $\mathrm{C}_\mathrm{c}(G;\C)$ in the norm \eqref{cscp.norm}, and is denoted by $\C\rtimes_\alpha G$. If $(\H,\pi,U)$ is a covariant representation of a $C^*$-dynamical system $(\C,G,\alpha)$, then the corresponding representation $(\H,\tilde{\pi})$ of $C^*$-crossed product $\C\rtimes_\alpha G$ is given uniquely by \cite{Doplicher:Kastler:Robinson:1966}:
\begin{equation}
        \tilde{\pi}(x)=\int_G \tmu^G_L(g)\pi(x(g))U(g)\;\;\forall x\in \mathrm{C}_\mathrm{c}(G;\C),
\end{equation}
and the map $(\H,\pi,U)\ra(\H,\tilde{\pi})$ is a bijection onto the set of nondegenerate representations of $L_1(G;\C)$. For further information on $C^*$-dynamical systems and $C^*$-crossed products, see \cite{Pedersen:1979,Soltan:2007}.

Let $(\N,G,\alpha)$ be a $W^*$-dynamical system, let $\tmu^G_L$ be a left Haar measure on a locally compact topological group $G$, and let $\H$ be a Hilbert space. The complex vector space $\mathrm{C}_\mathrm{c}(G;\H)$ of continuous functions $f:G\ra\H$ with compact support can be equipped with the inner product
\begin{equation}
                \mathrm{C}_\mathrm{c}(G;\H)\times \mathrm{C}_\mathrm{c}(G;\H)\ni(\xi_1,\xi_2)             \mapsto\s{\xi_1,\xi_2}_{\tmu^G_L}:=\int_G \tmu^G_L(g)\s{\xi_1(g),\xi_2(g)}_\H\in\CC.
\end{equation}
The completion of $\mathrm{C}_\mathrm{c}(G;\H)$ in the topology of the norm defined by this inner product is a Hilbert space $L_2(G,\tmu^G_L;\H)$, which can be identified with $\H\otimes L_2(G,\tmu_L^G)$ by means of the unitary isomorphism $u:\H\otimes L_2(G,\tmu_L^G)\ra L_2(G,\tmu^G_L;\H)$ satisfying
\begin{equation}
        (u(\xi_0\otimes f))(g)=f(g)\xi_0\;\;\;\;\forall g\in G\;\forall \xi_0\in\H\;\forall f\in L_2(G,\tmu_L^G).
\end{equation}
The operators\rpktarget{PI.ALPHA}\rpktarget{UGIE}
\begin{align}
                (\pi_\alpha(x)\xi)(b)&:=\alpha_b^{-1}(x)\xi(b)
                \;\;\forall x\in\N\;\forall b\in G\;\forall\xi\in L_2(G,\tmu^G_L;\H),
                \label{pi.covariant.rep}\\
                (u_G(g)\xi)(b)&:=\xi(g^{-1}b)
                \;\;\forall g,b\in G\;\forall\xi\in L_2(G,\tmu^G_L;\H),
                \label{u.covariant.rep}
\end{align}
define a faithful normal representation $\pi_\alpha:\N\ra\BBB(L_2(G,\tmu^G_L;\H))$ and a strongly continuous unitary representation $u_G$ of $G$ in $L_2(G,\tmu^G_L;\H)$ that satisfy the \df{covariance equation}
\begin{equation}
        u_G(g)\pi_\alpha(x)u_G(g)^*=\pi_\alpha(\alpha_g(x))\;\;\forall x\in\N\;\forall g\in G.
\label{crossed.product.covariance.eq}
\end{equation}
Hence, $(L_2(G,\tmu^G_L;\H),\pi_\alpha,u_G)$ is a covariant representation of a $W^*$-dynamical system $(\N,G,\alpha)$. The \df{crossed product} of $\N$ by $G$ with respect to $\alpha$, denoted by $\rpktarget{CROSSEDPROD}\N\rtimes_\alpha G$, is defined as a von Neumann algebra that acts on $L_2(G,\tmu^G_L;\H)$ and is generated by $\pi_\alpha(\N)$ and $u_G(G)$ \cite{Takesaki:1973:duality}. The algebra $\N\rtimes_\alpha G$ can be equivalently defined as a von Neumann subalgebra of $\N\otimes\BBB(L_2(G,\tmu_L^G))$ generated by $\pi_\alpha(\N)$ and $\{\II\otimes u_g\mid g\in G\}$, where $u_g\in\BBB(L_2(G,\tmu_L^G))$ is a (left regular) representation of $G$ in $L_2(G,\tmu_L^G)$ given by 
\begin{equation}
        (u_g\xi)(b):=\xi(g^{-1}b)\;\;\forall\xi\in L_2(G,\tmu_L^G)\;\;\forall g,b\in G.
\end{equation}

If $\N\rtimes_\alpha G$ is a crossed product, then the corresponding covariant representation of a $W^*$-dynamical system $(\N,G,\alpha)$ is explicitly given in the definition of the crossed product $\N\rtimes_\alpha G$. The corresponding nondegenerate representation $(\H,\tilde{\pi})$ is provided in terms of nondegenerate representation $(\H,\pi_\alpha,u_G)$:
\begin{equation}
        \tilde{\pi}(x)=\int_G \tmu^G_L(g)\pi_\alpha(x(g))u_G(g)\;\;\;\forall x\in \mathrm{C}_\mathrm{c}(G;\H).
\end{equation}
For further information on $W^*$-dynamical systems and crossed products, see \cite{Guichardet:1974,Digernes:1975,vanDaele:1978}.

If a locally compact group $G$ is abelian, then its \df{Pontryagin dual group} $\rpktarget{PONTR.DUAL}\hat{G}$ \cite{Pontryagin:1934,vanKampen:1935} is defined as a set of all continuous group homomorphisms $G\ra\TT:=\{z\in\CC\mid\ab{z}=1\}\iso\RR/\ZZ$, equipped with: a unit element $\hat{e}$ given by a function identically equal to $1$, a group operation $\circ$ given by positive multiplication, an inverse operation $(\cdot)^{-1}$ given by the complex conjugate $(\cdot)^*$, and a topology given by a topology of uniform convergence on compact subsets of $G$. The group $\hat{G}$ is also locally compact and abelian. For $\hat{b}\in\hat{G}$, $g\in G$, the \df{Pontryagin duality map} is given by
\begin{equation}
        \duality{\cdot,\cdot}_{G\times\hat{G}}:G\times\hat{G}\ni(g,\hat{b})\mapsto\hat{b}(g)\in\TT.
\end{equation}
The strongly continuous representation $\hat{u}_{\hat{G}}$ of $\hat{G}$ in $L_2(G,\tmu^G_L;\H)$,
\begin{equation}
        \hat{u}_{\hat{G}}(\hat{b})\xi(g):=\left(\duality{g,\hat{b}}_{G\times\hat{G}}\right)^*\xi(g)\;\;\forall\xi\in L_2(G,\tmu^G_L;\H)\;\forall g\in G\;\forall\hat{b}\in\hat{G},
\end{equation}
satisfies
\begin{align}
        \hat{u}_{\hat{G}}(\hat{b})\pi_\alpha(x)\hat{u}_{\hat{G}}(\hat{b}^{-1})&=\pi_\alpha(x)\;\;\forall x\in\N\;\forall\hat{b}\in\hat{G},\\
        \hat{u}_{\hat{G}}(\hat{b})\hat{u}_{\hat{G}}(g)\hat{u}_{\hat{G}}(\hat{b}^{-1})&=\left(\duality{g,\hat{b}}_{G\times\hat{G}}\right)^*u_G(g)\;\;\forall\hat{b}\in\hat{G}\;\forall g\in G,
\end{align}
so
\begin{equation}
        \hat{u}_{\hat{G}}(\hat{b})\left(\N\rtimes_\alpha G\right)\hat{u}_{\hat{G}}(\hat{b}^{-1})=\N\rtimes_\alpha G\;\;\forall\hat{b}\in\hat{G}.
\end{equation}
This makes it possible to define the \df{dual action} $\hat{\alpha}$ of $\hat{G}$ on $\N\rtimes_\alpha G$ as a continuous map \cite{Takesaki:1973:duality}
\begin{equation}
        \hat{\alpha}:
        \hat{G}\times(\N\rtimes_\alpha G)
        \ni
        (\hat{b},\hat{x})
        \mapsto
        \hat{\alpha}_{\hat{b}}(\hat{x}):=
        \hat{u}_{\hat{G}}(\hat{b})\hat{x}\hat{u}_{\hat{G}}(\hat{b})^*=
        (\II\otimes{\hat{u}}_{\hat{b}})
        \hat{x}
        (\II\otimes{\hat{u}}_{\hat{b}})^*
        \in\N\rtimes_\alpha G,
\end{equation}
where the unitary operator ${\hat{u}}_{\hat{b}}\in\BBB(L_2(G,\tmu^G_L))$, given by
\begin{equation}
        (\hat{u}_{\hat{b}})f(g):=\left(\duality{g,\hat{b}}_{G\times\hat{G}}\right)^*f(g)\;\;\forall f\in \mathrm{C}_\mathrm{c}(G;\CC),
\end{equation}
defines a representation of $\hat{G}$ in $L_2(G,\tmu^G_L)$ that is continuous in the topology of uniform convergence on compact sets in $\hat{G}$ \cite{Haagerup:1978:dualweights:I}. The \df{fixed point} subalgebra of $\N\rtimes_\alpha G$ under the action $\hat{\alpha}$ satisfies \cite{Landstad:1979}
\begin{equation}
        (\N\rtimes_\alpha G)_{\hat{\alpha}}:=\{\hat{x}\in\N\rtimes_\alpha G\mid\hat{\alpha}_{\hat{b}}(\hat{x})=\hat{x}\;\forall\hat{b}\in\hat{G}\}=\pi_\alpha(\N).
\label{fixed.point.algebra.crossprod}
\end{equation}
The triple $(\N\rtimes_\alpha G,\hat{G},\hat{\alpha})$ is a $W^*$-dynamical system. Its covariant representation takes a form
\begin{align}
        \hat{\alpha}_{\hat{b}}(\pi_\alpha(x))&=\pi_\alpha(x)\;\;\forall x\in\N\;\forall\hat{b}\in\hat{G},\\
        \hat{\alpha}_{\hat{b}}(u_G(g))&=\left(\duality{g,\hat{b}}_{G\times\hat{G}}\right)^*u_G(g)\;\;\forall g\in G\;\forall\hat{b}\in\hat{G},
\end{align}
where $\pi_\alpha$ and $u_G$ are given by \eqref{pi.covariant.rep} and \eqref{u.covariant.rep}, respectively. As a result, one obtains the crossed product
\begin{equation}
        (\N\rtimes_\alpha G)\rtimes_{\hat{\alpha}}\hat{G}.
\end{equation}
According to Takesaki's theorem \cite{Takesaki:1973:duality}, if the multiplicative constant of the left Haar measure $\tmu_L^{\hat{G}}$ on $\hat{G}$ is chosen such that the \df{Plancherel formula} \cite{Plancherel:1910}
\begin{equation}
        \int_G\tmu_L^G(g)\ab{f(g)}^2=\int_{\hat{G}}\tmu_L^{\hat{G}}(\hat{b})\ab{\int_G\tmu_L^G(g)\left(\duality{g,\hat{b}}_{G\times\hat{G}}\right)^*f(g)}^2\;\;\forall f\in L_1(G,\tmu_L^G)\cap L_2(G,\tmu_L^G)
\end{equation}
holds, then there exists a unique $*$-isomorphism
\begin{equation}
        U:(\N\rtimes_\alpha G)\rtimes_{\hat{\alpha}}\hat{G}\iso\N\otimes\BBB(L_2(G,\tmu_L^G))
\label{unique.iso.of.dual.crossprod}
\end{equation}
such that, for all $x\in\N$, $\xi\in L_2(G,\tmu^G_L;\H)$, $g,g_1,g_2\in G$, $\hat{b}\in\hat{G}$,
\begin{align}
        (U(\pi_{\hat{\alpha}}\circ\pi_\alpha(x))\xi)(g)&=
        \alpha_g^{-1}(x)\xi(g),\\
        (U(\pi_{\hat{\alpha}}\circ u_G(g_2))\xi)(g_1)&=\xi(g_1g_2^{-1}),\\
        (U(u_{\hat{G}}(\hat{b}))\xi)(g)&=\left(\duality{g,\hat{b}}_{G\times\hat{G}}\right)^*\xi(g).
\end{align}
The extension of the above duality theory to $C^*$-crossed products was provided by Takai \cite{Takai:1974,Takai:1975}, while its further extension to nonabelian groups was provided by Nakagami \cite{Nakagami:1975,Nakagami:1976} (see also \cite{Olesen:Pedersen:1978,Landstad:1979,Nakagami:Takesaki:1979}).

For a key example of the above constructions, consider a crossed product $\N\rtimes_{\sigma^\psi}\RR$, defined as the von Neumann algebra acting on the Hilbert space $L_2(\RR,\dd t;\H)\iso\H\otimes L_2(\RR,\dd t)$ and generated by the operators $\pi_{\sigma^\psi}(x)$ and $u_\RR(t)$, which are defined by\rpktarget{PI.SIGMA}\rpktarget{URR}
\begin{align}   
(\pi_{\sigma^\psi}(x)\xi)(t)&:=\sigma_{-t}^\psi(x)\xi(t),
\label{pi.sigma}\\
        (u_\RR(t_2)\xi)(t_1)&:=\xi(t_1-t_2),
\label{lambda.tau}
\end{align}
for all $x\in\N$, $t,t_1,t_2\in\RR$, $\xi\in L_2(\RR,\dd t;\H)$. These two operators satisfy the covariance equation
\begin{equation}
        u_\RR(t)\pi_{\sigma^\psi}(x)u_\RR^*(t)=\pi_{\sigma^\psi}(\sigma_t^\psi(x)).
\label{covariance.eqn}
\end{equation}
The equation \eqref{pi.sigma} can be written as
\begin{equation}
        (\pi_{\sigma^\psi}(x)\xi)(t)=\Delta_\psi^{\ii t}x\Delta_\psi^{-\ii t}\xi(t)=\ee^{-\ii K_\psi t}x\ee^{\ii K_\psi t}\xi(t),
\end{equation}
where $K_\psi$ is a modular hamiltonian of the modular operator $\Delta_\psi$. So, the covariance equation \eqref{covariance.eqn} translates between family of unitaries that partially generate the crossed product algebra $\N\rtimes_{\sigma^\psi}\RR$ and the modular automorphism of the underlying von Neumann algebra $\N$:
\begin{equation}
        u_\RR(t)\pi_{\sigma^\psi}(x)u_\RR(t)^*=
        \pi_{\sigma^\psi}(\ee^{-\ii t K_\psi}x\ee^{\ii t K_\psi}).
\label{modular.histories.covariance}
\end{equation}
Given any $\phi,\psi\in\W_0(\N)$, the corresponding crossed products are unitarily isomorphic,
\begin{equation}
        U_{\phi,\psi}\left(\N\rtimes_{\sigma^\phi}\RR\right)U_{\phi,\psi}^*=\N\rtimes_{\sigma^\psi}\RR,
\label{precore.isomorphisms}
\end{equation}
where
\begin{align}
        (U_{\phi,\psi}\xi)(s)&:=\Connes{\psi}{\phi}{-s}\xi(s)\;\;\forall\xi\in \mathrm{C}_\mathrm{c}(\RR;\H)\;\forall s\in\RR,\\
        (U_{\phi,\psi}^*\xi)(s)&:=\Connes{\psi}{\phi}{-s}^*\xi(s)\;\;\forall\xi\in \mathrm{C}_\mathrm{c}(\RR;\H)\;\forall s\in\RR.
\end{align}
The crossed product algebra can be also denoted as
\begin{equation}
        \N\rtimes_{\sigma^\psi}\RR=\{x\otimes\II,\Delta^{\ii t}_\psi\otimes u_\RR(t)\mid x\in\N,t\in\RR\}^\comm{}^\comm\subseteq\BBB(L_2(\N)\otimes L_2(\RR,\dd\lambda)).
\end{equation}
Introducing the notation
\begin{align}
        \tilde{x}&:=x\otimes\II,\\
        \psi^{\ii t}&:=\Delta^{\ii t}_\psi\otimes u_\RR(t),\\
        \rpktarget{PHI.IT}\phi^{\ii t}&:=\Delta^{\ii t}_{\phi,\psi}\otimes u_\RR(t)=(\Connes{\phi}{\psi}{t}\otimes\II)(\Delta^{\ii t}_\psi\otimes u_\RR(t)),
\end{align}
we obtain
\begin{equation}
\psi^{\ii t}\tilde{x}\psi^{-\ii t}=\sigma^\psi_t(\tilde{x}),\;\;\phi^{\ii t}\tilde{x}\phi^{-\ii t}=\sigma^\phi_t(\tilde{x})\;\;\forall x\in\N.
\label{sv1.eq}
\end{equation}

The dual group $\hat{\RR}$ of $\RR$ is equal to $\RR$, and the canonical Pontryagin duality in this case has the form $\duality{t,s}_{\RR\times\RR}=\ee^{\ii ts}$. The dual action $\hat{\sigma}^\psi:\RR\ni s\mapsto\hat{\sigma}^\psi_s\in\Aut(\precore)$ of $\RR$ on $\rpktarget{PRECORE}\precore:=\N\rtimes_{\sigma^\psi}\RR$ is characterised by
\begin{align}
        \hat{\sigma}^\psi_s(\pi_{\sigma^\psi}(x))&=\pi_{\sigma^\psi}(x)\;\;\forall x\in\N\;\forall s\in\RR,\\
        \hat{\sigma}^\psi_s(u_\RR(t))&=\ee^{-\ii st}u_\RR(t)\;\;\forall t,s\in\RR.\label{quasiinv.under.sigma.hat}
\end{align}
The dual unitary representation $\hat{u}_\RR$ of $\RR$ in $L_2(\RR,\dd\lambda;\H)$ is given by
\begin{equation}
        \hat{u}_\RR(s)\xi(t)=\ee^{-\ii st}\xi(t),
\end{equation}
while the unitary $\hat{u}_t\in\BBB(L_2(\RR,\dd\lambda))$ reads $(\hat{u}_tf)(s)=\ee^{-\ii st}f(s)\;\;\forall f\in \mathrm{C}_\mathrm{c}(\RR;\CC)$, so
\begin{equation}
\hat{\sigma}^\psi_s(\hat{x})=\hat{u}_\RR(s)\hat{x}\hat{u}_\RR(s)^*=(\II\otimes\hat{u}_s)\hat{x}(\II\otimes\hat{u}_s)\;\;\forall\hat{x}\in\precore.
\end{equation}
As a result, $(\precore,\RR,\hat{\alpha})$ is a $W^*$-dynamical system, and \eqref{unique.iso.of.dual.crossprod} in this case becomes
\begin{equation}
        (\N\rtimes_{\sigma^\psi}\RR)\rtimes_{\hat{\sigma}^\psi}\RR\iso\N\otimes\BBB(L_2(\RR,\dd\lambda)).
\label{isomorphism.of.double.dual.psi}
\end{equation}
The von Neumann algebra $\N$ is a subalgebra of $\precore$, with an embedding $\N\ra\precore$ characterised in terms of \eqref{fixed.point.algebra.crossprod} as a fixed point subalgebra of $\precore$ that is a centraliser with respect to $\hat{\sigma}^\psi$,
\begin{equation}
\N\iso\precore_{\hat{\sigma}^\psi}=\{x\in\N\rtimes_{\sigma^\psi}\RR\mid\hat{\sigma}^\psi_s(x)=x\;\forall s\in\RR\}.
\label{fixed.point.algebra.crossprod.psi}
\end{equation}
If $\N\rtimes_{\sigma^\psi}\RR$ is of type I, then $\N$ is semi-finite \cite{vanDaele:1978}. On the other hand, if $\N$ of type III, then $\N\rtimes_{\sigma^\psi}\RR$ is of type II$_\infty$ \cite{Takesaki:1973:duality}. More generally, for any von Neumann algebra $\N$ the algebra $\N\rtimes_{\sigma^\psi}\RR$ is semi-finite \cite{Takesaki:1973:duality}. From \eqref{fixed.point.algebra.crossprod.psi} it follows that every von Neumann algebra $\N$ can be represented as a crossed product $\N\iso\precore\rtimes_{\hat{\sigma}^\psi}\RR$ with a semi-finite von Neumann algebra $\precore$ \cite{Landstad:1979}. Moreover, if $\N$ is a type III factor, then
\begin{equation}
        \N\iso(\N\rtimes_{\sigma^\psi}\RR)\rtimes_{\hat{\sigma}^\psi}\RR\;\;\forall\psi\in\W_0(\N).
\end{equation}
The triple $(\zentr_\precore,\RR,\hat{\sigma}^\psi|_{\zentr_{\precore}})$ (or, equivalently, $\left(\zentr_\precore,\RR^+\setminus\{0\},\left(t\mapsto\hat{\sigma}^\psi_{-\log(t)}\right)|_{\zentr_\precore}\right)$) is a $W^*$-dy\-na\-mi\-cal system, called \df{flow of weights}. As shown by Connes and Takesaki \cite{Connes:Takesaki:1974,Connes:Takesaki:1977}, the flow of weights is independent of the choice of $\psi$, in the sense that it is uniquely determined by the underlying von Neumann algebra, up to a $*$-isomorphism 
\begin{equation}
        \varsigma:\zentr_{\N\rtimes_{\sigma^{\psi_1}}\RR}\ra
                        \zentr_{\N\rtimes_{\sigma^{\psi_2}}\RR}
\end{equation}
that satisfies
\begin{equation}
        \varsigma\circ\hat{\sigma}_t^{\psi_1}|_{\zentr_{\N\rtimes_{\sigma^{\psi_1}}\RR}}=\hat{\sigma}_t^{\psi_2}|_{\zentr_{\N\rtimes_{\sigma^{\psi_2}}\RR}}\circ\varsigma\;\;\forall t\in\RR.
\end{equation}
This defines a functor $\CTflow$ from the category $\VNfIIIIso$ of type III factor von Neumann algebras with $*$-isomorphisms to the category $\WstarCovR$ of $W^*$-dynamical systems $(\N,\RR,\alpha)$ with arrows $(\N_1,\RR,\alpha^1)\ra(\N_2,\RR,\alpha^2)$ given by such $*$-isomorphisms $\varsigma:\N_1\ra\N_2$ that satisfy 
\begin{equation}
        \varsigma\circ\alpha_t^1=\alpha_t^2\circ\varsigma\;\;\forall t\in\RR.
\end{equation}
The codomain category of $\CTflow$ can be further restricted to a subcategory of \textit{properly ergodic flows}, see \cite{Connes:Takesaki:1974,Connes:Takesaki:1977,Takesaki:1978,Connes:1994,Takesaki:2003} for details. Connes and Takesaki used flow of weights to provide a refined classification of type III factors with separable preduals, which is equivalent with the classification in terms of Connes' modular spectrum $\modspec(\N)$:
\begin{equation}\rpktarget{MODSPEC.FLOW}
        \modspec(\N)=\exp\left(\ker\left(\hat{\sigma}^\psi|_{\zentr_\precore}\right)\right)\cup\{0\}.
\end{equation}
\subsection{Canonical core algebra\label{FT.core.algebra.section}}
Consider a $W^*$-algebra $\N$ and a relation $\sim_t$ on $\N\times\W_0(\N)$ defined by \cite{Falcone:Takesaki:2001}
\begin{equation}
        (x,\psi)\sim_t(y,\phi)\iff y=x\Connes{\psi}{\phi}{t}\;\;\forall x,y\in\N\;\forall\psi,\phi\in\W_0(\N).
\label{core.equivalence}
\end{equation}
The property \eqref{Connes.cocycle} of Connes' cocycle implies that $\sim_t$ is an equivalence relation in $\N\times\W_0(\N)$. The equivalence class $(\N\times\W_0(\N))/\sim_t$ is denoted by $\rpktarget{NT}\N(t)$, and its elements are denoted by $\rpktarget{PSI.IT}x\psi^{\ii t}$. The operations
\begin{align}
        x\psi^{\ii t}+y\psi^{\ii t}&:=(x+y)\psi^{\ii t},\\
        \lambda(x\psi^{\ii t})&:=(\lambda x)\psi^{\ii t}\;\forall\lambda\in\CC,\\
        \n{x\psi^{\ii t}}&:=\n{x},
\end{align}
equip $\N(t)$ with the structure of the Banach space, which is isometrically isomorphic to $\N$, considered as a Banach space. By definition, $\N(0)$ a $W^*$-algebra that is trivially $*$-isomorphic to $\N$. However, for $t\neq0$ the spaces $\N(t)$ are not $W^*$-algebras. 

The operations
\begin{align}
        \cdot\,:\N(t_1)\times\N(t_2)\ni(x\psi^{\ii t_1},y\psi^{\ii t_2})&\mapsto x\sigma_{t_1}^{\psi}(y)\psi^{\ii(t_1+t_2)}\in\N(t_1+t_2),\\
        ^*\,:\N(t)\ni x\psi^{\ii t}&\mapsto\sigma_{-t}^\psi(x^*)\psi^{-\ii t}\in\N(-t),
\end{align}
equip the disjoint sum $\fell(\N):=\coprod_{t\in\RR}\N(t)$ over $\N\times\RR$ with the structure of $*$-algebra. The bijections 
\begin{equation}
        \N(t)\ni x\psi^{\ii t}\mapsto(x,t)\in\N\times\RR
\label{core.pairs.bijection}
\end{equation}
allow to endow $\fell(\N)$ with the topology induced by \eqref{core.pairs.bijection} from the product topology on $\N\times\RR$ of the weak-$\star$ topology on $\N$ and the usual topology on $\RR$. This provides the Fell's Banach $^*$-algebra bundle structure on $\fell(\N)$ (see \cite{Fell:1969,Fell:1977,Fell:Doran:1988} for a general theory of the Fell bundles). One can consider the Fell bundle $\fell(\N)$ as a natural algebraic structure which enables to translate between elements of $\N(t)$ at different $t\in\RR$. In order to recover an element of $\fell(\N)$ at a given $t\in\RR$, one has to select a section $\widetilde{x}:\RR\ra\fell(\N)$ of $\fell(\N)$:
\begin{equation}
        t\mapsto x(t)\psi^{\ii t}=:\widetilde{x}(t).
\end{equation}
Consider the set $\Gamma^1(\fell(\N))$ of such cross-sections of $\fell(\N)$ that are \df{integrable} in the following sense:
\begin{enumerate}
\item[i)] for any $\epsilon>0$ and any bounded interval $I\subseteq\RR$ there exists a compact subset $Y\subseteq I$ such that $\ab{I-Y}<\epsilon$ and the restriction $Y\ni t\mapsto x(t)\in\fell(\N)$ is continuous relative to the topology induced in $\fell(\N)$ by \eqref{core.pairs.bijection},
\item[ii)] $\int_\RR\dd r\,\n{x(r)}<\infty$.
\end{enumerate}
The set $\Gamma^1(\fell(\N))$ can be endowed with a multiplication, involution, and norm,
\begin{align}
        (\widetilde{x}\widetilde{y})(t)
        &:=\int_\RR\dd r\, x(r)y(t-r)
        =\left(\int_\RR\dd r\, x(r)\sigma_r^\psi(y(t-r))\right)\psi^{\ii t},\\
  \widetilde{x}^*(t)
  &:=\widetilde{x}(-t)^*
  =\sigma_t^\psi(x(-t)^*)\psi^{\ii t},\\
  \n{\widetilde{x}}
  &:=\int_\RR\dd r\,\n{x(r)},
\end{align}
thus forming a Banach $*$-algebra, denoted by $\bundlealg(\N)$. 

Falcone and Takesaki \cite{Falcone:2000,Falcone:Takesaki:2001} constructed also a suitably defined `bundle of Hilbert spaces' over $\RR$. Let $\N=\pi(\C)$ be a von Neumann algebra representing a $W^*$-algebra $\C$ in terms of a standard representation $(\H,\pi,J,\stdcone)$. The space $\H$ can be considered as a $\N$-$(\N^\comm)^o$ bimodule, with the left action of $\N$ on $\H$ given by ordinary multiplication from the left, and with the right action of $(\N^\comm)^o$ on $\H$ defined by 
\begin{equation}
        \xi x^o:=x\xi\;\;\;\forall\xi\in\H\;\;\forall x^o\in(\N^\comm)^o.
\end{equation}
Thus, the left action of $\N$ on $\H$ is just an action of a standard representation of the underlying $W^*$-algebra $\C$, while the right action of $(\N^\comm)^o$ is provided by the corresponding antirepresentation of $\C$ (that is, by commutant of a standard representation of $\C^o$). Given arbitrary $r_1,r_2\in\RR$, $\zeta_1,\zeta_2\in\H$, $\phi_1,\phi_2\in\W_0(\N)$, and $\varphi_1,\varphi_2:\W_0((\N^\comm)^o)$ the condition
\begin{equation}      
\left(\connes{\phi_1}{\varphi_1}\right)^{\ii r_1}\zeta_1=\left(\connes{\phi_2}{\varphi_2}\right)^{\ii r_2}\zeta_2\Connes{\varphi_2}{\varphi_1}{t},
\end{equation}
defines an equivalence relation
\begin{equation}
        (r_1,\phi_1,\zeta_1,\varphi_1)\;\;\sim_t\;\;(r_2,\phi_2,\zeta_2,\varphi_2)
\label{hilb.equiv}
\end{equation}
on the set $\RR\times\W_0(\N)\times\H\times\W_0((\N^\comm)^o)$. The equivalence class of the relation \eqref{hilb.equiv} is denoted by $\rpktarget{HT}\H(t)$, and its elements have the form
\begin{equation}
        \phi^{\ii t}\xi=\left(\connes{\phi}{\varphi}\right)^{\ii t}\xi\varphi^{\ii t},
\end{equation}
which is equivalent to
\begin{equation}
        \phi^{\ii t}\xi\varphi^{-\ii t}=\left(\connes{\phi}{\varphi}\right)^{\ii t}\xi.
\label{connes.cocycle.as.generator}
\end{equation}
Falcone and Takesaki show that $\H(t)$ is a Hilbert space independent of the choice of weights $\phi_1,\phi_2,\varphi_1,\varphi_2$ and of the choice of $r\in\RR$. This enables to form the Hilbert space bundle over $\RR$,
$\coprod_{t\in\RR}\H(t)$, and to form the Hilbert space of square-integrable cross-sections of this bundle,\rpktarget{COREH}
\begin{equation}
        \widetilde{\H}:=\Gamma^2\left(\coprod\limits_{t\in\RR}\H(t)\right).
\end{equation}
The Hilbert space bundle $\coprod_{t\in\RR}\H(t)$ is homeomorphic to $\H\times\RR$ for \textit{any} choice of weight $\psi\in\W_0(\N)$. The left action of $\bundlealg(\N)$ on $\widetilde{\H}$ generates a von Neumann algebra $\rpktarget{CORE}\core$ called \df{standard core} \cite{Falcone:Takesaki:2001}. For type III$_1$ factors $\N$ the standard core $\core$ is a type II$_\infty$ factor, but in general case $\core$ is not a factor. The structure of $\core$ is independent of the choice of weight on $\N$. However, for any choice of $\psi\in\W_0(\N)$ there exists a unitary map
\begin{equation}
\coreiso_\psi:\widetilde{\H}=\Gamma^2\left(\coprod_{t\in\RR}\H(t)\right)\ra L_2(\RR,\dd t;\H)\iso\H\otimes L_2(\RR,\dd t),
\end{equation}
such that
\begin{equation}
        \coreiso_\psi(\xi)(t)=\psi^{-\ii t}\xi(t)\in\H\;\;\;\forall\xi\in\widetilde{\H}.
\end{equation}
It satisfies
\begin{align}
        (\coreiso_\psi x \coreiso_\psi^*)(\xi)(t)
        &=\sigma^\psi_{-t}(x)\xi(t),\\
        (\coreiso_\psi\psi^{\ii s}\coreiso_\psi^*)(\xi)(t)
        &=\xi(t-s),\\
        (\coreiso_\psi\phi^{-\ii s}\coreiso_\psi^*)(\xi)(t)
        &=\left(\connes{\psi}{\phi^\comm}\right)^{\ii s}.
\end{align}
for all $\xi\in L_2(\RR,\dd t;\H)$, $x\in\N$, $\phi,\psi\in\W_0(\N)$, $s,t\in\RR$. This map provides a $*$-isomorphism between the standard core $\core$ on $\widetilde{\H}$ and the crossed product $\N\rtimes_{\sigma^\psi}\RR$ on $\H\otimes L_2(\RR,\dd t)$,
\begin{equation}
       \coreiso_\psi^*\core\coreiso_\psi=\N\rtimes_{\sigma^\psi}\RR.
\label{core.iso.map}
\end{equation}
The equation \eqref{core.iso.map} is a canonical analogue of \eqref{precore.isomorphisms}. Using the uniqueness of the standard representation up to unitary equivalence, Falcone and Takesaki \cite{Falcone:Takesaki:2001} proved that the map $\N\mapsto\core$ extends to a functor $\VNCore$ from the category $\VNIso$ of von Neumann algebras with $*$-isomorphisms to its own subcategory $\VNsfIso$ of semi-finite von Neumann algebras with $*$-isomorphisms. The functoriality of Kosaki's construction of canonical representation $\pi_\C$ of any $W^*$-algebra $\C$ turns the assignment
\begin{equation}
        \C\mapsto\pi_\C(\C)=:\N\mapsto\core=\widetilde{\pi_\C(\C)}
\end{equation}
to a functor
\begin{equation}
        \WstarCore:\WsIso\ra\VNsfIso,
\end{equation}
where $\WstarCore:=\VNCore\,\circ\,\CanVN$. For any $W^*$-algebra $\N$, the object $\WstarCore(\N)\in\Ob(\VNsfIso)$ will be called \df{canonical core} of $\N$.
\section{Noncommutative integration\label{noncommutative.integration.section}}
The construction of a standard representation by Connes, Araki and Haagerup and its further canonical refinement by Kosaki allows one to assign a canonical $L_2(\N)$ space to every $W^*$-algebra $\N$, without any choice of weight on $\N$ involved. This leads to a question: is it possible to develop the theory of noncommutative integration and $L_p(\N)$ spaces for arbitrary $W^*$-algebras $\N$ along the lines of analogy between Hilbert--Schmidt space as a member $\mathfrak{G}_2(\H)$ of the family of von Neumann--Schatten $\mathfrak{G}_p(\H)$ spaces and a Hilbert $L_2(\X,\mho(\X),\tmu)$ space as a member of a family of Riesz--Radon $L_p(\X,\mho(\X),\tmu)$ spaces? The answer is in the affirmative.

The theory of integration on $W^*$-algebras has a long history, which can be divided roughly into three stages. It started from results and methods developed for the analysis of factor von Neumann algebras by Murray and von Neumann \cite{Murray:vonNeumann:1936,vonNeumann:1943} (this includes first noncommutative Radon--Nikod\'{y}m type theorem, called \textit{BT-theorem}). Dye \cite{Dye:1952} proved more general Radon--Nikod\'{y}m type theorem for finite von Neumann algebras, while Segal \cite{Segal:1953} and Puk\'{a}nszky \cite{Pukanszky:1954} extended it to all semi-finite von Neumann algebras $\N$. Segal \cite{Segal:1953} proposed the foundations of noncommutative integration theory based on the notion of measurability of an unbounded (but affiliated) operator with respect to a faithful normal semi-finite trace $\tau$, and used it to define noncommutative $L_1(\N,\tau)$, $L_2(\N,\tau)$, and $L_\infty(\N,\tau)$ spaces (see also \cite{Segal:1965}). This definition was extended to the full range of noncommutative $L_p(\N,\tau)$ spaces by Ogasawara and Yoshinaga \cite{Ogasawara:Yoshinaga:1955,Ogasawara:Yoshinaga:1955:II} and Kunze \cite{Kunze:1958}. Further refinements of the theory, based on the stronger notion of measurability with respect to a trace, were introduced by Stinespring \cite{Stinespring:1959} and Nelson \cite{Nelson:1974}. Independently, Dixmier \cite{Dixmier:1953} has also introduced a family of $L_p(\N,\tau)$ spaces, based on the collection of abstract ideals in $\N$ and their completion with respect to $p$-norms defined by trace $\tau$ (see also \cite{Dixmier:1950,Dixmier:1952:remarques,Dixmier:1957}). The alternative (but equivalent) approach, based on Grothendieck's rearrangements of operators \cite{Grothendieck:1955}, was developed by Yeadon \cite{Yeadon:1973,Yeadon:1975} and Fack \& Kosaki \cite{Fack:Kosaki:1986}. The isometries between $L_p(\N,\tau)$ spaces were analysed in \cite{Broise:1966,Russo:1968,Arazy:1975,Katavolos:1976,Tam:1979,Katavolos:1981,Katavolos:1982}, and their complete description was provided by Yeadon \cite{Yeadon:1981}. For additional results in the foundations of noncommutative integration on semi-finite von Neumann algebras, and $L_p(\N,\tau)$ space theory, see also \cite{Padmanabhan:1967,Saito:1969,Saito:1970,Ovchinnikov:1970,Christensen:1972,Padmanabhan:1979,Cecchini:1978,Trunov:1979,Trunov:1980,Trunov:1981,Yeadon:1981,Kosaki:1981:Lorentz,Leinert:1986,Leinert:1992}. The von Neumann--Schatten spaces $\schatten_p(\H)$ \cite{vonNeumann:1937:Tomsk,Schatten:1946,Schatten:vonNeumann:1946,Schatten:vonNeumann:1947,Schatten:1950,Schatten:1960}, associated by definition with type I von Neumann algebras $\BH$, are just a special case of this theory, obtained by $\schatten_p(\H)=L_p(\BH,\tr)$.

Extension of the noncommutative integration theory to an arbitrary von Neumann algebra $\N$ equipped with an arbitrary 
$\psi\in\W_0(\N)$ became possible only after development of the Tomita--Takesaki modular theory \cite{Tomita:1967:a,Tomita:1967:b,Takesaki:1970}, which has lead in particular to noncommutative Radon--Nikod\'{y}m type theorems by Pedersen--Takesaki \cite{Pedersen:Takesaki:1973} and Connes \cite{Connes:1973:poids:normaux,Connes:1973:classification}, valid for all von Neumann algebras. First construction of a complete range of noncommutative $L_p(\N,\psi)$ spaces for $\psi\in\W_0(\N)$ and $p\in[1,\infty]$ was provided by Haagerup \cite{Haagerup:1979:ncLp} (its complete exposition was given by Terp \cite{Terp:1981}), using Takesaki's duality theory for crossed products of the von Neumann algebras by locally compact abelian groups \cite{Takesaki:1973:duality,Digernes:1974,Digernes:1975,Haagerup:1978:dualweights:I,Haagerup:1978:dualweights:II} as well as Haagerup's theory of operator valued weights \cite{Haagerup:1979:ovw1,Haagerup:1979:ovw2}. In this approach, the noncommutative $L_p(\N,\psi)$ spaces are represented as Banach spaces of closed densely defined operators that are affiliated with the crossed product $\N\rtimes_{\sigma^\psi}\RR$ and satisfying certain additional conditions. The extension of Haagerup's construction to $p\in\CC$ was proposed by Yamagami \cite{Yamagami:1992}. An alternative approach was developed by Connes \cite{Connes:1980} and Hilsum \cite{Hilsum:1981}, who have defined $L_p(\N,\psi^\comm)$ spaces for $\psi^\comm\in\W_0(\N^\comm)$ as Banach spaces of closed densely defined unbounded operators $x$ acting on a single Hilbert space $\H$, and such that $\exists\phi\in\N_\star^+\;\;\ab{x}^p=\connes{\phi}{\psi^\comm}$, Cauchy completed in the norm $\n{x}_p:=(\phi(\ab{\II}))^{1/p}$. Kosaki \cite{Kosaki:1984:ncLp} defined the families of $L_p(\N,\psi)$ spaces for $\psi\in\N^+_{\star0}$ as Calder\'{o}n complex interpolation Banach spaces \cite{Calderon:1964} between $\N_\star$ and $\N$. This construction was extended to $\psi\in\W_0(\N)$ by Terp \cite{Terp:1982}. Izumi \cite{Izumi:1996,Izumi:1997,Izumi:1998,Izumi:1998:PhD,Izumi:2000} completed Kosaki--Terp approach and provided further extension of this construction to $p\in\CC$. Araki and Masuda \cite{Araki:Masuda:1982} constructed $L_p(\N,\psi)$ spaces for $\psi\in\N^+_{\star0}$ using standard form of $\N$ on a single Hilbert space $\H$ and the properties of $p$-family of positive cones in $\H$ introduced by Araki \cite{Araki:1974:modular:conjugation} (which were also investigated in this context by Kosaki \cite{Kosaki:1980:cones,Kosaki:1981:positive,Kosaki:1982:RN,Kosaki:1982:Tt,Kosaki:1983:RN2}). Masuda \cite{Masuda:1983} extended this construction to $\psi\in\W_0(\N)$. Another approach to the  theory of noncommutative integration for arbitrary von Neumann algebras was developed by Sherstn\"{e}v and Trunov \cite{Sherstnev:1974,Sherstnev:1977,Sherstnev:1978,Trunov:Sherstnev:1978:I,Trunov:Sherstnev:1978:II,Trunov:1979:L2,Sherstnev:1982,Trunov:1983,Zolotarev:1985:konus}. However, they have constructed only noncommutative $L_1(\N,\psi)$ and $L_2(\N,\psi)$ spaces (Trunov's $L_p(\N,\psi)$ spaces \cite{Trunov:1979,Trunov:1981,Zolotarev:1988} were restricted to semi-finite $\N$). An extension of this approach to full theory of noncommutative $L_p(\N,\psi)$ spaces was provided later, and in different ways, by Zolotar\"{e}v \cite{Zolotarev:1982,Zolotarev:1985,Zolotarev:1986} and Cecchini \cite{Cecchini:1984,Cecchini:1985,Cecchini:1986,Cecchini:1988}, in both cases using Calder\'{o}n's complex interpolation method. Yet another different constructions of a range of $L_p(\N,\psi)$ spaces were provided by Tikhonov \cite{Tikhonov:1982} and Leinert \cite{Leinert:1991}. A construction of $L_2(\N,\psi)$ spaces dependent on $\psi\in\N^+_{\star0}$ was proposed also by Holevo \cite{Holevo:1976,Holevo:1977}.

Most of the above constructions have been shown to be equivalent. The isometric isomorphism between Connes--Hilsum $L_p(\N,\psi^\comm)$ spaces and Kosaki--Terp $L_p(\N,\psi)$ spaces was established by Terp \cite{Terp:1982}. The isometric isomorphism between Haagerup--Terp $L_p(\N,\psi)$ spaces and Kosaki--Terp $L_p(\N,\psi)$ spaces was established by Kosaki \cite{Kosaki:1984:ncLp}. The isometric isomorphism between Haagerup--Terp $L_p(\N,\psi)$ spaces and Connes--Hilsum $L_p(\N,\psi^\comm)$ spaces was established by Terp \cite{Terp:1981}. The isometric isomorphism between Araki--Masuda and Connes--Hilsum spaces is stated in Araki--Masuda \cite{Araki:Masuda:1982} and Masuda \cite{Masuda:1983}. The isometric isomorphisms between Cecchini's $L_p(\N,\psi)$ spaces, Zolotar\"{e}v's $L_p(\N,\psi)$ spaces, and Connes--Hilsum $L_p(\N,\psi^\comm)$ spaces was established by Cecchini \cite{Cecchini:1986}. Leinert proved \cite{Leinert:1991} that his $L_p(\N,\psi)$ spaces are isometrically isomorphic to Kosaki--Terp $L_p(\N,\psi)$ spaces. 

These equivalence results, together with the independence of the $L_p(\N,\psi)$ spaces on the choice of $\psi\in\W_0(\N)$, reflected by isometric isomorphisms $L_p(\N,\psi)\iso L_p(\N,\phi)$ for arbitrary $\psi,\phi\in\W_0(\N)$, lead to the problem of construction of the canonical theory of noncommutative $L_p(\N)$ spaces, which would unify all these approaches, and would be independent of the choice of representation of a $W^*$-algebra in terms of a von Neumann algebra. Such theory was developed by Kosaki \cite{Kosaki:1980:PhD}, who constructed the range of $L_p(\N)$ spaces over arbitrary $W^*$-algebra $\N$ by introducing and applying the \textit{canonical} form $(L_2(\N),\N,J_\N,L_2(\N)^+)$. The alternative (but equivalent by means of an isometric isomorphism) construction of the canonical $L_p(\N)$ space theory has been provided by Falcone and Takesaki \cite{Falcone:1996,Falcone:Takesaki:1999,Falcone:2000,Falcone:Takesaki:2001,Takesaki:2003}. The crucial underlying notion of this approach is the \textit{core} von Neumann algebra $\core$ which is associated functorially to any $W^*$-algebra $\N$, and provides a weight-independent analogue of the crossed product $\N\rtimes_{\sigma^\psi}\RR$ used in Haagerup's construction of $L_p(\N,\psi)$. The structure of $\core$ is designed to include the elements of $\N$ and $\W_0(\N)$ on equal footing, what develops the ideas contained in earlier works of Woronowicz \cite{Woronowicz:1979}, Connes \cite{Connes:1980:correspondences} and Yamagami \cite{Yamagami:1992}, who considered the grading of weights and bimodule structures as a method of construction of $L_1(\N,\psi)$, $L_2(\N,\psi)$, and $L_p(\N,\psi)$ spaces, respectively. It gives rise to a \textit{modular algebra} generated by the elements $x\phi^z$, where $x\in\N$, $\phi\in\W_0(\N)$ (or $\N_\star^+$), and $z\in\CC$ with $\re(z)\geq0$, as well as to a canonical integral that acts on all elements of a modular algebra that have a form $x_1\phi_1^{z_1}\cdots x_n\phi_n^{z_n}$ with $\sum_{i=1}^nz_i=1$. Both canonical constructions of $L_p(\N)$ spaces (by Kosaki and by Falcone \& Takesaki) do not require any underlying Hilbert space and are functorial over the category of $W^*$-algebras with $*$-isomorphisms. Kosaki showed this explicitly by providing all constructions in terms of the canonical representation, and by showing functoriality of assignment of the canonical representation to a $W^*$-algebra. In the Falcone--Takesaki case this follows from functorial dependence of the standard core algebra on the underlying von Neumann algebra, together with the functorial association of a von Neumann algebra to any $W^*$-algebra by means of Kosaki's canonical representation.

For (selective) reviews of noncommutative integration theory, see \cite{Trunov:Sherstnev:1985,Goldstein:2002,Takesaki:2003,Pisier:Xu:2003,Zumbraegel:2004,Sherstnev:2008,Flattot:2010}. For further development of the theory of $L_p(\N)$ spaces using the modular algebras and bimodules, see \cite{Yamagami:1994,Sherman:2001,Sherman:2003,Sherman:2005,Junge:Sherman:2005,Sherman:2006,Sherman:2006:new,Pavlov:2011}. The Banach space properties of noncommutative $L_p(\N,\psi)$ spaces (with a special attention paid to the problem of classification of isometries of these spaces) are discussed in  \cite{Watanabe:1992,Watanabe:1995,Watanabe:1996,Sukochev:1996,Watanabe:1999,Raynaud:Xu:2001,Raynaud:Xu:2003,Haagerup:Rosenthal:Sukochev:2003,Junge:Ruan:2004,Junge:Ruan:Sherman:2004,Junge:Ruan:Xu:2005,Randrianantonina:2008,Haagerup:Junge:Xu:2010,Junge:Parcet:2010}. As Banach spaces, $L_p(\N,\psi)$ spaces can be considered as special cases of the more general notions of noncommutative Banach spaces and operator spaces \cite{Ovchinnikov:1970,Dodds:Dodds:dePagter:1989,Xu:1991,Dodds:Dodds:dePagter:1992,Dodds:Dodds:dePagter:1993,Dodds:Dodds:1995,Pisier:2003,dePagter:2007}. For some extensions of a theory of noncommutative $L_p$ spaces to $C^*$-algebras see \cite{Majewski:Zegarlinski:1995,Majewski:Zegarlinski:1996:MPRF,Majewski:Zegarlinski:1996:APP,Phan:1999,Goldstein:Phan:2000}. For elements of noncommutative integration theory over $^*$-algebras see \cite{Gudder:Hudson:1978,Gudder:1979:RN}.
\subsection{Noncommutative Radon--Nikod\'{y}m type theorems\label{nc.RN.section}}
Let $\N$ be a von Neumann algebra acting on $\H$. A linear operator $x:\dom(x)\ra\H$ is called \df{affiliated} to $\N$ if{}f $[x,u]=0$ for every unitary element $u\in\N^\comm$ \cite{Murray:vonNeumann:1936}. The set of all operators affiliated to $\N$ will be denoted $\rpktarget{AFF}\aff(\N)$, while the set of all positive self-adjoint elements of $\aff(\N)$ will be denoted $\aff(\N)^+$. A closed linear operator $x$ is affiliated to $\N$ if{}f $x(\II+x^*x)^{1/2}\in\N$. A closed densely defined linear operator $x:\dom(x)\ra\H$ with polar decomposition $x=v\ab{x}$ is affiliated with $\N$ if{}f any of the following equivalent conditions holds:
\begin{enumerate}
\item[1)] $[u,x]=0$ $\forall$ unitary $u\in\N^\comm$,
\item[2)] $[u,\ab{x}]=0$ and $[u,v]=0$ $\forall$ unitary $u\in\N^\comm$,
\item[3)] $v\in\N$ and all spectral projections of $\ab{x}$ belong to $\N$.
\end{enumerate}
The space of all closed densely defined linear operators affiliated with $\N$ will be denoted by $\rpktarget{MMM}\MMM(\N)$. 

A generalisation of the notion of affiliation to arbitrary $W^*$-algebras was alluded already in \cite{Stratila:Zsido:1975}, but it was actually provided by Derezi\'{n}ski, Jak\v{s}i\'{c} and Pillet \cite{DJP:2003}, using earlier insights of \cite{Baaj:Jungl:1983,Woronowicz:1991}. For a given $W^*$-algebra $\N$, a linear map $x:\dom(x)\ra\N$, where $\dom(x)\subseteq\N$, is called to be \df{affiliated} to $\N$ if{}f there exists $y\in\N$ such that 
\begin{enumerate}
\item[1)] $\n{y}\leq1$,
\item[2)] $(\II-yy^*)\N$ is weakly-$\star$ dense in $\N$,
\item[3)] $\forall y_1,y_2\in\N$ $(y_1\in\dom(x),\;xy_1=u_2)$ $\iff$ $yy_1=(1-yy^*)^{1/2}y_2$.
\end{enumerate}
If such $y$ exists, then it is unique. The set of all operators affiliated to $\N$ will be denoted $\rpktarget{AFF.ZWEI}\aff(\N)$. The set of all elements $x\in\aff(\N)$ such that $(1-yy^*)^{-1/2}\in\N^+$ will be denoted $\aff(\N)^+$. If $(\H,\pi)$ is a normal representation of $\N$ such that $\pi(\II)=\II$, then there exists a unique extension $\hat{\pi}:\aff(\N)\ra\aff(\pi(\N))$ such that
\begin{equation}
(1+\pi(x)\pi(x)^*)^{-1/2}\pi(x)=\pi(y),
\end{equation}
where $y$ is determined by $x\in\aff(\N)$ as above. If $(\H,\pi)$ is faithful, then $\hat{\pi}$ is faithful. The set $\aff(\pi(\N))$ of maps $x:\dom(x)\ra\pi(\N)$ coincides with the set $\aff(\pi(\N))$ of operators $x:\dom(x)\ra\H$ affiliated to $\pi(\N)$, and the same holds for the sets $\aff(\pi(\N))^+$.

Recall that any weight on a $W^*$-algebra $\N$ can be uniquely extended to a linear functional on $\mmm_\phi$ which coincides with $\phi$ on $\N^+\cap\mmm_\phi$. Given a semi-finite trace $\tau:\N^+\ra[0,\infty]$ on a semi-finite $W^*$-algebra $\N$, its extension to a two-sided ideal $\mmm_\tau$ of $\N$ satisfies
\begin{equation}
        \tau(yx)=\tau(xy)\;\;\forall x\in\mmm_\tau\;\forall y\in\N.
\label{commutative.trace.mmm}
\end{equation}
In addition, if $\tau$ is normal, then for any $x\in\mmm_\tau$ the map 
\begin{equation}
        y\mapsto\omega_x(y):=\tau(xy)
\label{omega.x.map}
\end{equation}
is an element of $\N_\star^+$ \cite{Dye:1952}. Moreover, 
\begin{equation}
        \tau(yx)=\tau(x^{1/2}yx^{1/2})=\tau(y^{1/2}xy^{1/2})\;\;\forall x\in\mmm_\tau^+\;\forall y\in\N^+.
\end{equation}
So, the formula
\begin{equation}
        \omega_x(y):=\tau(x^{1/2}yx^{1/2})\;\;\forall y\in\N
\label{omega.from.tau.ideal.mt}
\end{equation}
gives rise to $\omega_x\in\N_\star^+$ with $\n{\omega_x}=\tau(\ab{x})$ for each $x\in\mmm_\tau$. 

Given any $h\in\aff(\N)^+$,
\begin{align}
        \pvm^h\left(\left]\textstyle\frac{1}{n},n\right[\right)&\in\mmm_\tau^+\;\;\forall n\in\NN,\\
        \pvm^h\left(\left]\textstyle\frac{1}{n},n\right[\right)h&\in\N^+\;\;\forall n\in\NN,
\end{align}
so the formulas \eqref{commutative.trace.mmm}-\eqref{omega.from.tau.ideal.mt} can be applied also to $\pvm^h\left(\left]\textstyle\frac{1}{n},n\right[\right)$. Setting $\epsilon:=\frac{1}{n}$, define
\begin{equation}\rpktarget{TAU.H}
        \tau_h:=\lim_{\epsilon\ra^+0}\tau
        \left(
                \left(\pvm^h\left(\left]\epsilon,\textstyle\frac{1}{\epsilon}\right[\right)\right)^{1/2}
                \,\cdot\,
                \left(\pvm^h\left(\left]\epsilon,\textstyle\frac{1}{\epsilon}\right[\right)\right)^{1/2}
        \right).
\end{equation}
According to the Segal--Puk\'{a}nszky theorem \cite{Segal:1953,Pukanszky:1954} (see also \cite{Perdrizet:1971}),
\begin{equation}
        \forall\phi\in\N_\star^+\;\exists!h\in\aff(\N)^+\;\forall x\in\N^+\;\;\;\phi(x)=\tau_h(x).
\label{Segal.Pukanszky.theorem}
\end{equation}
This theorem implies the Dye--Segal theorem \cite{Dye:1952,Segal:1953},
\begin{equation}
        \forall\psi,\phi\in\N_\star^+\;\;
        \phi\leq\psi\;
        \limp\;
        \exists!h\in\aff(\N)^+\;\;
        \phi(x)=\lim_{\epsilon\ra^+0}\psi
        \left(
        \left(\pvm^h\left(\left]\epsilon,\textstyle\frac{1}{\epsilon}\right[\right)\right)^{1/2}
        h^{1/2}xh^{1/2}
        \left(\pvm^h\left(\left]\epsilon,\textstyle\frac{1}{\epsilon}\right[\right)\right)^{1/2}
        \right).
\label{Dye.Segal.thm}
\end{equation}

If $h:\dom(x)\ra\H$ is a positive self-adjoint linear operator on $\H$, then $h(1+\epsilon h)^{-1}$ is self-adjoint and bounded for any $\epsilon>0$. If $h\in\aff(\N)^+$ then $h(1+\epsilon h)^{-1}\in\N^+$ $\forall\epsilon>0$. Using these properties, Pedersen and Takesaki \cite{Pedersen:Takesaki:1973} showed that the `regularised perturbation' of the trace $\tau$ by $h\in\aff(\N)^+$,\rpktarget{TAU.H.ZWEI}
\begin{equation}
        \tau_h(\cdot):=\lim_{\epsilon\ra^+0}\tau\left((h(1+\epsilon h)^{-1})^{1/2}\,\cdot\,(h(1+\epsilon h)^{-1})^{1/2}\right)
\label{omega.from.tau}
\end{equation}
is a semi-finite normal weight, $\tau_h\in\W(\N)$, and 
\begin{equation}
\forall\phi\in\W(\N)\;\exists! h\in\aff(\N)^+\;\;\phi=\tau_h.
\label{Dye.Segal.weights}
\end{equation}
Moreover, the map $h\mapsto\tau_h$ is a bijection between $\aff(\N)^+$ and $\W(\N)$. In this sense, the equations \eqref{Dye.Segal.thm} and \eqref{Dye.Segal.weights} provide the noncommutative analogue of the Radon--Nikod\'{y}m theorem. (For a derivation of \eqref{Dye.Segal.thm} using \eqref{Dye.Segal.weights}, see \cite{Pedersen:1979}. In such case the Dye--Segal theorem is valid for any $W^*$-algebra. See also a discussion of this theorem in \cite{Gudder:Marchand:1972}.)

As noted by Segal \cite{Segal:1953,Segal:1965}, the key property responsible for the Dye--Segal--Puk\'{a}nszky analogue of the Radon--Nikod\'{y}m theorem is the invariance property of a trace: $\tau(x)=\tau(uxu^*)$ for all unitary $u\in\N$. For general weights, this property no longer holds. A weaker condition is a `relative invariance' property: $\psi=\psi\circ\sigma^\phi$, which turns out to correspond to the use of such $h$ that are affiliated to the subset of $\N$ invariant with respect to the action of $\sigma^\phi$, namely $h\in\aff(\N_{\sigma^\phi})^+$. If $\N$ is not semi-finite, then there exists no normal semi-finite trace on it. This suggests to consider traces on $\N_{\sigma^\phi}$. However, for $\phi\in\W(\N)$, $\N_{\sigma^\phi}$ may be not semi-finite (it may be of type III even if $\N$ is of type II$_\infty$ \cite{Haagerup:1977}). Moreover, given $\phi\in\W(\N)$, the restriction $\phi|_{\N_{\sigma^\phi}}$ is a trace on $\N_{\sigma^\phi}\cap\mmm_\phi$, but it is not semi-finite unless $\phi$ is \textit{strictly semi-finite} (i.e., $\phi$ can be expressed as a sum of $\{\phi_i\}\subseteq\N_\star^+$ with $\supp(\phi_i)\supp(\phi_j)=\dirac_{ij}\supp(\phi_i)\;\forall i,j$) \cite{Combes:1971:esperances}. To handle more general cases, it is necessary to establish the method of comparison between all normal weights. The property
\begin{equation}
        h\in\N_{\sigma^\phi}\;\;\iff\;\;h\mmm_\phi\subseteq\mmm_\phi,\;\mmm_\phi h\subseteq\mmm_\phi,\;\phi(hx)=\phi(xh)\;\;\forall x\in\mmm_\phi
\end{equation}
allowed Pedersen and Takesaki \cite{Pedersen:Takesaki:1973} to construct the `perturbations' of weights similar to \eqref{omega.from.tau}, and to provide the extension of the theorems \eqref{Segal.Pukanszky.theorem} and \eqref{Dye.Segal.weights} to the case of weights on \textit{arbitrary} $W^*$-algebras. 

If $\phi\in\W(\N)$ and $h\in\N^+_{\sigma^\phi}$, then
\begin{equation}\rpktarget{PHI.H}
        \phi_h:\N^+\ni x\mapsto\phi_h(x):=\phi(h^{1/2}x h^{1/2})\in[0,\infty]
\end{equation}
satisfies
\begin{enumerate}
\item[1)] $\phi_h\in\W(\N)$,
\item[2)] $\supp(\phi_h)=\supp(h)$,
\item[3)] $\phi_h\in\W_0(\N)\;\iff\;\supp(h)=\II$,
\item[4)] $\phi_{h_1+h_2}=\phi_{h_1}+\phi_{h_2}$,
\item[5)] $h_1\leq h_2\;\limp\;\phi_{h_1}\leq\phi_{h_2}$,
\item[6)]  $\sup_\iota\{h_\iota\}=h\;\limp\;\sup_\iota\{\phi_{h_\iota}\}=\phi_h$,
\end{enumerate}
for all $h_1,h_2,h_\iota\in\N^+_{\sigma^\phi}$. If $h_\epsilon:=h\in\aff(\N_{\sigma^\phi})^+$ then $h(1+\epsilon h)^{-1}\in\N_{\sigma^\phi}^+\;\forall\epsilon>0$ and $\lim_{\epsilon\ra^+0}h_\epsilon=h$. This allows to define the extension of $\phi_h$ for $h\in\aff(\N_{\sigma^\phi})^+$,
\begin{equation}
        \phi_h:=\lim_{\epsilon\ra^+0}\phi\left((h(1+\epsilon h)^{-1})^{1/2}\,\cdot\,(h(1+\epsilon h)^{-1})^{1/2}\right)\in\W(\N).
\label{Pedersen.Takesaki.perturbed.weight}
\end{equation}
It satisfies
\begin{enumerate}
\item[1)] $\phi_h\in\W(\N)$,
\item[2)] $\supp(\phi_h)=\supp(h)$,
\item[3)] $\phi_h\in\W_0(\N)\;\iff\;\supp(h)=\II$,
\item[4)] $h_1\leq h_2\;\limp\;\phi_{h_1}\leq\phi_{h_2}$,
\item[5)] $h_1=h_2\;\iff\;\phi_{h_1}=\phi_{h_2}$,
\item[6)] $\sigma^{\phi_h}_t(x)=h^{\ii t}\sigma^\phi_t(x)h^{-\ii t}\;\;\forall x\in\N_{\supp(h)}\;\forall t\in\RR$,
\item[7)] $\phi\in\W_0(\N),\;[h_1,h_2]=0\;\limp\;(\phi_{h_1})_{h_2}=(\phi_{h_2})_{h_1}$,
\item[8)] $\Connes{\phi_h}{\phi}{t}=h^{\ii t}\;\forall t\in\RR$.
\end{enumerate}
Equation 6) holds for all $x\in\N$ if $h$ is invertible. One says that $h\in\aff(\N_{\sigma^\phi})^+$ is \df{nonsingular} if{}f $\supp(h)=\II$, or, equivalently, if{}f $h\xi\neq0\;\forall\xi\in\dom(h)\setminus\{0\}$. According to the Pedersen--Takesaki theorem \cite{Pedersen:Takesaki:1973} (see also \cite{Elliott:1975,Stratila:1981}), if $\phi\in\W_0(\N)$ and $\psi\in\W(\N)$ then the following conditions are equivalent:
\begin{enumerate}
\item[i)] $\psi=\psi\circ\sigma^\phi_t\;\forall t\in\RR$,
\item[ii)] $\Connes{\psi}{\phi}{t}\in\N_{\sigma^{\psi}}\;\forall t\in\RR$,
\item[iii)] $\Connes{\psi}{\phi}{t}\in\N_{\sigma^{\phi}}\;\forall t\in\RR$,
\item[iv)] $t\mapsto\Connes{\psi}{\phi}{t}$ is an ultrastrongly continuous group of unitary elements in $\N_{\supp(\psi)}$,
\item[v)] $\exists h\in\aff(\N_{\sigma^{\phi}})^+$ such that $\psi=\phi_h$.
\end{enumerate}
If any of these conditions holds, then $h$ is unique and $\supp(\psi)\in\N_{\sigma^\phi}$. Such $h$ will be called a \df{Pedersen--Takesaki density}. For $\psi\in\W_0(\N)$, $h$ is assumed to be nonsingular, and then the above conditions are also equivalent to
\begin{enumerate}
\item[vi)] $\phi=\phi\circ\sigma^\psi_t\;\forall t\in\RR$.
\item[vii)] $\exists!$ nonsingular $k\in\aff(\N_{\sigma^\psi})^+$ such that $\phi=\psi_k$.
\end{enumerate}
Moreover, if $\psi\in\W_0(\N)$, then $h^{\ii t}\in\N_{\sigma^\phi}$. Conversely, if $h\in\aff(\N_{\sigma^\phi})^+$ is nonsingular, then the equation $\psi=\phi_h$ determines $\psi\in\W_0(\N)$ and
\begin{equation}
        \sigma^\psi_t=\Ad(h^{\ii t})\sigma_t^\phi=\Connes{\psi}{\phi}{t}\sigma^\phi_t\Connes{\psi}{\phi}{t}^*\;\;\forall t\in\RR.
\end{equation}
If $\phi,\psi\in\N^+_{\star0}$, then $k,h\in\aff(\N_{\sigma^\phi}\cap\N_{\sigma^\psi})^+$ \cite{Takesaki:1970}. From 8), iv) and v) above it follows that the description of relationship of two weights (`integrals') in terms of perturbation by some operator (`function') is equivalent to the specification of this operator in terms of Connes' cocycle (`derivative'). This relationship becomes explicit for $\phi,\psi\in\W_0(\N)$. In such case, Connes' theorem \cite{Connes:1973:poids:normaux,Connes:1973:classification} states that the following conditions are equivalent:
\begin{enumerate}
\item[i)] $\exists\lambda>0\;\;\psi\leq\lambda\phi$,
\item[ii)] $x\in\nnn_\phi\;\limp\;x\in\nnn_\psi$,
\item[iii)] $\rpktarget{CONNES.COC.VIER}t\mapsto\Connes{\psi}{\phi}{t}$ can be extended to a map that is valued in $\N$, bounded (by $\lambda^{1/2}$) and weakly-$\star$ continuous on a strip $\{z\in\CC\mid\im(z)\in[-\frac{1}{2},0]\}$, holomorphic in interior of this strip, and satisfying the boundary condition
\begin{equation}
        \psi(x)=\phi\left(\Connes{\psi}{\phi}{-\ii/2}^*x\Connes{\psi}{\phi}{-\ii/2}\right)\;\;\forall x\in\mmm_\psi.
\end{equation}
\end{enumerate}
This theorem extends to $\psi\in\W(\N)$, with $\RR\ni t\mapsto\Connes{\psi}{\phi}{t}\in\supp(\psi)\N\;\forall t\in\RR$ \cite{Kosaki:1980:PhD}. Thus, whenever the condition i) is satisfied, the analytic continuation of Connes' cocycle 
\begin{equation}
        h^{1/2}=\Connes{\psi}{\phi}{-\ii/2}
\end{equation}
plays the role of a noncommutative Radon--Nikod\'{y}m quotient. If $\exists\lambda>0\;\;\psi\leq\lambda\phi$ is not satisfied, then $\Connes{\psi}{\phi}{-\ii/2}$ is an unbounded operator which is not closable unless $\N$ is finite \cite{Kosaki:1985}. The existence of inverse of the noncommutative Radon--Nikod\'{y}m quotient is equivalent to a condition that both weights are faithful (from the perspective of the GNS representation, this condition states that the corresponding Gel'fand ideals are empty).

Every faithful normal semi-finite trace $\tau$ on a semi-finite $W^*$-algebra $\N$ satisfies $\sigma^\tau_t=\id_\N\;\forall t\in\RR$, so the Pedersen--Takesaki theorem reduces in this case to the generalised Dye--Segal--Puk\'{a}nszky theorem \eqref{omega.from.tau} with $h^{\ii t}=\Connes{\phi}{\tau}{t}$.

Pedersen and Takesaki \cite{Pedersen:Takesaki:1973} proved also that for $\phi\in\W_0(\N)$ and $\psi\in\W(\N)$ the following conditions are equivalent:
\begin{enumerate}
\item[i)] $\psi$ satisfies the KMS condition with respect to $\sigma^\phi$ and $\beta=1$,
\item[ii)] $\supp(\psi)=\zentr_\N$ and $\sigma_t^\psi=\sigma^\phi_t|_{\N\supp(\psi)}\;\forall t\in\RR$,
\item[iii)] $\exists h\in\aff(\zentr_\N)^+$ such that $\psi=\phi_h$.
\end{enumerate}
If any of these conditions holds, then $h$ is unique and $h^{\ii t}\in\zentr_\N\subseteq\N_{\sigma^\phi}$. This is the case, in particular, if $\phi$ and $\psi$ are faithful normal semi-finite traces on a semi-finite $W^*$-algebra $\N$.

The noncommutativity of $\N$ allows to formulate also other noncommutative Radon--Nikod\'{y}m type theorems. In particular, there is a collection of theorems which express a functional $\psi$ majorised by $\phi$ as a `perturbation' $\psi=\phi(h\,\cdot\,h)$ with $0\leq h\leq\II$, starting from Segal's theorem \cite{Segal:1947:irreducible} for any $C^*$-algebra $\C$,
\begin{equation}
        \phi,\psi\in\C^{\banach+},\;\psi\leq\phi\;\;\limp\;\;\exists! h\in\pi_\phi(\C)^\comm\;\;\;\psi=\s{\pi_\phi(\,\cdot\,)h\Omega_\phi,\Omega_\phi}_\phi\mbox{ and }0\leq h\leq\II,
\end{equation}
and Sakai's theorem \cite{Sakai:1965} for any $W^*$-algebra $\N$,
\begin{equation}
        \phi,\psi\in\N_\star^+,\;\psi\leq\phi\;\;\limp\;\;\exists! h\in\N^+\;\;\psi=\phi(h\,\cdot\,h)\mbox{ and }0\leq h\leq\II,
\end{equation}
see also  \cite{Pedersen:Takesaki:1973,Haagerup:1973,Araki:1974:modular:conjugation,vanDaele:1975,Pedersen:1979}. Among these theorems we want to mention a theorem by \c{S}tr\u{a}til\u{a} and Zsid\'{o} \cite{Stratila:Zsido:1975}: if $\phi,\psi\in\N_\star^+$, $\supp(\psi)\leq\supp(\phi)$, and $\supp(\phi)$ is finite in $\N$, then
\begin{equation}
        \exists! h\in\aff(\N)^+\;\;\psi=\phi(h\,\cdot\,h)\;\mbox{ and }\;\supp(h)\leq\supp(\phi).
\end{equation}
For a generalisation of noncommutative Radon--Nikod\'{y}m theorem to arbitrary $*$-algebras see \cite{Gudder:1979:RN}.
\subsection{Integration relative to a trace\label{integration.trace.section}}
Let $\tau$ be a faithful normal semi-finite trace on a $W^*$-algebra $\N$. The map 
\begin{equation}
        \n{\cdot}_p:\N\ni x\mapsto\n{x}_p:=\tau(\ab{x}^p)^{1/p}\in[0,\infty]
\end{equation}
for $p\in[1,\infty[$ is a norm on a vector space $\{x\in\N\mid\n{x}_p<\infty\}$. Denote the Cauchy completion of this normed vector space by $\rpktarget{LPNTAU}L_p(\N,\tau)$. Equivalently, $L_p(\N,\tau)$ can be defined as a Cauchy completion of $\{x\in\N\mid\tau(\ab{x})<\infty\}$ in the norm given by $\n{\cdot}_p$ \cite{Nelson:1974}, or as a Cauchy completion of $\Span_\CC\{x\in\N^+\mid\tau(\supp(x))<\infty\}$ in $\n{\cdot}_p$ \cite{Pisier:Xu:2003}. The space $L_1(\N,\tau)$ can be equivalently defined also as a Cauchy completion of $\mmm_\tau$ in $\n{\cdot}_1$, while $L_2(\N,\tau)$ as a Cauchy completion of $\nnn_\tau$ in $\n{\cdot}_2$ \cite{Dixmier:1953,Takesaki:2003}. The property $\ab{\tau(x)}\leq\n{x}_1\;\forall x\in\mmm_\tau$ allows the unique continuous extension of $\tau$ from a linear functional on $\mmm_\tau$ to a linear functional on $L_1(\N,\tau)$. This extends a bilinear form
\begin{equation}
        \mmm_\tau\times\N\ni(h,x)\mapsto\tau(h^{1/2}x h^{1/2})\in\CC
\end{equation}
to the bilinear form $L_1(\N,\tau)\times\N\ra\CC$, which defines a duality between $L_1(\N,\tau)$ and $\N$, and makes $L_1(\N,\tau)$ isometrically isomorphic to $\N_\star$ \cite{Dixmier:1953}. Extending the notation $\omega_x$ of \eqref{omega.x.map} to all elements of $\N_\star$ corresponding to $x\in L_1(\N,\tau)$, we have
\begin{equation}
        \omega_x(y)=\tau(yx)=\tau(xy)\;\;\forall y\in\N\;\forall x\in L_1(\N,\tau),
\end{equation}
and \cite{Dye:1952,Segal:1953}
\begin{equation}
         \forall\omega\in\N_\star^+\;\;\exists! x\in L_1(\N,\tau)^+\;\;\forall y\in\N\;\;\omega(y)=\tau(xy)=\tau(x^{1/2}yx^{1/2}).
\label{Dye.Segal.density}
\end{equation}
Such $x$ will be called a \df{Dye--Segal density} of $\omega$ with respect to $\tau$. If $\psi,\phi\in\W_0(\N)$ are traces with corresponding Dye--Segal densities $\rho_\psi$ and $\rho_\psi$, then
\begin{equation}
	\Connes{\psi}{\phi}{t}=\rho^{\ii t}_\psi\rho^{-\ii t}_\phi\;\;\forall t\in\RR.
\end{equation}

Using the notion of measurability with respect to a trace $\tau$, the above range of $L_p(\N,\tau)$ spaces can be represented in terms of operators affiliated to a von Neumann algebra $\N$ acting on $\H$. Let $\tau$ be a fixed faithful normal semi-finite trace on $\N$. A closed densely defined linear operator $x:\dom(x)\ra\H$ is called \df{$\tau$-measurable} \cite{Segal:1953,Nelson:1974} if{}f any of the following equivalent conditions holds:\footnote{We give here the set of conditions that are equivalent for Nelson's notion of $\tau$-measurability \cite{Nelson:1974}, which is stronger than Segal's notion \cite{Segal:1953}. For a discussion of differences between these two notions, see \cite{Cecchini:1978}. Instead of $\rpktarget{pvm.nelson.nota}P^{\ab{x}}(]\lambda,+\infty[)$ the notation $\II-P^{\ab{x}}_\lambda:=\II-P^{\ab{x}}(]-\infty,\lambda])$ is also used.}
\begin{enumerate}
\item[1)] $\exists\lambda>0\;\;\tau(\pvm^{\ab{x}}(]\lambda,+\infty[))<\infty$, 
\item[2)] $\forall\epsilon_1>0\;\exists\epsilon_2>0\;\;\tau(\pvm^{\ab{x}}(]\epsilon_2,\infty[))\leq\epsilon_1$,
\item[3)] $\forall\epsilon>0\;\exists P\in\Proj(\N)$ such that $\tau(\II-P)<\epsilon$, and $P\H\subseteq\dom(x)$, 
\item[4)] $\lim_{\lambda\ra\infty}\tau(\pvm^{\ab{x}}(]\lambda,+\infty[))=0$. 
\end{enumerate}
The space of all $\tau$-measurable operators affiliated with $\N$ will be denoted by $\rpktarget{MMMNTAU}\MMM(\N,\tau)$. For $x,y\in\MMM(\N,\tau)$ the algebraic sum $x+y$ and algebraic product $xy$ may not be closed, hence in general they do not belong to $\MMM(\N,\tau)$. However, their closures (denoted with the abuse of notation by the same symbol) belong to $\MMM(\N,\tau)$. Moreover, $\II\in\MMM(\N,\tau)$. 

The space $\MMM(\N,\tau)$ can be equipped with a Hausdorff metrisable Cauchy complete topology, called \df{$\tau$-topology} \cite{Stinespring:1959,Nelson:1974}, given by the set of neighbourhoods of $0\in\MMM(\N,\tau)$,
\begin{align}
        N_{\epsilon_1,\epsilon_2}(0):&=\{x\in\MMM(\N,\tau)\mid\exists P\in\Proj(\N),\;\tau(\II-P)\leq\epsilon_1,\;P\H\subseteq\dom(x),\;\n{xP}\leq\epsilon_2\}\nonumber\\&=\{x\in\MMM(\N,\tau)\mid\tau(\pvm^{\ab{x}}(]\epsilon_2,\infty[))\leq\epsilon_1\},
\end{align}
where $\epsilon_1>0$, $\epsilon_2>0$. This turns $\MMM(\N,\tau)$ into a unital topological $*$-algebra, with sum and multiplication defined by closures of algebraic sum and multiplication \cite{Nelson:1974}. The algebra $\N$ is a $*$-subalgebra of $\MMM(\N,\tau)$ that is dense in $\tau$-topology. Moreover, the addition, multiplication, and conjugation in $\N$ have unique extensions to $\MMM(\N,\tau)$. It follows that given any semi-finite von Neumann algebra $\N$ and a faithful normal semi-finite trace $\tau$, the topological $*$-algebra $\MMM(\N,\tau)$ is defined uniquely as a completion of $\N$ in $\tau$-topology. In general, $\tau$-topology is not locally convex. However, the subspace
\begin{equation}
        \MMM_0(\N,\tau):=\{x\in\MMM(\N,\tau)\mid\tau(\pvm^{\ab{x}}(]\lambda,\infty[))<\infty\;\forall\lambda>0\}
\end{equation}
is a sequential space, that is, the convergence in $\tau$-topology coincides on $\MMM_0(\N,\tau)$ with the sequential convergence in $\tau$. The latter is defined as follows. A sequence $\{x_i\}\subseteq\MMM(\N,\tau)$ is called to \df{converge in $\tau$} to $x\in\MMM(\N,\tau)$ if{}f $\exists\{P_i\}\subseteq\Proj(\N)$ such that
\begin{enumerate}
\item[i)] $\lim_i\n{(x_i-x)P_i}=0$,
\item[ii)] $\lim_i\tau(\II-P_i)=0$.
\end{enumerate}
The space $\MMM_0(\N,\tau)$ is a two-sided ideal in $\MMM(\N,\tau)$. If $\N$ is a finite von Neumann algebra, then $x\in\MMM(\N)\limp x\in\MMM(\N,\tau)$. If $\N=\BH$ and $\tau$ is a standard trace $\tr$ on $\BH$, then $\MMM(\BH,\tr)=\BH$. For a more detailed study of $\MMM(\N,\tau)$ and various topologies on it, see \cite{Muratov:Chilin:2007}.

Consider the extension of a trace $\tau$ from $\N^+$ to $\aff(\N)^+$ given by
\begin{equation}
        \tau:\aff(\N)^+\ni x\mapsto\tau(x):=
        \sup_{n\in\NN}\left\{\tau\left(
                \int_0^n\pvm^x(\lambda)\lambda
        \right)\right\}
        \in[0,\infty],
\label{trace.extension.to.affN.plus}
\end{equation}
the map
\begin{equation}
        \n{\cdot}_p:\MMM(\N,\tau)\ni x\mapsto\n{x}_p:=(\tau(\ab{x}^p))^{1/p}\in[0,\infty],
\label{trace.p.norm}
\end{equation}
and the family of vector spaces\rpktarget{LPNTAU.ZWEI}
\begin{equation}
        L_p(\N,\tau):=\{x\in\MMM(\N,\tau)\mid\n{x}_p<\infty\},
\label{Lp.measurable}
\end{equation}
where $p\in[1,\infty[$. The map \eqref{trace.p.norm} is a norm on \eqref{Lp.measurable} \cite{Yeadon:1975}, and $L_p(\N,\tau)$ are Cauchy complete with respect to the topology of this norm. In addition, one defines $L_\infty(\N):=\N$. The Banach spaces $L_p(\N,\tau)$ defined this way coincide with the $L_p(\N,\tau)$ spaces defined at the beginning of this Section. The spaces $L_p(\N,\tau)$ embed continuously into $\MMM(\N,\tau)$, and are subsets of $\MMM_0(\N,\tau)$ \cite{Nelson:1974}. The space $\N\cap L_p(\N,\tau)$ is dense in $L_p(\N,\tau)$ with respect to the topology of $\n{\cdot}_p$ norm. For all $\gamma\in\,]0,1]$ \cite{Yeadon:1975}
\begin{equation}
	(x,y)\in L_{1/\gamma}(\N,\tau)\times L_{1/(1-\gamma)}(\N,\tau)\;\;\limp\;\;xy\in L_1(\N,\tau),
\end{equation}
and the duality
\begin{equation}
        L_{1/\gamma}(\N,\tau)\times L_{1/(1-\gamma)}(\N,\tau)\ni(x,y)
        \mapsto\duality{x,y}
        :=\tau(xy)\in\RR
\end{equation}
determines an isometric isomorphism of Banach spaces
\begin{equation}
        L_{1/\gamma}(\N,\tau)^\banach\iso L_{1/(1-\gamma)}(\N,\tau).
\end{equation}
The noncommutative analogue of the Rogers--H\"{o}lder inequality reads \cite{Yeadon:1975}
\begin{equation}
	\n{xy}_1\leq\n{x}_{1/\gamma}\n{y}_{1/(1-\gamma)}\;\;\forall(x,y)\in L_{1/\gamma}(\N,\tau)\times L_{1/(1-\gamma)}(\N,\tau).
\label{RH.ncLNtau.ineq}
\end{equation}
The special case of \eqref{RH.ncLNtau.ineq} were obtained in \cite{Grothendieck:1955,Garling:1967} for $(\N,\tau)=(\BH,\tr)$.

The space of \df{Riesz--Schauder} \cite{Riesz:1917,Schauder:1930} (or \df{compact}) operators over a Hilbert space $\H$,
\begin{equation}\rpktarget{SCHATT}
        \schatten_0(\H):=\overline{\{x\in\BH\mid\dim\ran(x)\leq\infty\}},
\end{equation}
where bar denotes the Cauchy completion in the norm of $\BH$, allows to define the space $\schatten_1(\H)$ of \df{trace class} (or \df{nuclear}) operators \cite{Schatten:vonNeumann:1946,Schatten:vonNeumann:1947} and the space $\schatten_2(\H)$ of \df{Hilbert--Schmidt} operators \cite{Schmidt:1907:a,vonNeumann:1927:Mathematische,Stone:1932} as a Cauchy completion of $\schatten_0(\H)$ in the norm  $\n{x}_1:=\tr(\ab{\sqrt{x^*x}})$ and $\n{x}_2:=\tr(x^*x)$, respectively. More generally, the spaces $\schatten_p(\H)$ of \df{von~Neumann--Schatten $p$-class} operators over a Hilbert space $\H$ are defined as \cite{vonNeumann:1937:Tomsk,Schatten:1946,Schatten:vonNeumann:1946,Schatten:vonNeumann:1947,Schatten:1950,Schatten:1960,Gokhberg:Krein:1965,Simon:1979}
\begin{equation}\rpktarget{SCHATT.P}
        \schatten_p(\H):=
        \{x\in\schatten_0(\H)\mid\n{x}_p:=
        \tr((x^*x)^{p/2})^{1/p}<\infty\},
\end{equation}
for $p\in[1,\infty[$, and they are Banach spaces with respect to the norm $\n{\cdot}_p$ for $p\in[1,\infty[$. In addition, one sets $\schatten_\infty(\H):=\BH$ with $\n{x}_\infty:=\n{x}_\BH$. The spaces $\schatten_p(\H)$ are uniformly convex and uniformly Fr\'{e}chet differentiable for $p\in\,]1,\infty[$ \cite{Dixmier:1953,McCarthy:1967,Lai:1973}, and the following Banach space dualities hold \cite{Schatten:1960,McCarthy:1967}:
\begin{equation}
        \schatten_0(\H)^\banach\iso\schatten_1(\H),\;\;\;
        \schatten_1(\H)^\banach\iso\schatten_\infty(\H),\;\;\;
        \schatten_{1/\gamma}(\H)^\banach\iso\schatten_{1/(1-\gamma)}(\H),
\end{equation}
for $\gamma\in\,]0,1]$. If $\N\subseteq\BH$, then \cite{Dixmier:1953,Dixmier:1954}
\begin{equation}
        \forall\omega\in\N^\banach\;\;
        \left(
                \omega\in\N_\star\;\;
                \iff\;\;
                \exists x\in\schatten_1(\H)\;\;\;
                \omega(\cdot)=\tr_\BH(x\,\cdot\,)
        \right).
\label{normal.state.is.trace.class.op}
\end{equation}
In such case $\n{\omega}=\tr(x)$. This theorem holds also for $(\omega,x)\in\N_\star^+\times\schatten_1(\H)^+$, as well as for $(\omega,x)\in\N^+_{\star1}\times\schatten_1(\H)^+_1$. However, the uniqueness of $x$ in \eqref{normal.state.is.trace.class.op}, as well as in its positive and normalised cases, holds only for $\N=\BH$, because in such case \eqref{normal.state.is.trace.class.op} defines a linear isometry $L_1(\BH,\tr)\iso\schatten_1(\H)\iso\BH_\star$ \cite{Dixmier:1950,Schatten:1950}. More generally, if $\N\subseteq\BH$, then \cite{Dixmier:1953}
\begin{equation}
        \N_\star\iso\schatten_1(\H)/\{x\in\schatten_1(\H)\mid\tr(xy)=0\;\forall y\in\N\}.
\end{equation}
The space $\schatten_2(\H)$ can be equipped with the inner product 
\begin{equation}
        \s{x,y}_{\schatten_2(\H)}:=\tr(y^*x)\;\;\forall x,y\in\schatten_2(\H),
\end{equation}
which turns it into a Hilbert space, called the \df{Hilbert--Schmidt space}\footnote{A spectral theory of bounded operators associated with this space was a subject of analysis in \cite{Hilbert:1904,Hilbert:1905,Hilbert:1906:a:b:c,Schmidt:1907:a,Schmidt:1907:b,Schmidt:1907:c,Schmidt:1908,Hilbert:1910}.}. The von~Neumann--Schatten $\schatten_p(\H)$ spaces can be characterised by
\begin{equation}
        \schatten_p(\H)=L_p(\BH,\tr)\;\;\forall p\in[1,\infty].
\end{equation}
\subsection{Operator valued weights}
Given a $W^*$-algebra $\N$, the \df{extended positive cone} $\rpktarget{NEXT}\N^\ext$ is defined as set of maps $m:\N_\star^+\ra[0,\infty]$ such that for all $\phi,\psi\in\N_\star^+$
\begin{enumerate}
\item[1)] $m(\lambda\phi)=\lambda m(\phi)\;\;\forall\lambda\geq0$,
\item[2)] $m(\phi+\psi)=m(\phi)+m(\psi)$,
\item[3)] $m$ is weakly lower semi-continuous, that is,
\begin{equation}
        \sup_\iota\{\omega_\iota(x)\}=
        \omega(x)\;\;\limp\;\;
        m(\omega)\leq\lim\inf_\iota\{m(\omega_\iota)\}
        \;\;\forall\omega,\omega_\iota\in\N_\star^+,
\end{equation}
or, equivalently,
\begin{equation}
        \mbox{the sets }\{\omega\in\N_\star^+\mid m(\omega)>\lambda\}\mbox{ are weakly open }\forall\lambda\in\RR.
\end{equation}
\end{enumerate}
The set $\N^\ext$ can be considered as the `set of normal weights on $\N_\star$'. It contains $\N^+$, and is closed under addition, multiplication by nonnegative scalars, and increasing limits of nets. If $\N$ is a von Neumann algebra acting on some Hilbert space $\H$, then $\aff(\N)^+\subseteq\N^\ext$. For all $m_1,m_2\in\N^\ext$, $x\in\N$, $\phi\in\N_\star^+$ and $\lambda\in\RR^+$ one defines
\begin{align}
        (\lambda m)(\phi)&:=\lambda m(\phi),\\
        (m_1+m_2)(\phi)&:=m_1(\phi)+m_2(\phi),\label{add.m.ext}\\
        (x^*mx)(\phi)&:=m(\phi(x\,\cdot\,x^*)).\label{pert.m.ext}
\end{align}
Every $m\in\N^\ext$ has a unique spectral decomposition
\begin{equation}
        m(\phi)=
        \int_0^\infty\phi(P^m(\lambda))+\infty\cdot\phi(P^m)
        \;\;\forall\phi\in\N_\star^+,
\label{spec.decomp.m}
\end{equation}
where $\{P^m(\lambda)\in\Proj(\N)\mid\lambda\in\RR^+\}$ is an increasing family which is ultrastrongly continuous from the right, and $P^m=\II-\lim_{\lambda\ra\infty}P^m(\lambda)$. Moreover,
\begin{align}
P^m(0)=0&\iff m\mbox{ is \df{faithful}  }(\mbox{i.e. }m(\phi)>0\;\;\forall\phi\in\N_\star^+\setminus\{0\}),\\
P^m=0&\iff m\mbox{ is \df{semi-finite} }(\mbox{i.e. }\{\phi\in\N_\star^+\mid m(\phi)<\infty\}\mbox{ is dense in }\N_\star^+).\label{m.semifinite}
\end{align}
Every normal weight on $\N$ has a unique extension $\rpktarget{TILDEPHI}\tilde{\phi}$ to $\N^\ext$ satisfying
\begin{enumerate}
\item[1)] $\tilde{\phi}(\lambda x)=\lambda\tilde{\phi}(x)\;\;\forall\lambda\geq0\;\forall x\in\N^\ext$,
\item[2)] $\tilde{\phi}(x+y)=\tilde{\phi}(x)+\tilde{\phi}(y)\;\;\forall x,y\in\N^\ext$,
\item[3)] $\tilde{\phi}(\sup_\iota\{m_\iota\})=\sup_\iota\{\tilde{\phi}(m_\iota)\}\;\;\forall$ increasing $\{m_\iota\}\subseteq\N^\ext$,
\end{enumerate} 
where $\{m_\iota\}$ is said to be \df{increasing} if{}f $\{m_\iota(\omega)\}$, with $\{m_\iota(\omega)\}\subseteq[0,\infty]$, is increasing for all $\omega\in\N_\star^+$. The proof of this proposition is based on the fact that every normal weight satisfies $\phi=\sum_i\phi_i$ for some family $\{\phi_i\}\subseteq\N_\star^+$ \cite{Pedersen:Takesaki:1973}, which allows to write
\begin{equation}
        m(\phi)
        =\lim_{n\ra\infty}\phi_i\left(
        \int_o^nP^m(\lambda)\lambda+nP^m
        \right)
        =:\lim_{n\ra\infty}\sum_i\phi_i(x_n)
        =\sum_im(\phi_i),
\label{m.as.a.sum}
\end{equation}
where $P^m(\lambda)$ and $P^m$ are given by \eqref{spec.decomp.m}. The formula \eqref{m.as.a.sum} provides a generalisation of \eqref{trace.extension.to.affN.plus}. It allows also to prove that for every faithful normal semi-finite trace $\tau$ the function
\begin{equation}
        \N^+\times\N^+\ni(x,y)\mapsto\tau(x^{1/2}yx^{1/2})=\tau(y^{1/2}xy^{1/2})\in[0,\infty]
\end{equation}
can be extended to $\N^\ext\times\N^\ext$ by a formula\rpktarget{TILDETAU}
\begin{equation}
        \N^\ext\times\N^\ext\ni(x,y)\mapsto\tilde{\tau}(x^{1/2}yx^{1/2}):=\sup_{i,j}\{\tau(x_j^{1/2}y_ix^{1/2}_j)\},
\label{tau.ext.extension}
\end{equation}
which is characterised as a unique extension of $\tau$ to $\N^\ext\times\N^\ext$ such that
\begin{enumerate}
\item[1)] $\tilde{\tau}(\lambda x)=\lambda\tilde{\tau}(x)$,
\item[2)] $\tilde{\tau}(x+y)=\tilde{\tau}(x)+\tilde{\tau}(y)$,
\item[3)] $(\sup_{\iota_1}\{x_{\iota_1}\}=x$, $\sup_{\iota_2}\{y_{\iota_2}\}=y)$ $\limp$ $\sup_{\iota_1,\iota_2}\{\tilde{\tau}(x^{1/2}_{\iota_1}y_{\iota_2}x^{1/2}_{\iota_1})\}=\tilde{\tau}(x^{1/2}yx^{1/2})$,
\item[4)] $\tilde{\tau}(x^{1/2}yx^{1/2})=\tilde{\tau}(y^{1/2}xy^{1/2})$,
\end{enumerate}
for all $\lambda\in\RR^+$, all $x,y\in\N^\ext$, and all increasing nets $\{x_{\iota_1}\},\{y_{\iota_2}\}\subseteq\N^\ext$. For every $y\in\N^+$, the extended trace $\tilde{\tau}$ defines a normal weight $\tilde{\tau}_x$ on $\N$ given by\rpktarget{TILDETAUX}
\begin{equation}
        \tilde{\tau}_x(y):=\sup_i\{\tilde{\tau}(x_i^{1/2}yx_i^{1/2})\}.
\label{weight.from.ext.trace}
\end{equation}
The map
\begin{equation}
        \N^\ext\ni x\mapsto
        \tilde{\tau}_x\in\{\mbox{all normal weights on }\N\}
\label{ext.normal.weights.bijection}
\end{equation}
is a bijection that preserves additivity, multiplication by $\lambda\in\RR^+$, ordering, and suprema. If $x\in\aff(\N)^+$ and $y\in\N^+$, then the definition \eqref{tau.ext.extension} coincides with \eqref{omega.from.tau}, and by \eqref{m.semifinite} the bijection \eqref{ext.normal.weights.bijection} turns to a bijection
\begin{equation}
        \aff(\N)^+\ni x\mapsto\tilde{\tau}_x\in\W(\N).
\end{equation}
This way \eqref{weight.from.ext.trace} leads to Haagerup's generalisation \cite{Haagerup:1979:ovw1} of the noncommutative Radon--Nikod\'{y}m theorem \eqref{Dye.Segal.weights}:
\begin{equation}
        \forall\mbox{ normal weight }\phi\;\mbox{ on }\N\;\;
        \exists!h\in\N^\ext\;\;\;\phi=\tilde{\tau}_h.
\label{Haagerup.ncRN.ext}
\end{equation}

Given $W^*$-algebras $\N_1$, $\N_2$ such that $\N_2\subseteq\N_1$, an \df{operator valued weight} from $\N_1$ to $\N_2$ is a map $\rpktarget{T.MAP}T:\N_1^+\ra\N_2^\ext$ satisfying \cite{Haagerup:1979:ovw1,Haagerup:1979:ovw2,Hirakawa:1992,Falcone:Takesaki:1999} 
\begin{enumerate}
\item[1)] $T(\lambda x)=\lambda T(x)\;\;\forall\lambda\geq0\;\forall x\in\N^+_1$,
\item[2)] $T(x+y)=T(x)+T(y)\;\;\forall x,y\in\N_1^+$,
\item[3)] $T(y^*xy)=y^*T(x)y\;\;\forall x\in\N_1^+\;\forall y\in\N_2$.
\end{enumerate}
It is called: \df{normal} if{}f $\sup_\iota\{x_\iota\}= x$ $\limp$ $\sup_\iota\{T(x_\iota)\}=T(x)$ $\forall x_\iota,x\in\N_1^+$; \df{faithful} if{}f $T(x^*x)=0$ $\limp$ $x=0$; \df{semi-finite} if{}f the set
\begin{equation}
        \rpktarget{nnnT}\nnn_T:=\{x\in\N_1\mid\n{T(x^*x)}\leq\infty\}
\end{equation}
is weakly-$\star$ dense in $\N_1$. If $T:\N_1^+\ra\N^\ext_2$ is a normal (respectively, faithful, or semi-finite) operator valued weight and $\phi$ is a normal (respectively, faithful, or semi-finite) weight on $\N_2$, then $\tilde{\phi}\circ T$ is a normal (respectively, faithful, or semi-finite) weight on $\N_1$. While Connes' cocycles are noncommutative analogues of the Radon--Nikod\'{y}m quotients, \cytat{application of an operator valued weight (...) can and should be thought of as ``partial integration''} \cite{Falcone:1996}. If $T:\N^+_1\ra\N_2^\ext$ is a faithful normal semi-finite operator valued weight then \cite{Haagerup:1979:ovw1}
\begin{align}
        \sigma^{\tilde{\psi}\circ T}_t(x)&=\sigma^\phi_t(x)\;\;\forall x\in\N_2\;\forall\phi\in\W_0(\N_1),\label{pre.nc.tonelli}\\
        \Connes{(\tilde{\psi}\circ T)}{(\tilde{\phi}\circ T)}{t}&=\Connes{\psi}{\phi}{t}\;\;\forall\phi,\psi\in\W_0(\N_1).
\end{align}
The property \eqref{pre.nc.tonelli} can be used to characterise $T$. According to Haagerup's theorem \cite{Haagerup:1979:ovw1,Falcone:Takesaki:1999}, there exists a faithful normal semi-finite operator valued weight $T:\N_1^+\ra\N_2^\ext$ if{}f there exist $\phi\in\W_0(\N_1)$ and $\psi\in\W_0(\N_2)$ such that
\begin{equation}
        \sigma^\phi_t(x)=\sigma^\psi_t(x)\;\;\forall x\in\N_2.
\end{equation}
If this condition is satisfied, then $T$ is uniquely determined by the `noncommutative Tonelli theorem'
\begin{equation}
        \phi=\tilde{\psi}\circ T.
\end{equation}
In a special case, when $\N_1$ and $\N_2$ are semi-finite von Neumann algebras, $\N_2\subseteq\N_1$, $\tau_1$ is a faithful normal semi-finite trace on $\N_1$, while $\tau_2$ is a faithful normal semi-finite trace on $\N_2$, then there exists a unique faithful normal semi-finite operator valued weight $T:\N_1^+\ra\N_2^\ext$ such that
\begin{equation}
        \tau_1=\tilde{\tau}_2\circ T.
\end{equation}

Given $W^*$-algebras $\N_1,\N_2$ such that $\N_2\subseteq\N_1$, the \df{conditional expectation} from $\N_1$ to $\N_2$ is defined as a map $\rpktarget{condexp}\condexp:\N_1\ra\N_2$ such that \cite{vonNeumann:1943,Dixmier:1953,Moy:1954,Nakamura:Turumaru:1954,Umegaki:1954,Umegaki:1956,Umegaki:1959,Umegaki:1962}\footnote{See \cite{Stoermer:1997,Stoermer:2013} for an overview.}
\begin{enumerate}
\item[1)] $\condexp(\lambda_1x_1+\lambda_2x_2)=\lambda_1\condexp(x_1)+\lambda_2\condexp(x_2)\;\;\forall x_1,x_2\in\N_1,\;\forall\lambda_1,\lambda_2\in\CC$,
\item[2)] $\condexp(x)=x$ $\forall x\in\N_2$,
\item[3)] $x\geq0\;\;\limp\;\;\condexp(x)\geq0$.
\end{enumerate}
Instead of 2) a weaker condition can be equivalently used:
\begin{enumerate}
\item[2')] $\condexp(\II)=\II$.
\end{enumerate}
From \cite{Tomiyama:1957,Tomiyama:1958,Tomiyama:1959,Choi:Effros:1974} it follows that the conditions 1)-3) imply the following equivalent properties
\begin{enumerate}
\item[4)] $\condexp(y_1xy_2)=y_1\condexp(x)y_2\;\;\forall x\in\N_1\;\forall y_1,y_2\in\N_2$,
\item[4')] $\condexp(y^*xy)=y^*\condexp(x)y\;\;\forall x\in\N_1\;\forall y\in\N_2$,
\item[4'')] $\condexp(x\condexp(y))=\condexp(x)\condexp(y)\;\;\forall x,y\in\N_1$,
\item[4''')] $\condexp(xy)=\condexp(x)y\;\;\forall(x,y)\in\N_1\times\N_2$.
\end{enumerate}
A conditional expectation $\condexp$ is called: \df{faithful} if{}f $\condexp(x)=0$ $\limp$ $x=0$ $\forall x\in\N_1^+$; \df{normal} if{}f $\sup_\iota\{x_\iota\}=x$ $\limp$ $\sup_\iota\{\condexp(x_\iota)\}=\condexp(x)$ for every bounded increasing net $\{x_\iota\}\subseteq\N_1^+$; \df{$\omega$-stable} (or \df{$\omega$-invariant}) for $\omega\in(\N_1)_\star^+$ if{}f $\omega|_{\N_2}\circ\condexp=\omega$. If $\omega\in(\N_1)^+_{\star01}$ and $\condexp$ is $\omega$-stable, then $\condexp$ is faithful normal. According to the Takesaki--Golodec theorem \cite{Takesaki:1972,Golodec:1972}, if $\N_1,\N_2$ are $W^*$-algebras such that $\N_2\subseteq\N_1$ and $\phi\in\N^+_{\star0}$, then there exists a $\phi$-stable conditional expectation $\condexp:\N_1\ra\N_2$ if{}f $\sigma^\phi|_{\N_2}(\N_2)=\N_2$. If such $\condexp$ exists, then it is unique. For any operator valued weight $T:\N_1^+\ra\N_2^\ext$ satisfying $T(\II)=\II$ there exists a conditional expectation $\condexp:\N_1\ra\N_2$ such that $T$ is a restriction of $\condexp$ to $\N_1^+$. Conversely, every operator valued weight $T$ has a unique linear extension $T:\mmm_T\ra\N_2$, where
\begin{align}
        \rpktarget{mmmT}\mmm_T:&=\Span_\CC\{x^*y\mid x,y\in\N_1,\;\;\n{T(x^*x)}<\infty,\;\;\n{T(y^*y)}<\infty\}\\
        &=\Span_\CC\{x\in\N^+_1\mid\n{T(x)}<\infty\}=\Span_\CC\mmm^+_T,
\end{align}
which satisfies
\begin{equation}
        T(y_1xy_2)=y_1T(x)y_2\;\;\forall x\in\mmm_T\;\forall y_1,y_2\in\N_2.
\end{equation}
The sets $\nnn_T$ and $\mmm_T$ are bimodules over $\N_2$, while $T(\mmm_T)$ is a weakly-$\star$ dense two sided ideal of $\N_2$. If $T(\II)=\II$ then the above extension of $T$ is a conditional expectation from $\N_1$ to $\N_2$. (The notions of an operator valued weight generalises this way the \textit{unbounded} conditional expectations of \cite{Combes:Delaroche:1975}.)

Given a $W^*$-dynamical system $(\N,G,\alpha)$ with a locally compact abelian group $G$, 
the formula
\begin{equation}
        T(x):=\int_G\tmu_L^G(g)\alpha_g(x)
\end{equation}
defines a normal operator valued weight $T:\N^+\ra\N_\alpha^\ext$. We say that $\alpha$ is \df{integrable} if{}f the set $\{x\in\N\mid T(x^*x)\in\N^+\}$ is weakly-$\star$ dense in $\N$. For $x\in(\N\rtimes_\alpha G)^+$, the map $x\mapsto\int_\D\tmu_L^{\hat{G}}(\hat{b})\hat{\alpha}_{\hat{b}}(x)$ is weakly-$\star$ continuous for any compact $\D\subseteq\hat{G}$ and\rpktarget{HATT}
\begin{equation}
        \hat{T}(x):=\int_{\hat{G}}\tmu_L^{\hat{G}}(\hat{b})\hat{\alpha}_{\hat{b}}(x)\;\;\forall x\in(\N\rtimes_\alpha G)^+
\label{nsf.ovw.cp.G}
\end{equation}
defines a faithful normal semi-finite operator valued weight $\hat{T}:(\N\rtimes_\alpha G)^+\ra(\N\rtimes_\alpha G)^\ext_{\hat{\alpha}}=(\pi_\alpha(\N))^\ext$ which is characterised by
\begin{equation}
        (\hat{T}(x))(\omega)=\int_{\hat{G}}\tmu_L^{\hat{G}}(b)(\hat{\alpha}_{\hat{b}}(x))(\omega)\;\;\forall\omega\in(\pi_\alpha(\N))^+_\star.
\end{equation}
It satisfies \cite{Haagerup:1978:dualweights:II}
\begin{equation}
        \hat{T}(u_G(g)xu_G(g)^*)=u_G(g)\hat{T}(x)u_G(g)^*\;\;\forall x\in(\N\rtimes_\alpha G)^+\;\forall g\in G.
\end{equation}
Given $\phi\in\W(\N)$, the \df{dual weight} $\rpktarget{HATPHI}\hat{\phi}$ on $\N\rtimes_\alpha G$ is defined as\footnote{Such definition of a dual weight is sufficient for our purposes. However, it is only a special case of the theory of dual weights on crossed products, developed in \cite{Takesaki:1973:duality,Digernes:1974,Digernes:1975,Sauvageot:1974,Sauvageot:1977,Haagerup:1976,Haagerup:1979:ovw2,Stratila:Voiculescu:Zsido:1976,Stratila:Voiculescu:Zsido:1977,Zsido:1979,Stratila:1981}.} \cite{Haagerup:1978:dualweights:I,Haagerup:1978:dualweights:II}
\begin{equation}
        \hat{\phi}:=\tilde{\phi}\circ\pi_\alpha^{-1}\circ\hat{T}.
\label{dual.weight.general.def}
\end{equation}

Given a $W^*$-algebra $\N$ and $\psi\in\W_0(\N)$, consider  $\precore:=\N\rtimes_{\sigma^\psi}\RR$ where $\sigma^\psi$ is a modular automorphism group induced by $\psi$ on $\N$. Formula \eqref{nsf.ovw.cp.G} turns in this case to
\begin{equation}\rpktarget{HATT.ZWEI}
        \hat{T}:\precore^+\ni x\mapsto\hat{T}(x):=\int_\RR\dd s\,\hat{\sigma}^\psi_s(x)\in\precore^\ext_{\hat{\sigma}^\psi}=(\pi_{\sigma^\psi}(\N))^\ext.
\end{equation}
Hence, the map
\begin{equation}
        \precore^+\ni x\mapsto\pi^{-1}_{\sigma^\psi}\left(\int_\RR\dd s\,\hat{\sigma}^\psi_s(x)\right)\in\N^\ext,
\end{equation}
where $\pi_{\sigma^\psi}$ is a faithful normal representation determined by \eqref{pi.sigma}, is a faithful normal semi-finite operator valued weight \cite{Haagerup:1979:ovw2}. So, given any normal weight $\phi$ on a $W^*$-algebra $\N$, its dual weight $\hat{\phi}$ on $\N\rtimes_{\sigma^\psi}\RR$ is given by \eqref{dual.weight.general.def},
\begin{equation}\rpktarget{HATPHI.ZWEI}
        \hat{\phi}:=\tilde{\phi}\circ\pi^{-1}_{\sigma^\psi}\left(\int_\RR\dd s\,\hat{\sigma}^\psi_s(\cdot)\right):\precore^+\ra[0,\infty].
\end{equation}
The dual weight $\hat{\phi}$ is also normal. If $\phi$ is faithful (respectively, semi-finite), then $\hat{\phi}$ is also faithful (respectively, semi-finite). From $\hat{T}(x)\in\precore^\ext_{\hat{\alpha}}\;\forall x\in\precore^+$ one has $\hat{\sigma}^\psi_s\circ\hat{T}=\hat{T}\;\forall s\in\RR$, where the extension of $\hat{\sigma}^\psi$ to $\precore^\ext$ is given by 
\begin{equation}
        \tilde{\psi}(\hat{\sigma}^\psi(x))=
        (\tilde{\psi}\circ\hat{\sigma}^\psi)(x)
        \;\;\forall\precore^\ext\;\forall\psi\in\W_0(\precore).
\end{equation}
Hence,
\begin{equation}
        \hat{\phi}\circ\hat{\sigma}^\psi_s=
        \hat{\phi}\;\forall s\in\RR.
        \label{hat.psi.invariance}
\end{equation}
Moreover, the map
\begin{equation}
        \W(\N)\ni\phi\mapsto\hat{\phi}\in\{\varphi\in\W_0(\precore)\mid\varphi\circ\hat{\sigma}^\psi_s=\varphi\;\;\forall s\in\RR\}
\end{equation}
is a bijection such that
\begin{align}
        \widehat{\phi+\psi}&=\hat{\phi}+\hat{\psi},\label{dual.weight.add}\\
        \widehat{\phi(x\,\cdot\,x^*)}&=\hat{\phi}(x\,\cdot\,x^*),\\
        \supp(\hat{\phi})&=\supp(\phi).\label{dual.weight.supp}
\end{align}
\subsection{Integration relative to a weight}
Now we can consider Haagerup's approach to construction of noncommutative $L_p(\N,\psi)$ spaces. A starting point of this approach is an observation that only semi-finite von Neumann algebras admit faithful normal semi-finite traces, and the crossed product $\precore:=\N\rtimes_{\sigma^\psi}\RR$ of an arbitrary von Neumann algebra $\N$ with the action of modular automorphism group $\sigma^\psi$ of $\psi\in\W_0(\N)$ is a semi-finite von Neumann algebra. So, one can try to integrate over $\N$ using a suitably defined trace on $\hat{\N}$, provided that this procedure can cover all $\N$. Because $\N$ is the fixed point subalgebra $\precore_{\hat{\sigma}^\psi}$ of $\precore$ under the dual action $\hat{\sigma}^\psi$, it seems that one can use the Pedersen--Takesaki noncommutative Radon--Nikod\'{y}m type theorem to integrate over full $\N$. However, in order to determine a \textit{natural} choice of trace $\tau_\psi$ over $\precore$ that corresponds to $\psi\in\W_0(\N)$, one needs to use dual weights, which requires to use operator valued weights and Haagerup's version of the noncommutative Radon--Nikod\'{y}m theorem.

A faithful normal semi-finite trace $\tau_\psi$ on $\precore$, uniquely determined by the equation
\begin{equation}
\Connes{\hat{\psi}}{\tau_\psi}{t}=u_\RR(t)=:h^{\ii t}\;\;\forall t\in\RR,
\label{nat.trace.Connes}
\end{equation}
will be called a \df{natural trace}. From \eqref{quasiinv.under.sigma.hat} one has $\hat{\sigma}^\psi_s(h)=\ee^{-s}h$. Together with $\hat{\psi}\circ\hat{\sigma}^\psi=\hat{\psi}$, which follows from \eqref{hat.psi.invariance}, this gives
\begin{align}
        \tau_\psi\circ\hat{\sigma}^\psi_s(x)
        &=\tilde{\psi}\circ\hat{T}(h^{-1/2}\hat{\sigma}^\psi_s(x)h^{-1/2})
        =\tilde{\psi}\circ\hat{T}
        \left(
                (\hat{\sigma}^\psi_{-s}(h^{-1}))^{1/2}
                x
                (\hat{\sigma}^\psi_{-s}(h^{-1}))^{1/2}
        \right)\\
        &=e^{-s}\tilde{\psi}\circ\hat{T}(h^{-1/2}xh^{-1/2})
        =e^{-s}\tau_\psi(x)\;\;\forall x\in\precore^+,
\end{align}
hence
\begin{equation}
        \tau_\psi\circ\hat{\sigma}^\psi_s=\ee^{-s}\tau_\psi\;\;\forall s\in\RR.
\end{equation}
By \eqref{Haagerup.ncRN.ext}, for any normal weight $\phi$ on $\N$ there exists a unique operator $\rpktarget{H.PHI}h_\phi\in\N^\ext$ that satisfies
\begin{equation}
        \tilde{\tau}_\psi(h_\phi^{1/2}xh_\phi^{1/2})=\hat{\phi}(x)\;\;\forall x\in\precore^+,
\label{h.phi.radon.nikodym.PT}
\end{equation}
which is equivalent to a uniqueness of the bijection between $\phi$ and a family of partial isometries given by Connes' cocycle,
\begin{equation}
        \Connes{\hat{\phi}}{\tau_\psi}{t}=h^{\ii t}_\phi,
\label{h.phi.radon.nikodym.Connes}
\end{equation}
whenever $\phi\in\W(\N)$. The operator $h_\phi$ plays a role of a noncommutative Radon--Nikod\'{y}m quotient of a normal weight $\hat{\phi}$ on $\precore$ with respect to a natural trace $\tau_\psi$. The map $\phi\mapsto h_\phi$ defines a bijection between the set $\W(\N)$ and a subset $\NNN^1_\psi(\precore)$ of such elements of $\aff(\precore)^+$ that satisfy $\hat{\sigma}^\psi_s(h_\phi)=\ee^{-s}h_\phi$. For any normal weight $\phi$ on $\N$, $\hat{\phi}=(\tilde{\tau}_\psi)_{h_\phi}$ determined by \eqref{h.phi.radon.nikodym.PT} satisfies \eqref{dual.weight.add}-\eqref{dual.weight.supp}, which can be expressed in terms of noncommutative Radon--Nikod\'{y}m quotient as \cite{Haagerup:1979:ovw1}
\begin{align}
        (\tilde{\tau}_\psi)_{h_1+h_2}
        &=(\tilde{\tau}_\psi)_{h_1}+(\tilde{\tau}_\psi)_{h_2},\\
        (\tilde{\tau}_\psi)_{xhx}
        &=(\tilde{\tau}_\psi)_{h}(x\,\cdot\,x^*),\\
        \supp((\tilde{\tau}_\psi)_h)
        &=\supp(h),
\end{align}
where $h_1+h_2$ and $xhx^*$ are understood in terms of \eqref{add.m.ext} and \eqref{pert.m.ext}, respectively. As a result, the map $\phi\mapsto h_\phi$ satisfies
\begin{align}
        h_{\phi+\varphi}&=h_\phi+h_\varphi,\\
        h_{\phi(x\,\cdot\,x^*)}&=xh_\phi x^*,\\
        \supp(h_\phi)&=\supp(\phi).
\end{align}

From the property (see \cite{Kosaki:1980:PhD} or \cite{Terp:1981} for a proof)
\begin{equation}
        \phi\in\W(\N)\;\;\limp\;\;\tau_\psi\left(\pvm^{h_\phi}(]\lambda,\infty[)\right)=\frac{1}{\lambda}\phi(\II)\;\;\forall\lambda>0
\end{equation}
it follows that the operator $h_\phi$ defined by \eqref{h.phi.radon.nikodym.PT} or \eqref{h.phi.radon.nikodym.Connes} is $\tau_\psi$-measurable if{}f $\phi\in\N_\star^+$. Let $\MMM(\precore,\tau_\psi)$ denote the completion of $\precore$ in $\tau_\psi$-topology. The space $\MMM(\precore,\tau_\psi)$ is a topological $*$-algebra, and can be always represented as a space of all closed densely defined $\tau_\psi$-measurable operators affiliated with $\precore$. Let the extension of $\hat{\sigma}^\psi_s$, $s\in\RR$, from $\precore$ to $\MMM(\precore,\tau_\psi)$ and $\N^\ext$ be denoted, with the abuse of notation, by the same symbol. Then the algebraic component of the structure of noncommutative $L_p(\N,\psi)$ space, where $p\in[1,\infty]$, is defined by\rpktarget{MMMPPRECORE}
\begin{equation}
        \MMM^p(\precore,\tau_\psi):=\left\{
        x\in\MMM(\precore,\tau_\psi)\mid 
                \hat{\sigma}^\psi_s(x)=\ee^{-\frac{s}{p}}x
        \right\}.
\label{nc.Lp}
\end{equation}
This way $\MMM(\precore,\tau_\psi)$ becomes a `container' for all noncommutative $L_p(\N,\psi)$ spaces. Every space $\MMM^p(\precore,\tau_\psi)$ is a self-adjoint linear subspace of $\MMM(\precore,\tau_\psi)$, closed under left and right multiplication by the elements of $\N$. If $x=v\ab{x}\in\MMM(\precore,\tau_\psi)$, then
\begin{equation}
        x\in \MMM^p(\precore,\tau_\psi)
        \iff
        (v\in\N,\;\;\ab{x}\in \MMM^p(\precore,\tau_\psi))
        \iff
        (v\in\N,\;\;\ab{x}^p\in\MMM^1(\precore,\tau_\psi)).
\end{equation}
The second equivalence is provided by the \df{Mazur map} $\N^+\ni x\mapsto x^p\in\N^+$, $p\in\,]0,\infty[$, extended by continuity to $\MMM(\precore,\tau_\psi)\ni x\mapsto x^p\in\MMM(\precore,\tau)$,
\begin{equation}
 x\in \MMM^p(\precore,\tau_\psi)\iff 
 x^p\in\MMM^1(\precore,\tau_\psi)\;\;
 \forall x\in\MMM(\precore,\tau_\psi)^+.
\end{equation}
By definition, $\N=\MMM^\infty(\precore,\tau_\psi)$. On the other hand, all elements of $\MMM^p(\precore,\tau_\psi)$ for all $p\neq\infty$ are unbounded \cite{Haagerup:1979:ncLp,Terp:1981}. All spaces $\MMM^p(\precore,\tau_\psi)$ inherit the $\tau_\psi$-topology of $\MMM(\precore,\tau_\psi)$, and all are sequential spaces with respect to it. 

Every $\phi\in\N_\star$, considered as a linear form on $\N$, has a unique polar decomposition $\phi=\ab{\phi}(\,\cdot\,u)$, and a polar decomposition of $h_\phi$ is $h_\phi=u\ab{h_\phi}=uh_{\ab{\phi}}$. This defines a unique extension of the map $\N_\star^+\ni\phi\mapsto h_\phi\in\MMM(\precore,\tau_\psi)^+$ to a linear bijection
\begin{equation}
        \N_\star\ni\phi\mapsto h_\phi\in\MMM^1(\precore,\tau_\psi)=\NNN^1_\psi(\precore)\cap\MMM(\precore,\tau_\psi).
\end{equation}
preserving positivity, conjugation, ordering, polar decomposition and the action of $\N$ \cite{Terp:1981}. For $\phi\in\W_0(\N)$ the operator $h_\phi$ is strictly positive (invertible). The space $\MMM^1(\precore,\tau_\psi)$ can be equipped with a bounded positive linear functional $\Tr:\MMM^1(\precore,\tau_\psi)\ra\CC$,
\begin{equation}
       \Tr(h_\phi):=\phi(\II)\;\;\forall\phi\in\N_\star.
\end{equation}
By polar decomposition,
\begin{equation}
        \Tr(\ab{h_\phi})
        =\Tr(h_{\ab{\phi}})
        =\ab{\phi}(\II)
        =\n{\phi}_{\N_\star}
        \;\;\forall\phi\in\N_\star,
\end{equation}
so
\begin{equation}
        \ab{\Tr(x)}\leq\Tr(\ab{x})\;\;\forall x\in\MMM^1(\precore,\tau_\psi).
\label{mod.Tr.leq.Tr.mod}
\end{equation}
As a result, the map
\begin{equation}
        x\mapsto\n{x}_1:=\Tr(\ab{x})
\label{Haagerup.L.one.norm}
\end{equation}
is a norm on $\MMM^1(\precore,\tau_\psi)$. The Mazur map enables one to define the corresponding norms for $p\in\,]1,\infty[$ by
\begin{equation}                \n{x}_p:=\n{\ab{x}^p}_1^{1/p}=(\Tr(\ab{x}^p))^{1/p}\;\;\forall x\in\MMM^p(\precore,\tau_\psi).
\end{equation}
For $p=\infty$, we define 
\begin{equation}
        \n{x}_\infty=\n{x}_\N\;\;\forall x\in\N=\MMM^\infty(\precore,\tau_\psi).
\end{equation}
For each $p\in[1,\infty]$ the space $\MMM^p(\precore,\tau_\psi)$ is Cauchy complete in the topology generated by $\n{\cdot}_p$. This topology coincides with the topology induced on $\MMM^p(\precore,\tau_\psi)$ from $\MMM(\precore,\tau_\psi)$. The Banach spaces $(\MMM^p(\precore,\tau_\psi),\n{\cdot}_p)$ will be denoted by $\rpktarget{HTLP}L_p(\N,\psi)$, and called the \df{Haagerup--Terp spaces}. From \eqref{Haagerup.L.one.norm} it follows that the map $\phi\mapsto h_\phi$ defines an isometric isomorphism $\N_\star\iso L_1(\N,\phi)$. We have also $L_\infty(\N,\psi)=\N$. For $\gamma\in\,]0,1]$, the linear form
\begin{equation}
        L_{1/\gamma}(\N,\psi)\times L_{1/(1-\gamma)}(\N,\psi)\ni(x,y)\mapsto \Tr(xy)=\Tr(yx)\in\CC
\label{duality.Lp}
\end{equation}
defines the isometric isomorphism between $L_{1/\gamma}(\N,\psi)$ and $L_{1/(1-\gamma)}(\N,\psi)$, given by the duality pairing $\duality{\cdot,\cdot}$
\begin{equation}
        ^\dual:L_{1/\gamma}(\N,\psi)\ni x\mapsto x^\dual:=\duality{x,\cdot}:=\Tr(x\;\cdot)\in L_{1/(1-\gamma)}(\N,\psi)^\banach.
\end{equation}
Moreover, one has also the noncommutative analogue of Rogers--H\"{o}lder inequality \cite{Rogers:1888,Hoelder:1889},
\begin{equation}
        \n{xy}_1\leq\n{x}_{1/\gamma}\n{y}_{1/(1-\gamma)}\;\;
        \forall\gamma\in\,]0,1]\;
        \forall x\in L_{1/\gamma}(\N,\psi)\;
        \forall y\in L_{1/(1-\gamma)}(\N,\psi).
\end{equation}
The space $L_2(\N,\psi)$ is a Hilbert space with respect to the inner product
\begin{equation}
        \s{x,y}_{L_2(\N,\psi)}:=\Tr(y^*x)=\Tr(xy^*)\;\;\forall x,y\in L_2(\N,\psi).
\end{equation}
If $\N$ has no minimal projection and if $1\leq p<q<\infty$, then 
\begin{equation}
        L_p(\N,\psi)\cap L_q(\N,\psi)=\{0\}.
\end{equation}
A Haagerup--Terp space $L_p(\N,\psi)$ is isometrically isomorphic to some commutative Riesz--Radon space $L_p(\X,\mho(\X),\tmu)$ \cite{Riesz:1910,Radon:1913} only if either $p=2$ or $\N$ is a commutative $W^*$-algebra (for semi-finite $\N$ and $\psi$ given by faithful normal semi-finite trace this was shown in \cite{Katavolos:1981,Katavolos:1982}). For semi-finite $\N$ and faithful normal semi-finite trace $\psi=\tau$ on $\N$ the Haagerup--Terp spaces $L_p(\N,\psi)$ are isometrically isomorphic and order isomorphic to spaces $L_p(\N,\tau)$ spaces defined in Section \ref{integration.trace.section}. The quadruple $(L_2(\N,\psi),\pi_L(\N),J,L_2(\N,\psi)^+)$ with $\pi_L(x)y:=xy$ and $Jy:=y^*$ $\forall x\in\N$ $\forall y\in L_2(\N,\psi)$ is a standard form of $\N$.

We will now define the Connes--Hilsum spaces. Let $\N$ be a von Neumann algebra on a Hilbert space $\H$, and let $\psi^\comm\in\W_0(\N^\comm)$. Define $\NNN^p(\N,\psi^\comm)$ as a set of all closed densely defined operators $x$ on $\H$ such that, given polar decomposition $x=u\ab{x}$, $u\in\N$ and there exists $\phi\in\W(\N)$ such that $\ab{x}^p=\connes{\phi}{\psi^\comm}$. Define
\begin{align}
        \int\psi^\comm\ab{x}
        &:=\phi(\II),\\
        \MMM^p(\N,\psi^\comm)
        &:=\{x\in\NNN^p(\N,\psi^\comm)\mid\int\psi^\comm\ab{x}^p<\infty\},\\
        \n{x}_p
        &:=\left(\int\psi^\comm\ab{x}^p\right)^{1/p}\;\;\forall x\in\MMM^p(\N,\psi^\comm),
\end{align}
for $p\in[1,\infty[$, as well as $\MMM^\infty(\N,\psi^\comm):=\N$ and $\n{x}_\infty:=\n{x}_\N$ $\forall x\in\N$. For $p\in[1,\infty]$, the space $(\MMM^p(\N,\psi^\comm),\n{\cdot}_p)$ is a Banach space if the additive structure of this space is given by the \textit{strong} sum, defined as a closure of an algebraic sum. These Banach spaces will be denoted by $\rpktarget{CHLP}L_p(\N,\psi^\comm)$ and called the \df{Connes--Hilsum spaces}.

Finally, let us define the Araki--Masuda spaces. For a $W^*$-algebra $\N$, $\psi\in\W_0(\N)$, $p\in[1,\infty[$ and
\begin{equation}
        \nnn^\infty_\psi:=(\nnn_\psi\cap\nnn_\psi^*)^{\sigma^\psi}_\infty
\end{equation}
(c.f. \eqref{A.alpha.infty}), consider the sets $\MMM^p(\N,\psi)$ of all closed operators $x$ on $\H_\psi$ satisfying
\begin{enumerate}
\item[1)] $xJ_\psi\sigma^\psi_{-\ii/p}(y)J_\psi\supseteq J_\psi yJ_\psi x\;\;\forall y\in\nnn^\infty_\psi$,
\item[2)] $\n{x}_p:=\left(\sup_{\{y\in\nnn^\infty_\psi\mid\n{y}\leq1\}}\left\{\n{\ab{x}^{p/2}[y]_\psi}\right\}\right)^{2/p}<\infty$.
\end{enumerate}
The sets $\MMM^p(\N,\psi)$ can be equipped with the structure of a vector space over $\CC$, with the addition operation given by
\begin{enumerate}
\item[1)] $\overline{(x_1+x_2)|_{\dom(x_1)\cap\dom(x_2)}}$ if{}f $x_1,x_2\in\MMM^p(\N,\psi)$ are densely defined and for $p\in[2,\infty[$,
\item[2)] $\duality{x_1,\cdot}_\psi+\duality{x_2,\cdot}_\psi$ for $p\in\,]1,\infty[$, where\rpktarget{AMDUALITY}
\begin{equation}
        \MMM^p(\N,\psi)\times\MMM^q(\N,\psi)\ni(x_p,x_q)\mapsto\duality{x_p,x_q}_\psi:=\lim_{y\ra\II}\s{x_p[y]_\psi,x_q[y]_\psi}_\psi\in\CC,
\label{Araki.Masuda.duality}
\end{equation}
for $\frac{1}{p}+\frac{1}{q}=1$, $y\in\nnn^\infty_\psi$ such that $\n{y}\leq1$, and $\lim$ denoting a limit in ultrastrong topology,
\item[3)] the linear structure of $\N_\star$ for $p=1$.
\end{enumerate}
The map $\n{\cdot}_p$ is a norm on a vector space $\MMM^p(\N,\psi)$, with respect to which $\MMM^p(\N,\psi)$ is Cauchy complete. The resulting Banach spaces will be denoted $\rpktarget{AMLP}L_p(\N,\psi)$ and called the \df{Araki--Masuda spaces}. The space $L_\infty(\N,\psi)$ is defined as $\N$, with $\n{x}_\infty:=\n{x}_\N$. The map $\duality{\cdot,(\cdot)^*}_\psi$ is a bilinear form on $L_p(\N,\psi)\times L_q(\N,\psi)$. For every $x\in L_p(\N,\psi)$ and every $p\in[1,\infty[$ there exists a unique polar decomposition
\begin{equation}
        x=u\Delta^{1/p}_{\phi,\psi},
\label{Araki.Masuda.Lp.polar}
\end{equation}
where $\phi\in\N_\star^+$ and $u$ is a partial isometry such that $\supp(\phi)=u^*u$. Moreover, every operator of the form \eqref{Araki.Masuda.Lp.polar} belongs to $L_p(\N,\psi)$, and
\begin{align}
        \n{u\Delta^{1/p}_{\phi,\psi}}_p&=(\phi(\II))^{1/p},\\
        L_p(\N,\psi)^+&=\{\Delta^{1/p}_{\phi,\psi}\mid\phi\in\N_\star^+\}.
\end{align}
The mapping
\begin{equation}
        \N_\star\ni\phi\mapsto\phi(xu)=\duality{u\Delta_{\phi,\psi},x^*}_\psi\in\CC\;\;\forall x\in\N
\end{equation}
defines an isometric isomorphism $\N_\star\iso L_1(\N,\psi)$. In addition, $\H_\psi\iso L_2(\N,\psi)$ by means of
\begin{equation}
        L_2(\N,\psi)\ni u\Delta^{1/2}_{\phi,\psi}\mapsto u\xi_\psi(\phi)\in\H_\psi,
\end{equation}
where $\xi_\psi(\phi)$ is a standard vector representative of $\phi\in\N_\star^+$ in the natural cone of $\H_\psi$.

The Haagerup--Terp, Araki--Masuda and Kosaki--Terp spaces $L_p(\N,\psi)$ are uniformly convex and uniformly Fr\'{e}chet differentiable for $p\in\,]1,\infty[$ (for proofs, see \cite{Terp:1981}, \cite{Araki:Masuda:1982,Masuda:1983}, and \cite{Kosaki:1984:ncLp}, respectively). Hence, by isometric isomorphisms, this holds also for the corresponding spaces of Connes--Hilsum, Zolotar\"{e}v, Cecchini, and Leinert.

The structure of $\precore$ is independent of the choice of $\psi$ \textit{up to a $*$-isomorphism} $\varsigma:\N\rtimes_{\sigma^{\psi_1}}\RR\ra\N\rtimes_{\sigma^{\psi_2}}\RR$ such that \cite{vanDaele:1978,Woronowicz:1979}
\begin{align}
        \varsigma\circ\hat{\sigma}^{\psi_1}_t
        &=\hat{\sigma}^{\psi_2}_t\circ\varsigma
        \;\;\forall t\in\RR,\\
        \tau_{\psi_1}
        &=\tau_{\psi_2}\circ\varsigma.
\end{align}
the map $\varsigma$ can be extended to a topological $*$-isomorphism $\bar{\varsigma}:\MMM(\precore_1,\tau_{\psi_1})\ra\MMM(\precore_2,\tau_{\psi_2})$ \cite{Terp:1981}. From this it follows that for any $\psi,\phi\in\W_0(\N)$, the spaces $L_p(\N,\psi)$ and $L_p(\N,\phi)$ are isometrically isomorphic \cite{Terp:1981,Kosaki:1984:ncLp}. This leads to a question whether it is possible to provide a construction of noncommutative $L_p$ spaces, which would be explicitly independent of the choice of $\psi\in\W_0(\N)$.
\subsection{Canonical noncommutative integration\label{canonical.int.section}}
The problem of construction of canonical integration theory over $W^*$-algebras $\N$ together with the associated canonical (weight-independent) construction of $L_p(\N)$ spaces was solved in two different but equivalent ways by Kosaki \cite{Kosaki:1980:PhD} and by Falcone and Takesaki \cite{Falcone:Takesaki:2001}.

The approach of Kosaki is based on the use of polar decomposition of elements of $\N_\star$ in terms of canonical relative modular operator \eqref{canonical.rel.mod.op}. For $\phi_1,\phi_2\in\N_\star$ with polar decompositions $\phi_1=\ab{\phi_1}(\,\cdot\,u_1)$ and $\phi_2=\ab{\phi_2}(\,\cdot\,u_2)$, $p\in[1,\infty[$, and $\lambda=\ee^{\ii r}\ab{\lambda}\in\CC$ with $r\in[0,2\pipi[$, consider the addition, multiplication and $*$ operations on $\N_\star$ given by
\begin{enumerate}
\item[1)] $\phi_1^{1/p}+\phi_2^{1/p}:=(\varphi(\,\cdot\,u))^{1/p}$, where $\varphi\in\N_\star^+$ and a partial isometry $u$ with $\supp(\varphi)=u^*u$ are determined by
\begin{equation}
        u\Delta^{1/p}_{\varphi,\ab{\phi_1}+\ab{\phi_2}}:=
        u_1\Delta^{1/p}_{\ab{\phi_1},\ab{\phi_1}+\ab{\phi_2}}+
        u_2\Delta^{1/p}_{\ab{\phi_2},\ab{\phi_1}+\ab{\phi_2}},
\end{equation}
\item[2)] $\lambda\cdot\phi_1^{1/p}:=(\ab{\lambda}^p\ab{\phi_1}(\,\cdot\,\ee^{\ii r}u))^{1/p}$,
\item[3)] $(\phi_1^{1/p})^*:=(\varphi(\,\cdot\,u))^{1/p}$, where $\varphi\in\N_\star^+$ and a partial isometry $u$ with $\supp(\varphi)=u^*u$ are determined by
\begin{equation}
        u\Delta^{1/p}_{\varphi,\ab{\phi_1}}:=(u_1\Delta^{1/p}_{\ab{\phi_1}})^*.
\end{equation}
\end{enumerate}
Like in \eqref{Kosaki.new.multiplication}-\eqref{Kosaki.new.addition}, $\rpktarget{PHIp}\phi^{1/p}$ is understood here as a \textit{symbol} referring to the element $\phi$ of $\N_\star$ subject to the above operations. The set $\N_\star^+$ equipped with the above structure becomes a vector space with involution $^*$, and will be denoted by $\MMM^p(\N)$. The map
\begin{equation}
        \n{\cdot}_p:\MMM^p(\N)\ni\phi^{1/p}\mapsto\n{\phi^{1/p}}_p:=(\ab{\phi}(\II))^{1/p}=\n{\phi}_{\N_\star}^{1/p}
\end{equation}
defines a norm on $\MMM^p(\N)$, with respect to which $\MMM^p(\N)$ is Cauchy complete. The Banach spaces $(\MMM^p(\N),\n{\cdot}_p)$ are denoted by $\rpktarget{LPNKOSAKI}L_p(\N)$. Kosaki shows that they satisfy noncommutative analogue of the Rogers--H\"{o}lder inequality, are uniform convex and uniform Fr\'{e}chet differentiable for $p\in\,]1,\infty[$, and $L_q(\N)$ is a Banach dual of $L_p(\N)$ for $\frac{1}{p}+\frac{1}{q}=1$ with $p\in[1,\infty[$. The space $L_2(\N)$ coincides with the Hilbert space of Kosaki's canonical representation. Moreover, given a $*$-isomorphism $\varsigma:\N_1\ra\N_2$ of $W^*$-algebras $\N_1$ and $\N_2$, the isometry $\varsigma_\star:{\N_2}_\star\ra{\N_1}_\star$, induced by 
\begin{equation}
        (\varsigma_\star\phi)(x)=\phi(\varsigma(x))\;\;\forall x\in\N_1,\;\forall\phi\in{\N_2}_\star
\end{equation}
gives rise to a surjective isometry $L_p(\N_1)\ra L_p(\N_2)$ \cite{Kosaki:1980:PhD}. This defines a functor $\ncLK_p:\WsIso\ra\ncL_p\Iso$, where $\ncL_p\Iso$ is a family of categories (indexed by $p\in[1,\infty]$) consisting of $L_p(\N)$ spaces with isometric isomorphisms.

The approach of Falcone and Takesaki relies on the properties of the standard core algebra $\core$ and Masuda's \cite{Masuda:1984} reformulation of Connes' noncommutative Radon--Nikod\'{y}m type theorem. The one-parameter automorphism group of $\fell(\N)$,
\begin{equation}
        \tilde{\sigma}_s(x\phi^{\ii t}):=\ee^{-\ii ts}x\phi^{\ii t}\;\;\;\;\forall x\phi^{\ii t}\in\N(t),
\label{tilde.sigma.def}
\end{equation}
corresponding to the unitary group $\tilde{u}(s)$ on $\widetilde{\H}$ given by
\begin{equation}
        (\tilde{u}(s)\xi)(t)=\ee^{-\ii st}\xi(t)\;\;\forall t,s\in\RR\;\forall\xi\in\widetilde{\H},
\end{equation}
extends uniquely to a group of automorphisms $\tilde{\sigma}_s:\core\ra\core$. The automorphism $\tilde{\sigma}_t$ provides a weight-independent replacement for a dual automorphism $\hat{\sigma}_t^\psi$ used in Haagerup's theory. The triple $(\tilde{\N},\RR,\tilde{\sigma})$ is a $W^*$-dynamical system. Analogously to \eqref{isomorphism.of.double.dual.psi} and \eqref{fixed.point.algebra.crossprod.psi}, there exist canonical isomorphisms
\begin{align}
        \core\rtimes_{\tilde{\sigma}}\RR&\iso\N\otimes\BBB(L_2(\RR,\dd\lambda)),\\
        \core_{\tilde{\sigma}}&\iso\N.
\end{align}
The action of $\tilde{\sigma}_s$ on $\core$ is integrable over $s\in\RR$, and 
\begin{equation}
        T_{\tilde{\sigma}}:\core^+\ni x\mapsto T_{\tilde{\sigma}}(x):=\int_\RR\dd s\,\tilde{\sigma}_s(x)\in\N^\ext,
\end{equation}
is an operator valued weight from $\core$ to $\core_{\tilde{\sigma}}\iso\N$. For any $\phi\in\W(\N)$, its dual weight over $\core$ is given by
\begin{equation}\rpktarget{DUALWEIGHT}
        \hat{\phi}:=\tilde{\phi}\circ T_{\tilde{\sigma}}\in\W(\core).
\end{equation}
Every $\phi\in\W_0(\N)$ can be considered as an analytic generator of the one parameter group of unitaries $\rpktarget{PHI.IT.DREI}\{\phi^{\ii t}\mid t\in\RR\}\subseteq\core$ acting on $\widetilde{\H}$ from the right, given by
\begin{equation}
        \phi=\exp\left(-\ii\frac{\dd}{\dd t}\left(\phi^{\ii t}\right)|_{t=0}\right).
\label{phi.in.FT.as.generator}
\end{equation}
This allows to equip $\core$ with a faithful normal semi-finite trace $\taucore_\phi:\core^+\ra[0,\infty]$, 
\begin{align}
        \taucore_\phi(x):&=\lim_{\epsilon\ra^+0}\hat{\phi}((\phi^{-1}(1+\epsilon\phi^{-1})^{-1})^{1/2}x((\phi^{-1}(1+\epsilon\phi^{-1})^{-1})^{1/2})\nonumber\\
        &=\lim_{\epsilon\ra^+0}\hat{\phi}(\phi^{-1/2}(1+\epsilon\phi^{-1})^{-1/2}x\phi^{-1/2}(1+\epsilon\phi^{-1})^{-1/2})\nonumber\\
        &=\lim_{\epsilon\ra^+0}\hat{\phi}((\phi+\epsilon)^{-1/2}x(\phi+\epsilon)^{-1/2}).
\label{canonical.trace}
\end{align}
This definition is independent of the choice of weight (e.g., $\taucore_\varphi=\taucore_\psi\;\forall\varphi,\psi\in\W_0(\N)$), which follows from the fact that 
\begin{equation}
\Connes{\taucore_\phi}{\taucore_\varphi}{t}=\Connes{\taucore_\phi}{\tilde{\varphi}}{t}
\Connes{\tilde{\varphi}}{\tilde{\psi}}{t}
\Connes{\tilde{\psi}}{\taucore_\varphi}{t}=\varphi^{-\ii t}\Connes{\varphi}{\psi}{t}\psi^{\ii t}=\varphi^{-\ii t}\varphi^{\ii t}\psi^{-\ii t}\psi^{\ii t}
=1
\end{equation}
for all $\phi,\varphi\in\W_0(\N)$ and for all $t\in\RR$. This allows to write $\taucore$ instead of $\taucore_\varphi$. Moreover, $\taucore$ has the scaling property
\begin{equation}
        \taucore\circ\tilde{\sigma}_s=\ee^{-s}\taucore\;\;\;\;\forall s\in\RR.
\label{scaling}
\end{equation}
This allows to call $\taucore$ a \df{canonical trace} of $\core$. It will play the role analogous to a natural trace $\tilde{\tau}_\psi$ on $\precore=\N\rtimes_{\sigma^\psi}\RR$. Nevertheless, the definition \eqref{canonical.trace} is not a straightforward generalisation of \eqref{nat.trace.Connes}, and is neither an application of \eqref{ext.normal.weights.bijection} nor of \eqref{Pedersen.Takesaki.perturbed.weight}. It is yet another type of `perturbed' construction of a weight, which is designed in this case for the purpose of direct elimination of the dependence of $\taucore_\phi$ on $\phi$.

Consider the category $\WstarCovRTr$ of quadruples $(\N,\RR,\alpha,\tau)$, where $\N$ is a semi-finite $W^*$-algebra, $\tau$ is a faithful normal semi-finite trace on $\N$, and $(\N,\RR,\alpha)$ is a $W^*$-dynamical system, with morphisms
\begin{equation}
        (\N_1,\RR,\alpha^1,\tau_1)\ra
        (\N_2,\RR,\alpha^2,\tau_2)
\end{equation}
given by such $*$-isomorphisms $\varsigma:\N_1\ra\N_2$ which satisfy
\begin{align}
        \varsigma\circ\alpha^1_t&=\alpha^2_t\circ\varsigma\;\;\forall t\in\RR,\label{iso.covariance}\\
        \tau_1&=\tau_2\circ\varsigma.\label{trace.covariance}
\end{align}
Falcone and Takesaki call the quadruple $(\core,\RR,\tilde{\sigma},\taucore)$ a \df{noncommutative flow of weights}, and prove that every $*$-isomorphism $\varsigma:\N_1\ra\N_2$ of von Neumann algebras extends to a $*$-isomorphism $\widetilde{\varsigma}:\core_1\ra\core_2$ satisfying \eqref{iso.covariance} and \eqref{trace.covariance}. This defines a functor 
\begin{equation}
        \FTflow:\VNIso\ra\WstarCovRTr.
\end{equation}
The restriction of $\tilde{\sigma}$ to the center $\zentr_\core$ is the Connes--Takesaki flows of weights $(\zentr_\core,\RR,\tilde{\sigma}|_{\zentr_\core})$ \cite{Falcone:Takesaki:2001}. Hence, the relationship between the Falcone--Takesaki noncommutative flow of weights and the Connes--Takesaki flow of weights can be summarised in terms of the commutative diagram
\begin{equation}
\xymatrix{
        \WsIso
        \ar[rr]^{\CanVN}
        &&
        \VNIso
        \ar[rr]^{\FTflow}
        &&
        \WstarCovRTr
        \ar[d]^{\zentr\circ\ForgTr}
        \\
        \WsfIIIIso
        \ar@{ >->}[u]
        \ar[rr]_{\CanVN}
        &&
        \VNfIIIIso
        \ar@{ >->}[u]
        \ar[rr]_{\CTflow}
        &&
        \WstarCovR,     
}
\label{ctft.cat.diag}
\end{equation}
where $\WsfIIIIso$ (respectively, $\VNfIIIIso$) is a category of type III factor $W^*$-algebras (respectively, von Neumann algebras) with $*$-isomorphisms, $\ForgTr$ denotes the forgetful functor that forgets about traces, while $\zentr:\WstarCovR\ra\WstarCovR$ is an endofunctor that assigns an object $(\zentr_\N,\RR,\alpha|_{\zentr_\N})$ to each $(\N,\RR,\alpha)$, and assigns a morphism $\varsigma^{12}_\zentr$ such that
\begin{equation}
        \varsigma^{12}_\zentr\circ\alpha^1_t|_{\zentr_{\N_1}}=
        \alpha^2_t|_{\zentr_{\N_2}}\circ\varsigma^{12}_\zentr
\end{equation}
to each $\varsigma:(\N_1,\RR,\alpha^1)\ra(\N_2,\RR,\alpha^2)$.

Given $\varphi\in\W(\N)$, 
\begin{equation}
        h^{\ii t}_\varphi:=\Connes{(\tilde{\varphi}\circ T_{\tilde{\sigma}})}{\taucore\,}{t}\;\;\forall t\in\RR
\label{haagerup.corresp}
\end{equation}
defines a map 
\begin{equation}
        \W(\N)\ni\varphi\mapsto h_\varphi\in\aff(\core)^+.
\end{equation}
From the Pedersen--Takesaki theorem it follows that $h_\varphi$ is a unique element of $\aff(\core)^+$ that satisfies
\begin{equation}
        \tilde{\varphi}\circ T_{\tilde{\sigma}}(\cdot)=\taucore_{h_\varphi}.
\label{nc.RN.on.core}
\end{equation}
Hence, $h_\varphi$ can be considered as (a reference-independent) `operator density' of $\varphi$. Define a \df{grade} $\rpktarget{GRAD}\grad(x)$ of a closed densely defined operator $x$ affiliated with $\core$ as such $\gamma\in\CC$ that
\begin{equation}
        \tilde{\sigma}_s(x)=\ee^{-\gamma s}x\;\;\;\;\forall s\in\RR.
\label{grade.def}
\end{equation}
If $\grad(x)=0$, then $x$ is bounded, but if $\re(\grad(x))\neq0$, then $x$ is unbounded. Equations \eqref{haagerup.corresp} and \eqref{nc.RN.on.core} define a bijection between the set $\W(\N)$ and the set $\NNN^1(\core)$ of all elements of $\aff(\core)^+$ that are of grade $1$. For any $x\in\aff(\core)^+$ with $p:=\re(\grad(x))>0$ there exists a unique $\varphi\in\W(\N)$ such that $h_\varphi$ is of grade $1$ and $h_\varphi=\ab{x}^{1/p}$. Moreover, $\varphi\in\N_\star^+$ if{}f $h_\varphi$ is $\taucore$-measurable \cite{Nelson:1974,Haagerup:1979:ncLp}. Let $\rpktarget{MMMPCORE}\MMM^p(\core,\taucore)$ denote the space of all $\taucore$-measurable operators of grade $1/p$ affiliated with $\core$ for $p\in\CC\setminus\{0\}$. The spaces $\MMM^p(\core,\taucore)$ embed into the topological $*$-algebra $\MMM(\core,\taucore)$ of all $\taucore$-measurable operators affiliated with $\core$. Given a polar decomposition of $\varphi=\ab{\varphi}(\,\cdot,u)$, $h_\varphi:=uh_{\ab{\varphi}}$ defines a unique extension of the map $\N_\star^+\ni\varphi\mapsto h_\varphi\in\MMM(\core,\taucore)$ to a natural bijection (linear isomorphism)
\begin{equation}
        \N_\star\ni\omega\mapsto h_\omega\in\MMM^1(\core,\taucore)
\label{SDHFT.iso}
\end{equation}
that preserves positivity and satisfies
\begin{equation}
        h_{\phi(x\,\cdot\,y)}=xh_\phi y\;\;\forall x,y\in\N.
\end{equation} 

Let
\begin{align}
        \mmm^+_{T_{\tilde{\sigma}}}&
        :=\{x^*y\in\core^+\mid
        \;\n{T_{\tilde{\sigma}}(x^*x)}<\infty,
        \;\n{T_{\tilde{\sigma}}(y^*y)}<\infty\}
        \subseteq\core^+,\\
        \hat{\mmm}^+_{T_{\tilde{\sigma}}}&
        :=\{y\in\mmm^+_{T_{\tilde{\sigma}}}\mid
        T_{\tilde{\sigma}}(y)=1\}.
\end{align}
If $\omega\in\N_\star$, then, for any $x\in\N$ and for any $y\in\hat{\mmm}^+_{T_{\tilde{\sigma}}}$, 
\begin{gather}
        y^{1/2}xh_\omega y^{1/2}\in\MMM(\core,\taucore),\\
        \omega(x)=\taucore(y^{1/2}xh_\omega y^{1/2}).
\end{gather}
This allows to introduce the integral of $x\in\MMM^1(\core,\taucore)$,
\begin{equation}
        \int x:=\taucore(y^{1/2}xy^{1/2}),
\end{equation}
whose value is independent of the choice of $y\in\hat{\mmm}^+_{T_{\tilde{\sigma}}}$. This allows to call it \df{canonical integral}. While $\taucore$ takes only the $+\infty$ value on nonzero elements of $\MMM^1(\core,\taucore)$, the canonical integral $\int$ takes finite values. This allows to extend \eqref{SDHFT.iso} to an isometric isomorphism of Banach spaces, with the norm on $\MMM^1(\core,\taucore)$ defined by $\n{x}_1:=\int\ab{x}$, and with $\n{h_\varphi}_1=\varphi(\II)$. The duality pairing between Banach spaces $\N$ and $\MMM^1(\core,\taucore)$ that identifies $\N_\star$ with $\MMM^1(\core,\taucore)$ is given by the bilinear form
\begin{equation}
        \N\times\MMM^1(\core,\taucore)\ni(y,x)\mapsto\duality{y,x}_{\core}:=\int yx\in\CC.
\label{int.duality.one}
\end{equation}
The noncommutative $\rpktarget{LPNFT}L_p(\N)$ spaces for $p\in\{z\in\CC\mid\re(z)\geq1\}$ are defined as the spaces $\MMM^p(\core,\taucore)$ equipped with, and Cauchy complete in, the norm
\begin{equation}
        \n{\cdot}_p:\MMM^p(\core,\taucore)\ni x\mapsto
        \n{x}_p:=\left(\int\ab{x}^{\re(p)}\right)^{1/{\re(p)}}\in\RR^+.
\label{norm.FT.Lp.space}
\end{equation}
By \eqref{SDHFT.iso} and \eqref{int.duality.one}, $L_1(\N)\iso\N_\star$, and it is natural to define $L_\infty(\N):=\MMM^\infty(\core,\taucore)\iso\N\iso\N(0)$, using the definition \eqref{grade.def} of grade with $\tilde{\sigma}_s(x)=x$ for $\grad(x)=0$. If $\phi\in\N_\star^+$ then $\n{\phi^{1/p}}_p=(\phi(\II))^{1/\re(p)}$. By definition, all $L_p(\N)$ for $p\in[1,\infty]$ are Banach spaces. The space $L_2(\N)$ is also a Hilbert space with respect to the inner product
\begin{equation}
L_2(\N)\times L_2(\N)\ni(x_1,x_2)\mapsto\s{x_1,x_2}_{L_2(\N)}:=\int x_2^*x_1\in\CC.
\label{FT.Hilbert.space}
\end{equation}
The duality \eqref{int.duality.one} extends to noncommutative $L_p(\N)$ space duality, given by the bilinear map
\begin{equation}
        L_p(\N)\times L_q(\N)\ni(x,y)\mapsto\duality{x,y}_{\core}:=\int xy\in\CC,
\label{dual.pairing.Lp.FT}
\end{equation}
with $1/p+1/q=1$, where $p\in\{\lambda\in\CC\mid\re(\lambda)>0\}$. For $p\in\CC$ such that $\re(p)<0$, one has $\MMM^p(\core,\taucore)=\{0\}$, so $L_p(\N)=\{0\}$. Moreover, 
\begin{equation}
        \rpktarget{LITNFT}L_{1/\ii t}(\N)=\N(t)\;\;\forall t\in\RR.
\end{equation}

Due to the properties of grade function, the elements of $\MMM(\core,\taucore)$ possess remarkable algebraic properties. The grade function satisfies:
\begin{align}
        \grad(x^*)&=(\grad(x))^*,\\
        \grad(\ab{x})&=\re(\grad(x))
        =\textstyle\frac{1}{2}(\grad(x)+\grad(x)^*),\\
        \grad(\overline{xy})&=\grad(x)+\grad(y),
\end{align}
where $\overline{xy}$ is the closure of $xy$. Moreover, from the bijection between the elements $\omega\in\N_\star^+$ and $h_\omega\in L_1(\N)$ it follows that
\begin{equation}
        \re(\grad(x))\geq0\limp\ab{x}^{1/\re(\grad(x))}\in\N^+_\star.
\end{equation}
If $\CC^+:=\{\lambda\in\CC\mid\re(\lambda)\geq0\}$, then the spaces $\bigcup_{\lambda\in\CC}L_{1/\lambda}(\N)$ and $\bigcup_{\lambda\in\CC^+}L_{1/\lambda}(\N)$ are $*$-algebras, which are identical due to 
\begin{equation}
        L_{1/\lambda}(\N)=\{0\}\;\;\;\forall\lambda\in\CC\setminus\CC^+.
\label{L.N.minus.empty}
\end{equation}
Moreover,
\begin{equation}
        \N(s)\N(t)\subseteq\N(s+t),\;\;\;\N(t)^*=\N(-t),\;\;\;\forall s,t\in\RR.
\end{equation}
If $\{x_i\}_{i=1}^n\subseteq\MMM(\core,\taucore)$, $\sum_{i=1}^n\grad(x_i)=:r\leq1$ and $\re(\grad(x_i))\geq0$ $\forall i\in\{1,\ldots,n\}$, then the noncommutative analogue of the Rogers--H\"{o}lder inequality holds \cite{Kosaki:1984:continuity},
\begin{equation}
\n{x_1\cdots x_n}_{1/r}\leq\n{x_1}_{1/\re(\grad(x_1))}\cdots\n{x_n}_{1/\re(\grad(x_n))}.
\end{equation}
The stronger condition $\sum_{i=1}^n\grad(x_i)=1$ implies that $x_1\cdots x_n\in L_1(\N)$, and in such case
\begin{equation}
                \int x_1\cdots x_n=\int x_nx_1\cdots x_{n-1}.
\label{permutation.core}
\end{equation}
For $x_i=y_i\phi^{z_i}$ with a fixed $\phi\in\N^+_{\star0}$, the equation \eqref{permutation.core} turns to the Araki multiple KMS condition for $\sigma^\phi$ and $\beta=1$ \cite{Araki:1968,Araki:1973:relative:hamiltonian,Araki:Masuda:1982,Masuda:1983}. More generally, the function
\begin{equation}
        \CC^n\ni(z_1,\ldots,z_n)\mapsto\phi_1^{z_1}y_1\cdots\phi_n^{z_n}y_n\phi_{n+1}^{1-z_1-\ldots-z_n}\in\N_\star
\end{equation}
is a bounded holomorphic function on the tube 
\begin{equation}
        \{(z_1,\ldots,z_n)\in\CC^n\mid\re(z_i)>0\;\;\forall i\in\{1,\ldots,n\},\;\sum_{i=1}^n\re(z_i)\leq1\},
\end{equation}
with respect to the norm topology of $\N_\star$ \cite{Yamagami:1992}. The algebraic relations in modular algebra of $\N$ can be used in order to rewrite Connes' cocycle as\rpktarget{CONNES.MODALG}
\begin{equation}
        \Connes{\omega}{\phi}{t}=\Delta_{\omega,\phi}^{\ii t}\Delta_{\phi}^{-\ii t}=\omega^{\ii t}\phi^{-\ii t},
\label{core.connes}
\end{equation}
which holds for all $\phi,\omega\in\W_0(\N)$, and for all $\phi,\omega\in\N_\star^+$ provided $\supp(\omega)\leq\supp(\phi)$, and to rewrite the Tomita--Takesaki modular automorphism as\rpktarget{TT.MODALG}
\begin{equation}
        \sigma_{t}^\phi(x)=\Delta^{\ii t}_\phi x\Delta^{-\ii t}_\phi=\phi^{\ii t}x\phi^{-\ii t},
\label{core.tt.auto}
\end{equation}
which holds for all $\phi\in\W_0(\N)$, and for all $\phi\in\W(\N)$, provided $x\in\N_{\supp(\phi)}$. These remarkable algebraic properties were observed by Woronowicz \cite{Woronowicz:1979} and were later developed by Connes \cite{Connes:1980,Connes:1982,Connes:1994} and Yamagami \cite{Yamagami:1992,Yamagami:1994}. Equation \eqref{core.tt.auto} is a representation independent generalisation of \eqref{sv1.eq}, and enables to define an inner product 
\begin{equation}
        \N(t)\times\N(t)\ni(x\phi^{\ii t},y\phi^{\ii t})\mapsto\s{x\phi^{\ii t},y\phi^{\ii t}}_{\N(t)}:=(y\phi^{\ii t})^*(x\phi^{\ii t})=\phi^{-\ii t}y^*x\phi^{\ii t}=\sigma_t^\phi(y^*x)\in\N.
\end{equation}
Recall that, by means of \eqref{phi.in.FT.as.generator}, $\phi$ can be considered as a generator of a group $\{\phi^{\ii t}\mid t\in\RR\}$ of unitaries in $\core$. As \eqref{connes.cocycle.as.generator} shows, Connes' spatial quotient $\rpktarget{CONN.SPAT.MODALG}\connes{\phi}{\psi}$ can be identified with an exponentiated generator of the one parameter group of unitaries
\begin{equation}
        \RR\ni t\mapsto\phi^{\ii t}(\,\cdot\,)\psi^{-\ii t}
\end{equation}
acting on a Hilbert space $\H(t)$ \cite{Yamagami:1994}. By taking the properties of grade and equations \eqref{core.connes} and \eqref{core.tt.auto} as elementary, one can consider a $*$-algebra generated algebraically by a given $W^*$-algebra $\N$ and the set of symbols $\rpktarget{PHI.IT.MODALG}\{\psi^{\ii t}\mid\psi\in\W(\N),t\in\RR\}$, equipped with the relations
\begin{equation}
\psi^{\ii t}\psi^{\ii s}=\psi^{\ii(t+s)},\;(\psi^{\ii t})^*=\psi^{-\ii t},\;\psi^{\ii0}=\supp(\psi),\;\psi^{\ii t}x\psi^{-\ii t}=\sigma^{\psi}_t(x),\;\psi^{\ii t}=\Connes{\psi}{\phi}{t}\phi^{\ii t},
\end{equation}
and define a \df{modular algebra} of $\N$ as a closure of this $*$-algebra with respect to the topology induced from its normal representations, see \cite{Yamagami:1992,Sherman:2001}. The modular algebra of $\N$ is unitarily isomorphic to a canonical core algebra of $\N$. The equation \eqref{L.N.minus.empty} means that for the negative powers of weights, $\phi^{-p}$ for $p>0$, there are no corresponding $L_{-p}(\N)$ spaces. However, as shown in \cite{Sherman:2001}, the right multiplication $\rpktarget{RRR.SHERMAN}\RRR(\phi^{-p})$ for $\phi\in\W_0(\N)$ is well defined and satisfies $\RRR(\phi^{-p})=(\RRR(\phi^p))^{-1}$ as well as
\begin{equation}
        \left(\connes{\psi}{\phi(J_\N\cdot J_\N)}\right)^{1/p}=\RRR(\phi^{-1/p})\LLL(\psi^{1/p}),
\label{sherman.minus.weight.spatial}
\end{equation}
where $\psi\in\W(\N)$ and $\rpktarget{LLL.SHERMAN}\LLL$ denotes left multiplication, which is well defined too. By means of \eqref{canonical.rel.mod.op}, this gives
\begin{equation}
        \int\psi^\gamma\phi^{1-\gamma}=\int\psi^\gamma\phi^{-\gamma}\phi=\int(\RRR(\phi^{-\gamma})\LLL(\psi^{\gamma})\II)\phi=\phi(\RRR(\phi^{-\gamma})\LLL(\psi^{\gamma})\II)=\s{\xi_\pi(\phi),\Delta^\gamma_{\psi,\phi}\xi_\pi(\phi)}_\H
\label{sherman.left.right}
\end{equation}
for any standard representation $(\H,\pi,J,\stdcone)$. In analogy with the equations \eqref{core.connes} and \eqref{core.tt.auto}, the equation \eqref{sherman.left.right} holds also when $\phi,\psi\in\N_\star^+$ and $\psi\ll\phi$, because in such case $\phi$ is faithful on $\N_{\supp(\phi)}$ and this algebra contains the support of $\phi$.

The spaces $L_p(\N)$ of Falcone and Takesaki are isometrically isomorphic to Kosaki's $L_p(\N)$ spaces. The space $L_2(\N)$ is also unitarily isomorphic to Kosaki's $L_2(\N)$ space.\footnote{While Kosaki's construction of $L_2(\N)$ is based on $\N_\star^+$, the construction of Falcone and Takesaki is based on $\W_0(\N)$. The latter allows to think of $L_2(\N)^+$ as a space that provides also a representation of weights. More precisely, for every normal weight $\phi$ on $\N$ the function $\N\ni x\mapsto(\phi(x^*x))^{1/2}\in[0,\infty]$ is subadditive and weakly-$\star$ lower semi-continuous. In this sense, normal weights on $\N$ can be considered as `infinite vectors' in $L_2(\N)^+$.} For any choice of a reference weight $\psi\in\W_0(\N)$, $L_p(\N)$ spaces are isometrically isomorphic to the noncommutative $L_p(\N,\psi)$ spaces in the sense of Haagerup--Terp, Connes--Hilsum, Kosaki--Terp, Araki--Masuda, Zolotar\"{e}v, Cecchini and Leinert, as well as to Izumi's complex extension Kosaki--Terp $L_p(\N,\psi)$ spaces to $p\in\CC$. This implies that the spaces $L_p(\N)$ are uniformly convex and uniformly Fr\'{e}chet differentiable for $p\in\,]1,\infty[$. If the von Neumann algebra $\N$ is semi-finite and some faithful normal semi-finite trace $\tau$ on $\N$ is chosen, then the Falcone--Takesaki $L_p(\N)$ spaces are isometrically isomorphic and order-isomorphic to noncommutative $L_p(\N,\tau)$ spaces.

In the Falcone--Takesaki theory the trace $\taucore$, the algebra $\core$, and the integral $\int$ are independent of the choice of the particular weight on $\N$, as opposed to the trace $\tau_\psi$ and the algebra $\precore=\N\rtimes_{\sigma^\psi}\RR$ appearing in the Haagerup--Terp theory, and the integral $\int\psi^\comm$ appearing in the Connes--Hilsum theory. The canonical (representation independent) character of the Falcone--Takesaki `noncommutative integral' $\int$ corresponds to the canonical (representation independent) character of Connes' cocycle as the noncommutative analogue of the Radon--Nikod\'{y}m quotient. Because the construction of $L_p(\N)$ spaces is completely determined by the noncommutative flow of weights, the assignment of $L_p(\N)$ spaces to von Neumann algebras $\N$ determines a family of functors $\VNIso\ra\ncL_p\Iso$ for all $p\in\CC$ with $\re(p)\in\,]1,\infty[$. Using functorial character of Kosaki's canonical representation, we can provide the right composition with the functor $\CanVN$, which extends this to a family of functors $\ncLFT_p:\WsIso\ra\ncL_p\Iso$.

Sherman \cite{Sherman:2005} proved the following generalisation of theorems by Banach \cite{Banach:1931}, Stone \cite{Stone:1937} and Kadison \cite{Kadison:1951}: every surjective isometry $T:L_p(\N_1)\ra L_p(\N_2)$ for $p\in\,]0,\infty[\setminus\{2\}$ determines a unique surjective Jordan $*$-isomorphism $\varsigma:\N_1\ra\N_2$ and a unique unitary $u\in\N_2$ such that
\begin{equation}
        T(\phi^{1/p})=u(\phi\circ\varsigma^{-1})^{1/p}\;\;\forall\phi\in{\N_1}^+_\star.
\label{Sherman.correspondence}
\end{equation} 
This theorem allows us to define a family of functors $\Sher^\sharp_p:\ncL_p\Iso\ra\WssJIso$, where $\WssJIso$ is a category of $W^*$-algebras with surjective Jordan $*$-isomorphisms. Sherman \cite{Sherman:2005} proved also that for any $p\in[1,\infty]\setminus\{2\}$, any $W^*$-algebras $\N_1$ and $\N_2$ are Jordan $*$-isomorphic if{}f $L_p(\N_1)$ and $L_p(\N_2)$ are isometrically isomorphic.\footnote{This theorem seems to be a good starting point for introducing a family of functors $\Sher_p^\flat:\WssJIso\ra\ncL_p\Iso$ determining an equivalence of a category $\ncL_p\Iso$  with a category $\WssJIso$,
\begin{equation}
\xymatrix{\ncL_p\Iso
                \ar@<0.5ex>[rr]^{\Sher^\sharp_p}
                \POS!L(.8)\ar@(ul,dl)_{\id_{\ncL_p\Iso}}
        &&
                \WssJIso
                \ar@<0.5ex>[ll]^{\Sher^\flat_p}
                \POS!R(.8)\ar@(dr,ur)_{\id_{\WssJIso}}
}
\label{sherman.equivalence}
\end{equation}
with natural isomorphisms $\Sher^\sharp_p\circ\Sher^\flat_p\Rightarrow\id_{\WssJIso}$ and $\Sher^\flat_p\circ\Sher^\sharp_p\Rightarrow\id_{\ncL_p\Iso}$. However, it remains unclear how to handle the choice of corresponding surjective isometries (e.g., how to globally fix the choice of $u$ in \eqref{Sherman.correspondence}).}

The functorial character of the above constructions can be summarised by the diagram
\begin{equation}
\xymatrix{
&&
\WssJIso
&&\\
&&&&\\
&&
\WsIso
\ar@{ >->}[uu]
        \ar[ddll]|{\ncLFT_{1/\gamma}\;\;\;}
        \ar[ddrr]|{\;\;\;\ncLFT_{1/(1-\gamma)}}
        \ar@/_2pc/[ddll]|{\ncLK_{1/\gamma}}
        \ar@/^2pc/[ddrr]|{\ncLK_{1/(1-\gamma)}} 
&&\\
&&&&\\
\ncL_{1/\gamma}\Iso
\ar@<0.5ex>[rrrr]^{(\cdot)^\banach}
\ar@/^4pc/[uuuurr]^{\Sher^\sharp_{1/\gamma}}
&&&&
\ncL_{1/(1-\gamma)}\Iso
\ar@<0.5ex>[llll]^{(\cdot)_\star}
\ar@/_4pc/[uuuull]_{\Sher^\sharp_{1/(1-\gamma)}}
}
\label{ncLp.cat.diag}
\end{equation}
where $\gamma\in\{z\in\CC\mid\re(z)\in\,]0,1]\}$, with the exception of: arrows $\ncLK_p$, which are considered for $p\in[1,\infty[$, and arrows 
$\Sher_p^\sharp$, which are considered for $p\in\,]1,\infty[\setminus\{2\}$. The functors $(\cdot)^\banach$ and $(\cdot)_\star$ are defined by the Banach space duality between $L_{1/\gamma}(\N)$ and $L_{1/(1-\gamma)}(\N)$. The contravariant functor $\rpktarget{BANACH.FUNCTOR}(\cdot)^\banach$ assigns to each Banach space $X$ its Banach dual space $X^\banach$ and to each isometric isomorphism $f:X\ra Y$ a dual isometric isomorphism $f^\banach:Y^\banach\ra X^\banach$ defined by
\begin{equation}
        \duality{f(x),\phi}_{X\times X^\banach}=\duality{x,f^\banach(\phi)}_{X\times X^\banach}\;\;\forall x\in X\;\forall\phi\in Y^\banach.
\label{banach.duality.functor}
\end{equation}
The contravariant functor $(\cdot)_\star\rpktarget{PREDUAL.FUNCTOR}$ assigns to each Banach space $X$ its predual Banach space $X_\star$ and to each isometric isomorphism $f:X\ra Y$ a predual isometric isomorphism $f_\star:Y_\star\ra X_\star$ defined by
\begin{equation}
        \duality{f_\star(x),\phi}_{X_\star\times X}=\duality{x,f(\phi)}_{X_\star\times X}\;\;\forall x\in X_\star\;\forall\phi\in Y.
\label{banach.preduality.functor}
\end{equation}

\subsection{Integration relative to a measure\label{comm.integr.section}}
Integration theory based on the notion of measure on countably additive bounded subsets of $\RR^n$, developed by Borel \cite{Borel:1898} and Lebesgue \cite{Lebesgue:1901,Lebesgue:1902,Lebesgue:1904,Lebesgue:1910}, was unified with the Stieltjes integral theory \cite{Stieltjes:1894,Stieltjes:1895,Riesz:1909:b} by Radon \cite{Radon:1913}. Together with the ideas of `general analysis' by Fr\'{e}chet \cite{Frechet:1906,Frechet:1907:a,Frechet:1910,Frechet:1925:Morale} and Moore \cite{Moore:1909,Moore:1915}, Radon's work became a point of departure of several different abstract integration theories on abstract spaces. The most important are the Daniell abstract integral theory on vector lattices \cite{Daniell:1918,Daniell:1919:a,Daniell:1920,Goldstine:1941,McShane:1944,Stone:1948,Stone:1949,McShane:1949,Riesz:SzokefalviNagy:1952,Shilov:Gurevich:1964,Weir:1974,Pfeffer:1977} (built upon some earlier ideas by Young \cite{Young:1911,Young:1913,Young:1914} and Riesz \cite{Riesz:1912,Riesz:1920}), Fr\'{e}chet's abstract measure theory on sets \cite{Frechet:1915:definition,Frechet:1915:sur:integrale,Frechet:1922,Frechet:1923:additives,Sierpinski:1927,Sierpinski:1928,Nikodym:1930,Saks:1933,Kolmogorov:1933,Maharam:1942:TAMS,Halmos:1950} (which includes an integration theory on topological spaces \cite{Haar:1933,vonNeumann:1935:Haarschen,vonNeumann:1936,Cartan:1940,Weil:1940,Aleksandrov:1940,Aleksandrov:1941,Aleksandrov:1943,Rokhlin:1949,Bourbaki:1952,Varadarajan:1961,Schwartz:1973}), and Carath\'{e}odory's abstract measure theory on boolean algebras \cite{Caratheodory:1938,Wecken:1939,Ridder:1941,Olmsted:1942,Maharam:1942:PNAS,Ridder:1946,Gomes:1946,Kappos:1948,Horn:Tarski:1948,Segal:1951,Caratheodory:1956,Kappos:1960,Fremlin:1974}. To a large extent, these theories are equivalent, and they all are `commutative' integration theories, in the sense that the elements subjected to integration form commutative algebras. For recent expositions, see \cite{Stroock:1990,Bichteler:1998,Fremlin:2000,Carrillo:2002,Bogachev:2007}. 

In Section \ref{integr.compar.section} we will establish the direct relationship between commutative and noncommutative integration theory in their canonical (that is, representation independent and functorial) formulations. For this purpose, in this section we will describe a formulation of Carath\'{e}odory's approach based on interplay between the properties of boolean algebras and Riesz and Banach lattices, and we will also briefly discuss Fr\'{e}chet's and Daniell's approaches. For the theory of boolean algebras we refer to \cite{Sikorski:1960,Koppelberg:1989,Givant:Halmos:2009}, while for the theory of Riesz and Banach lattices we refer to \cite{Kantorovich:Vulikh:Pinsker:1950,Nakano:1950,Vulikh:1961,Semadeni:1971,Luxemburg:Zaanen:1971,Fremlin:1974,Schaefer:1974,Lacey:1974,Lindenstrauss:Tzafriri:1977:1979,Zaanen:1983,MeyerNieberg:1991,Kusraev:Kutateladze:1999}. Our default reference for the contents of this section is \cite{Fremlin:2000}.

A \df{partially ordered set} (or a \df{poset}) \cite{Hausdorff:1914} is defined as a pair $(X,\leq)$, where $X$ is a set, and $\rpktarget{POSET}\leq$ is a relation on $X$ such that
\begin{equation}
        x\leq x,\;\;
        (x\leq y,\;y\leq x)\limp x=y,\;\;
        (x\leq y,\;y\leq z)\limp x\leq z\;\;
        \forall x,y,z\in X.
\end{equation}
If $(X,\leq)$ is a poset and $Y\subseteq X$, then $Y$ is called: \df{bounded above} if{}f $\exists x\in X$ $\forall y\in Y$ $y\leq x$; \df{bounded below} if{}f $\exists x\in X$ $\forall y\in Y$ $x\leq y$; \df{upwards directed} if{}f $Y$ is nonempty and every pair of elements of $Y$ is bounded above; \df{downwards directed} if{}f $Y$ is nonempty and every pair of elements is bounded below. A \df{supremum} (or the  \df{least upper bound}) of $Y\subseteq X$, denoted by $\rpktarget{SUP}\sup Y$, is defined as $x\in X$ such that 
\begin{equation}
        y\leq x,\;\;
        y\leq z\limp x\leq z\;\;
        \forall z\in X\;\forall y\in Y,
\end{equation}
while an \df{infimum} (or the \df{greatest lower bound}) of $Y\subseteq X$, denoted by $\rpktarget{INF}\inf Y$, is defined as $x\in X$ such that 
\begin{equation}
        x\leq y,\;\;
        z\leq y\limp z\leq x\;\;
        \forall z\in X\;\forall y\in Y.
\end{equation}
If $I$ is a set and $\{x_\iota\mid\iota\in I\}\subseteq X$, then $\sup\{x_\iota\mid\iota\in I\}=:\sup_{\iota\in I}\{x_\iota\}=:\sup_\iota\{x_\iota\}$ (and analogously for $\inf$). If $I=\NN$ and $i,n\in\NN$, then $\sup_i\{x_i\}=:\bigvee_ix_i$, $\inf_i\{x_i\}=:\bigwedge_ix_i$, $\sup\{x_1,\ldots,x_n\}=:x_1\lor\ldots\lor x_n$ and $\inf\{x_1,\ldots,x_n\}=:x_1\land\ldots\land x_n$. If $(X,\leq)$ is a poset, then $Y\subseteq X$ is called \df{order closed} if{}f $\sup Z_1\in Y$ for every nonempty upwards directed $Z_1\subseteq Y$ such that $\sup Z_1\in X$ and $\inf Z_2\in Y$ for every nonempty downwards directed $Z_2\subseteq Y$ such that $\inf Z_2\in X$. A poset $(X,\leq)$ is called: \df{Dedekind--MacNeille complete} \cite{Dedekind:1872,MacNeille:1937} if{}f every nonempty bounded above subset of $X$ has a supremum, \textit{or}, equivalently, if{}f every bounded below subset of $X$ has an infimum; \df{countably additive complete} if{}f every nonempty bounded above countable subset of $X$ has a supremum \textit{and} every nonempty bounded below countable subset of $X$ has an infimum; \df{lattice} \cite{Peirce:1880,Peirce:1885,Schroeder:1890,Ore:1935} if{}f every subset of $X$ consisting of two elements has a supremum and infimum. A lattice $X$ is called: \df{distributive} \cite{Schroeder:1890} if{}f
\begin{align}
        x\land(y\lor z)&=(x\land y)\lor(x\land z)\;\;\forall x,y,z\in X,\\
        x\lor(y\land z)&=(x\lor y)\land(x\lor z)\;\;\forall x,y,z\in X;
\end{align}
\df{boolean} \cite{Boole:1847,Boole:1854,Whitehead:1898} if{}f it is distributive, contains a \df{least element} $0\in X$ such that $0\leq x$ $\forall x\in X$ and a \df{greatest element} $1\in X$ such that $x\leq 1$ $\forall x\in X$, and
\begin{equation}
        \forall x\in X\;\exists y\in X\;\;\mbox{such that}\;\;x\land y=0,\;\;x\lor y=1,\;\;\mbox{and}\;\;y=:\lnot x;
\end{equation}
\df{Riesz} \cite{Daniell:1919:a,Riesz:1930,Riesz:1937,Riesz:1940} if{}f it is a vector space over $\RR$ such that
\begin{equation}
        x\leq y\limp x+z\leq y+z,\;\;x\geq0\limp\lambda x\geq 0\;\;\forall\lambda\geq0\;\forall x,y,z\in X;
\end{equation}
\df{Banach} \cite{Kantorovich:1935,Kantorovich:1937,Birkhoff:1940} if{}f it is a Riesz lattice equipped with a norm $\n{\cdot}:X\ra\RR^+$ such that $\ab{x}\leq\ab{y}\limp\n{x}\leq\n{y}$ and it is Cauchy complete with respect to this norm, where $\ab{x}:=x\lor(-x)$; an \df{f-algebra} \cite{Birkhoff:Pierce:1956} if{}f it is a Riesz lattice equipped with an associative multiplication $\cdot:X\times X\ra X$ such that $(X,+,\cdot\,)$ is an algebra over $\RR$, $x,y\geq0$ $\limp$ $x\cdot y\geq0$, and $(x\land y=0$, $z\geq0)$ $\limp$ $(x\cdot z)\land y=0$. For boolean lattice the two conditions required for countably additive completeness are equivalent. Every Dedekind--MacNeille complete lattice is countably additive complete. Every Riesz lattice is distributive \cite{Freudenthal:1936}. If $X$ is a vector space over $\CC$ and $Y$ is a real vector subspace of $X$ such that
\begin{equation}
        \forall x\in X\;\exists x_1,x_2\in Y\;\;x=x_1+\ii x_2,
\label{real.vector.sublattice}
\end{equation}
then $X$ is called a \df{complex Riesz lattice} if{}f $Y$ is a Riesz lattice and
\begin{equation}
        \ab{x}:=\sup\{\re(\ee^{\ii\lambda}x)
        =x_1\cos\lambda+x_2\sin\lambda\mid
        \lambda\in[0,2\pipi]\}\in Y\;\;\forall x\in X.
\end{equation}
Given a complex Riesz lattice $X$ one defines $\re(X):=\bigcup_{x\in X}\re(x)$, where $\re(x):=x_1$ in terms of decomposition provided by \eqref{real.vector.sublattice}. A complex Riesz lattice $X$ is called, respectively: \df{Dedekind--MacNeille complete}, \df{countably additive complete}, \df{distributive} if{}f $\re(X)$ satisfies the corresponding property. A complex Banach lattice is defined as a Banach lattice constructed over a complex Riesz lattice. If $Y$ is a real Banach lattice, then its complexification $X:=Y+\ii Y$ is a complex Banach lattice with respect to the norm $\n{x}_X:=\n{\ab{x}}_Y$ \cite{Ando:1969,Mittelmeyer:Wolff:1974}. Unless stated otherwise, all following statements about Riesz and Banach lattices apply to both real and complex case. If $X$ is a Riesz lattice and $x\in X$ then $x^+:=x\lor0$ and $x^-:=(-x)\lor0$ satisfy $x=x^+-x^-$ and $\ab{x}=x^++x^-$. A \df{Riesz dual} of a Riesz lattice $X$ is defined as a set $X^\riesz$ of all linear functions $X\ra\RR$ that map intervals $[x,y]:=\{z\in X\mid x\leq z\leq y\}$ for any $x,y\in X$ to bounded subsets of $\RR$. A Riesz lattice $X$ is called \df{archimedean} if{}f 
\begin{equation}
        \{nx\mid n\in\NN\}\;\mbox{is bounded above}\;\limp x\leq 0\;\;\forall x\in X.
\end{equation}
Every countably additive complete Riesz lattice is archimedean. Every Banach lattice is archi\-me\-de\-an. Every archimedean f-algebra is commutative \cite{Amemiya:1953,Birkhoff:Pierce:1956}. An element $e\in X^+:=\{x\in X\mid x\geq0\}$ of an archimedean Riesz lattice $X$ is called an \df{order unit} if{}f $\forall x\in X$ $\exists\lambda>0$ $\ab{x}\leq\lambda e$ \cite{Freudenthal:1936}. If $X$ is an archimedean Riesz lattice with an order unit $e$, then an \df{order unit norm} on $X$ is defined as a map $\n{\cdot}_e:X\ra\RR^+$ such that $\n{x}_e:=\min\{\lambda\in\RR\mid\ab{x}\leq\lambda e\}$. An \df{MI-space} \cite{Krein:Krein:1940,Kakutani:1941:M} is defined as a Banach lattice with an order unit norm. An \df{abstract $L_p$ space} \cite{Birkhoff:1938,Bohnenblust:1940,Kakutani:1941:M} is defined for $p\in[1,\infty[$ as a Banach lattice $X$ with norm such that
\begin{equation}
        \ab{x}\land\ab{y}=0\;
        \limp\;
        \n{x+y}^p=\n{x}^p+\n{y}^p\;\;\forall x,y\in X,
\end{equation}
and as a countably additive complete MI-space $X$ for $p=\infty$. An abstract $L_\infty$ space will be called \df{proper} if{}f it is Banach dual to some Banach space. Every abstract $L_p$ space for $p\in[1,\infty[$ is Dedekind--MacNeille complete. A commutative ring $\rpktarget{BOOLE}(\boole,+,\cdot\,)$ is called \df{boolean} if{}f $x^2=x$ $\forall x\in \boole$. Every boolean lattice defines a boolean ring with unit by $x+y:=(x\land\lnot y)\lor(\lnot x\lor y)$ and $x\cdot y:=x\land y$, and the converse is also true \cite{Stone:1936}. By this reason both are referred to as a \df{boolean algebra}. A simplest nontrivial example of a boolean algebra is $\two$, consisting of two elements $\{0,1\}$ such that $0\leq1$ and $0\neq1$. 

If $(X_1,\leq_1)$ and $(X_2,\leq_2)$ are partially ordered sets, then a function $f:X_1\ra X_2$ is called: \df{order preserving} if{}f $x\leq_1y\limp f(x)\leq_2 f(y)$ $\forall x,y\in X_1$; \df{order continuous} \cite{Nakano:1950} if{}f it is order preserving, $f(\sup Y)=\sup_{x\in Y}\{f(x)\}$ for every nonempty upwards directed $Y\subseteq X_1$ with $\sup Y\in X_1$, and $f(\inf Y)=\inf_{x\in Y}\{f(x)\}$ for every nonempty downwards directed $Y\subseteq X_1$ with $\inf Y\in X_1$; \df{sequentially order continuous} if{}f it is order preserving, $f(\sup_i\{x_i\})=\sup_i\{f(x_i)\}$ for every nondecreasing sequence $\{x_i\}\subseteq X_1$, and $f(\inf_i\{x_i\})=\inf_i\{f(x)\}$ for every nonincreasing sequence $\{x_i\}\subseteq X_1$. If $X_1$ and $X_2$ are lattices, then a \df{lattice homomorphism} is defined as a function $f:X_1\ra X_2$ such that $f(x\lor y)=f(x)\lor f(y)$ and $f(x\land y)=f(x)\land f(y)$. If $\boole_1$ and $\boole_2$ are boolean algebras, then a \df{boolean homomorphism} is defined as a ring homomorphism $f:\boole_1\ra\boole_2$ such that $f(1)=1$. If $X_1$ and $X_2$ are Riesz lattices, then a \df{Riesz homomorphism} is defined as a linear function $f:X_1\ra X_2$ such that any of equivalent conditions holds: $f(x^+)=(f(x))^+$; $f(\ab{x})=\ab{f(x)}$; $f(x\land y)=f(x)\land f(y)$; $f(x\lor y)=f(x)\lor f(y)$. If $X_1$ and $X_2$ are Banach lattices then a Riesz homomorphism $f:X_1\ra X_2$ is called: \df{unit preserving} if{}f $X_1$ has an order unit norm with an order unit $e_1$, $X_2$ has an order unit norm with an order unit $e_2$ and $f(e_1)=e_2$; \df{norm preserving} if{}f $\n{f(x)}_{X_2}=\n{x}_{X_1}$; \df{isometric} if{}f it is norm preserving and continuous with respect to norm topologies on $X_1$ and $X_2$. A \df{boolean isomorphism} is defined as a bijective boolean homomorphism, while a \df{Riesz isomorphism} is defined as a bijective Riesz homomorphism. Isometric Riesz isomorphisms of Banach lattices coincide with their isometric isomorphisms (surjective isometries). Every isometric Riesz isomorphism is order continuous. Every boolean homomorphism and every Riesz lattice homomorphism is a lattice homomorphism. Every bijective lattice homomorphism is order continuous. Every boolean homomorphism is order preserving. A multiplication in archimedean f-algebra is order continuous.

Let $\catname{B}$ denote a category of boolean algebras and boolean homomorphisms, and let $\Top$ denote a category of topological spaces and continuous functions. The \df{Stone spectrum} \cite{Stone:1936} of a boolean algebra $\boole$ is defined as a set $\rpktarget{STONESPEC}\sp_\mathrm{S}(\boole)$ of nonzero boolean homomorphisms from $\boole$ to $\two$,
\begin{equation}
        \sp_{\mathrm{S}}(\boole):=\Hom_{\catname{B}}(\boole,\two)\setminus\{0\},
\end{equation}
equipped with a topology of open sets given by 
\begin{equation}
\{\Y\subseteq\sp_\mathrm{S}(\boole)\mid\forall \xx\in \Y\;\exists x\in\boole\;\;\xx\in\hat{x}\subseteq\Y\},
\end{equation}
where $\hat{\cdot}:\boole\ra\Hom_\Top(\sp_\mathrm{S}(\boole),\two)$ is the \df{Stone representation map} defined by $\hat{x}:=\{\xx\in\sp_\mathrm{S}(\boole)\mid\xx(x)=1\}$. The set $\{\hat{x}\subseteq\sp_{\mathrm{S}}(\boole)\mid x\in\boole\}$ consists of all subsets of $\sp_\mathrm{S}(\boole)$ that are open and closed, and is boolean isomorphic to $\boole$. An order closed vector subspace $Y$ of a Riesz lattice $X$ is called a \df{band} if{}f $(x\in Y,\;\ab{y}\leq\ab{x})\limp y\in Y$. If $X$ is an archimedean Riesz lattice and $Z\subseteq X$, then 
\begin{equation}
        Z^\oc:=\{x\in X\mid\ab{x}\land\ab{y}=0\;\forall y\in Z\}
\end{equation}
is a band and $Z^\oc{}^\oc=Z$. A subset $Y$ of an archimedean Riesz lattice $X$ is called a \df{projection band} if{}f $Y+Y^\oc=X$. If $X$ is archimedean and Dedekind--MacNeille complete, then each band of $X$ is a  projection band. The set of all bands of an archimedean Riesz lattice $X$ forms a Dedekind--MacNeille complete boolean algebra $\boole$, with $Y\land Z:=Y\cap Z$, $Y\lor Z:=(Y+Z)^\oc{}^\oc$, $1:=X$, $0:=\{0\}$, $\lnot Y:=Y^\oc$, $(Y\leq Z):=(Y\subseteq Z)$, while the set of all projection bands of $X$ forms a boolean subalgebra of $\boole$. These two boolean algebras coincide if{}f $X$ is Dedekind--MacNeille complete.

A \df{measure} on a boolean algebra $\boole$ is defined as a function $\rpktarget{MEASURE}\mu:\boole\ra[0,\infty]$ such that $\mu(0)=0$. It is called: \df{countably additive} if{}f
\begin{equation}
        \mu(\bigvee_i x_i)=\sum_i\mu(x_i)\;\;\mbox{for}\;\;(i\neq j\limp x_i\land x_j=0);
\label{countably.additive.measure}
\end{equation}
\df{strictly positive} if{}f $x\neq0\limp\mu(x)>0$; \df{finite} if{}f $\cod(\mu)\subseteq\RR^+$; \df{semi-finite} if{}f
\begin{equation}
        \forall x\in\boole\;\exists y\in\boole\;\;\mu(x)=\infty\limp(y\leq x\;\mbox{and}\;0<\mu(y)<\infty).
\end{equation}
The space of all semi-finite countably additive measures on a boolean algebra $\boole$ will be denoted $\rpktarget{MEASURE.W}\W(\boole)$, while the subset of strictly positive elements of $\W(\boole)$ will be denoted $\rpktarget{MEASURE.W.ZERO}\W_0(\boole)$. A boolean algebra will be called: \df{ccb-algebra} if{}f it is \underline{c}ountably additive \underline{c}omplete; \df{Dcb-algebra} if{}f it is \underline{D}edekind--MacNeille \underline{c}omplete; \df{mcb-algebra} if{}f it allows a semi-finite strictly positive countably additive \underline{m}easure and is Dedekind--MacNeille \underline{c}omplete. There exist Dcb-algebras that do not admit any countably additive measure \cite{Szpilrajn:1934,Horn:Tarski:1948}. A pair $(\boole,\mu)$ of a ccb-algebra $\boole$ and a strictly positive countably additive measure $\mu$ on $\boole$ is called a \df{measure algebra}. A measure algebra $(\boole,\mu)$ is called: \df{semi-finite} if{}f $\mu$ is semi-finite; \df{localisable} (or \df{Maharam}) if{}f $\boole$ is an mcb-algebra and $\mu$ is semi-finite. An \df{evaluation} on a boolean algebra $\boole$ is defined as a function $\phi:\boole\ra\RR$ satisfying $\phi(0)=0$ and countably additive in the sense of \eqref{countably.additive.measure} with $\mu$ substituted by $\phi$. It is called: \df{positive} if{}f $\cod(\phi)\subseteq\RR^+$; \df{strictly positive} if{}f $x\neq0$ $\limp$ $\phi(x)>0$. The set of all evaluations on $\boole$ will be denoted $\eval(\boole)$, and its subsets of all positive (resp., strictly positive) elements will be denoted by $\eval(\boole)^+$ (resp. $\eval(\boole)^+_0$). Every positive evaluation is an element of $\W(\boole)$, hence the diagram
\begin{equation}
\xymatrix{
        \eval(\boole)^+_0
        \ar@{^{(}->}[r]
        \ar@{^{(}->}[d]
        &
        \W_0(\boole)
        \ar@{^{(}->}[d]
        \\
        \eval(\boole)^+
        \ar@{^{(}->}[r]
        &
        \W(\boole)
}
\end{equation}
is commutative. 

Let $\boole$ be an arbitrary boolean algebra, let $X$ be a vector space of all sums $\sum_{i=1}^n\lambda_ix_i$ with $\{\lambda_i\}\subseteq\RR$ and $\{x_i\}\subseteq\boole$, and let $Y$ be a vector subspace of $X$ spanned by the elements of $X$ of the form $(x_1\lor x_2)-x_1-x_2$ for $x_1,x_2\in\boole$ such that $x_1\land x_2=0$. The space $X/Y$ can be equipped with the norm
\begin{equation}
        \n{f}_\infty:=\min\{\lambda\geq0\mid\ab{f}\leq \lambda\chr(1)\}\;\;\forall f\in X/Y,
\end{equation}
where $\chr:\boole\ra X/Y$ is defined as a map from $x\in\boole$ to an image of $x\in X$ in $X/Y$. The space $\rpktarget{LPBINF}L_\infty(\boole)$ is defined as a Cauchy completion of $X/Y$ in $\n{\cdot}_\infty$.\footnote{Equivalently, one can define (real or complex) Banach lattice $L_\infty(\boole)$ as the space of all (real or complex) continuous functions on the Stone spectrum $\sp_\mathrm{S}(\boole)$, endowed with its multiplication, linear and order structures, and norm given by $\n{f}:=\sup_{\xx\in\sp_\mathrm{S}(\boole)}\{\ab{f(\xx)}\}$. However, for the purposes of Section \ref{integr.compar.section}, we want to avoid any dependence on Stone representation in the definition of $L_\infty(\boole)$.} The order unit of $L_\infty(\boole)$ is given by the constant function taking the value $1$ everywhere. The projection band algebra of $L_\infty(\boole)$ is boolean isomorphic to $\boole$. $L_\infty(\boole)$ is Dedekind--MacNeille complete if{}f $\boole$ is, and is countably additive complete if{}f $\boole$ is. If $f:\boole_1\ra\boole_2$ is a boolean homomorphism, then the formula
\begin{equation}
        L_\infty(f)(\chr(x))=\chr(f(x))\;\;\forall x\in\boole_1
\label{Linfty.morphism}
\end{equation}
determines a unique Riesz homomorphism $L_\infty(f):L_\infty(\boole_1)\ra L_\infty(\boole_2)$ which is unit preserving, and is surjective (resp.: injective; order continuous) if{}f $f$ is surjective (resp.: injective; order continuous). If $\boole$ is a ccb-algebra, then $\rpktarget{LPBZERO}L_0(\boole)$ is defined as a set of all functions $f:\RR\ra\boole$ such that 
\begin{equation}
        f(\lambda_1)=\sup_{\lambda_2>\lambda_1}f(\lambda_2)\;\forall\lambda_1\in\RR,\;\;
        \inf_{\lambda\in\RR}f(\lambda)=0,\;\;
        \sup_{\lambda\in\RR}f(\lambda)=1.
\end{equation}
The $L_0(\boole)$ space can be equipped with an f-algebra structure, provided by
\begin{equation}
        (x\cdot y)(\lambda_1)
        :=\sup\left\{
                x(\lambda_2)\land y\left(\frac{\lambda_1}{\lambda_2}\right)\mid
                \lambda_2\in\QQ,\;\lambda_2>0
        \right\}
        \;\;\forall x,y\geq0,
\end{equation}
and
\begin{equation}
        x\cdot y:=x^+\cdot x^+-x^+\cdot y^--x^-\cdot y^++x^-\cdot y^-\;\;\forall x,y\in L_0(\boole).
\end{equation}
For any measure algebra $(\boole,\mu)$, the map
\begin{equation}
        \n{\cdot}_1:L_0(\boole)\ni f\mapsto\int_0^\infty\dd\lambda\,\mu(\ab{f(\lambda)})\in[0,\infty],
\end{equation}
where $\dd\lambda$ is a Lebesgue measure on $\RR$, allows to define
\begin{equation}\rpktarget{LPBM.ONE}
        L_1(\boole,\mu):=\{f\in L_0(\boole)\mid\n{f}_1<\infty\}.
\end{equation}
Moreover, for $p\in\,]1,\infty[$,
\begin{equation}
        \ab{f(\lambda)}^p:=\left\{
        \begin{array}{ll}
                \ab{f(\lambda^{1/p})}&:\;\lambda\geq0\\
                1&:\;\lambda<0
        \end{array}
        \right.
\end{equation}
allows to define 
\begin{equation}\rpktarget{LPBM}
        L_p(\boole,\mu):=\{f\in L_0(\boole)\mid\ab{f}^p\in L_1(\boole,\mu)\}
\end{equation}
and 
\begin{equation}
        \n{\cdot}_p:L_p(\boole,\mu)\ni f\mapsto\n{\ab{f}^p}_1^{1/p}\in\RR^+.
\end{equation}
For $p\in[1,\infty[$ the maps $\n{\cdot}_p$ are norms on $L_p(\boole,\mu)$ under which $L_p(\boole,\mu)$ are Cauchy complete. The spaces $L_p(\boole,\mu)$ inherit an f-algebra structure from $L_0(\boole)$ and are Dedekind--MacNeille complete. If $\boole$ is a ccb-algebra and $\mu_1,\mu_2\in\W(\boole)$, then $L_p(\boole,\mu_1)$ and $L_p(\boole,\mu_2)$ are isometrically Riesz isomorphic. If $(\boole,\mu)$ is a localisable measure algebra, then the band algebra of $L_p(\boole,\mu)$ is boolean isomorphic to $\boole$. If $(\boole,\mu)$ is a measure algebra and $x\in L_1(\boole,\mu)$, then the function $\int\mu:L_1(\boole,\mu)\ra\RR$, defined by
\begin{equation}
        \int\mu x:=\n{x^+}_1-\n{x^-}_1=\int_0^\infty\dd\lambda\,\mu(x(\lambda))-\int_0^\infty\dd\lambda\,\mu(-x(\lambda)),
\end{equation}
is linear and order continuous, and satisfies
\begin{align}
\n{x}_1&=\int\mu\ab{x}\;\;\forall x\in L_1(\boole,\mu),\\
\n{x}_p&=\left(\int\mu\ab{x}^p\right)^{1/p}=\n{\ab{x}^p}_1^{1/p}\;\;\forall x\in L_p(\boole,\mu)\;\forall p\in[1,\infty[.
\end{align}
The space $\eval(\boole)$ is an abstract $L_1$ space, and if $(\boole,\mu)$ is a semi-finite measure algebra, then there exists a bijective Riesz isomorphism between $\eval(\boole)$ and $L_1(\boole,\mu)$. Hence, there exists a bijection between $L_1(\boole,\mu)^+$ and $\eval(\boole)^+$. For any measure algebra $(\boole,\mu)$ and $\gamma\in\,]0,1[$ there is a Riesz isomorphism $L_{1/\gamma}(\boole,\mu)^\riesz\iso L_{1/(1-\gamma)}(\boole,\mu)$ and a Banach space duality $L_{1/\gamma}(\boole,\mu)^\banach\iso L_{1/(1-\gamma)}(\boole,\mu)$ determined by the map
\begin{equation}
        L_{1/\gamma}(\boole,\mu)\times L_{1/(1-\gamma)}(\boole,\mu)\ni(x,y)\mapsto\int\mu xy\in\RR.
\end{equation}
The space $L_\infty(\boole)$ can be identified with the linear subspace of $L_0(\boole)$ generated by $\chr(1)$, and in such case $L_1(\boole,\mu)\times L_\infty(\boole)\ni(x,y)\mapsto x\cdot y\in L_1(\boole,\mu)$ is a bilinear maps, while
\begin{equation}
        L_1(\boole,\mu)\times L_\infty(\boole)\ni(x,y)\mapsto\int\mu xy\in\RR
\end{equation}
is a bilinear functional. If $(\boole,\mu)$ is semi-finite, then $L_1(\boole,\mu)$ is isometrically Riesz isomorphic to $L_\infty(\boole)^\riesz$. According to Segal's theorem \cite{Segal:1951}, the space $L_1(\boole,\mu)^\banach$ is isometrically Riesz isomorphic to $L_\infty(\boole)$ if{}f $(\boole,\mu)$ is localisable, and in such case all Banach preduals of $L_\infty(\boole)$ are isometrically (and Riesz) isomorphic. If $(\boole,\mu)$ is a measure algebra, then $\rpktarget{BOOLMEASIDEAL}\boole^\mu:=\{x\in\boole\mid\mu(x)<\infty\}$ is a boolean algebra and an ideal in $\boole$. If $(\boole,\mu)$ is semi-finite, then an embedding $\boole^\mu\subseteq\boole$ is an order continuous injective boolean homomorphism. For any measure algebras $(\boole_1,\mu_1)$ and $(\boole_2,\mu_2)$ a boolean homomorphism $f:\boole_1^{\mu_1}\ra\boole_2^{\mu_2}$ is called \df{measure preserving} if{}f $\mu_2(f(x))=\mu_1(x)$ $\forall x\in\boole_1^{\mu_1}$. Every measure preserving boolean homomorphism is injective. Every measure preserving boolean homomorphism $f:\boole_1^{\mu_1}\ra\boole_2^{\mu_2}$ is order continuous. If $p\in[1,\infty[$, then a measure preserving boolean homomorphism $f:\boole_1^{\mu_1}\ra\boole_2^{\mu_2}$ determines a unique injective, order continuous, isometric Riesz homomorphism $\widetilde{f}:L_p(\boole_1,\mu_1)\ra L_p(\boole_2,\mu_2)$ given by
\begin{equation}
        \widetilde{f}(\chr(x))=\chr(f(x))\;\;\forall x\in\boole_1^{\mu_1},
\label{Lp.homo}
\end{equation}
where the function $\chr:\boole\ra L_0(\boole)$, defined by
\begin{equation}\rpktarget{CHICHAR}
        (\chr(x))(\lambda):=
        \left\{
        \begin{array}{ll}
                1&:\;\lambda<0\\
                x&:\;\lambda\in[0,1[\\
                0&:\;1\leq\lambda,
        \end{array}
        \right.
\end{equation}
is additive, injective lattice homomorphism. The map $\widetilde{f}$ is surjective if{}f $f$ is surjective. Hence, every measure preserving boolean isomorphism determines a unique corresponding isometric Riesz isomorphism of associated $L_p$ spaces.

Every abstract $L_p$ space for $p\in[1,\infty[$ is Dedekind--MacNeille complete and countably additive complete. If $X$ is an abstract $L_{1/\gamma}$ space with $\gamma\in\,]0,1[$, then by And\={o}'s theorem \cite{Ando:1969}, its Banach dual $X^\banach$ is an abstract $L_{1/(1-\gamma)}$ space. If $X$ is an abstract $L_1$ space, then its Banach dual $X^\banach$ coincides with its Riesz dual $X^\riesz$ and is a proper abstract $L_\infty$ space. According to the Bohnenblust--Kakutani--Nakano theorem \cite{Bohnenblust:1940,Kakutani:1941:M,Kakutani:1941:L,Bohnenblust:Kakutani:1941,Ando:1969}:
\begin{enumerate}
\item[(i)] every abstract $L_p$ space $X$ for $p\in[1,\infty[$ is isometrically Riesz isomorphic to some $L_p(\boole,\mu)$ space, where $\boole$ is uniquely determined as an mcb-algebra of projection bands of $X$, while $\mu\in\W(\boole)$ is (nonuniquely) determined by $\boole$ and a norm of $X$, so that $(\boole,\mu)$ is a localisable measure algebra;
\item[(ii)] every abstract $L_\infty$ space $X$ determines a ccb-algebra $\boole$ of its projection bands, and $X$ is isometrically Riesz isomorphic to $L_\infty(\boole)$. Hence, every Dedekind--MacNeille complete abstract $L_\infty$ space $X$ is isometrically Riesz isomorphic to $L_\infty(\boole)$, where $\boole$ is a Dcb-algebra.
\end{enumerate}
By Segal's theorem \cite{Segal:1951}, this implies that
\begin{enumerate}
\item[(iii)] every proper abstract $L_\infty$ space $X$ is isometrically Riesz isomorphic to $L_\infty(\boole)$, where $\boole$ is an mcb-algebra.
\end{enumerate}

Consider the categories: $\locMeAlgIso$ of localisable measure algebras and measure preserving boolean isomorphisms; $\sfMeAlgIso$ of semi-finite measure algebras and measure preserving boolean isomorphisms; $\sfMeAlg$ of semi-finite measure algebras and measure preserving boolean homomorphisms; $\mcBIso$ of mcb-algebras and boolean isomorphisms; $\ccBIso$ of ccb-algebras and boolean isomorphisms; $\ccB$ of ccb-algebras and boolean homomorphisms; $\ccBi$ of ccb-algebras and injective boolean homomorphisms; $\mcBc$ of mcb-algebras and order continuous boolean homomorphisms; $\DcBc$ of Dcb-algebras and order continuous boolean homomorphisms; $\LpIso$ of abstract $L_p$ spaces for fixed $p\in[1,\infty[$ with unit preserving isometric Riesz isomorphisms; $\Lpinp$ of abstract $L_p$ spaces for fixed $p\in[1,\infty[$ with unit preserving norm preserving injective Riesz homomorphisms; $\LinfIso$ of proper abstract $L_\infty$ spaces with unit preserving isometric Riesz isomorphisms; $\Linfc$ of proper abstract $L_\infty$ spaces with order continuous unit preserving Riesz homomorphisms; $\aLinfIso$ of abstract $L_\infty$ spaces with unit preserving isometric Riesz isomorphisms; $\aLinfi$ of abstract $L_\infty$ spaces with injective unit preserving Riesz homomorphisms; $\aLinf$ of abstract $L_\infty$ spaces with unit preserving Riesz homomorphisms; $\DcaLinfc$ of Dedekind--MacNeille complete abstract $L_\infty$ spaces with order continuous unit preserving Riesz homomorphisms; $\bLinf$ of $L_\infty(\boole)$ spaces over boolean algebras $\boole$ with unit preserving Riesz homomorphisms. Let $\mathrm{Frg}:\sfMeAlg\ra\ccBi$ denote the forgetful functor that forgets about the measure, let $\mathrm{MeAlgL}_p:\sfMeAlg\ra\Lpinp$ denote the functor that associates $L_p(\boole,\mu)$ space to each $(\boole,\mu)\in\Ob(\sfMeAlg)$ and $\widetilde{f}$ given by \eqref{Lp.homo} to each $f\in\Mor(\sfMeAlg)$, let $\mathrm{L}_\infty:\catname{B}\ra\bLinf$ denote the functor that assigns $L_\infty(\boole)$ to each boolean algebra $\boole$ and maps the corresponding homomorphisms according to \eqref{Linfty.morphism}, and let $\PrjB$ be a functor from the category of archimedean Riesz lattices with Riesz homomorphisms to the category of boolean algebras with boolean homomorphisms that associates an algebra of projection bands to each Riesz lattice, and a boolean homomorphism of projection bands algebra that is determined by a Riesz homomorphism. Then the properties discussed above can be summarised in terms of the following commutative diagram:
\begin{equation}\xymatrix{%
\mcBc
\ar@{ >->}[r]
\ar@<-0.5ex>[d]_{\mathrm{L}_\infty}
&
\DcBc
\ar@{ >->}[r]
\ar@<-0.5ex>[d]_{\mathrm{L}_\infty}
&
\ccB
\ar@{ >->}[r]
\ar@<-0.5ex>[d]_{\mathrm{L}_\infty}
&
\catname{B}
\ar@<-0.5ex>[d]_{\mathrm{L}_\infty}
\\
\Linfc
\ar@{ >->}[r]
\ar@<-0.5ex>[u]_{\PrjB}
&
\DcaLinfc
\ar@{ >->}[r]
\ar@<-0.5ex>[u]_{\PrjB}
&
\aLinf
\ar@{ >->}[r]
\ar@<-0.5ex>[u]_{\PrjB}
&
\bLinf
\ar@<-0.5ex>[u]_{\PrjB}
\\
\LinfIso
\ar@{ >->}[r]
\ar@{ >->}[u]
\ar@<+0.5ex>[d]^{\PrjB}
\ar@<-1.5ex>@/_4.7pc/[ddd]_{(\cdot)^\banach}
&
\aLinfIso
\ar@{ >->}[r]
\ar@<+0.5ex>[d]^{\PrjB}
&
\aLinfi
\ar@{ >->}[u]
\ar@<+0.5ex>[d]^{\PrjB}
&
\\
\mcBIso
\ar@{ >->}[r]
\ar@<+0.5ex>@{ >->}@/^3.3pc/[uuu]
\ar@<+0.5ex>[u]^{\mathrm{L}_\infty}
&
\ccBIso
\ar@{ >->}[r]
\ar@<+0.5ex>[u]^{\mathrm{L}_\infty}
&
\ccBi
\ar@<+0.5ex>@{ >->}@/^3.3pc/[uuu]
\ar@<+0.5ex>[u]^{\mathrm{L}_\infty}
&
\\
\locMeAlgIso
\ar@{ >->}[r]
\ar[u]_{\mathrm{Frg}}
\ar[d]^(0.45){\!\mathrm{MeAlgL}_p}
&
\sfMeAlgIso
\ar@{ >->}[r]
\ar[u]_{\mathrm{Frg}}
\ar[dl]^{\;\mathrm{MeAlgL}_p}
&
\sfMeAlg
\ar[u]_{\mathrm{Frg}}
\ar[d]^{\mathrm{MeAlgL}_p}
&
\\
{\catname{L}_p\catname{Iso}}
\ar@{ >->}[rr]
\ar@/^4.7pc/[uuu]_{(\cdot)_\star}
\ar@/^3pc/[uu]|(0.76){\PrjB}
&
&
{\catname{L}_p\catname{inp}}
&
}
\label{commutative.integration.diagram}
\end{equation}
where $(\cdot)^\banach$ and $(\cdot)_\star$ denote the Banach space duality functors, defined by \eqref{banach.duality.functor} and \eqref{banach.preduality.functor}, respectively, and are considered only for $p=1$. The $(\mathrm{L}_\infty,\PrjB)$ in \eqref{commutative.integration.diagram} set up an equivalence of categories that are their domain and codomain, e.g.
\begin{align}
        \mathrm{L}_\infty\circ\PrjB\iso\id_{\bLinf}&,
        \;\;\;\PrjB\circ\mathrm{L}_\infty\iso\id_{\catname{B}};
        \label{bLinf.B.equiv}
        \\
        \mathrm{L}_\infty\circ\PrjB\iso\id_{\aLinf}&,
        \;\;\;\PrjB\circ\mathrm{L}_\infty\iso\id_{\ccB};
        \label{aLinf.ccB.equiv}\\
        \mathrm{L}_\infty\circ\PrjB\iso\id_{\Linfc}&,
        \;\;\;\PrjB\circ\mathrm{L}_\infty\iso\id_{\catname{mcBc}};
        \label{Linfc.mcBc.equiv}
\end{align}
and so on. The duality between $L_{1/\gamma}(\boole,\mu)$ and $L_{1/(1-\gamma)}(\boole,\mu)$ spaces for semi-finite measure algebra $(\boole,\mu)$ and $\gamma\in\,]0,1[$ extends to an And\={o} duality of abstract $L_p$ spaces, which takes the form of the commutative diagram of equivalence of categories,
\begin{equation}
\xymatrix{
        \catname{L}_{1/\gamma}\catname{Iso}
        \ar@<+0.5ex>[rr]^{(\cdot)^\banach}
        &&
        \catname{L}_{1/(1-\gamma)}\catname{Iso},
        \ar@<+0.5ex>[ll]^{(\cdot)_\star}
}
\;\mbox{ with }\;
(\cdot)^\banach\circ(\cdot)_\star\iso\id_{\catname{L}_{1/\gamma}\Iso}\iso(\cdot)_\star\circ(\cdot)^\banach.
\end{equation}

This concludes the description of an algebraic integration theory based on boolean algebras and Banach lattices. We finish this section with a brief discussion of the elementary notions of Fr\'{e}chet's and Daniell's approaches to integration.

We begin with Fr\'{e}chet's approach. For a given set $\X$, a \df{countably additive algebra} on $\X$ is defined as a family $\rpktarget{MHO}\mho(\X)$ of subsets of $\X$ such that
\begin{equation}
        \varnothing\in\mho(\X),\;\;
        \Y\in\mho(\X)\limp\X\setminus\Y\in\mho(\X),\;\;
        \bigcup_i\X_i\in\mho(\X)\;\;\mbox{for any sequence}\;\;\{\X_i\}\subseteq\mho(\X).
\end{equation}
A \df{countably additive ideal} \cite{Ulam:1930} of a countably additive algebra $\mho(\X)$ on $\X$ is defined as a family $\rpktarget{MHO.ZERO}\mho^0(\X)$ of subsets of $\mho(\X)$ such that
\begin{enumerate}
\item[1)] $\varnothing\in\mho^0(\X)$,
\item[2)] $(\X_1\in\mho^0(\X),\;\X_2\in\mho(\X),\;\X_2\subseteq\X_1)\limp\X_2\in\mho^0(\X)$,
\item[3)] $\bigcup_i\X_i\in\mho^0(\X)$ for any countable set $\{\X_i\}\subseteq\mho^0(\X)$.
\end{enumerate}
A \df{premeasurable space} is defined as a pair $(\X,\mho(\X))$, while a \df{measurable space} is defined as a triple $(\X,\mho(\X),\mho^0(\X))$, where $\mho(\X)$ is any countable additive algebra on $\X$, while $\mho^0(\X)$ any countably additive ideal of $\mho(\X)$. A \df{complete morphism} of premeasurable spaces, $(\X_1,\mho_1(\X_1),\mho^0_1(\X_1))\ra(\X_2,\mho_2(\X_2),\mho^0_2(\X_2))$, is defined as a map $f:\X_1\ra\X_2$ such that $f^{-1}(\Y)\in\mho_1(\X_1)$ $\forall\Y\in\mho_2(\X_2)$ and $f^{-1}(\Z)\in\mho^0_1(\X_1)$ $\forall\Z\in\mho^0(\X_2)$. A \df{measure} on a premeasurable space $(\X,\mho(\X))$ is defined as a function $\rpktarget{SET.MEASURE}\tmu:\mho(\X)\ra[0,\infty]$ such that $\tmu(\varnothing)=0$. A measure is called \df{countably additive} if{}f $\tmu(\bigcup_i\X_i)=\sum_i\tmu(\X_i)$ for any countable sequence $\{\X_i\}\subseteq\mho(\X)$ satisfying $i\neq j\limp\X_i\cap\X_j=\varnothing$. A set of all countably additive measures on $(\X,\mho(\X))$ will be denoted $\rpktarget{SET.MEASURE.SET}\Meas^+(\X,\mho(\X))$. Moreover, $\Meas(\X,\mho(\X)):=\{\tmu:=\tmu_1-\tmu_2\mid\tmu_1,\tmu_2\in\Meas^+(\X,\mho(\X))\}$. A \df{measure space} is defined as a triple $(\X,\mho(\X),\tmu)$, where $(\X,\mho(\X))$ is a premeasurable space, and $\tmu$ is countably additive measure on it. Given a measure space $(\X,\mho(\X),\tmu)$, a set $\Y\subseteq\X$ is called \df{$\tmu$-null} if{}f there exists $\Z\subseteq\mho(\X)$ such that $\Y\subseteq\Z$ and $\tmu(\Z)=0$. A family of all $\tmu$-null subsets of $\X$ is denoted by $\rpktarget{NULL}\nul(\X,\mho(\X),\tmu)$. A measure $\tmu:\mho(\X)\ra[0,\infty]$ is called \df{atomless} if{}f there exists no $\Y\in\mho(\X)$ satisfying ($\tmu(\Y)>0$ and for any $\Z\in\mho(\X)$ such that $\Z\subseteq\Y$ it holds that either $\Z$ or $\Y\setminus\Z$ is $\tmu$-null). One says that the property $Q(\xx)$ holds for \df{$\tmu$-almost every} $\xx\in\X$ if{}f $\{\xx\in\X\mid Q(\xx)\;\mbox{is false}\}\in\nul(\X,\mho(\X),\tmu)$. The set
\begin{equation}\rpktarget{MHO.TMU}
\mho^{\tmu}(\X):=\mho(\X)\cap\nul(\X,\mho(\X),\tmu)=\{\Y\in\mho(\X)\mid\tmu(\Y)=0\}
\end{equation}
is a countably additive ideal of $\mho(\X)$, hence, every measure space $(\X,\mho(\X),\tmu)$ determines a corresponding measurable space $(\X,\mho(\X),\mho^\tmu(\X))$. A measure space $(\X,\mho(\X),\tmu)$ is called: \df{semi-finite} if{}f 
\begin{equation}
        \forall\X_1\in\mho(\X)\;\exists\X_2\in\mho(\X)\;\;\tmu(\X_1)=\infty\limp(\X_2\subseteq\X_1\mbox{ and }0<\tmu(\X_2)<\infty);
\end{equation}
\df{localisable} (or \df{Maharam}) if{}f it is semi-finite and for all $Y\subseteq\mho(\X)$ there exists $\X_1\in\mho(\X)$ such that
\begin{enumerate}
\item[1)] $\Y\setminus\X_1\in\nul(\X,\mho(\X),\tmu)\;\forall\Y\in Y$,
\item[2)] $(\X_2\in\mho(\X),\;\Y\setminus\X_2\in\nul(\X,\mho(\X),\tmu)\;\forall\Y\in Y)\limp\X_1\setminus\X_2\in\nul(\X,\mho(\X),\tmu)$.
\end{enumerate}
A measurable space $(\X,\mho(\X),\mho^0(\X))$ will be called \df{localisable} (or \df{Maharam}) if{}f there exists a measure $\tmu$ on $(\X,\mho(\X))$ such that $\mho^0(\X)=\mho^\tmu(\X)$ and $(\X,\mho(\X),\mho^\tmu(\X))$ is localisable. If $(\X,\mho(\X))$ is a premeasurable space and $\Y\subseteq\X$, then $\mho^\X(\Y):=\{\Z\cap\Y\mid\Z\in\mho(\X)\}$ is a countably additive algebra on $\Y$. A function $f:\Y\ra\RR$ is called \df{$\mho(\X)$-measurable} if{}f $\{\xx\in\X\mid f(\xx)\leq\lambda\}\subseteq\mho^\X(\Y)$ $\forall\lambda\in\RR$. A function $f:\X\ra\RR$ is called \df{$\tmu$-simple} if{}f $f=\sum_{i=1}^n\lambda_i\chr_{\Y_i}$, where $n\in\NN$, $\{\lambda_i\}\subseteq\RR$, $\{\Y_i\}\subseteq\X$ are $\mho(\X)$-measurable sets with $\tmu(\Y_i)<\infty$, and $\chr_{\Y_i}$ are characteristic functions of $\Y_i$. A \df{$\tmu$-integral} of a $\tmu$-simple $f$ is defined as $\int\tmu f:=\sum_{i=1}^n\lambda_i\mu(\Y_i)$. A function $f:\X\ra\RR$ is called \df{$\tmu$-integrable} if{}f $f=f_a-f_b$, where  $f_o\in\{f_a,f_b\}$ satisfy
\begin{enumerate}
\item[1)] $\X\setminus\dom f_o$ is $\tmu$-null,
\item[2)] $f_o(\xx)\in\RR^+$ $\forall\xx\in\dom f_o$,
\item[3)] there exists a nondecreasing sequence $\{f_i\}$ of simple functions $f_i:\X\ra\RR^+$ such that $\sup_i\{\int\tmu f_i\}<\infty$ and $\lim_{i\ra\infty}f_i(\xx)=f_o(\xx)$ holds $\tmu$-almost everywhere.
\end{enumerate}
A \df{$\tmu$-integral} of $\tmu$-integrable $f$ is defined as $\int\tmu f:=\int\tmu f_a-\int\tmu f_b$. If $(\X,\mho(\X),\tmu)$ is a measure space, then the set of functions $f:\X\ra\RR$ such that
\begin{enumerate}
\item[i)] $\X\setminus\dom f$ is $\tmu$-null,
\item[ii)] $\exists\Y\subseteq\X$ such that $\X\setminus\Y$ is $\tmu$-null and $f|_\Y$ is $\mho^\X(\Y)$-measurable,
\end{enumerate}
is denoted by $\Lcal_0(\X,\mho(\X),\tmu)$. A space $\Lcal_\infty(\X,\mho(\X),\tmu)$ is defined as a set of $f\in\Lcal_0(\X,\mho(\X),\tmu)$ such that
\begin{equation}
        \exists\lambda\geq0\;\;
        \X\setminus\{\xx\in\dom f\mid
        \ab{f(\xx)}\leq\lambda\}
        \in\nul(\X,\mho(\X),\tmu).
\end{equation}
A space $\Lcal_p(\X,\mho(\X),\tmu)$, for $p\in\,]1,\infty[$, is defined as a set of all $f\in\Lcal_0(\X,\mho(\X),\tmu)$ such that $\ab{f}^p$ is $\tmu$-integrable. For $p\in[1,\infty]\cup\{0\}$ \cite{Riesz:1910,Radon:1913,Dunford:1938}
\begin{equation}\rpktarget{LPMEASURESET}
        L_p(\X,\mho(\X),\tmu):=\Lcal_p(\X,\mho(\X),\tmu)/=_\tmu,
\end{equation} 
where $=_\tmu$ is an equivalence relation on elements of $\Lcal_0(\X,\mho(\X),\tmu)$ such that $f_1=_\tmu f_2$ if{}f $f_1=f_2$ holds $\tmu$-almost everywhere. 

By Wecken's theorem \cite{Wecken:1939}, every measurable space $(\X,\mho(\X),\mho^0(\X))$ determines a ccb-algebra $\boole$ by $\boole:=\mho(\X)/\mho^0(\X)$, and, in particular, every measure space $(\X,\mho(\X),\tmu)$ determines a ccb-algebra\rpktarget{BOOLETMUALG}
\begin{equation}
        \boole_\tmu:=\mho(\X)/\mho^\tmu(\X)=\mho(\X)/\{\Y\in\mho(\X)\mid\tmu(\Y)=0\},
\label{wecken.eins}
\end{equation}
and a measure algebra $(\boole_\tmu,\mu)$, with 
\begin{equation}
        \mu([\Z]_{\boole_\tmu}):=\tmu(\Z)\;\;\forall\Z\in\mho(\X),
\label{wecken.zwei}
\end{equation}
where the map $\mho(\X)\ni\Z\mapsto[\Z]_{\boole_\tmu}\in\boole_\tmu$ is defined by  \eqref{wecken.eins}, and is sequentially order continuous. On the other hand, for every ccb-algebra $\boole$ the Loomis--Sikorski theorem \cite{Loomis:1947,Sikorski:1948} provides an explicit construction of a measurable space $(\sp_\mathrm{S}(\boole),\mho_{\mathrm{LS}}(\sp_\mathrm{S}(\boole)),\mho^0_{\mathrm{LS}}(\sp_\mathrm{S}(\boole)))$, such that $\mho_{\mathrm{LS}}(\sp_\mathrm{S}(\boole))/\mho^0_{\mathrm{LS}}(\sp_\mathrm{S}(\boole))$ is boolean isomorphic to $\boole$.\footnote{The elements of $\mho_{\mathrm{LS}}(\sp_\mathrm{S}(\boole))$ are clopen subsets of $\sp_{\mathrm{S}}(\boole)$, while $\mho^0_{\mathrm{LS}}(\sp_\mathrm{S}(\boole))$ consists of all meager subsets of $\sp_{\mathrm{S}}(\boole)$.} As a consequence, one can show that for every measure algebra $(\boole,\mu)$ there exists a measure preserving isomorphism to a measure algebra of some measure space. By Kelley--Namioka theorem \cite{Kelley:Namioka:1963}, the measure space is localisable if{}f the corresponding ccb-algebra is an mcb-algebra. Together with a contravariant equivalence between order continuous boolean homomorphisms and complete morphisms of measurable spaces (see e.g. \cite{Fremlin:2000}), this allows to establish the following categorical equivalence. Let $\locMeSp$ be a category consisting of localisable measurable spaces and complete morphisms, let $\mathrm{W}:\locMeSp^\op\ra\mcBc$ be a functor that assigns to each $(\X,\mho(\X),\mho^0(\X))$ its mcb-algebra $\mho(\X)/\mho^0(\X)$, and to each complete morphism $(\X_1,\mho_1(\X),\mho^0_1(\X_1))\ra(\X_2,\mho_2(\X_2),\mho^0_2(\X_2))$ the corresponding order continuous boolean homomorphism $\mho_2(\X_2)/\mho^0_2(\X_2)\ra\mho_1(\X_1)/\mho^0_1(\X_1)$. Let $\mathrm{LS}:\mcBc\ra\locMeSp^\op$ assign to each $\boole\in\Ob(\mcBc)$ its Loomis--Sikorski representation, and to each $f\in\Mor(\mcBc)$ the corresponding complete morphism in $\locMeSp^\op$. Then $\mathrm{W}$ and $\mathrm{LS}$ form an equivalence of categories,
\begin{equation}
        \mathrm{W}\circ\mathrm{LS}\iso\id_{\mcBc},\;\;\mathrm{LS}\circ\mathrm{W}\iso\id_{\locMeSp^\op}.
\label{mcBc.locMesp.duality}
\end{equation}
It seems that this equivalence is a special case of a more general equivalence between arbitrary measurable spaces and ccb-algebras with arbitrary boolean homomorphisms. However, it is unclear for us what are the morphisms between measurable spaces in this general case.

Every $L_p(\X,\mho(\X),\tmu)$ space for $p\in[1,\infty[$ is a Banach lattice that is isometrically Riesz isomorphic to $L_p(\boole_\tmu,\mu)$ with $(\boole_\tmu,\mu)$ determined by \eqref{wecken.eins} and \eqref{wecken.zwei}. On the other hand, an abstract $L_p$ space $X$ determines uniquely the mcb-algebra $\boole$ (as an algebra of its projection bands), but it determines neither the particular measurable space $(\X,\mho(\X),\mho^0(\X))$ nor the measure space $(\X,\mho(\X),\tmu)$. These structures can vary arbitrarily, as long as $\mho(\X)/\mho^0(\X)$, or, respectively, $\mho(\X)/\mho^\tmu(\X)$, are boolean isomorphic to  $\boole$. All mutually isometrically Riesz isomorphic $L_p(\X,\mho(\X),\tmu)$ spaces constructed over various measure spaces $(\X,\mho(\X),\tmu)$ can be identified with a single $L_p(\boole,\mu)$ space, with $\boole\iso\mho(\X)/\mho^\tmu(\X)$ and $\mu([\cdot]_\boole)=\tmu$. Finally, for any $(\X,\mho(\X),\tmu)$ one has an isometric Riesz isomorphism $L_\infty(\X,\mho(\X),\tmu)\iso L_\infty(\mho(\X)/\mho^\tmu(\X))\iso L_\infty(\boole)$. The restriction of validity of isometric isomorphism $L_1(\boole,\mu)^\banach\iso L_\infty(\boole)$ to localisable measure algebras $(\boole,\mu)$ is equivalent with restriction of validity of the Steinhaus--Nikod\'{y}m theorem \cite{Steinhaus:1919,Nikodym:1931} $L_1(\X,\mho(\X),\tmu)\iso L_\infty(\X,\mho(\X),\tmu)$ to localisable measure spaces, which was established by Segal \cite{Segal:1951}. Example showing that this property does not hold for arbitrary measure space was given in \cite{Saks:1933}.

For further purposes, we will recall few notions from topology and topological branch of Fr\'{e}chet's approach to measure theory. If $\X$ is a topological space, then $\Y\subseteq\X$ is called: \df{clopen} if{}f it is closed and open; \df{rare} (or \df{nowhere dense}) if{}f $\INT(\bar{\Y})=\varnothing$; \df{meager} if{}f it is a union of a sequence of rare subsets of $\X$; \df{cozero} if{}f $\X\setminus\Y$ is of the form $f^{-1}(\{0\})$ for some continuous function $f:\X\ra\RR$. A topological space $\X$ is called: \df{Hausdorff} \cite{Hausdorff:1914} if{}f for any $\xx_1,\xx_2\in\X$ such that $\xx_1\neq\xx_2$ there exist open sets $\X_1,\X_2\subseteq\X$ such that $\xx_1\in\X_1$, $\xx_2\in\X_2$, and $\X_1\cap\X_2=\varnothing$; \df{compact} if{}f every open cover of $\X$ has a finite subcover; \df{locally compact} if{}f every $\xx\in\X$ has a topological neighbourhood which is compact; \df{extremally disconnected} if{}f the closure of every open subset of $\X$ is open (and hence clopen); \df{stonean} if{}f $\X$ is extremally disconnected compact Hausdorff; \df{totally disconnected} (or \df{zero-dimensional}) if{}f every open subset of $\X$ is a union of its clopen subsets; \df{Stone} if{}f $\X$ is totally disconnected compact Hausdorff; \df{basically disconnected} if{}f the closure of every cozero subset of $\X$ is open; \df{Rickart} (or \df{quasi-stonean}) if{}f $\X$ is basically disconnected compact Hausdorff. Let $\mho_{\mathrm{Borel}}(\X)$ denote the smallest countably additive algebra on $\X$ containing all open sets of $\X$. If $\X$ is a Hausdorff topological space, a measure $\tmu$ on $\rpktarget{BOREL}\mho_{\mathrm{Borel}}(\X)$ is called: \df{compactly inner regular} if{}f
\begin{equation}
        \tmu(\Y)=\sup\{\tmu(\Z)\mid\Z\mbox{ is compact},\;\Z\subseteq\Y\}\;\;\forall\Y\in\mho(\X);
\end{equation}
\df{locally finite} if{}f $\forall\xx\in\X$ of $\xx$ there exists a neighbourhood $\Y\subseteq\X$ such that $\tmu(\Y)<\infty$; \df{Radon} if{}f it is compactly inner regular and locally finite; \df{normal} if{}f $\int\tmu\sup_\iota\{f_\iota\}=\sup_\iota\{\int\tmu f_\iota\}$ for each bounded monotone increasing net $\{f_\iota\}\subseteq\mathrm{C}(\X;\RR)$, or, equivalently, if{}f $\tmu(\Y)=0$ for every meager $\Y\subseteq\X$. Let $\rpktarget{NORMAL.RADON}\Meas^+_\star(\X,\mho_{\mathrm{Borel}}(\X))$ denote the set of all normal Radon measures on $\mho_{\mathrm{Borel}}(\X)$. A stonean topological space $\X$ is called \df{hyperstonean} if{}f $\bigcup\{\supp(\tmu)\mid\tmu\in\Meas^+_\star(\X,\mho_{\mathrm{Borel}}(\X))\}$ is a dense subset of $\X$, or, equivalently, if{}f for each nonempty open $\Y\subseteq\X$ there exists a Radon measure $\tmu$ on $\X$ such that $\tmu(\Z)=0$ for every rare $\Z\subseteq\X$ and $\tmu(\Y)>0$. Hence, $\nul(\X,\mho_{\mathrm{Borel}}(\X),\tmu)=\rare(\X)$, where $\rare(\X)$ is a set of all rare subsets of $\X$, so $\mho^{\tmu}(\X)=\mho_{\mathrm{Borel}}(\X)\cap\rare(\X)$. A continuous function $f:\X_1\ra\X_2$ between two compact Hausdorff topological spaces $\X_1$ and $\X_2$ is called: \df{quasi-open} if{}f $f^{-1}(\Y)$ is rare in $\X_1$ for all rare $\Y\subseteq\X_2$, or, equivalently, if{}f 
\begin{equation}
        \X\neq\varnothing\;\limp\;\INT(f(\X))\neq\varnothing\;\;\forall\X\subseteq\X_1;
\end{equation}
\df{open} if{}f $\overline{f^{-1}(\Y)}=f^{-1}(\bar{\Y})$ $\forall\Y\subseteq\X_2$, or, equivalently, if{}f
\begin{equation}
        \INT(f^{-1}(\Y))=f^{-1}(\INT(\Y))\;\;\forall\Y\subseteq\X_2.
\end{equation}
A continuous function between stonean spaces is open if{}f it is quasi-open. Every homeomorphism of topological spaces is open, and a bijective continuous function between two topological spaces is homeomorphism if{}f it is open.

According to Riesz representation theorem \cite{Riesz:1909:b}, if $\X$ is a locally compact Hausdorff topological space, then every $\RR$-valued positive linear functional $\phi$ on the set $\mathrm{C}_\mathrm{c}(\X;\RR)\rpktarget{CC.RIESZ.THM}$ of continuous functions on $\X$ with compact support determines a unique Radon measure $\tmu_\phi$ on $\mho_{\mathrm{Borel}}(\X)$ such that
\begin{equation}
        \phi(f)=\int_\X\tmu_\phi(\xx)f(\xx)\;\;\forall f\in\mathrm{C}_\mathrm{c}(\X;\RR).
\end{equation}
This implies that there exists an order preserving isometric isomorphism between the set $\rpktarget{RADON}\Rad(\X)^+$ of Radon measures on $\X$ and a subset of a Banach dual $\mathrm{C}(\X;\RR)^\banach$ of a space $\mathrm{C}(\X;\RR)\rpktarget{CONTI.RIESZ.THM}$ of all $\RR$-valued continuous functions on $\X$, which is determined by the map
\begin{equation}
        \Rad(\X)^+\ni\tmu\mapsto\int_\X\tmu(\xx)(\cdot)\in\mathrm{C}(\X;\RR)^\banach.
\end{equation}

Finally, we turn briefly to Daniell's approach. A \df{Stone lattice} \cite{Stone:1948,Stone:1949} is defined as a Riesz lattice $X$ with a (multiplicative) unit element $\rpktarget{DANIELUNIT}\II$ such that $f\in X^+$ $\limp$ $\inf\{f,\II\}\in X$. A \df{Daniell--Stone integral} \cite{Daniell:1918,Daniell:1920,Stone:1948,Stone:1949} is defined as a function $\omega:X\ra\RR^+$ that is linear and monotonically sequentially continuous (that is, $\omega(\inf\{f_i\})=\inf\{\omega(f_i)\}$ for every sequence $\{f_i\}\subseteq X$ such that $f_1\geq f_2\geq\ldots$ and $\inf\{f_i\}\in X$, or, equivalently, $f_1\leq f_2\leq\ldots$ and $\sup\{f_i\}\in X$). If $\X$ is a set, then a \df{Daniell lattice} over $\X$ is defined as such Stone lattice $X$ that is a subset of a set of functions $f:\X\ra\RR\cup\{+\infty\}$. Every Daniell--Stone integral $\omega$ on a Daniell lattice $X$ can be uniquely extended to a function $\hat{\omega}:\hat{X}\ra[0,\infty]$ where
\begin{align}
        \hat{X}&:=\{f:\X\ra\RR\cup\{+\infty\}\mid\exists\{f_n\}\in X\;\;\inf\{f_n\}=f\},\\
        \hat{\omega}(f)&:=\left\{
        \begin{array}{ll}
                \inf\{\lim\omega(h_i)\mid
                \{h_i\}\subseteq X,\;
                \inf\{h_i\}=h\geq f
                \}&:\,h\in X\\
                +\infty&:\,\mbox{otherwise}.
        \end{array}
        \right.
\end{align}
Such $\hat{\omega}$ is called a \df{Daniell--Stone extension} of $\omega$, and is an analogue of a semi-finite countably additive measure. (In particular, the Lebesgue integral can be constructed as a Daniell--Stone extension of the Riemann integral, while the Radon integral can be constructed as a Daniell--Stone extension of the Stieltjes integral.) It is convex, affine ($\lambda>0\limp\hat{\omega}(\lambda f)=\lambda\hat{\omega}(f)$), and subadditive ($\hat{\omega}(f_1+f_2)\leq\hat{\omega}(f_1)+\hat{\omega}(f_2)$). We will call the pair $(\hat{X},\hat{\omega})$ a \df{Daniell system}. For $p\in[1,\infty[$ and a Daniell system $(\hat{X},\hat{\omega})$, the function 
\begin{equation}
        \n{\cdot}_p:\hat{X}\ni x\mapsto\n{x}_p:=(\hat{\omega}(\ab{x})^p)^{1/p}\in[0,\infty]
\end{equation}
is a semi-norm on $X$. The space $\Lcal_p(X,\omega)$ is defined as a completion of the set $\{x\in X\mid\n{x}_p<\infty\}$ in the topology generated by $\n{\cdot}_p$. The function $\n{\cdot}_p$ becomes a norm on $L_p(X,\omega):=\Lcal_p(X,\omega)/\ker(\n{\cdot}_p)$, and $L_p(X,\omega)$ are Banach spaces with respect to $\n{\cdot}_p$.

The main structure that allows the transition between integral and measure are the characteristic functions of sets, which can be used to reconstruct either the measure on the underlying set or the integral on the space of functions. If $X$ is a Daniell lattice over $\X$, then a subset $\Y\subseteq\X$ is called \df{$X$-open} if{}f there exists a sequence of nonnegative functions $\{f_i\}\subseteq X$ that is monotone increasing $(f_1\leq f_2\leq\ldots)$ and $\II|_\Y=\sup\{f_i\}$. The family of $X$-open subsets of $\X$ is closed with respect to countable unions. Let $\mho^X(\X)$ denote the smallest countably additive algebra containing all $X$-open subsets of $\X$. The Daniell--Stone theorem states that for a given Daniell system $(\hat{X},\hat{\omega})$ there exists a countably additive measure $\tmu$ on $\mho^X(\X)$ that is uniquely determined by the conditions:
\begin{enumerate}
\item[1)] $X\subseteq L_1(\X,\mho^X(\X),\tmu)$,
\item[2)] $\omega(f)=\int\tmu f$ $\forall f\in X$,
\item[3)] $\tmu(\Z)=\inf\{\tmu(\Y)\mid\Z\subseteq\Y,\;\Y$\mbox{ is }$X$\mbox{-open in }$\X\}$ $\forall\Z\in\mho^X(\X)$.
\end{enumerate}
\subsection{Canonical commutative integration\label{canonical.comm.int.section}}
The role played by mcb-algebras in diagram \eqref{commutative.integration.diagram} suggests that it might be possible to deal with Banach lattice isomorphic $L_p(\boole,\mu)$ spaces for $p\in[1,\infty[$ without specifying any particular measure $\mu\in\W_0(\boole)$ associated with a given mcb-algebra $\boole$. In what follows, we will construct a family of canonical $L_p(\boole)$ spaces that are associated functorially to mcb-algebras $\boole$. This construction is new for $p\in[1,\infty[$, and it is aimed to provide a commutative counterpart to the Falcone--Takesaki construction of canonical noncommutative $L_p(\N)$ spaces. In principle, one could try to define the space $L_p(\boole)$ as an equivalence class of $L_p(\boole,\mu)$ spaces divided by the isometric Riesz isomorphisms generated by varying $\mu$ within $\W_0(\boole)$. However, this would remove too much structure, making category theoretic description inapplicable (or less applicable). Hence, instead of `isomorphism invariant' definition, we will provide `isomorphism covariant' construction, which follows the ideas of Neveu \cite{Neveu:1964} and Zhu \cite{Zhu:1998:lebesgue}. This will enable us to provide an explicit description of the relationship between the canonical integration theory and the commutative case of canonical noncommutative integration theory, \textit{without passing to representations} in terms of measure algebras, measure spaces, or Daniell systems. In order to keep the algebraic representation-independent formulation, we prove that every proper abstract $L_\infty$ space is a commutative $W^*$-algebra. Because converse is also true, this defines an equivalence between the category of mcb-algebras (with boolean isomorphisms), commutative $W^*$-algebras (with normal unital $*$-isomorphisms), and proper abstract $L_\infty$ spaces (with unit preserving isometric Riesz isomorphisms). Together with the full and faithful functor from the category of $L_p(\boole)$ spaces with isometric Riesz isomorphisms to the category of $L_p(\N)$ spaces with isometric isomorphisms, this shows that canonical commutative integration theory is precisely a commutative sector of canonical noncommutative integration theory. The relationships between various categories of commutative and noncommutative integration are summarised in the commutative diagram \eqref{canonical.integration.theory.big.stuff}

For any countably additive measures $\mu_1$ and $\mu_2$ on a ccb-algebra $\mu_1$ is called: \df{absolutely continuous} with respect to $\mu_2$, and denoted by $\rpktarget{MEASURE.LL}\mu_1\ll\mu_2$, if{}f
\begin{equation}
        \forall\epsilon_1>0\;\exists\epsilon_2>0\;\forall x\in\boole\;\;\;
        \mu_2(x)\leq\epsilon_2\;\limp\;\mu_1(x)\leq\epsilon_1,
\end{equation}
or, equivalently,
\begin{equation}
	\mu_2(x)=0\;\;\limp\;\;\mu_1(x)=0\;\;\forall x\in\boole;
\end{equation}
\df{compatible} \cite{Lewin:Lewin:1975} with respect to $\mu_2$ if{}f
\begin{equation}
	\forall x\in\boole\;\;0<\mu_1(x)<\infty\;\;\limp\;\;(\exists y\in\boole\;\;\mu_1(y)>0,\;\mu_2(y)<\infty).
\end{equation}
Note that $\mu_1,\mu_2\in\W_0(\boole)$ $\limp$ $\mu_1\ll\mu_2\ll\mu_1$. If $f\in L_1(\boole,\mu)$ and $x\in\boole$, then
\begin{equation}
        \int_x\mu f:=\int_0^\infty\dd\lambda\,\mu(x\land f(\lambda))=\int\mu f\chr(x).
\end{equation}
If $\boole$ is an mcb-algebra and $\mu_1,\mu_2\in\W(\boole)$, then the Segal--Lewin--Lewin theorem \cite{Segal:1951,Lewin:Lewin:1975} (see also \cite{Zaanen:1961,McShane:1962,Kelley:1966,Volcic:1970,Canderolo:Volcic:2002}) states that for $\mu_2\ll\mu_1$ and $\mu_2$ compatible with $\mu_1$
\begin{equation}
        \exists! f\in L_1(\boole,\mu_1)\;\forall x\in\boole\;\;\mu_2(x)=\int_x\mu_1 f.
\end{equation}
Such $f$ will be called a \df{Radon--Nikod\'{y}m quotient} and denoted by $\frac{\mu_2}{\mu_1}$. This theorem is a generalisation of the Lebesgue--Radon--Daniell--Nikod\'{y}m theorem \cite{Lebesgue:1910,Radon:1913,Daniell:1920:Stieltjes,Nikodym:1930} (which holds for countably additive finite measures). For $\mu_1,\mu_2\in\W_0(\boole)$ the compatibility condition of $\mu_1$ with respect to $\mu_2$ reduces to
\begin{equation}
	\forall x\in\boole^{\mu_1}\;\;\exists y\in\boole\;\;y\leq x,\;y\in\boole^{\mu_2}.
\end{equation}
If $\mu_1,\mu_2,\mu_3\in\W_0(\boole)$ are mutually compatible, then their Radon--Nikod\'{y}m quotients satisfy
\begin{align}
\left(\frac{\mu_i}{\mu_j}\right)^{-1}&=\frac{\mu_j}{\mu_i},\\
\frac{\mu_i}{\mu_j}\frac{\mu_j}{\mu_k}&=\frac{\mu_i}{\mu_k},
\end{align}
for all $i,j\in\{1,2,3\}$.

As a consequence, for $\gamma\in\,]0,1]$, $\mu_1,\mu_2\in\W_0(\boole)$ such that $\mu_1$ and $\mu_2$ are mutually compatible, $f_1\in L_{1/\gamma}(\boole,\mu_1)$, $f_2\in L_{1/\gamma}(\boole,\mu_2)$, the formula 
\begin{equation}
        (f_1,\mu_1)\sim_{1/\gamma}(f_2,\mu_2)\;\;:\iff\;\; f_1=f_2\left(\frac{\mu_2}{\mu_1}\right)^{1/\gamma}
\end{equation}
determines an equivalence relation on $L_0(\boole)\times\W_0(\boole)$, which defines the family of equivalence classes
\begin{equation}
        \{f\mu^\gamma:=(f,\mu)/\sim_{1/\gamma}\mid\mu\in\W_0(\boole),\;f\in L_{1/\gamma}(\boole,\mu)\}.
        \label{canonical.Lp}
\end{equation}
Let $\rpktarget{LPBN}L_{1/\gamma}(\boole)$ denote the set $\{f\mu^\gamma\mid\mu\in\W_0(\boole),\;f\in L_{1/\gamma}(\boole,\mu)\}$ equipped with the operations
\begin{align}
f_1\mu_1^\gamma+f_2\mu_2^\gamma
&:=\left(f_1\left(\frac{\mu_1}{\mu_4}\right)^\gamma+f_2\left(\frac{\mu_2}{\mu_4}\right)^\gamma\right)\mu_4^\gamma,\label{L.gamma.sum}\\
\lambda(f\mu^\gamma)
&:=(\lambda f)\mu^\gamma,\label{L.gamma.multiplication}\\
\n{f\mu^\gamma}_{1/\gamma}
&:=\left(\int\mu\ab{f}^{1/\gamma}\right)^\gamma,\label{L.gamma.norm}\\
f_1\mu_1^\gamma\land f_2\mu_2^\gamma
&:=\left(f_1\left(\frac{\mu_1}{\mu_4}\right)^\gamma\land f_2\left(\frac{\mu_2}{\mu_4}\right)^\gamma\right)\mu_4^\gamma,\label{L.gamma.land}\\
f_1\mu_1^\gamma\lor f_2\mu_2^\gamma
&:=\left(f_1\left(\frac{\mu_1}{\mu_4}\right)^\gamma\lor f_2\left(\frac{\mu_2}{\mu_4}\right)^\gamma\right)\mu_4^\gamma,\label{L.gamma.lor}
\end{align}
where $\mu_4$ is an arbitrary element of $\W_0(\boole)$ providing a representation of an equivalence class $f\mu^\gamma$ (hence, it is compatible with $\mu_1,\mu_2\in\W_0(\boole)$. 
\begin{proposition}\label{proposition.L.gamma.is.abstract}
$L_{1/\gamma}(\boole)$ is an abstract $L_{1/\gamma}$ space for $\gamma\in\,]0,1]$.
\end{proposition}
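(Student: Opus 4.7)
The plan is to reduce everything to the known theory of $L_{1/\gamma}(\boole,\mu_0)$ by fixing a reference measure and showing that the canonical space is just a relabeling. Since $\boole$ is an mcb-algebra, pick any $\mu_0 \in \W_0(\boole)$ and define
\[
        \Phi_{\mu_0} : L_{1/\gamma}(\boole,\mu_0) \ni f \longmapsto f\mu_0^{\gamma} \in L_{1/\gamma}(\boole).
\]
The first task will be to verify that $\Phi_{\mu_0}$ is a bijection. Injectivity follows immediately from the defining equivalence $\sim_{1/\gamma}$: if $f\mu_0^\gamma = g\mu_0^\gamma$ then $f = g(\mu_0/\mu_0)^{1/\gamma} = g$. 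For surjectivity, given any representative $g\mu^\gamma$, the Segal--Lewin--Lewin theorem (applicable because $\boole$ is mcb and the measures are mutually absolutely continuous) furnishes the Radon--Nikod\'{y}m quotient $\mu/\mu_0$, and I would verify
\[
        \int\mu_0\left\lvert g\left(\frac{\mu}{\mu_0}\right)^{\gamma}\right\rvert^{1/\gamma}
        = \int\mu_0 \ab{g}^{1/\gamma}\frac{\mu}{\mu_0}
        = \int\mu\,\ab{g}^{1/\gamma} < \infty,
\]
so $g(\mu/\mu_0)^{\gamma}\in L_{1/\gamma}(\boole,\mu_0)$ and $\Phi_{\mu_0}(g(\mu/\mu_0)^\gamma) = g\mu^\gamma$.

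Next I would check that the algebraic, lattice, and norm structure of $L_{1/\gamma}(\boole)$ is transported by $\Phi_{\mu_0}$ from the ordinary $L_{1/\gamma}(\boole,\mu_0)$. Substituting $\mu_4 = \mu_0$ in \eqref{L.gamma.sum}--\eqref{L.gamma.lor} (always a legitimate choice since $\mu_0$ is strictly positive on the mcb-algebra $\boole$, hence mutually compatible with the measures appearing in any two representatives), the cocycle identity $(\mu_1/\mu_0)\cdot(\mu_0/\mu_2)=(\mu_1/\mu_2)$ makes
\[
        \Phi_{\mu_0}^{-1}\bigl(f_1\mu_1^\gamma \star f_2\mu_2^\gamma\bigr)
        = f_1(\mu_1/\mu_0)^{\gamma}\; \star\; f_2(\mu_2/\mu_0)^{\gamma}
\]
for $\star\in\{+,\land,\lor\}$, and similarly for scalar multiplication and the norm \eqref{L.gamma.norm}. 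Well-definedness of the operations in $L_{1/\gamma}(\boole)$ (independence of the auxiliary $\mu_4$) is a companion computation with the same cocycle identity, which I would handle once and for all by observing that any two choices $\mu_4,\mu_4'$ are related by $(\mu_4/\mu_4')^\gamma$ and that this factor cancels with $(\mu_i/\mu_4)^\gamma$ via $(\mu_i/\mu_4')^\gamma$.

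Having established that $\Phi_{\mu_0}$ is a bijective Riesz homomorphism preserving the norm, the proof concludes by transport of structure: $L_{1/\gamma}(\boole,\mu_0)$ is already known to be a Banach lattice satisfying $\n{x+y}_{1/\gamma}^{1/\gamma}=\n{x}_{1/\gamma}^{1/\gamma}+\n{y}_{1/\gamma}^{1/\gamma}$ whenever $\ab{x}\land\ab{y}=0$ (this is part of the catalogue of standard properties of $L_p(\boole,\mu)$ recalled above), and Cauchy completeness transports through an isometric bijection, so $L_{1/\gamma}(\boole)$ inherits the abstract $L_{1/\gamma}$ axioms.

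The main obstacle I anticipate is the technical care needed for compatibility of the measures used in defining the Radon--Nikod\'{y}m quotients, since the text's compatibility condition is not automatic even for two elements of $\W_0(\boole)$. The clean way around this is to exploit that $\boole$ being an mcb-algebra lets one always replace a pair $\mu_1,\mu_2$ by a common dominator (e.g.\ $\mu_1+\mu_2$ restricted to suitable support bands), with respect to which Segal--Lewin--Lewin applies; I would isolate this as a preliminary lemma so that every invocation of $(\mu/\mu_0)^\gamma$ in the bijectivity, well-definedness, and operation-preservation arguments is on firm ground.
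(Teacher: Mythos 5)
Your proof is correct, and it reaches the conclusion by a route that is organized differently from the paper's. The paper verifies the abstract $L_{1/\gamma}$ axioms one at a time, in each step choosing a representative measure $\mu_4\in\W_0(\boole)$ and invoking the corresponding property (Riesz lattice, f-algebra, norm, disjoint additivity) of $L_{1/\gamma}(\boole,\mu_4)$; you instead establish a single global isometric Riesz bijection $\Phi_{\mu_0}$ onto $L_{1/\gamma}(\boole,\mu_0)$ and transport the entire structure at once. In effect you prove first what the paper records only afterwards as a separate proposition (that $[\cdot]_\mu:x\mu^\gamma\mapsto x$ is an isometric Riesz isomorphism) and then derive Proposition \ref{proposition.L.gamma.is.abstract} as a corollary; the paper proceeds in the opposite order. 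Your organization is arguably cleaner, since the well-definedness of the operations (independence of the auxiliary $\mu_4$, via the cocycle identity for Radon--Nikod\'{y}m quotients) is isolated once rather than being implicit in every axiom check; the cost is that the transport argument is only meaningful after that well-definedness is settled, so your ``companion computation'' is not optional and should be carried out explicitly. Two small remarks. First, your surjectivity computation uses the exponent $\gamma$ on the quotient $\mu/\mu_0$, which is the only reading consistent with the operations \eqref{L.gamma.sum}--\eqref{L.gamma.lor} and with well-definedness of the norm \eqref{L.gamma.norm}, so that is the right choice. Second, your anticipated obstacle about compatibility is less serious than you fear: for $\mu_1,\mu_2\in\W_0(\boole)$ the reduced compatibility condition ($x\in\boole^{\mu_1}\limp\exists\,0\neq y\leq x$ with $y\in\boole^{\mu_2}$) follows directly from semi-finiteness of $\mu_2$, so the Segal--Lewin--Lewin theorem applies to any pair of elements of $\W_0(\boole)$ without passing to a common dominator; your preliminary lemma is harmless but not needed.
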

\begin{proof}
We need to check that $L_{1/\gamma}(\boole)$ satisfies the following properties: 1) it is a lattice; 2) it is a vector space over $\RR$; 3) $x\leq y$ $\limp$ $x+z\leq y+z$; 4) $x\geq0$ $\limp$ $\lambda x\geq0$ $\forall\lambda\geq0$; 5) $\ab{x}\leq\ab{y}$ $\limp$ $\n{x}\leq\n{y}$; 6) $\n{\cdot}_{1/\gamma}$ is a norm; 7) it is Cauchy complete in $\n{\cdot}_{1/\gamma}$; 8)  $\ab{x}\land\ab{y}=0$ $\limp$ $\n{x+y}^{1/\gamma}_{1/\gamma}=\n{x}^{1/\gamma}_{1/\gamma}+\n{y}^{1/\gamma}_{1/\gamma}$. We begin by noting that 2) follows directly from \eqref{L.gamma.sum}, \eqref{L.gamma.multiplication} and the vector space structure of $L_{1/\gamma}(\boole,\mu)$, 6) and 7) follow directly from 2), \eqref{L.gamma.norm} and the Banach space structure of $L_{1/\gamma}(\boole,\mu)$, while 1) follows directly from \eqref{L.gamma.land}, \eqref{L.gamma.lor} and the lattice structure of $L_{1/\gamma}(\boole,\mu)$. Hence, it remains to prove 3), 4), 5), and 8).
\begin{enumerate}
\item[3)] $f_1\mu_1^\gamma\leq f_2\mu_2^\gamma$ $\iff$ $f_1\left(\frac{\mu_1}{\mu_4}\right)^\gamma\mu_4^\gamma\leq f_2\left(\frac{\mu_2}{\mu_4}\right)^\gamma\mu_4^\gamma$,\\so $f_1\mu_1^\gamma+f_3\mu_3^\gamma=f_1\left(\frac{\mu_1}{\mu_4}\right)^\gamma\mu_4^\gamma+f_3\left(\frac{\mu_3}{\mu_4}\right)^\gamma\mu_4^\gamma\leq f_2\left(\frac{\mu_2}{\mu_4}\right)^\gamma\mu_4^\gamma+f_3\left(\frac{\mu_3}{\mu_4}\right)^\gamma\mu_4^\gamma=f_2\mu_2^\gamma+f_2\mu_3^\gamma$.
\item[4)] $f\mu^\gamma\geq0$ $\iff$ $f\geq0$ $\limp$ $\lambda f\geq0$ $\iff$ $(\lambda f)\mu^\gamma\geq0$ $\iff$ $\lambda(f\mu^\gamma)\geq0$.
\item[5)] Using the f-algebra structure of $L_{1/\gamma}(\boole,\mu_4)$, we obtain
\begin{equation}
        \ab{f\mu^\gamma}=(f\mu^\gamma)\lor(-f\mu^\gamma)=\left(f\left(\frac{\mu}{\mu_4}\right)^\gamma\lor-f\left(\frac{\mu}{\mu_4}\right)^\gamma\right)\mu_4^\gamma=\ab{f\left(\frac{\mu}{\mu_4}\right)^\gamma}\mu_4^\gamma.
\label{canonical.Lp.absolute}
\end{equation}
This allows us to write
\begin{align}
\ab{f_1\mu_1^\gamma}
&\leq\ab{f_2\mu_2^\gamma},\\
\ab{f_1\left(\frac{\mu_1}{\mu_4}\right)^\gamma}\mu_4^\gamma
&\leq\ab{f_2\left(\frac{\mu_2}{\mu_4}\right)^\gamma}\mu_4^\gamma,\\
\left(\ab{f_2\left(\frac{\mu_2}{\mu_4}\right)^\gamma}-\ab{f_1\left(\frac{\mu_1}{\mu_4}\right)^\gamma}\right)\mu_4^\gamma
&\geq0,\\
\ab{f_2\left(\frac{\mu_2}{\mu_4}\right)^\gamma}
&\geq\ab{f_1\left(\frac{\mu_1}{\mu_4}\right)^\gamma},\\
\n{f_2\left(\frac{\mu_2}{\mu_4}\right)^\gamma}
&\geq\n{f_1\left(\frac{\mu_1}{\mu_4}\right)^\gamma},\\
\left(\int\mu_4\ab{f_2\left(\frac{\mu_2}{\mu_4}\right)^\gamma}^{1/\gamma}\right)^\gamma
&\geq\left(\int\mu_4\ab{f_1\left(\frac{\mu_1}{\mu_4}\right)^\gamma}^{1/\gamma}\right)^\gamma,\\
\left(\int\mu_2\ab{f_2}^{1/\gamma}\right)^\gamma
&\geq\left(\int\mu_1\ab{f_1}^{1/\gamma}\right)^\gamma,\\
\n{f_2\mu_2^\gamma}_{1/\gamma}
&\geq\n{f_1\mu_1^\gamma}_{1/\gamma}.
\end{align} 
\item[8)] We have
\begin{align}
\n{f_1\mu_1^\gamma+f_2\mu_2^\gamma}^{1/\gamma}_{1/\gamma}&=\n{f_1+f_2}^{1/\gamma}_{1/\gamma},\\
\n{f_1\mu_1^\gamma}^{1/\gamma}_{1/\gamma}+\n{f_2\mu_2^\gamma}^{1/\gamma}_{1/\gamma}&=\n{f_1}^{1/\gamma}_{1/\gamma}+\n{f_2}^{1/\gamma}_{1/\gamma}.
\end{align}
In order to prove $\ab{f_1\mu_1^\gamma}\land\ab{f_2\mu_2^\gamma}=0$ $\iff$ $\ab{f_1}\land\ab{f_2}=0$, we need to use $(x\land y=0,\;z\geq0)$ $\limp$ $(x\cdot z)\land y=0$ in $L_{1/\gamma}(\boole,\mu_4)$, and the positivity of Radon--Nikod\'{y}m quotient, which gives us
\begin{align}
0&=
\ab{f_1\mu_1^\gamma}
        \land
        \ab{f_2\mu_2^\gamma}
        =
        \ab{f_1\left(
                \frac{\mu_1}{\mu_4}
        \right)^\gamma}
        \mu_4^\gamma
        \land
        \ab{f_2\left(
                \frac{\mu_2}{\mu_4}
        \right)^\gamma}
        \mu_4^\gamma\;\;\iff
        \\0&=
        \ab{f_1
                \left(
                        \frac{\mu_1}{\mu_4}
                \right)^\gamma
                \left(\left(
                        \frac{\mu_1}{\mu_4}
                \right)^{-1}\right)^\gamma}
        \mu_4^\gamma\land
        \ab{f_2
                \left(
                        \frac{\mu_2}{\mu_4}
                \right)^\gamma
                \left(\left(
                        \frac{\mu_2}{\mu_4}
                \right)^{-1}\right)^\gamma}
        \mu_4^\gamma=
        \left(
                \ab{f_1}\land\ab{f_2}
        \right)
        \mu_4^\gamma\;\;\iff
        \\0&=
                \ab{f_1}\land\ab{f_2}.
\end{align}
\end{enumerate}
Thus, an abstract $L_{1/\gamma}$ space structure of $L_{1/\gamma}(\boole)$ follows from an abstract $L_{1/\gamma}$ space structure of $L_{1/\gamma}(\boole,\mu_4)$ for $\mu_4\in\W_0(\boole)$.
\end{proof}
Hence, every mcb-algebra $\boole$ allows to construct a family of canonical commutative $L_p(\boole)$ spaces over $\boole$, with $p\in[1,\infty]$, which are abstract $L_p$ spaces and do not depend on the choice of measure on $\boole$. This assignment is functorial, with boolean homomorphisms $f:\boole_1\ra\boole_2$ mapped to the unit preserving Riesz homomorphisms $\widetilde{f}:L_p(\boole_1)\ra L_p(\boole_2)$, and with boolean isomorphisms mapped to the unit preserving Riesz isomorphisms, by means of \eqref{Lp.homo} with arbitrary choice of $\mu_1\in\W_0(\boole_1)$. This defines the family of functors $\mathrm{L}_{1/\gamma}:\mcBIso\ra\catname{L}_{1/\gamma}\catname{Iso}$, which provides a missing arrow in the diagram \eqref{commutative.integration.diagram} for $\gamma\in\,]0,1]$, and removes the need for use of categories of measure algebras in the foundations of the theory. Together with the functor $\PrjB$, the functor $\mathrm{L}_{1/\gamma}$ establishes an equivalence of the categories $\mcBIso$ and $\catname{L}_{1/\gamma}\catname{Iso}$:
\begin{equation}
\mathrm{L}_{1/\gamma}\circ\PrjB\iso\id_{\catname{L}_{1/\gamma}\catname{Iso}},\;\;\PrjB\circ\mathrm{L}_{1/\gamma}\iso\id_{\mcBIso}.
\label{L.gamma.mcBIso.duality}
\end{equation}
\begin{proposition}
The map $[\cdot]_\mu:L_{1/\gamma}(\boole)\ni x\mu^\gamma\mapsto x\in L_{1/\gamma}(\boole,\mu)$ is an isometric Riesz isomorphism.
\end{proposition}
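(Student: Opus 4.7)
The plan is to verify the five defining properties of an isometric Riesz isomorphism in turn: well-definedness, linearity, preservation of lattice operations, bijectivity, and isometry. The key observation enabling all of these is that every equivalence class $f\nu^\gamma \in L_{1/\gamma}(\boole)$ admits a canonical representative with second coordinate equal to any fixed $\mu \in \W_0(\boole)$, namely $f(\nu/\mu)^{1/\gamma}\mu^\gamma$, since $\mu,\nu \in \W_0(\boole)$ are mutually compatible and the Segal--Lewin--Lewin theorem provides the Radon--Nikod\'{y}m quotient $\nu/\mu \in L_1(\boole,\mu)$.

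First I would verify well-definedness of $[\cdot]_\mu$: if $(y_1,\nu_1) \sim_{1/\gamma} (y_2,\nu_2)$, so $y_1 = y_2(\nu_2/\nu_1)^{1/\gamma}$, then $y_1(\nu_1/\mu)^{1/\gamma} = y_2(\nu_2/\mu)^{1/\gamma}$ by the chain-rule identity $(\nu_2/\nu_1)(\nu_1/\mu) = \nu_2/\mu$ noted earlier. Next, setting $\mu_4 := \mu$ in the defining formulas \eqref{L.gamma.sum}--\eqref{L.gamma.lor} yields immediately that $[\cdot]_\mu$ respects addition, scalar multiplication, and the lattice operations $\land,\lor$. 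Isometry is similarly direct from \eqref{L.gamma.norm}: setting $\mu_4 := \mu$ gives $\|f\mu^\gamma\|_{1/\gamma} = \left(\int\mu |f|^{1/\gamma}\right)^\gamma = \|f\|_{L_{1/\gamma}(\boole,\mu)}$.

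For bijectivity, surjectivity is obvious (any $x \in L_{1/\gamma}(\boole,\mu)$ is the image of $x\mu^\gamma$), while injectivity reduces to the implication $f_1(\mu_1/\mu)^{1/\gamma} = f_2(\mu_2/\mu)^{1/\gamma} \limp (f_1,\mu_1) \sim_{1/\gamma} (f_2,\mu_2)$, which again follows from multiplying by $(\mu/\mu_1)^{1/\gamma}$ and applying the chain-rule identity $(\mu_2/\mu)(\mu/\mu_1) = \mu_2/\mu_1$. No step here is a serious obstacle; the only subtlety is purely bookkeeping, namely the consistent use of the Radon--Nikod\'{y}m chain rule $(\mu_i/\mu_j)(\mu_j/\mu_k) = \mu_i/\mu_k$ for mutually compatible strictly positive measures, which is what makes the choice of representative $\mu_4$ in \eqref{L.gamma.sum}--\eqref{L.gamma.lor} immaterial and makes every step of the argument invariant under changes of reference measure.
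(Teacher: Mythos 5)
Your proposal is correct and follows essentially the same route as the paper's proof: both reduce everything to evaluating the defining formulas \eqref{L.gamma.sum}--\eqref{L.gamma.lor} at the reference measure $\mu_4:=\mu$, with the Radon--Nikod\'{y}m chain rule guaranteeing independence of the representative. The paper's version is terser (it cites \eqref{canonical.Lp.absolute} for the Riesz property $[\ab{x}]_\mu=\ab{[x]_\mu}$ rather than checking $\land,\lor$ directly, and leaves well-definedness and bijectivity implicit), but your added bookkeeping is sound and changes nothing of substance.
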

\begin{proof}
Linearity follows from \eqref{L.gamma.sum} and \eqref{L.gamma.multiplication}, isometry follows from \eqref{L.gamma.norm}, while the property $[\ab{x}]_\mu=\ab{[x]_\mu}$ follows from \eqref{canonical.Lp.absolute}.
\end{proof}
Hence, for $\mu\in\W_0(\boole)$ the function $[\cdot]_\mu$ provides an isometrically Riesz isomorphic representation of $L_{1/\gamma}(\boole)$ space in terms of the $L_{1/\gamma}(\boole,\mu)$ space.

\begin{corollary}
For any mcb-algebra $\boole$ there exists a bijective Riesz homomorphism $L_1(\boole)\iso\eval(\boole)$, and the diagram
\begin{equation}
\xymatrix{
        L_1(\boole)^+_0
        \ar@{^{(}->}[r]
        \ar@{^{(}->}[d]
        &
        \W_0(\boole)
        \ar@{^{(}->}[d]
        \\
        L_1(\boole)^+
        \ar@{^{(}->}[r]
        &
        \W(\boole)
}
\label{commutative.state.weight.diag}
\end{equation}
commutes.
\end{corollary}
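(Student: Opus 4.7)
The plan is to obtain the isomorphism by composing two isomorphisms already available in the text. Since $\boole$ is an mcb-algebra, $\W_0(\boole)\neq\varnothing$; fix $\mu\in\W_0(\boole)$. Then $(\boole,\mu)$ is a localisable (hence semi-finite) measure algebra, so by the Bohnenblust--Kakutani--Nakano/Segal theory recalled earlier there is a bijective Riesz isomorphism $\eval(\boole)\iso L_1(\boole,\mu)$, while the preceding proposition supplies an isometric Riesz isomorphism $[\cdot]_\mu:L_1(\boole)\to L_1(\boole,\mu)$. Composing gives a bijective Riesz homomorphism $\Psi_\mu:\eval(\boole)\to L_1(\boole)$ of the explicit form $\Psi_\mu(\phi)=\frac{\phi}{\mu}\,\mu$, where $\frac{\phi}{\mu}\in L_1(\boole,\mu)$ denotes the Radon--Nikod\'{y}m quotient supplied by the Segal--Lewin--Lewin theorem applied to the Jordan decomposition $\phi=\phi^+-\phi^-$ (here $\phi^\pm\in\W(\boole)$ are finite and $\phi^\pm\ll\mu$ by strict positivity of $\mu$; compatibility follows from semi-finiteness of $\mu$).

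Next I would check independence of $\mu$: for any $\mu_1,\mu_2\in\W_0(\boole)$ the chain rule $\frac{\phi}{\mu_1}=\frac{\phi}{\mu_2}\cdot\frac{\mu_2}{\mu_1}$ (a direct consequence of uniqueness in Segal--Lewin--Lewin, used earlier to derive the relations $(\mu_i/\mu_j)^{-1}=\mu_j/\mu_i$ and $(\mu_i/\mu_j)(\mu_j/\mu_k)=\mu_i/\mu_k$) shows that $\Psi_{\mu_1}(\phi)$ and $\Psi_{\mu_2}(\phi)$ represent the same equivalence class under the relation $\sim_1$ defining $L_1(\boole)$. Linearity, injectivity, and the Riesz property of $\Psi_\mu$ reduce immediately to the corresponding properties of $[\cdot]_\mu$ and of the integration pairing $L_1(\boole,\mu)\ni f\mapsto\int_{(\cdot)}\mu f\in\eval(\boole)$, with surjectivity being the content of Segal--Lewin--Lewin.

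For the commutativity of \eqref{commutative.state.weight.diag}, I would verify that $\Psi_\mu$ restricts to bijections $L_1(\boole)^+\to\eval(\boole)^+$ and $L_1(\boole)^+_0\to\eval(\boole)^+_0$. Positivity correspondence is immediate from $f\mu\geq0\iff f\geq0\iff\int_x\mu f\geq0\ \forall x\in\boole$. Faithfulness corresponds to strict positivity almost everywhere: if $f\geq0$ then $\Psi_\mu(f\mu)\in\eval(\boole)^+_0$ iff $\int_x\mu f>0$ for every nonzero $x\in\boole$, which by strict positivity of $\mu$ happens iff $f>0$ on a set of positive $\mu$-measure within every such $x$, i.e.\ iff $f\mu\in L_1(\boole)^+_0$. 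The two horizontal embeddings in \eqref{commutative.state.weight.diag} are then the canonical inclusions $\eval(\boole)^+\hookrightarrow\W(\boole)$ and $\eval(\boole)^+_0\hookrightarrow\W_0(\boole)$ transported along $\Psi_\mu$, so the square commutes trivially.

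The main conceptual obstacle, and the point requiring the most care, is verifying that the construction genuinely does not depend on $\mu$ — that is, that the composite $\Psi_\mu$ lands canonically in $L_1(\boole)$ and not merely in $L_1(\boole,\mu)$. This is where the structure of $L_1(\boole)$ as equivalence classes $f\mu$ modulo the Radon--Nikod\'{y}m chain rule is essential, and where the mcb-hypothesis is used (to guarantee enough reference measures and the validity of Segal--Lewin--Lewin). All other steps are routine verifications using the established lattice and integration machinery.
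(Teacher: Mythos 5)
Your proposal is correct and follows the same route the paper intends for this corollary: compose the isometric Riesz isomorphism $[\cdot]_\mu:L_1(\boole)\to L_1(\boole,\mu)$ from the preceding proposition with the bijective Riesz isomorphism $\eval(\boole)\iso L_1(\boole,\mu)$ recalled for semi-finite measure algebras, and then transport the earlier commutative square $\eval(\boole)^+_0\hookrightarrow\W_0(\boole)$, $\eval(\boole)^+\hookrightarrow\W(\boole)$ along it. Your additional verifications (the explicit Radon--Nikod\'{y}m form of the map and its independence of the choice of $\mu\in\W_0(\boole)$ via the chain rule for quotients) are elaborations of, not departures from, this argument.
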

This is a strict analogue of \eqref{Wstar.states.weights.comm} for mcb-algebras. Moreover, if $\mu_1,\mu_2\in\W_0(\boole)$ and $\mu_1$ is compatible with respect to $\mu_2$, then $\int\mu_1 f=\int\mu_2\frac{\mu_1}{\mu_2}f$ $\forall f\in L_1(\boole,\mu_1)$. This allows us to define a \df{canonical integral},
\begin{equation}
        \int: L_1(\boole)\ni x\mapsto\int x:=\int\mu f\in\RR,
\end{equation}
where $[x]_\mu=f\in L_1(\boole,\mu)$, which is independent of the choice of an arbitrary $\mu\in\W_0(\boole)$. As a result, we obtain a bilinear functional
\begin{equation}
        L_{1/\gamma}(\boole)\times L_{1/(1-\gamma)}(\boole)\ni(x,y)\mapsto\int xy=\int yx\in\RR,
\end{equation}
which sets up a canonical duality between $L_{1/\gamma}(\boole)$ and $L_{1/(1-\gamma)}(\boole)$ spaces for $\gamma\in\,]0,1]$. This establishes direct analogy between the properties of the family $L_{1/\gamma}(\boole)$ spaces over mcb-algebras $\boole$ and the properties of the family of $L_{1/\gamma}(\N)$ spaces over $W^*$-algebras $\N$. In what follows, we will see that those two settings coincide in the case when $W^*$-algebra is commutative.
\subsection{Categories of integration theory\label{integr.compar.section}}
Consider the categories: $\cpH$ of \underline{c}om\underline{p}act \underline{H}ausdorff topological spaces and continuous functions; $\tdcpH$ of \underline{t}otally \underline{d}isconnected \underline{c}ompact \underline{H}ausdorff topological spaces and continuous functions; $\bdcpH$ of \underline{b}asically \underline{d}isconnected \underline{c}ompact \underline{H}ausdorff topological spaces and continuous functions; $\edcpHo$ of \underline{e}xtremally \underline{d}is\-con\-ne\-cted \underline{c}ompact \underline{H}ausdorff topological spaces and \underline{o}pen continuous functions; $\hypso$ of \underline{hyp}er\underline{s}tonean spaces and \underline{o}pen continuous functions; $\hypsh$ of \underline{hyp}er\underline{s}tonean spaces and \underline{h}omeomorphisms. Every Stone spectrum is a totally disconnected compact Hausdorff topological space. The functor $\sp_\mathrm{S}:\catname{B}\ra\tdcpH^\op$ assigns to each $\boole\in\Ob(\catname{B})$ its Stone spectrum $\sp_{\mathrm{S}}(\boole)$, and to each boolean homomorphism $f:\boole_1\ra\boole_2$ a continuous function $\sp_\mathrm{S}(f):\sp_\mathrm{S}(\boole_2)\ra\sp_\mathrm{S}(\boole_1)$. The functor $\mathrm{B}:\tdcpH^\op\ra\catname{B}$ assigns to each $\X\in\Ob(\tdcpH^\op)$ a boolean algebra of all clopen subsets of $\X$, and to each $f:\X_1\ra\X_2$ in $\tdcpH$ a boolean homomorphism $\mathrm{B}(f):\mathrm{B}(\X_2)\ra\mathrm{B}(\X_1)$. According to categorified version of Stone's theorem \cite{Stone:1936,Stone:1937}, these two functors define an equivalence of categories:
\begin{equation}
        \mathrm{B}\circ\sp_\mathrm{S}\iso\id_{\catname{B}},\;\;\;\sp_\mathrm{S}\circ\mathrm{B}\iso\id_{\tdcpH^{op}}.
\label{Stone.tdcpH.B.duality}
\end{equation}
Stone \cite{Stone:1937,Stone:1937:algebraic} (and Nakano \cite{Nakano:1941:System}, see also \cite{Gleason:1958}) showed that under restriction to Dcb-algebras and ccb-algebras this restricts, respectively, to equivalences:
\begin{align}
        \mathrm{B}\circ\sp_\mathrm{S}\iso\id_{\ccB},\;&\;\;\sp_\mathrm{S}\circ\mathrm{B}\iso\id_{\bdcpH^\op};\\
        \mathrm{B}\circ\sp_\mathrm{S}\iso\id_{\DcBc},\;&\;\;\sp_\mathrm{S}\circ\mathrm{B}\iso\id_{\edcpHo^\op}.
\end{align}
The original statements of these theorems were set theoretic (without discussing the morphisms and functors). For a detailed discussion of the functorial extension specified above, see \cite{Linton:1970,Johnstone:1982,Bezhanishvili:2010} and further references therein. Finally, for $\X\in\Ob(\hypso^\op)$, $\mathrm{B}(\X)$ is boolean isomorphic to its Loomis--Sikorski representation $\mho_{\mathrm{Borel}}(\X)/\mho_{\mathrm{Borel}}(\X)\cap\rare(\X)$, and from the fact that $\bigcup\{\supp(\tmu)\mid\tmu\in\Meas^+_\star(\X,\mho_{\mathrm{Borel}}(\X))\}$ is dense in $\X$ it follows that there exists $\mu\in\W_0(\mho_{\mathrm{Borel}}(\X)/\mho_{\mathrm{Borel}}(\X)\cap\rare(\X))$, hence $\mathrm{B}(\X)$ is an mcb-algebra (see e.g. \cite{Fremlin:2000} for details). This gives rise to equivalences:
\begin{align}
        \mathrm{B}\circ\sp_\mathrm{S}\iso\id_{\mcBc},\;&\;\;\sp_\mathrm{S}\circ\mathrm{B}\iso\id_{\hypso^\op};\label{mcBc.hypso.duality}\\
        \mathrm{B}\circ\sp_\mathrm{S}\iso\id_{\mcBIso},\;&\;\;\sp_\mathrm{S}\circ\mathrm{B}\iso\id_{\hypsh^\op}.\label{mcBIso.hypsh.duality}
\end{align}

Let $\MI$ denote a category of MI-spaces and order unit preserving Riesz homomorphisms. The functor $\mathrm{C}:\cpH^\op\ra\MI$ assigns to each $\X\in\Ob(\cpH^\op)$ a set $\mathrm{C}(\X;\RR)$ (or $\mathrm{C}(\X;\CC)$) of real (or complex) valued continuous functions on $\X$, equipped with a Banach lattice structure, with ordering defined by $f_1\leq f_2$ $\iff$ $f_1(\xx)\leq f_2(\xx)$ $\forall\xx\in\X$, with norm $\n{f}:=\sup_{\xx\in\X}\{\ab{f(\xx)}\}$, and order unit given by a function constantly equal to $1$. According to the Kre\u{\i}n--Kre\u{\i}n--Kakutani theorem \cite{Krein:Krein:1940,Krein:Krein:1943,Kakutani:1940,Kakutani:1941:M}, every real (resp., complex) MI-space is isometrically Riesz isomorphic to some $\mathrm{C}(\X;\RR)$ (resp., $\mathrm{C}(\X;\CC)$) space, with $\X$ unique up to a homeomorphism. This defines a functor $\KKK:\MI\ra\cpH^\op$, and an extension of this theorem to an equivalence of categories reads \cite{Banaschewski:1976}
\begin{equation}
        \mathrm{C}\circ\KKK\iso\id_{\MI},\;\;\;\KKK\circ\mathrm{C}\iso\id_{\cpH^\op}.
\end{equation}
Under restriction to the subcategories of $\MI$ given by $\bLinf$, $\aLinf$, $\DcaLinfc$, $\Linfc$, and $\LinfIso$ the equivalent subcategories of $\cpH^\op$ are, respectively, $\tdcpH^\op$, $\bdcpH^\op$, $\edcpHo^\op$, $\hypso^\op$, and $\hypsh^\op$. These results can be deduced directly from Stone duality type equivalences above and equivalences in \eqref{commutative.integration.diagram}, by means of composition of functors:
\begin{equation}
        \mathrm{L}_\infty\circ\mathrm{B}=\mathrm{C},\;\;\;\sp_{\mathrm{S}}\circ\PrjB=\KKK.
\label{LinfB.eq.C.spSPrjB.eq.KKK}
\end{equation}

Consider the categories: $\CsJ$ of $C^*$-algebras and Jordan $*$-homomorphisms; $\Cs$ of $C^*$-algebras and $*$-homomorphims; $\SZs$ of \c{S}tr\u{a}til\u{a}--Zsid\'{o} $C^*$-algebras and $*$-homomorphisms; $\Rs$ of Rickart $C^*$-algebras and $*$-homomorphisms; $\AWsc$ of $AW^*$-algebras and complete $*$-ho\-mo\-mor\-ph\-isms; $\cCsu$ of commutative $C^*$-algebras and unital $*$-homomorphisms; $\cSZsu$ of commutative \c{S}tr\u{a}til\u{a}--Zsid\'{o} $C^*$-algebras and unital $*$-homomorphisms; $\cRsu$ of commutative Rickart $C^*$-algebras and unital $*$-homomorphisms; $\cAWsuc$ of commutative $AW^*$-algebras and unital complete $*$-ho\-mo\-mor\-ph\-isms; $\cVNun$ of commutative von Neumann algebras and unital normal $*$-homomorphims; $\cWsun$ of commutative $W^*$-algebras and unital normal $*$-homomorphisms; $\cWsuIso$ of commutative $W^*$-algebras and unital $*$-isomorphisms. The functor $\mathrm{C}:\cpH^\op\ra\cCsu$ assigns to each $\X\in\Ob(\cpH^\op)$ a set $\rpktarget{CONTIFUN}\mathrm{C}(\X;\CC)$ of continuous complex valued functions on $\X$ equipped with multiplication defined pointwise, $^*$ defined by complex conjugation on codomain, norm defined by $\n{f}:=\sup_{\xx\in\X}\{\ab{f(\xx)}\}$, and unit given by a function constantly equal to $1$. The functor $\sp_{\mathrm{G}}:\cCsu\ra\cpH^\op$ sends $\C\in\Ob(\cCsu)$ to its \df{Gel'fand spectrum} $\rpktarget{GELFSPEC}\sp_{\mathrm{G}}(\C)$, defined as the set of all nonzero unital $*$-homomorphisms $f:\C\ra\CC$,
\begin{equation}
        \sp_\mathrm{G}(\C):=\Hom_{\cCsu}(\C,\CC)\setminus\{0\},
\end{equation}
equipped with the weak-$\star$ topology of $\C^\banach$. At the level of objects, this construction was introduced by Gel'fand \cite{Gelfand:1939,Gelfand:1941}. According to categorified version \cite{Negrepontis:1971} of Gel'fand--Na\u{\i}mark theorem \cite{Gelfand:Naimark:1943}, the functors $\sp_\mathrm{G}$ and $\mathrm{C}$ define an equivalence of categories
\begin{equation}
        \mathrm{C}\circ\sp_\mathrm{G}\iso\id_{\cCsu},\;\;\;\sp_\mathrm{G}\circ\mathrm{C}\iso\id_{\cpH^\op}.
\label{Stone.Gelfand.Naimark.duality}
\end{equation}
An analogous construction for $\mathrm{C}(\X;\RR)$ and $C^*$-algebras over $\RR$ was developed by Stone \cite{Stone:1940}, while its categorified version was discussed in detail in \cite{Johnstone:1982}. Thus, if both real and complex case are considered simultaneously, one can speak of the Stone--Gel'fand--Na\u{\i}mark equivalence of categories. \c{S}tr\u{a}til\u{a} and Zsid\'{o} \cite{Stratila:Zsido:1977:1979} proved that commutative \c{S}tr\u{a}til\u{a}--Zsid\'{o} $C^*$-algebras are characterised as those commutative $C^*$-algebras that have totally disconnected Gel'fand spectrum, which implies an equivalence
\begin{equation}
        \mathrm{C}\circ\sp_\mathrm{G}\iso\id_{\cSZsu},\;\;\;\sp_\mathrm{G}\circ\mathrm{C}\iso\id_{\tdcpH^\op}.
\label{Stratila.Zsido.duality}
\end{equation}
In order to consider further special cases of the Stone--Gel'fand--Na\u{\i}mark equivalence \eqref{Stone.Gelfand.Naimark.duality}, we need some facts from the theory of Rickart $C^*$-algebras and $AW^*$-algebras (see \cite{Kaplansky:1955,Bade:1971,Berberian:1972,Stratila:Zsido:1977:1979} for a detailed account). Rickart \cite{Rickart:1946} showed that a set of projections in a Rickart $C^*$-algebra $\C$ is a countably additive lattice, which turn to a ccb-algebra if{}f $\C$ is commutative. In such case $\C$ is a countably additive complex Banach lattice (a commutative $C^*$-algebra is always a Banach lattice with respect to its norm \cite{Gelfand:Naimark:1943}). By Stone theorem discussed above, ccb-algebras are dual to basically disconnected compact Hausdorff spaces. This implies the Stone--Gel'fand--Na\u{\i}mark type equivalence
\begin{equation}
        \mathrm{C}\circ\sp_\mathrm{G}\iso\id_{\cRsu},\;\;\;\sp_\mathrm{G}\circ\mathrm{C}\iso\id_{\bdcpH^\op},
\end{equation}
which was established (at the set theoretical level) already by Rickart \cite{Rickart:1946}. The $AW^*$-algebras form a subclass of Rickart $C^*$-algebras, and they are characterised by Dedekind--MacNeille completeness of their lattice of projections \cite{Kaplansky:1951,Kaplansky:1952}, which for commutative unital $AW^*$-algebras turn to a Dcb-algebras. Every commutative $AW^*$-algebra is a Dedekind--MacNeille complete Banach lattice, and a commutative $C^*$-algebra is an $AW^*$-algebra if{}f every bounded set of its self-adjoint elements has a supremum. By yet another theorem of Stone \cite{Stone:1949:CJM}, a compact Hausdorff topological space is extremally disconnected if{}f the Riesz lattice $\mathrm{C}(\X;\RR)$ (or $\mathrm{C}(\X;\CC)$) is Dedekind--MacNeille complete. This gives a duality between commutative unital $AW^*$-algebras and stonean topological spaces. In order to extend this duality to categorical formulation, we need to use Brown's theorem \cite{Pedersen:1986}: if $f:\X_2\ra\X_1$ is a continuous function between compact Hausdorff spaces, and $\mathrm{C}(f):\mathrm{C}(\X_1;\CC)\ra\mathrm{C}(\X_2;\CC)$ is a corresponding $*$-homomorphism of commutative $C^*$-algebras, then $\mathrm{C}(f)$ is complete if{}f $f$ is open. This allows us to establish an equivalence
\begin{equation}
        \mathrm{C}\circ\sp_\mathrm{G}\iso\id_{\cAWsuc},\;\;\;\sp_\mathrm{G}\circ\mathrm{C}\iso\id_{\edcpHo^\op}.
\label{Kaplansky.duality}
\end{equation}
According to Dixmier's theorem \cite{Dixmier:1951}, every hyperstonean topological space $\X$ determines a commutative $W^*$-algebra given by the space $\mathrm{C}(\X;\RR)$, which is isometrically isomorphic to a Banach dual of the Banach space $\Meas_\star(\X,\mho_{\mathrm{Borel}}(\X)):=\{\mu:=\tmu_1-\tmu_2\mid\tmu_1,\tmu_2\in\Meas^+_\star(\X,\mho_{\mathrm{Borel}}(\X))\}$. Grothendieck \cite{Grothendieck:1955} (see also \cite{Wada:1957}) proved the converse theorem: if $X$ is a Banach space, $\X$ is a compact Hausdorff space, $j:X\ra X^\banach{}^\banach$ is a canonical embedding, and $f:\mathrm{C}(\X;\RR)\ra X^\banach$ is an isometric isomorphism, then $\X$ is hyperstonean and $f^\banach\circ j: X\ra\Meas_\star(\X,\mho_{\mathrm{Borel}}(\X))$ is an isometric isomorphism. Both theorems hold also in the complex case. For commutative unital $W^*$-algebras a unital $*$-homomorphism is complete if{}f it is normal. Moreover, restriction to homeomorphisms is equivalent to restriction to $*$-isomorphisms. Hence, the Dixmier--Grothendieck theorem allows to restrict the Stone--Gel'fand--Na\u{\i}mark duality to a functorial equivalence of categories
\begin{align}
        \mathrm{C}\circ\sp_\mathrm{G}\iso\id_{\mcBc},\;&\;\;\sp_\mathrm{G}\circ\mathrm{C}\iso\id_{\hypso^\op};\label{Dixmier.Grothendieck.duality}\\
        \mathrm{C}\circ\sp_\mathrm{G}\iso\id_{\mcBIso},\;&\;\;\sp_\mathrm{G}\circ\mathrm{C}\iso\id_{\hypsh^\op}.
\end{align}
The Dixmier--Grothendieck theorem is in a quite interesting relationship with a result of Segal \cite{Segal:1951}, who established an explicit equivalence between localisable measure spaces and commutative unital von Neumann algebras: $(\X,\mho(\X),\tmu)$ is a localisable measure space if{}f $L_\infty(\X,\mho(\X),\tmu)$ is a commutative unital von Neumann algebra acting on a Hilbert space $L_2(\X,\mho(\X),\tmu)$, and, conversely, every commutative unital von Neumann algebra $\N$ on a Hilbert space $\H$ can be faithfully represented in this form, up to unitary equivalence $\H\iso L_2(\X,\mho(\X),\tmu)$. Together with earlier results on equivalence between normal unital $*$-homomorphisms, order continuous boolean homomorphisms, and complete morphisms of measurable spaces, this defines a pair of functors: $\Seg^\sharp:\locMeSp^\op\ra\cVNun$, $\Seg^\flat:\cVNun\ra\locMeSp^\op$ which set up an equivalence \cite{Pavlov:2010}
\begin{equation}
        \Seg^\sharp\circ\Seg^\flat\iso\id_{\cVNun},\;\;\;\Seg^\flat\circ\Seg^\sharp\iso\id_{\locMeSp^\op}.
\label{Segal.duality}
\end{equation}
We are however unaware of a direct proof of equivalence of morphisms of these two categories without using the above passage. If such proof had been provided, then a right composition of Segal duality \eqref{Segal.duality} with Sakai--Kosaki duality \eqref{Sakai.Kosaki.duality} together with a left composition with Wecken--Loomis--Sikorski duality \eqref{mcBc.locMesp.duality} and Stone duality \eqref{mcBc.hypso.duality}, would form an \textit{independent} (yet more complicated) statement of the duality between $\cWsun$ and $\hypso^\op$:
\begin{align}
        (\sp_\mathrm{S}\,\circ\,\mathrm{W}\,\circ\,\Seg^\flat\,\circ\, \CanVN)\,\circ\,(\FrgHlb\,\circ\,\Seg^\sharp\,\circ\,\mathrm{LS}\,\circ\,\mathrm{B})&\iso\id_{\cWsun},\nonumber\\ (\FrgHlb\,\circ\,\Seg^\sharp\,\circ\,\mathrm{LS}\,\circ\,\mathrm{B})\,\circ\,(\sp_\mathrm{S}\,\circ\,\mathrm{W}\,\circ\,\Seg^\flat\,\circ\, \CanVN)&\iso\id_{\hypso^\op}.
\label{long.segal.equivalence}
\end{align}

Independently of the above three `topological' families of dualities (Stone, Kre\u{\i}n--Kre\u{\i}n--Kakutani, Stone--Gel'fand--Na\u{\i}mark) it is possible to develop the fourth family of dualities, which is of algebraic character, relates a subclass of Banach lattices with a subclass of commutative Banach algebras and relies on Freudenthal's spectral theorem and some results in the theory of f-algebras.

\begin{proposition}\label{proposition.Linfty.cWstar}
The categories $\cWsun$ and $\Linfc$ are equivalent, the categories $\cWsuIso$ and $\LinfIso$ are equivalent.
\end{proposition}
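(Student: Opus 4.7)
The plan is to obtain both equivalences by composing three functorial equivalences that are already established in the excerpt. Specifically, I will compose the Bohnenblust--Kakutani--Nakano--Segal equivalence $\Linfc\iso\mcBc$ given by $(\mathrm{L}_\infty,\PrjB)$ in \eqref{Linfc.mcBc.equiv}, the Stone-type equivalence $\mcBc\iso\hypso^\op$ given by $(\sp_{\mathrm{S}},\mathrm{B})$ in \eqref{mcBc.hypso.duality}, and the Dixmier--Grothendieck equivalence $\hypso^\op\iso\cWsun$ given by $(\mathrm{C},\sp_{\mathrm{G}})$. The last equivalence is obtained by assembling Dixmier's theorem (every hyperstonean $\X$ yields a commutative $W^*$-algebra $\mathrm{C}(\X;\CC)$ with predual $\Meas_\star(\X,\mho_{\mathrm{Borel}}(\X))$), Grothendieck's converse (a commutative $C^*$-algebra of the form $\mathrm{C}(\X;\CC)$ admitting any Banach predual forces $\X$ to be hyperstonean and the predual to be unique), and the correspondence of morphisms via Brown's theorem together with the fact that a unital $*$-homomorphism between commutative $W^*$-algebras is complete iff normal. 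Setting $F:=\mathrm{C}\circ\sp_{\mathrm{S}}\circ\PrjB:\Linfc\to\cWsun$ and $G:=\mathrm{L}_\infty\circ\mathrm{B}\circ\sp_{\mathrm{G}}:\cWsun\to\Linfc$, the natural isomorphisms $F\circ G\iso\id_{\cWsun}$ and $G\circ F\iso\id_{\Linfc}$ follow by pasting the three pairs of natural isomorphisms.

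For the second equivalence $\cWsuIso\iso\LinfIso$, I restrict $F$ and $G$ to the corresponding Iso subcategories. Each factor equivalence restricts: $\mcBIso$ corresponds to $\hypsh^\op$ by \eqref{mcBIso.hypsh.duality}; isometric Riesz isomorphisms between proper abstract $L_\infty$ spaces coincide with bijective unit preserving Riesz homomorphisms (which are automatically order continuous, since every bijective lattice homomorphism is); and every unital $*$-isomorphism between $W^*$-algebras is automatically normal by uniqueness of the predual, while it corresponds to a homeomorphism at the level of Gel'fand spectra. Restricting $(F,G)$ accordingly yields the claimed equivalence on the Iso subcategories.

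The main technical point I expect to require care is, at the morphism level, showing that a unit preserving order continuous Riesz homomorphism $f:X_1\to X_2$ between proper abstract $L_\infty$ spaces is automatically multiplicative (so that it corresponds to a unital normal $*$-homomorphism under $F$). Via the representations $X_i\iso\mathrm{C}(\sp_{\mathrm{S}}(\PrjB(X_i));\CC)$ this reduces to showing that any unit preserving Riesz homomorphism between spaces of continuous functions on compact Hausdorff spaces is induced by pullback along a continuous map of spectra, which one obtains by pre-composing $f$ with evaluation characters of $X_2$ and using the order unit to produce characters of $X_1$. Equivalently, one invokes the Birkhoff--Pierce result that an archimedean f-algebra carries a unique order continuous multiplication compatible with its distinguished order unit, so the $C^*$-multiplication on a proper abstract $L_\infty$ space is intrinsic to its Banach-lattice-with-unit structure, and any unit preserving Riesz morphism between such spaces automatically preserves it. With this in hand, the three-step composition delivers the equivalences in both cases.
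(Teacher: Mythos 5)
Your proof is correct, but it takes a genuinely different route from the one in the paper. You obtain the equivalence by composing three dualities that are already on the table --- $(\mathrm{L}_\infty,\PrjB):\Linfc\iso\mcBc$ from \eqref{Linfc.mcBc.equiv}, the Stone-type duality $(\sp_{\mathrm{S}},\mathrm{B}):\mcBc\iso\hypso^\op$ from \eqref{mcBc.hypso.duality}, and the Dixmier--Grothendieck duality $(\mathrm{C},\sp_{\mathrm{G}}):\hypso^\op\iso\cWsun$ --- so the morphism-level issues (in particular the multiplicativity of a unit preserving order continuous Riesz homomorphism, which you rightly flag) are absorbed into the component equivalences and resolved by passing through characters on the spectrum. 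The paper instead gives a direct, representation-free argument: commutative $W^*$-algebras are proper abstract $L_\infty$ spaces by the Freudenthal--Lyubovin--Vulikh and Luxemburg--Zaanen results plus uniqueness of the predual, and conversely a proper abstract $L_\infty$ space $X\iso L_\infty(\boole)$ is an archimedean f-algebra whose square-root property \eqref{f.algebra.square.root} yields the $C^*$-identity $\n{y^2}_X=\n{y}_X^2$ on the (coinciding) Banach-algebra and Banach-lattice complexifications, giving a commutative $AW^*$-algebra by Dedekind--MacNeille completeness and a $W^*$-algebra by properness; the morphism correspondence is then read off from the f-algebra identity $f(\ab{x})=\ab{f(x)}$. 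Your composition is shorter given the machinery, but it reintroduces exactly the topological detour the paper is at pains to avoid: the text immediately after the proof stresses that the result belongs to the ``lattice theoretic analysis \ldots without using topological representation theorems,'' and Section \ref{canonical.comm.int.section} motivates keeping the whole comparison of commutative and noncommutative integration representation-independent. Moreover, the paper's direct construction is what gives independent content to the functors $\Freu^\sharp,\Freu^\flat$ and to the commuting triangles \eqref{Freu.comm}; if one \emph{defines} the equivalence as your composite, that corollary becomes tautological, and the open problem recorded after the Conjecture (proving the $\bLinf$/$\cSZsu$ and $\DcaLinfc$/$\cAWsuc$ cases without passing to topological representation) makes clear why the author wants the algebraic proof rather than the composite one.
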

\begin{proof}
Using Freudenthal's spectral theorem \cite{Freudenthal:1936}, Lyubovin \cite{Lyubovin:1954,Lyubovin:1956} and Vulikh \cite{Vulikh:1957:Lenin,Vulikh:1961} proved that each commutative von Neumann algebra is a Dedekind--MacNeille complete Banach lattice (for earlier proofs of this result, depedending on Gel'fand's representation theorem, see \cite{Kaplansky:1951,Fell:Kelley:1952}), while Luxemburg and Zaanen \cite{Luxemburg:Zaanen:1971} proved that each commutative von Neumann algebra is an MI-space. Both proofs hold for arbitrary $W^*$-algebra. Taking into account that each $W^*$-algebra has a unique Banach predual, we conclude that each commutative $W^*$-algebra is a complex \textit{proper} abstract $L_\infty$ space. Conversely, each real abstract $L_\infty$ space $X$ has a form $L_\infty(\boole)$ over a ccb-algebra $\boole$ of projection bands of $X$. Hence (see e.g. \cite{Fremlin:2000}) $X$ is a real commutative Banach algebra and an archimedean real f-algebra. As an f-algebra, it satisfies $\ab{y^2}=\ab{y}^2$, where $y^2:=y\cdot y$. As an archimedean f-algebra it satisfies \cite{Huijsmans:dePagter:1982}
\begin{equation}
        \forall x\in X\;\;\;x\geq0\;\iff\;\exists! y\in X\;\;\;x=y^2,
\label{f.algebra.square.root}
\end{equation}
while as a Banach lattice it satisfies $\ab{x}\leq\ab{y}$ $\limp$ $\n{x}_X\leq\n{y}_X$. Hence, $X$ satisfies $\n{y^2}_X=\n{y}_X^2$. Its Banach algebra complexification $X_\CC:=X+\ii X$, equipped with multiplication, involution, and norm:
\begin{align}
        (x_1+\ii x_2)\cdot(y_1+\ii y_2)&:=(x_1y_1-x_2y_2)+\ii(x_1y_1+x_2y_2),\\
        (x_1+\ii x_2)^*&:=x_1-\ii x_2,\\
        \n{x+\ii y}_{X_\CC}&:=\n{x^2+y^2}_X^{1/2},
\end{align}
is a commutative $C^*$-algebra (see e.g. \cite{vanNeerven:1997}). On the other hand, the Banach lattice complexification $\tilde{X}_\CC$ of $X$ is equipped with the norm \cite{Beukers:Huijsmans:dePagter:1983}
\begin{equation}
        \n{x+\ii y}_{\tilde{X}_\CC}:=\n{\ab{x+\ii y}}_X=\n{\sqrt{x^2+y^2}}_X.
\end{equation}
These two complexifications coincide, because $\n{x^{1/2}}_X=\n{x}^{1/2}_X$ $\forall x=y^2\geq0$. Thus, every abstract $L_\infty$ space is a commutative $C^*$-algebra with the multiplicative unit $\II$ given by the order unit. The Dedekind--MacNeille completeness of $X_\CC$ implies that its boolean algebra $\boole$ of projection bands is a Dcb-algebra, hence $X_\CC$ is a commutative $AW^*$-algebra \cite{Kaplansky:1955}. Finally, the existence of a unique predual turns $X$ into a commutative $W^*$-algebra. Every algebra homomorphism $f$ of f-algebras with multiplicative unit element is a Riesz homomorphism if{}f it satisfies $f(\ab{x})=\ab{f(x)}$ \cite{Huijsmans:dePagter:1984}. But this is equivalent to a condition that $f$ is a $*$-homomorphism, since $f(x^*x)=f(x)^*f(x)=\ab{f(x)}^2$, hence $f(\ab{x})=\ab{f(x)}$, which follow from \eqref{f.algebra.square.root}. From the equality of multiplicative unit $\II$ with an order unit, and coincidence of definitions of normality and order continuity, it follows that a function $f:X_1\ra X_2$ between two commutative unital $W^*$-algebras $X_1$ and $X_2$ is a normal (resp., unital) $*$-homomorphism if{}f it is order continuous (resp., unit preserving). Finally, the surjective isometries of commutative $W^*$-algebras coincide with their $*$-isomorphisms (and are normal), while the surjective isometries of Banach lattices coincide with their isometric Riesz isomorphisms (and are order continuous).
\end{proof}
This result follows the tradition of lattice theoretic analysis of a structure of commutative $W^*$-algebras without using topological representation theorems (see also \cite{SzokefalviNagy:1942,Riesz:SzokefalviNagy:1952,Vulikh:1957,Dodds:1969,Dodds:1974}). As a corollary, obtained by composition with the functors $\mathrm{L}_\infty$ and $\PrjB$, it provides a characterisation of mcb-algebras as those Dcb-algebras (and those ccb-algebras) which arise as lattices of projections in commutative $W^*$-algebras.\footnote{For a representation dependent characterisation which includes also an equivalence between normal $*$-homomorphisms and order continuous boolean homomorphisms, see e.g. \cite{Lurie:2011}.} Using some additional results, it is also possible to establish a similar lattice--algebra duality between MI-spaces and commutative $C^*$-algebras, as well as between abstract $L_\infty$ spaces and commutative unital Rickart $C^*$-algebras.

\begin{proposition}
The categories $\MI$ and $\cCsu$ are equivalent, the categories $\aLinf$ and $\cRsu$ are equivalent.
\end{proposition}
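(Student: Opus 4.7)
The plan is to mirror the proof of Proposition \ref{proposition.Linfty.cWstar}, replacing the Dedekind--MacNeille completeness and existence of a predual by weaker hypotheses: for $\MI$ vs $\cCsu$, only the MI-space axioms, and for $\aLinf$ vs $\cRsu$, countable additive completeness of the lattice of projection bands. On the morphism level I would reuse verbatim the observation from the proof of Proposition \ref{proposition.Linfty.cWstar} that, between unital archimedean f-algebras, a unit preserving algebra homomorphism $f$ is a Riesz homomorphism if{}f it is a $*$-homomorphism (both conditions collapse to $f(\ab{x})=\ab{f(x)}$ via the uniqueness of positive square roots \cite{Huijsmans:dePagter:1982}), and that order-unit preservation coincides with unitality for the multiplicative unit; hence the whole argument reduces to matching the categories at the level of objects.

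For the first equivalence, the direction $\cCsu\to\MI$ is direct: in a commutative unital $C^*$-algebra $\C$, the self-adjoint part is a Banach lattice with the multiplicative unit as order unit and the $C^*$-norm as the order unit norm, and complexification gives $\C$ back as an MI-space. For the converse direction I would first equip a real MI-space $X$ with the unique multiplication making it into an archimedean unital f-algebra whose multiplicative unit coincides with the order unit (a classical result obtainable via Yosida-style functional calculus on the order unit norm completion, cf.\ \cite{Luxemburg:Zaanen:1971,Zaanen:1983}), and then apply the complexification argument of Proposition \ref{proposition.Linfty.cWstar} verbatim: the combination $\ab{y^2}=\ab{y}^2$, uniqueness of positive square roots, and $\ab{x}\leq\ab{y}\limp\n{x}_X\leq\n{y}_X$ deliver the $C^*$-identity $\n{x^*x}_{X_\CC}=\n{x}_{X_\CC}^2$.

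For the second equivalence one direction is inherited from the first: an abstract $L_\infty$ space is by definition a countably additive complete MI-space, hence a commutative unital $C^*$-algebra; its projection bands, which form a ccb-algebra by countable additive completeness, coincide with the idempotents of the $C^*$-structure, and this ccb-algebra of projections is precisely Rickart's characterisation of commutative Rickart $C^*$-algebras recalled at the beginning of Subsection \ref{vNa.section}. Conversely, by Rickart's theorem \cite{Rickart:1946} every commutative unital Rickart $C^*$-algebra is a countably additive complete complex Banach lattice with multiplicative unit as order unit and $C^*$-norm as order unit norm, hence a countably additive complete MI-space, i.e., an abstract $L_\infty$ space.

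The main obstacle will be the construction of the f-algebra multiplication on an arbitrary MI-space, since, unlike the abstract $L_\infty$ case treated in Proposition \ref{proposition.Linfty.cWstar}, one cannot appeal to the canonical $L_\infty(\boole)$ model. The cleanest route seems to be to define squares on the positive cone via uniform approximation by Riesz-lattice polynomials in the order unit, then polarise to obtain the product, and verify the f-algebra axioms from the archimedean Riesz structure; uniqueness then follows from agreement on $\Span_\RR\{y^2\mid y\in X^+\}$, which is uniformly dense by the square-root lemma of Huijsmans--de Pagter. Once this multiplication is secured, the remaining verifications (in particular, the coincidence of Riesz homomorphisms with unital $*$-homomorphisms) reduce to the morphism argument already carried out in Proposition \ref{proposition.Linfty.cWstar}.
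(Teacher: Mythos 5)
Your proof follows the same architecture as the paper's: equip an MI-space with an f-algebra multiplication and complexify as in Proposition \ref{proposition.Linfty.cWstar}, go back via the lattice and order-unit-norm structure of a commutative unital $C^*$-algebra, descend to the Rickart/abstract $L_\infty$ case by matching countable additive completeness with the ccb-algebra of projections, and reuse the earlier morphism argument. Two remarks. First, what you call ``the main obstacle''---constructing the f-algebra multiplication on an arbitrary MI-space---is exactly the point where the paper does not argue from scratch but cites Martignon \cite{Martignon:1980}, who proves that every MI-space carries an archimedean f-algebra multiplication making it a Banach lattice algebra with the order unit as multiplicative unit. Your sketch via lattice-polynomial approximation and polarisation is the right idea, but verifying associativity, the f-algebra axiom $(x\land y=0,\,z\geq0)\limp(x\cdot z)\land y=0$, and $\n{y^2}=\n{y}^2$ from that definition is genuine work that you would either have to carry out or outsource to this reference; and note that routing the construction through a Yosida/Kakutani representation onto $\mathrm{C}(K)$ would reintroduce precisely the topological representation theorem that this ``algebraic'' family of dualities is designed to avoid. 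Second, the direction $\cCsu\ra\MI$ is not as ``direct'' as you state: that the self-adjoint part of a $C^*$-algebra is a lattice exactly when the algebra is commutative is Sherman's theorem \cite{Sherman:1951}, and identifying the $C^*$-norm with the order unit norm $\min\{\lambda\in\RR\mid\ab{x}\leq\lambda\II\}$ uses Freudenthal's spectral theorem \cite{Freudenthal:1936}; both facts are true but are theorems, not observations. The Rickart part and the morphism reduction are fine (though Rickart's characterisation via ccb-algebras of projections is recalled in Section \ref{integr.compar.section}, not Section \ref{vNa.section}).
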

\begin{proof}
Every MI-space $X$ with an order unit $e$ can be equipped with a multiplication $\cdot:X\times X\ra X$ and an algebraic (multiplicative) unit $\II\geq0$ such that $X$ becomes an archimedean f-algebra, a Banach lattice, and $e=\II$ \cite{Martignon:1980}. The unique extension to the commutative unital $C^*$-algebra structure is then provided in the same way as in the proof of Proposition \ref{proposition.Linfty.cWstar}. Conversely, by Sherman's theorem \cite{Sherman:1951}, each $C^*$-algebra $\C$ is a Banach lattice with respect to an order \eqref{Cstar.order} if{}f it is commutative. The order unit $e$ of $\C$ is defined as an algebraic (multiplicative) unit $\II$. From Freudenthal's spectral theorem it follows that the spectrum of $\ab{x}$ is contained in $\RR^+$ for any $x\in\C$, and it is equal to $\n{\ab{x}}=\n{x}$. From $\n{x}=\sup\{\ab{x}\}$ it follows that $\n{x}=\min\{\lambda\in\RR\mid\ab{x}\leq\lambda\II\}$, hence a norm of $\C$ is an order unit norm, so $\C$ is an MI-space. The restriction to commutative Rickart $C^*$-algebras corresponds to restriction of lattices of projections to ccb-algebras, which in turn correspond to countably additive complete MI-spaces, which are the same as abstract $L_\infty$ spaces. The correspondence of unital $*$-homomorphisms and unit preserving Riesz homomorphisms was already established.
\end{proof}
\begin{definition} Functors generating above equivalences will be denoted by $\Freu^\sharp:\MI\ra\cCsu$ and $\Freu^\flat:\cCsu\ra\MI$.
\end{definition}
\begin{corollary}
The following diagrams commute in the weak sense (up to isomorphism):
\hspace*{-0.6cm}\begin{equation}\begin{tabular}{ccc}\xymatrix{
        \cCsu
        \ar@<0.5ex>[rr]^{\Freu^\sharp}
        \ar@<0.5ex>@/_1pc/[rd]^{\sp_\mathrm{G}}
        &&
        \MI
        \ar@<0.5ex>[ll]^{\Freu^\flat}
        \ar@<0.5ex>@/^1pc/[ld]^{\KKK}
        \\
        &
        \cpH^\op
        \ar@<0.5ex>@/_1pc/[ur]^{\mathrm{C}}
        \ar@<0.5ex>@/^1pc/[ul]^{\mathrm{C}}
        &
}
&
\xymatrix{
        \cRsu
        \ar@<0.5ex>[rr]^{\Freu^\sharp}
        \ar@<0.5ex>@/_1pc/[rd]^{\sp_\mathrm{G}}
        &&
        \aLinf
        \ar@<0.5ex>[ll]^{\Freu^\flat}
        \ar@<0.5ex>@/^1pc/[ld]^{\KKK}
        \\
        &
        \bdcpH^\op
        \ar@<0.5ex>@/_1pc/[ur]^{\mathrm{C}}
        \ar@<0.5ex>@/^1pc/[ul]^{\mathrm{C}}
        &
}
&
\xymatrix{
        \cWsun
        \ar@<0.5ex>[rr]^{\Freu^\sharp}
        \ar@<0.5ex>@/_1pc/[rd]^{\sp_\mathrm{G}}
        &&
        \Linfc
        \ar@<0.5ex>[ll]^{\Freu^\flat}
        \ar@<0.5ex>@/^1pc/[ld]^{\KKK}
        \\
        &
        \hypso^\op
        \ar@<0.5ex>@/_1pc/[ur]^{\mathrm{C}}
        \ar@<0.5ex>@/^1pc/[ul]^{\mathrm{C}}
        &
}
\end{tabular}
\label{Freu.comm}
\end{equation}
\end{corollary}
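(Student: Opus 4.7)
The plan is to establish the commutativity of the three triangles by exploiting the fact that each triangle consists of three equivalences of categories sharing the topological category as a common vertex, so commutativity (up to natural isomorphism) reduces to the essential uniqueness of the intermediate functors. Concretely, for each triangle I would construct a natural isomorphism $\KKK\circ\Freu^\flat\Rightarrow\sp_\mathrm{G}$ and a natural isomorphism $\mathrm{C}\Rightarrow\Freu^\sharp\circ\mathrm{C}$, and verify that these suffice for all three diagrams by restriction to appropriate subcategories.

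First I would handle the top diagram. Given $\C\in\Ob(\cCsu)$, the Gel'fand--Naimark theorem provides a canonical $*$-isomorphism $\C\iso\mathrm{C}(\sp_\mathrm{G}(\C);\CC)$, and restriction to the self-adjoint part gives a canonical isometric Riesz isomorphism $\Freu^\flat(\C)\iso\mathrm{C}(\sp_\mathrm{G}(\C);\RR)$. On the other hand, $\KKK(\Freu^\flat(\C))$ is characterized (uniquely up to homeomorphism) by $\mathrm{C}(\KKK(\Freu^\flat(\C));\RR)\iso\Freu^\flat(\C)$ as MI-spaces. Combining these identifications produces a canonical homeomorphism $\sp_\mathrm{G}(\C)\iso\KKK(\Freu^\flat(\C))$. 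In the reverse direction, for $\X\in\Ob(\cpH^\op)$, the complexification construction used in the proof of Proposition \ref{proposition.Linfty.cWstar} applied to $\mathrm{C}(\X;\RR)$ reproduces $\mathrm{C}(\X;\CC)$ on the nose, since the complexified multiplication, involution, and norm reduce to their pointwise counterparts; this immediately yields $\Freu^\sharp\circ\mathrm{C}\iso\mathrm{C}$ on objects.

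Next I would check naturality, which is where the subtlety lies. For a morphism $f:\C_1\ra\C_2$ in $\cCsu$, the restriction to self-adjoint parts gives $\Freu^\flat(f):\Freu^\flat(\C_1)\ra\Freu^\flat(\C_2)$, and one must show that $\KKK(\Freu^\flat(f))$ coincides (through the object-level isomorphisms above) with $\sp_\mathrm{G}(f):\sp_\mathrm{G}(\C_2)\ra\sp_\mathrm{G}(\C_1)$. This is done by identifying both spaces with the set of characters (nonzero unital $*$-homomorphisms to $\CC$), respectively nonzero unit-preserving Riesz homomorphisms to $\RR$, and noting that these two sets coincide canonically: every character restricts to such a Riesz homomorphism on the self-adjoint part, and every such Riesz homomorphism extends uniquely to a character by complexification. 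Functoriality of pullback in both descriptions completes the naturality check.

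For the remaining two diagrams, the key observation is that the functors and natural isomorphisms constructed above restrict correctly: by the characterisations used in the proof of Proposition \ref{proposition.Linfty.cWstar}, a commutative $C^*$-algebra is Rickart (respectively, a $W^*$-algebra) if and only if the corresponding MI-space is an abstract $L_\infty$ space (respectively, a proper abstract $L_\infty$ space), and if and only if its Gel'fand spectrum is basically disconnected (respectively, hyperstonean), via the Rickart--Stone duality and the Dixmier--Grothendieck theorem invoked in the excerpt. Thus the restrictions $\Freu^\sharp\big|_{\aLinf}$, $\Freu^\sharp\big|_{\Linfc}$, $\Freu^\flat\big|_{\cRsu}$, $\Freu^\flat\big|_{\cWsun}$ land in the correct subcategories, as do the restrictions of $\sp_\mathrm{G}$, $\KKK$, and $\mathrm{C}$. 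The main obstacle will be making the naturality argument in the second paragraph precise for morphisms in a uniform way across the three cases — in particular, in the $W^*$-algebra case one must additionally verify that normality of $*$-homomorphisms corresponds, under the constructed isomorphism, to openness of the associated continuous maps of hyperstonean spaces, which is already known from Brown's theorem and the discussion preceding \eqref{Dixmier.Grothendieck.duality}.
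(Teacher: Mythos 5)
Your proof is correct and follows the route the paper intends: the corollary is stated there without an explicit proof, being treated as an immediate consequence of the three pairwise equivalences $(\mathrm{C},\sp_\mathrm{G})$, $(\mathrm{C},\KKK)$ and $(\Freu^\sharp,\Freu^\flat)$ established just before it, and your argument simply makes the implicit compatibility check explicit. The one substantive ingredient you supply --- the natural identification $\sp_\mathrm{G}(\C)\iso\KKK(\Freu^\flat(\C))$ via the coincidence of characters with unit-preserving Riesz homomorphisms to $\RR$ (both being point evaluations), together with the restriction statements for the Rickart and $W^*$ cases --- is exactly what is needed, and the remaining commutativities then follow formally from the quasi-inverse relations.
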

\begin{conjecture}
The functors $\Freu^\sharp$ and $\Freu^\flat$ provide an equivalence between categories $\bLinf$ and $\cSZsu$, as well as between $\DcaLinfc$ and $\cAWsuc$.
\end{conjecture}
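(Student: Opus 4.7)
The plan is to deduce both equivalences from the already-established equivalence $\cCsu\simeq\MI$ generated by $\Freu^\sharp,\Freu^\flat$, by showing that these functors restrict to the subcategories at hand. Because the unit isomorphisms $\Freu^\sharp\circ\Freu^\flat\iso\id_{\cCsu}$ and $\Freu^\flat\circ\Freu^\sharp\iso\id_{\MI}$ automatically restrict to any pair of matching full subcategories, it suffices to (i) identify the objects of $\bLinf$ and $\DcaLinfc$ on the $C^*$-algebraic side in terms of their Gel'fand spectra, and (ii) verify that the distinguished classes of morphisms on both sides correspond.

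For $\bLinf\simeq\cSZsu$, I would argue as follows. Given $L_\infty(\boole)\in\Ob(\bLinf)$, the identity $\sp_\mathrm{S}\circ\PrjB=\KKK$ from \eqref{LinfB.eq.C.spSPrjB.eq.KKK} combined with the rightmost diagram of \eqref{Freu.comm} yields $\sp_\mathrm{G}(\Freu^\sharp(L_\infty(\boole)))\iso\sp_\mathrm{S}(\boole)$, which is totally disconnected compact Hausdorff; hence by \eqref{Stratila.Zsido.duality}, $\Freu^\sharp(L_\infty(\boole))\in\Ob(\cSZsu)$. Conversely, for $\C\in\Ob(\cSZsu)$, $\sp_\mathrm{G}(\C)$ is totally disconnected, so by Stone's equivalence \eqref{Stone.tdcpH.B.duality} one has $\sp_\mathrm{G}(\C)\iso\sp_\mathrm{S}(\mathrm{B}(\sp_\mathrm{G}(\C)))$, hence $\Freu^\flat(\C)\iso\mathrm{C}(\sp_\mathrm{G}(\C);\CC)\iso L_\infty(\mathrm{B}(\sp_\mathrm{G}(\C)))\in\Ob(\bLinf)$. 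The morphism classes (unit-preserving Riesz homomorphisms versus unital $*$-homomorphisms) correspond by the argument reproduced in the proof of Proposition \ref{proposition.Linfty.cWstar}, namely that a homomorphism of unital archimedean f-algebras is a Riesz homomorphism iff it is a $*$-homomorphism in the induced $C^*$-structure.

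For $\DcaLinfc\simeq\cAWsuc$ the same chain of reasoning applies: for $X\in\Ob(\DcaLinfc)$, Dedekind--MacNeille completeness of $X$ passes to $\PrjB(X)$, turning it into a Dcb-algebra, so $\sp_\mathrm{G}(\Freu^\sharp(X))\iso\sp_\mathrm{S}(\PrjB(X))$ is stonean, whence by Kaplansky's duality \eqref{Kaplansky.duality} $\Freu^\sharp(X)$ is a commutative unital $AW^*$-algebra; conversely, for $\C\in\Ob(\cAWsuc)$ the spectrum $\sp_\mathrm{G}(\C)$ is stonean, so $\Freu^\flat(\C)\iso L_\infty(\mathrm{B}(\sp_\mathrm{G}(\C)))$ is Dedekind--MacNeille complete by \eqref{aLinf.ccB.equiv} restricted to Dcb-algebras.

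The main obstacle is the matching of morphisms in the $AW^*$-case, where the morphism class on one side is singled out by \emph{order continuity} and on the other by \emph{completeness}. This must be bridged via Brown's theorem (invoked in the paper in the proof of \eqref{Kaplansky.duality}), which equates completeness of a unital $*$-homomorphism between commutative $AW^*$-algebras with openness of the induced continuous map of stonean Gel'fand spectra. The remaining link is that, under the Kakutani--Kre\u{\i}n representation, a unit-preserving Riesz homomorphism between Dedekind--MacNeille complete abstract $L_\infty$ spaces is order continuous iff the corresponding continuous map of stonean spaces is open; this, in turn, follows by transport along $\sp_\mathrm{S}\circ\PrjB\iso\KKK$ from the established equivalence $\Linfc\simeq\mcBc$ from \eqref{Linfc.mcBc.equiv} specialized (via \eqref{mcBc.hypso.duality} and \eqref{edcpHo}) to the Dcb level. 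Once this three-way characterization of ``good'' morphisms is in hand, the restriction of $\Freu^\sharp,\Freu^\flat$ and of their unit isomorphisms to the subcategories completes the proof.
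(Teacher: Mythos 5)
Your argument is correct and coincides with the route the paper itself indicates: the conjecture is deduced from \eqref{Freu.comm}, \eqref{Stratila.Zsido.duality} and \eqref{Kaplansky.duality} by factoring $\Freu^\sharp$ and $\Freu^\flat$ through the Stone and Gel'fand spectra, with Brown's theorem matching complete $*$-homomorphisms to open continuous maps and hence to order continuous Riesz homomorphisms. Note, however, that this is precisely a proof ``by passing to topological representation,'' so it establishes the conjecture but leaves untouched what the paper flags as the remaining open problem, namely a direct lattice--algebra argument in the style of Proposition \ref{proposition.Linfty.cWstar}.
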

From \eqref{Freu.comm}, \eqref{Stratila.Zsido.duality} and \eqref{Kaplansky.duality} it follows that this conjecture is true, but it remains an open problem how to prove it without passing to topological representation. 

The above results allow us to describe jointly the canonical structures of commutative and noncommutative integration theory.
\begin{proposition}\label{huge.diagram.int.theory}
The diagram

{\small
\begin{equation}\hspace*{-1cm}\xymatrix@=2em{%
&
\CsJ
&
\VNn
\ar@<+0.5ex>[r]^{\FrgHlb}
&
\Wsn
\ar@<+0.5ex>[l]|{\CanVN}
\ar@{ >->}[r]
&
\AWsc
\ar@{ >->}[rd]
&
\SZs
\ar@{ >->}[r]
&
\Cs
\ar@<-0.5ex>@{ >->}@/_2pc/[lllll]
\\
&
\WssJIso
\ar@{ >->}[u]
&
\cVNun
\ar@{ >->}[u]
\ar@<+0.5ex>[r]^{\FrgHlb}
\ar@<+0.5ex>[d]^{\Seg^\flat}
&
\cWsun
\ar@<+0.5ex>[l]^{\CanVN}
\ar@{ >->}[r]
\ar@{ >->}[u]
\ar@<+1.2ex>[ddd]^{\sp_\mathrm{G}}
\ar@<-0.7ex>@/^1.3pc/[ddddl]_(0.47){\Freu^\flat\!\!\!}
\ar@<+0.8ex>@/^2pc/[dddd]|(.6){\Prj}
&
\cAWsuc
\ar@{ >->}[ddr]
\ar@{ >->}[u]
\ar@<+0.5ex>[ddd]^{\sp_\mathrm{G}}
\ar@<-1ex>@/_2.4pc/[dddd]|{\Prj}
&
\Rs
\ar@{ >->}[u]
&
\cCsu
\ar@{ >->}[u]
\ar@<+0.5ex>[d]^(0.65){\Freu^\flat}
\ar@<+0.5ex>@/_0.5pc/[dl]^(0.65){\sp_\mathrm{G}}
\\
\catname{ncL}_{1/\gamma}\catname{Iso}
\ar@/^1.5pc/[ur]|(0.35){\Sher_{1/\gamma}^\sharp}
\ar@<+0.5ex>@/_2.4pc/[r]|{(\cdot)^\banach}
&
\WsIso
\ar@{ >->}[u]
\ar@{=}[l]|(.45){\,\gamma=0\,}
\ar@/^1.15pc/[l]|{\ncLFT_{1/\gamma}}
\ar@/_1pc/[l]|{\ncLK_{1/\gamma}}
\ar@<+0.5ex>@/^2.9pc/[l]|{(\cdot)_\star}
\ar@<-1ex>@{ >->}@/^1.5pc/[uurr]
&
\locMeSp^\op
\ar@<+0.5ex>[u]^{\Seg^\sharp}
\ar@<+0.5ex>@/_0.8pc/[rddd]^(0.2){\mathrm{W}}
&
&
&
\cpH^\op
\ar@<+0.5ex>[r]|{\,\mathrm{C}\,}
\ar@<+0.3ex>@/^0.5pc/[ur]^(0.3){\mathrm{C}}
&
\MI
\ar@<+0.5ex>[l]^{\KKK}
\ar@<+0.5ex>[u]^(0.4){\Freu^\sharp}
\\
&
\cWsuIso
\ar@{ >->}[u]
\ar@{ >->}@/_1.5pc/[uurr]
\ar@<+0.5ex>@/^0.6pc/[rd]^(0.65){\sp_\mathrm{G}}
\ar@<+0.5ex>[d]^{\Freu^\flat}
&
&
&
&
\cRsu
\ar@{ >->}[r]
\ar@<-1ex>@{ >->}@/^1.9pc/[uu]%
\ar@<+0.5ex>[d]^(0.56){\!\sp_\mathrm{G}}
\ar@/_1.94pc/[dd]|(0.3){\Prj}
\ar@<-0.5ex>@/^3.2pc/[ddd]_(0.5){\Freu^\flat\!\!}
&
\cSZsu
\ar@<+0.5ex>@{ >->}@/_3.2pc/[uu]
\ar@/^2.5pc/[dd]|{\Prj}
\ar@<+0.5ex>[d]^{\sp_\mathrm{G}}
\ar@{ >->}[luuu]
\\
&
\LinfIso
\ar@<+0.5ex>[u]^{\Freu^\sharp}
\ar@<+0.5ex>[ldd]^(0.55){(\cdot)_\star}
\ar@<+0.5ex>[dd]^(0.4){\PrjB}
\ar@<+0.5ex>[r]^{\KKK}
\ar@{ >->}@<+0.5ex>[rd]
\ar@{ >->}@<+0.5ex>@/_1.2pc/[rrdd]
&
\hypsh^\op
\ar@{ >->}[r]
\ar@<+0.5ex>[l]^{\mathrm{C}}
\ar@<+0.1ex>@/_0.5pc/[lu]^{\mathrm{C}}
\ar@<-0.5ex>[ldd]_(0.65){\mathrm{B}}
&
\hypso^\op
\ar@<-0.5ex>[ld]|{\mathrm{C}}
\ar@<-0.6ex>[uuu]^{\mathrm{C}}
\ar@<-0.9ex>[d]|{\mathrm{B}}
\ar@{ >->}[r]
&
\edcpHo^\op
\ar@{ >->}[r]
\ar@<+0.5ex>[uuu]^{\mathrm{C}}
\ar@<+0.5ex>[d]^{\mathrm{B}}
\ar@<-0.5ex>@/^2.5pc/[dd]_(0.3){\mathrm{C}\!\!}
&
\bdcpH^\op
\ar@{ >->}[r]
\ar@<+0.5ex>[d]^{\mathrm{B}}
\ar@<+0.5ex>@/_2.5pc/[dd]^(0.35){\!\!\mathrm{C}}
\ar@<+0.5ex>[u]^{\mathrm{C}}
&
\tdcpH^\op
\ar@{ >->}[luu]
\ar@<+0.5ex>[d]^{\mathrm{B}}
\ar@<+0.5ex>[u]^{\mathrm{C}}
\ar@<-0.5ex>@/^2pc/[dd]_(0.5){\mathrm{C}}
\\
&
&
\Linfc
\ar@<-0.1ex>@/_1.3pc/[ruuuu]_(0.42){\!\!\!\Freu^\sharp}
\ar@{ >->}@/_0.7pc/[rrd]
\ar@<+0.5ex>[r]|{\PrjB}
\ar@<-0.5ex>[ru]_{\KKK}
&
\mcBc
\ar@{ >->}[r]
\ar[u]_(0.6){\sp_\mathrm{S}}
\ar@<+0.5ex>[l]^{\mathrm{L}_\infty}
\ar@<+0.5ex>@/^0.8pc/[luuu]^(0.8){\mathrm{LS}}
&
\DcBc
\ar@{ >->}[r]
\ar@<+0.5ex>[u]^{\sp_\mathrm{S}\!\!}
\ar@<+0.5ex>[d]^(0.4){\mathrm{L}_\infty}
&
\ccB
\ar@{ >->}[r]
\ar@<+0.5ex>[u]^{\sp_\mathrm{S}\!\!}
\ar@<+0.5ex>[d]^(0.3){\!\mathrm{L}_\infty}
&
\catname{B}
\ar@<+0.5ex>[u]^{\sp_\mathrm{S}}
\ar@<+0.5ex>[d]^(0.35){\mathrm{L}_\infty}
\\
\catname{L}_{1/\gamma}\catname{Iso}
\ar@{ >->}[uuuu]
\ar@<+0.5ex>[ruu]^{(\cdot)^\banach}
\ar@<+0.5ex>[r]^{\PrjB}
&
\mcBIso
\ar@<+0.5ex>[l]^{\mathrm{L}_{1/\gamma}}
\ar@<+0.5ex>[uu]^{\mathrm{L}_\infty}
\ar@<-0.5ex>[uur]_(0.4){\sp_\mathrm{S}}
\ar@{ >->}@<+0.5ex>@/_0.9pc/[urr]
\ar@{ >->}[r]
&
\ccBIso
\ar@<-0.5ex>[r]_{\mathrm{L}_\infty}
&
\aLinfIso
\ar@<-0.5ex>[l]|{\PrjB}
\ar@{ >->}@/_2pc/[rr]
&
\DcaLinfc
\ar@{ >->}[r]
\ar@<+0.5ex>[u]^{\PrjB}
\ar@<-0.5ex>@/_2.5pc/[uu]_(0.78){\!\!\!\KKK}
&
\aLinf
\ar@{ >->}[r]
\ar@<+0.5ex>[u]^(0.6){\PrjB\!}
\ar@<+0.5ex>@/^2.5pc/[uu]^(0.23){\KKK\!\!\!}
\ar@<-0.5ex>@/_3.2pc/[uuu]_(0.17){\!\!\!\!\Freu^\sharp}
&
\bLinf
\ar@<+0.5ex>[u]^{\PrjB}
\ar@<-0.5ex>@/_2pc/[uu]_(0.66){\KKK}
\ar@{ >->}@<-1.5ex>@/_4pc/[uuuu]
}
\label{canonical.integration.theory.big.stuff}
\end{equation}} commutes in the following sense: the pairs of functors $(\mathrm{B},\sp_{\mathrm{S}})$, $(\mathrm{C},\sp_{\mathrm{G}})$, $(\mathrm{C},\KKK)$, $(\mathrm{L}_\infty,\PrjB)$, $(\mathrm{L}_{1/\gamma},\PrjB)$, $(\mathrm{LS},\mathrm{W})$, $(\FrgHlb,\CanVN)$,  $(\Seg^\sharp,\Seg^\flat)$, and $(\Freu^\sharp,\Freu^\flat)$ 
commute weakly (up to isomorphism) as equivalences of categories that are at their domains and codomains; the functors $(\cdot)^\banach$ and $(\cdot)_\star$ are considered only for $\gamma=1$; the functor denoted $\gamma=0$ is considered only for $\gamma=0$ and is an isomorphism of categories.
\end{proposition}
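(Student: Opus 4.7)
The plan is to assemble the diagram from the individual equivalences and inclusions already established in the preceding material, and then to verify that the compositions along any two parallel paths agree up to natural isomorphism. Since each local equivalence has been stated (and referenced to a proof) in the text, the task is organizational rather than computational: we need to exhibit the compatibility of all these constructions on a common family of data, and to check morphism-level coherence where it is not automatic.

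First I would collect the backbone equivalences as black boxes: Stone duality $(\mathrm{B},\sp_\mathrm{S})$ in the three flavours \eqref{Stone.tdcpH.B.duality}, \eqref{mcBc.hypso.duality}, \eqref{mcBIso.hypsh.duality}; Kre\u{\i}n--Kre\u{\i}n--Kakutani duality $(\mathrm{C},\KKK)$ together with its Stone/basically disconnected/stonean restrictions; Gel'fand--Na\u{\i}mark duality $(\mathrm{C},\sp_\mathrm{G})$ together with its \c{S}tr\u{a}til\u{a}--Zsid\'{o}, Rickart, $AW^*$, and Dixmier--Grothendieck \eqref{Dixmier.Grothendieck.duality} refinements; the Banach lattice equivalences $(\mathrm{L}_\infty,\PrjB)$ from \eqref{bLinf.B.equiv}--\eqref{Linfc.mcBc.equiv}; the canonical-$L_p$ equivalence $(\mathrm{L}_{1/\gamma},\PrjB)$ from \eqref{L.gamma.mcBIso.duality} (which in turn rests on Proposition \ref{proposition.L.gamma.is.abstract} and the Bohnenblust--Kakutani--Nakano theorem); the Wecken--Loomis--Sikorski duality $(\mathrm{LS},\mathrm{W})$ of \eqref{mcBc.locMesp.duality}; the Sakai--Kosaki equivalence $(\FrgHlb,\CanVN)$ of \eqref{Sakai.Kosaki.duality}; the Segal equivalence $(\Seg^\sharp,\Seg^\flat)$ of \eqref{Segal.duality}; and the Freudenthal equivalence $(\Freu^\sharp,\Freu^\flat)$ of Proposition \ref{proposition.Linfty.cWstar} and its MI/Rickart corollaries.

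Next I would verify the outer commutations that glue the various equivalences together. The identity $\mathrm{L}_\infty\circ\mathrm{B}\cong\mathrm{C}$ and $\sp_\mathrm{S}\circ\PrjB\cong\KKK$ are already noted in \eqref{LinfB.eq.C.spSPrjB.eq.KKK}; the analogous identities $\Freu^\sharp\circ\mathrm{C}\cong\mathrm{C}$ and $\Freu^\flat\circ\mathrm{C}\cong\mathrm{C}$ (with appropriate domains) are the three sub-diagrams \eqref{Freu.comm}; and the independent Sakai--Kosaki/Segal/Loomis--Sikorski/Stone route \eqref{long.segal.equivalence} joins the commutative $W^*$ corner to the hyperstonean corner. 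What still needs to be checked by direct computation on generators is that (i) for $f\in\mathrm{Mor}(\mcBIso)$ the Riesz isomorphism $\mathrm{L}_{1/\gamma}(f)$ defined via \eqref{Lp.homo} agrees, under $[\cdot]_\mu$, with the isomorphism induced by $\mathrm{L}_\infty(f)$ on $L_\infty$--multipliers (routine: both map $\chr(x)\mu^\gamma$ to $\chr(f(x))(f_*\mu)^\gamma$); (ii) the Banach-duality functors $(\cdot)^\banach$ and $(\cdot)_\star$ between $\ncL_{1/\gamma}\Iso$ and $\ncL_{1/(1-\gamma)}\Iso$, restricted to the commutative sub-diagram via $\ncLK_{1/\gamma}\circ(\cdot)$ (or $\ncLFT_{1/\gamma}$), reproduce the And\={o} duality on abstract $L_p$-spaces; (iii) the inclusions $\WsIso\hookrightarrow\Wsn\hookrightarrow\AWsc\hookrightarrow\cdots$ are compatible with their commutative counterparts via the evident forgetful arrows, and with the assignments $\ncLK_{1/\gamma}, \ncLFT_{1/\gamma}, \Sher_{1/\gamma}^\sharp$ in the noncommutative wing, using the functoriality established in Sections \ref{canonical.int.section} and \ref{canonical.comm.int.section}.

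The main obstacle is checking the glue between the commutative wing and the noncommutative wing: namely, that $\ncLK_{1/\gamma}$ (and $\ncLFT_{1/\gamma}$) restricted along $\cWsuIso\hookrightarrow\WsIso$ coincides, via $\Freu^\flat$, with $\mathrm{L}_{1/\gamma}\circ\PrjB$. Concretely, for a commutative $W^*$-algebra $\N$ realised as $L_\infty(\boole)$ with $\boole=\PrjB(\N)$, one must produce a canonical isometric Riesz isomorphism between Kosaki's $L_{1/\gamma}(\N)$ (built from $\N_\star^+$ via canonical relative modular operators as in \eqref{Kosaki.new.multiplication}--\eqref{Kosaki.new.addition}) and the canonical $L_{1/\gamma}(\boole)$ built from $\W_0(\boole)$ via Radon--Nikod\'{y}m quotients in \eqref{canonical.Lp}; the key point is that in the commutative case Connes' cocycle $\Connes{\phi}{\psi}{-\ii/2}$ reduces to the classical Radon--Nikod\'{y}m quotient $(\mu_\phi/\mu_\psi)^{1/2}$, so that the equivalence classes $f\mu^\gamma$ of \eqref{canonical.Lp} are precisely the symbols $\phi^{1/p}$ of \eqref{canonical.Lp} in the notation of Section \ref{canonical.int.section}. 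Once this identification is made, naturality in $*$-isomorphisms follows from the naturality of both constructions, and the entire diagram assembles. After these checks, the final diagram \eqref{canonical.integration.theory.big.stuff} commutes up to isomorphism, with the $\gamma=0$ arrow interpreted as the tautological identification $L_\infty(\N)=\N$, and the duality arrows $(\cdot)^\banach,(\cdot)_\star$ restricted to $\gamma=1$ as specified.
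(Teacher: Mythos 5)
Your proposal is correct and follows essentially the same route as the paper: the paper's own proof is a one-line citation of \eqref{Sakai.Kosaki.duality}, \eqref{ncLp.cat.diag}, \eqref{commutative.integration.diagram}, \eqref{mcBc.locMesp.duality}, \eqref{L.gamma.mcBIso.duality}, \eqref{Stone.tdcpH.B.duality}--\eqref{LinfB.eq.C.spSPrjB.eq.KKK}, \eqref{Stone.Gelfand.Naimark.duality}--\eqref{Segal.duality} and \eqref{Freu.comm}, i.e.\ exactly the assembly of black boxes you list, since the proposition only asserts that the named pairs are equivalences between their domains and codomains. Your additional coherence checks (the $\mathrm{L}_{1/\gamma}$/$\mathrm{L}_\infty$ compatibility, the And\={o} duality restriction, and the commutative--noncommutative glue via the reduction of Connes' cocycle to the Radon--Nikod\'{y}m quotient) are sound and are in fact handled by the surrounding text of Sections \ref{canonical.comm.int.section} and \ref{integr.compar.section} rather than inside the proof itself.
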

\begin{proof}
Follows directly from \eqref{Sakai.Kosaki.duality}, \eqref{ncLp.cat.diag}, \eqref{commutative.integration.diagram}, \eqref{mcBc.locMesp.duality}, \eqref{L.gamma.mcBIso.duality}, \eqref{Stone.tdcpH.B.duality}-\eqref{LinfB.eq.C.spSPrjB.eq.KKK}, \eqref{Stone.Gelfand.Naimark.duality}-\eqref{Segal.duality} and \eqref{Freu.comm}.
\end{proof}

The diagram \eqref{canonical.integration.theory.big.stuff} is valid for MI-spaces and $C^*$-algebras in both cases: real and complex. In the real case the functor $\mathrm{C}$ assigns $\mathrm{C}(\X;\RR)$ spaces, while in complex case it assigns $\mathrm{C}(\X;\CC)$ spaces. Note also that \eqref{canonical.integration.theory.big.stuff} contains no category of measure spaces. According to Fremlin, \cytat{another curiosity of traditional measure theory is the unimportance of any notion of homomorphism between measure spaces. This distinguishes it from all comparably abstract branches of mathematics. I believe that this deficiency occurs because the natural and important homomorphisms of the theory are between measure algebras, and not between measure spaces at all} \cite{Fremlin:1974}. From the category theoretic point of view, this means that measure spaces are just irrelevant for the \textit{foundations} of commutative (and noncommutative) integration theory. However, there is still some structural difference between the categories of commutative and noncommutative $L_p$ spaces: while both constructions assign $L_p$ spaces to underlying algebras canonically and functorially, the categories $\catname{L}_p\catname{Iso}$ possess also an internal characterisation in terms of abstract $L_p$ spaces. So far there is no internal characterisation of categories $\catname{ncL}_p\catname{Iso}$, with an exception of the case $p=2$, which was characterised by Connes \cite{Connes:1974}. Moreover, in \cite{Henson:Raynaud:Rizzo:2007} it was proved that the categories $\catname{ncL}_p\catname{Iso}$ cannot be axiomatised neither in the language of Banach spaces nor in that of operator spaces. This prompts for some further development in the noncommutative integration theory. We think that the key problem is to define a category of canonical core algebras equipped with canonical trace without using the Falcone--Takesaki noncommutative flow of weights, but using some suitable \textit{Banach bimodule} properties imposed on the category of topological $*$-algebras instead. However, we are unable to develop this speculative programme in more details here (one can compare this with some comments in \cite{Sherman:2001})\footnote{I have been recently (September 2012) informed by Dmitri\u{\i} Pavlov that he is working on his own approach to this problem, which includes a replacement of the Falcone--Takesaki core algebra by means of some alternative construction.}. 

Let us note that for the sake of brevity we have omitted the discussion of adjoint functors to embedding arrows in \eqref{canonical.integration.theory.big.stuff}, the discussion which of these arrows are full and/or faithful, and the discussion of additional equivalences for locally compact Hausdorff topological spaces and nonunital commutative $C^*$-algebras.

The roles played in the commutative integration theory by mcb-algebras $\boole$ and their representations in terms of measurable spaces $(\X,\mho(\X),\mho^0(\X))$ or measure spaces $(\X,\mho(\X),\tmu)$ are analogous to the roles played in the noncommutative integration theory by, respectively, $W^*$-algebras $\N$ and their standard representations $(\H,\pi(\N),J,\stdcone)$ or the GNS representations ($\H_\omega,\pi_\omega,\Omega_\omega)$. In particular, the Loomis--Sikorski representation $(\sp_\mathrm{S}(\boole),\mho_{\mathrm{LS}}(\sp_\mathrm{S}(\boole)),\mho^0_{\mathrm{LS}}(\sp_\mathrm{S}(\boole)))$ is an analogue of the Kosaki canonical representation $(L_2(\N),\pi_\N(\N),J_\N,L_2(\N)^+)$. Furthermore, if $\N$ is commutative, then each $\mu_\psi\in\W(\boole)$ determines a normal semi-finite trace on $\N$ by
\begin{equation}
	\psi(x)=\int\mu_\psi x\;\;\forall x\in\N^+.
\end{equation}
If $\mu_\phi\in\W_0(\boole)$ corresponds to $\phi\in\W_0(\N)$ and $f\in L_1(\boole,\mu_\phi)$ is its Radon--Nikod\'{y}m quotient with respect to $\mu_\psi\in\W(\boole)$, $f=\frac{\mu_\psi}{\mu_\phi}$, which means
\begin{equation}
        \psi(x)=\int\mu_\psi x=\int\mu_\phi\frac{\mu_\psi}{\mu_\phi} x=\phi\left(\frac{\mu_\psi}{\mu_\phi}x\right)\;\;\forall x\in L_\infty(\boole)^+,
\end{equation}
then the faithfulness of $\psi$ corresponds to strict positivity of $\mu_\psi$ and implies $\frac{\mu_\psi}{\mu_\phi}>0$. In such case, the map $\RR\ni t\mapsto\left(\frac{\mu_\psi}{\mu_\phi}\right)^{\ii t}\in\N$ satisfies
\begin{equation}
        \left(\frac{\mu_\psi}{\mu_\phi}\right)^{\ii t}=\Connes{\psi}{\phi}{t}\;\;\forall t\in\RR.
\end{equation}
Note that the boolean ideals $\boole^\mu\subseteq\boole$ for $\mu\in\W(\boole)$ play the role analogous to the ideals $\nnn_\psi\subseteq\N$ for $\psi\in\W(\N)$. In particular, the compatibility condition $x\in\boole^{\mu_1}$ $\limp$ $\exists y\leq x$ $y\in\boole^{\mu_2}$ plays a crucial role in the definition of the Radon--Nikod\'{y}m quotient $\frac{\mu_1}{\mu_2}$ of $\mu_1,\mu_2\in\W_0(\boole)$, which corresponds to the crucial role played by the condition $x\in\nnn_{\psi_1}$ $\limp$ $x\in\nnn_{\psi_2}$ in the extension of Connes' cocycle $\Connes{\psi_1}{\psi_2}{t}$ of $\psi_1,\psi_2\in\W_0(\N)$ to the (square root of) noncommutative analogue of the Radon--Nikod\'{y}m quotient, $\Connes{\psi}{\phi}{-\ii/2}$. The additional condition $\mu_1\ll\mu_2$ required for $\mu_1,\mu_2\in\W(\boole)$ corresponds to the additional condition $\psi_1\ll\psi_2$ required for $\psi_1,\psi_2\in\W(\N)$.
\subsection{Commutative and noncommutative Orlicz spaces}
In this section we will first review the main notions from the theory of commutative Orlicz spaces, and then we will move to discussion of the noncommutative version of this theory. For a detailed treatment of the theory of commutative Orlicz spaces, as well as the associated theory of modular spaces, see \cite{Nakano:1950,Nakano:1951:book,Nakano:1951,Zaanen:1953,Krasnoselskii:Rutickii:1958,Lindenstrauss:Tzafriri:1977:1979,Musielak:1983,Maligranda:1989,Rao:Ren:1991,Chen:1996,Rao:Ren:2002,Leonard:2007}. The Banach lattice analogues of commutative Orlicz spaces were studied in \cite{Nakano:1950,Kantorovich:Vulikh:Pinsker:1950,Orlicz:1964,Claas:Zaanen:1978,Wnuk:1980,Zaanen:1983,Wnuk:1984}.

If $X$ is a vector space over $\KK\in\{\RR,\CC\}$, and $\rpktarget{ORLICZFUN}\Orlicz:X\ra[0,\infty]$ is a convex function satisfying
\begin{enumerate}
\item[1)] $\Orlicz(0)=0$,
\item[2)] $\Orlicz(\lambda x)=0$ $\forall\lambda>0$ $\limp$ $x=0$,
\item[3)] $\ab{\lambda}=1$ $\limp$ $\Orlicz(\lambda x)=\Orlicz(x)$,
\end{enumerate}
then $\Orlicz$ is called a \df{pseudomodular function} \cite{Musielak:Orlicz:1959}. If 2) is replaced by
\begin{enumerate}
\item[2')] $\Orlicz(x)=0$ $\limp$ $x=0$,
\end{enumerate}
then $\Orlicz$ is called a \df{modular function} \cite{Nakano:1950,Nakano:1951:book,Nakano:1951}. If 2) is replaced by 
\begin{enumerate}
\item[2'')] $x\neq0$ $\limp$ $\lim_{\lambda\ra+\infty}\Orlicz(\lambda x)=+\infty$,
\end{enumerate}
then $\Orlicz$ is called a \df{Young function} \cite{Young:1912,Birnbaum:Orlicz:1930,Birnbaum:Orlicz:1931}. A Young function $f$ on $\RR$ is said to satisfy: \df{local $\triangle_2$ condition} if{}f \cite{Birnbaum:Orlicz:1930,Birnbaum:Orlicz:1931}
\begin{equation}
	\exists\lambda>0\;\;
	\exists x_0\geq0\;\;
	\forall x\geq x_0\;\;
	f(2x)\leq\lambda f(x);
\label{local.delta.two.condition}
\end{equation}
 \df{global $\triangle_2$ condition} if{}f $x_0$ in \eqref{local.delta.two.condition} is set to $0$. A convex function $f:\RR\ra\RR^+$ is called \df{N-function} \cite{Birnbaum:Orlicz:1931} if{}f $\lim_{x\ra^+0}\frac{f(x)}{x}=0$ and $\lim_{x\ra+\infty}\frac{f(x)}{x}=+\infty$. Every Young function $f$ allows do define a \df{Young--Birnbaum--Orlicz dual} \cite{Birnbaum:Orlicz:1931,Mandelbrojt:1939}
\begin{equation}
	f^\Young:\RR\ni y\mapsto f^\Young(y):=\sup_{x\geq0}\{x\ab{y}-f(x)\}\in[0,\infty].
\end{equation}
Every YBO dual is a nondecreasing Young function, and each pair $(f,f^\Young)$ satisfies the \df{Young inequality} \cite{Young:1912}
\begin{equation}
	xy \leq f(x)+f^\Young(y)\;\;\forall x,y\in\RR.
\end{equation}
If $f$ is also an N-function, then $f^\Young{}^\Young=f$. Every modular function $\Orlicz:X\ra[0,\infty]$ determines a \df{modular space} \cite{Nakano:1950,Nakano:1951:book}
\begin{equation}
	X_\Orlicz:=\{x\in X\mid\lim_{\lambda\ra^+0}\Orlicz(\lambda x)=0\}
\end{equation}
and the \df{Morse--Transue--Nakano--Luxemburg norm} on $X_\Orlicz$ \cite{Morse:Transue:1950,Nakano:1951:book,Luxemburg:1955,Weiss:1956},
\begin{equation}
	 \n{\cdot}_\Orlicz:X_\Orlicz\ni x\mapsto\n{x}_\Orlicz:=\inf\{\lambda>0\mid\Orlicz(\lambda^{-1}x)\leq 1\}\in\RR^+,
\end{equation}
which allows to define a Banach space
\begin{equation}
	L_\Orlicz(X):=\overline{X_\Orlicz}^{\n{\cdot}_\Orlicz}.
\end{equation}
An \df{Orlicz function} is defined as a function $f:[0,\infty[\,\ra[0,\infty]$ that is convex, continuous, nondecreasing, satisfying $f(0)=0$, $\lambda>0$ $\limp$ $f(\lambda)>0$, $\lim_{\lambda\ra+\infty}f(\lambda)=+\infty$. By definition, an Orlicz function is a restriction to $\RR^+$ of a modular Young function on $\RR$, equipped with the additional conditions of continuity and nondecreasing.\footnote{In all application discussed here, the condition of continuity of an Orlicz function $f$ can be relaxed to continuity at $[0,x_f[$ with left continuity at $x_f$, where $x_f:=\sup\{\lambda>0\mid f(\lambda)<\infty\}$.} Every Orlicz function $f$ defines a continuous modular function $\Orlicz_f$ on $L_0(\X,\mho(\X),\tmu;\RR)$ where $(\X,\mho(\X),\tmu)$ is a localisable measure space, by the formula
\begin{equation}
	\Orlicz_f:L_0(\X,\mho(\X),\tmu;\RR)\ni x\mapsto\Orlicz_f(x):=\int\tmu f(\ab{x})\in[0,\infty].
\label{Orlicz.modular}
\end{equation}
An \df{Orlicz space} \cite{Orlicz:1936} is defined as a Banach space
\begin{equation}
	L_{\Orlicz_f}(\X,\mho(\X),\tmu;\RR):=L_{\Orlicz_f}(L_0(\X,\mho(\X),\tmu;\RR)),
\rpktarget{LORLICZX}
\end{equation}
an \df{Orlicz class} \cite{Orlicz:1932} is defined by
\begin{equation}
	\tilde{L}_{\Orlicz_f}(\X,\mho(\X),\tmu;\RR):=\{x\in L_0(\X,\mho(\X),\tmu;\RR)\mid\int\tmu f(\ab{x})<\infty\},
\label{Orlicz.class}
\end{equation}
while a \df{Morse--Transue--Krasnosel'ski\u{\i}--Ruticki\u{\i} space} \cite{Morse:Transue:1950,Krasnoselskii:Rutickii:1952,Krasnoselskii:Rutickii:1954} is defined by\rpktarget{EORLICZX}
\begin{equation}
	E_{\Orlicz_f}(\X,\mho(\X),\tmu;\RR):=\{x\in L_0(\X,\mho(\X),\tmu;\RR)\mid\forall\lambda<0\;\;\int\tmu f(\lambda\ab{x})<\infty\}.
\label{MTKR.space}
\end{equation}
Every MTKR space is a Banach space with respect to the MTNL norm $\n{\cdot}_{\Orlicz_f}$. By application of the Lebesgue dominated convergence theorem, it follows that 
\begin{equation}
	L_{\Orlicz_f}(\X,\mho(\X),\tmu;\RR)=\{x\in L_0(\X,\mho(\X),\tmu;\RR)\mid\exists\lambda>0\;\;\int\tmu f(\lambda\ab{x})<\infty\}.
\end{equation}
Moreover, $L_{\Orlicz_f}(\X,\mho(\X),\tmu;\RR)=\Span_\RR B_{\Orlicz_f}(\X,\mho(\X),\tmu;\RR)$, where
\begin{equation}
	B_{\Orlicz_f}(\X,\mho(\X),\tmu;\RR):=\{x\in L_{\Orlicz_f}(\X,\mho(\X),\tmu;\RR)\mid\int\tmu f(\ab{x})\leq1\}
\label{unit.Orlicz.ball}
\end{equation}
is a unit closed ball in $L_{\Orlicz_f}(\X,\mho(\X),\tmu;\RR)$ and a convex subset of $\tilde{L}_{\Orlicz_f}(\X,\mho(\X),\tmu;\RR)$. The definitions of $L_{\Orlicz_f}(\X,\mho(\X),\tmu;\RR)$,  $\tilde{L}_{\Orlicz_f}(\X,\mho(\X),\tmu;\RR)$,  $E_{\Orlicz_f}(\X,\mho(\X),\tmu;\RR)$, and $B_{\Orlicz_f}(\X,\mho(\X),\tmu;\RR)$, as well as their above properties, can be extended by replacing $\RR$ with $[-\infty,+\infty]$ \cite{Zaanen:1953,Luxemburg:1955}, and by replacing Orlicz function $f$ by an arbitrary Young function $f:\RR\ra[0,\infty]$ \cite{Rao:Ren:1991}. In the latter case, \eqref{Orlicz.modular} does not define a modular function, but it is anyway a norm on $L_0(\X,\mho(\X),\tmu;\RR)$. Hence, the corresponding Orlicz space $L_f(\X,\mho(\X),\tmu;\RR)$ can be defined as a completion of $L_0(\X,\mho(\X),\tmu;\RR)$ in the MTNL norm $\n{\cdot}_{\Orlicz_f}$, and the same holds for $L_f(\X,\mho(\X),\tmu;[-\infty,+\infty])$. In what follows, we will use indices $(\cdot)_{\Orlicz}$ to refer to any of these two constructions: the one based on an arbitrary Young function $\Orlicz$, and the one based on a modular function determined by an Orlicz function $\Orlicz$. In order to keep the notation concise, in what follows we will also omit symbols $\X$, $\mho(\X)$, and ($\RR$ or $[-\infty,+\infty]$), whenever they will not make any difference.

As an example, the space $L_\infty(\tmu)$ can be determined as an Orlicz space $L_{\Orlicz_\infty}(\tmu)$, where
\begin{equation}
	\Orlicz_\infty(x):=\left\{
	\begin{array}{ll}
	0 &:\;x\in[0,1[\\
	+\infty &:\;x>1\\
	\Orlicz_\infty(-x) &:\;x<0
	\end{array}
	\right.
\end{equation}
is a Young function but not an Orlicz function, while the spaces $L_p(\tmu)$ can be defined as Orlicz spaces $L_\Orlicz(\tmu)$ with $\Orlicz(x)$ given by any of the Orlicz functions: $\frac{\ab{x}^p}{p}$, $\ab{x}^p$, $\frac{x^p}{p}$, or $x^p$. 

In general, $E_\Orlicz(\tmu)$ is the largest vector subspace of $L_0(\tmu)$ contained in $\tilde{L}_\Orlicz(\tmu)$, $\tilde{L}_\Orlicz(\tmu)$ is a convex subset of $L_\Orlicz(\tmu)$, while $L_\Orlicz(\tmu)$ is the smallest vector subspace of $L_0(\tmu)$ containing $\tilde{L}_\Orlicz(\tmu)$. Moreover, for any Young function $\Orlicz$, $L_\Orlicz(\tmu)\subseteq (L_{\Orlicz^\Young}(\tmu))^\banach$ and \cite{Orlicz:1932,Orlicz:1936,Morse:Transue:1950,Krasnoselskii:Rutickii:1958,Rao:1964,Rao:1968}
\begin{equation}
	(E_\Orlicz(\tmu))^\banach\iso L_{\Orlicz^\Young}(\tmu),
\end{equation}
so that
\begin{equation}
	\forall x\in L_{\Orlicz^\Young}(\tmu)\;\;\exists! y\in L_{\Orlicz^\Young}(\tmu)\;\;\forall z\in E_\Orlicz(\tmu)\;\;x(z)=\int\tmu zy.
\end{equation}
If $\Orlicz$ and $\Orlicz^\Young$ are Young N-functions, then \cite{Krasnoselskii:Rutickii:1958,Rao:Ren:1991} (c.f. also \cite{Billik:1957})
\begin{align}
	xy&\in L_1(\tmu)\;\;\forall(x,y)\in\tilde{L}_\Orlicz(\tmu)\times\tilde{L}_{\Orlicz^\Young}(\tmu),\\
		\n{xy}_{L_1(\tmu)}=\int\tmu\ab{xy}&\leq\n{x}_\Orlicz\n{y}_{\Orlicz^\Young}\;\;\forall(x,y)\in L_\Orlicz(\tmu)\times L_{\Orlicz^\Young}(\tmu).
\end{align}
For any Young function $\Orlicz$, $\ran(\DDD^+\Orlicz(E_\Orlicz(\tmu)))\subseteq L_{\Orlicz^\Young}(\tmu)$. The space $\tilde{L}_\Orlicz(\tmu)$ is a vector space if{}f $E_\Orlicz(\tmu)=\tilde{L}_\Orlicz(\tmu)$ as sets, and in such case also $\tilde{L}_\Orlicz(\tmu)=L_\Orlicz(\tmu)$ holds. If a Young function $\Orlicz$ satisfies (local $\triangle_2$ condition and $\tmu(\X)<\infty$) or (global $\triangle_2$ condition and $\tmu(\X)=\infty$) then:
\begin{enumerate}
\item[i)] $\tilde{L}_\Orlicz(\tmu)$ is a vector space,
\item[ii)] $E_\Orlicz(\tmu)\iso L_\Orlicz(\tmu)$,
\item[iii)] $E_\Orlicz(\tmu)=\tilde{L}_\Orlicz(\tmu)=L_\Orlicz(\tmu)$,
\item[iv)] $(L_\Orlicz(\tmu))^\banach\iso L_{\Orlicz^\Young}(\tmu)$.
\end{enumerate}
If $\n{x}_\Orlicz=1$ $\limp$ $\int\tmu\Orlicz(x)=1$ $\forall x\in L_\Orlicz(\tmu)$ (which holds for example when $\tmu(\X)<\infty$, $\tmu$ is atomless, and $\Orlicz$ satisfies global $\triangle_2$ condition), and if $\Orlicz$ is strictly convex on $\RR^+$, then $(L_\Orlicz(\tmu),\n{\cdot}_\Orlicz)$ is strictly convex \cite{Turett:1976}. If $\tmu$ is atomless, then $(L_\Orlicz(\tmu),\n{\cdot}_\Orlicz)$ is uniformly convex if{}f \cite{Luxemburg:1955,Kaminska:1982} ($\tmu(\X)<\infty$, $\Orlicz$ is strictly convex on $\RR^+$, satisfies local $\triangle_2$ condition with $x_0$, and is uniformly convex\footnote{%
A function $f:X\ra\,]-\infty,+\infty]$ on a Banach space $X$ is called \df{uniformly convex} \cite{Levitin:Polyak:1966} if{}f $f\neq+\infty$ and there exists an increasing function $g:\RR^+\ra\,]-\infty,+\infty]$ with $g(0)=0$, such that
\begin{equation}
	f(\lambda x+(1-\lambda)y)\leq\lambda f(x)+(1-\lambda)f(y)-\lambda(1-\lambda)g(\n{x-y})\;\;\forall x,y\in\{z\in X\mid f(z)\neq+\infty\}\;\;\forall t\in[0,1].
\end{equation}
} for $x\geq x_0$) or ($\tmu(\X)=\infty$, $\Orlicz$ is uniformly convex on $\RR^+$, and satisfies global $\triangle_2$ condition). If both $\Orlicz$ and $\Orlicz^\Young$ are continuous, $\Orlicz$ satisfies global $\triangle_2$ condition, and 
\begin{equation}
	\forall\epsilon>0\;\;\exists\lambda(\epsilon),x(\epsilon)\in\RR\;\;\forall x\geq x(\epsilon)\;\;\;\DDD^+f((1+\epsilon)x)\geq\lambda(\epsilon)\Orlicz(x),
\end{equation}
then $(L_\Orlicz(\tmu),\n{\cdot}_\Orlicz)$ is uniformly Fr\'{e}chet differentiable. Finally, $L_\Orlicz(\tmu)$ and $L_{\Orlicz^\Young}(\tmu)$ are reflexive if (both $\Orlicz$ and $\Orlicz^\Young$ satisfy global $\triangle_2$ condition, and $\tmu(\X)=\infty$) or (both $\Orlicz$ and $\Orlicz^\Young$ satisfy local $\triangle_2$ condition, and $\tmu(\X)<\infty$) \cite{Orlicz:1936,Rao:Ren:1991}.

The noncommutative Orlicz spaces associated with the algebra $\BH$ of bounded operators on a Hilbert space $\H$ were implicitly introduced by Schatten \cite{Schatten:1960} as ideals in $\BH$ generated by the so-called symmetric gauge functions, and were studied in more details by Gokhberg and Kre\u{\i}n \cite{Gokhberg:Krein:1965} (see also \cite{Grothendieck:1955}). First explicit study of those ideals which are direct noncommutative analogues of Orlicz spaces is due to Rao \cite{Rao:1971,Rao:Ren:1991}, where $\Orlicz$ is assumed to be a  continuous modular function, $\Orlicz(\ab{x})$ for $x\in\BH$ is understood in terms of the spectral representation, an analogue of the MNTL norm reads
\begin{equation}
	\BH\ni x\mapsto\n{x}_\Orlicz:=\inf\left\{\lambda>0\mid\tr\left(\Orlicz\left(\frac{\ab{x}}{\lambda}\right)\right)\leq1\right\},
\end{equation}
while the corresponding noncommutative Orlicz space is defined as
\begin{equation}
	\schatten_\Orlicz(\H):=\{x\in\BH\mid\n{x}_\Orlicz<\infty\}.
\rpktarget{SCHATTENORLICZ}
\end{equation}
The generalisation of Orlicz spaces to semi-finite $W^*$-algebras $\N$ equipped with a faithful normal semi-finite trace $\tau$ were proposed by Muratov \cite{Muratov:1978,Muratov:1979}, Dodds, Dodds, and de Pagter \cite{Dodds:Dodds:dePagter:1989}, and Kunze \cite{Kunze:1990}. Two latter constructions are based on the results of Fack and Kosaki \cite{Fack:Kosaki:1986}. Given any $y\in\MMM(\N,\tau)$, the \df{rearrangement function} is defined as \cite{Grothendieck:1955} (see also \cite{Yeadon:1975})
\begin{equation}
	\rearr{y}{\tau}:[0,\infty[\,\ni t\mapsto\rearr{y}{\tau}(t):=\inf\{s\geq0\mid\tau(\pvm^{\ab{x}}(]s,+\infty[)\leq t\}\in[0,\infty].
\rpktarget{REARRANGEMENT}
\end{equation}
If $x\in\MMM(\N,\tau)^+$ and $f:[0,\infty[\ra[0,\infty[$ is a continuous nondecreasing function, then \cite{Fack:Kosaki:1986}
\begin{align}
	\tau(f(x))&=\int_0^\infty\dd t f(\rearr{x}{\tau}(t)),\label{FK.property.one}\\
	\rearr{f(x)}{\tau}(t)&=f(\rearr{x}{\tau}(t))\;\;\forall t\in\RR^+.\label{FK.property.two}
\end{align}
Using this result, Kunze \cite{Kunze:1990} defined a noncommutative Orlicz space associated with a pair $(\N,\tau)$ and an arbitrary Orlicz function $\Orlicz$ as\rpktarget{LORLICZNT}
\begin{equation}
	L_\Orlicz(\N,\tau):=\Span_\CC\{x\in\MMM(\N,\tau)\mid\tau(\Orlicz(\ab{x}))\leq1\},
\label{Kunze.nc.Orlicz}
\end{equation}
equipped with a quantum version of a MNTL norm,
\begin{equation}
	\n{\cdot}_\Orlicz:\MMM(\N,\tau)\ni x\mapsto\inf\{\lambda>0\mid\tau(\Orlicz(\lambda^{-1}\ab{x}))\leq1\},
\end{equation}
under which, as he proves, \eqref{Kunze.nc.Orlicz} is a Banach space. From linearity, it follows that
\begin{equation}
	L_\Orlicz(\N,\tau)=\{x\in\MMM(\N,\tau)\mid\exists\lambda>0\;\;\tau(\Orlicz(\lambda\ab{x}))<\infty\}.
\end{equation}
On the other hand,  Dodds, Dodds, and de Pagter \cite{Dodds:Dodds:dePagter:1989} defined (implicitly) a noncommutative Orlicz space associated with $(\N,\tau)$ and an Orlicz function $\Orlicz$ as\rpktarget{LORLICZNT.ZWEI}
\begin{equation}
	L_\Orlicz(\N,\tau):=\{x\in\MMM(\N,\tau)\mid\rearr{x}{\tau}\in L_\Orlicz(\RR^+,\mho_{\mathrm{Borel}}(\RR^+),\dd\lambda)\}.
\label{DDdP.nc.Orlicz}
\end{equation}
By means of \eqref{FK.property.one} and \eqref{FK.property.two}, these two definitions are equivalent\footnote{See a discussion in \cite{Labuschagne:Majewski:2008,Labuschagne:2013} of the case when continuity of $\Orlicz$ is relaxed to continuity on $[0,x_\Orlicz[$ and left continuity at $x_\Orlicz$ with $x_\Orlicz\neq+\infty$.}. Kunze \cite{Kunze:1990} showed that, for $\Orlicz$ satisfying global $\triangle_2$ condition,
\begin{align}
	L_\Orlicz(\N,\tau)&=\{x\in\MMM(\N,\tau)\mid\tau(\Orlicz(\ab{x}))<\infty\},\\
	L_\Orlicz(\N,\tau)&=E_\Orlicz(\N,\tau),\\
	(L_\Orlicz(\N,\tau))^\banach&\iso L_{\Orlicz^\Young}(\N,\tau),
\end{align}
where $E_\Orlicz(\N,\tau)$ is defined for any Orlicz function $\Orlicz$ as
\begin{equation}
	E_\Orlicz(\N,\tau):=\overline{\N\cap L_\Orlicz(\N,\tau)}^{\n{\cdot}_\Orlicz}.
\rpktarget{EORLICZNT}
\end{equation}
In \cite{Ayupov:Chilin:Abdullaev:2012} it is shown that if $\tau_1$ and $\tau_2$ are faithful normal semi-finite traces on a semi-finite $W^*$-algebra $\N$, and $\Orlicz$ is an Orlicz function satisfying global $\triangle_2$ condition, then $L_\Orlicz(\N,\tau_1)$ and $L_\Orlicz(\N,\tau_2)$ are isometrically isomorphic. Further analysis of the structure of $L_\Orlicz(\N,\tau)$ spaces in the context of modular function was provided by Sadeghi \cite{Sadeghi:2012}, who showed that the map $\MMM(\N,\tau)\ni x\mapsto \tau(\Orlicz(\ab{x}))\in[0,\infty]$ is a modular function for any Orlicz function $\Orlicz$. He also notes that the results of \cite{Chilin:Krygin:Sukochev:1992} and \cite{Dodds:Dodds:dePagter:1993} allow to infer, respectively, the uniform convexity and reflexivity of the spaces $(L_\Orlicz(\N,\tau),\n{\cdot}_\Orlicz)$ from the corresponding properties of the commutative Orlicz spaces $(L_\Orlicz(\RR^+,\mho_{\mathrm{Borel}}(\RR^+),\dd\lambda),\n{\cdot}_\Orlicz)$. This leads to conclusion that $(L_\Orlicz(\N,\tau),\n{\cdot}_\Orlicz)$ is: uniformly convex if $\Orlicz$ is uniformly convex and satisfies global $\triangle_2$ condition; reflexive if $\Orlicz$ and $\Orlicz^\Young$ satisfy global $\triangle_2$ condition.\footnote{The statement of the sufficient condition for reflexivity in Collorary 4.3 of \cite{Sadeghi:2012} is missing the requirement of the global $\triangle_2$ condition for $\Orlicz^\Young$.}

Al-Rashed and Zegarli\'{n}ski \cite{AlRashed:Zegarlinski:2007,AlRashed:Zegarlinski:2011} proposed a construction of a family of noncommutative Orlicz spaces associated with a faithful normal state on a countably finite $W^*$-algebra. Ayupov, Chilin and Abdullaev \cite{Ayupov:Chilin:Abdullaev:2012} proposed the construction of a family of noncommutative Orlicz spaces $L_\Orlicz(\N,\psi)$ for a semi-finite $W^*$-algebra $\N$, a faithful normal locally finite weight $\psi$, and an Orlicz function $\Orlicz$ satisfying global $\triangle_2$ condition. This construction extends Trunov's theory of $L_p(\N,\tau)$ spaces \cite{Trunov:1979,Trunov:1981,Zolotarev:1988}. Labuschagne \cite{Labuschagne:2013} provided a construction of the family of noncommutative Orlicz spaces $L_\Orlicz(\N,\psi)$ associated with an arbitrary $W^*$-algebra and a faithful normal semi-finite weight $\psi$. This construction uses Haagerup's approach to noncommutative integration and is quite complicated, losing the direct structural analogy between commutative and noncommutative Orlicz spaces.\footnote{In addition, various constructions of noncommutative Orlicz spaces associated with a Young function $\Orlicz(x)=\cosh(x)-1$ were given in \cite{Streater:2004:Orlicz,Jencova:2005,Streater:2008,Labuschagne:Majewski:2008,Streater:2009:book,Streater:2010:Banach,Jencova:2010,Majewski:Labuschagne:2014}.} We propose here an alternative construction, based on the Falcone--Takesaki approach to noncommutative integration. 

\begin{definition}
For an arbitrary $W^*$-algebra $\N$ and arbitrary Orlicz function $\Orlicz$, we define a \df{noncommutative Orlicz space} as a vector space\rpktarget{LORLICZN}
\begin{equation}
	L_\Orlicz(\N):=\{x\in\MMM(\core,\taucore)\mid\exists\lambda>0\;\;\taucore(\Orlicz(\lambda\ab{x}))<\infty\},\label{nc.Orlicz.space}
\end{equation}
equipped with the norm
\begin{equation}
	\n{\cdot}_\Orlicz:\MMM(\core,\taucore)\ni x\mapsto\inf\{\lambda>0\mid\taucore(\Orlicz(\lambda^{-1}\ab{x}))\leq1\}.\label{nc.Orlicz.norm}
\end{equation}
In addition, we define
\begin{equation}
	E_\Orlicz(\N):=\overline{\N\cap L_\Orlicz(\N)}^{\n{\cdot}_\Orlicz}.
\rpktarget{EORLICZN}
\end{equation}
\end{definition}

Because $\core$ is a semi-finite von Neumann algebra, while $\taucore$ is a faithful normal semi-finite trace on $\core$, all above results on the Banach space structure of $L_\Orlicz(\N,\tau)$ immediately apply to $L_\Orlicz(\N)$. In particular, if $\Orlicz$ satisfies global $\triangle_2$ condition, then $L_\Orlicz(\N)=E_\Orlicz(\N)$ and $L_\Orlicz(\N)^\banach\iso L_{\Orlicz^\Young}(\N)$. Moreover: if $\Orlicz$ is also uniformly convex, then $L_\Orlicz(\N)$ is uniformly convex and $L_{\Orlicz^\Young}(\N)$ is uniformly Fr\'{e}chet differentiable; if $\Orlicz^\Young$ also satisfies global $\triangle_2$ condition, then $L_\Orlicz(\N)$ is reflexive. A particular example of the space $E_\Orlicz(\N)$ is considered in \cite{Jencova:2005,Jencova:2010}.

For any choice of a normal semi-finite weight $\psi$ on $\N$, $\core$ can be represented as $\N\rtimes_{\sigma^\psi}\RR$ by means of \eqref{core.iso.map}. In such case our construction provides an alternative to Labuschagne's. They do not coincide, because the representation of a canonical trace $\taucore$ on $\N\rtimes_{\sigma^\psi}\RR$, given by $\taucore(\coreiso_\psi\cdot\coreiso_\psi^*)$, differs from Haagerup's trace $\tilde{\tau}_\psi$. 

\begin{proposition}
Every $*$-isomorphism $\varsigma:\N_1\ra\N_2$ of $W^*$-algebras gives rise to a corresponding isometric isomorphism $L_\Orlicz(\N_1)\ra L_\Orlicz(\N_2)$.
\end{proposition}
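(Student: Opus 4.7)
The plan is to reduce the statement to the functoriality of the Falcone--Takesaki noncommutative flow of weights and the induced lift to the spaces of $\taucore$-measurable operators. By the theorem stated in Section~\ref{FT.core.algebra.section}, every $*$-isomorphism $\varsigma:\N_1\ra\N_2$ of $W^*$-algebras lifts, via the functor $\WstarCore=\VNCore\circ\CanVN$, to a $*$-isomorphism $\widetilde{\varsigma}:\core_1\ra\core_2$ of the canonical core algebras satisfying $\widetilde{\varsigma}\circ\tilde{\sigma}^1_s=\tilde{\sigma}^2_s\circ\widetilde{\varsigma}$ for all $s\in\RR$ and the intertwining of canonical traces $\taucore_1=\taucore_2\circ\widetilde{\varsigma}$. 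This is precisely the arrow-level part of the functor $\FTflow$ composed with $\CanVN$, and it will be the starting input for everything that follows.

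Next I would extend $\widetilde{\varsigma}$ to the $*$-algebras of $\taucore$-measurable operators. Since $\widetilde{\varsigma}$ preserves the canonical trace and is a weakly-$\star$ continuous $*$-isomorphism of semi-finite von Neumann algebras, it carries the $\taucore_1$-topology on $\core_1$ bijectively to the $\taucore_2$-topology on $\core_2$, because both are generated by the families of neighbourhoods defined through spectral projections of positive elements and trace values on them. By the uniqueness of the completion of $\core_i$ in the $\taucore_i$-topology as a topological $*$-algebra (see the remarks following the definition of $\MMM(\N,\tau)$ in Section~\ref{integration.trace.section}), $\widetilde{\varsigma}$ extends uniquely to a topological $*$-algebra isomorphism $\bar{\varsigma}:\MMM(\core_1,\taucore_1)\ra\MMM(\core_2,\taucore_2)$. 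Moreover, $\bar{\varsigma}$ preserves absolute values (as it is a $*$-isomorphism it commutes with polar decomposition, hence $\ab{\bar{\varsigma}(x)}=\bar{\varsigma}(\ab{x})$) and spectral projections, and from $\taucore_1=\taucore_2\circ\widetilde{\varsigma}$ on $\core_1^+$ combined with the extension formula \eqref{trace.extension.to.affN.plus} one obtains $\taucore_1(x)=\taucore_2(\bar{\varsigma}(x))$ for every $x\in\MMM(\core_1,\taucore_1)^+$.

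The verification that $\bar{\varsigma}$ carries $L_\Orlicz(\N_1)$ onto $L_\Orlicz(\N_2)$ isometrically is then routine. By continuity of the Orlicz function $\Orlicz$, its functional calculus commutes with $\bar{\varsigma}$, i.e.\ $\Orlicz(\lambda^{-1}\ab{\bar{\varsigma}(x)})=\bar{\varsigma}(\Orlicz(\lambda^{-1}\ab{x}))$ for every $\lambda>0$ and $x\in\MMM(\core_1,\taucore_1)$. Applying the trace-preservation from the previous step gives
\begin{equation}
\taucore_2\left(\Orlicz(\lambda^{-1}\ab{\bar{\varsigma}(x)})\right)=\taucore_1\left(\Orlicz(\lambda^{-1}\ab{x})\right)\;\;\forall\lambda>0,
\end{equation}
so that $\{\lambda>0\mid\taucore_2(\Orlicz(\lambda^{-1}\ab{\bar{\varsigma}(x)}))\leq 1\}=\{\lambda>0\mid\taucore_1(\Orlicz(\lambda^{-1}\ab{x}))\leq 1\}$. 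Taking infima and recalling \eqref{nc.Orlicz.norm} yields $\n{\bar{\varsigma}(x)}_\Orlicz=\n{x}_\Orlicz$, whence $\bar{\varsigma}$ restricts to a linear isometric bijection between the subspaces \eqref{nc.Orlicz.space} of $\MMM(\core_1,\taucore_1)$ and $\MMM(\core_2,\taucore_2)$.

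The main obstacle I expect is step two, i.e.\ the clean verification that $\widetilde{\varsigma}$ extends to a topological $*$-isomorphism $\bar{\varsigma}$ of the completions with $\taucore_2\circ\bar{\varsigma}=\taucore_1$ on positive elements. The argument is standard in the semi-finite trace setting, but one must verify it carefully in the Falcone--Takesaki framework: one uses that $\widetilde{\varsigma}$ maps $\nnn_{\taucore_1}$ bijectively onto $\nnn_{\taucore_2}$ (because $\taucore_1=\taucore_2\circ\widetilde{\varsigma}$), that it intertwines the spectral projections of any $x\in\core_i^+$, and that the basic $\taucore_i$-topology neighbourhoods $N_{\epsilon_1,\epsilon_2}(0)$ are described intrinsically in terms of such spectral data. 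Once this is in place, everything else is a direct computation using the functional calculus and the intrinsic algebraic description of $L_\Orlicz(\N)$ via \eqref{nc.Orlicz.space}--\eqref{nc.Orlicz.norm}.
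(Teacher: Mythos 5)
Your proof follows essentially the same route as the paper's: lift $\varsigma$ functorially to a trace-preserving, covariance-intertwining $*$-isomorphism $\widetilde{\varsigma}$ of the canonical cores, extend it to the algebras of $\taucore$-measurable operators (the paper invokes Corollary 38 of Terp's work for this step, which you instead sketch directly), and then verify norm preservation. The only cosmetic difference is in the final computation: the paper rewrites $\n{\cdot}_\Orlicz$ via the rearrangement function and checks that $\taucore_2\left(\pvm^{\ab{\widetilde{\varsigma}(x)}}(]s,+\infty[)\right)=\taucore_1\left(\pvm^{\ab{x}}(]s,+\infty[)\right)$, while you apply the extended trace equality directly to $\Orlicz(\lambda^{-1}\ab{x})$ via functional calculus --- the two computations are equivalent.
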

\begin{proof}
By the Falcone--Takesaki construction, and its composition \eqref{ctft.cat.diag} with Kosaki's construction, $\varsigma:\N_1\ra\N_2$ induces a $*$-isomorphism $\widetilde{\varsigma}:\core_1\ra\core_2$ of semi-finite von Neumann algebras and a mapping $(\core_1,\RR,\tilde{\sigma}^1,\taucore_1)\ra(\core_2,\RR,\tilde{\sigma}^2,\taucore_2)$ satisfying
\begin{align}
	\widetilde{\varsigma}\circ\tilde{\sigma}^1_t&=\tilde{\sigma}^2_t\circ\widetilde{\varsigma}\;\;\forall t\in\RR,\label{widetilde.covariance.insanity.one}\\
	\taucore_1&=\taucore_2\circ\widetilde{\varsigma}.\label{widetilde.covariance.insanity.two}
\end{align}
By Collorary 38 in \cite{Terp:1981}, every $*$-isomorphism of semi-finite von Neumann algebras satisfying \eqref{widetilde.covariance.insanity.one} and \eqref{widetilde.covariance.insanity.two} extends to a topological $*$-isomorphism of corresponding spaces of $\tau$-measurable operators affiliated with these algebras, and this extension preserves the property \eqref{widetilde.covariance.insanity.two}. The $*$-isomorphism $\widetilde{\varsigma}$ extends to $\bar{\varsigma}:\MMM(\core_1,\taucore_1)\ra\MMM(\core_2,\taucore_2)$ by $\bar{\varsigma}(\cdot)=u(\cdot)u^*$, where $u$ is a unitary operator implementing $\widetilde{\varsigma}(\cdot)=u(\cdot)u^*$. It remains to show that $\bar{\varsigma}$ is an isometric isomorphism. Using \eqref{FK.property.one}, we can rewrite \eqref{nc.Orlicz.norm} as
\begin{equation}
	\n{x}_\Orlicz=\inf\{\lambda>0\mid\int_0^\infty\dd t\Orlicz(\lambda^{-1}\rearr{\ab{x}}{\taucore}(t))<\infty\},
\end{equation}
where
\begin{equation}
	\rearr{\ab{x}}{\taucore}(t)=\inf\{s\geq0\mid\taucore(\pvm^{\ab{x}}(]s,+\infty[))\leq t\}.
\end{equation}
For $\MMM(\core_2,\taucore_2)=\MMM(\widetilde{\varsigma}(\core_1),\taucore_1\circ\widetilde{\varsigma}^{-1})$ and $x\in\MMM(\core_1,\taucore_1)$ we have
\begin{equation}
\taucore_1\circ\widetilde{\varsigma}^{\,-1}\left(\pvm^{\ab{\widetilde{\varsigma}(x)}}(s,+\infty[)\right)=\taucore_1\circ\widetilde{\varsigma}^{\,-1}\circ\widetilde{\varsigma}(\pvm^{\ab{x}}(]s,+\infty[))=\taucore_1(\pvm^{\ab{x}}(]s,+\infty[)).
\end{equation}
Hence, $\bar{\varsigma}:L_\Orlicz(\N_1)\ra L_\Orlicz(\N_2)$ is an isometric isomorphism.
\end{proof}

Denoting the category of noncommutative Orlicz spaces $L_\Orlicz(\N)$ with isometric isomorphisms by $\catname{ncL}_\Orlicz\catname{Iso}$, we conclude that our construction determines a functor
\begin{equation}
	\mathrm{ncL}_\Orlicz:\WsIso\ra\catname{ncL}_\Orlicz\catname{Iso}.
\end{equation}
Following the results of Sherman \cite{Sherman:2005}, we end this section with an interesting problem: for which Orlicz functions $\Orlicz$ there exists a functor from $\catname{ncL}_\Orlicz\catname{Iso}$ to the category of $W^*$-algebras with surjective Jordan $*$-isomorphisms?


\section*{References}
\addcontentsline{toc}{section}{References}
{%
\scriptsize

Note: All links provided in references link to the free access files. Files and digital copies that are subject to any sort of restricted access were not linked. See \href{http://michaelnielsen.org/polymath1/index.php?title=Journal_publishing_reform}{michaelnielsen.org/polymath1/index.php?title=Journal\_publishing\_reform} for the reasons why.

\begingroup
\raggedright
\renewcommand\refname{\vskip -1cm}

\begin{thebibliography}{100}

\bibitem{AlRashed:Zegarlinski:2007}
{Al-Rashed M.H.A., Zegarli\'{n}ski B.}, 2007, \textit{Noncommutative Orlicz
  spaces associated to a state}, Studia Math. \textbf{180}, 199.

\bibitem{AlRashed:Zegarlinski:2011}
{Al-Rashed M.H.A., Zegarli\'{n}ski B.}, 2011, \textit{Noncommutative Orlicz
  spaces associated to a state II}, Lin. Alg. Appl. \textbf{435}, 2999.
  \href{http://libgen.org/scimag/get.php?doi=10.1016/j.laa.2010.10.027}{libgen.org:scimag/get.php?doi=10.1016/j.laa.2010.10.027}.

\bibitem{Alaoglu:1940}
{Alao\u{g}lu L.}, 1940, \textit{Weak topologies of normed linear spaces}, Ann.
  Math. \textbf{41}, 252.
  \href{http://libgen.org/scimag/get.php?doi=10.2307/1968829}{libgen.org/scimag/get.php?doi=10.2307/1968829}.

\bibitem{Aleksandrov:1940}
{Aleksandrov A.D.}, 1940, \textit{Additive set-functions in abstract spaces},
  Matem. Sb. N.S. \textbf{8}, 307.
  \href{http://mi.mathnet.ru/msb6032}{mathnet.ru:msb6032}.

\bibitem{Aleksandrov:1941}
{Aleksandrov A.D.}, 1941, \textit{Additive set-functions in abstract spaces},
  Matem. Sb. N.S. \textbf{9}, 563.
  \href{http://mi.mathnet.ru/msb6105}{mathnet.ru:msb6105}.

\bibitem{Aleksandrov:1943}
{Aleksandrov A.D.}, 1943, \textit{Additive set-functions in abstract spaces},
  Matem. Sb. N.S. \textbf{13}, 169.
  \href{http://mi.mathnet.ru/msb6177}{mathnet.ru:msb6177}.

\bibitem{Alfsen:1971}
{Alfsen E.M.}, 1971, \textit{Compact convex sets and boundary integrals},
  Springer, Berlin.
  \href{http://libgen.org/get?open=0&md5=EC849C9F200202BC0F57C1830368DF3B}{libgen.org:EC849C9F200202BC0F57C1830368DF3B}.

\bibitem{Amemiya:1953}
{Amemiya I.}, 1953, \textit{A genaralization of Riesz--Fischer's theorem}, J.
  Math. Soc. Japan \textbf{5}, 353.
  \href{http://projecteuclid.org/euclid.jmsj/1261415457}{euclid:jmsj/1261415457}.

\bibitem{Ando:1969}
{And\={o} T.}, 1969, \textit{Banachverb\"{a}nde und positive Projektionen},
  Math. Z. \textbf{109}, 121.
  \href{http://gdz.sub.uni-goettingen.de/dms/load/img/?PPN=GDZPPN002404192&IDDOC=168945}{gdz.sub.uni-goettingen.de/dms/load/img/?PPN=GDZPPN002404192\&IDDOC=168945}.

\bibitem{Antonne:1902}
{Antonne L.}, 1902, \textit{Sur l'hermitien}, Rend. Circ. Mat. Palermo
  \textbf{16}, 104.

\bibitem{Ara:Goldstein:1993}
{Ara P., Goldstein D.}, 1993, \textit{A solution to the matrix problem for
  Rickart $C^*$-algebras}, Math. Nachr. \textbf{164}, 259.

\bibitem{Araki:1968}
{Araki H.}, 1968, \textit{Multiple time analyticity of a quantum statistical
  state satisfying the KMS boundary condition}, Publ. Res. Inst. Math. Sci.
  Ky\={o}to Univ. \textbf{4}, 361.
  \href{http://dx.doi.org/10.2977/prims/1195194880}{dx.doi.org/10.2977/prims/1195194880}.

\bibitem{Araki:1969}
{Araki H.}, 1969, \textit{Gibbs states of a one dimensional quantum lattice},
  Commun. Math. Phys. \textbf{14}, 120.
  \href{http://projecteuclid.org/euclid.cmp/1103841726}{euclid:cmp/1103841726}.

\bibitem{Araki:1973:relative:hamiltonian}
{Araki H.}, 1973, \textit{Relative hamiltonian for faithful normal states of
  von Neumann algebra}, Publ. Res. Inst. Math. Sci. Ky\={o}to Univ. \textbf{9},
  165.
  \href{http://dx.doi.org/10.2977/prims/1195192744}{dx.doi.org/10.2977/prims/1195192744}.

\bibitem{Araki:1974:modular:conjugation}
{Araki H.}, 1974, \textit{Some properties of the modular conjugation operator
  of von Neumann algebras and a non-commutative Radon--Nikodym theorem with a
  chain rule}, Pacific J. Math. \textbf{50}, 309.
  \href{http://projecteuclid.org/euclid.pjm/1102913224}{euclid:pjm/1102913224}.

\bibitem{Araki:1976:relham:relent}
{Araki H.}, 1976, \textit{Introduction to relative hamiltonian and relative
  entropy}, in: Guerra F., Robinson D.W., Stora R. (eds.), \textit{Les methodes
  mathematiques de la theorie quantique des champs, Marseille, 23-27 juin
  1975}, Colloques Internationaux C.N.R.S. \textbf{248}, \'{E}ditions du
  C.N.R.S., Paris, p.782.

\bibitem{Araki:1977:relative:entropy:II}
{Araki H.}, 1977, \textit{Relative entropy for states of von Neumann algebras
  II}, Publ. Res. Inst. Math. Sci. Ky\={o}to Univ. \textbf{13}, 173.
  \href{http://dx.doi.org/10.2977/prims/1195190105}{dx.doi.org/10.2977/prims/1195190105}.

\bibitem{Araki:1978}
{Araki H.}, 1978, \textit{On KMS states of a $C^*$-dynamical system}, in: Araki
  H., Kadison R.V. (eds.), \textit{$C^*$-algebras and applications to physics.
  Proceedings of Second Japan--USA seminar, Los Angeles, April 18-22, 1977},
  LNM \textbf{650}, Springer, Berlin, p.66.

\bibitem{Araki:Elliott:1973}
{Araki H., Elliott G.}, 1973, \textit{On the definition of $C^*$ algebras},
  Publ. Res. Inst. Math. Sci. Ky\={o}to Univ. \textbf{9}, 93.
  \href{http://dx.doi.org/10.2977/prims/1195192739}{dx.doi.org/10.2977/prims/1195192739}.

\bibitem{Araki:Masuda:1982}
{Araki H., Masuda T.}, 1982, \textit{Positive cones and $L_p$-spaces for von
  Neumann algebras}, Publ. Res. Inst. Math. Sci. Ky\={o}to Univ. \textbf{18},
  339.
  \href{http://dx.doi.org/10.2977/prims/1195183577}{dx.doi.org/10.2977/prims/1195183577}.

\bibitem{Araki:Miyata:1968}
{Araki H., Miyata H.}, 1968, \textit{On KMS boundary condition}, Publ. Res.
  Inst. Math. Sci. Ky\={o}to Univ. Ser. A \textbf{4}, 373.
  \href{http://dx.doi.org/10.2977/prims/1195194881}{dx.doi.org/10.2977/prims/1195194881}.

\bibitem{Araki:Sewell:1977}
{Araki H., Sewell G.L.}, 1977, \textit{KMS conditions and local thermodynamical
  stability of quantum lattice systems}, Commun. Math. Phys. \textbf{52}, 103.
  \href{http://projecteuclid.org/euclid.cmp/1103900491}{euclid:cmp/1103900491}.

\bibitem{Araki:Woods:1968}
{Araki H., Woods E.J.}, 1968, \textit{A classification of factors}, Publ. Res.
  Inst. Math. Sci. Ky\={o}to Univ. Ser. A \textbf{4}, 51.
  \href{http://dx.doi.org/10.2977/prims/1195195263}{dx.doi.org/10.2977/prims/1195195263}.

\bibitem{Arazy:1975}
{Arazy J.}, 1975, \textit{The isometries of $C_p$}, Israel J. Math.
  \textbf{22}, 247.
  \href{http://libgen.org/scimag/get.php?doi=10.1007/bf02761592}{libgen.org/scimag/get.php?doi=10.1007/bf02761592}.

\bibitem{Arveson:1976}
{Arveson W.}, 1976, \textit{An invitation to $C^*$-algebras}, GTM \textbf{39},
  Springer, Berlin.
  \href{http://libgen.org/get?open=0\&md5=C379A3BA57F78DE6D8168664206EE693}{libgen.org:C379A3BA57F78DE6D8168664206EE693}.

\bibitem{Ayupov:Chilin:Abdullaev:2012}
{Ayupov Sh.A., Chilin V.I., Abdullaev R.Z.}, 2012, \textit{Orlicz spaces
  associated with a semi-finite von Neumann algebra}, Comment. Math. Univ.
  Carolinae \textbf{53}, 519.
  \href{http://www.arxiv.org/pdf/1108.3267}{arXiv:1108.3267}.

\bibitem{Baaj:Jungl:1983}
{Baaj S., Jungl P.}, 1983, \textit{Th\'{e}orie bivariante de Kasparov et
  op\'{e}rateurs non born\'{e}s dans les $C^*$-modules hilbertiens}, Compt.
  Rend. Acad. Sci. Paris \textbf{296}, 875.

\bibitem{Bade:1971}
{Bade W.G.}, 1971, \textit{The Banach space $C(S)$}, Aarhus Universitet,
  Lecture Notes No. \textbf{26}, Aarhus.

\bibitem{Banach:1922}
{Banach S.}, 1922, \textit{Sur les op\'{e}rations dans les ensembles abstraits
  et leur application aux \'{e}quations int\'{e}grales}, Fund. Math.
  \textbf{3}, 133.
  \href{http://matwbn.icm.edu.pl/ksiazki/fm/fm3/fm3120.pdf}{matwbn.icm.edu.pl/ksiazki/fm/fm3/fm3120.pdf}.

\bibitem{Banach:1929}
{Banach S.}, 1929, \textit{Sur les fonctionnelles lin\'{e}aires}, Studia Math.
  \textbf{1}, 211, 223.
  \href{http://matwbn.icm.edu.pl/ksiazki/sm/sm1/sm1111.pdf}{matwbn.icm.edu.pl/ksiazki/sm/sm1/sm1111.pdf}.

\bibitem{Banach:1931}
{Banach S.}, 1931, \textit{Teoria operacyj, Tom I, Operacje liniowe},
  Wydawnictwo Kasy Mianowskiego, Warszawa (fr. transl. rev. ed., 1932,
  \textit{Th\'{e}orie des op\'{e}rations lin\'{e}aires}, Monografie
  Matematyczne \textbf{1}, Fundusz Kultury Narodowej, Warszawa 1932.
  \href{http://webs.um.es/joseori/documentos/analisisFuncional/preliminares/Banach-book.pdf}{webs.um.es/joseori/documentos/analisisFuncional/preliminares/Banach-book.pdf};
  Engl. transl. 1987, \textit{Theory of linear operations}, North-Holland,
  Amsterdam).

\bibitem{Banaschewski:1976}
{Banaschewski B.}, 1976, \textit{The duality between $M$-spaces and compact
  Hausdorff spaces}, Math. Nachr. \textbf{75}, 41.
  \href{http://libgen.org/scimag/get.php?doi=10.1002/mana.19760750105}{libgen.org:scimag/get.php?doi=10.1002/mana.19760750105}.

\bibitem{Barut:Raczka:1977}
{Barut A.O., R\k{a}czka R.}, 1977, \textit{Theory of group representations and
  applications}, Polskie Wydawnictwo Naukowe, Warszawa (2nd rev. ed., 1980).
  \href{http://libgen.org/get?open=0&md5=D1508EDD2C9374B65A89F73390848DBF}{libgen.org:D1508EDD2C9374B65A89F73390848DBF}.

\bibitem{Berberian:1972}
{Berberian S.K.}, 1972, \textit{Baer $*$-rings}, Springer, Berlin (2nd ed.
  2011).
  \href{http://libgen.org/get?open=0&md5=CCCC243A588BEFAFB3CB7F2A763621E0}{libgen.org:CCCC243A588BEFAFB3CB7F2A763621E0}.

\bibitem{Beukers:Huijsmans:dePagter:1983}
{Beukers F., Huijsmans C.B., de Pagter B.}, 1983, \textit{Unital embedding and
  complexification of $f$-algebras}, Math. Z. \textbf{183}, 131.
  \href{http://gdz.sub.uni-goettingen.de/dms/load/img/?PPN=GDZPPN002427753&IDDOC=171006}{gdz.sub.uni-goettingen.de/dms/load/img/?PPN=GDZPPN002427753\&IDDOC=171006}.

\bibitem{Bezhanishvili:2010}
{Bezhanishvili G.}, 2010, \textit{Stone duality and Gleason covers through de
  Vries duality}, Topol. Appl. \textbf{157}, 1064.
  \href{http://sierra.nmsu.edu/gbezhani/deVries.pdf}
  {sierra.nmsu.edu/gbezhani/deVries.pdf}.

\bibitem{Bichteler:1998}
{Bichteler K.}, 1998, \textit{Integration: a functional approach},
  Birkh\"{a}user, Basel.
  \href{http://libgen.org/get?open=0&md5=992AC0F0CE55B0D2D6D6A1D7C20D661F}{libgen.org:992AC0F0CE55B0D2D6D6A1D7C20D661F}.

\bibitem{Billik:1957}
{Billik M.}, 1957, \textit{Orlicz spaces}, Master thesis, MIT, Cambridge.

\bibitem{Birkhoff:1938}
{Birkhoff G.}, 1938, \textit{Dependent probabilities and spaces $(L)$}, Proc.
  Nat. Acad. Sci. U.S.A. \textbf{24}, 154.
  \href{https://www.ncbi.nlm.nih.gov/pmc/articles/PMC1077053/pdf/pnas01791-0050.pdf}{www.ncbi.nlm.nih.gov/pmc/articles/PMC1077053/pdf/pnas01791-0050.pdf}.

\bibitem{Birkhoff:1940}
{Birkhoff G.}, 1940, \textit{Lattice theory}, American Mathematical Society,
  Providence (3rd rev. ed., 1967).
  \href{http://libgen.org/get?open=0&md5=E25AFE588D80C627BD3013ABA587DC06}{libgen.org:E25AFE588D80C627BD3013ABA587DC06}.

\bibitem{Birkhoff:Pierce:1956}
{Birkhoff G., Pierce R.S.}, 1956, \textit{Lattice-ordered rings}, An. Acad.
  Brasil. Ci\^{e}nc. \textbf{28}, 41.

\bibitem{Birnbaum:Orlicz:1930}
{Birnbaum Z.W., Orlicz W.}, 1930, \textit{\"{U}ber Approximation im Mittel},
  Studia Math. \textbf{2}, 197.
  \href{http://matwbn.icm.edu.pl/ksiazki/sm/sm2/sm2117.pdf}{matwbn.icm.edu.pl/ksiazki/sm/sm2/sm2117.pdf}.

\bibitem{Birnbaum:Orlicz:1931}
{Birnbaum Z.W., Orlicz W.}, 1931, \textit{\"{U}ber die Verallgemeinerung des
  Begriffes der zueinander konjugierten Potenzen}, Studia Math. \textbf{3}, 1.
  \href{http://matwbn.icm.edu.pl/ksiazki/sm/sm3/sm311.pdf}{matwbn.icm.edu.pl/ksiazki/sm/sm3/sm311.pdf}.

\bibitem{Bishop:deLeeuw:1959}
{Bishop E., de Leeuw K.}, 1959, \textit{The representations of linear
  functionals by measures on sets of extreme points}, Ann. Inst. Fourier
  Grenoble \textbf{9}, 305.
  \href{http://archive.numdam.org/article/AIF_1959__9__305_0.pdf}{numdam:AIF\_1959\_\_9\_\_305\_0}.

\bibitem{Blackadar:2005}
{Blackadar B.}, 2005, \textit{Operator algebras: theory of $C^*$-algebras nad
  von Neumann algebras}, Springer, Berlin.
  \href{http://libgen.org/get?open=0&md5=022F575F2FC853F0BB78BB4CB6109277}{libgen.org:022F575F2FC853F0BB78BB4CB6109277}.

\bibitem{Bochner:1933}
{Bochner S.}, 1933, \textit{Integration von Funktionen, deren Werte die
  Elemente eines Vektorraumes sind}, Fund. Math. \textbf{20}, 262.
  \href{http://http://matwbn.icm.edu.pl/ksiazki/fm/fm20/fm20127.pdf}{matwbn.icm.edu.pl/ksiazki/fm/fm20/fm20127}.

\bibitem{Bogachev:2007}
{Bogachev V.I.}, 2007, \textit{Measure theory}, Vol.1-2, Springer, Berlin.
  \href{http://libgen.org/get?open=0&md5=536F04573E3C8224B0B5302BB24DBE88}{libgen.org:536F04573E3C8224B0B5302BB24DBE88}.

\bibitem{Bohnenblust:1940}
{Bohnenblust H.F.}, 1940, \textit{An axiomatic characterisation of
  $L_p$-spaces}, Duke Math. J. \textbf{6}, 627.

\bibitem{Bohnenblust:Kakutani:1941}
{Bohnenblust H.F., Kakutani S.}, 1941, \textit{Concrete representation of
  $(M)$-spaces}, Ann. Math. \textbf{42}, 1025.
  \href{http://libgen.org/scimag/get.php?doi=10.2307/1968779}{libgen.org:scimag/get.php?doi=10.2307/1968779}.

\bibitem{Bonsall:Duncan:1973}
{Bonsall F.F., Ducan J.}, 1973, \textit{Complete normed algebras}, Springer,
  Berlin.
  \href{http://libgen.org/get?open=0&md5=303E5C6B29AB0E3746585453A54261CF}{libgen.org:303E5C6B29AB0E3746585453A54261CF}.

\bibitem{Boole:1847}
{Boole G.}, 1847, \textit{The mathematical analysis of logic, being an essay
  towards a calculus of deductive reasoning}, Macmillan, Barclay, \&
  Macmillan/Bell, Cambridge/London.
  \href{http://libgen.org/get?open=0&md5=CA3B34EDCE8DF0D0413205615F286FE9}{libgen.org:CA3B34EDCE8DF0D0413205615F286FE9}.

\bibitem{Boole:1854}
{Boole G.}, 1854, \textit{An investigation of the laws of thought, on which are
  founded the mathematical theories of logic and probabilities}, Walton and
  Maberly/Macmillan \& Co, London/Cambridge.
  \href{http://libgen.org/get?open=0&md5=19D2B28FCD9E538E41E4D5FA415A8D94}{libgen.org:19D2B28FCD9E538E41E4D5FA415A8D94}.

\bibitem{Borchers:1983}
{Borchers H.-J.}, 1983, \textit{$C^*$-algebras and automorphism groups},
  Commun. Math. Phys. \textbf{88}, 95.
  \href{http://projecteuclid.org/euclid.cmp/1103922216}{euclid:cmp/1103922216}.

\bibitem{Borchers:1983:symm}
{Borchers H.-J.}, 1983, \textit{Symmetries and covariant representations}, in:
  Blanchard P., Streit L. (eds.), \textit{Dynamics and processes. Proceedings
  of the Third Encounter in Mathematics and Physics, held in Bielefeld, Germany
  Nov. 30 -- Dec. 4, 1981}, LNM \textbf{1031}, Springer, Berlin, p.1.
  \href{http://libgen.org/get?open=0&md5=BDEB931CABAACBC092AD230EA9CAB901}{libgen.org:BDEB931CABAACBC092AD230EA9CAB901}.

\bibitem{Borel:1898}
{Borel \'{E}.}, 1898, \textit{Le\c{c}ons sur la th\'{e}orie des fonctions},
  Gauthier--Villars, Paris (4th ed. 1950).
  \href{http://libgen.org/get?open=0&md5=CDB4F06F36851832C1564D041F78DD65}{libgen.org:CDB4F06F36851832C1564D041F78DD65}.

\bibitem{Boes:1978}
{B\"{o}s W.}, 1978, \textit{A classification for selfdual cones in Hilbert
  space}, Arch. Math. \textbf{30}, 75.
  \href{http://libgen.org/scimag/get.php?doi=10.1007/bf01226022}{libgen.org:scimag/get.php?doi=10.1007/bf01226022}.

\bibitem{Bourbaki:1938}
{Bourbaki N.}, 1938, \textit{Sur les espaces de Banach}, Compt. Rend. Acad.
  Sci. Paris \textbf{206}, 1701.

\bibitem{Bourbaki:1952}
{Bourbaki N.}, 1952, 1956, 1959, 1963, 1969, \textit{Int\'{e}gration}, Ch.I-IV,
  V, VI, VII-VIII, IX, Actualit\'{e}s Scientifiques et Industrielles
  \textbf{1175}, \textbf{1244}, \textbf{1306}, \textbf{1364}, \textbf{1343},
  Hermann, Paris.
  \href{http://libgen.org/get?open=0&md5=4ED0491C7DC1A9BB9AE78466525F28A6}{libgen.org:4ED0491C7DC1A9BB9AE78466525F28A6},
  \href{http://libgen.org/get?open=0&md5=56738F31928C4B8A7C05848437F24702}{libgen.org:56738F31928C4B8A7C05848437F24702},
  \href{http://libgen.org/get?open=0&md5=3F7EB65845FB55079A2B083E7BD20BC6}{libgen.org:3F7EB65845FB55079A2B083E7BD20BC6},
  \href{http://libgen.org/get?open=0&md5=18068016370AF2C09AAF00388DE4794D}{libgen.org:18068016370AF2C09AAF00388DE4794D}.

\bibitem{Bratteli:1986}
{Bratteli O.}, 1986, \textit{Derivations, dissipations and group actions on
  $C^*$-algebras}, LNM \textbf{1229}, Springer, Berlin.
  \href{http://libgen.org/get?open=0&md5=0e1705f79fee6a0eee21a36f152321cd}{libgen.org:0e1705f79fee6a0eee21a36f152321cd}.

\bibitem{Bratteli:Robinson:1976:II}
{Bratteli O., Robinson D.W.}, 1976, \textit{Unbounded derivations of
  $C^*$-algebras. II}, Commun. Math. Phys. \textbf{46}, 11.
  \href{http://projecteuclid.org/euclid.cmp/1103899543}{euclid:cmp/1103899543}.

\bibitem{Bratteli:Robinson:1976}
{Bratteli O., Robinson D.W.}, 1976, \textit{Unbounded derivations of von
  Neumann algebras}, Ann. Inst. Henri Poincar\'{e} A \textbf{25}, 139.
  \href{http://archive.numdam.org/article/AIHPA_1976__25_2_139_0.pdf}{numdam:AIHPA\_1976\_\_25\_2\_139\_0}.

\bibitem{Bratteli:Robinson:1979}
{Bratteli O., Robinson D.W.}, 1979, 1981, \textit{Operator algebras and quantum
  statistical mechanics}, Vol.1-2, Springer, Berlin (2nd ed., 1987, 1997).
  \href{http://libgen.org/get?open=0&md5=10fe324534f787c6047f854057211637}{libgen.org:10fe324534f787c6047f854057211637},
  \href{http://libgen.org/get?open=0&md5=bafb8a86b7ceb8efe729599bdc6ab0a2}{libgen.org:bafb8a86b7ceb8efe729599bdc6ab0a2}.

\bibitem{Broise:1966}
{Broise M.M.}, 1966, \textit{Sur les isomorphismes de certaines alg\`{e}bres de
  von Neumann}, Ann. Sci. \'{E}cole Norm. Sup. \textbf{83}, 91.
  \href{http://archive.numdam.org/article/ASENS_1966_3_83_2_91_0.pdf}{numdam:ASENS\_1966\_3\_83\_2\_91\_0}.

\bibitem{Calderon:1964}
{Calder\'{o}n A.P.}, 1964, \textit{Intermediate spaces and interpolation, the
  complex method}, Studia Math. \textbf{24}, 113.
  \href{http://matwbn.icm.edu.pl/ksiazki/sm/sm24/sm24110.pdf}{matwbn.icm.edu.pl/ksiazki/sm/sm24/sm24110.pdf}.

\bibitem{Canderolo:Volcic:2002}
{Canderolo D., Vol\v{c}i\v{c} A.}, 2002, \textit{Radon--Nikod\'{y}m theorems},
  in: Pap E. (ed.), \textit{Handbook of measure theory}, Vol.1, North-Holland,
  Amsterdam, p.249.
  \href{http://libgen.org/get?open=0&md5=5DD785AB146D8478B93ACC810B114377}{libgen.org:5DD785AB146D8478B93ACC810B114377}.

\bibitem{Caratheodory:1938}
{Carath\'{e}odory C.}, 1938, \textit{Entwurf f\"{u}r eine Algebraisierung des
  Integralbegriffs}, Sitzungsber. Math.-Naturw. Kl. Bayer. Akad. Wiss.
  M\"{u}nchen \textbf{1938}, 27.

\bibitem{Caratheodory:1956}
{Carath\'{e}odory C.}, 1956, \textit{Mass und Integral und ihre
  Algebraisierung}, Lehrb\"{u}cher und Monographien aus dem Gebiete der exakten
  Wissenschaften, Math. Reihe, Bd. \textbf{10}, Birkh\"{a}user, Basel (Engl.
  transl. 1963, \textit{Algebraic theory of measure and integration}, Chelsea,
  New York; 2nd ed. 1986).

\bibitem{Carleman:1923}
{Carleman T.}, 1923, \textit{Sur les \'{e}quations int\'{e}grales
  singuli\`{e}res \`{a} noyau r\'{e}el et sym\'{e}trique}, Uppsala Universitets
  \r{A}rsskrift, Uppsala.

\bibitem{Carrillo:2002}
{Carrillo M.D.}, 2002, \textit{Daniell integral and related topics}, in: Pap E.
  (ed.), \textit{Handbook of measure theory}, Vol.1, North-Holland, Amsterdam,
  p.503.
  \href{http://libgen.org/get?open=0&md5=5DD785AB146D8478B93ACC810B114377}{libgen.org:5DD785AB146D8478B93ACC810B114377}.

\bibitem{Cartan:1940}
{Cartan H.}, 1940, \textit{Sur la mesure de Haar}, Compt. Rend. Acad. Sci.
  Paris \textbf{211}, 759.
  \href{http://www.univie.ac.at/nuhag-php/bibtex/open_files/4197_ca400001.pdf}{www.univie.ac.at/nuhag-php/bibtex/open\_files/4197\_ca400001.pdf}.

\bibitem{Cecchini:1978}
{Cecchini C.}, 1978, \textit{On two definitions of measurable and locally
  measurable operators}, Bollettino U.M.I. A \textbf{15}, 526.
  \href{http://dmitripavlov.org/scans/cecchini.pdf}{dmitripavlov.org/scans/cecchini.pdf}.

\bibitem{Cecchini:1984}
{Cecchini C.}, 1984, \textit{Non commutative integration and conditioning}, in:
  Accardi L., Frigerio A., Gorini V. (eds.), \textit{Quantum probability and
  applications to the quantum theory of irreversible processes. Proceedings of
  the International Workshop held at Villa Mondragone, Italy, September 6-11,
  1982}, LNM \textbf{1055}, Springer, Berlin, p.76.
  \href{http://libgen.org/get?open=0&md5=4BEA16A7EE0AC087A4883784D449C36B}{libgen.org:4BEA16A7EE0AC087A4883784D449C36B}.

\bibitem{Cecchini:1985}
{Cecchini C.}, 1985, \textit{Non commutative $L^p$ spaces and K.M.S.
  functions}, in: Accardi L., von Waldenfels W. (eds.), \textit{Quantum
  probability and applications II. Proceedings of a workshop held in
  Heidelberg, West Germany, October 1-5, 1984}, LNM \textbf{1136}, Springer,
  Berlin, p.136.
  \href{http://libgen.org/get?open=0&md5=EC968C53EC1F4BE1243A4749E6DBEB4D}{libgen.org:EC968C53EC1F4BE1243A4749E6DBEB4D}.

\bibitem{Cecchini:1986}
{Cecchini C.}, 1986, \textit{Noncommutative integration for states on von
  Neumann algebras}, J. Oper. Theor. \textbf{15}, 217.
  \href{http://www.theta.ro/jot/archive/1986-015-002/1986-015-002-002.pdf}{www.theta.ro/jot/archive/1986-015-002/1986-015-002-002.pdf}.

\bibitem{Cecchini:1988}
{Cecchini C.}, 1988, \textit{Some noncommutative Radon--Nikodym theorems for
  von Neumann algebras}, in: Accardi L., von Waldenfels W. (eds.),
  \textit{Quantum probability and applications III. Proceedings of a conference
  held in Obserwolfach, FRG, January 25-31, 1987}, LNM \textbf{1303}, Springer,
  Berlin, p.52.
  \href{http://libgen.org/get?open=0&md5=9F0E1284D0D854BCD6641F802BED42CE}{libgen.org:9F0E1284D0D854BCD6641F802BED42CE}.

\bibitem{Chen:1996}
{Chen S.}, 1996, \textit{Geometry of Orlicz spaces}, Dissertationes Math.
  \textbf{356}, Instytut Matematyki Polskiej Akademii Nauk, Warszawa.

\bibitem{Chilin:Krygin:Sukochev:1992}
{Chilin V.I., Krygin A.W., Sukochev F.A.}, 1992, \textit{Uniform and local
  uniform convexity of symmetric spaces of measurable operators}, Math. Proc.
  Cambridge Phil. Soc. \textbf{111}, 355.

\bibitem{Ching:1969}
{Ching W.-M.}, 1969, \textit{Non-isomorphic non-hyperfinite factors}, Canad. J.
  Math. \textbf{21}, 1293.
  \href{http://cms.math.ca/cjm/v21/cjm1969v21.1293-1308.pdf}{cms.math.ca/cjm/v21/cjm1969v21.1293-1308.pdf}.

\bibitem{Choi:Effros:1974}
{Choi M.-D., Effros E.}, 1974, \textit{Injectivity and operator spaces}, J.
  Funct. Anal. \textbf{24}, 156.
  \href{http://libgen.org/scimag/get.php?doi=10.1016/0022-1236(77)90052-0}{libgen.org:scimag/get.php?doi=10.1016/0022-1236(77)90052-0}.

\bibitem{Choquet:1956:II}
{Choquet G.}, 1956, \textit{Existence des repr\'{e}sentiations int\'{e}grales
  au moyen des points extr\'{e}maux dans les c\^{o}nes convexes}, Compt. Rend.
  Acad. Sci. Paris \textbf{243}, 699.

\bibitem{Choquet:1956:III}
{Choquet G.}, 1956, \textit{Existence des repr\'{e}sentiations int\'{e}grales
  dans les c\^{o}nes convexes}, Compt. Rend. Acad. Sci. Paris \textbf{243},
  736.

\bibitem{Choquet:1956:IV}
{Choquet G.}, 1956, \textit{Existence unicite des repr\'{e}sentiations
  int\'{e}grales au moyen des points extr\'{e}maux dans les c\^{o}nes
  convexes}, S\'{e}m. Bourbaki \textbf{139}, 33.
  \href{http://archive.numdam.org/article/SB_1956-1958__4__33_0.pdf}{numdam:SB\_1956-1958\_\_4\_\_33\_0}.

\bibitem{Choquet:1956:I}
{Choquet G.}, 1956, \textit{Unicit\'{e} des repr\'{e}sentiations int\'{e}grales
  au moyen des points extr\'{e}maux dans les c\^{o}nes convexes reticules},
  Compt. Rend. Acad. Sci. Paris \textbf{243}, 555.

\bibitem{Choquet:1960}
{Choquet G.}, 1960, \textit{Le th\'{e}or\`{e}me de repr\'{e}sentation
  int\'{e}grale dans les ensemble convexes compacts}, Ann. Inst. Fourier
  Grenoble \textbf{10}, 333.
  \href{http://archive.numdam.org/article/AIF_1960__10__333_0.pdf}{numdam:AIF\_1960\_\_10\_\_333\_0}.

\bibitem{Christensen:1972}
{Christensen E.}, 1972, \textit{Non-commutative integration for monotone
  sequentially closed $C^*$-algebras}, Math. Scand. \textbf{31}, 171.
  \href{http://www.mscand.dk/article.php?id=2094}{www.mscand.dk/article.php?id=2094}.

\bibitem{Claas:Zaanen:1978}
{Claas W.J., Zaanen A.C.}, 1978, \textit{Orlicz lattices}, in: Musielak J.
  (ed.), \textit{Commentationes mathematicae: tomus specialis in honorem
  Ladislai Orlicz}, Vol.1, Pa\'{n}stwowe Wydawnictwo Naukowe, Warszawa, p.77.
  \href{http://libgen.org/get?open=0&md5=66537F616B72DEC4777DBD41BF24FA92}{libgen.org:66537F616B72DEC4777DBD41BF24FA92}.

\bibitem{Combes:1966}
{Combes F.}, 1966, \textit{\'{E}tude des repr\'{e}sentations trac\'{e}es d'une
  $C^*$-alg\`{e}bre}, Compt. Rend. Acad. Sci. Paris \textbf{262}, 114.

\bibitem{Combes:1967}
{Combes F.}, 1967, \textit{\'{E}tude des poids d\'{e}finis sur une
  $C^*$-algebre}, Compt. Rend. Acad. Sci. Paris \textbf{265}, 340.

\bibitem{Combes:1968}
{Combes F.}, 1968, \textit{Poids sur une $C^*$-alg\`{e}bre}, J. Math. Pures
  Appl. 9$^e$ s\'{e}rie \textbf{47}, 57.

\bibitem{Combes:1971}
{Combes F.}, 1971, \textit{Poids associ\'{e} \`{a} une alg\`{e}bre hilbertienne
  \`{a} gauche}, Comp. Math. \textbf{23}, 49.
  \href{http://archive.numdam.org/article/CM_1971__23_1_49_0.pdf}{numdam:CM\_1971\_\_23\_1\_49\_0}.

\bibitem{Combes:1970}
{Combes F.}, 1971, \textit{Poids associ\'{e} \`{a} une alg\`{e}bre hilbertienne
  g\'{e}n\'{e}ralis\'{e}e}, Compt. Rend. Acad. Sci. Paris S\'{e}r. A
  \textbf{270}, 33.

\bibitem{Combes:1971:esperances}
{Combes F.}, 1971, \textit{Poids et esp\'{e}rances conditionnelles dans les
  alg\`{e}bres de von Neumann}, Bull. Soc. Math. France \textbf{99}, 73.
  \href{http://archive.numdam.org/article/BSMF_1971__99__73_0.pdf}{numdam:BSMF\_1971\_\_99\_\_73\_0}.

\bibitem{Combes:Delaroche:1975}
{Combes F., Delaroche C.}, 1975, \textit{Groupe modulaire d'une esp\'{e}rance
  conditionelle dans une alg\`{e}bre de von Neumann}, Bull. Soc. Math. France
  \textbf{103}, 385.
  \href{http://archive.numdam.org/article/BSMF_1975__103__385_0.pdf}{numdam:BSMF\_1975\_\_103\_\_385\_0}.

\bibitem{Connes:1972}
{Connes A.}, 1972, \textit{Groupe modulaire d'une alg\`{e}bre de von Neumann},
  Compt. Rend. Acad. Sci. Paris S\'{e}r. A \textbf{274}, 523.

\bibitem{Connes:1973:poids:normaux}
{Connes A.}, 1973, \textit{Sur le th\'{e}or\`{e}me de Radon--Nikodym pour le
  poids normaux fid\`{e}les semifinies}, Bull. Soc. Math. France 2-\`{e}me
  S\'{e}r. \textbf{97}, 253.

\bibitem{Connes:1973:classification}
{Connes A.}, 1973, \textit{Une classification des facteurs de type III}, Ann.
  Sci. \'{E}cole Norm. Sup. 4 \`{e}me s\'{e}r. \textbf{6}, 133.
  \href{http://archive.numdam.org/article/ASENS_1973_4_6_2_133_0.pdf}{numdam:ASENS\_1973\_4\_6\_2\_133\_0}.

\bibitem{Connes:1974}
{Connes A.}, 1974, \textit{Caract\'{e}risation des espaces vectoriels
  ordonn\'{e}s sous-jacents aux alg\`{e}bres de von Neumann}, Ann. Inst.
  Fourier Grenoble \textbf{24}, 121.
  \href{http://archive.numdam.org/article/AIF_1974__24_4_121_0.pdf}{numdam:AIF\_1974\_\_24\_4\_121\_0}.

\bibitem{Connes:1975:hyperfinite}
{Connes A.}, 1975, \textit{On hyperfinite factors of type III$_0$ and Krieger's
  factors}, J. Funct. Anal. \textbf{18}, 318.
  \href{http://libgen.org/scimag/get.php?doi=10.1016/0022-1236(75)90019-1}{libgen.org:scimag/get.php?doi=10.1016/0022-1236(75)90019-1}.

\bibitem{Connes:1975}
{Connes A.}, 1975, \textit{Outer conjugacy classes of automorphisms of
  factors}, Ann. Sci. \'{E}cole Norm. Sup. 4 \`{e}me s\'{e}r. \textbf{8}, 383.
  \href{http://archive.numdam.org/article/ASENS_1975_4_8_3_383_0.pdf}{numdam:ASENS\_1975\_4\_8\_3\_383\_0}.

\bibitem{Connes:1976:injective}
{Connes A.}, 1976, \textit{Classification of injective factors. Cases II$_1$,
  II$_\infty$, III$_\lambda$, $\lambda\neq1$}, Ann. Math. \textbf{104}, 73.
  \href{http://libgen.org/scimag/get.php?doi=10.2307/1971057}{libgen.org:scimag/get.php?doi=10.2307/1971057}.

\bibitem{Connes:1977}
{Connes A.}, 1977, \textit{Periodic automorphisms of the hyperfinite factor
  type II$_1$}, Acta Sci. Math. Szeged \textbf{39}, 39.
  \href{http://acta.fyx.hu/acta/showCustomerArticle.action?id=9198&dataObjectType=article&sessionDataSetId=555ed075b98c930b}{acta.fyx.hu/acta/showCustomerArticle.action?id=9198\&dataObjectType=article\&sessionDataSetId=555ed075b98c930b}.

\bibitem{Connes:1980:correspondences}
{Connes A.}, 1980, \textit{Correspondences}, unpublished manuscript.
  \href{http://www.fuw.edu.pl/~kostecki/scans/connes1980.pdf}{www.fuw.edu.pl/$\sim$kostecki/scans/connes1980.pdf}.

\bibitem{Connes:1980}
{Connes A.}, 1980, \textit{On a spatial theory of von Neumann algebras}, J.
  Func. Anal. \textbf{35}, 153.
  \href{http://libgen.org/scimag/get.php?doi=10.1016/0022-1236(80)90002-6}{libgen.org:scimag/get.php?doi=10.1016/0022-1236(80)90002-6}.

\bibitem{Connes:1982}
{Connes A.}, 1982, \textit{Classification des facteurs}, in: Kadison R. (ed.),
  \textit{Operator algebras and applications (Kingston, Ontario, 1980)},
  Proceedings of the Symposia in Pure Mathematics \textbf{38}, Part 2, American
  Mathematical Society, Providence, p.43.

\bibitem{Connes:1985}
{Connes A.}, 1985, \textit{Factors of type III$_1$, property $L_\lambda'$ and
  closure of inner automorphisms}, J. Op. Theor. \textbf{14}, 189.
  \href{http://www.theta.ro/jot/archive/1985-014-001/1985-014-001-011.pdf}{www.theta.ro/jot/archive/1985-014-001/1985-014-001-011.pdf}.

\bibitem{Connes:1994}
{Connes A.}, 1994, \textit{Noncommutative geometry}, Academic Press, Boston.
  \href{http://www.alainconnes.org/docs/book94bigpdf.pdf}{www.alainconnes.org/docs/book94bigpdf.pdf}.

\bibitem{Connes:Takesaki:1974}
{Connes A., Takesaki M.}, 1974, \textit{Flots des poids sur les facteurs de
  type III}, Compt. Rend. Acad. Sci. Paris S\'{e}r. A \textbf{278}, 945.

\bibitem{Connes:Takesaki:1977}
{Connes A., Takesaki M.}, 1977, \textit{The flow of weights on factors of type
  III}, T\={o}hoku Math. J. \textbf{29}, 473.
  \href{http://projecteuclid.org/euclid.tmj/1178240493}{euclid:tmj/1178240493}.

\bibitem{Daniell:1918}
{Daniell P.J.}, 1918, \textit{A general form of the integral}, Ann. Math.
  \textbf{19}, 279.

\bibitem{Daniell:1919:a}
{Daniell P.J.}, 1919, \textit{Integrals in an infinite number of dimensions},
  Ann. Math. \textbf{21}, 30.
  \href{http://libgen.org/scimag/get.php?doi=10.2307/1967122}{libgen.org:scimag/get.php?doi=10.2307/1967122}.

\bibitem{Daniell:1920}
{Daniell P.J.}, 1920, \textit{Further properties of the general integral}, Ann.
  Math. \textbf{21}, 203.
  \href{http://libgen.org/scimag/get.php?doi=10.2307/2007258}{libgen.org:scimag/get.php?doi=10.2307/2007258}.

\bibitem{Daniell:1920:Stieltjes}
{Daniell P.J.}, 1920, \textit{Stieltjes derivatives}, Bull. Amer. Math. Soc.
  \textbf{26}, 444.
  \href{http://projecteuclid.org/euclid.bams/1183425376}{euclid:bams/1183425376}.

\bibitem{dePagter:2007}
{de Pagter}, 2007, \textit{Non-commutative Banach function spaces}, in:
  Boulabiar K., Buskes G., Triki A. (eds.), \textit{Positivity},
  Birkh\"{a}user, Basel, p.197.
  \href{http://libgen.org/get.php?md5=B1B8D3714EAB479347290237CCD2C0A6}{libgen.org:B1B8D3714EAB479347290237CCD2C0A6}.

\bibitem{Dedekind:1872}
{Dedekind R.}, 1872, \textit{Stetigkeit und die Irrationalzahlen}, Vieweg,
  Braunschweig (Engl. transl: 1901, \textit{Continuity and irrational numbers},
  in: Dedekind R., \textit{Essays on the theory of numbers}, Open Court,
  Chicago, p.1.
  \href{http://www.gutenberg.org/files/21016/21016-pdf.pdf}{www.gutenberg.org/files/21016/21016-pdf.pdf}).

\bibitem{DJP:2003}
{Derezi\'{n}ski J., Jak\v{s}i\'{c} V., Pillet C.-A.}, 2003,
  \textit{Perturbation theory of $W^*$-dynamics, liouvilleans and KMS-states},
  Rev. Math. Phys. \textbf{15}, 447.
  \href{http://www.fuw.edu.pl/~derezins/golden.ps}{www.fuw.edu.pl/$\sim$derezins/golden.ps}.

\bibitem{Dieudonne:1942}
{Dieudonn\'{e} J.A.}, 1942, \textit{La dualit\'{e} dans les espaces vectoriels
  topologiques}, Ann. Sci. \'{E}cole Norm. Sup. \textbf{59}, 107.
  \href{http://archive.numdam.org/article/ASENS_1942_3_59__107_0.pdf}{numdam:ASENS\_1942\_3\_59\_\_107\_0}.

\bibitem{Digernes:1974}
{Digernes T.}, 1974, \textit{Poids dual sur un produit crois\'{e}}, Compt.
  Rend. Acad. Sci. Paris \textbf{278}, 937.

\bibitem{Digernes:1975}
{Digernes T.}, 1975, \textit{Duality for weights on covariant systems and its
  applications}, Ph.D. thesis, University of California, Los Angeles.

\bibitem{Dixmier:1949}
{Dixmier J.}, 1949, \textit{Les anneaux d'op\'{e}rateurs de classe finie}, Ann.
  Sci. \'{E}cole Norm. Sup. \textbf{66}, 209.
  \href{http://archive.numdam.org/article/ASENS_1949_3_66__209_0.pdf}{numdam:ASENS\_1949\_3\_66\_\_209\_0}.

\bibitem{Dixmier:1950}
{Dixmier J.}, 1950, \textit{Les fonctionnelles lin\'{e}aires sur l'ensemble des
  op\'{e}rateurs born\'{e}s d'un espace de Hilbert}, Ann. Math. \textbf{51},
  387.
  \href{http://libgen.org/scimag/get.php?doi=10.2307/1969331}{libgen.org:scimag/get.php?doi=10.2307/1969331}.

\bibitem{Dixmier:1951}
{Dixmier J.}, 1951, \textit{Sur certains espaces consid\'{e}r\'{e}s par M.H.
  Stone}, Summa Brasil. Math. \textbf{2}, 151.

\bibitem{Dixmier:1951:reduction}
{Dixmier J.}, 1951, \textit{Sur la r\'{e}duction des anneaux d'operateurs},
  Ann. \'{E}col. Norm. Sup. \textbf{68}, 185.
  \href{http://archive.numdam.org/article/ASENS_1951_3_68__185_0.pdf}{numdam:ASENS\_1951\_3\_68\_\_185\_0}.

\bibitem{Dixmier:1952}
{Dixmier J.}, 1952, \textit{Alg\`{e}bres quasi-unitaires}, Comment. Math. Helv.
  \textbf{26}, 275.
  \href{http://gdz.sub.uni-goettingen.de/dms/load/img/?PPN=GDZPPN00205602X&IDDOC=213242}{gdz.sub.uni-goettingen.de/dms/load/img/?PPN=GDZPPN00205602X\&IDDOC=213242}.

\bibitem{Dixmier:1952:remarques}
{Dixmier J.}, 1952, \textit{Remarques sur les applications$^\natural$}, Arch.
  Math. \textbf{3}, 290.
  \href{http://libgen.org/scimag/get.php?doi=10.1007/bf01899229}{libgen.org:scimag/get.php?doi=10.1007/bf01899229}.

\bibitem{Dixmier:1953}
{Dixmier J.}, 1953, \textit{Formes lin\'{e}aires sur un anneau
  d'op\'{e}rateurs}, Bull. Soc. Math. France \textbf{81}, 9.
  \href{http://archive.numdam.org/article/BSMF_1953__81__9_0.pdf}{numdam:BSMF\_1953\_\_81\_\_9\_0}.

\bibitem{Dixmier:1954}
{Dixmier J.}, 1954, \textit{Sur les anneaux d'op\'{e}rateurs dans les espaces
  hilbertiens}, Compt. Rend. Acad. Sci. Paris \textbf{238}, 439.

\bibitem{Dixmier:1957}
{Dixmier J.}, 1957, \textit{Les alg\`{e}bres d'op\'{e}rateurs dans l'espace
  hilbertien (alg\`{e}bres de von Neumann)}, Gauthier--Villars, Paris. (2nd.
  rev. ed., 1969.
  \href{http://libgen.org/get?open=0&md5=A911DC6F4313A149BF83121753FF653A}{libgen.org:A911DC6F4313A149BF83121753FF653A};
  Engl. trans., 1981, \textit{von Neumann algebras}, North-Holland, Amsterdam.
  \href{http://libgen.org/get?open=0&md5=518b5383b13c54957d92085dfb8f740d}{libgen.org:518b5383b13c54957d92085dfb8f740d}).

\bibitem{Dixmier:1964}
{Dixmier J.}, 1964, \textit{Les $C^*$-alg\`{e}bres et leurs
  repr\'{e}sentations}, Gauthier Villars, Paris. (2nd. rev. ed., 1969; Engl.
  transl. 1977, \textit{$C^*$-algebras}, North-Holland, Amsterdam.
  \href{http://libgen.org/get?open=0&md5=79BA7DF668DB3A5C702510C450508AC8}{libgen.org:79BA7DF668DB3A5C702510C450508AC8}).

\bibitem{Dixmier:Lance:1969}
{Dixmier J., Lance E.C.}, 1969, \textit{Deux noveaux facteurs de type II$_1$},
  Invest. Math. \textbf{7}, 226.

\bibitem{Dodds:1969}
{Dodds P.G.}, 1969, \textit{The Riesz space structure of an abelian
  $W^*$-algebra}, Ph.D. thesis, California Institute of Technology, Pasadena.
  \href{http://www.fuw.edu.pl/~kostecki/scans/dodds1969.pdf}{www.fuw.edu.pl/$\sim$kostecki/scans/dodds1969.pdf}.

\bibitem{Dodds:1974}
{Dodds P.G.}, 1974, \textit{The order dual of an abelian von Neumann algebra},
  J. Austr. Math. Soc. \textbf{18}, 153.
  \href{http://dx.doi.org/10.1017/S1446788700019868}{dx.doi.org/10.1017/S1446788700019868}.

\bibitem{Dodds:Dodds:1995}
{Dodds P.G., Dodds T.K.}, 1995, \textit{Some aspects of the theory of symmetric
  operator spaces}, Quaest. Math. \textbf{18}, 47.
  \href{http://libgen.org/scimag/get.php?doi=10.1080/16073606.1995.9631788}{libgen.org:scimag/get.php?doi=10.1080/16073606.1995.9631788}.

\bibitem{Dodds:Dodds:dePagter:1989}
{Dodds P.G., Dodds T.K., de Pagter B.}, 1989, \textit{Non-commutative Banach
  function spaces}, Math. Z. \textbf{201}, 583.
  \href{http://gdz.sub.uni-goettingen.de/dms/load/img/?PPN=GDZPPN002436515&IDDOC=173377}{gdz.sub.uni-goettingen.de/dms/load/img/?PPN=GDZPPN002436515\&IDDOC=173377}.

\bibitem{Dodds:Dodds:dePagter:1992}
{Dodds P.G., Dodds T.K., de Pagter B.}, 1992, \textit{Fully symmetric operator
  spaces}, Integr. Eq. Op. Theor. \textbf{15}, 941.
  \href{http://libgen.org/scimag/get.php?doi=10.1007/bf01203122}{libgen.org:scimag/get.php?doi=10.1007/bf01203122}.

\bibitem{Dodds:Dodds:dePagter:1993}
{Dodds P.G., Dodds T.K., de Pagter B.}, 1993, \textit{Non-commutative K\"{o}the
  duality}, Trans. Amer. Math. Soc. \textbf{339}, 717.
  \href{http://www.ams.org/journals/tran/1993-339-02/S0002-9947-1993-1113694-3/}{www.ams.org/journals/tran/1993-339-02/S0002-9947-1993-1113694-3/}.

\bibitem{Doplicher:Kastler:Robinson:1966}
{Doplicher S., Kastler D., Robinson D.}, 1966, \textit{Covariance algebras in
  field theory and statistical mechanics}, Commun. Math. Phys. \textbf{3}, 1.
  \href{http://projecteuclid.org/euclid.cmp/1103839349}{euclid:cmp/1103839349}.

\bibitem{Dunford:1938}
{Dunford N.}, 1938, \textit{Uniformity in linear spaces}, Trans. Amer. Math.
  Soc. \textbf{44}, 305.
  \href{http://libgen.org/scimag/get.php?doi=10.2307/1989974}{libgen.org:scimag/get.php?doi=10.2307/1989974}.

\bibitem{Dye:1952}
{Dye H.A.}, 1952, \textit{The Radon--Nikod\'{y}m theorem for finite rings of
  operators}, Trans. Amer. Math. Soc. \textbf{72}, 243.
  \href{http://www.ams.org/journals/tran/1952-072-02/S0002-9947-1952-0045954-5/S0002-9947-1952-0045954-5.pdf}{www.ams.org/journals/tran/1952-072-02/S0002-9947-1952-0045954-5/S0002-9947-1952-0045954-5.pdf}.

\bibitem{Elliott:1975}
{Elliott G.A.}, 1975, \textit{On the Radon--Nikodym derivative with a chain
  rule in a von Neumann algebra}, Canad. Math. Bull. \textbf{18}, 661.
  \href{http://cms.math.ca/cmb/v18/cmb1975v18.0661-0669.pdf}{cms.math.ca/cmb/v18/cmb1975v18.0661-0669}.

\bibitem{Emch:1972}
{Emch G.G.}, 1972, \textit{Algebraic methods in statistical mechanics and
  quantum field theory}, Wiley, New York.
  \href{http://libgen.org/get?open=0&md5=F30E8F567FDE37FD0E9C2F256A5DDC2D}{libgen.org:F30E8F567FDE37FD0E9C2F256A5DDC2D}.

\bibitem{Evans:Kawahigashi:1998}
{Evans D.E., Kawahigashi Y.}, 1998, \textit{Quantum symmetries on operator
  algebras}, Clarendon Press, Oxford.
  \href{http://libgen.org/get?open=0&md5=6CC169822288E111E97E826BE5069436}{libgen.org:6CC169822288E111E97E826BE5069436}.

\bibitem{Fack:Kosaki:1986}
{Fack T., Kosaki H.}, 1986, \textit{Generalized $s$-numbers of
  $\tau$-measurable operators}, Pacific J. Math. \textbf{123}, 269.
  \href{http://projecteuclid.org/euclid.pjm/1102701004}{euclid:pjm/1102701004}.

\bibitem{Falcone:1996}
{Falcone A.J.}, 1996, \textit{Operator valued weights, $L^2$-von Neumann
  modules and their relative tensor products}, Ph.D. thesis, University of
  California, Los Angeles.
  \href{http:/www.fuw.edu.pl/~kostecki/scans/falcone1996.pdf}{www.fuw.edu.pl/$\sim$kostecki/scans/falcone1996.pdf}.

\bibitem{Falcone:2000}
{Falcone A.J.}, 2000, \textit{$L^2$-von Neumann modules, their relative tensor
  products and the spatial derivative}, Illinois J. Math. \textbf{44}, 407.
  \href{http://projecteuclid.org/euclid.ijm/1255984848}{euclid:ijm/1255984848}.

\bibitem{Falcone:Takesaki:1999}
{Falcone A.J., Takesaki M.}, 1999, \textit{Operator valued weights without
  structure theory}, Trans. Amer. Math. Soc. \textbf{351}, 323.
  \href{http://www.ams.org/journals/tran/1999-351-01/S0002-9947-99-02028-0/S0002-9947-99-02028-0.pdf}{www.ams.org/journals/tran/1999-351-01/S0002-9947-99-02028-0/S0002-9947-99-02028-0.pdf}.

\bibitem{Falcone:Takesaki:2001}
{Falcone A.J., Takesaki M.}, 2001, \textit{The non-commutative flow of weights
  on a von Neumann algebra}, J. Funct. Anal. \textbf{182}, 170.
  \href{http://www.math.ucla.edu/~mt/papers/QFlow-Final.tex.pdf}{www.math.ucla.edu/$\sim$mt/papers/QFlow-Final.tex.pdf}.

\bibitem{Fannes:Verbeure:1977}
{Fannes M., Verbeure A.}, 1977, \textit{Correlation inequalities and
  equilibrium states}, Commun. Math. Phys. \textbf{55}, 125.
  \href{http://projecteuclid.org/euclid.cmp/1103900981}{euclid:cmp/1103900981}.

\bibitem{Fell:1969}
{Fell J.M.G.}, 1969, \textit{An extension of Mackey's method to Banach
  $^*$-algebraic bundles}, Memoirs of the American Mathematical Society
  \textbf{90}, American Mathematical Society, Providence.

\bibitem{Fell:1977}
{Fell J.M.G.}, 1977, \textit{Induced representations and Banach $*$-algebraic
  bundles}, LNM \textbf{582}, Springer, Berlin.
  \href{http://libgen.org/get?open=0&md5=C4CD5C99569916C1E0B2F49B77026454}{libgen.org:C4CD5C99569916C1E0B2F49B77026454}.

\bibitem{Fell:Doran:1988}
{Fell J.M.G., Doran R.S.}, 1988, \textit{Representations of $*$-algebras,
  locally compact groups, and Banach $*$-algebraic bundles}, Vol.1-2, Academic
  Press, San Diego.
  \href{http://libgen.org/get?open=0&md5=B3C0105455A0E4412FC3F763CCABD9C6}{libgen.org:B3C0105455A0E4412FC3F763CCABD9C6},
  \href{http://libgen.org/get?open=0&md5=FBE9EFB71289B20E059497652D910D77}{libgen.org:FBE9EFB71289B20E059497652D910D77}.

\bibitem{Fell:Kelley:1952}
{Fell J.M.G., Kelley J.L}, 1952, \textit{An algebra of unbounded operators},
  Proc. Nat. Acad. Sci. U.S.A. \textbf{38}, 592.
  \href{https://www.ncbi.nlm.nih.gov/pmc/articles/PMC1063619/pdf/pnas01580-0040.pdf}{www.ncbi.nlm.nih.gov/pmc/articles/PMC1063619/pdf/pnas01580-0040.pdf}.

\bibitem{Flattot:2010}
{Flattot A.}, 2010, \textit{Notes on non-commutative integration},
  \href{http://www.math.sc.edu/~flattot/re/Notes_noncommutative_int.pdf}{www.math.sc.edu/$\sim$flattot/re/Notes\_noncommutative\_int.pdf}.

\bibitem{Frechet:1906}
{Fr\'{e}chet M.}, 1906, \textit{Sur quelques points du calcul fonctionnel},
  Rend. Circ. Matem. Palermo \textbf{22}, 1.
  \href{http://webpages.ursinus.edu/nscoville/Frechet Thesis French
  I.pdf}{webpages.ursinus.edu/nscoville/Frechet Thesis French I.pdf},
  \href{http://webpages.ursinus.edu/nscoville/Frechet Thesis French
  II.pdf}{webpages.ursinus.edu/nscoville/Frechet Thesis French II.pdf},
  \href{http://webpages.ursinus.edu/nscoville/Frechet Thesis French
  III.pdf}{webpages.ursinus.edu/nscoville/Frechet Thesis French III.pdf},
  \href{http://webpages.ursinus.edu/nscoville/Frechet Thesis French
  IV.pdf}{webpages.ursinus.edu/nscoville/Frechet Thesis French IV.pdf}.

\bibitem{Frechet:1907:a}
{Fr\'{e}chet M.}, 1907, \textit{Sur les ensembles de fonctions et les
  op\'{e}rations lin\'{e}aires}, Compt. Rend. Acad. Sci. Paris \textbf{144},
  1414.

\bibitem{Frechet:1910}
{Fr\'{e}chet M.}, 1910, \textit{Les ensembles abstraits et le calcul
  fonctionnel}, Rend. Circ. Mat. Palermo \textbf{30}, 1.
  \href{http://libgen.org/scimag/get.php?doi=10.1007/bf03014860}{libgen.org:scimag/get.php?doi=10.1007/bf03014860}.

\bibitem{Frechet:1915:definition}
{Frech\'{e}t M.}, 1915, \textit{D\'{e}finition de l'int\'{e}grale sur un
  ensemble abstrait}, Compt. Rend. Hebd. S\'{e}anc. l'Acad. Sci. \textbf{160},
  839.

\bibitem{Frechet:1915:sur:integrale}
{Frech\'{e}t M.}, 1915, \textit{Sur l'int\'{e}grale d'une fonctionelle
  \'{e}tendue \`{a} un ensemble abstrait}, Bull. Soc. Math. France \textbf{43},
  248.
  \href{http://archive.numdam.org/article/BSMF_1915__43__248_1.pdf}{numdam:BSMF\_1915\_\_43\_\_248\_1}.

\bibitem{Frechet:1922}
{Fr\'{e}chet M.}, 1922, \textit{Familles additives et fonctions additives
  d'ensembles abstraits}, Enseign. Math. \textbf{22}, 113.

\bibitem{Frechet:1923:additives}
{Fr\'{e}chet M.}, 1923, \textit{Des familles et fonctions additives d'ensembles
  abstraits}, Fund. Math. \textbf{4}, 329.
  \href{http://matwbn.icm.edu.pl/ksiazki/fm/fm4/fm4128.pdf}{matwbn.icm.edu.pl/ksiazki/fm/fm4/fm4128.pdf}.

\bibitem{Frechet:1925:Morale}
{Fr\'{e}chet M.}, 1925, \textit{L'analyse g\'{e}n\'{e}rale et ensembles
  abstraits}, Rev. M\'{e}taphys. Morale \textbf{32}, 1.

\bibitem{Fremlin:1974}
{Fremlin D.H.}, 1974, \textit{Topological Riesz spaces and measure theory},
  Cambridge University Press, Cambridge.
  \href{http://libgen.org/get?open=0&md5=802e0de75031bbaea5fad8f52bba0e4b}{libgen.org:802e0de75031bbaea5fad8f52bba0e4b}.

\bibitem{Fremlin:2000}
{Fremlin D.H.}, 2000, 2001, 2002, 2003, 2008, \textit{Measure theory}, Vol.1-5,
  Torres Fremlin, Colchester. New editions available as \LaTeX{} source at:
  \href{http://www.essex.ac.uk/maths/people/fremlin/mt.htm}{www.essex.ac.uk/maths/people/fremlin/mt.htm},
  old editions available as PDF files at:
  \href{http://libgen.org/get?open=0&md5=79C0EA89BBA9834E2CB4FF9DB9CFAB97}{libgen.org:79C0EA89BBA9834E2CB4FF9DB9CFAB97},
  \href{http://libgen.org/get?open=0&md5=B526B2F8B1F9BA194BAAD8BDFF100582}{libgen.org:B526B2F8B1F9BA194BAAD8BDFF100582},
  \href{http://libgen.org/get?open=0&md5=15094F863F4B67E680FFE00D1A212F05}{libgen.org:15094F863F4B67E680FFE00D1A212F05},
  \href{http://libgen.org/get?open=0&md5=D0ECA533BBA2A6C314DF429DA1857C37}{libgen.org:D0ECA533BBA2A6C314DF429DA1857C37},
  \href{http://libgen.org/get?open=0&md5=2E52DFCA19EF962520A36D8A9BE90946}{libgen.org:2E52DFCA19EF962520A36D8A9BE90946},
  \href{http://libgen.org/get?open=0&md5=78273D5EC827F6D0937D6F6F40962E3D}{libgen.org:78273D5EC827F6D0937D6F6F40962E3D}.

\bibitem{Freudenthal:1936}
{Freudenthal H.}, 1936, \textit{Teilweise geordnete Moduln}, Proc. Roy. Acad.
  Amsterdam \textbf{39}, 641.
  \href{http://www.dwc.knaw.nl/DL/publications/PU00016905.pdf}{www.dwc.knaw.nl/DL/publications/PU00016905.pdf}.

\bibitem{Fukamiya:1952}
{Fukamiya M.}, 1952, \textit{On a theorem of Gelfand and Naimark and the
  $B^*$-algebras}, Kumamoto J. Sci. \textbf{1}, 17.

\bibitem{Fukamiya:Misonou:Takeda:1954}
{Fukamiya M., Misonou M., Takeda Z.}, 1954, \textit{An order and commutativity
  of $B^*$-algebras}, T\={o}hoku Math J. \textbf{6}, 89.
  \href{http://projecteuclid.org/euclid.tmj/1178245239}{euclid:tmj/1178245239}.

\bibitem{Garling:1967}
{Garling D.J.H.}, 1967, \textit{On ideals of operators in Hilbert space}, Proc.
  London Math. Soc. \textbf{17}, 115.
  \href{http://libgen.org/scimag/get.php?doi=10.1112/plms/s3-17.1.115}{libgen.org:scimag/get.php?doi=10.1112/plms/s3-17.1.115}.

\bibitem{Gelfand:1939}
{Gel'fand I.M.}, 1939, \textit{On normed rings}, Dokl. Akad. Nauk SSSR
  \textbf{23}, 430.

\bibitem{Gelfand:1941:Ideale}
{Gel'fand I.M.}, 1940, \textit{Ideale und prim\"{a}re Ideale in normierten
  Ringen}, Matem. Sbornik (N.S.) \textbf{9}, 41.
  \href{http://mi.mathnet.ru/msb6048}{mathnet.ru:msb6048}.

\bibitem{Gelfand:1941}
{Gel'fand I.M.}, 1941, \textit{Normierte Ringe}, Matem. Sbornik (N.S.)
  \textbf{9}, 3. \href{http://mi.mathnet.ru/msb6046}{mathnet.ru:msb6046}.

\bibitem{Gelfand:Naimark:1943}
{Gel'fand I.M., Na\u{\i}mark M.A.}, 1943, \textit{On the imbedding of normed
  rings into the ring of operators in Hilbert space}, Matem. Sbornik (N.S.)
  \textbf{12}, 197. \href{http://mi.mathnet.ru/msb6155}{mathnet.ru:msb6155}.

\bibitem{Gelfand:Naimark:1948}
{Gel'fand I.M., Na\u{\i}mark M.A.}, 1948, \textit{Normirovannye kol'ca s
  involyucie\u{\i} i ikh predstavleniya}, Izv. Akad. Nauk SSSR Ser. Mat.
  \textbf{12}, 445. \href{http://mi.mathnet.ru/izv3092}{mathnet.ru:izv3092}.

\bibitem{Gelfand:Raikov:1943}
{Gel'fand I.M., Ra\u{\i}kov D.A.}, 1943, \textit{Neprivodimye unitarnye
  predstavleniya lokal'no bikompaktnykh grupp}, Matem. Sbornik (N.S.)
  \textbf{13}, 301. \href{http://mi.mathnet.ru/msb6181}{mathnet.ru:msb6181}.

\bibitem{Gelfand:Raikov:Shilov:1946}
{Gel'fand I.M., Ra\u{\i}kov D.A., Shilov G.E.}, 1946, \textit{Kommutativnye
  normirovannye kol'ca}, Usp. Mat. Nauk \textbf{1}, 48.
  \href{http://mi.mathnet.ru/umn7036}{mathnet.ru:umn7036}.

\bibitem{Gelfand:Shilov:1941}
{Gel'fand I.M., Shilov G.E.}, 1941, \textit{\"{U}ber verschiedene Methoden der
  Einf\"{u}hrung der Topologie in die Menge der maximalen Ideale eines
  normierten Ringes}, Matem. Sbornik (N.S.) \textbf{9}, 25.
  \href{http://mi.mathnet.ru/msb6047}{mathnet.ru:msb6047}.

\bibitem{Givant:Halmos:2009}
{Givant S., Halmos P.}, 2009, \textit{Introduction to boolean algebras},
  Springer, New York.
  \href{http://libgen.org/get?open=0&md5=4108A7A1E846C0A1B937374E2E146235}{libgen.org:4108A7A1E846C0A1B937374E2E146235}.

\bibitem{Gleason:1958}
{Gleason A.M.}, 1958, \textit{Projective topological spaces}, Illinois J. Math.
  \textbf{2}, 223.
  \href{http://projecteuclid.org/euclid.ijm/1255454110}{euclid:ijm/1255454110}.

\bibitem{Glimm:Kadison:1960}
{Glimm J., Kadison R.V.}, 1960, \textit{Unitary operators in $C^*$-algebras},
  Pacific J. Math. \textbf{10}, 547.
  \href{http://projecteuclid.org/euclid.pjm/1103038410}{euclid:pjm/1103038410}.

\bibitem{Godefroy:1989}
{Godefroy G.}, 1989, \textit{Existence and uniqueness of isometric preduals: a
  survey}, in: Bor-Luh Lin (ed.), \textit{Workshop on Banach space theory:
  proceedings of a research workshop held July 5--25, 1987}, Contemp. Math.
  \textbf{85}, American Mathematical Society, Providence, p.131.

\bibitem{Gokhberg:Krein:1965}
{Gokhberg I.C., Kre\u{\i}n M.G.}, 1965, \textit{Vvedenie v teoriyu
  line\u{\i}nykh nesamospryazhennykh operatorov v gilbertovom prostranstve},
  Nauka, Moskva.
  \href{http://libgen.org/get?open=0&md5=4D5A9858B25F4D0C0924FF447CED570E}{libgen.org:4D5A9858B25F4D0C0924FF447CED570E}
  (Engl. transl. 1969, \textit{Introduction to the theory of nonselfadjoint
  operators in Hilbert space}, American Mathematical Society, Providence).

\bibitem{Goldstein:1995}
{Goldstein D.}, 1995, \textit{Polar decomposition in Rickart $C^*$-algebras},
  Publ. Mat. \textbf{39}, 5.
  \href{http://dmle.cindoc.csic.es/pdf/PUBLICACIONSMATEMATIQUES_1995_39_01_01.pdf}{dmle.cindoc.csic.es/pdf/PUBLICACIONSMATEMATIQUES\_1995\_39\_01\_01.pdf}.

\bibitem{Goldstein:2002}
{Goldstein S.}, 2002, \textit{Niekomutatywne przestrzenie $L^p$},
  \href{http://math.uni.lodz.pl/~goldstei/download/papers/np_lp.pdf}{math.uni.lodz.pl/$\sim$goldstei/download/papers/np\_lp.pdf}.

\bibitem{Goldstein:Phan:2000}
{Goldstein S., Phan V.T.}, 2000, \textit{$L^p$ spaces for $C^*$-algebras with a
  state}, Int. J. Theor. Phys. \textbf{39}, 687.
  \href{http://libgen.org/scimag/get.php?doi=10.1023/a:1003646006449}{libgen.org:scimag/get.php?doi=10.1023/a:1003646006449}.

\bibitem{Goldstine:1941}
{Goldstine H.H.}, 1941, \textit{Linear functionals and integrals in abstract
  spaces}, Bull. Amer. Math. Soc. \textbf{47}, 615.
  \href{http://projecteuclid.org/euclid.bams/1183503790}{euclid:bams/1183503790}.

\bibitem{Golodec:1972}
{Golodec V.Ya.}, 1972, \textit{Uslovnye ozhidaniya i modulyarnye avtomorfizmy
  ne\u{\i}manovskikh algebr}, Funkc. Anal. Prilozh. \textbf{6}, 68.
  \href{http://mi.mathnet.ru/faa2520}{mathnet.ru:faa2520} (Engl. transl. 1972,
  \textit{Conditional expectation and modular automorphisms of von Neumann
  algebras}, Funct. Anal. Appl. \textbf{6}, 231).

\bibitem{Gomes:1946}
{Gomes A.P.}, 1946, \textit{Introdu\c{c}\~{a}oao estudo duma no\c{c}\~{a}o de
  funcional em espa\c{c}os sem pontos}, Portug. Mat. \textbf{5}, 1.

\bibitem{Grothendieck:1955}
{Grothendieck A.}, 1955, \textit{R\'{e}arrangements de fonctions et
  in\'{e}galit\'{e}s de convexit\'{e} dans les alg\`{e}bres de von Neumann
  munies d'une trace}, S\'{e}m. Bourbaki \textbf{113}, 1.
  \href{http://archive.numdam.org/article/SB_1954-1956__3__127_0.pdf}{numdam:B\_1954-1956\_\_3\_\_127\_0}.

\bibitem{Gudder:1979:RN}
{Gudder S.P.}, 1979, \textit{A Radon--Nikodym theorem for $^*$-algebras},
  Pacific J. Math. \textbf{80}, 141.
  \href{http://projecteuclid.org/euclid.pjm/1102785959}{euclid:pjm/1102785959}.

\bibitem{Gudder:Hudson:1978}
{Gudder S.P., Hudson R.}, 1978, \textit{A noncommutative probability theory},
  Trans. Amer. Math. Soc. \textbf{245}, 1.
  \href{http://libgen.org/scimag/get.php?doi=10.2307/1998855}{libgen.org:scimag/get.php?doi=10.2307/1998855}.

\bibitem{Gudder:Marchand:1972}
{Gudder S.P., Marchand J.-P.}, 1972, \textit{Noncommutative probability on von
  Neumann algebras}, J. Math. Phys. \textbf{13}, 799.
  \href{http://libgen.org/scimag/get.php?doi=10.1063/1.1666054}{libgen.org:scimag/get.php?doi=10.1063/1.1666054}.

\bibitem{Guichardet:1974}
{Guichardet A.}, 1974, \textit{Syst\`{e}mes dynamiques non commutatifs},
  Ast\'{e}risque \textbf{13}-\textbf{14}, Soci\'{e}t\'{e} Math\'{e}matique de
  France, Paris.

\bibitem{Haag:1992}
{Haag R.}, 1992, \textit{Local quantum physics}, Springer, Berlin. (1996, 2nd
  rev. enlarg. ed.
  \href{http://libgen.org/get?open=0&md5=1b6e28c8485b541d49269b3894481158}{libgen.org:1b6e28c8485b541d49269b3894481158}).

\bibitem{Haag:Hugenholtz:Winnink:1967}
{Haag R., Hugenholtz N.M., Winnink M.}, 1967, \textit{On the equilibrium states
  in quantum statistical mechanics}, Commun. Math. Phys. \textbf{5}, 215.
  \href{http://projecteuclid.org/euclid.cmp/1103840050}{euclid:cmp/1103840050}.

\bibitem{HKTP:1974}
{Haag R., Kastler D., Trych-Pohlmeyer E.B.}, 1974, \textit{Stability and
  equilibrium states}, Commun. Math. Phys. \textbf{38}, 173.
  \href{http://projecteuclid.org/euclid.cmp/1103860047}{euclid:cmp/1103860047}.

\bibitem{Haag:TrychPohlmeyer:1977}
{Haag R., Trych-Pohlmeyer E.B.}, 1977, \textit{Stability properties of
  equilibrium states}, Commun. Math. Phys. \textbf{56}, 214.
  \href{http://projecteuclid.org/euclid.cmp/1103901186}{euclid:cmp/1103901186}.

\bibitem{Haagerup:1973}
{Haagerup U.}, 1973, \textit{The standard form of von Neumann algebras},
  Preprint Ser. 1973 No. \textbf{15}, K{\o}benhavns Universitet Matematisk
  Institut, K{\o}benhavn.

\bibitem{Haagerup:1975:normal:weights}
{Haagerup U.}, 1975, \textit{Normal weights on $W^*$-algebras}, J. Funct. Anal.
  \textbf{19}, 302.
  \href{http://libgen.org/scimag/get.php?doi=10.1016/0022-1236(75)90060-9}{libgen.org:scimag/get.php?doi=10.1016/0022-1236(75)90060-9}.

\bibitem{Haagerup:1975:standard:form}
{Haagerup U.}, 1975, \textit{The standard form of von Neumann algebras}, Math.
  Scand. \textbf{37}, 271.
  \href{http://www.mscand.dk/article.php?id=2275}{www.mscand.dk/article.php?id=2275}.

\bibitem{Haagerup:1976}
{Haagerup U.}, 1976, \textit{Operator valued weights and crossed products}, in:
  \textit{Convegno sulle algebre $C^*$ e loro applicazioni in fisica teoria
  (12-18 Marzo 1975, Convegno sulla teoria degli operatori indice e teoria $K$
  (13-18 Ottobre 1975), del Istituto Nazionale di Alta Matematica, Roma},
  Symposia Mathematica \textbf{20}, Academic Press, New York, p.241.

\bibitem{Haagerup:1977}
{Haagerup U.}, 1977, \textit{An example of a weight with type III centralizer},
  Proc. Amer. Math. Soc. \textbf{62}, 278.
  \href{http://www.ams.org/journals/proc/1977-062-02/S0002-9939-1977-0430801-2/S0002-9939-1977-0430801-2.pdf}{www.ams.org/journals/proc/1977-062-02/S0002-9939-1977-0430801-2/S0002-9939-1977-0430801-2.pdf}.

\bibitem{Haagerup:1978:dualweights:I}
{Haagerup U.}, 1978, \textit{On the dual weights for crossed products of von
  Neumann algebras I: removing separability conditions}, Math. Scand.
  \textbf{43}, 99.
  \href{http://www.mscand.dk/article.php?id=2437}{www.mscand.dk/article.php?id=2437}.

\bibitem{Haagerup:1978:dualweights:II}
{Haagerup U.}, 1978, \textit{On the dual weights for crossed products of von
  Neumann algebras II: application of operator valued weights}, Math. Scand.
  \textbf{43}, 119.
  \href{http://www.mscand.dk/article.php?id=2438}{www.mscand.dk/article.php?id=2438}.

\bibitem{Haagerup:1979:ncLp}
{Haagerup U.}, 1979, \textit{$L^p$-spaces associated with an aribitrary von
  Neumann algebra}, in: \textit{Alg\`{e}bres d'op\'{e}rateurs et leurs
  applications en physique math\'{e}matique (Proc. Colloques Internationaux,
  Marseille 20-24 juin 1977)}, Colloques Internationaux C.N.R.S. \textbf{274},
  \'{E}ditions du C.N.R.S., Paris, p.175.
  \href{http://dmitripavlov.org/scans/haagerup.pdf}{dmitripavlov.org/scans/haagerup.pdf}.

\bibitem{Haagerup:1979:ovw1}
{Haagerup U.}, 1979, \textit{Operator valued weights in von Neumann algebras
  I}, J. Funct. Anal. \textbf{32}, 175.
  \href{http://libgen.org/scimag/get.php?doi=10.1016/0022-1236(79)90053-3}{libgen.org:scimag/get.php?doi=10.1016/0022-1236(79)90053-3}.

\bibitem{Haagerup:1979:ovw2}
{Haagerup U.}, 1979, \textit{Operator valued weights in von Neumann algebras
  II}, J. Funct. Anal. \textbf{33}, 339.
  \href{http://libgen.org/scimag/get.php?doi=10.1016/0022-1236(79)90072-7}{libgen.org:scimag/get.php?doi=10.1016/0022-1236(79)90072-7}.

\bibitem{Haagerup:1987}
{Haagerup U.}, 1987, \textit{Connes' bicentralizer problem and uniqueness of
  the injective factor of type III$_1$}, Acta Math. \textbf{158}, 95.
  \href{http://libgen.org/scimag/get.php?doi=10.1007/bf02392257}{libgen.org:scimag/get.php?doi=10.1007/bf02392257}.

\bibitem{Haagerup:Junge:Xu:2010}
{Haagerup U., Junge M., Xu Q.}, 2010, \textit{A reduction method for
  noncommutative $L_p$-spaces and applications}, Trans. Amer. Math. Soc.
  \textbf{362}, 2125.
  \href{http://www.ams.org/journals/tran/2010-362-04/S0002-9947-09-04935-6/S0002-9947-09-04935-6.pdf}{www.ams.org/journals/tran/2010-362-04/S0002-9947-09-04935-6/S0002-9947-09-04935-6.pdf}.

\bibitem{Haagerup:Rosenthal:Sukochev:2003}
{Haagerup U., Rosenthal H.P., Sukochev F.A.}, 2003, \textit{Banach embedding
  properties of noncommutative $L^p$-spaces}, Memoirs Amer. Math. Soc.
  \textbf{163} no. 776, American Mathematical Society, Providence.
  \href{http://www.arxiv.org/pdf/math/0005150}{arXiv:math/0005150}.

\bibitem{Haar:1933}
{Haar A.}, 1933, \textit{Der Ma{\ss}begriff in der Theorie der kontinuierlichen
  Gruppen}, Ann. Math. \textbf{34}, 147.
  \href{http://libgen.org/scimag/get.php?doi=10.2307/1968346}{libgen.org:scimag/get.php?doi=10.2307/1968346}.

\bibitem{Hahn:1927}
{Hahn H.}, 1927, \textit{\"{U}ber lineare Gleichungssysteme in linearen
  R\"{a}umen}, J. Reine Angew. Math. \textbf{157}, 214.
  \href{http://gdz.sub.uni-goettingen.de/dms/load/img/?PPN=GDZPPN002170183&IDDOC=252029}{gdz.sub.uni-goettingen.de/dms/load/img/?PPN=GDZPPN002170183\&IDDOC=252029}.

\bibitem{Halmos:1950}
{Halmos P.R.}, 1950, \textit{Measure theory}, Van Nostrand, New York.
  \href{http://libgen.org/get?open=0&md5=00C4B0D76C69B67BEAC992F2EE26B0AF}{libgen.org:00C4B0D76C69B67BEAC992F2EE26B0AF}.

\bibitem{Halpern:1972}
{Halpern H.}, 1972, \textit{Unitary implementation of automorphism groups on
  von Neumann algebras}, Commun. Math. Phys. \textbf{25}, 253.
  \href{http://projecteuclid.org/euclid.cmp/1103857959}{euclid:cmp/1103857959}.

\bibitem{Halvorson:2004:remote}
{Halvorson H.}, 2004, \textit{Remote preparation of arbitrary ensembles and
  quantum bit commitment}, J. Math. Phys. \textbf{45}, 4920.
  \href{http://www.arxiv.org/pdf/quant-ph/0310001}{arXiv:quant-ph/0310001}.

\bibitem{Hausdorff:1914}
{Hausdorff F.}, 1914, \textit{Grundz\"{u}ge der Mengenlehre}, von Veit,
  Leipzig.
  \href{http://libgen.org/get?open=0&md5=732D04465AF921B6C8CEF79367B97EBB}{libgen.org:732D04465AF921B6C8CEF79367B97EBB}
  (Engl. transl.: 1962, \textit{Set theory}, Chelsea, New York.
  \href{http://libgen.org/get?open=0&md5=8F127D237902B6876C73F76B15B30877}{libgen.org:8F127D237902B6876C73F76B15B30877}).

\bibitem{Hellinger:Toeplitz:1910}
{Hellinger E., Toeplitz O.}, 1910, \textit{Grundlagen f\"{u}r eine Theorie der
  unendlichen Matrizen}, Math. Ann. \textbf{69}, 289.
  \href{http://gdz.sub.uni-goettingen.de/dms/load/img/?PPN=GDZPPN002500876&IDDOC=58146}{gdz.sub.uni-goettingen.de/dms/load/img/?PPN=GDZPPN002500876\&IDDOC=58146}.

\bibitem{Henle:1970}
{Henle M.}, 1970, \textit{Spatial representation of groups of automorphisms of
  von Neumann algebras with properly infinite commutants}, Commun. Math. Phys.
  \textbf{19}, 273.
  \href{http://projecteuclid.org/euclid.cmp/1103842740}{euclid:cmp/1103842740}.

\bibitem{Henson:Raynaud:Rizzo:2007}
{Henson C.W., Raynaud Y., Rizzo A.}, 2007, \textit{On axiomatizability of
  non-commutative $L_p$-spaces}, Canad. Math. Bull. \textbf{50}, 519.
  \href{http://cms.math.ca/cmb/v50/hensen8676.pdf}{cms.math.ca/cmb/v50/hensen8676.pdf}.

\bibitem{Hilbert:1904}
{Hilbert D.}, 1904, \textit{Grundz\"{u}ge einer allgemeinen Theorie der
  linearen Integralgleichungen I}, Nachr. K\"{o}nigl. Gesselsch. Wissensch.
  G\"{o}ttingen Math.-phys. Kl. \textbf{1904}, 49.

\bibitem{Hilbert:1905}
{Hilbert D.}, 1905, \textit{Grundz\"{u}ge einer allgemeinen Theorie der
  linearen Integralgleichungen II}, Nachr. K\"{o}nigl. Gesselsch. Wissensch.
  G\"{o}ttingen Math.-phys. Kl. \textbf{1905}, 213.

\bibitem{Hilbert:1906:a:b:c}
{Hilbert D.}, 1906, \textit{Grundz\"{u}ge einer allgemeinen Theorie der
  linearen Integralgleichungen III, IV, V}, Nachr. K\"{o}nigl. Gesselsch.
  Wissensch. G\"{o}ttingen Math.-phys. Kl. \textbf{1906}, 157, 307, 439.

\bibitem{Hilbert:1910}
{Hilbert D.}, 1910, \textit{Grundz\"{u}ge einer allgemeinen Theorie der
  linearen Integralgleichungen VI}, Nachr. K\"{o}nigl. Gesselsch. Wissensch.
  G\"{o}ttingen Math.-phys. Kl. \textbf{1910}, 355.

\bibitem{Hille:1948}
{Hille E.}, 1948, \textit{Functional analysis and semi-groups}, American
  Mathematical Society, New York (2nd rev. exp. ed.: Hille E., Phillips R.S.,
  1957.
  \href{http://libgen.org/get?open=0&md5=93153A761F1B0A580DE876A190CF593B}{libgen.org:93153A761F1B0A580DE876A190CF593B}).

\bibitem{Hilsum:1981}
{Hilsum M.}, 1981, \textit{Les espaces $L^p$ d'une alg\`{e}bre de von Neumann
  d\'{e}finies par la d\'{e}riv\'{e}e spatiale}, J. Funct. Anal. \textbf{40},
  151.
  \href{http://libgen.org/scimag/get.php?doi=10.1016/0022-1236(81)90065-3}{libgen.org:scimag/get.php?doi=10.1016/0022-1236(81)90065-3}.

\bibitem{Hirakawa:1992}
{Hirakawa M.}, 1992, \textit{A generalization of $\varphi$-conditional
  expectation and operator valued weight}, Publ. Res. Inst. Math. Sci.
  Ky\={o}to Univ. \textbf{28}, 289.
  \href{http://dx.doi.org/10.2977/prims/1195168664}{dx.doi.org/10.2977/prims/1195168664}.

\bibitem{Hoelder:1889}
{H\"{o}lder O.}, 1889, \textit{\"{U}ber einen Mittelwertsatz}, Nachr.
  Gesellsch. Wissensch. G\"{o}ttingen \textbf{38}, 38.

\bibitem{Holevo:1976}
{Holevo A.S. (Kholevo A.S.)}, 1976, \textit{Noncommutative analogues of the
  Cram\'{e}r--Rao inequality in the quantum measurement theory}, in: Prokhorov
  Yu.V., Maruyama G. (eds.) \textit{Proceedings of 3rd Japan--USSR symposium on
  probability theory}, LNM \textbf{550}, Springer, Berlin, p.194.
  \href{http://libgen.org/get?open=0&md5=8160370B4AB326DFBD7DEB5D8967168A}{libgen.org:8160370B4AB326DFBD7DEB5D8967168A}.

\bibitem{Holevo:1977}
{Holevo A.S. (Kholevo A.S.)}, 1977, \textit{Commutative superoperator of a
  state and its application in the noncommutative statistics}, Rep. Math. Phys.
  \textbf{12}, 251.

\bibitem{Horn:Tarski:1948}
{Horn A., Tarski A.}, 1948, \textit{Measures in boolean algebras}, Trans. Amer.
  Math. Soc. \textbf{64}, 467.
  \href{http://www.ams.org/journals/tran/1948-064-03/S0002-9947-1948-0028922-8/S0002-9947-1948-0028922-8.pdf}{www.ams.org/journals/tran/1948-064-03/S0002-9947-1948-0028922-8/S0002-9947-1948-0028922-8.pdf}.

\bibitem{Hugenholtz:1967}
{Hugenholtz N.M.}, 1967, \textit{On the factor type of equilibrium states in
  quantum statistical mechanics}, Commun. Math. Phys. \textbf{6}, 189.
  \href{http://projecteuclid.org/euclid.cmp/1103840242}{euclid:cmp/1103840242}.

\bibitem{Huijsmans:dePagter:1982}
{Huijsmans C.B., de Pagter B.}, 1982, \textit{Ideal theory in $f$-algebras},
  Trans. Amer. Math. Soc. \textbf{269}, 225.
  \href{http://www.ams.org/journals/tran/1982-269-01/S0002-9947-1982-0637036-5/S0002-9947-1982-0637036-5.pdf}{www.ams.org/journals/tran/1982-269-01/S0002-9947-1982-0637036-5/S0002-9947-1982-0637036-5.pdf}.

\bibitem{Huijsmans:dePagter:1984}
{Huijsmans C.B., de Pagter B.}, 1984, \textit{Subalgebras and Riesz subspaces
  of an $f$-algebra}, Proc. London Math. Soc. \textbf{48}, 161.
  \href{http://libgen.org/scimag/get.php?doi=10.1112/plms/s3-48.1.161}{libgen.org:scimag/get.php?doi=10.1112/plms/s3-48.1.161}.

\bibitem{Hurwitz:1897}
{Hurwitz A.}, 1897, \textit{Ueber die Erzeugung der Invarianten durch
  Integration}, Nachr. K\"{o}nigl. Gesellsch. Wisennsch. G\"{o}ttingen,
  Math.-phys. Kl. \textbf{1897}, 71.
  \href{http://gdz.sub.uni-goettingen.de/dms/load/img/?PPN=GDZPPN002497735&IDDOC=52776}{gdz.sub.uni-goettingen.de/dms/load/img/?PPN=GDZPPN002497735\&IDDOC=52776}.

\bibitem{Iochum:1984}
{Iochum B.}, 1984, \textit{C\^{o}nes autopolaires et alg\`{e}bres de Jordan},
  Springer, Berlin.
  \href{http://libgen.org/get?open=0&md5=9078D30F3EF08B9FB42158BEC835AD11}{libgen.org:9078D30F3EF08B9FB42158BEC835AD11}.

\bibitem{Izumi:1996}
{Izumi H.}, 1996, \textit{Non-commutative $L^{p}$-spaces},
  S\={u}rikaisekikenky\={u}sho K\={o}ky\={u}roku \textbf{956}, 24.
  \href{http://www.kurims.kyoto-u.ac.jp/~kyodo/kokyuroku/contents/pdf/0956-3.pdf}{www.kurims.kyoto-u.ac.jp/$\sim$kyodo/kokyuroku/contents/pdf/0956-3.pdf}.

\bibitem{Izumi:1997}
{Izumi H.}, 1997, \textit{Construction of non-commutative $L^p$-spaces with a
  complex parameter arising from modular actions}, Intern. J. Math. \textbf{8},
  1029.

\bibitem{Izumi:1998}
{Izumi H.}, 1998, \textit{Natural bilinear forms, natural sesquilinear forms
  and the associated duality on non-commutative $L^p$-spaces}, Intern. J. Math.
  \textbf{9}, 975.
  \href{http://libgen.org/scimag/get.php?doi=10.1142/S0129167X98000439}{libgen.org:scimag/get.php?doi=10.1142/S0129167X98000439}.

\bibitem{Izumi:1998:PhD}
{Izumi H.}, 1998, \textit{Non-commutative $L_p$-spaces constructed by the
  complex interpolation}, Ph.D. thesis, T\={o}hoku Math. Publ. \textbf{9},
  T\={o}hoku University, Sendai.

\bibitem{Izumi:2000}
{Izumi H.}, 2000, \textit{Non-commutative $L^p$-spaces},
  S\={u}rikaisekikenky\={u}sho K\={o}ky\={u}roku \textbf{1131}, 1.
  \href{http://www.kurims.kyoto-u.ac.jp/~kyodo/kokyuroku/contents/pdf/1131-1.pdf}{www.kurims.kyoto-u.ac.jp/$\sim$kyodo/kokyuroku/contents/pdf/1131-1.pdf}.

\bibitem{JOPP:2012}
{Jak\v{s}i\'{c} V., Ogata Y., Pautrat Y., Pillet C.-A.}, 2012, \textit{Entropic
  fluctuations in quantum statistical mechanics: an introduction}, in:
  Fr\"{o}hlich J. et al (eds.), \textit{Quantum theory from small to large
  scales: lecture notes of the Les Houches summer school: volume 95, August
  2010}, Oxford University Press, Oxford.
  \href{http://www.arxiv.org/pdf/1106.3786}{arXiv:1106.3786}.

\bibitem{Jencova:2005}
{Jen\v{c}ov\'{a} A.}, 2005, \textit{Quantum information geometry and
  non-commutative $L_p$ spaces}, Inf. Dim. Anal. Quant. Prob. Relat. Top.
  \textbf{8}, 215.
  \href{http://www.mat.savba.sk/~jencova/lpspaces.pdf}{www.mat.savba.sk/$\sim$jencova/lpspaces.pdf}.

\bibitem{Jencova:2010}
{Jen\v{c}ov\'{a} A.}, 2010, \textit{On quantum information manifolds}, in:
  Gibilisco P. et al (eds.), \textit{Algebraic and geometric methods in
  statistics}, Cambridge University Press, Cambridge, p.265.
  \href{http://libgen.org/get.php?md5=4059765D692D2A0A5583B7E032CE1895}{libgen.org:4059765D692D2A0A5583B7E032CE1895}.

\bibitem{Johnstone:1982}
{Johnstone P.T.}, 1982, \textit{Stone spaces}, Cambridge University Press,
  Cambridge.
  \href{http://libgen.org/get?open=0&md5=cfb5295ddc01bce9cae1256930a62b5a}{libgen.org:cfb5295ddc01bce9cae1256930a62b5a}.

\bibitem{Junge:Parcet:2010}
{Junge M., Parcet J.}, 2010, \textit{Mixed-norm inequalities and operator space
  $L_p$ embedding theory}, Memoirs of American Mathematical Society
  \textbf{203}, American Mathematical Society, Providence.
  \href{http://www.icmat.es/miembros/parcet/parcet_ICMAT/Papers_files/Mixednorm.pdf}{www.icmat.es/miembros/parcet/parcet\_ICMAT/Papers\_files/Mixednorm.pdf}.

\bibitem{Junge:Ruan:2004}
{Junge M., Ruan Z.-J.}, 2004, \textit{Decomposable maps on non-commutative
  $L_p$-spaces}, Contemp. Math. \textbf{365}, American Mathematical Society,
  Providence, p.355.

\bibitem{Junge:Ruan:Sherman:2004}
{Junge M., Ruan Z.-J., Sherman D.}, 2005, \textit{A classification of
  $2$-isometries of noncommutative $L_p$-spaces}, Israel J. Math. \textbf{150},
  25. \href{http://www.arxiv.org/pdf/math/0402181}{arXiv:math/0402181}.

\bibitem{Junge:Ruan:Xu:2005}
{Junge M., Ruan Z.-J., Xu Q.}, 2005, \textit{Rigid $\mathcal{O}\mathcal{L}_p$
  structures of non-commutative $L_p$-spaces associated with hyperfinite von
  Neumann algebras}, Math. Scand. \textbf{96}, 63.
  \href{http://www.mscand.dk/article.php?id=277}{www.mscand.dk/article.php?id=277}.

\bibitem{Junge:Sherman:2005}
{Junge M., Sherman D.}, 2005, \textit{Noncommutative $L^p$ modules}, J. Op.
  Theor. \textbf{53}, 3.
  \href{http://www.arxiv.org/pdf/math/0301044}{arXiv:math/0301044}.

\bibitem{Kadison:1951}
{Kadison R.V.}, 1951, \textit{Isometries of operator algebras}, Ann. Math.
  \textbf{54}, 325.

\bibitem{Kadison:1965}
{Kadison R.V.}, 1965, \textit{Transformation of states in operator theory and
  dynamics}, Topology \textbf{3} (Suppl.) 2, 177.

\bibitem{Kadison:1966}
{Kadison R.V.}, 1966, \textit{Derivations of operator algebras}, Ann. Math.
  \textbf{83}, 280.
  \href{http://www.math.uci.edu/~brusso/kadison66.pdf}{www.math.uci.edu/$\sim$brusso/kadison66.pdf}.

\bibitem{Kadison:Ringrose:1983:1986:1991:1992}
{Kadison R.V., Ringrose J.R.}, 1983, 1986, 1991, 1992, \textit{Fundamentals of
  the theory of operator algebras}, Vol.1-4, Academic Press, New York.
  \href{http://libgen.org/get?open=0&md5=4960FD4077CF90D1B55AE177A58286AB}{libgen.org:4960FD4077CF90D1B55AE177A58286AB},
  \href{http://libgen.org/get?open=0&md5=406F0E9D00D61F544624186451460FA2}{libgen.org:406F0E9D00D61F544624186451460FA2},
  \href{http://libgen.org/get?open=0&md5=63FAD76F4774E3487848458AEB4088A3}{libgen.org:63FAD76F4774E3487848458AEB4088A3}.

\bibitem{Kakutani:1940}
{Kakutani S.}, 1940, \textit{Weak topology, bicompact set and the principle of
  duality}, Proc. Imp. Acad. T\={o}ky\={o} \textbf{16}, 63.
  \href{http://projecteuclid.org/euclid.pja/1195579206}{euclid:pja/1195579206}.

\bibitem{Kakutani:1941:L}
{Kakutani S.}, 1941, \textit{Concrete representation of abstract $(L)$-spaces
  and the mean ergodic theorem}, Ann. Math. \textbf{42}, 523.

\bibitem{Kakutani:1941:M}
{Kakutani S.}, 1941, \textit{Concrete representation of abstract $(M)$-spaces
  (a characterization of the space of continuous functions)}, Ann. Math.
  \textbf{42}, 994.

\bibitem{Kallman:1971}
{Kallman R.R.}, 1971, \textit{Spatially induced groups of automorphisms of
  certain von Neumann algebras}, Trans. Amer. Math. Soc. \textbf{156}, 505.
  \href{http://www.ams.org/journals/tran/1971-156-00/S0002-9947-1971-0275180-8/S0002-9947-1971-0275180-8.pdf}{www.ams.org/journals/tran/1971-156-00/S0002-9947-1971-0275180-8/S0002-9947-1971-0275180-8.pdf}.

\bibitem{Kaminska:1982}
{Kami\'{n}ska A.}, 1982, \textit{On uniform convexity of Orlicz spaces}, Indag.
  Math. A \textbf{85}, 27.
  \href{libgen.org/scimag/get.php?doi=10.1016/1385-7258(82)90005-1}{libgen.org:scimag/get.php?doi=10.1016/1385-7258(82)90005-1}.

\bibitem{Kantorovich:1935}
{Kantorovich L.V.}, 1935, \textit{O poluuporyadochennykh line\u{\i}nykh
  prostranstvakh i ikh primeneniyakh v teorii line\u{\i}nykh operaci\u{\i}},
  Dokl. Akad. Nauk SSSR \textbf{4}, 11.

\bibitem{Kantorovich:1937}
{Kantorovich L.V.}, 1937, \textit{Lineare halbgeordnete R\"{a}ume}, Matem. sb.
  N.S. \textbf{2}, 121.
  \href{http://mi.mathnet.ru/msb5563}{mathnet.ru:msb5563}.

\bibitem{Kantorovich:Vulikh:Pinsker:1950}
{Kantorovich L.V., Vulikh B.Z., Pinsker A.G.}, 1950,
  \textit{Funkcional'ny\u{\i} analiz v poluuporyadochennykh prostranstvakh},
  Gosudarstvennoe Izdatel'stvo Tekhniko-Teoretichesko\u{\i} Literatury,
  Moskva/Leningrad.
  \href{http://libgen.org/get?open=0&md5=B3217A5F948CAF3DDF6AA0DC1D01A770}{libgen.org:B3217A5F948CAF3DDF6AA0DC1D01A770}.

\bibitem{Kaplansky:1950}
{Kaplansky I.}, 1950, \textit{Quelques r\'{e}sultats sur les anneaux
  d'op\`{e}rateurs}, Compt. Rend. Acad. Sci. Paris \textbf{231}, 485.

\bibitem{Kaplansky:1951}
{Kaplansky I.}, 1951, \textit{Projections in Banach algebras}, Ann. Math.
  \textbf{53}, 235.

\bibitem{Kaplansky:1951:rings}
{Kaplansky I.}, 1951, \textit{A theorem on rings of operators}, Pacific J.
  Math. \textbf{1}, 227.
  \href{http://projecteuclid.org/euclid.pjm/1103052193}{euclid:pjm/1103052193}.

\bibitem{Kaplansky:1952}
{Kaplansky I.}, 1952, \textit{Algebras of type I}, Ann. Math. \textbf{56}, 460.

\bibitem{Kaplansky:1953}
{Kaplansky I.}, 1953, \textit{Modules over operator algebras}, Amer. J. Math.
  \textbf{75}, 839.

\bibitem{Kaplansky:1955}
{Kaplansky I.}, 1955, \textit{Rings of operators}, Lecture notes, Mathematics
  Department of Chicago University, Chigago (rev. ed.: 1968, Benjamin, New
  York.
  \href{http://libgen.org/get?open=0&md5=7CA2C85D0EE64EEF7E1E169CA12A8514}{libgen.org:7CA2C85D0EE64EEF7E1E169CA12A8514}).

\bibitem{Kappos:1948}
{Kappos D.A.}, 1948, \textit{Ein Beitrag zur Carath\'{e}odoryschen Definition
  der Ortsfunktionen in Booleschen Algebren}, Math. Z. \textbf{51}, 616.
  \href{http://gdz.sub.uni-goettingen.de/dms/load/img/?PPN=GDZPPN002381036&IDDOC=24065}{gdz.sub.uni-goettingen.de/dms/load/img/?PPN=GDZPPN002381036\&IDDOC=24065}.

\bibitem{Kappos:1960}
{Kappos D.A.}, 1960, \textit{Strukturtheorie der Wahrscheinlichkeits-Felder und
  -R\"{a}ume}, Ergebnisse der Mathematik und ihrer Grenzgebiete. Neue Folge.
  Heft \textbf{24}, Springer, Berlin.

\bibitem{Kastler:1976}
{Kastler D.}, 1976, \textit{Equilibrium states of matter and operator
  algebras}, in: \textit{Convegno sulle algebre $C^*$ e loro applicazioni in
  fisica teoria (12-18 Marzo 1975, Convegno sulla teoria degli operatori indice
  e teoria $K$ (13-18 Ottobre 1975), del Istituto Nazionale di Alta Matematica,
  Roma}, Symposia Mathematica \textbf{20}, Academic Press, New York, p.49.

\bibitem{KPP:1969}
{Kastler D., Pool J.C.T., Poulsen E.T.}, 1969, \textit{Quasi-unitary algebras
  attached to temperature states in statistical mechanics. A comment to the
  work of Haag, Hugenholtz, and Winnink}, Commun. Math. Phys. \textbf{12}, 175.
  \href{http://projecteuclid.org/euclid.cmp/1103841385}{euclid:cmp/1103841385}.

\bibitem{Kastler:Robinson:1966}
{Kastler D., Robinson D.W.}, 1966, \textit{Invariant states in statistical
  mechanics}, Commun. Math. Phys. \textbf{3}, 151.
  \href{http://projecteuclid.org/euclid.cmp/1103839427}{euclid:cmp/1103839427}.

\bibitem{Katavolos:1976}
{Katavolos A.}, 1976, \textit{Isometric mappings of non-commutative $L_p$
  spaces}, Canad. J. Math. \textbf{28}, 1180.
  \href{http://cms.math.ca/cjm/v28/cjm1976v28.1180-1186.pdf}{cms.math.ca/cjm/v28/cjm1976v28.1180-1186.pdf}.

\bibitem{Katavolos:1981}
{Katavolos A.}, 1981, \textit{Are non-commutative $L_p$ spaces really
  non-commutative?}, Canad. J. Math. \textbf{33}, 1319.
  \href{http://cms.math.ca/cjm/v33/cjm1981v33.1319-1327.pdf}{cms.math.ca/cjm/v33/cjm1981v33.1319-1327.pdf}.

\bibitem{Katavolos:1982}
{Katavolos A.}, 1982, \textit{Non commutative $L_p$ spaces II}, Canad. J. Math.
  \textbf{34}, 1208.
  \href{http://cms.math.ca/cjm/v33/cjm1981v33.1319-1327.pdf}{cms.math.ca/cjm/v33/cjm1981v33.1319-1327.pdf}.

\bibitem{Kelley:1966}
{Kelley J.L.}, 1966, \textit{Decomposition and representation theorems in
  measure theory}, Math. Ann. \textbf{163}, 89.
  \href{http://gdz.sub.uni-goettingen.de/dms/load/img/?PPN=GDZPPN00229561X&IDDOC=62555}{gdz.sub.uni-goettingen.de/dms/load/img/?PPN=GDZPPN00229561X\&IDDOC=62555}.

\bibitem{Kelley:Namioka:1963}
{Kelley J.L., Namioka I.}, 1963, \textit{Linear topological spaces}, Van
  Nostrand, Princeton.
  \href{http://libgen.org/get?open=0&md5=31A7137D85FB6927FE7B07DEC8677C29}{libgen.org:31A7137D85FB6927FE7B07DEC8677C29}.

\bibitem{Kelley:Vaught:1953}
{Kelley J.L., Vaught R.L.}, 1953, \textit{The positive cone in Banach
  algebras}, Trans. Amer. Math. Soc. \textbf{74}, 44.
  \href{http://www.ams.org/journals/tran/1953-074-01/S0002-9947-1953-0054175-2/S0002-9947-1953-0054175-2.pdf}{www.ams.org/journals/tran/1953-074-01/S0002-9947-1953-0054175-2/S0002-9947-1953-0054175-2.pdf}.

\bibitem{Kolmogorov:1933}
{Kolmogorov A.N.}, 1933, \textit{Grundbegriffe der Wahrscheinlichkeitrechnung},
  Ergebnisse der Mathematik und Ihrer Grenzgebiete Bd. \textbf{2}, Springer,
  Berlin. (Engl. transl. 1950, \textit{Foundations of the theory of
  probability}, Chelsea, New York.
  \href{http://libgen.org/get?open=0&md5=2BFE8A8E7D7907A81547C003E748B7BD}{libgen.org:2BFE8A8E7D7907A81547C003E748B7BD}).

\bibitem{Koopman:1931}
{Koopman B.O.}, 1931, \textit{Hamiltonian systems and transformations in
  Hilbert space}, Proc. Nat. Acad. Sci. U.S.A. \textbf{17}, 315.
  \href{http://www.pnas.org/content/17/5/315.full.pdf}{www.pnas.org/content/17/5/315.full.pdf}.

\bibitem{Koppelberg:1989}
{Koppelberg S.}, 1989, \textit{General theory of boolean algebras}, in: Monk
  J.D., Bonnet R. (eds.), \textit{Handbook of boolean algebras}, Vol.1,
  North-Holland, Amsterdam, p.1.
  \href{http://libgen.org/get?nametype=orig&md5=07A504676596177115BFB41FF1786D56}{libgen.org:07A504676596177115BFB41FF1786D56}.

\bibitem{Kosaki:1980:PhD}
{Kosaki H.}, 1980, \textit{Canonical $L^p$-spaces associated with an arbitrary
  abstract von Neumann algebra}, Ph.D. thesis, University of California, Los
  Angeles.
  \href{http://dmitripavlov.org/scans/kosaki-thesis.pdf}{dmitripavlov.org/scans/kosaki-thesis.pdf}.

\bibitem{Kosaki:1980:cones}
{Kosaki H.}, 1980, \textit{Positive cones associated with a von Neumann
  algebra}, Math. Scand. \textbf{47}, 295.
  \href{http://www.mscand.dk/article.php?id=2559}{www.mscand.dk/article.php?id=2559}.

\bibitem{Kosaki:1981:Lorentz}
{Kosaki H.}, 1981, \textit{Non-commutative Lorentz spaces associated with a
  semi-finite von Neumann algebra and applications}, Proc. Japan Acad. Ser. A
  \textbf{57}, 303.
  \href{http://projecteuclid.org/euclid.pja/1195516369}{euclid:pja/1195516369}.

\bibitem{Kosaki:1981:positive}
{Kosaki H.}, 1981, \textit{Positive cones and $L^p$-spaces associated with a
  von Neumann algebra}, J. Oper. Theor. \textbf{6}, 13.
  \href{http://www.theta.ro/jot/archive/1981-006-001/1981-006-001-002.pdf}{www.theta.ro/jot/archive/1981-006-001/1981-006-001-002.pdf}.

\bibitem{Kosaki:1982:RN}
{Kosaki H.}, 1982, \textit{A Radon--Nikodym theorem for natural cones
  associated with von Neumann algebras}, Proc. Amer. Math. Soc. \textbf{84},
  207.
  \href{http://www.ams.org/journals/proc/1983-087-02/S0002-9939-1983-0681835-7/S0002-9939-1983-0681835-7.pdf}{www.ams.org/journals/proc/1983-087-02/S0002-9939-1983-0681835-7/S0002-9939-1983-0681835-7.pdf}.

\bibitem{Kosaki:1982:Tt}
{Kosaki H.}, 1982, \textit{$T$-theorem for $L^p$-spaces associated with a von
  Neumann algebra}, J. Op. Theor. \textbf{7}, 267.
  \href{http://www.theta.ro/jot/archive/1982-007-002/1982-007-002-007.pdf}{www.theta.ro/jot/archive/1982-007-002/1982-007-002-007.pdf}.

\bibitem{Kosaki:1983:RN2}
{Kosaki H.}, 1983, \textit{A Radon--Nikodym theorem for natural cones
  associated with von Neumann algebras. II}, Proc. Amer. Math. Soc.
  \textbf{87}, 283.
  \href{http://www.ams.org/journals/proc/1983-087-02/S0002-9939-1983-0681835-7/S0002-9939-1983-0681835-7.pdf}{www.ams.org/journals/proc/1983-087-02/S0002-9939-1983-0681835-7/S0002-9939-1983-0681835-7.pdf}.

\bibitem{Kosaki:1984:ncLp}
{Kosaki H.}, 1984, \textit{Applications of the complex interpolation method to
  a von Neumann algebra: non-commutative $L^p$-spaces}, J. Funct. Anal.
  \textbf{56}, 29.

\bibitem{Kosaki:1984:continuity}
{Kosaki H.}, 1984, \textit{On the continuity of the map $\varphi\ra|\varphi|$
  from the predual of a $W^*$-algebra}, J. Funct. Anal. \textbf{59}, 123.

\bibitem{Kosaki:1985}
{Kosaki H.}, 1985, \textit{Lebesgue decomposition of states on a von Neumann
  algebra}, Am. J. Math. \textbf{107}, 697.

\bibitem{Krasnoselskii:Rutickii:1952}
{Krasnosel'ski\u{\i} M.A., Ruticki\u{\i} Ya.B.}, 1952, \textit{Line\u{\i}nye
  integral'nye operatory v prostranstvakh Orlicha}, Dokl. Akad. Nauk SSSR
  \textbf{85}, 1.

\bibitem{Krasnoselskii:Rutickii:1954}
{Krasnosel'ski\u{\i} M.A., Ruticki\u{\i} Ya.B.}, 1954, \textit{O
  biline\u{\i}nykh funkcionalakh v prostranstvakh Orlicha}, Dokl. Akad. Nauk
  SSSR \textbf{97}, 581.

\bibitem{Krasnoselskii:Rutickii:1958}
{Krasnosel'ski\u{\i} M.A., Ruticki\u{\i} Ya.B.}, 1958, \textit{Vypuklye funkcii
  i prostranstva Orlicha}, Gosudarstvennoe izdatel'stvo
  fiziko-matematichesko\u{\i} literatury, Moskva.
  \href{http://libgen.org/get.php?md5=CFC3DB6B89C6821457044734272DC224}{libgen.org:CFC3DB6B89C6821457044734272DC224}
  (engl. trans. 1961, \textit{Convex functions and Orlicz spaces}, Nordhoff,
  Groningen.
  \href{http://libgen.org/get.php?md5=f74c452095b800808a4c7e578e83662d}{libgen.org:f74c452095b800808a4c7e578e83662d}).

\bibitem{Krein:Krein:1940}
{Kre\u{\i}n M.G, Kre\u{\i}n S.G.}, 1940, \textit{On an inner characteristic of
  the set of all continuous functions defined on a bicompact Hausdorff space},
  Dokl. Akad. Nauk SSSR \textbf{27}, 427.

\bibitem{Krein:Krein:1943}
{Kre\u{\i}n M.G, Kre\u{\i}n S.G.}, 1943, \textit{Sur l'espace de fonctions
  continues d\'{e}finies sur un bicompact Hausdorff et ses sous espaces
  semiordonn\'{e}s}, Matem. sb. \textbf{13}, 1.
  \href{http://mi.mathnet.ru/msb6170}{mathnet.ru:msb6170}.

\bibitem{Krein:Milman:1940}
{Kre\u{\i}n M.G., Mil'man D.P.}, 1940, \textit{On extreme points of regularly
  convex sets}, Stud. Math. \textbf{9}, 133.
  \href{http://matwbn.icm.edu.pl/ksiazki/sm/sm9/sm9111.pdf}{matwbn.icm.edu.pl/ksiazki/sm/sm9/sm9111.pdf}.

\bibitem{Krieger:1976}
{Krieger W.}, 1976, \textit{On ergodic flows and the isomorphism of factors},
  Math. Ann. \textbf{223}, 19.
  \href{http://gdz.sub.uni-goettingen.de/dms/load/img/?PPN=GDZPPN00231309X&IDDOC=125657}{gdz.sub.uni-goettingen.de/dms/load/img/?PPN=GDZPPN00231309X\&IDDOC=125657}.

\bibitem{Kubo:1957}
{Kubo R.}, 1957, \textit{Statistical-mechanical theory of irreversible
  processes. I. General theory and simple applications to magnetic and
  conduction problems}, J. Phys. Soc. Japan \textbf{12}, 570.
  \href{http://dx.doi.org/10.1143/JPSJ.12.570}{dx.doi.org/10.1143/JPSJ.12.570}.

\bibitem{Kunze:1958}
{Kunze R.}, 1958, \textit{$L_p$ Fourier transforms on locally compact
  unimodular groups}, Trans. Amer. Math. Soc. \textbf{89}, 519.
  \href{http://www.ams.org/journals/tran/1958-089-02/S0002-9947-1958-0100235-1/S0002-9947-1958-0100235-1.pdf}{www.ams.org/journals/tran/1958-089-02/S0002-9947-1958-0100235-1/S0002-9947-1958-0100235-1.pdf}.

\bibitem{Kunze:1990}
{Kunze W.}, 1990, \textit{Noncommutative Orlicz spaces and generalized Arens
  algebras}, Math. Nachr. \textbf{147}, 123.
  \href{http://libgen.org/scimag/get.php?doi=10.1002/mana.19901470114}{libgen.org:scimag/get.php?doi=10.1002/mana.19901470114}.

\bibitem{Kusraev:Kutateladze:1999}
{Kusraev A.G., Kutateladze S.S.}, 1999, \textit{Boolean valued analysis},
  Kluwer, Dordrecht.
  \href{http://libgen.org/get?open=0&md5=3806EC39A079E4778477A2C882FD25D0}{libgen.org:3806EC39A079E4778477A2C882FD25D0}.

\bibitem{Labuschagne:2013}
{Labuschagne L.E.}, 2013, \textit{A crossed product approach to Orlicz spaces},
  Proc. London Math. Soc. \textbf{107}, 965.
  \href{http://arxiv.org/pdf/1205.2192}{arXiv:1205.2192}.

\bibitem{Labuschagne:Majewski:2008}
{Labuschagne L.E., Majewski W.A.}, 2008, \textit{Quantum $L_p$ and Orlicz
  spaces}, in: Garc\'{\i}a J.C., Quezada R., Sontz S.B.(eds.), \textit{Quantum
  probability and related topics. Proceedings of the 28th Conference
  (CIMAT-Guanajuato, Mexico, 2-8 September 2007)}, World Scientific, Singapore,
  p.176. \href{http://arxiv.org/pdf/0902.4327}{arXiv:0902.4327}.

\bibitem{Lacey:1974}
{Lacey H.E.}, 1974, \textit{The isometric theory of classical Banach spaces},
  Springer, Berlin.
  \href{http://libgen.org/get?open=0&md5=152b967e4ffe4c1a0c4bd8fc35a897f3}{libgen.org:152b967e4ffe4c1a0c4bd8fc35a897f3}.

\bibitem{Lai:1973}
{Lai P.T.}, 1973, \textit{L'analogue dans $\mathcal{C}^p$ des theorem de
  convexit\'{e} de M. Riesz et G.O. Thorin}, Studia Math. \textbf{46}, 111.

\bibitem{Landstad:1979}
{Landstad M.B.}, 1979, \textit{Duality theory for covariant systems}, Trans.
  Amer. Math. Soc. \textbf{248}, 223.
  \href{http://www.ams.org/journals/tran/1979-248-02/S0002-9947-1979-0522262-6/S0002-9947-1979-0522262-6.pdf}{www.ams.org/journals/tran/1979-248-02/S0002-9947-1979-0522262-6/S0002-9947-1979-0522262-6.pdf}.

\bibitem{Lanford:1970}
{Lanford O.E. III}, 1970, \textit{The KMS sates of quantum spin system}, in:
  Michel L., Ruelle D. (eds.), \textit{Syst\`{e}mes \`{a} un nombre infinies de
  degr\`{e}s de libert\'{e}}, Centre National de la Recherche Scientifique,
  Paris, p.146.

\bibitem{Lebesgue:1901}
{Lebesgue H.}, 1901, \textit{Sur une g\'{e}n\'{e}ralisation de l'int\'{e}grale
  d\'{e}finie}, Compt. Rend. Hebd. S\'{e}anc. l'Acad. Sci. \textbf{132}, 1025.
  \href{http://www.proba.jussieu.fr/~mazliak/Lebesgue_1901.pdf}{www.proba.jussieu.fr/$\sim$mazliak/Lebesgue\_1901.pdf},
  Engl. transl.:
  \href{http://www.math.binghamton.edu/paul/505-S08/1.pdf}{www.math.binghamton.edu/paul/505-S08/1.pdf}.

\bibitem{Lebesgue:1902}
{Lebesgue H.}, 1902, \textit{Int\'{e}grale, longeur, aire}, Ann. Matem. Pura
  Appl. \textbf{7}, 231.

\bibitem{Lebesgue:1904}
{Lebesgue H.}, 1904, \textit{Le\c{c}ons sur l'int\'{e}gration et la recherche
  des fonctions primitives}, Gauthier--Villars, Paris (2nd. ed. 1924).
  \href{http://libgen.org/get?open=0&md5=DE50641E50FEE871175600D0BD57BAF2}{libgen.org:DE50641E50FEE871175600D0BD57BAF2}.

\bibitem{Lebesgue:1910}
{Lebesgue H.}, 1910, \textit{Sur l'int\'{e}gration des fonctions discontinues},
  Ann. Sci. \'{E}cole Norm. Sup. $3^{e}$ ser. \textbf{27}, 361.
  \href{http://archive.numdam.org/article/ASENS_1910_3_27__361_0.pdf}{numdam:ASENS\_1910\_3\_27\_\_361\_0}.

\bibitem{Leinert:1986}
{Leinert M.}, 1986, \textit{On integration with respect to a trace}, in: Nagel
  R. et. al (eds.), \textit{Aspects of positivity in functional analysis},
  North-Holland, Amsterdam, p.231.

\bibitem{Leinert:1991}
{Leinert M.}, 1991, \textit{Integration with respect to a weight}, Intern. J.
  Math. \textbf{2}, 177.

\bibitem{Leinert:1992}
{Leinert M.}, 1992, \textit{Integration with respect to a trace}, Arch. Math.
  \textbf{59}, 475.

\bibitem{Leonard:2007}
{L\'{e}onard L.}, 2007, \textit{Orlicz spaces},
  \href{http://www.cmap.polytechnique.fr/~leonard/papers/papers/orlicz.pdf}{www.cmap.polytechnique.fr/$\sim$leonard/papers/papers/orlicz.pdf}.

\bibitem{Levitin:Polyak:1966}
{Levitin E.S., Polyak B.T.}, 1966, \textit{O skhodimosti minimiziruyushhikh
  posledovatel'noste\u{\i} v zadachakh na uslovny\u{\i} \`{e}kstremum}, Dokl.
  Akad. Nauk SSSR \textbf{168}, 997.

\bibitem{Lewin:Lewin:1975}
{Lewin J., Lewin M.}, 1975, \textit{A reformulation of the Radon-Nikodym
  theorem}, Proc. Amer. Math. Soc. \textbf{47}, 393.
  \href{http://www.ams.org/journals/proc/1975-047-02/S0002-9939-1975-0376999-4/S0002-9939-1975-0376999-4.pdf}{www.ams.org/journals/proc/1975-047-02/S0002-9939-1975-0376999-4/S0002-9939-1975-0376999-4.pdf}.

\bibitem{Lindenstrauss:Tzafriri:1977:1979}
{Lindenstrauss J., Tzafriri L.}, 1977, 1979, \textit{Classical Banach spaces},
  Vol.1-2, Springer, Berlin.
  \href{http://libgen.org/get?open=0&md5=6D9FC2E3C27BA845FD629049752944A7}{libgen.org:6D9FC2E3C27BA845FD629049752944A7}.

\bibitem{Linton:1970}
{Linton F.E.J.}, 1970, \textit{Applied functorial semantics, I}, Ann. Mat.
  \textbf{86}, 1.

\bibitem{Loomis:1947}
{Loomis L.H.}, 1947, \textit{On the representation of $\sigma$-complete boolean
  algebras}, Bull. Amer. Math. Soc. \textbf{53}, 757.
  \href{http://projecteuclid.org/euclid.bams/1183510979}{euclid:bams/1183510979}.

\bibitem{Lurie:2011}
{Lurie J.}, 2011, \textit{von Neumann algebras},
  \href{http://www.math.harvard.edu/~lurie/261y.html}{www.math.harvard.edu/$\sim$lurie/261y.html}.

\bibitem{Luxemburg:1955}
{Luxemburg W.A.J.}, 1955, \textit{Banach function spaces}, Ph.D. thesis,
  Technische Hogeschool te Delft, Delft.
  \href{http://repository.tudelft.nl/assets/uuid:252868f8-d63f-42e4-934c-20956b86783f/71308.pdf}{repository.tudelft.nl/assets/uuid:252868f8-d63f-42e4-934c-20956b86783f/71308.pdf}.

\bibitem{Luxemburg:Zaanen:1971}
{Luxemburg W.A.J., Zaanen A.C.}, 1971, \textit{Riesz spaces}, Vol.1,
  North-Holland, Amsterdam.
  \href{http://libgen.org/get?open=0&md5=54828591DD8D0A5F5E1C0464E65DB093}{libgen.org:54828591DD8D0A5F5E1C0464E65DB093}.

\bibitem{Lyubovin:1954}
{Lyubovin V.D.}, 1954, \textit{Poluuporyadochennye kol'ca samospryazhennykh
  operatorov}, Ph.D. thesis, Leningradski\u{\i} Gosudarstvenny\u{\i}
  Pedagogicheski\u{\i} Institut, Leningrad.

\bibitem{Lyubovin:1956}
{Lyubovin V.D.}, 1956, \textit{O $K$-prostranstve ogranichennykh
  samospryazhennykh operatorov}, Uch. zap. leningr. pedagog. univ.
  \textbf{125}, 119.

\bibitem{Mackey:1945}
{Mackey G.W.}, 1945, \textit{On infinite-dimensional spaces}, Trans. Amer.
  Math. Soc. \textbf{57}, 155.
  \href{http://www.ams.org/journals/tran/1945-057-02/S0002-9947-1945-0012204-1/S0002-9947-1945-0012204-1.pdf}{www.ams.org/journals/tran/1945-057-02/S0002-9947-1945-0012204-1/S0002-9947-1945-0012204-1.pdf}.

\bibitem{MacNeille:1937}
{MacNeille H.M.}, 1937, \textit{Partially ordered sets}, Trans. Amer. Math.
  Soc. \textbf{42}, 416.
  \href{http://www.ams.org/journals/tran/1937-042-03/S0002-9947-1937-1501929-X/S0002-9947-1937-1501929-X.pdf}{www.ams.org/journals/tran/1937-042-03/S0002-9947-1937-1501929-X/S0002-9947-1937-1501929-X.pdf}.

\bibitem{Magyar:Sebestyen:1985}
{Magyar Z., Sebesty\'{e}n Z.}, 1985, \textit{On the definition of
  $C^*$-algebras II}, Can. J. Math. \textbf{37}, 664.
  \href{http://cms.math.ca/cjm/v37/cjm1985v37.0664-0681.pdf}{cms.math.ca/cjm/v37/cjm1985v37.0664-0681.pdf}.

\bibitem{Maharam:1942:PNAS}
{Maharam D.}, 1942, \textit{On homogeneous measure algebras}, Proc. Amer. Math.
  Soc. \textbf{28}, 108.
  \href{https://www.ncbi.nlm.nih.gov/pmc/articles/PMC1078424/pdf/pnas01638-0046.pdf}{www.ncbi.nlm.nih.gov/pmc/articles/PMC1078424/pdf/pnas01638-0046.pdf}.

\bibitem{Maharam:1942:TAMS}
{Maharam D.}, 1942, \textit{On measure in abstract sets}, Trans. Amer. Math.
  Soc. \textbf{51}, 413.
  \href{http://www.ams.org/journals/tran/1942-051-00/S0002-9947-1942-0006594-0/S0002-9947-1942-0006594-0.pdf}{www.ams.org/journals/tran/1942-051-00/S0002-9947-1942-0006594-0/S0002-9947-1942-0006594-0.pdf}.

\bibitem{Majewski:Zegarlinski:1995}
{Majewski A.W., Zegarli\'{n}ski B.}, 1995, \textit{Quantum stochastic dynamics
  I: spin systems on a lattice}, Math. Phys. Electr. J. \textbf{1}, 1.
  \href{http://www.emis.de/journals/MPEJ/Vol/1/2.pdf}{www.emis.de/journals/MPEJ/Vol/1/2.pdf}.

\bibitem{Majewski:Zegarlinski:1996:APP}
{Majewski A.W., Zegarli\'{n}ski B.}, 1996, \textit{On quantum $L_p$-space
  technique}, Acta Phys. Polon. B \textbf{27}, 653.
  \href{http://www.actaphys.uj.edu.pl/vol27/pdf/v27p0653.pdf}{www.actaphys.uj.edu.pl/vol27/pdf/v27p0653.pdf}.

\bibitem{Majewski:Zegarlinski:1996:MPRF}
{Majewski A.W., Zegarli\'{n}ski B.}, 1996, \textit{On quantum stochastic
  dynamics}, Markov Proc. Rel. Fields \textbf{2}, 87.

\bibitem{Majewski:Labuschagne:2014}
{Majewski W.A., Labuschagne L.E.}, 2014, \textit{On applications of Orlicz
  spaces to statistical physics}, Ann. H. Poincar\'{e} \textbf{15}, 1197.
  \href{http://arxiv.org/pdf/1302.3460}{arXiv:1302.3460}.

\bibitem{Maligranda:1989}
{Maligranda L.}, 1989, \textit{Orlicz spaces and interpolation}, Universidade
  de Estadual de Campinas, Campinas.

\bibitem{Mandelbrojt:1939}
{Mandelbrojt S.}, 1939, \textit{Sur les fonctions convexes}, Compt. Rend. Acad.
  Sci. Paris \textbf{209}, 977.

\bibitem{Martignon:1980}
{Martignon L.}, 1980, \textit{Banach f-algebras and Banach lattice algebras
  with unit}, Bol. Soc. Bras. Mat. \textbf{11}, 11.

\bibitem{Martin:Schwinger:1959}
{Martin P.C., Schwinger J.}, 1959, \textit{Theory of many particle systems: I},
  Phys. Rev. \textbf{115}, 1342.

\bibitem{Masuda:1983}
{Masuda T.}, 1983, \textit{$L_p$-spaces for von Neumann algebra with reference
  to a faithful normal semifinite weight}, Publ. Res. Inst. Math. Sci.
  Ky\={o}to Univ. \textbf{19}, 673.
  \href{http://dx.doi.org/10.2977/prims/1195182447}{doi:10.2977/prims/1195182447}.

\bibitem{Masuda:1984:I}
{Masuda T.}, 1984, \textit{Groupoid dynamical systems and crossed product,
  I--the case of $W^*$-systems}, Publ. Res. Inst. Math. Sci. Ky\={o}to Univ.
  \textbf{20}, 929.
  \href{http://dx.doi.org/10.2977/prims/1195180873}{doi:10.2977/prims/1195180873}.

\bibitem{Masuda:1984:II}
{Masuda T.}, 1984, \textit{Groupoid dynamical systems and crossed product,
  II--the case of $C^*$-systems}, Publ. Res. Inst. Math. Sci. Ky\={o}to Univ.
  \textbf{20}, 959. \href{http://dx.doi.org/10.2977/prims/1195180874
  }{doi:10.2977/prims/1195180874}.

\bibitem{Masuda:1984}
{Masuda T.}, 1984, \textit{A note on a theorem of A.~Connes on Radon-Nikodym
  cocycles}, Publ. Res. Inst. Math. Sci. Ky\={o}to Univ. \textbf{20}, 131.
  \href{http://dx.doi.org/10.2977/prims/1195181833}{doi:10.2977/prims/1195181833}.

\bibitem{Masuda:1985}
{Masuda T.}, 1985, \textit{Groupoid dynamical systems and crossed product}, in:
  Araki H. et al (eds.), \textit{Operator algebras and their connections with
  topology and erodic theory. Proceedings of the OATE Conference held in
  Busteni, Romania, Aug. 29 -- Sept. 9, 1983}, LNM \textbf{1132}, Springer,
  Berlin, p.350.
  \href{http://libgen.org/get?open=0&md5=494FE62B29C1BC73ED4C43D5C2E258B2}{libgen.org:494FE62B29C1BC73ED4C43D5C2E258B2}.

\bibitem{McCarthy:1967}
{McCarthy C.A.}, 1967, \textit{$c_p$}, Israel J. Math. \textbf{5}, 249.

\bibitem{McDuff:1969:countable}
{McDuff D.}, 1969, \textit{A countable infinity of II$_1$ factors}, Ann. Math.
  \textbf{90}, 361.

\bibitem{McDuff:1969}
{McDuff D.}, 1969, \textit{Uncountably many II$_1$ factors}, Ann. Math.
  \textbf{90}, 372.

\bibitem{McShane:1944}
{McShane E.J.}, 1944, \textit{Integration}, Princeton University Press,
  Princeton.
  \href{http://libgen.org/get?open=0&md5=3647B95D28457863FFDA57D2A6F74D0C}{libgen.org:3647B95D28457863FFDA57D2A6F74D0C}.

\bibitem{McShane:1949}
{McShane E.J.}, 1949, \textit{Remark concerning integration}, Proc. Nat. Acad.
  Sci. U.S.A. \textbf{35}, 46.
  \href{https://www.ncbi.nlm.nih.gov/pmc/articles/PMC1062956/pdf/pnas01538-0055.pdf}{www.ncbi.nlm.nih.gov/pmc/articles/PMC1062956/pdf/pnas01538-0055.pdf}.

\bibitem{McShane:1962}
{McShane E.J.}, 1962, \textit{Families of measures and representations of
  algebras of operators}, Trans. Amer. Math. Soc. \textbf{102}, 328.
  \href{http://www.ams.org/journals/tran/1962-102-02/S0002-9947-1962-0137002-X/S0002-9947-1962-0137002-X.pdf}{www.ams.org/journals/tran/1962-102-02/S0002-9947-1962-0137002-X/S0002-9947-1962-0137002-X.pdf}.

\bibitem{MeyerNieberg:1991}
{Meyer-Nieberg P.}, 1991, \textit{Banach lattices}, Springer, Berlin.
  \href{http://libgen.org/get?open=0&md5=D14F05FA4C67A9890447EB201F406165}{libgen.org:D14F05FA4C67A9890447EB201F406165}.

\bibitem{Mittelmeyer:Wolff:1974}
{Mittelmeyer G., Wolff M.P.H.}, 1974, \textit{\"{U}ber den Absolutbetrag auf
  komplexen Vektorb\"{a}nden}, Math. Z. \textbf{131}, 87.
  \href{http://www.digizeitschriften.de/dms/img/?PPN=GDZPPN002414309}{www.digizeitschriften.de/dms/img/?PPN=GDZPPN002414309}.

\bibitem{Moore:1909}
{Moore E.H.}, 1909, \textit{On a form of general analysis, with applications to
  linear differential and integral equations}, in: Castelnuovo G. (ed.),
  \textit{Atti del IV Congresso Internazionale dei Matematici (Roma, 6-11
  Aprile 1908)}, Accademia Nazionale dei Lincei, Roma, vol.2, p.98.

\bibitem{Moore:1915}
{Moore E.H.}, 1915, \textit{Definition of limit in general integral analysis},
  Proc. Nat. Acad. Sci. USA \textbf{1}, 628.
  \href{https://www.ncbi.nlm.nih.gov/pmc/articles/PMC1090916/pdf/pnas01981-0068.pdf}{www.ncbi.nlm.nih.gov/pmc/articles/PMC1090916/pdf/pnas01981-0068.pdf}.

\bibitem{Morse:Transue:1950}
{Morse M., Transue W.}, 1950, \textit{Functionals $F$ bilinear over the product
  $A$ $\times$ $B$ of two pseudo-normed vector spaces}, Ann. Math. \textbf{51},
  576.
  \href{http://libgen.org/scimag/get.php?doi=10.2307/1969370}{libgen.org:scimag/get.php?doi=10.2307/1969370}.

\bibitem{Moy:1954}
{Moy S.-T.C.}, 1954, \textit{Characterisations of conditional expectation as a
  transformation on function spaces}, Pacific J. Math. \textbf{4}, 47.
  \href{http://projecteuclid.org/euclid.pjm/1103044954}{euclid:pjm/1103044954}.

\bibitem{Muratov:1978}
{Muratov M.A.}, 1978, \textit{Nekommutativnye prostranstva Orlicha}, Dokl.
  Akad. Nauk UzSSR \textbf{1978:6}, 11.
  \href{http://www.fuw.edu.pl/~kostecki/scans/muratov1978.pdf}{www.fuw.edu.pl/$\sim$kostecki/scans/muratov1978.pdf}.

\bibitem{Muratov:1979}
{Muratov M.A.}, 1979, \textit{Norma Lyuksemburga v prostranstve Orlicha
  izmerimykh operatorov}, Dokl. Akad. Nauk UzSSR \textbf{1979:1}, 5.
  \href{http://www.fuw.edu.pl/~kostecki/scans/muratov1979.pdf}{www.fuw.edu.pl/$\sim$kostecki/scans/muratov1979.pdf}.

\bibitem{Muratov:Chilin:2007}
{Muratov M.A., Chilin V.I.}, 2007, \textit{Algebry izmerimykh i lokal'no
  izmerimykh operatorov}, Instytut matematyky NAN Ukra\"{\i}ny, Ky\"{\i}v.

\bibitem{Murphy:1990}
{Murphy G.J.}, 1990, \textit{$C^*$-algebras and operator theory}, Academic
  Press, San Diego.
  \href{http://libgen.org/get?open=0&md5=C59BCCD47DFAA1BDFD272A4D5E160530}{libgen.org:C59BCCD47DFAA1BDFD272A4D5E160530}.

\bibitem{Murray:vonNeumann:1936}
{Murray F.J., von Neumann J.}, 1936, \textit{On rings of operators}, Ann. Math.
  \textbf{37}, 116.

\bibitem{Murray:vonNeumann:1937}
{Murray F.J., von Neumann J.}, 1937, \textit{On rings of operators II}, Trans.
  Am. Math. Soc. \textbf{41}, 208.
  \href{http://www.ams.org/journals/tran/1937-041-02/S0002-9947-1937-1501899-4/S0002-9947-1937-1501899-4.pdf}{www.ams.org/journals/tran/1937-041-02/S0002-9947-1937-1501899-4/S0002-9947-1937-1501899-4.pdf}.

\bibitem{Murray:vonNeumann:1943}
{Murray F.J., von Neumann J.}, 1943, \textit{On rings of operators IV}, Ann.
  Math. \textbf{44}, 716.

\bibitem{Musielak:1983}
{Musielak J.}, 1983, \textit{Orlicz spaces and modular spaces}, LNM
  \textbf{1034}, Springer, Berlin.
  \href{http://libgen.org/get.php?md5=9D2BBB211D18EA4387AA1D846E73344E}{libgen.org:9D2BBB211D18EA4387AA1D846E73344E}.

\bibitem{Musielak:Orlicz:1959}
{Musielak J., Orlicz W.}, 1959, \textit{On modular spaces}, Studia Math.
  \textbf{18}, 49.
  \href{http://matwbn.icm.edu.pl/ksiazki/sm/sm18/sm1814.pdf}{matwbn.icm.edu.pl/ksiazki/sm/sm18/sm1814.pdf}.

\bibitem{Nagumo:1936}
{Nagumo M.}, 1936, \textit{Einige analytische Untersuchungen in linearen
  metrischen Ringen}, Japan J. Math. \textbf{13}, 61.

\bibitem{Nakagami:1975}
{Nakagami Y.}, 1975, \textit{Duality for crossed products of von Neumann
  algebras by locally compact groups}, Bull. Amer. Math. Soc. \textbf{81},
  1106.
  \href{http://projecteuclid.org/euclid.bams/1183537424}{euclid:bams/1183537424}.

\bibitem{Nakagami:1976}
{Nakagami Y.}, 1976, \textit{Dual action on a von Neumann algebra and
  Takesaki's duality for a locally compact group}, Publ. Res. Inst. Math. Sci.
  Ky\={o}to Univ. \textbf{12}, 727.
  \href{http://dx.doi.org/10.2977/prims/1195190377}{doi:10.2977/prims/1195190377}.

\bibitem{Nakagami:Takesaki:1979}
{Nakagami Y., Takesaki M.}, 1979, \textit{Duality for crossed products of von
  Neumann algebras}, LNM \textbf{731}, Springer, Berlin.
  \href{http://libgen.org/get?open=0&md5=87EAAF4923E59499B18B0E3BC1A775B4}{libgen.org:87EAAF4923E59499B18B0E3BC1A775B4}.

\bibitem{Nakamura:Turumaru:1954}
{Nakamura M., Turumaru T.}, 1954, \textit{Expectations in an operator algebra},
  T\={o}hoku Math. J. \textbf{6}, 174.
  \href{http://projecteuclid.org/euclid.tmj/1178245178}{euclid:tmj/1178245178}.

\bibitem{Nakano:1941:System}
{Nakano H.}, 1941, \textit{\"{U}ber System aller stetigen Funktionen auf einem
  topologischen Raum}, Proc. Imp. Acad. T\={o}ky\={o} \textbf{17}, 308.
  \href{http://projecteuclid.org/euclid.pja/1195578669}{euclid:pja/1195578669}.

\bibitem{Nakano:1950}
{Nakano H.}, 1950, \textit{Modulared semi-ordered linear spaces}, Maruzen,
  T\={o}ky\={o}.

\bibitem{Nakano:1951}
{Nakano H.}, 1951, \textit{Modulared linear spaces}, J. Fac. Sci. Univ.
  T\={o}ky\={o} \textbf{6}, 85.

\bibitem{Nakano:1951:book}
{Nakano H.}, 1951, \textit{Topology and topological linear spaces}, Maruzen,
  T\={o}ky\={o}.

\bibitem{Naimark:1940}
{Na\u{\i}mark M.A.}, 1940, \textit{Spektral'nye funkcii simmetricheskogo
  operatora}, Izv. Akad. Nauk. SSSR, Ser. Mat. \textbf{4}, 277.
  \href{http://mi.mathnet.ru/izv3893}{mathnet.ru:izv3893}.

\bibitem{Naimark:1956}
{Na\u{\i}mark M.A.}, 1956, \textit{Normirovannye kol'ca}, Gostekhizdat, Moskva.
  (2nd. rev. ed., 1968.
  \href{http://libgen.org/get?open=0&md5=BBB34818787A2FD77CEFD3059CE84D23}{libgen.org:BBB34818787A2FD77CEFD3059CE84D23};
  Engl. transl. 1972, \textit{Normed rings}, Noordhoff, Groningen.
  \href{http://libgen.org/get?open=0&md5=CEBDDA9B3B43AEEDE5D6C608CD83CBA7}{libgen.org:CEBDDA9B3B43AEEDE5D6C608CD83CBA7}).

\bibitem{Negrepontis:1971}
{Negrepontis J.W.}, 1971, \textit{Duality in analysis from the point of view of
  triples}, J. Alg. \textbf{19}, 228.

\bibitem{Nelson:1974}
{Nelson E.}, 1974, \textit{Notes on non-commutative integration}, J. Funct.
  Anal. \textbf{15}, 103.

\bibitem{Neveu:1964}
{Neveu J.}, 1964, \textit{Bases math\'{e}matiques du calcul des
  probabilit\'{e}s}, Masson et Cie, Paris (1970, 2nd ed.
  \href{http://libgen.org/get?open=0&md5=3D5B8FB717180E18B98C3C6488766E83}{libgen.org:3D5B8FB717180E18B98C3C6488766E83};
  Engl. transl.: 1965, \textit{Mathematical foundations of the calculus of
  probability}, Holden--Day, San Francisco).

\bibitem{Nikodym:1930}
{Nikod\'{y}m O.M.}, 1930, \textit{Sur uner g\'{e}n\'{e}ralisation des
  int\'{e}grales de M.J.Radon}, Fund. Math. \textbf{15}, 131.
  \href{http://matwbn.icm.edu.pl/ksiazki/fm/fm15/fm15114.pdf}{matwbn.icm.edu.pl/ksiazki/fm/fm15/fm15114.pdf}.

\bibitem{Nikodym:1931}
{Nikod\'{y}m O.M.}, 1931, \textit{Contribution \`{a} la th\'{e}orie des
  fonctionnelles lin\'{e}aires en connection avec la th\'{e}orie de la mesure
  des ensembles abstraits}, Mathematica, Cluj \textbf{5}, 130; Bull Soc.
  \c{S}tiin\c{t}. Cluj \textbf{6}, 79.

\bibitem{Ogasawara:Yoshinaga:1955:II}
{Ogasawara T., Yoshinaga K.}, 1955, \textit{Extension of $\natural$-application
  to unbounded operators}, J. Sci. Hiroshima Univ. \textbf{19}, 273.
  \href{http://dmitripavlov.org/scans/ogasawara-yoshinaga-1.pdf}{dmitripavlov.org/scans/ogasawara-yoshinaga-1.pdf}.

\bibitem{Ogasawara:Yoshinaga:1955}
{Ogasawara T., Yoshinaga K.}, 1955, \textit{A non-commutative theory of
  integration for operators}, J. Sci. Hiroshima Univ. \textbf{18}, 311.
  \href{http://dmitripavlov.org/scans/ogasawara-yoshinaga-0.pdf}{dmitripavlov.org/scans/ogasawara-yoshinaga-0.pdf}.

\bibitem{Olesen:Pedersen:1978}
{Olesen D., Pedersen G.K.}, 1978, \textit{Applications of the Connes spectrum
  to $C^*$-dynamical systems}, J. Funct. Anal. \textbf{30}, 179.

\bibitem{Olmsted:1942}
{Olmsted J.H.M.}, 1942, \textit{Lebesgue theory on a boolean algebra}, Trans.
  Amer. Math. Soc. \textbf{51}, 164.
  \href{http://www.ams.org/journals/tran/1942-051-00/S0002-9947-1942-0006593-9/S0002-9947-1942-0006593-9.pdf}{www.ams.org/journals/tran/1942-051-00/S0002-9947-1942-0006593-9/S0002-9947-1942-0006593-9.pdf}.

\bibitem{Ono:1959}
{Ono T.}, 1959, \textit{Note on a $B^*$-algebra}, J. Math. Soc. Japan
  \textbf{11}, 140.
  \href{http://projecteuclid.org/euclid.jmsj/1261148426}{euclid:jmsj/1261148426}.

\bibitem{Ore:1935}
{Ore {\Oorig}.}, 1935, \textit{On the foundations of abstract algebra. I}, Ann.
  Math. \textbf{36}, 406.

\bibitem{Orlicz:1932}
{Orlicz W.}, 1932, \textit{\"{U}ber eine gewisse Klasse von R\"{a}umen vom
  Typus $B$}, Bull. Acad. Polon. Sci. S\'{e}r. A \textbf{1932}:8/9, 207.

\bibitem{Orlicz:1936}
{Orlicz W.}, 1936, \textit{\"{U}ber R\"{a}ume ($L^M$)}, Bull. Int. Acad. Polon.
  Sci. Lett. A \textbf{1936}, 93.

\bibitem{Orlicz:1964}
{Orlicz W.}, 1964, \textit{A note on modular spaces. VII}, Bull. Acad. Polon.
  Sci. S\'{e}r. Sci. Math. Astr. Phys. \textbf{12}, 305.

\bibitem{Ovchinnikov:1970}
{Ovchinnikov V.I.}, 1970, \textit{Simmetrichnye prostranstva izmerimykh
  operatorov}, Dokl. Akad. Nauk SSSR \textbf{191}, 769 (Engl. transl. 1970,
  \textit{Symmetric spaces of measurable operators}, Sov. Math. Dokl.
  \textbf{11}, 448).

\bibitem{Padmanabhan:1967}
{Padmanabhan A.R.}, 1967, \textit{Convergence in measure and related results in
  finite rings of operators}, Trans. Amer. Math. Soc. \textbf{128}, 359.
  \href{http://www.ams.org/journals/tran/1967-128-03/S0002-9947-1967-0212581-7/S0002-9947-1967-0212581-7.pdf}{www.ams.org/journals/tran/1967-128-03/S0002-9947-1967-0212581-7/S0002-9947-1967-0212581-7.pdf}
  (err. 1968, \textbf{132}, 563.
  \href{http://www.ams.org/journals/tran/1968-132-02/S0002-9947-1968-0223905-X/S0002-9947-1968-0223905-X.pdf}{www.ams.org/journals/tran/1968-132-02/S0002-9947-1968-0223905-X/S0002-9947-1968-0223905-X.pdf}).

\bibitem{Padmanabhan:1979}
{Padmanabhan A.R.}, 1979, \textit{Probabilistic aspects of von Neumann
  algebras}, J. Funct. Anal \textbf{31}, 139.

\bibitem{Palmer:1994}
{Palmer T.W.}, 1994, 2001, \textit{Banach algebras an the general theory of
  $*$-algebras}, Vol.1-2, Cambridge University Press, Cambridge.
  \href{http://libgen.org/get?open=0&md5=941996BB74C3130A4D41CA39CF97230C}{libgen.org:941996BB74C3130A4D41CA39CF97230C},
  \href{http://libgen.org/get?open=0&md5=FD4DAF17C04A58B188767D6FDA547B53}{libgen.org:FD4DAF17C04A58B188767D6FDA547B53}.

\bibitem{Pavlov:2010}
{Pavlov D.}, 2010, \textit{Posts at mathoverflow.net},
  \href{http://mathoverflow.net/questions/49426/}{mathoverflow.net/questions/49426}.

\bibitem{Pavlov:2011}
{Pavlov D.}, 2011, \textit{A decomposition theorem for noncommutative
  $L_p$-spaces and a new symmetric monoidal bicategory of von Neumann
  algebras}, Ph.D. thesis, University of California, Berkeley.

\bibitem{Pedersen:1966}
{Pedersen G.K.}, 1966, \textit{Measure theory for $C^*$ algebras}, Math. Scand.
  \textbf{19}, 131.
  \href{http://www.mscand.dk/article.php?id=1793}{www.mscand.dk/article.php?id=1793}.

\bibitem{Pedersen:1968}
{Pedersen G.K.}, 1968, \textit{Measure theory for $C^*$ algebras II}, Math.
  Scand. \textbf{22}, 63.
  \href{http://www.mscand.dk/article.php?id=1861}{www.mscand.dk/article.php?id=1861}.

\bibitem{Pedersen:1969:III}
{Pedersen G.K.}, 1969, \textit{Measure theory for $C^*$ algebras III}, Math.
  Scand. \textbf{25}, 71.
  \href{http://www.mscand.dk/article.php?id=1932}{www.mscand.dk/article.php?id=1932}.

\bibitem{Pedersen:1969:IV}
{Pedersen G.K.}, 1969, \textit{Measure theory for $C^*$-algebras IV}, Math.
  Scand. \textbf{25}, 121.
  \href{http://www.mscand.dk/article.php?id=1938}{www.mscand.dk/article.php?id=1938}.

\bibitem{Pedersen:1971}
{Pedersen G.K.}, 1971, \textit{$C^*$-integrals: an approach to non-commutative
  measure theory}, Ph.D. thesis, K{\o}benhavns Universitet, K{\o}benhavn.

\bibitem{Pedersen:1979}
{Pedersen G.K.}, 1979, \textit{$C^*$-algebras and their automorphism groups},
  Academic Press, London.
  \href{http://libgen.org/get?open=0&md5=D0251C94AFFF04ECBD38BDC97F51AF2D}{libgen.org:D0251C94AFFF04ECBD38BDC97F51AF2D}.

\bibitem{Pedersen:1986}
{Pedersen G.K.}, 1986, \textit{$SAW^*$-algebras and corona $C^*$-algebras,
  contributions to non-commutative topology}, J. Op. Theor. \textbf{15}, 15.
  \href{http://jot.theta.ro/jot/archive/1986-015-001/1986-015-001-002.pdf}{jot.theta.ro/jot/archive/1986-015-001/1986-015-001-002.pdf}.

\bibitem{Pedersen:Takesaki:1973}
{Pedersen G.K., Takesaki M.}, 1973, \textit{The Radon--Nikodym theorem for von
  Neumann algebras}, Acta Math. \textbf{130}, 53.
  \href{http://www.kryakin.com/files/Acta_Mat_(2_55)/acta150_107/130/130_02.pdf}{www.kryakin.com/files/Acta\_Mat\_(2\_55)/acta150\_107/130/130\_02.pdf}.

\bibitem{Peirce:1880}
{Peirce C.S.}, 1880, \textit{On the algebra of logic}, Amer. J. Math.
  \textbf{3}, 15.
  \href{http://www.jstor.org/stable/2369442}{www.jstor.org/stable/2369442}.

\bibitem{Peirce:1885}
{Peirce C.S.}, 1885, \textit{On the algebra of logic: a contribution to the
  philosophy of notation}, Amer. J. Math. \textbf{7}, 180.
  \href{http://www.jstor.org/stable/2369451}{www.jstor.org/stable/2369451},
  \href{http://www.jstor.org/stable/2369269}{www.jstor.org/stable/2369269}.

\bibitem{Perdrizet:1970}
{Perdizet F.}, 1970, \textit{\'{E}lements positifs relatifs \`{a} une
  alg\`{e}bre hilbertienne \`{a} gauche}, Compt. Rend. Acad. Sci. Paris
  \textbf{270}, 322.

\bibitem{Perdrizet:1971}
{Perdizet F.}, 1971, \textit{\'{E}lements positifs relatifs \`{a} une
  alg\`{e}bre hilbertienne \`{a} gauche}, Comp. Math. \textbf{23}, 25.
  \href{http://archive.numdam.org/article/CM_1971__23_1_25_0.pdf}{numdam:CM\_1971\_\_23\_1\_25\_0}.

\bibitem{Pfeffer:1977}
{Pfeffer W.F.}, 1977, \textit{Integrals and measures}, Marcel Dekker, New York.

\bibitem{Phan:1999}
{Phan V.T.}, 1999, \textit{$L^p$ spaces for $C^*$-algebras}, Ph.D. thesis,
  University of {\L}\'{o}d\'{z}, {\L}\'{o}d\'{z}.

\bibitem{Pillet:2006}
{Pillet C.-A.}, 2006, \textit{Quantum dynamical systems}, in: Attal S., Joye
  A., Pillet C.-A. (eds.), \textit{Open quantum systems I: the hamiltonian
  approach}, LNM \textbf{1880}, Springer, Berlin, p.107.
  \href{http://hal.archives-ouvertes.fr/docs/00/12/88/67/PDF/qds.pdf}{hal.archives-ouvertes.fr/docs/00/12/88/67/PDF/qds.pdf}.

\bibitem{Pisier:2003}
{Pisier G.}, 2003, \textit{Introduction to operator space theory}, Cambridge
  University Press, Cambridge.
  \href{http://libgen.org/get?open=0&md5=67ffe167b4a49cbe6bbb1e5d46d0716b}{libgen.org:67ffe167b4a49cbe6bbb1e5d46d0716b}.

\bibitem{Pisier:Xu:2003}
{Pisier G., Xu Q.}, 2003, \textit{Noncommutative $L^p$ spaces}, in:
  \textit{Handbook of the geometry of Banach spaces}, vol.2, North-Holland,
  Amsterdam, p.1459.
  \href{http://dmitripavlov.org/scans/pisier-xu.pdf}{dmitripavlov.org/scans/pisier-xu.pdf}.

\bibitem{Plancherel:1910}
{Plancherel M.}, 1910, \textit{Contribution \`{a} l'\'{e}tude de la
  r\'{e}presentation d'une fonction arbitraire par des int\'{e}grales
  d\'{e}finies}, Rend. Circ. Mat. Palermo \textbf{30}, 289.

\bibitem{Pontryagin:1934}
{Pontryagin L.S.}, 1934, \textit{The theory of topological commutative groups},
  Ann. Math. \textbf{35}, 361.

\bibitem{Popa:1986}
{Popa S.}, 1986, \textit{Correspondences}, INCREST preprint.
  \href{http://www.math.ucla.edu/~popa/popa-correspondences.pdf}{www.math.ucla.edu/$\sim$popa/popa-correspondences.pdf}.

\bibitem{Powers:1967}
{Powers R.T.}, 1967, \textit{Representations of uniformly hyperfinite algebras
  and their associated von Neumann rings}, Ann. Math. \textbf{86}, 138.

\bibitem{Powers:Sakai:1975}
{Powers R.T., Sakai S.}, 1975, \textit{Unbounded derivations in operator
  algebras}, J. Funct. Anal. \textbf{19}, 81.

\bibitem{Pukanszky:1954}
{Puk\'{a}nszky L.}, 1954, \textit{The theorem of Radon--Nikodym in
  operator-rings}, Acta Sci. Math. Szeged \textbf{15}, 149.
  \href{http://acta.fyx.hu/acta/showCustomerArticle.action?id=6277&dataObjectType=article&sessionDataSetId=6b5e242c46532397}{acta.fyx.hu/acta/showCustomerArticle.action?id=6277\&dataObjectType=article\&sessionDataSetId=6b5e242c46532397}.

\bibitem{Pukanszky:1956}
{Puk\'{a}nszky L.}, 1956, \textit{Some examples of factors}, Publ. Math.
  Debrecen \textbf{4}, 135.

\bibitem{Pusz:Woronowicz:1978:passive}
{Pusz W., Woronowicz S.L.}, 1978, \textit{Passive states and KMS states for
  general quantum systems}, Commun. Math. Phys. \textbf{58}, 273.
  \href{http://projecteuclid.org/euclid.cmp/1103901491}{euclid:cmp/1103901491}.

\bibitem{Radon:1913}
{Radon J.}, 1913, \textit{Theorie und Anwendungen der absolut additiven
  Mengenfunktionen}, Sitzungsber. Akad. Wiss. Wien, Math.-Naturwiss. Klasse,
  Abt. IIa \textbf{122}, 1296.

\bibitem{Randrianantonina:2008}
{Randrianantonina N.}, 2008, \textit{Embeddings of non-commutative $L^p$-spaces
  into preduals of finite von Neumann algebras}, Israel J. Math. \textbf{163},
  1.

\bibitem{Rao:1964}
{Rao M.M.}, 1964, \textit{Linear functionals on Orlicz spaces}, Nieuw. Arch.
  Wisk. \textbf{12}, 77.

\bibitem{Rao:1968}
{Rao M.M.}, 1968, \textit{Linear functionals on Orlicz spaces: general theory},
  Pacific J. Math. \textbf{25}, 553.
  \href{http://projecteuclid.org/euclid.pjm/1102986150}{euclid:pjm/1102986150}.

\bibitem{Rao:1971}
{Rao M.M.}, 1971, \textit{Approximately tame algebras of operators}, Bull.
  Acad. Polon. Sci. S\'{e}r. Math. \textbf{19}, 43.
  \href{http://www.fuw.edu.pl/~kostecki/scans/rao1971.pdf}{www.fuw.edu.pl/$\sim$kostecki/scans/rao1971.pdf}.

\bibitem{Rao:Ren:1991}
{Rao M.M., Ren Z.D.}, 1991, \textit{Theory of Orlicz spaces}, Dekker, New York.
  \href{http://libgen.org/get.php?md5=377A7D40BDED488A7101383A69A5497F}{libgen.org:377A7D40BDED488A7101383A69A5497F}.

\bibitem{Rao:Ren:2002}
{Rao M.M., Ren Z.D.}, 2002, \textit{Applications of Orlicz spaces}, Dekker, New
  York.
  \href{http://libgen.org/get.php?md5=BFADBC4E1FDEACC31EAB6D3E1A72F4C5}{libgen.org:BFADBC4E1FDEACC31EAB6D3E1A72F4C5}.

\bibitem{Raikov:1946}
{Ra\u{\i}kov D.A.}, 1946, \textit{K teorii normirovannykh kolec s
  involyucie\u{\i}}, Dokl. Akad. Nauk SSSR \textbf{54}, 391.

\bibitem{Raynaud:Xu:2001}
{Raynaud Y., Xu Q.}, 2001, \textit{On the structure of subspaces of
  non-commutative $L_p$-spaces}, Compt. Rend. Acad. Sci. Paris \textbf{333},
  213.

\bibitem{Raynaud:Xu:2003}
{Raynaud Y., Xu Q.}, 2003, \textit{On subspaces of noncommutative
  $L_p$-spaces}, J. Funct. Anal. \textbf{203}, 149.
  \href{http://www.arxiv.org/pdf/math/0307169}{arXiv:math/0307169}.

\bibitem{Rickart:1946}
{Rickart C.E.}, 1946, \textit{Banach algebras with an adjoint operation}, Ann.
  Math. \textbf{47}, 528.

\bibitem{Rickart:1960}
{Rickart C.E.}, 1960, \textit{General theory of Banach algebras}, Van Nostrand,
  Princeton (1974, 2nd corr. ed, Krieger, New York.
  \href{http://libgen.org/get?open=0&md5=54E36EAF64DDDF1C9307A6CD1360215E}{libgen.org:54E36EAF64DDDF1C9307A6CD1360215E}).

\bibitem{Ridder:1941}
{Ridder J.}, 1941, \textit{Mass- und Integrationstheorie in Strukturen}, Acta.
  Math. \textbf{73}, 131.

\bibitem{Ridder:1946}
{Ridder J.}, 1946, \textit{Zur Mass- und Integrationstheorie in Strukturen. I},
  Neder. Akad. Wetensch. \textbf{49}, 167.
  \href{http://www.dwc.knaw.nl/DL/publications/PU00018011.pdf}{www.dwc.knaw.nl/DL/publications/PU00018011.pdf}.

\bibitem{Rieffel:vanDaele:1977}
{Rieffel M.A., van Daele A.}, 1977, \textit{A bounded operator approach to
  Tomita--Takesaki theory}, Pacific J. Math. \textbf{69}, 187.
  \href{http://projecteuclid.org/euclid.pjm/1102817105}{euclid:pjm/1102817105}.

\bibitem{Riesz:1909:b}
{Riesz F.}, 1909, \textit{Sur les op\'{e}rations fonctionnelles lin\'{e}aires},
  Compt. Rend. Acad. Sci. Paris \textbf{149}, 974.

\bibitem{Riesz:1910}
{Riesz F.}, 1910, \textit{Untesuchungen \"{u}ber Systeme integrierbarer
  Funktionen}, Math. Ann. \textbf{69}, 449.
  \href{http://gdz.sub.uni-goettingen.de/dms/load/img/?PPN=PPN235181684_0069&DMDID=DMDLOG_0040}{gdz.sub.uni-goettingen.de/dms/load/img/?PPN=PPN235181684\_0069\&DMDID=DMDLOG\_0040}.

\bibitem{Riesz:1912}
{Riesz F.}, 1912, \textit{Sur quelques points de la th\'{e}orie des fonctions
  sommables}, Compt. Rend. Acad. Sci. Paris \textbf{154}, 641.

\bibitem{Riesz:1917}
{Riesz F.}, 1917, \textit{Line\'{a}ris f\"{u}ggv\'{e}nyegyenletekr\Horig{o}l},
  Matematikai \'{e}s Term\'{e}szettudom\'{a}nyi \'{E}rtes\'{\i}t\Horig{o}
  \textbf{35}, 544 (Germ. transl. 1918, \textit{\"{U}ber lineare
  Funktionalgleichungen}, Acta Math. \textbf{41}, 71; Engl. transl.
  \href{http://www.math.technion.ac.il/hat/fpapers/rie1ineng.pdf}{www.math.technion.ac.il/hat/fpapers/rie1ineng.pdf}).

\bibitem{Riesz:1920}
{Riesz F.}, 1920, \textit{Sur int\'{e}grale de Lebesgue}, Acta Math.
  \textbf{42}, 191.

\bibitem{Riesz:1930}
{Riesz F.}, 1930, \textit{Sur la d\'{e}composition des op\'{e}rations
  fonctionelles lin\'{e}aires}, in: Zanichelli N. (ed.), \textit{Atti Congr.
  Intern. Mat. Bologna 3-10 Settembre 1928 (VI)}, Vol.3, Societ\`{a}
  Tipografica gi\`{a} Compositori, Bologna, p.143.
  \href{http://www.mathunion.org/ICM/ICM1928.3/Main/icm1928.3.0143.0148.ocr.pdf}{www.mathunion.org/ICM/ICM1928.3/Main/icm1928.3.0143.0148.ocr.pdf}.

\bibitem{Riesz:1930:Hilbert}
{Riesz F.}, 1930, \textit{\"{U}ber die linearen Transformationen des komplexen
  Hilbertschen Raumes}, Acta Litt. Sci. Szeged \textbf{5}, 23.
  \href{http://acta.fyx.hu/acta/showCustomerArticle.action?id=5269&dataObjectType=article&sessionDataSetId=3a4a4425f023771b}{acta.fyx.hu/acta/showCustomerArticle.action?id=5269\&dataObjectType=article\&sessionDataSetId=3a4a4425f023771b}.

\bibitem{Riesz:1937}
{Riesz F.}, 1937, \textit{A line\'{a}ris oper\'{a}ci\'{o}k \'{a}ltal\'{a}nos
  elm\'{e}let\'{e}nek n\'{e}h\'{a}ny alapvet\Horig{o}
  fogalomalkot\'{a}s\'{a}r\'{o}l}, Matematikai \'{e}s
  Term\'{e}szettudom\'{a}nyi \'{E}rtes\'{\i}t\Horig{o} \textbf{56}, 1.

\bibitem{Riesz:1940}
{Riesz F.}, 1940, \textit{Sur quelques notions fondamentales dans la
  th\'{e}orie g\'{e}n\'{e}rale des op\'{e}rations lin\'{e}aires}, Ann. Math.
  \textbf{41}, 174.

\bibitem{Riesz:SzokefalviNagy:1952}
{Riesz F., Sz\Horig{o}kefalvi-Nagy B.}, 1952, \textit{Le\c{c}ons d'analyse
  fonctionnelle}, Akad\'{e}miai Kiad\'{o}, Budapest (1968, 5th ed.
  \href{http://libgen.org/get?open=0&md5=E786C346809CEEFD4198E75DB3B89D14}{libgen.org:E786C346809CEEFD4198E75DB3B89D14};
  Engl. transl. 1955, \textit{Functional analysis}, Ungar, New York.
  \href{http://libgen.org/get?open=0&md5=D547185C578E682BAB30C0230DCA7F6F}{libgen.org:D547185C578E682BAB30C0230DCA7F6F}).

\bibitem{Roepstorff:1976}
{Roepstorff G.}, 1976, \textit{Correlation inequalities in quantum statistical
  mechanics and their application in the Kondo problem}, Commun. Math. Phys.
  \textbf{46}, 253.
  \href{http://projecteuclid.org/euclid.cmp/1103899639}{euclid:cmp/1103899639}.

\bibitem{Rogers:1888}
{Rogers L.J.}, 1888, \textit{An extension of a certain theorem in
  inequalities}, Messeng. Math. \textbf{17}, 145.
  \href{http://gdz.sub.uni-goettingen.de/dms/load/img/?PPN=PPN599484047_0017&DMDID=DMDLOG_0022&IDDOC=642600}{gdz.sub.uni-goettingen.de/dms/load/img/?PPN=PPN599484047\_0017\&DMDID=DMDLOG\_0022\&IDDOC=642600}.

\bibitem{Rokhlin:1949}
{Rokhlin V.A.}, 1949, \textit{Ob osnovnykh ponyatiyakh teorii mery}, Matem. Sb.
  \textbf{25}, 107. \href{http://mi.mathnet.ru/msb5995}{mathnet.ru:msb5995}
  (engl. tranl. 1952, \textit{On the fundamental ideas of measure theory},
  Amer. Math. Soc. Transl. \textbf{71}, American Mathematical Society,
  Providence.
  \href{http://ma.huji.ac.il/~matang02/rohlin.pdf}{ma.huji.ac.il/$\sim$matang02/rohlin.pdf}).

\bibitem{Ruelle:1966}
{Ruelle D.}, 1966, \textit{States of physical systems}, Commun. Math. Phys.
  \textbf{3}, 133.
  \href{http://projecteuclid.org/euclid.cmp/1103839390}{euclid:cmp/1103839390}.

\bibitem{Ruelle:1970:integral}
{Ruelle D.}, 1970, \textit{Integral representation of states on a
  $C^*$-algebra}, J. Funct. Anal. \textbf{6}, 116.
  \href{http://www.ihes.fr/~ruelle/PUBLICATIONS/[27].pdf}{www.ihes.fr/$\sim$ruelle/PUBLICATIONS/[27].pdf}.

\bibitem{Ruelle:1970}
{Ruelle D.}, 1970, \textit{Symmetry breakdown in statistical mechanics}, in:
  Kastler D. (ed.), \textit{Carg\`{e}se Lectures in Physics}, Vol.4, Gordon \&
  Breach, New York, p.169.

\bibitem{Russo:1968}
{Russo B.}, 1968, \textit{Isometries of $L^p$-spaces associated with finite von
  Neumann algebras}, Bull. Amer. Math. Soc. \textbf{74}, 228.
  \href{http://www.ams.org/journals/bull/1968-74-02/S0002-9904-1968-11898-1/S0002-9904-1968-11898-1.pdf}{www.ams.org/journals/bull/1968-74-02/S0002-9904-1968-11898-1/S0002-9904-1968-11898-1.pdf}.

\bibitem{Sadeghi:2012}
{Sadeghi G.}, 2012, \textit{Non-commutative Orlicz spaces associated to a
  modular on $\tau$-measurable operators}, J. Math. Anal. App. \textbf{395},
  705.
  \href{http://libgen.org/scimag/get.php?doi=10.1016/j.jmaa.2012.05.054}{libgen.org:scimag/get.php?doi=10.1016/j.jmaa.2012.05.054}.

\bibitem{Saito:1969}
{Sait\={o} K.}, 1969, \textit{On the algebra of measurable operators for a
  general $AW^*$-algebra}, T\={o}hoku Math. J. \textbf{21}, 249.
  \href{http://projecteuclid.org/euclid.tmj/1178242995}{euclid:tmj/1178242995}.

\bibitem{Saito:1970}
{Sait\={o} K.}, 1970, \textit{A non-commutative integration theory for a
  semi-finite $AW^*$-algebra and a problem of Feldman}, Proc. Japan Acad.
  \textbf{46}, 463.
  \href{http://projecteuclid.org/euclid.pja/1195526634}{euclid:pja/1195526634}.

\bibitem{Sakai:1956}
{Sakai S.}, 1956, \textit{A characterization of $W^*$-algebras}, Pacific J.
  Math. \textbf{6}, 763.
  \href{http://projecteuclid.org/euclid.pjm/1103043801}{euclid:pjm/1103043801}.

\bibitem{Sakai:1958}
{Sakai S.}, 1958, \textit{On linear functionals of $W^*$-algebras}, Proc. Japan
  Acad. \textbf{34}, 571.
  \href{http://projecteuclid.org/euclid.pja/1195524521}{euclid:pja/1195524521}.

\bibitem{Sakai:1960}
{Sakai S.}, 1960, \textit{On a conjecture of Kaplansky}, T\={o}hoku Math. J.
  \textbf{12}, 31.
  \href{http://projecteuclid.org/euclid.tmj/1178244484}{euclid:tmj/1178244484}.

\bibitem{Sakai:1962}
{Sakai S.}, 1962, \textit{The theory of $W^*$-algebras}, Lecture notes, Yale
  University, New Heaven.

\bibitem{Sakai:1964}
{Sakai S.}, 1964, \textit{On the reduction theory of von Neumann}, Bull. Amer.
  Math. Soc. \textbf{70}, 393.
  \href{http://projecteuclid.org/euclid.bams/1183526019}{euclid:bams/1183526019}.

\bibitem{Sakai:1965:central}
{Sakai S.}, 1965, \textit{On the central decomposition for positive functionals
  on $C^*$-algebras}, Trans. Amer. Math. Soc. \textbf{118}, 406.
  \href{http://www.ams.org/journals/tran/1965-118-00/S0002-9947-1965-0179640-7/S0002-9947-1965-0179640-7.pdf}{www.ams.org/journals/tran/1965-118-00/S0002-9947-1965-0179640-7/S0002-9947-1965-0179640-7.pdf}.

\bibitem{Sakai:1965}
{Sakai S.}, 1965, \textit{A Radon--Nikodym theorem in $W^*$-algebras}, Bull.
  Amer. Math. Soc. \textbf{71}, 149.
  \href{http://projecteuclid.org/euclid.bams/1183526404}{euclid:bams/1183526404}.

\bibitem{Sakai:1966}
{Sakai S.}, 1966, \textit{Derivations of $W^*$-algebras}, Ann. Math.
  \textbf{83}, 273.
  \href{http://www.math.uci.edu/~brusso/sakai66.pdf}{www.math.uci.edu/$\sim$brusso/sakai66.pdf}.

\bibitem{Sakai:1968}
{Sakai S.}, 1968, \textit{Asymptotically abelian II$_1$ factor}, Publ. Res.
  Inst. Math. Sci., Ky\={o}to Univ. \textbf{4}, 299.
  \href{http://dx.doi.org/10.2977/prims/1195194878}{dx.doi.org/10.2977/prims/1195194878}.

\bibitem{Sakai:1970:typeIII}
{Sakai S.}, 1970, \textit{An uncountable family of non-hyperfinite type III
  factors}, in: Wilde C.O. (ed.), \textit{Functional analysis}, Academic Press,
  New York, p.65.

\bibitem{Sakai:1970}
{Sakai S.}, 1970, \textit{An uncountable number of II$_1$ and II$_\infty$
  factors}, J. Funct. Anal. \textbf{5}, 236.

\bibitem{Sakai:1971}
{Sakai S.}, 1971, \textit{$C^*$-algebras and $W^*$-algebras}, Springer, Berlin.
  \href{http://libgen.org/get?open=0&md5=90DEF60AF758B6FDC332FFBB2A29614E}{libgen.org:90DEF60AF758B6FDC332FFBB2A29614E}.

\bibitem{Sakai:1991}
{Sakai S.}, 1991, \textit{Operator algebras in dynamical systems: the theory of
  unbounded derivations in $C^*$-algebras}, Cambridge University Press,
  Cambridge.
  \href{http://libgen.org/get?open=0&md5=80241c139371f657256786b1c28eadb1}{libgen.org:80241c139371f657256786b1c28eadb1}.

\bibitem{Saks:1933}
{Saks S.}, 1933, \textit{Th\'{e}orie de l'int\'{e}grale}, Monografje
  Matematyczne \textbf{2}, Garasi\'{n}ski--Fundusz Kultury Narodowej, Warszawa.
  \href{http://libgen.org/get?open=0&md5=3B45744D0B01168F9CEF5EDD2D0477E8}{libgen.org:3B45744D0B01168F9CEF5EDD2D0477E8}
  (2nd rev. ed., 1937, \textit{Theory of the integral}, Drukarnia Uniwersytetu
  Jagiello\'{n}skiego/Stechert, Warszawa--Lw\'{o}w/New York.
  \href{http://libgen.org/get?open=0&md5=3417123462FD2580DCDB7B5C8D8B8F3E}{libgen.org:3417123462FD2580DCDB7B5C8D8B8F3E}).

\bibitem{Sauvageot:1974}
{Sauvageot J.}, 1974, \textit{Sur le type de produit crois\'{e} d'une
  alg\`{e}bre de von Neumann par un groupe localement compact
  d'automorphismes}, Compt. Rend. Acad. Sci. Paris \textbf{278}, 941.

\bibitem{Sauvageot:1977}
{Sauvageot J.}, 1977, \textit{Sur le type de produit crois\'{e} d'une
  alg\`{e}bre de von Neumann par un groupe localement compact
  d'automorphismes}, Bull. Soc. Math. France \textbf{105}, 349.
  \href{http://archive.numdam.org/article/BSMF_1977__105__349_0.pdf}{numdam:BSMF\_1977\_\_105\_\_349\_0}.

\bibitem{Schaefer:1974}
{Schaefer H.H.}, 1974, \textit{Banach lattices and positive operators},
  Springer, Berlin.
  \href{http://libgen.org/get?open=0&md5=6A5DB556882D5F68531087846B1D5CD8}{libgen.org:6A5DB556882D5F68531087846B1D5CD8}.

\bibitem{Schatten:1946}
{Schatten R.}, 1946, \textit{The cross-space of linear transformations}, Ann.
  Math. \textbf{47}, 73.

\bibitem{Schatten:1950}
{Schatten R.}, 1950, \textit{A theory of cross spaces}, Princeton University
  Press, Princeton.
  \href{http://libgen.org/get?open=0&md5=566b60cc48d6cfb9a42c11d0a3521fc0}{libgen.org:566b60cc48d6cfb9a42c11d0a3521fc0}.

\bibitem{Schatten:1960}
{Schatten R.}, 1960, \textit{Norm ideals of completely continuous operators},
  Springer, Berlin.
  \href{http://libgen.org/get?open=0&md5=8BA5A444148F4B3722719DB398C9DA85}{libgen.org:8BA5A444148F4B3722719DB398C9DA85}.

\bibitem{Schatten:vonNeumann:1946}
{Schatten R., von Neumann J.}, 1946, \textit{The cross-space of linear
  transformations II}, Ann. Math. \textbf{47}, 608.

\bibitem{Schatten:vonNeumann:1947}
{Schatten R., von Neumann J.}, 1947, \textit{The cross-space of linear
  transformations III}, Ann. Math. \textbf{49}, 557.

\bibitem{Schauder:1930}
{Schauder J.}, 1930, \textit{\"{U}ber lineare, vollstetige
  Funktionaloperationen}, Studia Math. \textbf{2}, 183.
  \href{http://matwbn.icm.edu.pl/ksiazki/sm/sm2/sm2116.pdf}{matwbn.icm.edu.pl/ksiazki/sm/sm2/sm2116.pdf}.

\bibitem{Schmidt:1907:b}
{Schmidt E.}, 1907, \textit{Zur Theorie der linearen und nicht linearen
  Integralgleichungen Zweite Abhandlung: Aufl\"{o}sung der allgemeinen linearen
  Integralgleichung}, Math. Ann. \textbf{64}, 161.
  \href{http://gdz.sub.uni-goettingen.de/dms/load/img/?PPN=GDZPPN002261634&IDDOC=36719}{gdz.sub.uni-goettingen.de/dms/load/img/?PPN=GDZPPN002261634\&IDDOC=36719}.

\bibitem{Schmidt:1907:a}
{Schmidt E.}, 1907, \textit{Zur Theorie der linearen und nichtlinearen
  Integralgleichungen I. Teil: Entwicklung willk\"{u}rlicher Funktionen nach
  Systemen vorgeschriebener}, Math. Ann. \textbf{63}, 433.
  \href{http://gdz.sub.uni-goettingen.de/dms/load/img/?PPN=GDZPPN002261464&IDDOC=36690}{gdz.sub.uni-goettingen.de/dms/load/img/?PPN=GDZPPN002261464\&IDDOC=36690}.

\bibitem{Schmidt:1907:c}
{Schmidt E.}, 1907, \textit{Zur Theorie der linearen und nichtlinearen
  Integralgleichungen. III. Teil. \"{U}ber die Aufl\"{o}sung der nichtlinearen
  Integralgleischung und die Verzweigung ihrer L\"{o}sungen}, Math. Ann.
  \textbf{64}, 370.
  \href{http://gdz.sub.uni-goettingen.de/dms/load/img/?PPN=GDZPPN00226207X&IDDOC=38211}{gdz.sub.uni-goettingen.de/dms/load/img/?PPN=GDZPPN00226207X\&IDDOC=38211}.

\bibitem{Schmidt:1908}
{Schmidt E.}, 1908, \textit{\"{U}ber die Anfl\"{o}sung linearer Gleichungen mit
  endlich vielen Unbekannten}, Rend. Circ. Mat. Palermo \textbf{25}, 53.

\bibitem{Schroeder:1890}
{Schr\"{o}der E.}, 1890, 1891, 1895, \textit{Vorlesungen \"{u}ber die Algebra
  der Logik}, Vol.1-3, Teubner, Leipzig.
  \href{http://libgen.org/get?open=0&md5=8d22045696a5d71bf1e9eaa066c5a262}{libgen.org:8d22045696a5d71bf1e9eaa066c5a262},
  \href{http://libgen.org/get?open=0&md5=0f558bcb0f53dd6c6850329a31023b43}{libgen.org:0f558bcb0f53dd6c6850329a31023b43},
  \href{http://libgen.org/get?open=0&md5=1dc5e441edc3fa8abd7ccce43d1e2c9f}{libgen.org:1dc5e441edc3fa8abd7ccce43d1e2c9f}.

\bibitem{Schur:1909}
{Schur I.}, 1909, \textit{\"{U}ber die charakterischen Wurzeln einer linearen
  Substitution mit einer Anwendung auf die Theorie der Integralgleichungen},
  Math. Ann. \textbf{66}, 488.
  \href{http://gdz.sub.uni-goettingen.de/dms/load/img/?PPN=GDZPPN002262533&IDDOC=38279}{gdz.sub.uni-goettingen.de/dms/load/img/?PPN=GDZPPN002262533\&IDDOC=38279}.

\bibitem{Schwartz:1963}
{Schwartz J.}, 1963, \textit{Non-isomorphism of a pair of factors of type III},
  Comm. Pure Appl. Math. \textbf{16}, 111.

\bibitem{Schwartz:1963:typeII}
{Schwartz J.}, 1963, \textit{Two finite, non-hyperfinite, non-isomorphic
  factors}, Comm. Pure Appl. Math. \textbf{16}, 19.

\bibitem{Schwartz:1967}
{Schwartz J.}, 1967, \textit{$W^*$-algebras}, Gordon \& Breach, New York.
  \href{http://libgen.org/get?open=0&md5=ed038ea3cf303fe2ae1e634583429845}{libgen.org:ed038ea3cf303fe2ae1e634583429845}.

\bibitem{Schwartz:1973}
{Schwartz L.-M.}, 1973, \textit{Radon measures on arbitrary topological spaces
  and cylindrical measures}, Oxford University Press, Oxford.

\bibitem{Segal:1947:irreducible}
{Segal I.E.}, 1947, \textit{Irreducible representations of operator algebras},
  Bull. Amer. Math. Soc. \textbf{61}, 69.
  \href{http://www.ams.org/journals/bull/1947-53-02/S0002-9904-1947-08742-5/S0002-9904-1947-08742-5.pdf}{www.ams.org/journals/bull/1947-53-02/S0002-9904-1947-08742-5/S0002-9904-1947-08742-5.pdf}.

\bibitem{Segal:1947:postulates}
{Segal I.E.}, 1947, \textit{Postulates for general quantum mechanics}, Ann.
  Math. \textbf{48}, 930.

\bibitem{Segal:1949:ideals}
{Segal I.E.}, 1949, \textit{Two-sided ideals in operator algebras}, Ann. Math.
  \textbf{50}, 856.

\bibitem{Segal:1951:decompositions}
{Segal I.E.}, 1951, \textit{Decompositions of operator algebras I and II},
  Memoirs of American Mathematical Society \textbf{9}, American Mathematical
  Society, New York.
  \href{http://libgen.org/get?open=0&md5=08ebbf240ceda9458ef5366b2e1443ed}{libgen.org:08ebbf240ceda9458ef5366b2e1443ed}.

\bibitem{Segal:1951}
{Segal I.E.}, 1951, \textit{Equivalences of measure spaces}, Amer. J. Math.
  \textbf{73}, 275.

\bibitem{Segal:1953}
{Segal I.E.}, 1953, \textit{A non-commutative extension of abstract
  integration}, Ann. Math. \textbf{57}, 401 (corr. Ann. Math. \textbf{58},
  595).

\bibitem{Segal:1965}
{Segal I.E.}, 1965, \textit{Algebraic integration theory}, Bull. Amer. Math.
  Soc. \textbf{71}, 571.
  \href{http://projecteuclid.org/euclid.bams/1183526903}{euclid:bams/1183526903}.

\bibitem{Segal:1996}
{Segal I.E.}, 1996, \textit{Noncommutative geometry, by Alain Connes}, Bull.
  Amer. Math. Soc. \textbf{33}, 459.
  \href{http://www.ams.org/journals/bull/1996-33-04/S0273-0979-96-00687-8/S0273-0979-96-00687-8.pdf}{www.ams.org/journals/bull/1996-33-04/S0273-0979-96-00687-8/S0273-0979-96-00687-8.pdf}.

\bibitem{Semadeni:1971}
{Semadeni Z.}, 1971, \textit{Banach spaces of continuous functions}, Monografie
  Matematyczne \textbf{55}, Pa\'{n}stwowe Wydawnictwo Naukowe, Warszawa.

\bibitem{Sewell:1977}
{Sewell G.L.}, 1977, \textit{KMS conditions and local thermodynamical stability
  of quantum lattice systems. II}, Commun. Math. Phys. \textbf{55}, 53.
  \href{http://projecteuclid.org/euclid.cmp/1103900930}{euclid:cmp/1103900930}.

\bibitem{Sherman:2001}
{Sherman D.E.}, 2001, \textit{The application of modular algebras to relative
  tensor products and noncommutative $L^p$ modules}, Ph.D. thesis, University
  of California, Los Angeles.

\bibitem{Sherman:2003}
{Sherman D.E.}, 2003, \textit{Relative tensor products for modules over von
  Neumann algebras}, in: Jarosz K. (ed.), \textit{Function spaces: fourth
  conference on function spaces, May 14-19, 2002, Southern Illinois University
  at Edwardsville}, Contemp. Math. \textbf{328}, American Mathematical Society,
  Providence, p.275.
  \href{http://www.arxiv.org/pdf/math/0301061}{arXiv:math/0301061}.

\bibitem{Sherman:2005}
{Sherman D.E.}, 2005, \textit{Noncommutative $L^p$ structure encodes exactly
  Jordan structure}, J. Funct. Anal. \textbf{221}, 150.
  \href{http://www.arxiv.org/pdf/math/0309365}{arXiv:math/0309365}.

\bibitem{Sherman:2006:new}
{Sherman D.E.}, 2006, \textit{A new proof of the noncommutative Banach-Stone
  theorem}, in: Bo\.zejko M., M{\l}otkowski W., Wysocza\'{n}ski J. (eds.),
  \textit{Quantum probability}, Banach Center Publications \textbf{73},
  Instytut Matematyki Polskiej Akademii Nauk, Warszawa, p.363.
  \href{http://www.arxiv.org/pdf/math/0409488}{arXiv:math/0409488}.

\bibitem{Sherman:2006}
{Sherman D.E.}, 2006, \textit{On the structure of isometries between
  noncommutative $L_p$ spaces}, Publ. Res. Inst. Math. Sci. Ky\={o}to Univ.
  \textbf{42}, 45.
  \href{http://www.arxiv.org/pdf/math/0311336}{arXiv:math/0311336}.

\bibitem{Sherman:1951}
{Sherman S.}, 1951, \textit{Order in operator algebras}, Amer. J. Math.
  \textbf{73}, 227.

\bibitem{Sherstnev:1974}
{Sherstn\"{e}v A.N.}, 1974, \textit{K obshhe\u{\i} teorii sostoyani\u{\i} na
  algebrakh fon Ne\u{\i}mana}, Funkts. Anal. Prilozh. \textbf{8}, 89.
  \href{http://mi.mathnet.ru/faa2369}{mathnet.ru:faa2369} (Engl. transl. 1974,
  \textit{States on von Neumann algebras}, Funct. Anal. Appl. \textbf{8}, 272).

\bibitem{Sherstnev:1977}
{Sherstn\"{e}v A.N.}, 1977, \textit{Kazhdy\u{\i} gladki\u{\i} ves yavlyaetsya
  $\ell$-vesom}, Izv. Vyssh. Uchebn. Zaved. Matem. \textbf{183}, 88.
  \href{http://mi.mathnet.ru/ivm6013}{mathnet.ru:ivm6013} (Engl. transl. 1977,
  \textit{Every smooth weight is an $\ell$-weight}, Sov. Math. \textbf{21}:8,
  66).

\bibitem{Sherstnev:1978}
{Sherstn\"{e}v A.N.}, 1978, \textit{O nekommutativnom analoge prostranstva
  $L_1$}, Usp. Mat. Nauk \textbf{199}, 231.
  \href{http://mi.mathnet.ru/umn3378}{mathnet.ru:umn3378} (Engl. transl. 1978,
  \textit{A non-commutative analogue to the space $L_1$}, Russ. Math. Surv.
  \textbf{33}:1, 217).

\bibitem{Sherstnev:1982}
{Sherstn\"{e}v A.N.}, 1982, \textit{K obshhe\u{\i} teorii mery i integrala v
  algebrakh Ne\u{\i}mana}, Izv. Vyssh. Uchebn. Zaved. Matem. \textbf{243}, 20.
  \href{http://mi.mathnet.ru/ivm6862}{mathnet.ru:ivm6862} (Engl. transl. 1982,
  \textit{On the general theory of the measure and integral in von Neumann
  algebras}, Sov. Math. \textbf{26}:8, 21).

\bibitem{Sherstnev:2008}
{Sherstn\"{e}v A.N.}, 2008, \textit{Metody biline\u{\i}nykh form v
  nekommutativno\u{\i} teorii mery i integrala}, Fizmatlit, Moskva.

\bibitem{Shilov:Gurevich:1964}
{Shilov G.E., Gurevich B.L.}, 1964, \textit{Integral, mera i proizvodnaya},
  Nauka, Moskva.
  \href{http://libgen.org/get?open=0&md5=FCD26FB27E10B6BB7CC343F8A869326B}{libgen.org:FCD26FB27E10B6BB7CC343F8A869326B}
  (2nd rev. ed. 1967; Engl. transl. 1966, \textit{Integral, measure and
  derivative: a unified approach}, Prentice--Hall, Englewood Cliffs; 3rd ed.
  2012, Dover, New York).

\bibitem{Shmulyan:1940}
{Shmul'yan V.L.}, 1940, \textit{\"{U}ber lineare topologische R\"{a}ume},
  Matem. Sb. N.S. \textbf{7}, 425.
  \href{http://mi.mathnet.ru/msb5957}{mathnet.ru:msb5957}.

\bibitem{Sierpinski:1927}
{Sierpi\'{n}ski W.}, 1927, \textit{Sur les ensembles boreliens abstraits}, Ann.
  Soc. Polon. Math. \textbf{6}, 50.

\bibitem{Sierpinski:1928}
{Sierpi\'{n}ski W.}, 1928, \textit{Un th\`{e}or\'{e}me g\'{e}n\'{e}ral sur les
  familles des ensembles}, Fund. Math. \textbf{12}, 206.
  \href{http://matwbn.icm.edu.pl/ksiazki/fm/fm12/fm12117.pdf}{matwbn.icm.edu.pl/ksiazki/fm/fm12/fm12117.pdf}.

\bibitem{Sikorski:1948}
{Sikorski R.}, 1948, \textit{On the representation of boolean algebras as
  fields of sets}, Fund. Math. \textbf{35}, 247.
  \href{http://matwbn.icm.edu.pl/ksiazki/fm/fm35/fm35123.pdf}{matwbn.icm.edu.pl/ksiazki/fm/fm35/fm35123.pdf}.

\bibitem{Sikorski:1960}
{Sikorski R.}, 1960, \textit{Boolean algebras}, Springer, Berlin (3rd corr. ed.
  1969.
  \href{http://libgen.org/get?open=0&md5=1689534E78DB3F705CDEFD140C2A82F6}{libgen.org:1689534E78DB3F705CDEFD140C2A82F6}).

\bibitem{Simon:1976}
{Simon B.}, 1976, \textit{Quantum dynamics: from automorphism to hamiltonian},
  in: Lieb E.H., Simon B., Wightman A.S. (eds.), \textit{Studies in
  mathematical physics. Essays in honor of Valentine Barmann}, Princeton
  University Press, Princeton, p.327.
  \href{http://www.math.caltech.edu/SimonPapers/R12.pdf}{www.math.caltech.edu/SimonPapers/R12.pdf}.

\bibitem{Simon:1979}
{Simon B.}, 1979, \textit{Trace ideals and their applications}, Cambridge
  University Press, Cambridge (2nd.ed. 2005, American Mathematical Society,
  Providence.
  \href{http://libgen.org/get?open=0&md5=5DE3DBE1EDBD917E34BA01A417469277}{libgen.org:5DE3DBE1EDBD917E34BA01A417469277}).

\bibitem{Skau:1975}
{Skau C.F.}, 1975, \textit{Orthogonal measures on the state space of a
  $C^*$-algebra}, in: Williamson J.H. (ed.), \textit{Algebras in analysis},
  Academic Press, New York, p.272.

\bibitem{Skau:1980}
{Skau C.F.}, 1980, \textit{Geometric aspects of the Tomita--Takesaki theory I},
  Math. Scan. \textbf{47}, 311.
  \href{http://www.mscand.dk/article.php?id=2561}{www.mscand.dk/article.php?id=2561}.

\bibitem{Soltan:2007}
{So{\l}tan P.M.}, 2007, \textit{$C^*$-algebras, group actions and crossed
  products}, Lecture notes, University of Warsaw, Warsaw.
  \href{http://www.fuw.edu.pl/~psoltan/prace/C*-algebras, group actions and
  crossed products (lecture
  notes).pdf}{www.fuw.edu.pl/$\sim$psoltan/prace/C$^*$-algebras, group actions
  and crossed products (lecture notes).pdf}.

\bibitem{Steinhaus:1919}
{Steinhaus H.}, 1919, \textit{Additive und stetige Funktionaloperationen},
  Math. Z. \textbf{5}, 186.
  \href{http://gdz.sub.uni-goettingen.de/dms/load/img/?PPN=GDZPPN002365073&IDDOC=16795}{gdz.sub.uni-goettingen.de/dms/load/img/?PPN=GDZPPN002365073\&IDDOC=16795}.

\bibitem{Stieltjes:1894}
{Stieltjes T.-J.}, 1894, \textit{Recherches sur les fractions continues}, Ann.
  Fac. Sci. Tolouse \textbf{8}, 1.
  \href{http://archive.numdam.org/article/AFST_1894_1_8_4_J1_0.pdf}{numdam:AFST\_1894\_1\_8\_4\_J1\_0}.

\bibitem{Stieltjes:1895}
{Stieltjes T.-J.}, 1895, \textit{Recherches sur les fractions continues [suite
  et fin]}, Ann. Fac. Sci. Tolouse \textbf{9}, 5.
  \href{http://archive.numdam.org/article/AFST_1895_1_9_1_A5_0.pdf}{numdam:AFST\_1895\_1\_9\_1\_A5\_0}.

\bibitem{Stinespring:1959}
{Stinespring W.F.}, 1959, \textit{Integration theorems for gages and duality
  for unimodular groups}, Trans. Amer. Math. Soc. \textbf{90}, 15.
  \href{http://www.ams.org/journals/tran/1959-090-01/S0002-9947-1959-0102761-9/S0002-9947-1959-0102761-9.pdf}{www.ams.org/journals/tran/1959-090-01/S0002-9947-1959-0102761-9/S0002-9947-1959-0102761-9.pdf}.

\bibitem{Stone:1930}
{Stone M.H.}, 1930, \textit{Linear transformations in Hilbert space, III:
  operational methods and group theory}, Proc. Natl. Acad. Sci. U.S.A.
  \textbf{16}, 172.
  \href{https://www.ncbi.nlm.nih.gov/pmc/articles/PMC1075964/pdf/pnas01715-0085.pdf}{www.ncbi.nlm.nih.gov/pmc/articles/PMC1075964/pdf/pnas01715-0085.pdf}.

\bibitem{Stone:1932:linear}
{Stone M.H.}, 1932, \textit{Linear transformations in Hilbert space and their
  applications to analysis}, Amer. Math. Soc. Colloq. Publ. \textbf{15},
  American Mathematical Society, Providence.
  \href{http://libgen.org/get?open=0&md5=0B83B7DDA00F7CCCC36ABF1E0A1CA1EB}{libgen.org:0B83B7DDA00F7CCCC36ABF1E0A1CA1EB}.

\bibitem{Stone:1932}
{Stone M.H.}, 1932, \textit{On one-parameter unitary groups in Hilbert space},
  Ann. Math. \textbf{33}, 643.

\bibitem{Stone:1936}
{Stone M.H.}, 1936, \textit{The theory of representations of boolean algebras},
  Trans. Amer. Math. Soc. \textbf{40}, 37.
  \href{http://www.ams.org/journals/tran/1936-040-01/S0002-9947-1936-1501865-8/S0002-9947-1936-1501865-8.pdf}{www.ams.org/journals/tran/1936-040-01/S0002-9947-1936-1501865-8/S0002-9947-1936-1501865-8.pdf}.

\bibitem{Stone:1937:algebraic}
{Stone M.H.}, 1937, \textit{Algebraic characterization of special boolean
  rings}, Fund. Math. \textbf{29}, 223.
  \href{http://matwbn.icm.edu.pl/ksiazki/fm/fm29/fm29127.pdf}{matwbn.icm.edu.pl/ksiazki/fm/fm29/fm29127.pdf}.

\bibitem{Stone:1937}
{Stone M.H.}, 1937, \textit{Applications of the theory of boolean rings in
  topology}, Trans. Amer. Math. Soc. \textbf{41}, 375.
  \href{http://www.ams.org/journals/tran/1937-041-03/S0002-9947-1937-1501905-7/S0002-9947-1937-1501905-7.pdf}{www.ams.org/journals/tran/1937-041-03/S0002-9947-1937-1501905-7/S0002-9947-1937-1501905-7.pdf}.

\bibitem{Stone:1940}
{Stone M.H.}, 1940, \textit{A general theory of spectra I}, Proc. Nat. Acad.
  Sci. U.S.A. \textbf{26}, 280.
  \href{https://www.ncbi.nlm.nih.gov/pmc/articles/PMC1078172/pdf/pnas01615-0058.pdf}{www.ncbi.nlm.nih.gov/pmc/articles/PMC1078172/pdf/pnas01615-0058.pdf}.

\bibitem{Stone:1948}
{Stone M.H.}, 1948, \textit{Notes on integration, I, II, III}, Proc. Natl.
  Acad. Sci. U.S.A. \textbf{34}, 336, 447, 483.
  \href{https://www.ncbi.nlm.nih.gov/pmc/articles/PMC1079120/pdf/pnas01708-0028.pdf}{www.ncbi.nlm.nih.gov/pmc/articles/PMC1079120/pdf/pnas01708-0028.pdf},
  \href{https://www.ncbi.nlm.nih.gov/pmc/articles/PMC1079143/pdf/pnas01710-0025.pdf}{www.ncbi.nlm.nih.gov/pmc/articles/PMC1079143/pdf/pnas01710-0025.pdf},
  \href{https://www.ncbi.nlm.nih.gov/pmc/articles/PMC1079152/pdf/pnas01711-0021.pdf}{www.ncbi.nlm.nih.gov/pmc/articles/PMC1079152/pdf/pnas01711-0021.pdf}.

\bibitem{Stone:1949:CJM}
{Stone M.H.}, 1949, \textit{Boundedness properties in function-lattices},
  Canad. J. Math. \textbf{1}, 176.
  \href{http://cms.math.ca/cjm/v1/cjm1949v01.0176-0186.pdf}{cms.math.ca/cjm/v1/cjm1949v01.0176-0186.pdf}.

\bibitem{Stone:1949}
{Stone M.H.}, 1949, \textit{Notes on integration, IV}, Proc. Natl. Acad. Sci.
  U.S.A. \textbf{35}, 50.
  \href{https://www.ncbi.nlm.nih.gov/pmc/articles/PMC1062957/pdf/pnas01538-0059.pdf}{www.ncbi.nlm.nih.gov/pmc/articles/PMC1062957/pdf/pnas01538-0059.pdf}.

\bibitem{Stoermer:1997}
{St{\o}rmer E.}, 1997, \textit{Conditional expectations and projection maps of
  von Neumann algebras}, in: Katavolos A. (ed.), \textit{Operator algebras and
  applications}, Kluwer, Dordrecht, p.449.

\bibitem{Stoermer:2013}
{St{\o}rmer E.}, 2013, \textit{Positive linear maps of operator algebras},
  Springer, Berlin.

\bibitem{Stratila:1981}
{\c{S}tr\u{a}til\u{a} S.}, 1981, \textit{Modular theory in operator algebras},
  Edituria Academiei/Abacus Press, Bucure\c{s}ti/Tunbridge Wells.
  \href{http://libgen.org/get.php?md5=78a7e19f84a568c3b222754c93580c45}{libgen.org:78a7e19f84a568c3b222754c93580c45}.

\bibitem{Stratila:Voiculescu:Zsido:1976}
{\c{S}tr\u{a}til\u{a} S., Voiculescu D., Zsid\'{o} L.}, 1976, \textit{On
  crossed products I}, Revue. Roum. Math. Pures Appl. \textbf{21}, 1411.

\bibitem{Stratila:Voiculescu:Zsido:1977}
{\c{S}tr\u{a}til\u{a} S., Voiculescu D., Zsid\'{o} L.}, 1977, \textit{On
  crossed products II}, Revue. Roum. Math. Pures Appl. \textbf{22}, 83.

\bibitem{Stratila:Zsido:1975}
{\c{S}tr\u{a}til\u{a} S., Zsid\'{o} L.}, 1975, \textit{Lec\c{t}ii de algebre
  von Neumann}, Edituria Academiei, Bucure\c{s}ti (rev. upd. Engl. transl.
  1979, \textit{Lectures on von Neumann algebras} Edituria Academiei/Abacus
  Press, Bucure\c{s}ti/Tunbridge Wells.
  \href{http://libgen.org/get.php?md5=e58daf42d46007c4538d371797560cd1}{libgen.org:e58daf42d46007c4538d371797560cd1}).

\bibitem{Stratila:Zsido:1977:1979}
{\c{S}tr\u{a}til\u{a} S., Zsid\'{o} L.}, 1977, 1979, \textit{Operator
  algebras}, INCREST, Bucure\c{s}ti (2nd ed. 1995, Tipografia
  Univestit\u{a}\c{t}ii din Timi\c{s}oara, Timi\c{s}oara; 3rd ed. 2009, Theta
  Foundation, Bucure\c{s}ti).

\bibitem{Streater:2004:Orlicz}
{Streater R.F.}, 2004, \textit{Quantum Orlicz spaces in information geometry},
  Open Sys. Inf. Dyn. \textbf{11}, 350.
  \href{http://www.arxiv.org/pdf/math-ph/0407046}{arXiv:math-ph/0407046}.

\bibitem{Streater:2008}
{Streater R.F.}, 2008, \textit{The set of density operators modelled on a
  quantum Orlicz space}, in: Belavkin V.P., Gu\c{t}\u{a} M. (eds.),
  \textit{Quantum stochastics and information: statistics, filtering and
  control}, World Scientific, Singapore, p.99.
  \href{http://libgen.org/get.php?md5=06BA05CB2B143C2CB333D764D308BF80}{libgen.org:06BA05CB2B143C2CB333D764D308BF80}.

\bibitem{Streater:2009:book}
{Streater R.F.}, 2009, \textit{Statistical dynamics. A stochastic approach to
  nonequilibrium thermodynamics}, 2nd rev. ed., Imperial College Press, London.

\bibitem{Streater:2010:Banach}
{Streater R.F.}, 2010, \textit{The Banach manifold of quantum states}, in:
  Gibilisco P. et al (eds.), \textit{Algebraic and geometric methods in
  statistics}, Cambridge University Press, Cambridge, p.257.
  \href{http://libgen.org/get.php?md5=4059765D692D2A0A5583B7E032CE1895}{libgen.org:4059765D692D2A0A5583B7E032CE1895}.

\bibitem{Stroock:1990}
{Stroock D.W.}, 1990, \textit{A concise introduction to the theory of
  integration}, World Scientific, Singapore.
  \href{http://libgen.org/get?open=0&md5=F80FB72E491202F245C44C6681C12B46}{libgen.org:F80FB72E491202F245C44C6681C12B46}
  (3rd ed. 1999, Birkh\"{a}user, Basel).

\bibitem{Sukochev:1996}
{Sukochev F.A.}, 1996, \textit{Non-isomorphism of $L_p$-spaces associated with
  finite and infinite von Neumann algebras}, Proc. Amer. Math. Soc.
  \textbf{124}, 1517.
  \href{http://www.ams.org/journals/proc/1996-124-05/S0002-9939-96-03279-0/S0002-9939-96-03279-0.pdf}{www.ams.org/journals/proc/1996-124-05/S0002-9939-96-03279-0/S0002-9939-96-03279-0.pdf}.

\bibitem{Sunder:1986}
{Sunder V.S.}, 1986, \textit{An invitation to von Neumann algebras}, Springer,
  Berlin.
  \href{http://libgen.org/get?open=0&md5=94D7F0C8083A54F296F7DB4D083A8476}{libgen.org:94D7F0C8083A54F296F7DB4D083A8476}.

\bibitem{SzokefalviNagy:1942}
{Sz\Horig{o}kefalvi-Nagy B.}, 1942, \textit{Spektraldarstellung linearer
  Transformationen des Hilbertschen Raumes}, Springer, Berlin.

\bibitem{Szpilrajn:1934}
{Szpilrajn E.}, 1934, \textit{Remarques sur les fonctions compl\`{e}ment
  additives d'ensemble et sur les ensembles jouissant de la propriet\'{e} de
  Baire}, Fund. Math. \textbf{22}, 303.
  \href{http://matwbn.icm.edu.pl/ksiazki/fm/fm22/fm22129.pdf}{matwbn.icm.edu.pl/ksiazki/fm/fm22/fm22129.pdf}.

\bibitem{Takai:1974}
{Takai H.}, 1974, \textit{Dualit\'{e} dans les produits crois\'{e}s de
  $C^*$-alg\`{e}bres}, Compt. Rend. Acad. Sci. Paris \textbf{278}, 1041.

\bibitem{Takai:1975}
{Takai H.}, 1975, \textit{On a duality for crossed products of $C^*$-algebras},
  J. Funct. Anal. \textbf{19}, 25.

\bibitem{Takesaki:1970:notes}
{Takesaki M.}, 1970, \textit{The theory of operator algebras}, Lecture notes
  1969-1970, University of California, Los Angeles (unpublished).

\bibitem{Takesaki:1970}
{Takesaki M.}, 1970, \textit{Tomita's theory of modular Hilbert algebras and
  its applications}, LNM \textbf{128}, Springer, Berlin.
  \href{http://libgen.org/get?open=0&md5=ACC2A399A5C65C5CB2CCEE7CBEB3FAC3}{libgen.org:ACC2A399A5C65C5CB2CCEE7CBEB3FAC3}.

\bibitem{Takesaki:1971}
{Takesaki M.}, 1971, \textit{One parameter automorphism groups and states of
  operator algebras}, in: Proceedings of International Congress of
  Mathematicians (Nice, 1970), Vol.2, Gauthier--Villars, Paris, p.427.
  \href{http://www.mathunion.org/ICM/ICM1970.2/Main/icm1970.2.0427.0432.ocr.pdf}{www.mathunion.org/ICM/ICM1970.2/Main/icm1970.2.0427.0432.ocr.pdf}.

\bibitem{Takesaki:1972}
{Takesaki M.}, 1972, \textit{Conditional expectations in von Neumann algebras},
  J. Funct. Anal. \textbf{9}, 306.

\bibitem{Takesaki:1973:duality}
{Takesaki M.}, 1973, \textit{Duality for crossed products and the structure of
  von Neumann algebras of type III}, Acta Math. \textbf{131}, 249.

\bibitem{Takesaki:1973:states}
{Takesaki M.}, 1973, \textit{States and automorphisms of operator algebras,
  standard representations and the Kubo--Martin--Schwinger boundary condition},
  in: Lennard A. (ed.), \textit{Statistical mechanics and mathematical
  problems}, LNP \textbf{20}, p.205.
  \href{http://libgen.org/get?open=0&md5=C2B47C20160E1B55C87522B1C6722CEF}{libgen.org:C2B47C20160E1B55C87522B1C6722CEF}.

\bibitem{Takesaki:1973:structure}
{Takesaki M.}, 1973, \textit{The structure of a von Neumann algebra with a
  homogeneous periodic state}, Acta Math. \textbf{131}, 79.

\bibitem{Takesaki:1978}
{Takesaki M.}, 1978, \textit{Introduction to the flow of weights on factors of
  type III}, in: Dell'Antonio G., Doplicher S., Jona-Lasinio G. (eds.),
  \textit{Mathematical problems in theoretical physics: international
  conference held in Rome, June 6-15, 1977}, LNP \textbf{80}, p.134.
  \href{http://libgen.org/get?open=0&md5=9470ACD3579143B495245E9938655719}{libgen.org:9470ACD3579143B495245E9938655719}.

\bibitem{Takesaki:2003}
{Takesaki M.}, 1979, 2003, \textit{Theory of operator algebras}, Vol.1-3,
  Springer, Berlin.
  \href{http://libgen.org/get?open=0&md5=7F0A9F06741272684D62426E348670B1}{libgen.org:7F0A9F06741272684D62426E348670B1},
  \href{http://libgen.org/get?open=0&md5=611936516F867B45282BED443AA0EE51}{libgen.org:611936516F867B45282BED443AA0EE51},
  \href{http://libgen.org/get?open=0&md5=5052F03EF47C3B9EF73299E4EF29D022}{libgen.org:5052F03EF47C3B9EF73299E4EF29D022}.

\bibitem{Takesaki:1983}
{Takesaki M.}, 1983, \textit{Structure of factors and automorphism groups},
  CBMS \textbf{51}, American Mathematical Society, Providence.
  \href{http://libgen.org/get?open=0&md5=077766162DC1DEE70C910BAFBD528D15}{libgen.org:077766162DC1DEE70C910BAFBD528D15}.

\bibitem{Takesaki:2003:entrance}
{Takesaki M.}, 2003, \textit{Say\={o}sokan e no iriguchi}, S\={u}gaku
  \textbf{55}, 89.
  \href{https://www.jstage.jst.go.jp/article/sugaku1947/55/1/55_1_89/_pdf}{www.jstage.jst.go.jp/article/sugaku1947/55/1/55\_1\_89/\_pdf}
  (Engl. transl. 2008, \textit{Entrance to operator algebras}, Amer. Math. Soc.
  Transl. \textbf{223}, 1).

\bibitem{Tam:1979}
{Tam P.K.}, 1979, \textit{Isometries of $L^p$-spaces associated with semifinite
  von Neumann algebras}, Trans. Amer. Math. Soc. \textbf{254}, 339.
  \href{http://www.ams.org/journals/tran/1979-254-00/S0002-9947-1979-0539922-3/S0002-9947-1979-0539922-3.pdf}{www.ams.org/journals/tran/1979-254-00/S0002-9947-1979-0539922-3/S0002-9947-1979-0539922-3.pdf}.

\bibitem{Terp:1981}
{Terp M.}, 1981, \textit{$L^p$-spaces associated with von Neumann algebras},
  K{\o}benhavns Univ. Math. Inst. Rapp. No. \textbf{3a+3b}, Matematisk
  Institut, K{\o}benhavns Universitet, K{\o}benhavn.
  \href{http://www.fuw.edu.pl/~kostecki/scans/terp1981.pdf}{www.fuw.edu.pl/$\sim$kostecki/scans/terp1981.pdf}.

\bibitem{Terp:1982}
{Terp M.}, 1982, \textit{Interpolation spaces between von Neumann algebra and
  its predual}, J. Oper. Theor. \textbf{8}, 327.
  \href{http://www.theta.ro/jot/archive/1982-008-002/1982-008-002-008.pdf}{www.theta.ro/jot/archive/1982-008-002/1982-008-002-008.pdf}.

\bibitem{Tikhonov:1982}
{Tikhonov O.E.}, 1982, \textit{Prostranstva tipa $L_p$ otnositel'no vesa na
  algebre Ne\u{\i}mana}, Izv. Vyssh. Uchebn. Zaved. Matem. \textbf{243}, 73.
  \href{http://mi.mathnet.ru/ivm6870}{mathnet.ru:ivm6870} (Engl. transl. 1982,
  \textit{Spaces of $L_p$ type with respect to a weight on a von Neumann
  algebra}, Sov. Math. \textbf{29}:8, 93).

\bibitem{Toeplitz:1918}
{Toeplitz O.}, 1918, \textit{Das algebraische Analogon zu einem Satze von
  Fej\'{e}r}, Math Z. \textbf{2}, 187.
  \href{http://gdz.sub.uni-goettingen.de/dms/load/img/?PPN=GDZPPN00236431X&IDDOC=17435}{gdz.sub.uni-goettingen.de/dms/load/img/?PPN=GDZPPN00236431X\&IDDOC=17435}.

\bibitem{Tomita:1953}
{Tomita M.}, 1953, \textit{On rings of operators in non-separable Hilbert
  spaces}, Mem. Fac. Sci., Ky\={u}sy\={u} Univ., Ser. A \textbf{7}, 129.
  \href{https://www.jstage.jst.go.jp/article/kyushumfs/7/2/7_2_129/_pdf}{www.jstage.jst.go.jp/article/kyushumfs/7/2/7\_2\_129/\_pdf}.

\bibitem{Tomita:1956}
{Tomita M.}, 1956, \textit{Harmonic analysis on locally compact groups}, Math.
  J. Okayama Univ. \textbf{5}, 133.
  \href{http://ousar.lib.okayama-u.ac.jp/file/33564/fulltext.pdf}{ousar.lib.okayama-u.ac.jp/file/33564/fulltext.pdf}.

\bibitem{Tomita:1959}
{Tomita M.}, 1959, \textit{Spectral theory of operator algebras I}, Math. J.
  Okayama Univ. \textbf{9}, 63.
  \href{http://ousar.lib.okayama-u.ac.jp/file/33396/fulltext.pdf}{ousar.lib.okayama-u.ac.jp/file/33396/fulltext.pdf}.

\bibitem{Tomita:1960}
{Tomita M.}, 1960, \textit{Spectral theory of operator algebras II}, Math. J.
  Okayama Univ. \textbf{10}, 19.
  \href{http://ousar.lib.okayama-u.ac.jp/file/33087/fulltext.pdf}{ousar.lib.okayama-u.ac.jp/file/33087/fulltext.pdf}.

\bibitem{Tomita:1967:a}
{Tomita M.}, 1967, \textit{Quasi-standard von Neumann algebras}, mimeographed
  notes, Kyushu University (unpublished).

\bibitem{Tomita:1967:b}
{Tomita M.}, 1967, \textit{Standard forms of von Neumann algebras}, in:
  \textit{The Fifth Functional Analysis Symposium of Mathematical Society of
  Japan}, Mathematical Institute, T\={o}hoku University, Sendai, p.101 (in
  Japanese).

\bibitem{Tomiyama:1957}
{Tomiyama J.}, 1957, \textit{On the projection of norm one in $W^*$-algebras},
  Proc. Japan Acad. \textbf{33}, 608.
  \href{http://projecteuclid.org/euclid.pja/1195524885}{euclid:pja/1195524885}.

\bibitem{Tomiyama:1958}
{Tomiyama J.}, 1958, \textit{On the projection of norm one in $W^*$-algebras,
  II}, T\={o}hoku Math. J. \textbf{10}, 204.
  \href{http://projecteuclid.org/euclid.tmj/1178244714}{euclid:tmj/1178244714}.

\bibitem{Tomiyama:1959}
{Tomiyama J.}, 1959, \textit{On the projection of norm one in $W^*$-algebras,
  III}, T\={o}hoku Math. J. \textbf{11}, 125.
  \href{http://projecteuclid.org/euclid.tmj/1178244633}{euclid:tmj/1178244633}.

\bibitem{Tomiyama:1987}
{Tomiyama J.}, 1987, \textit{Invitation to $C^*$-algebras and topological
  dynamics}, World Scientific, Singapore.

\bibitem{Topping:1971}
{Topping D.M.}, 1971, \textit{Lectures on von Neumann algebras}, Van Nostrand,
  New York.

\bibitem{Trunov:1979:L2}
{Trunov N.V.}, 1979, \textit{O nekommutativnom analoge prostranstva $L_2$},
  Konstr. Teor. Funkts. Funkts. Anal. \textbf{2}, 93.
  \href{http://mi.mathnet.ru/kuktf22}{mathnet.ru:kuktf22}.

\bibitem{Trunov:1979}
{Trunov N.V.}, 1979, \textit{O nekommutativnom analoge prostranstva $L_p$},
  Izv. Vyssh. Uchebn. Zaved. Matem. \textbf{210}, 69.
  \href{http://mi.mathnet.ru/ivm5679}{mathnet.ru:ivm5679} (Engl. transl. 1979,
  \textit{A noncommutative analogue of the space $L_p$}, Sov. Math.
  \textbf{23}:11, 71).

\bibitem{Trunov:1980}
{Trunov N.V.}, 1980, \textit{K teorii integrirovaniya v algebrakh Ne\u{\i}mana
  otnositel'no vesa}, Ph.D. thesis, Kazanski\u{\i} universitet, Kazan.

\bibitem{Trunov:1981}
{Trunov N.V.}, 1981, \textit{Prostranstva $L_p$, associirovannye s vesom na
  polukonechno\u{\i} algebre Ne\u{\i}mana}, Konstr. Teor. Funkts. Funkts. Anal.
  \textbf{3}, 88. \href{http://mi.mathnet.ru/kuktf34}{mathnet.ru:kuktf34}.

\bibitem{Trunov:1983}
{Trunov N.V.}, 1983, \textit{K teorii nekommutativnykh prostranstv $L_1$ i
  $L_2$}, Konstr. Teor. Funkts. Funkts. Anal. \textbf{4}, 96.
  \href{http://mi.mathnet.ru/kuktf48}{mathnet.ru:kuktf48}.

\bibitem{Trunov:Sherstnev:1978:I}
{Trunov N.V., Sherstn\"{e}v A.N.}, 1978, \textit{K obshhe\u{\i} teorii
  integrirovaniya v algebrakh operatorov otnositel'no vesa, I}, Izv. Vyssh.
  Uchebn. Zaved. Matem. \textbf{194}, 79.
  \href{http://mi.mathnet.ru/ivm8548}{mathnet.ru:ivm8548} (Engl. transl. 1978,
  \textit{On the general theory of integration with respect to a weight in
  algebras of operators. I}, Sov. Math. \textbf{22}:8, 65).

\bibitem{Trunov:Sherstnev:1978:II}
{Trunov N.V., Sherstn\"{e}v A.N.}, 1978, \textit{K obshhe\u{\i} teorii
  integrirovaniya v algebrakh operatorov otnositel'no vesa, II}, Izv. Vyssh.
  Uchebn. Zaved. Matem. \textbf{199}, 88.
  \href{http://mi.mathnet.ru/ivm5877}{mathnet.ru:ivm5877} (Engl. transl. 1978,
  \textit{On the general theory of integration with respect to a weight in
  algebras of operators. II}, Sov. Math. \textbf{22}:12, 62).

\bibitem{Trunov:Sherstnev:1985}
{Trunov N.V., Sherstn\"{e}v A.N.}, 1985, \textit{Vvedenie v teoriyu
  nekommutativnogo integrirovaniya}, Itogi Nauk. Tekhn. Ser. Sovr. Probl. Math.
  Nov. Dostizh. \textbf{27}, 167.
  \href{http://mi.mathnet.ru/intd89}{mathnet.ru:intd89} (Engl. transl. 1987,
  \textit{Introduction to the theory of noncommutative integration}, J. Sov.
  Math. \textbf{37}:6, 1504).

\bibitem{Turett:1976}
{Turett B.}, 1976, \textit{Rotundity of Orlicz spaces}, Indag. Math. A
  \textbf{79}, 462.
  \href{http://libgen.org/scimag/get.php?doi=10.1016/S1385-7258(76)80010-8}{libgen.org:scimag/get.php?doi=10.1016/S1385-7258(76)80010-8}.

\bibitem{Ulam:1930}
{Ulam S.}, 1930, \textit{Zur Masstheorie in der allgemeinen Mengenlehre}, Fund.
  Math. \textbf{16}, 140.
  \href{http://matwbn.icm.edu.pl/ksiazki/fm/fm16/fm16114.pdf}{matwbn.icm.edu.pl/ksiazki/fm/fm16/fm16114.pdf}.

\bibitem{Umegaki:1954}
{Umegaki H.}, 1954, \textit{Conditional expectation in an operator algebra},
  T\={o}hoku Math J. \textbf{6}, 177.
  \href{http://projecteuclid.org/euclid.tmj/1178245177}{euclid:tmj/1178245177}.

\bibitem{Umegaki:1956}
{Umegaki H.}, 1956, \textit{Conditional expectation in an operator algebra,
  II}, T\={o}hoku Math. J. \textbf{8}, 86.
  \href{http://projecteuclid.org/euclid.tmj/1178245011}{euclid:tmj/1178245011}.

\bibitem{Umegaki:1959}
{Umegaki H.}, 1959, \textit{Conditional expectation in an operator algebra,
  III}, K\={o}dai Math. Sem. Rep. \textbf{11}, 51.
  \href{http://projecteuclid.org/euclid.kmj/1138844157}{euclid:kmj/1138844157}.

\bibitem{Umegaki:1962}
{Umegaki H.}, 1962, \textit{Conditional expectation in an operator algebra, IV
  (entropy and information)}, K\={o}dai Math. Sem. Rep. \textbf{14}, 59.
  \href{http://projecteuclid.org/euclid.kmj/1138844604}{euclid:kmj/1138844604}.

\bibitem{vanDaele:1974}
{van Daele A.}, 1974, \textit{A new approach to the Tomita--Takesaki theory of
  generalized Hilbert algebras}, J. Funct. Anal. \textbf{15}, 378.

\bibitem{vanDaele:1975}
{van Daele A.}, 1975, \textit{A Radon--Nikodym for weights on von Neumann
  algebras}, Pacific J. Math. \textbf{61}, 527.
  \href{http://projecteuclid.org/euclid.pjm/1102868047}{euclid:pjm/1102868047}.

\bibitem{vanDaele:1978}
{van Daele A.}, 1978, \textit{Continuous crossed products and type III von
  Neumann algebras}, London Mathematical Society LNS \textbf{31}, Cambridge
  University Press, Cambridge.
  \href{http://libgen.org/get?open=0&md5=72FC00E975CCE8F559512CED15CA82E1}{libgen.org:72FC00E975CCE8F559512CED15CA82E1}.

\bibitem{vanDantzig:1931}
{van Dantzig D.}, 1931, \textit{Studi\"{e}n over topologische Algebra}, Paris,
  Amsterdam.

\bibitem{vanderWaerden:1932}
{van der Waerden B.L.}, 1932, \textit{Die gruppentheoretische Methode in der
  Quantenmechanik}, Springer, Berlin (Engl. transl. rev. ed. 1974,
  \textit{Group theory and quantum mechanics}, Springer, Berlin.
  \href{http://libgen.org/get?open=0&md5=9356028B543813A9586A362237616BE4}{libgen.org:9356028B543813A9586A362237616BE4}).

\bibitem{vanKampen:1935}
{van Kampen E.}, 1935, \textit{Locally bicompact abelian groups and their
  character groups}, Ann. Math. \textbf{36}, 448.

\bibitem{vanNeerven:1997}
{van Neerven J.M.A.M.}, 1997, \textit{The norm of a complex Banach lattice},
  Positivity \textbf{1}, 381.
  \href{http://aw.twi.tudelft.nl/~neerven/publications/papers/Positivity_98.pdf}{aw.twi.tudelft.nl/$\sim$neerven/publications/papers/Positivity\_98.pdf}.

\bibitem{Varadarajan:1961}
{Varadarajan V.S.}, 1961, \textit{Mery na topologicheskikh prostranstvakh},
  Matem. Sb. \textbf{55}, 35.
  \href{http://mi.mathnet.ru/msb4752}{mathnet.ru:msb4752} (Engl. transl. 1965,
  \textit{Measures on topological spaces}, Amer. Math. Soc. Transl.
  \textbf{48}, 161).

\bibitem{Volcic:1970}
{Vol\v{c}i\v{c} A.}, 1970, \textit{Sul teorema di Radon--Nikodym nel caso non
  $\sigma$-finito}, Rend. Inst. mat. Univ. Trieste \textbf{2}, 42.
  \href{http://rendiconti.dmi.units.it/volumi/02/04.pdf}{rendiconti.dmi.units.it/volumi/02/04.pdf}.

\bibitem{vonNeumann:1927:Mathematische}
{von Neumann J.}, 1927, \textit{Mathematische Begr\"{u}ndung der
  Quantenmechanik}, Nachr. Ges. Wiss. G\"{o}ttingen, Math.-phys. Kl.
  \textbf{1927}, 1.
  \href{http://gdz.sub.uni-goettingen.de/dms/load/img/?PPN=GDZPPN002507129&IDDOC=64005}{gdz.sub.uni-goettingen.de/dms/load/img/?PPN=GDZPPN002507129\&IDDOC=64005}.

\bibitem{vonNeumann:1930:Allgemeine}
{von Neumann J.}, 1930, \textit{Allgemeine Eigenwerttheorie Hermitescher
  Funktionaloperatoren}, Math. Ann. \textbf{102}, 49.
  \href{http://gdz.sub.uni-goettingen.de/index.php?id=11&PPN=PPN235181684_0102&DMDID=DMDLOG_0007&L=0}{gdz.sub.uni-goettingen.de/index.php?id=11\&PPN=PPN235181684\_0102\&DMDID=DMDLOG\_0007\&L=0}.

\bibitem{vonNeumann:1930:algebra}
{von Neumann J.}, 1930, \textit{Zur algebra der Funktionaloperatoren und
  Theorie der normalen Operatoren}, Math. Ann. \textbf{102}, 370.
  \href{http://gdz.sub.uni-goettingen.de/dms/load/img/?PPN=GDZPPN002273675&IDDOC=38466}{gdz.sub.uni-goettingen.de/dms/load/img/?PPN=GDZPPN002273675\&IDDOC=38466}.

\bibitem{vonNeumann:1932:grundlagen}
{von Neumann J.}, 1932, \textit{Mathematische Grundlagen der Quantenmechanik},
  Springer, Berlin.
  \href{http://libgen.org/get?open=0&md5=DF8F17426E6D36B4AD2D350970158BD7}{libgen.org:DF8F17426E6D36B4AD2D350970158BD7}
  (Engl. transl. 1955, \textit{Mathematical foundations of quantum mechanics},
  Princeton University Press, Princeton.
  \href{http://libgen.org/get?open=0&md5=C23114E46AB9284F70E789F93BC0512D}{libgen.org:C23114E46AB9284F70E789F93BC0512D}).

\bibitem{vonNeumann:1932:adjungierte}
{von Neumann J.}, 1932, \textit{\"{U}ber adjungierte Funktionaloperatoren},
  Ann. Math. \textbf{33}, 294.

\bibitem{vonNeumann:1932:Stone}
{von Neumann J.}, 1932, \textit{\"{U}ber einen Satz von Herrn M.~H.~Stone},
  Ann. Math. \textbf{33}, 567.

\bibitem{vonNeumann:1932:zur}
{von Neumann J.}, 1932, \textit{Zur Operatorenmethode in der Klassischen
  Mechanik}, Ann. Math. \textbf{33}, 587.

\bibitem{vonNeumann:1932:zusaetze}
{von Neumann J.}, 1932, \textit{Zus\"{a}tze zur Arbeit ,,Zur
  Operatorenmethode...''}, Ann. Math. \textbf{33}, 789.

\bibitem{vonNeumann:1935:Haarschen}
{von Neumann J.}, 1935, \textit{Zur Haarschen Mass in topologischen Gruppen},
  Comp. Math. \textbf{1}, 106.
  \href{http://archive.numdam.org/article/CM_1935__1__106_0.pdf}{numdam:CM\_1935\_\_1\_\_106\_0}.

\bibitem{vonNeumann:1936:topology}
{von Neumann J.}, 1936, \textit{On a certain topology for rings of operators},
  Ann. Math. \textbf{37}, 111[/11].

\bibitem{vonNeumann:1936}
{von Neumann J.}, 1936, \textit{The uniqueness of Haar's measure}, Matem. sb.
  N.S. \textbf{1}, 721.
  \href{http://mi.mathnet.ru/msb5481}{mathnet.ru:msb5481}.

\bibitem{vonNeumann:1937:Tomsk}
{von Neumann J.}, 1937, \textit{Some matrix-inequalities and metrization of
  matrix-spaces}, Tomsk Univ. Rev. \textbf{1}, 286.

\bibitem{vonNeumann:1939}
{von Neumann J.}, 1939, \textit{On infinite direct products}, Comp. Math.
  \textbf{6}, 1.
  \href{http://archive.numdam.org/article/CM_1939__6__1_0.pdf}{numdam:CM\_1939\_\_6\_\_1\_0}.

\bibitem{vonNeumann:1943}
{von Neumann J.}, 1943, \textit{On rings of operators III}, Ann. Math.
  \textbf{41}, 94.

\bibitem{vonNeumann:1949}
{von Neumann J.}, 1949, \textit{On rings of operators. Reduction theory}, Ann.
  Math. \textbf{50}, 401.

\bibitem{Vulikh:1957:Lenin}
{Vulikh B.Z.}, 1957, \textit{Chastichnoe uporyadochenie kolec ogranichennykh
  samosopryazhennykh operatorov}, Vestnik Leningr. Univ., Mat., Mekh., Astr.
  \textbf{13}, 13.
  \href{http://www.fuw.edu.pl/~kostecki/scans/vulikh1957.pdf}{www.fuw.edu.pl/$\sim$kostecki/scans/vulikh1957.pdf}.

\bibitem{Vulikh:1957}
{Vulikh B.Z.}, 1957, \textit{Primenenie teorii poluuporyadochennykh prostranstv
  k issledovaniyu samospryazhennykh operatorov v gil'bertovom prostranstve},
  Usp. Mat. Nauk \textbf{12}, 169.
  \href{http://mi.mathnet.ru/umn7537}{mathnet.ru:umn7537}.

\bibitem{Vulikh:1961}
{Vulikh B.Z.}, 1961, \textit{Vvedenie v teoriyu poluuporyadochennykh
  prostranstv}, Gosudarstvennoe Izdatel'stvo Fiziko-Matematichesko\u{\i}
  Literatury, Moskva.
  \href{http://libgen.org/get?open=0&md5=DC2470A89879A0C69CC4E1C255E12404}{libgen.org:DC2470A89879A0C69CC4E1C255E12404}
  (Engl. transl., 1967, \textit{Introduction to the theory of partially ordered
  spaces}, Woolters--Noordhoff, Groningen.
  \href{http://libgen.org/get?open=0&md5=8a75e6848531fa687eb1115bd9b6e2bf}{libgen.org:8a75e6848531fa687eb1115bd9b6e2bf}).

\bibitem{Wada:1957}
{Wada J.}, 1957, \textit{Stonian spaces and the second conjugate spaces of AM
  spaces}, Osaka Math. J. \textbf{9}, 195.
  \href{http://projecteuclid.org/euclid.ojm/1200689162}{euclid:ojm/1200689162}.

\bibitem{Watanabe:1992}
{Watanabe K.}, 1992, \textit{On isometries between non-commutative $L^p$-spaces
  associated with arbitrary von Neumann algebras}, J. Op. Theor. \textbf{28},
  267.
  \href{http://www.theta.ro/jot/archive/1992-028-002/1992-028-002-004.pdf}{www.theta.ro/jot/archive/1992-028-002/1992-028-002-004.pdf}.

\bibitem{Watanabe:1995}
{Watanabe K.}, 1995, \textit{Finite measures on preduals and non-commutative
  $L^p$-isometries}, J. Op. Theor. \textbf{33}, 371.
  \href{http://www.theta.ro/jot/archive/1995-033-002/1995-033-002-012.pdf}{www.theta.ro/jot/archive/1995-033-002/1995-033-002-012.pdf}.

\bibitem{Watanabe:1996}
{Watanabe K.}, 1996, \textit{An application of orthoisomorphisms to
  non-commutative $L^p$-isometries}, Publ. Res. Inst. Math. Sci. Ky\={o}to
  Univ. \textbf{32}, 493.
  \href{http://dx.doi.org/10.2977/prims/1195162853}{dx.doi.org/10.2977/prims/1195162853}.

\bibitem{Watanabe:1999}
{Watanabe K.}, 1999, \textit{Problems on isometries of non-commutative
  $L^p$-spaces}, Contemp. Math. \textbf{232}, American Mathematical Society,
  Providence, p.349.

\bibitem{Wecken:1939}
{Wecken F.}, 1940, \textit{Abstrakte Integrale und fastperiodische Funktionen},
  Math. Z. \textbf{45}, 377.
  \href{http://gdz.sub.uni-goettingen.de/dms/load/img/?PPN=GDZPPN002378647&IDDOC=19755}{gdz.sub.uni-goettingen.de/dms/load/img/?PPN=GDZPPN002378647\&IDDOC=19755}.

\bibitem{WeggeOlsen:1993}
{Wegge-Olsen N.E.}, 1993, \textit{$K$-theory and $C^*$-algebras: a friendly
  approach}, Oxford University Press, Oxford.
  \href{http://libgen.org/get?open=0&md5=1D89EECA2386BA94FAAB39D9089B7083}{libgen.org:1D89EECA2386BA94FAAB39D9089B7083}.

\bibitem{Weil:1940}
{Weil A.}, 1940, \textit{L'int\'{e}gration dans les groupes topologiques et ses
  applications}, Actualit\'{e}s Scientifiques et Industrielles \textbf{869},
  Hermann, Paris (1965, 2nd. ed.
  \href{http://libgen.org/get?open=0&md5=151BC32B6B429990A682D1729EDEBC02}{libgen.org:151BC32B6B429990A682D1729EDEBC02}).

\bibitem{Weir:1974}
{Weir A.J.}, 1974, \textit{General integration and measure}, Cambridge
  University Press, Cambridge.
  \href{http://libgen.org/get?open=0&md5=089AFC7F50FE36EF0C73AFB27BC82714}{libgen.org:089AFC7F50FE36EF0C73AFB27BC82714}.

\bibitem{Weiss:1956}
{Weiss G.}, 1956, \textit{A note on Orlicz spaces}, Portugaliae Math.
  \textbf{15}, 35. \href{http://purl.pt/2276/1/}{purl.pt/2276/1}.

\bibitem{Weyl:1927}
{Weyl H.K.H.}, 1927, \textit{Quantenmechanik und Gruppentheorie}, Z. Phys.
  \textbf{46}, 1.

\bibitem{Weyl:1928}
{Weyl H.K.H.}, 1928, \textit{Gruppentheorie und Quantenmechanik}, Hirzel,
  Leipzig. (Engl. transl.: 1931, \textit{The theory of groups and quantum
  mechanics}, Dutton, New York.
  \href{http://libgen.org/get?open=0&md5=73452979729CD002850A7907A4BE5099}{libgen.org:73452979729CD002850A7907A4BE5099}).

\bibitem{Whitehead:1898}
{Whitehead A.N.}, 1898, \textit{A treatise on universal algebra, with
  applications}, Cambridge University Press, Cambridge.
  \href{http://libgen.org/get?open=0&md5=8c87bdb9f9f0a244dbdca42279b7cae1}{libgen.org:8c87bdb9f9f0a244dbdca42279b7cae1}.

\bibitem{Wigner:1931}
{Wigner E.P.}, 1931, \textit{Gruppentheorie und ihre Anwendung in der Theorie
  der Atomspektren}, Vieweg, Braunschweig (Engl. transl. 1959, \textit{Group
  theory and its application to the quantum mechanics of atomic spectra},
  Academic Press, New York.
  \href{http://libgen.org/get?open=0&md5=4ebe79cef7b1f3e00795c4fc8c672c46}{libgen.org:4ebe79cef7b1f3e00795c4fc8c672c46}).

\bibitem{Wigner:1939}
{Wigner E.P.}, 1939, \textit{On unitary representations of the inhomogeneus
  Lorentz group}, Ann. Math. \textbf{40}, 149.
  \href{http://courses.theophys.kth.se/SI2390/wigner_1939.pdf}{courses.theophys.kth.se/SI2390/wigner\_1939.pdf}.

\bibitem{Wils:1968}
{Wils W.}, 1968, \textit{D\'{e}sint\'{e}gration centrale des formes positives
  sur les $C^*$-alg\`{e}bres}, Compt. Rend. Acad. Sci. Paris \textbf{267}, 810.

\bibitem{Wils:1969}
{Wils W.}, 1969, \textit{D\'{e}sint\'{e}gration centrale dans une partie
  convexe compacte d'un espace localement convexe}, Compt. Rend. Acad. Sci.
  Paris \textbf{269}, 702.

\bibitem{Wils:1971}
{Wils W.}, 1971, \textit{The ideal center of partially ordered vector spaces},
  Acta Math. \textbf{127}, 41.

\bibitem{Winnink:1968}
{Winnink M.}, 1968, \textit{An application of $C^*$-algebras to quantum
  statistical mechanics of systems in equilibrium}, Ph.D. thesis,
  Rijks--Universiteit, Gr\"{o}ningen.

\bibitem{Winnink:1970}
{Winnink M.}, 1970, \textit{Algebraic aspects of the Kubo--Martin--Schwinger
  condition}, in: Kastler D. (ed.), \textit{Carg\`{e}se Lectures in Physics},
  Vol.4, Gordon \& Breach, New York, p.235.

\bibitem{Wnuk:1980}
{Wnuk W.}, 1980, \textit{On a representation theorem for convex Orlicz
  lattices}, Bull. Acad. Polon. Sci., S\'{e}r. Math. \textbf{28}, 131.

\bibitem{Wnuk:1984}
{Wnuk W.}, 1984, \textit{Representations of Orlicz lattices}, Dissertationes
  Math. \textbf{235}, Pa\'{n}stwowe Wydawnictwo Naukowe, Warszawa.

\bibitem{Woods:1973}
{Woods E.J.}, 1973, \textit{The classification of factors is not smooth},
  Canad. J. Math. \textbf{25}, 96.
  \href{http://cms.math.ca/cjm/v25/cjm1973v25.0096-0102.pdf}{cms.math.ca/cjm/v25/cjm1973v25.0096-0102.pdf}.

\bibitem{Woronowicz:1972}
{Woronowicz S.L.}, 1972, \textit{On the purification of factor states}, Commun.
  Math. Phys. \textbf{28}, 221.
  \href{http://projecteuclid.org/euclid.cmp/1103858404}{euclid:cmp/1103858404}.

\bibitem{Woronowicz:1979}
{Woronowicz S.L.}, 1979, \textit{Operator systems and their application to the
  Tomita--Takesaki theory}, LNS \textbf{52}, Aarhus Universitet, Matematisk
  Institut, Aarhus. Reprinted in: J. Op. Theor. \textbf{2}, 169.
  \href{http://www.theta.ro/jot/archive/1979-002-002/1979-002-002-003.pdf}{www.theta.ro/jot/archive/1979-002-002/1979-002-002-003.pdf}.

\bibitem{Woronowicz:1985}
{Woronowicz S.L.}, 1985, \textit{On the existence of KMS states}, Lett. Math.
  Phys. \textbf{10}, 29.

\bibitem{Woronowicz:1991}
{Woronowicz S.L.}, 1991, \textit{Unbounded elements affiliated with
  $C^*$-algebras and noncompact quantum groups}, Commun. Math. Phys.
  \textbf{136}, 399.
  \href{http://projecteuclid.org/euclid.cmp/1104202358}{euclid:cmp/1104202358}.

\bibitem{Xu:1991}
{Xu Q.}, 1991, \textit{Analytic functions with values in lattices and symmetric
  spaces of measurable operators}, Math. Proc. Cambridge Phil. Soc.
  \textbf{109}, 541.

\bibitem{Yamagami:1992}
{Yamagami S.}, 1992, \textit{Algebraic aspects in modular theory}, Publ. Res.
  Inst. Math. Sci. Ky\={o}to Univ. \textbf{28}, 1075.
  \href{http://dx.doi.org/10.2977/prims/1195167738}{dx.doi.org/10.2977/prims/1195167738}.

\bibitem{Yamagami:1994}
{Yamagami S.}, 1994, \textit{Modular theory for bimodules}, J. Funct. Anal.
  \textbf{125}, 327.

\bibitem{Yeadon:1971}
{Yeadon F.J.}, 1971, \textit{A new proof of the existence of a trace in a
  finite von Neumann algebra}, Bull. Amer. Math. Soc. \textbf{77}, 257.
  \href{http://www.ams.org/journals/bull/1971-77-02/S0002-9904-1971-12708-8/S0002-9904-1971-12708-8.pdf}{www.ams.org/journals/bull/1971-77-02/S0002-9904-1971-12708-8/S0002-9904-1971-12708-8.pdf}.

\bibitem{Yeadon:1973}
{Yeadon F.J.}, 1973, \textit{Convergence of measurable operators}, Math. Proc.
  Cambridge Phil. Soc. \textbf{74}, 257.

\bibitem{Yeadon:1975}
{Yeadon F.J.}, 1975, \textit{Non-commutative $L_p$-spaces}, Math. Proc.
  Cambridge Phil. Soc. \textbf{77}, 91.

\bibitem{Yeadon:1981}
{Yeadon F.J.}, 1981, \textit{Isometries of non-commutative $L^p$ spaces}, Math.
  Proc. Cambridge Phil. Soc. \textbf{90}, 41.

\bibitem{Yosida:1948}
{Yosida K.}, 1948, \textit{On the differentiability and the representation of
  one-parameter semi-groups of linear operators}, J. Math. Soc. Japan
  \textbf{1}, 15.
  \href{http://projecteuclid.org/euclid.jmsj/1261735576}{euclid:jmsj/1261735576}.

\bibitem{Yosida:Fukamiya:1941}
{Yosida K., Fukamiya M.}, 1941, \textit{On regularly convex sets}, Proc. Imp.
  Acad. T\={o}ky\={o} \textbf{17}, 49.
  \href{http://projecteuclid.org/euclid.pja/1195578879}{euclid:pja/1195578879}.

\bibitem{Young:1911}
{Young W.H.}, 1911, \textit{On a new method in the theory of integration},
  Proc. London Math. Soc. \textbf{9}, 15.

\bibitem{Young:1912}
{Young W.H.}, 1912, \textit{On classes of summable functions and their Fourier
  series}, Proc. Royal Soc. London Ser. A. \textbf{87}, 225.
  \href{http://libgen.org/scimag/get.php?doi=10.1098/rspa.1912.0076}{libgen.org:scimag/get.php?doi=10.1098/rspa.1912.0076}.

\bibitem{Young:1913}
{Young W.H.}, 1913, \textit{On the new theory of integration}, Proc. Royal Soc.
  London Ser. A. \textbf{88}, 170.

\bibitem{Young:1914}
{Young W.H.}, 1914, \textit{On integration with respect to a function of
  bounded variation}, Proc. London Math. Soc. \textbf{13}, 109.

\bibitem{Zaanen:1953}
{Zaanen A.C.}, 1953, \textit{Linear analysis}, North-Holland, Amsterdam.

\bibitem{Zaanen:1961}
{Zaanen A.C.}, 1961, \textit{The Radon--Nikodym theorem I, II}, Nederl. Akad.
  Wetensch. Proc. Ser. A \textbf{64}, 157.

\bibitem{Zaanen:1983}
{Zaanen A.C.}, 1983, \textit{Riesz spaces II}, North-Holland, Amsterdam.
  \href{http://libgen.org/get?open=0&md5=a0a8eda888e14e5e2f24280050aa38cd}{libgen.org:a0a8eda888e14e5e2f24280050aa38cd}.

\bibitem{ZellerMeier:1969}
{Zeller-Meier G.}, 1969, \textit{Deux autres facteurs de type II$_1$}, Invent.
  Math. \textbf{7}, 235.
  \href{http://gdz.sub.uni-goettingen.de/dms/load/img/?PPN=GDZPPN002087634&IDDOC=166867}{gdz.sub.uni-goettingen.de/dms/load/img/?PPN=GDZPPN002087634\&IDDOC=166867}.

\bibitem{Zhu:1998:lebesgue}
{Zhu H.}, 1998, \textit{Generalized Lebesgue spaces and application to
  statistics}, Santa Fe Inst. Tech. Rep. \textbf{98-06-44}, Santa Fe.
  \href{http://www.santafe.edu/media/workingpapers/98-06-044.ps}{www.santafe.edu/media/workingpapers/98-06-044.ps}.

\bibitem{Zolotarev:1982}
{Zolotar\"{e}v A.A.}, 1982, \textit{Prostranstva $L^p$ otnositel'no sostoyaniya
  na algebre Ne\u{\i}mana i interpolyaciya}, Izv. Vyssh. Uchebn. Zaved. Matem.
  \textbf{243}, 36. \href{http://mi.mathnet.ru/ivm6863}{mathnet.ru:ivm6863}
  (Engl. transl. 1982, \textit{$L^p$ space with respect to a state on a von
  Neumann algebra, and interpolation}, Sov. Math. \textbf{26}:8, 41).

\bibitem{Zolotarev:1985}
{Zolotar\"{e}v A.A.}, 1985, \textit{K interpolyacionno\u{\i} teorii prostranstv
  $L_p$ otnositel'no sostoyaniya na algebre Ne\u{\i}mana}, Izv. Vyssh. Uchebn.
  Zaved. Matem. \textbf{272}, 59.
  \href{http://mi.mathnet.ru/ivm7262}{mi.mathnet.ru:ivm7262} (Engl. transl.
  1985, \textit{On the interpolation theory of $L_p$ spaces with respect to a
  state on von Neumann algebra}, Sov. Math. \textbf{29}:1, 78).

\bibitem{Zolotarev:1985:konus}
{Zolotar\"{e}v A.A.}, 1985, \textit{O konuse polozhitel'nykh {{\'{e}}}lementov
  v prostranstve $L_1$ otnositel'no vesa na algebre Ne\u{\i}mana}, Konstr.
  Teor. Funkts. Funkts. Anal. \textbf{5}, 40.
  \href{http://mi.mathnet.ru/kuktf55}{mathnet.ru:kuktf55}.

\bibitem{Zolotarev:1986}
{Zolotar\"{e}v A.A.}, 1986, \textit{Nekommutativnye prostranstva $L_p$ i
  operatory, prisoedinennye k centralizatoru algebry Ne\u{\i}mana}, Izv. Vyssh.
  Uchebn. Zaved. Matem. \textbf{289}, 60.
  \href{http://mi.mathnet.ru/ivm7574}{mi.mathnet.ru:ivm7574} (Engl. transl.
  1986, \textit{Noncommutative $L_p$ spaces and operators associated with the
  centralizer of a von Neumann algebra}, Sov. Math. \textbf{30}:6, 87).

\bibitem{Zolotarev:1988}
{Zolotar\"{e}v A.A.}, 1988, \textit{Interpolyacionnye svo\u{\i}stva
  dvukhparametrichesko\u{\i} shkaly prostranstv $L_p$ otnositel'no sostoyaniya
  na polukonechno\u{\i} algebre Ne\u{\i}mana}, Izv. Vyssh. Uchebn. Zaved.
  Matem. \textbf{309}, 71.
  \href{http://mi.mathnet.ru/ivm7902}{mathnet.ru:ivm7902} (Engl. transl. 1988,
  \textit{Interpolation properties of a two-parameter scale of $L_p$-spaces
  with respect to a state on a semifinite von Neumann algebra}, Sov. Math.
  \textbf{32}:2, 103).

\bibitem{Zsido:1973}
{Zsid\'{o} L.}, 1973, \textit{Descompuneri topologie ale $W^*$-algebrelor, I,
  II}, Studii Cercet. Matem. \textbf{25}, 859, 1037.
  \href{http://www.fuw.edu.pl/~kostecki/scans/zsido1973.pdf}{www.fuw.edu.pl/$\sim$kostecki/scans/zsido1973.pdf}.

\bibitem{Zsido:1979}
{Zsid\'{o} L.}, 1979, \textit{Invariance and dual weights}, in:
  \textit{Alg\`{e}bres d'op\'{e}rateurs et leurs applications en physique
  math\'{e}matique (Proc. Colloques Internationaux, Marseille 20-24 juin
  1977)}, Colloques Internationaux C.N.R.S. \textbf{274}, \'{E}ditions du
  C.N.R.S., Paris, p.529.

\bibitem{Zumbraegel:2004}
{Zumbr\"{a}gel J.}, 2004, \textit{Modular-Theorie und die Konstuktion
  nicht-komutativer $L_p$-R\"{a}ume nach Haagerup}, Diplomarbeit,
  Universit\"{a}t Oldenburg, Oldenburg.
  \href{http://shannoninstitute.ucd.ie/~jzumbr/files/Diplomarbeit.pdf}{shannoninstitute.ucd.ie/$\sim$jzumbr/files/Diplomarbeit.pdf}.

\end{thebibliography}

\endgroup        
}%
\newpage
\phantomsection
\addcontentsline{toc}{section}{Index}
{\footnotesize\printindex}
\section*{List of symbols}
\addcontentsline{toc}{section}{List of symbols}
\sectionmark{List of symbols}

\begin{tabular}{ccc}
\begin{tabular}{rl}
$\C$&\rpklink{CSTAR}\\
$\II$&\rpklink{UNIT}\ \rpklink{DANIELUNIT}\\
$x^*$&\rpklink{STAR}\\
$\C^o$&\rpklink{OPPOSITE}\\
$\Aut$&\rpklink{AUT}\\
$\C_\alpha$&\rpklink{FIXEDPOINT}\\
$\leq$&\rpklink{PORDER}\ \rpklink{PHILEQ}\ \rpklink{POSET}\\
$\Proj$&\rpklink{PROJ}\\
$(\cdot)^+$&\rpklink{POSITIV}\\
$(\cdot)^\sa$&\rpklink{SA}\\
$(\cdot)^\asa$&\rpklink{ASA}\\
$(\cdot)^\uni$&\rpklink{UNI}\\
$(\cdot)^\banach$&\rpklink{BANACH}\ \rpklink{BANACH.FUNCTOR}\\
$(\cdot)_\star$&\rpklink{PREDUAL}\ \rpklink{PREDUAL.FUNCTOR}\\
$(\cdot)_0$&\rpklink{ZERO}\\
$(\cdot)_1$&\rpklink{ONE}\\
$\Scal$&\rpklink{SCAL}\\
$\Scal_0$&\rpklink{SZERO}\\
$\N$&\rpklink{N}\\
$\BH$&\rpklink{BH}\\
$\mmm_\omega$&\rpklink{mmm}\\
$\W$&\rpklink{WC}\ \rpklink{MEASURE.W}\\
$\W_0$&\rpklink{WZEROC}\ \rpklink{MEASURE.W.ZERO}\\
$\nnn_\phi$&\rpklink{nnn}\\
$\supp$&\rpklink{SUPP}\ \rpklink{SUPP.PSI}\ \rpklink{SUPP.DREI}\\
$\ll$&\rpklink{ll}\ \rpklink{MEASURE.LL}\\
$\ab{\phi}$&\rpklink{AB.PHI}\\
$\pi$&\rpklink{pi}\\
$\pi_\omega$&\rpklink{pi.omega}\ \rpklink{pi.omega.zwei}\\
$\Omega_\omega$&\rpklink{omega.omega}\\
$\H_\omega$&\rpklink{h.omega}\ \rpklink{h.omega.zwei}\\
$\s{\cdot,\cdot}_\omega$&\rpklink{scal.omega}\\
$[\cdot]_\omega$&\rpklink{rep.omega}\ \rpklink{rep.omega.zwei}\\
$\N^\comm$&\rpklink{comm}\\
$\zentr_\N$&\rpklink{zentr}\\
$\face$&\rpklink{face}\\
$\co$&\rpklink{co}\\
$\bary$&\rpklink{BARY}\\
$\fact$&\rpklink{FACT}\\
$\pvm^x$&\rpklink{pvm}\ \rpklink{pvm.nelson.nota}\\
$\povm$&\rpklink{povm}\\
$\C^\alpha_\infty$&\rpklink{CAINF}\\
$\Lin$&\rpklink{LIN}\\
$J_\omega$&\rpklink{J}\ \rpklink{J.ZWEI}\ \rpklink{J.DREI}\\
$\Delta_\omega$&\rpklink{DELTA}\ \rpklink{DELTA.ZWEI}\ \rpklink{DELTA.DREI}\\
$\sigma^\omega$&\rpklink{SIGMA}\ \rpklink{SIGMA.DREI}\ \rpklink{TT.MODALG}\\
$\N_{\sigma^\omega}$&\rpklink{NSG}\\
$K_\omega$&\rpklink{KOMEGA}\\
$\stdcone_\Omega$&\rpklink{STDCONE.OMEGA}\\
$\stdcone$&\rpklink{STDCONE}
\end{tabular}
&
\begin{tabular}{rl}
$\xi_\pi$&\rpklink{STDREP}\\
$\xi_\natural^\pi$&\rpklink{STDREP.REVERSE}\\
$J_{\phi,\omega}$&\rpklink{JREL}\\
$\Delta_{\phi,\omega}$&\rpklink{DELTAREL}\ \rpklink{DELTA.REL.ZWEI}\ \rpklink{DELTA.REL.CANON}\\
$V_{\phi,\omega}$&\rpklink{STDUNITRANS}\\
$\Connes{\phi}{\omega}{t}$&\rpklink{CONNES.COC}\ \rpklink{CONNES.COC.ZWEI}\ \rpklink{CONNES.COC.DREI}\ \rpklink{CONNES.COC.VIER}\ \rpklink{CONNES.MODALG}\\
$\connes{\psi}{\phi^\comm}$&\rpklink{CONNES.SPAT}\ \rpklink{CONNES.SPAT.CANON}\ \rpklink{CONN.SPAT.MODALG}\\
$J_\N$&\rpklink{CANON.J}\\
$\pi_\N$&\rpklink{CANON.PI}\\
$\LLL$&\rpklink{LEFT}\ \rpklink{LLL.SHERMAN}\\
$\RRR$&\rpklink{RIGHT}\ \rpklink{RRR.SHERMAN}\\
$\dimfun$&\rpklink{DIMFUN}\\
$\modspec$&\rpklink{MODSPEC}\ \rpklink{MODSPEC.FLOW}\\
$\alpha$&\rpklink{ALPHA}\\
$\der$&\rpklink{DER}\\
$V_\alpha$&\rpklink{V.ALPHA}\\
$K^\alpha$&\rpklink{K.ALPHA}\\
$\tmu^G$&\rpklink{HAAR}\\
$\mathrm{C}_\mathrm{c}$&\rpklink{Cc}\ \rpklink{CC.RIESZ.THM}\\
$\pi_\alpha$&\rpklink{PI.ALPHA}\\
$u_G$&\rpklink{UGIE}\\
$\rtimes_\alpha$&\rpklink{CROSSEDPROD}\\
$\hat{G}$&\rpklink{PONTR.DUAL}\\
$\pi_{\sigma^\psi}$&\rpklink{PI.SIGMA}\\
$u_\RR$&\rpklink{URR}\\
$\phi^\lambda$&\rpklink{PHI.ONE.HALF}\ \rpklink{PHI.IT}\ \rpklink{PSI.IT}\ \rpklink{PHIp}\ \rpklink{PHI.IT.DREI}\ \rpklink{PHI.IT.MODALG}\\
$\precore$&\rpklink{PRECORE}\\
$\N(t)$&\rpklink{NT}\\
$\H(t)$&\rpklink{HT}\\
$\core$&\rpklink{CORE}\\
$\widetilde{\H}$&\rpklink{COREH}\\
$\aff$&\rpklink{AFF}\ \rpklink{AFF.ZWEI}\\
$\MMM(\N)$&\rpklink{MMM}\\
$\tau_h$&\rpklink{TAU.H}\ \rpklink{TAU.H.ZWEI}\\
$\phi_h$&\rpklink{PHI.H}\\
$L_p(\N,\tau)$&\rpklink{LPNTAU}\ \rpklink{LPNTAU.ZWEI}\\
$\MMM(\N,\tau)$&\rpklink{MMMNTAU}\\
$\schatten_p$&\rpklink{SCHATT}\ \rpklink{SCHATT.P}\\
$\N^\ext$&\rpklink{NEXT}\\
$\tilde{\tau}$&\rpklink{TILDETAU}\\
$\tilde{\phi}$&\rpklink{TILDEPHI}\\
$\tilde{\tau}_x$&\rpklink{TILDETAUX}\\
$T$&\rpklink{T.MAP}\\
$\nnn_T$&\rpklink{nnnT}\\
$\mmm_T$&\rpklink{mmmT}\\
$\condexp$&\rpklink{condexp}\\
$\hat{T}$&\rpklink{HATT}\ \rpklink{HATT.ZWEI}\\
$\hat{\phi}$&\rpklink{HATPHI}\ \rpklink{HATPHI.ZWEI}\ \rpklink{DUALWEIGHT}\\
$h_\phi$&\rpklink{H.PHI}\\
$\MMM^p(\precore,\tau_\psi)$&\rpklink{MMMPPRECORE}
\end{tabular}
&
\begin{tabular}{rl}
$L_p(\N,\psi)$&\rpklink{HTLP}\ \rpklink{AMLP}\\
$L_p(\N,\psi^\comm)$&\rpklink{CHLP}\\
$\duality{\cdot,\cdot}_\psi$&\rpklink{AMDUALITY}\\
$L_p(\N)$&\rpklink{CANON.HILB}\ \rpklink{LPNKOSAKI}\ \rpklink{LPNFT}\ \rpklink{LITNFT}\\
$\grad$&\rpklink{GRAD}\\
$\MMM^p(\core,\taucore)$&\rpklink{MMMPCORE}\\
$\sup$&\rpklink{SUP}\\
$\inf$&\rpklink{INF}\\
$\boole$&\rpklink{BOOLE}\\
$\sp_\mathrm{S}$&\rpklink{STONESPEC}\\
$\mu$&\rpklink{MEASURE}\\
$L_p(\boole)$&\rpklink{LPBINF}\ \rpklink{LPBZERO}\ \rpklink{LPBN}\\
$L_p(\boole,\mu)$&\rpklink{LPBM.ONE}\ \rpklink{LPBM}\\
$\chi$&\rpklink{CHICHAR}\\
$\boole^\mu$&\rpklink{BOOLMEASIDEAL}\\
$\mho$&\rpklink{MHO}\\
$\mho^0$&\rpklink{MHO.ZERO}\\
$\tmu$&\rpklink{SET.MEASURE}\\
$\Meas^+$&\rpklink{SET.MEASURE.SET}\\
$\nul$&\rpklink{NULL}\\
$\mho^{\tmu}$&\rpklink{MHO.TMU}\\
$L_p(\X,\mho(\X),\tmu)$&\rpklink{LPMEASURESET}\\
$\boole_\tmu$&\rpklink{BOOLETMUALG}\\
$\mho_{\mathrm{Borel}}(\X)$&\rpklink{BOREL}\\
$\Meas^+_\star$&\rpklink{NORMAL.RADON}\\
$\Rad(\cdot)^+$&\rpklink{RADON.BARY}\ \rpklink{RADON}\\
$\sp_{\mathrm{G}}$&\rpklink{GELFSPEC}\\
$\mathrm{C}$&\rpklink{CONT.BARY}\ \rpklink{CONTIFUN}\ \rpklink{CONTI.RIESZ.THM}\\
$\Orlicz$&\rpklink{ORLICZFUN}\\
$L_\Orlicz(\X,\mho(\X),\tmu)$&\rpklink{LORLICZX}\\
$E_\Orlicz(\X,\mho(\X),\tmu)$&\rpklink{EORLICZX}\\
$\schatten_\Orlicz$&\rpklink{SCHATTENORLICZ}\\
$\rearr{x}{\tau}$&\rpklink{REARRANGEMENT}\\
$L_\Orlicz(\N,\tau)$&\rpklink{LORLICZNT}\ \rpklink{LORLICZNT.ZWEI}\\
$E_\Orlicz(\N,\tau)$&\rpklink{EORLICZNT}\\
$L_\Orlicz(\N)$&\rpklink{LORLICZN}\\
$E_\Orlicz(\N)$&\rpklink{EORLICZN}
\end{tabular}
\end{tabular}
\end{document}